\documentclass{article}
%Version Abbo, October 2009

% Abbo's format for article.
% e-mail: abbondandolo@dm.unipi.it
% LaTex2e file

\usepackage{latexsym}
\usepackage{amscd}
\usepackage{graphics}
\usepackage{amsmath}
\usepackage{amssymb}
\usepackage{amsthm}
\usepackage{mathrsfs}
\usepackage{bbold}
\input xy
\xyoption{all}
\usepackage{graphicx}
\usepackage{epstopdf}
% New commands and symbols

\newcommand{\N}{\mathbb{N}}                     % the natural numbers
\newcommand{\Z}{\mathbb{Z}}                     % the integer numbers
                     % the rational numbers
\newcommand{\R}{\mathbb{R}}                     % the real line
\newcommand{\C}{\mathbb{C}}                     % the complex plane
                     % the unit circle
\newcommand{\T}{\mathbb{T}}                     % the torus
                     % a field
\newcommand{\set}[2]{\left\{{#1}\mid{#2}\right\}}       % the set
                 % implication arrow
              % iff arrow
                 % one over two
\newcommand{\im}{\mathrm{Im\,}}                 % imaginary part
\newcommand{\re}{\mathrm{Re\,}}                 % real part
        % divergence
             % distance
               % Kernel
\newcommand{\coker}{\mathrm{coker\,}}           % Cokernel
         % corank
             % span
               % signum
             % diameter
\newcommand{\ind}{\mathrm{ind\,}}               % Fredholm index
\newcommand{\codim}{\mathrm{codim}}           % codimension
               % total variation
             % diagonal matrix
      % essential range
\newcommand{\supp}{\mathrm{supp\,}}             % support
             % convex hull
             % spectrum
\newcommand{\graf}{\mathrm{graph\,}}       % graph
       % Lipschitz norm
\newcommand{\rank}{\mathrm{rank\,}}     % rank
   % range
         % category
       % diffeomorphisms
         % symmetric matrices
\newcommand{\dom}{\mathrm{dom}\,}       % domain
         % essential commutator
       % angle
           % complex projective space
           % real projective space

\newcommand{\crit}{\mathrm{crit}}

\newcommand{\grad}{\mathrm{grad\,}}

\newcommand{\hess}{\mathrm{Hess\,}}

             % convex hull    
 % closed convex hull

\newcommand{\delbar}{\overline{\partial}}               % delbar operator

\swapnumbers
\newtheorem{thm}{\sc Theorem}[section]      % numbered within each section

\newtheorem{cor}[thm]{\sc Corollary}        % numbered along with Theorem
\newtheorem{lem}[thm]{\sc Lemma}            % numbered along with Theorem
\newtheorem{prop}[thm]{\sc Proposition}     % numbered along with Theorem
         % numbered along with Theorem

\newtheorem{defn}[thm]{\sc Definition}      
\newtheorem{rem}[thm]{\sc Remark}

% Floer homology of cotangent bundles and the loop product

\usepackage{epsfig}

\newcommand{\loopprod}{\,\mathsf{o}\, }

\author{
Alberto Abbondandolo\\
{Universit\`a di Pisa}\\
{ Dipartimento di Matematica}\\
    { Largo Bruno Pontecorvo, 5}\\
    { 56127 Pisa, Italy}\\
abbondandolo@dm.unipi.it
\and 
Matthias Schwarz\\
Universit\"at Leipzig\\
Mathematisches Institut\\
Postfach 10 09 20\\
D-04009 Leipzig, Germany\\
mschwarz@math.uni-leipzig.de}

\title{Floer homology of cotangent bundles and the loop product}

\date{December 6, 2009}

\textwidth125mm
\textheight195mm
\begin{document}

\maketitle

\begin{abstract}
We prove that the pair-of-pants product on the Floer homology of the cotangent bundle of a compact manifold $M$ corresponds to the Chas-Sullivan loop product on the singular homology of the loop space of $M$. We also prove related results concerning the Floer homological interpretation of the Pontrjagin product and of the Serre fibration. The techniques include a Fredholm theory for Cauchy-Riemann operators with jumping Lagrangian boundary conditions of conormal type, and a new cobordism argument replacing the standard gluing technique.
\end{abstract}

\tableofcontents

\section*{Introduction}
\addcontentsline{toc}{section}{\numberline{}Introduction}

Let $M$ be a closed manifold, and let $H$ be a time-dependent smooth Hamiltonian on $T^*M$, the cotangent bundle of $M$. We assume that $H$ is $1$-periodic in time and grows asymptotically  quadratically on each fiber. Generically, the corresponding Hamiltonian system
\begin{equation}
\label{introuno}
x'(t) = X_H(t,x(t)), 
\end{equation}
has a discrete set $\mathscr{P}(H)$ of $1$-periodic orbits. The free Abelian group $F_*(H)$ generated by the elements in $\mathscr{P}(H)$, graded by their Conley-Zehnder index,  
 supports a chain complex, the {\em Floer complex} $(F_\ast(H),\partial)$. The boundary operator $\partial$ is defined by an algebraic count of the maps $u$ from the cylinder $\R \times \T$ to $T^*M$, solving the Cauchy-Riemann type equation
 \begin{equation}
 \label{introdue}
 \partial_s u(s,t) + J(u(s,t)) \bigl(\partial_t u(s,t) - X_H(t,u(s,t))\bigr) = 0, \quad \forall (s,t) \in \R \times \T,
 \end{equation}
 and converging to two $1$-periodic orbits of (\ref{introuno}) for $s\rightarrow -\infty$ and $s\rightarrow +\infty$. Here $J$ is the almost-complex structure on $T^*M$ induced by a Riemannian metric on $M$, and (\ref{introdue}) can be seen as the negative $L^2$-gradient equation for the Hamiltonian action functional. 
 
This construction is due to A.\ Floer (see e.g. \cite{flo88a,flo88d,flo89b,flo89a}) in the case of a closed symplectic manifold $P$, in order to prove a conjecture of Arnold on the number of periodic Hamiltonian orbits. The extension to non-compact symplectic manifolds, such as the cotangent bundles we consider here, requires suitable growth conditions on the Hamiltonian, such as the convexity assumption used in \cite{vit96} or the asymptotic quadratic-growth assumption used in \cite{as06}. The Floer complex obviously depends on the Hamiltonian $H$, but its homology often does not, so it makes sense to call this homology the {\em Floer homology} of the underlying symplectic manifold $P$, and to denote it by $HF_*(P)$. The Floer homology of a compact symplectic manifold $P$ without boundary is isomorphic to the singular homology of $P$, as proved by A.\ Floer for special classes of symplectic manifolds, and later extended to larger and larger classes by several authors (the general case requiring special coefficient rings, see \cite{hs95,lt98,fo99}). Unlike the compact case, the Floer homology of a cotangent bundle $T^*M$ is a truly infinite dimensional homology theory, being isomorphic to the singular homology of the free loop space $\Lambda(M)$ of $M$. This fact was first proved by C.\ Viterbo (see \cite{vit96}) using a generating functions approach, later by D.\ Salamon and J.\ Weber using the heat flow for curves on a Riemannian manifold (see \cite{sw06}), and then by the authors in \cite{as06}. In particular, our proof reduces the general case to the case of a Hamiltonian which is uniformly convex in the momenta, and for such a Hamiltonian it constructs an explicit isomorphism between the Floer complex of $H$ and the {\em Morse complex} of the action functional
\[
\mathbb{S}_L(\gamma) = \int_{\T} L(t,\gamma(t),\gamma'(t)) \, dt, \quad \gamma \in W^{1,2}(\T,M),
\]
associated to the Lagrangian  $L$ which is the Fenchel dual of $H$. 
The latter complex is the standard chain complex associated to the Lagrangian action functional $\mathbb{S}_L$. The domain of such a functional is the
infinite dimensional Hilbert manifold $W^{1,2}(\T,M)$ 
consisting of closed loops of Sobolev class $W^{1,2}$ on $M$, and an 
important fact is that the functional $\mathbb{S}_L$ is  
bounded from below, has critical points with finite Morse index, satisfies the Palais-Smale condition, and, although in general it is not $C^2$, it admits a smooth Morse-Smale pseudo-gradient flow. The construction of the
Morse complex in this infinite dimensional setting and the proof that
its homology is isomorphic to the singular homology of the ambient manifold are described in \cite{ama06m}.  The isomorphism between the Floer and the Morse complex is obtained by coupling the Cauchy-Riemann type equation on half-cylinders with the gradient flow equation for the Lagrangian action. We call this the {\em hybrid method}.

Since the space $W^{1,2}(\T,M)$ is homotopy
equivalent to $\Lambda(M)$, we get the required isomorphism 
\begin{equation}
\label{introtre}
\Phi^{\Lambda} : H_*(\Lambda(M)) \stackrel{\cong}{\longrightarrow} HF_*(T^*M),
\end{equation}
from the singular homology of the free loop space of $M$ to the Floer homology of $T^*M$.  

Additional interesting algebraic structures on the Floer homology of a
symplectic manifold are obtained by considering other Riemann surfaces
than the cylinder as domain for the Cauchy-Riemann type equation
(\ref{introdue}). By considering the pair-of-pants surface, a
non-compact Riemann surface with three cylindrical ends, one obtains
the {\em pair-of-pants product} in Floer homology (see \cite{sch95}
and \cite{ms04}). When the symplectic manifold $P$ is closed and
symplectically aspherical, this product corresponds to the standard
cup product from topology, after identifying the Floer homology of $P$
with its singular cohomology by Poincar\'e duality, while when the
manifold $P$ can carry $J$-holomorphic spheres, the pair-of-pants
product corresponds to the {\em quantum cup product} of $P$ (see
\cite{pss96} and \cite{lt99}). 

The main result of this paper is that in the case of cotangent
bundles, the pair-of-pants product is also equivalent to a product on
$H_*(\Lambda(M))$ coming from topology, but a more interesting one
than the simple cup product:

\bigskip

\noindent {\sc Theorem A.} {\em
Let $M$ be a closed oriented manifold. Then the
isomorphism $\Phi^{\Lambda}$ in (\ref{introtre}) is a ring isomorphism when the
Floer homology of $T^*M$ is endowed with its pair-of-pants product,
and the homology of the space of free parametrized loops of $M$ is endowed with its Chas-Sullivan loop
product.}

\bigskip

The latter is an algebraic structure which was recently discovered by 
M.\ Chas and D.\ Sullivan \cite{csu99}, and which is currently having a strong
impact in string topology (see e.g.\ \cite{chv06} and \cite{sul07}).   
It is the free loop space version of the classical
Pontrjagin product 
\[
\# : H_j(\Omega(M,q_0)) \otimes H_k(\Omega(M,q_0)) \rightarrow H_{j+k} (\Omega(M,q_0))
\]
on the singular homology of the space $\Omega(M,q_0)$ of loops based at $q_0$.
As the Pontrjagin product, it is induced by concatenation and it can be described in the following way. Let $\Theta(M)$ be the
subspace of $\Lambda(M)\times \Lambda(M)$ consisting of pairs of parametrized loops with identical initial point. 
If $M$ is oriented and $n$-dimensional,
$\Theta(M)$ is both a co-oriented $n$-codimensional submanifold of the 
Banach manifold $\Lambda(M)\times \Lambda(M)$, as well as of $\Lambda(M)$ itself via the concatenation map  $\Gamma\colon \Theta(M) \rightarrow \Lambda(M)$,
\begin{equation}
\Lambda(M)\times\Lambda(M)\,\stackrel{e}{\hookleftarrow}\, \Theta(M)\,\stackrel{\Gamma}{\hookrightarrow}\,\Lambda(M)\,.
\end{equation}
Seen as continuous maps, $e$ and $\Gamma$ induce homomorphisms $e_\ast$, $\Gamma_\ast$ in homology. Seen as  $n$-codimensional co-oriented embeddings, they induce Umkehr maps 
\begin{eqnarray*}
e_{!}\colon H_j(\Lambda(M) \times \Lambda(M)) & \rightarrow &
H_{j-n}(\Theta(M)),\\
\Gamma_{!}\colon H_j(\Lambda(M)) & \rightarrow & H_{j-n}(\Theta(M)).
\end{eqnarray*}
The loop product is the degree $-n$ product on the homology of the
free loop space of $M$,
\[
\loopprod: H_j(\Lambda(M)) \otimes H_k(\Lambda(M)) \rightarrow
H_{j+k-n}(\Lambda(M)),
\]
defined as the composition
\begin{equation*}\begin{split}
H_j(\Lambda(M)) \otimes H_k(\Lambda(M))
&\stackrel{\times}{\longrightarrow} H_{j+k} (\Lambda(M) \times
\Lambda(M)) \stackrel{e_!}{\longrightarrow} H_{j+k-n} (\Theta(M))\\
&\stackrel{\Gamma_*}{\longrightarrow} H_{j+k-n}(\Lambda(M)),
\end{split}\end{equation*}
where $\times$ is the exterior homology product. The loop product
turns out to be associative, commutative, and to have a unit, namely
the image of the fundamental class of $M$ by the embedding of $M$ into $\Lambda(M)$ as the space of constant loops. More information about the loop product and about its relationship with the Pontrjagin and the intersection product on $M$ are recalled in Section \ref{altopo}. Similarly, the composition  $e_\ast\circ\Gamma_{!}$ gives a coproduct of degree $-n$ (for coefficients in a field), corresponding to the pair-of-pants coproduct on Floer homology. However, it is easy to see that this coproduct is almost entirely trivial, except for homology classes of dimension $n$, so we shall not consider it in this paper.

Coming back to Theorem A, it is worth noticing that the analogy between the pair-of-pants product and the loop
product is even deeper. Indeed, we may look at the solutions $(x_1,x_2) : [0,1]
\rightarrow T^*M \times T^*M$ of the following pair of Hamiltonian systems 
\begin{equation}
\label{introquattro}
x_1'(t) = X_{H_1}(t,x_1(t)), \quad x_2'(t) = X_{H_2}(t,x_1(t)),
\end{equation}
coupled by the nonlocal boundary condition
\begin{equation}
\label{introcinque}
\begin{split}
q_1(0) = q_1(1) & =  q_2(0) = q_2(1), \\
p_1(1) - p_1(0)  & =  p_2(0) - p_2(1).
\end{split}
\end{equation}
Here we are using the notation $x_j(t) = (q_j(t),p_j(t))$, with
$q_j(t)\in M$ and $p_j(t) \in T_{q_j(t)}^* M$, for $j=1,2$. By studying the
corresponding Lagrangian boundary value Cauchy-Riemann type problem on
the strip $\R \times [0,1]$, we obtain a chain complex, the  {\em Floer complex
for figure-8 loops} $(F^{\Theta}(H_1 \oplus H_2),\partial)$ on the graded free
Abelian group generated by solutions of
(\ref{introquattro})-(\ref{introcinque}). Then we can show that:
\begin{enumerate} 
\item The homology of the chain complex $(F^{\Theta}(H_1 \oplus H_2),\partial)$ is 
isomorphic to the singular homology of $\Theta(M)$.
\item The pair of pants product factors through the homology of this
  chain complex.
\item The first homomorphism in this factorization corresponds to the
  homomorphism $e_{!} \circ \times$, while the
  second one corresponds to the homomorphism $\Gamma_*$.
\end{enumerate}

We also show that similar results hold for the space  $\Omega(M,q_0)$ of loops which are based at $q_0\in M$. The Hamiltonian problem in this case is the equation
$(\ref{introuno})$ for $x=(q,p) : [0,1] \rightarrow T^* M$ with boundary
conditions
\[
q(0) = q(1) = q_0.
\]
Since the fiber $T_{q_0}^* M$ is a Lagrangian submanifold of $T^*M$, this is a Lagrangian intersection problem, and one can associate to it a  
Floer homology, that we denote by $HF^{\Omega}_*(T^*M)$. On such a Floer homology there is a product 
\[
\Upsilon^{\Omega}_* : HF_j^{\Omega} (T^*M) \otimes HF_k^{\Omega}(T^*M) \rightarrow HF_{j+k}^{\Omega} (T^*M), 
\]
which is called the {\em triangle product}. Then we can prove the following:

\bigskip

\noindent {\sc Theorem B.} {\em
Let $M$ be a closed manifold. Then there is a ring isomorphism
\[
\Phi^{\Omega} : H_*(\Omega(M,q_0)) \stackrel{\cong}{\longrightarrow} HF_*^{\Omega} (T^*M),
\]
where the singular homology of the based loop space $\Omega(M,q_0)$ is endowed with the Pontrjagin product $\#$ and the Floer homology $HF^{\Omega}_*(T^*M)$ is equipped with the triangle product.}

\bigskip

Actually, every arrow in the commutative diagram from topology
\begin{equation}
\label{introsei}
\begin{CD}
H_j(M) \otimes H_k(M) @>{\bullet}>> H_{j+k-n}(M) \\
@V{\mathrm{c}_* \otimes \mathrm{c}_*}VV @VV{\mathrm{c}_*}V \\
H_j(\Lambda(M)) \otimes H_k(\Lambda(M)) @>{\loopprod}>>
H_{j+k-n}(\Lambda(M)) \\
@V{\mathrm{i}_! \otimes \mathrm{i}_!}VV @VV{\mathrm{i}_!}V \\
H_{j-n}(\Omega(M)) \otimes H_{k-n}(\Omega(M)) @>{\#}>>
H_{j+k-2n} (\Omega(M,q_0)),
\end{CD}
\end{equation}  
has an equivalent homomorphism in Floer homology. Here $\bullet$ is
the intersection product in singular homology, $\mathrm{c}$ is the embedding of $M$ into $\Lambda(M)$ by constant loops,
and $\mathrm{i}_!$ denotes the Umkehr map induced by the $n$-codimensional
co-oriented embedding $\mathrm{i}: \Omega(M) \hookrightarrow \Lambda(M)$.

The first step in the proof of the main statements of this paper is to describe objects and morphisms from algebraic topology in a Morse theoretical way.  The way this translation is performed is well known in the case of finite dimensional manifolds (see e.g.\ \cite{fuk93}, \cite{sch93}, \cite{bc94}, \cite{vit95f}, \cite{fuk97}). In Appendix A we outline how these results extend to infinite dimensional Hilbert manifolds, paying particular attention to the transversality conditions required for each construction, In Section \ref{mtdsec}, we specialize the analysis to the action functional associated to Lagrangians which have quadratic growth in the velocities. See also \cite{coh06,chv06,cs08}.

The core of the paper consists of Sections \ref{fhrssec} and \ref{imfsec}. In the former we define the Floer complexes we are dealing with and the products on their homology. All the Floer homologies we consider here -- for free loops, based loops, or figure-8 loops -- are special cases of Floer homology for {\em nonlocal conormal} boundary conditions. Therefore, we unify the presentation by using this level of generality (see \cite{aps08}): Given a closed manifold $Q$, we fix a closed submanifold $R$ of $Q\times Q$ and consider the Hamiltonian orbits $x: [0,1] \rightarrow T^*Q$ such that $(x(0),-x(1))$ belongs to the conormal bundle $N^\ast R$ of $R$, that is to the set of covectors in $T^* (Q\times Q)$ which are based at $R$ and annihilate every vector which is tangent to $R$.  The Floer homology on $T^*Q$ associated to $N^\ast R$ is denoted by $HF^R_\ast(T^*Q)$.
The standard Floer homology on $T^*M$ with periodic boundary conditions corresponds to the choice $Q=M$ and $R=\Delta_M$, the diagonal in $M\times M$. Floer homology for based loops (or Dirichlet boundary conditions) corresponds to choosing $Q=M$ and $R$ to consist of the point $(q_0,q_0)$. Finally, in Floer homology for figure-8 loops we choose $Q=M\times M$ and $R=\Delta_M^{(4)}$, the set of quadruples $(q,q,q,q)$ in $M^4$. 

In Section \ref{imfsec}, we start by recalling the construction of the isomorphism 
$$
\Phi^R_* \colon H_\ast(P_R(Q)) \,\stackrel{\cong}{\longrightarrow}\,  HF^R_\ast(T^\ast Q)
$$
where $P_R(Q))$ is the space of continuous paths $\gamma\colon [0,1]\to Q$ such that $(\gamma(0),\gamma(1))\in R$. Then we prove Theorems A and B, as corollaries of chain level results (Theorems \ref{chainA} and \ref{Bchain}) involving the Morse complex of the Lagrangian action functional. In Section \ref{bah}, we complete the picture, by showing how the other homomorphisms which appear in diagram (\ref{introsei}) can be described in a Floer theoretical way.

The linear Fredholm theory used in these sections is described in Section \ref{lineartheory}, whereas Section \ref{cocosec} contains compactness and removal of singularities results, together with the proofs of three cobordism statements from Sections \ref{fhrssec} and \ref{imfsec}.

Some of the proofs are based on standard techniques in Floer homology, and in this case we just refer to the literature. However, there are a few key points where we need to introduce some new ideas. We conclude this introduction by briefly describing these ideas.

\paragraph{Riemann surfaces as quotients of strips with slits.} The definition of the pair-of-pants product requires extending the Cauchy-Riemann type equation (\ref{introdue}) to the pair-of-pants surface. The Cauchy-Riemann operator $\partial_s + J \partial_t$ naturally extends to any Riemann surface, by letting it take value into the vector bundle of anti-linear one-forms. The zero-order term $-J X_H(t,u)$ instead does not have a natural extension when the Riemann surface does not have a global coordinate $z=s+it$. The standard way to overcome this difficulty is to make this zero-order term act only on the cylindrical ends of the pair-of-pants surface -- which do have a global coordinate $z = s+it$ -- by multiplying the Hamiltonian by a cut-off
function making it vanish far from the cylindrical ends (see 
\cite{sch95}, \cite[Section 12.2]{ms04}, but see also \cite{sei06b} for a different approach). This construction does not cause problems when dealing with compact symplectic manifolds as in the above mentioned reference, but in the case of the cotangent bundle it would create problems with compactness of the spaces of solutions. In fact, on one hand cutting off the Hamiltonian destroys  the identity relating the energy of the solution with the oscillation of the action functional, on the other hand our $C^0$-estimate for the solutions requires coercive Hamiltonians. 

We overcome this difficulty by a different -- and we believe more natural -- way of extending the zero-order term. We describe the pair-of-pants surface -- as well as the other Riemann surfaces we need to deal with -- as the quotient of an infinite strip with a slit -- or more slits in the case of more general Riemann surfaces. At the end of the slit we use a chart given by the square root map. In this way, the Riemann surface is still seen as a smooth object, but it carries a global coordinate $z=s+it$ with singularities. This global coordinate allows to extend the zero-order term without cutting off the Hamiltonian, and preserving the energy identity. See Section \ref{Fetpp} below.

\paragraph{Cauchy-Riemann operators on strips with jumping boundary conditions.} When using the above description for the Riemann surfaces, the problems we are looking at can be described in a unified way as Cauchy-Riemann type equations on a strip, with Lagrangian boundary conditions presenting a finite number of jumps. In Section \ref{lineartheory} we develop a complete linear theory for such problems, in the case of Lagrangian boundary conditions of conormal type. This is the kind of conditions which occur naturally on cotangent bundles. Once the proper Sobolev setting has been chosen, the proof of the Fredholm property for such operators is standard. The computation of the index instead is reduced to a Liouville type statement, proved in Section  \ref{liouvsec}

These linear results have the following consequence.
Let $R_0,\dots,R_k$ be submanifolds of $Q\times Q$, such that $R_{j-1}$ and $R_j$ intersect cleanly, for every $j=1,\dots,k$. Let $-\infty = s_0 < s_1 < \dots < s_k < s_{k+1} = +\infty$, and consider the space $\mathscr{M}$ consisting of the maps $u: \R \times [0,1] \rightarrow T^*Q$ solving the Cauchy-Riemann type equation (\ref{introdue}), satisfying the boundary conditions
\[
(u(s,0),-u(s,1)) \in N^* R_j  \quad \forall s\in [s_j,s_{j+1}],  \; \forall j=0,\dots,k,        
\]
and converging to Hamiltonian orbits $x^-$ and $x^+$ for $s\rightarrow -\infty$ and $s\rightarrow +\infty$. The results of Section \ref{lineartheory} imply that for a generic choice of the Hamiltonian $H$ the space $\mathscr{M}$ is a manifold of dimension
\[
\dim \mathscr{M} = \mu^{R_0}(x^-) - \mu^{R_k}(x^+) - \sum_{j=1}^k ( \dim R_{j-1} -  \dim R_{j-1} \cap R_j).
\]
Here ${\mu}^{R_0}(x^-)$ and ${\mu}^{R_k}(x^+)$ are the Maslov indices of the Hamiltonian orbits $x^-$ and $x^+$, with boundary conditions $(x^-(0),-x^-(1)) \in N^* R_0$, $(x^+(0),-x^+(1)) \in N^* R_k$, suitably shifted so that in the case of a fiberwise convex Hamiltonian they coincide with the Morse indices of the corresponding critical points $\gamma^-$ and $\gamma^+$ of the Lagrangian action functional on the spaces of paths satisfying $(\gamma^-(0),\gamma^-(1)) \in R_0$ and $(\gamma^+(0),\gamma^+(1))\in R_k$, respectively.  
Similar formulas hold for problems on the half-strip. See Section \ref{lin} for precise statements. Different approaches to jumping Lagrangian boundary conditions can be found in \cite{is02}, in \cite{ww07, ww09, ww09a, ww09b}, and in \cite{cel09}.

\paragraph{Cobordism arguments.} The main results of this paper always reduce to the fact that certain diagrams involving homomorphisms defined either in a Floer or in a Morse theoretical way should commute up to a chain homotopy. The proof of such a commutativity is based on cobordism arguments, saying that a given solution of a certain Problem 1 can be ``continued'' by a unique one-parameter family of solutions of a certain Problem 2, and that this family of solutions converges to a solution of a certain Problem 3. In many situations such a statement can be proved by the classical gluing argument in Floer theory: One finds the one-parameter family of solutions of Problem 2 by using the given solution of Problem 1 to construct an approximate solution, to be used as the starting point of a Newton iteration scheme which converges to a true solution. When this is the case, we just refer to the literature. However, we encounter three situations in which the standard arguments do not apply, one reason being that we face a Problem 2 involving a Riemann surface whose conformal structure is varying with the parameter: this occurs when proving that the pair-of-pants product factorizes through the figure-8 Floer homology (Section \ref{fpop}), that the Pontrjagin product corresponds to the triangle product (Section \ref{ori}), and that the homomorphism 
$e_{!} \circ \times$ corresponds to its Floer homological counterpart (Section \ref{chlhs}). We manage to reduce the former two statements to the standard implicit function theorem (see Sections \ref{factsec} and \ref{omegasec}). The proof of the latter statement is more involved, because in this case the solution of Problem 2 we are looking for cannot be expected to be even $C^0$-close to the solution of Problem 1 we start with. We overcome this difficulty by the following algebraic observation: In order to prove that two chain maps $\varphi,\psi:C \rightarrow C'$ are chain homotopic, it suffices to find a chain homotopy between the chain maps $\varphi \otimes \psi$ and $\psi \otimes \varphi$, and to find an element $\epsilon\in C_0$ and a chain map $\delta$ from the complex $C'$ to the trivial complex $(\Z,0)$ such that $\delta(\varphi(\epsilon)) = \delta(\psi(\epsilon)) = 1$ (see Lemma \ref{alge} below). In our situation, the chain homotopy between   $\varphi \otimes \psi$ and $\psi \otimes \varphi$ is easier to find, by using a localization argument and the implicit function theorem (see Section \ref{coupros}). This argument is somehow reminiscent of an alternative way suggested by H.\ Hofer to prove standard gluing results in Floer homology. The construction of the element $\epsilon$ and of the chain map $\delta$ is presented in Section \ref{chlhs}, together with the proof of the required algebraic identity. This is done by considering special Hamiltonian systems, having a hyperbolic equilibrium point.

\bigskip

The main results of this paper were announced in \cite{as06m}. Related results concerning the equivariant loop product and its interpretation in the symplectic field theory of unitary cotangent bundles have been announced in \cite{cl07}. 

\paragraph{Acknowledgements.}

We wish to thank the Max Planck Institute for Mathematics in the
Sciences of Leipzig and the Department of Mathematics at Stanford
University, and in particular Yasha Eliashberg, for their kind
hospitality. We are also indebted with Ralph Cohen and Helmut Hofer for
many fruitful discussions. The first author thanks the Humboldt Foundation for financial support in the form of a {\em Humboldt Research Fellowship for Experienced Researchers}.
The second author thanks the Deutsche Forschungsgemeinschaft
for the support by the grant DFG SCHW 892/2-3.

\numberwithin{equation}{section}

\section{The Pontrjagin and the loop products}
\label{altopo}

\subsection{The Pontrjagin product}
\label{tpp}

Given a topological space $M$ and a point $q_0\in M$, we denote by
$\Omega(M,q_0)$ the space of loops on $M$ based at $q_0$, that is
\[
\Omega(M,q_0) := \set{\gamma\in C^0(\T,M)}{\gamma(0)=q_0},
\]
endowed with the compact-open topology. Here $\T = \R/\Z$ is the
circle parameterized by the interval $[0,1]$. The concatenation
\[
\Gamma(\gamma_1,\gamma_2)(t) := \left\{ \begin{array}{ll}
\gamma_1(2t)
  & \mbox{for } 0\leq t \leq 1/2, \\ \gamma_2(2t-1) & \mbox{for } 1/2
  \leq t \leq 1, \end{array} \right.
\]
maps $\Omega(M,q_0) \times \Omega(M,q_0)$ continuously into
$\Omega (M,q_0)$. The constant loop $q_0$ is a homotopy unit for
$\Gamma$, meaning that the maps $\gamma \mapsto
\Gamma(q_0,\gamma)$ and $\gamma \mapsto \Gamma (\gamma,q_0)$ are
homotopic to the identity map. Moreover, $\Gamma$ is homotopy
associative, meaning that $\Gamma \circ (\Gamma \times
\mathrm{id})$ and $\Gamma \circ (\mathrm{id} \times \Gamma)$ are
homotopic. Therefore, $\Gamma$ defines the structure of an
$H$-space on $\Omega(M,q_0)$.

We denote by $H_*$ the singular homology functor with integer
coefficients. The composition
\[
H_j(\Omega(M,q_0)) \otimes H_k(\Omega(M,q_0))
\stackrel{\times}{\longrightarrow} H_{j+k}(\Omega(M,q_0)\times
\Omega(M,q_0)) \stackrel{\Gamma_*}{\longrightarrow}
H_{j+k}(\Omega(M,q_0))
\]
where the first arrow is the exterior homology product, is by
definition the {\em Pontrjagin product}
\[
\#: H_j(\Omega(M,q_0)) \otimes H_k(\Omega(M,q_0)) \rightarrow
H_{j+k}(\Omega(M,q_0)).
\]
The fact that $q_0$ is a homotopy unit for $\Gamma$ implies that
$[q_0]\in H_0(\Omega(M,q_0))$ is the identity element for the
Pontrjagin product. The fact that $\Gamma$ is homotopy associative
implies that the Pontrjagin product is associative. Therefore, the
product $\#$ makes the
singular homology of $\Omega(M,q_0)$ a graded ring. In general,
it is a non-commutative graded ring. See for instance \cite{dkp70} for more
information on $H$-spaces and the Pontrjagin product.

\subsection{The Chas-Sullivan loop product}
\label{tcslp}

We denote by $\Lambda(M):=C^0(\T,M)$ the space of free loops on $M$.
Under the assumption that $M$ is an oriented $n$-dimensional
manifold, it is possible to use the concatenation map $\Gamma$ to
define a product of degree $-n$ on $H_*(\Lambda(M))$. In order to
describe the construction, we need to recall the definition of the
Umkehr map.

Let $\mathcal{M}$ be a (possibly infinite-dimensional) smooth
Banach manifold, and let $e:\mathcal{M}_0 \hookrightarrow
\mathcal{M}$ be a smooth closed embedding, which we assume to be
$n$-co-dimensional and co-oriented. In other words, $\mathcal{M}_0$
is a closed submanifold of $\mathcal{M}$ whose normal bundle
$N\mathcal{M}_0:= T\mathcal{M}|_{\mathcal{M}_0} / T\mathcal{M}_0$
has dimension $n$ and is oriented. The tubular neighborhood
theorem provides us with a homeomorphism
$u : \mathcal{U} \rightarrow N\mathcal{M}_0$, uniquely determined
up to isotopy, of an open neighborhood of $\mathcal{M}_0$ onto
$N\mathcal{M}_0$, mapping $\mathcal{M}_0$ identically onto the
zero section of $N\mathcal{M}_0$, that we also denote by
$\mathcal{M}_0$ (see \cite{lan99}, IV.\S 5-6, if $\mathcal{M}$
admits
  smooth partitions of unity -- for instance, if it is a Hilbert
  manifold -- then $u$ can be chosen to be a smooth diffeomorphism). 
The {\em Umkehr map} $e_{!}$ associated to the embedding $e$ is defined to be the composition
\begin{equation*}\begin{split}
H_j(\mathcal{M}) &\longrightarrow H_j (\mathcal{M},\mathcal{M}
\setminus \mathcal{M}_0) \stackrel{\cong}{\longrightarrow}
H_j(\mathcal{U},\mathcal{U}
\setminus \mathcal{M}_0) \stackrel{u_*}{\longrightarrow}
H_j(N\mathcal{M}_0, N\mathcal{M}_0 \setminus \mathcal{M}_0)\\
&\stackrel{\tau}{\longrightarrow} H_{j-n} (\mathcal{M}_0),
\end{split}\end{equation*}
where the first arrow is induced by the inclusion, the second one
is the isomorphism given by excision, and the last one is the Thom
isomorphism associated to the $n$-dimensional oriented vector
bundle $N\mathcal{M}_0$, that is, the cap product with the Thom
class $\tau_{N\mathcal{M}_0} \in H^n(N\mathcal{M}_0,N\mathcal{M}_0
\setminus \mathcal{M}_0)$. 

We recall that if $M$ is an $n$-dimensional manifold, $\Lambda(M)$ is
an infinite dimensional smooth manifold modeled on the Banach space
$C^0(\T,\R^n)$. The set $\Theta(M)$ of pairs of loops with the same
initial point (figure-8 loops),
\[
\Theta (M) := \set{(\gamma_1,\gamma_2)\in \Lambda(M) \times
  \Lambda(M)}{\gamma_1(0) = \gamma_2(0)},
\]
is the inverse image of the diagonal $\Delta_M$ of $M\times M$ by the
smooth submersion
\[
\mathrm{ev} \times \mathrm{ev} : \Lambda(M) \times \Lambda(M)
\rightarrow M \times M, \quad (\gamma_1,\gamma_2) \mapsto
(\gamma_1(0),\gamma_2(0)).
\]
Therefore, $\Theta(M)$ is a closed smooth submanifold of
$\Lambda(M)\times \Lambda(M)$, and its normal bundle  $N\Theta(M)$ is
$n$-dimensional, being isomorphic to the pull-back of the normal
bundle $N\Delta_M$ of $\Delta_M$ in $M\times M$ by the map $\mathrm{ev} \times
\mathrm{ev}$. The
  Banach manifold $\Lambda(M)$ does not admit smooth partitions of
  unity (actually, the Banach space $C^0(\T,\R^n)$ does not admit
  non-zero functions of class $C^1$ with bounded support). So in
  general a closed submanifold of $\Lambda(M)$, or of $\Lambda(M)
  \times \Lambda(M)$, will not have a smooth tubular
  neighborhood. However, it would not be difficult to show that the
  submanifold $\Theta(M)$ and all the submanifolds we consider
  in this paper do have a smooth tubular neighborhood, which can be
  constructed explicitly by using the exponential map and the tubular
  neighborhood theorem on finite-dimensional manifolds. 
If moreover $M$ is oriented, so is $N\Delta_M$ and thus
also $N\Theta(M)$. Notice also that the concatenation map $\Gamma$ is
well-defined and smooth from $\Theta(M)$ into $\Lambda(M)$. If we
denote by $e$ the inclusion of $\Theta(M)$ into $\Lambda(M) \times
\Lambda(M)$, the Chas-Sullivan {\em loop product} (see \cite{csu99})
is defined by the composition
\begin{equation*}\begin{split}
H_j(\Lambda(M)) \otimes H_k(\Lambda(M))
&\stackrel{\times}{\longrightarrow} H_{j+k} (\Lambda(M) \times
\Lambda(M)) \stackrel{e_!}{\longrightarrow} H_{j+k-n} (\Theta(M))\\
&\stackrel{\Gamma_*}{\longrightarrow} H_{j+k-n}(\Lambda(M)),
\end{split}\end{equation*}
and it is denoted by
\[
\loopprod: H_j(\Lambda(M)) \otimes H_k(\Lambda(M)) \rightarrow
H_{j+k-n}(\Lambda(M)).
\]
We denote by $\mathrm{c}:M \rightarrow \Lambda(M)$ the map which associates to
every $q\in M$ the constant loop $q$ in $\Lambda(M)$. A simple
homotopy argument shows that the image of the fundamental class
$[M]\in H_n(M)$ under the homomorphism $\mathrm{c}_*$ is a unit for the
loop product: $\alpha \loopprod \mathrm{c}_*[M] = \mathrm{c}_*[M] \loopprod \alpha =
\alpha$ for every $\alpha\in H_*(\Lambda(M))$. Since $\Gamma \circ
(\mathrm{id} \times \Gamma)$ and $\Gamma \circ (\Gamma \times
\mathrm{id})$ are homotopic on the space of triplets of loops with
the same initial points, the loop product turns out to be
associative. Finally, notice that the maps $(\gamma_1,\gamma_2)
\mapsto \Gamma(\gamma_1,\gamma_2)$ and $(\gamma_1,\gamma_2)
\mapsto \Gamma(\gamma_2,\gamma_1)$ are homotopic on $\Theta(M)$,
by the homotopy
\[
\Gamma_s(\gamma_1,\gamma_2) (t) := \left\{ \begin{array}{ll}
\gamma_2(2t-s) &
  \mbox{if } 0 \leq  t \leq s/2 \mbox{, or } (s+1)/2 \leq t
  \leq 1,\\ \gamma_1(2t-s) & \mbox{if } s/2
  \leq t \leq (s+1)/2. \end{array} \right.
\]
This fact implies the following commutation rule
\[
\beta \loopprod \alpha = (-1)^{(|\alpha|-n)(|\beta|-n)} \alpha \loopprod
\beta,
\]
for every $\alpha,\beta\in H_*(\Lambda(M))$.

In order to get a product of degree zero, it is convenient to shift
the grading by $n$, obtaining the graded group
\[
\mathbb{H}_j(\Lambda(M)) := H_{j+n}(\Lambda(M)),
\]
which becomes a graded commutative ring with respect to the loop
product (commutativity has to be understood in the graded sense, that is
$\beta \loopprod \alpha = (-1)^{|\alpha| |\beta|} \alpha \loopprod \beta$).

\subsection{Relationship between the two products}
\label{rbttp}

If $M$ is an oriented $n$-dimensional manifold, we denote by
\[
\bullet: H_j(M) \otimes H_k(M) \rightarrow H_{j+k-n} (M),
\]
the intersection product on the singular homology of $M$ (which is
obtained by composing the exterior homology product with the Umkehr map associated to the embedding of the diagonal into
$M\times M$). Shifting again the grading by $n$, we see that the
product $\bullet$ makes
\[
\mathbb{H}_j(M) := H_{j+n}(M)
\]
a commutative graded ring.

Being the inverse image of $q_0$ by the
submersion $\mathrm{ev} :\Lambda(M) \rightarrow M$,
$\mathrm{ev}(\gamma) = \gamma(0)$, $\Omega(M,q_0)$ is a closed
submanifold of $\Lambda(M)$, and its normal bundle is
$n$-dimensional and oriented. If $\mathrm{i}:\Omega(M,q_0) \hookrightarrow
\Lambda(M)$ is the inclusion map, we find that the following
diagram
\begin{equation}
\label{diaga}
\begin{CD}
H_j(M) \otimes H_k(M) @>{\bullet}>> H_{j+k-n}(M) \\
@V{\mathrm{c}_* \otimes \mathrm{c}_*}VV @VV{\mathrm{c}_*}V \\
H_j(\Lambda(M)) \otimes H_k(\Lambda(M)) @>{\loopprod}>>
H_{j+k-n}(\Lambda(M)) \\
@V{\mathrm{i}_! \otimes \mathrm{i}_!}VV @VV{\mathrm{i}_!}V \\
H_{j-n}(\Omega(M,q_0)) \otimes H_{k-n}(\Omega(M,q_0)) @>{\#}>>
H_{j+k-2n} (\Omega(M,q_0))
\end{CD}
\end{equation}
commutes. In other words, the maps
\[
\{\mathbb{H}_*(M), \bullet\} \stackrel{c_*}{\longrightarrow}
\{\mathbb{H}_*(\Lambda(M)), \loopprod\} \stackrel{\mathrm{i}_!}{\longrightarrow}
\{H_*(\Omega(M,q_0), \#\}
\]
are graded ring homomorphisms. Notice that the homomorphism $\mathrm{c}_*$ is
always injective onto a direct summand, the map $\mathrm{ev}$
being a left inverse of $\mathrm{c}$. Using the spectral sequence associated to the Serre fibration
\[
\Omega(M) \hookrightarrow \Lambda(M) \rightarrow M,
\]
it is possible to compute the ring $\{\mathbb{H}_*(\Lambda(M)), \loopprod\}$ from the intersection product on $M$ and the Pontrjagin product on $\Omega(M)$, when $M$ is a sphere or a projective space, see \cite{cjy03}.

The aim of this paper is to show how the homomorphisms
appearing in the diagram above can be described
symplectically, in terms of different Floer homologies of the cotangent
bundle of $M$, when the manifold $M$ is closed. The first step is to obtain a Morse theoretical chain level description of diagram (\ref{diaga}), by using suitable Morse functions on some infinite dimensional Hilbert manifolds having the homotopy type of
$\Omega(M,q_0)$, $\Lambda(M)$, and $\Theta(M)$..

\begin{rem}
The loop product was defined by Chas and Sullivan in \cite{csu99}, by
using intersection theory for transversal chains. The definition we
use here is due to Cohen and Jones \cite{cj02}. See also
\cite{csu99b,sul04,bcr06,coh06,chv06,ram06,sul07,cks08,cs08} for more information and for other interpretations of this product.
\end{rem}

\section{Morse chain level descriptions}
\label{mtdsec}

The aim of this section is to describe the Pontrjagin product $\#$, the loop product $\loopprod$, and the other homomorphisms appearing in diagram (\ref{diaga}) in a Morse theoretical way. As it is well known, the singular homology of a (possibly infinite dimensional) manifold $\mathcal{M}$ is isomorphic to the homology of the {\em Morse chain complex} associated to a suitable Morse function on $\mathcal{M}$, and most homomorphisms between singular homology groups can be read at the chain level using these Morse complexes. The constructions of the Morse complex and of various homomorphisms between them, in the infinite dimensional setting needed in this paper, are described in Appendix A.  

Because of technical reasons, Hilbert manifolds are easier to deal with than Banach manifolds. Therefore, the first thing to do is to replace the Banach manifolds $\Lambda(M)$ (continuous free loops on $M$),
$\Omega(M,q_0)$ (continuous loops based at $q_0$),  and $\Theta(M)$ (continuous figure-8 loops) by the Hilbert manifolds
\begin{eqnarray*}
\Lambda^1(M) := W^{1,2}(\T,M), \quad
\Omega^1(M,q_0) := \set{\gamma \in \Lambda^1(M)}{\gamma(0)=q_0},
\\
\Theta^1(M) := \set{(\gamma_1,\gamma_2) \in \Lambda^1(M) \times
\Lambda^1(M)}{\gamma_1(0)=\gamma_2(0)},
\end{eqnarray*}
where $W^{1,2}$ denotes the class of absolutely continuous curves whose
derivative is square integrable.
The inclusions
\[
\Lambda^1(M) \hookrightarrow \Lambda(M), \quad 
\Omega^1 (M,q_0) \hookrightarrow \Omega(M,q_0), \quad
 \Theta^1(M) \hookrightarrow
\Theta(M),
\]
are homotopy equivalences. Therefore, we can replace $\Lambda(M)$,
$\Omega(M,q_0)$,  and $\Theta(M)$ by $\Lambda^1(M)$,
$\Omega^1(M,q_0)$,  and $\Theta^1(M)$ in the constructions of
Section \ref{altopo} (notice that the concatenation of two curves of class $W^{1,2}$ is still of class $W^{1,2}$).

\subsection{The Morse complex of the Lagrangian action functional} 
\label{mclaf}

Applying the results of Appendix A, one could find Morse chain level descriptions of all the homomorphisms of diagram (\ref{diaga}) by using quite a general class of abstract Morse functions on $M$, $\Lambda^1(M)$, $\Omega^1(M,q_0)$, and $\Theta^1(M)$. However, in order to find a link with Floer theory, we wish to consider a special class of functions on the three latter manifolds, namely the action functionals associated to a (possibly time-dependent) Lagrangian $L$ on $TM$.

Since we wish to consider different boundary conditions, it is useful to unify the presentation by working with general {\em nonlocal conormal boundary conditions}. Let $Q$ be a closed manifold (in our applications, $Q$ is either $M$ or $M\times M$), and let $R$ be a closed submanifold of $Q \times Q$. Let $L: [0,1] \times TQ
\rightarrow \R$ be a smooth function such that there exist real numbers $\ell_1>0$ and $\ell_2> 0$ such that
\begin{enumerate}
\item[(L1)] $\nabla_{vv} L(t,q,v) \geq \ell_1 I$,
\item[(L2)] $|\nabla_{qq} L(t,q,v)|\leq \ell_2 (1+|v|^2)$, $|\nabla_{qv} L(t,q,v)|\leq \ell_2 (1+|v|)$,  $|\nabla_{vv} L(t,q,v)|\leq \ell_2$,
\end{enumerate}
for every $(t,q,v)$ in $[0,1] \times TQ$. Here we have fixed a Riemannian metric on $Q$, with associated norm $|\cdot|$, and $\nabla_v$, $\nabla_q$ denote the vertical and horizontal components of the gradient with respect to this metric and to the induced metric on $TQ$. The fact that $Q$ is compact implies that these conditions do not depend on the choice of the metric. Conditions (L1) implies that $L$ is strictly fiberwise convex and grows at least quadratically in $v$. Condition (L2) implies that $L$ grows at most quadratically in $v$.

The {\em Euler-Lagrange equation} for a curve $\gamma:[0,1] \mapsto Q$ can be written in local coordinates as
\begin{equation}
\label{lageq}
\frac{d}{dt} \partial_v L(t,\gamma(t),\gamma'(t)) = \partial_q
L(t,\gamma(t), \gamma(t)),
\end{equation}
and by (L1) the corresponding second order Cauchy problem is locally well-posed. By (L2), the Lagrangian action functional 
\[
\mathbb{S}_L(\gamma) := \int_0^1 L(t,\gamma(t),\gamma'(t))\, dt
\]
is continuously differentiable on the Hilbert manifold $W^{1,2}([0,1],Q)$. Moreover, it is twice Gateaux differentiable at every curve $\gamma$, but it is everywhere twice Fr\'ech\'et differentiable if and only if for every $(t,q)$ the function $v\mapsto L(t,q,v)$ is a polynomial of degree at most two on $T_q Q$ (see \cite{as08b}). In the latter case, if we also assume (L1), the homogeneous part of degree two in $v\mapsto L(t,q,v)$ should be a positive quadratic form, so $L$ is an {\em electro-magnetic} Lagrangian, i.e.\ it has the form
\[
L(t,q,v) = \frac{1}{2} \langle A(t,q) v,v \rangle + \langle \alpha(t,q), v \rangle - V(t,q),
\]
 where $\langle \cdot , \cdot \rangle$ denotes the duality pairing, $A(t,q): T_q Q \rightarrow T_q^* Q$ is a positive symmetric linear mapping smoothly depending on $(t,q)$ (the kinetic energy), $\alpha$ is a smoothly time dependent one-form (the magnetic potential), and $V$ is a smooth function (the scalar potential). In this case, the action functional $\mathbb{S}_L$ is actually  smooth. In some situations, one can restrict the attention to electromagnetic Lagrangians, as in \cite{aps08}. However, in view of the transversality issues coming from Floer homology, it is convenient to work with the more general class of Lagrangians which satisfy (L1) and (L2). The reason is that some issues concerning $J$-holomorphic curves on cotangent bundles become much simpler when $J$ is the {\em Levi-Civita} almost complex structure (see Section \ref{fhrssec}, and in particular Remark \ref{3.3}). Therefore, in this paper we prefer not to perturb the Levi-Civita $J$, and to achieve the transversality needed in Floer theory by perturbing the Hamiltonian, or equivalently, the Lagrangian. The class of electro-magnetic Lagrangians is too rigid for this purpose, whereas the fact that conditions (L1) and (L2) are stable with respect to $C^2$-small perturbations of $L$ allows to achieve transversality within this class.   
 
The closed submanifold $R$ of $Q\times Q$ defines the following boundary conditions for solutions $\gamma: [0,1]\times Q$ of the Euler-Lagrange equation (\ref{lageq}):
\begin{eqnarray}
\label{lagcon1}
\bigl(\gamma(0),\gamma(1)\bigr) \in Q, \\
\label{lagcon2}
D_v L(0,\gamma(0),\gamma'(0))[\xi_0] = D_v L(1,\gamma(1),\gamma'(1))[\xi_1], \quad \forall (\xi_0,\xi_1) \in T_{(\gamma(0),\gamma(1))} R.
\end{eqnarray}
These conditions take a simpler form when read in the Hamiltonian formulation, see Section \ref{fhpfo}. The set of solutions of (\ref{lageq})-(\ref{lagcon1})-(\ref{lagcon2}) is denoted by $\mathscr{P}^R(L)$. The elements of $\mathscr{P}^R(L)$ are precisely the critical points of $\mathbb{S}_L^R$, that is the restriction of the Lagrangian action functional $\mathbb{S}_L$ to the Hilbert submanifold
\[
W^{1,2}_R ([0,1],Q) := \set{\gamma\in W^{1,2}([0,1],Q)}{(\gamma(0),\gamma(1)) \in R}.
\]
A critical point $\gamma$ of $\mathbb{S}_L^R$ has always finite Morse index 
\[
i \bigl(\gamma; \mathbb{S}_L^R \bigr)< +\infty,
\]
and it is non-degenerate if and only if there are no non-zero Jacobi vector fields along $\gamma$ (i.e.\ solutions of the linear system which is obtained by linearizing  (\ref{lageq}) along $\gamma$) which satisfy the linear boundary conditions that one finds by linearizing (\ref{lagcon1})-(\ref{lagcon2}). 

Let us assume that all the elements of $\mathscr{P}^R(L)$ are non-degenerate. This assumption holds for a generic choice of $L$, in several reasonable topologies (a subset of a topological space is said to be {\em generic} if it is a countable intersection of open and dense sets). Although the functional $\mathbb{S}_L^R$ is in general not twice Fr\'ech\'et differentiable, it admits a smooth pseudo-gradient $X$, in the sense of Appendix \ref{mc}, which satisfies the conditions (X1)-(X2)-(X3)-(X4) of \ref{mc}. This fact is proved in \cite{as08b}. Hence, such a functional has a well-defined Morse complex
$M_*( \mathbb{S}_L^R )$, 
whose homology $HM_*( \mathbb{S}_L^R )$ is isomorphic to the singular homology of $W^{1,2}_R ([0,1],Q)$, as explained in Appendix \ref{mc}.

In this paper, we are interested in the following three boundary conditions.

\paragraph{Periodic boundary conditions.} Here $Q=M$ and $R=\Delta_M$ is the diagonal in $M\times M$. Under the extra assumption that $L\in C^{\infty}([0,1]\times TM)$ extends as a smooth 1-periodic function on $\R\times TM$, the set $\mathscr{P}^{\Delta_M}(L)$ is precisely the set 1-periodic solutions of the Euler-Lagrange equation (\ref{lageq}). We denote such a set also by $\mathscr{P}^{\Lambda}(L)$, and the restricted functional $\mathbb{S}_L^{\Delta_M}$ by $\mathbb{S}_L^{\Lambda} : \Lambda^1(M) \rightarrow \R$, noticing that $W^{1,2}_{\Delta_M}([0,1],M) = \Lambda^1(M)$. The non-degeneracy condition is just:
\begin{description}
\item[(L0)$^{\Lambda}$] every solution $\gamma\in
  \mathscr{P}^{\Lambda}(L)$ is non-degenerate, meaning that there are
  no non-zero periodic Jacobi vector fields along $\gamma$.
\end{description}
The Morse index of a critical point $\gamma$ of $\mathbb{S}_L^{\Lambda}$ is denoted by $i^{\Lambda}(\gamma)$, or by $i^{\Lambda}(\gamma;L)$ when the choice of the Lagrangian $L$ is not clear from the context. The homology of the Morse complex $M_*(\mathbb{S}^{\Lambda}_L)$ is isomorphic to the singular homology of $\Lambda^1(M)$, hence to the singular homology of $\Lambda(M)$.

\paragraph{Dirichlet boundary conditions.} Here $Q=M$ and $R$ consists of the single point $(q_0,q_0)$, for some fixed $q_0\in M$. The set $\mathscr{P}^{(q_0,q_0)}(L)$, that we denote also by $\mathscr{P}^{\Omega}(L)$,  is precisely the set of solutions $\gamma:[0,1] \rightarrow M$ of the Euler-Lagrange equation (\ref{lageq}) such that $\gamma(0)=\gamma(1)=q_0$. The space of curves $W^{1,2}_{(q_0,q_0)}([0,1],M)$ is just $\Omega^1(M,q_0)$, and the functional $\mathbb{S}_L^{(q_0,q_0)}$ is denoted also by
$\mathbb{S}_L^{\Omega}: \Omega^1(M,q_0) \rightarrow \R$. The non-degeneracy condition is now:
\begin{description}
\item[(L0)$^{\Omega}$] every solution $\gamma\in
  \mathscr{P}^{\Omega}(L)$ is non-degenerate, meaning that there are no
  non-zero Jacobi vector fields along $\gamma$ which vanish for $t=0$
  and for $t=1$.
\end{description}
The non-degeneracy condition (L0)$^{\Omega}$ is generic also in the smaller class of autonomous Lagrangians $L$, whereas (L0)$^{\Lambda}$ requires $L$ to be explicitly time-dependent, because if $L$ is autonomous then $\gamma'$ is always a Jacobi vector field along $\gamma$. The Morse index of a critical point $\gamma$ of $\mathbb{S}_L^{\Omega}$ is denoted by $i^{\Omega}(\gamma)=i^{\Omega}(\gamma;L)$. The homology of the Morse complex $M_*(\mathbb{S}^{\Omega}_L)$ is isomorphic to the singular homology of $\Omega^1(M,q_0)$, hence to the singular homology of $\Omega(M,q_0)$.

\paragraph{Figure-8 boundary conditions.} Here $Q=M\times M$ and $R=
\Delta_M^{(4)}$ is the fourth diagonal in $M^4$:
\[
\Delta_M^{(4)} := \set{(q,q,q,q)}{q\in M}.
\]
In this case, it is convenient (although not necessary) to chose $L$ of the form $L=L_1 \oplus L_2$, where
\[
L_1 \oplus L_2 \, (t,q,v) := L_1(t,q_1,v_1) + L_2(t,q_2,v_2), \quad \begin{array}{l} \forall t\in [0,1], \\ \forall q=(q_1,q_2)\in M\times M, \\\forall v=(v_1,v_2)\in T_{(q_1,q_2)} M\times M, \end{array}
\]
so that the elements of $\mathscr{P}^{\Delta^{(4)}_M} (L)$ are the pairs of $M$-valued curves $(\gamma_1,\gamma_2)$ such that each $\gamma_j$ solves the Euler-Lagrange equation induced by $L_j$, and such that the boundary conditions 
\begin{equation}
\label{thetabdry}
\gamma_1(0)=\gamma_1(1)=\gamma_2(0)=\gamma_2(1), \quad
\sum_{i=0}^1 \sum_{j=1}^2 (-1)^i D_v L_j
(i,\gamma_j(i),\gamma'_j(i)) = 0
\end{equation}
hold. The set of such solutions is denoted also by $\mathscr{P}^{\Theta}(L_1\oplus L_2)$. The space of curves $W^{1,2}_{\Delta^{(4)}_M}([0,1],M\times M)$ coincides with $\Theta^1(M)$, and the functional $\mathbb{S}_L^{\Delta^{(4)}_M}$ is denoted simply by $\mathbb{S}_L^{\Theta}:\Theta^1(M) \rightarrow \R$.
 The corresponding non-degeneracy condition is:
\begin{description}
\item[(L0)$^{\Theta}$] every solution $(\gamma_1,\gamma_2) \in
\mathscr{P}^{\Theta}(L_1\oplus L_2)$ is non-degenerate, meaning that
there are no non-zero pairs of Jacobi vector fields $(\xi_1,\xi_2)$
along $(\gamma_1,\gamma_2)$ such that
\begin{eqnarray*}
\xi_1(0) = \xi_1(1) = \xi_2(0) = \xi_2(1) , \\
\sum_{i=0}^1 \sum_{j=1}^2 \left( D_{qv} L_j (i,\gamma_j(i),
\gamma'_j(i)) \xi_j(i) + D_{vv} L_j (i,\gamma_j(i),
\gamma'_j(i)) \xi'_j(i) \right) = 0.
\end{eqnarray*}
\end{description}
This condition allows both $L_1$ and $L_2$ to be autonomous. It
also allows $L_1=L_2$, but this excludes the autonomous case
(otherwise pairs $(\gamma,\gamma)$ with $\gamma\in
\mathscr{P}^{\Lambda}(L_1) = \mathscr{P}^{\Lambda}(L_2)$
non-constant would violate (L0)$^{\Theta}$).
The Morse index of a critical point $\gamma$ of $\mathbb{S}_{L_1\oplus L_2}^{\Theta}$ is denoted by $i^{\Theta}(\gamma) = i^{\Theta}(\gamma;L_1\oplus L_2)$. The homology of the Morse complex $M_*(\mathbb{S}^{\Theta}_{L_1\oplus L_2})$ is isomorphic to the singular homology of $\Theta^1(M)$, hence to the singular homology of $\Theta(M)$.

\subsection{Morse description of the homomorphisms
  $\mathbf{c_*}$, $\mathrm{\mathbf{{ev}_*}}$, and $\mathbf{i_!}$}
\label{laf}

The aim of this section is to describe the chain maps between Morse complexes that are induced by the smooth maps
\begin{eqnarray*}
& \mathrm{c} : M \rightarrow \Lambda^1(M), \; c(q)(t) \equiv q, \quad
\mathrm{ev} : \Lambda^1(M) \rightarrow M, \; \mathrm{ev}(\gamma) =
\gamma(0), & \\ & \mathrm{i} : \Omega^1(M,q_0) \hookrightarrow \Lambda^1(M). &
\end{eqnarray*} 
Everything follows from the abstract results of Appendix \ref{funct} and \ref{inter}, provided that we check the transversality conditions which are required there.

It is straightforward to apply the results of Section
\ref{funct} to describe the homomorphisms
\[
\mathrm{c}_* : H_k(M) \rightarrow H_k(\Lambda^1(M)), \quad \mbox{and} \quad
\mathrm{ev}_* :  H_k(\Lambda^1(M)) \rightarrow H_k(M),
\]
in terms of the Morse complexes of $\mathbb{S}_L^{\Lambda}$ 
and of a smooth Morse function $f$ on $M$. Indeed, in the case of $\mathrm{c}_*$ one imposes
the condition
\begin{equation}
\label{noco5}
q \in \crit(f) \quad \implies \quad \mathrm{c}(q) \notin \mathscr{P}^{\Lambda}(L),
\end{equation}
which guarantees (\ref{noco1}) and (\ref{noco2}) (given any
Lagrangian $L$ satisfying (L0)$^{\Lambda}$, one can always find a
Morse function $f$ on $M$ such that (\ref{noco5}) holds, simply
because (L0)$^{\Lambda}$ implies that the Euler-Lagrange equation has
finitely many constant solutions). Then one can find a metric
$g$ on $M$ and a smooth pseudo-gradient $X^{\Lambda}_L$ for $\mathbb{S}_L^{\Lambda}$ such
that the vector fields $X^{\Lambda}_L$ and $-\mathrm{grad}_g f$
satisfy the Morse-Smale condition, and the restriction of $c$ to
the unstable manifold of each $q\in \crit(f)$ is transverse to the
stable manifold of each $\gamma \in \crit(\mathbb{S}_L^{\Lambda})$. When
$i(q;f)=i^{\Lambda}(\gamma)$, the intersection
\[
W^u(q;-\mathrm{grad}_g f) \cap \mathrm{c}^{-1} \left( W^s(\gamma;X^{\Lambda}_L)
\right)
\]
consists of finitely many points, each of which comes with an
orientation sign $\pm 1$. The algebraic sums $n_\mathrm{c}(q,\gamma)$ of these
signs provide us with the coefficients of a chain map
\[
M_k \mathrm{c} : M_k(f) \rightarrow M_k(\mathbb{S}_L^{\Lambda}),
\]
which in homology induces the homomorphism $\mathrm{c}_*$.

The map $\mathrm{ev}:\Lambda^1(M) \rightarrow M$ is a submersion, so
(\ref{noco1}) holds automatically. Condition (\ref{noco2}) instead is
implied by
\begin{equation}
\label{noco6}
\gamma \in \mathscr{P}^{\Lambda}(L) \quad \implies \quad \gamma(0)
\notin \crit (f),
\end{equation}
which again holds for a generic $f$, given $L$. Then, for a
generic metric $g$ on $M$ and a generic pseudo-gradient $X^{\Lambda}_L$ for $\mathbb{S}_L^{\Lambda}$, the intersections
\[
 W^u(\gamma;X^{\Lambda}_L) \cap \mathrm{ev}^{-1} \left(
W^s(q;-\mathrm{grad}_g f) \right), 
\]
consist of finitely many oriented points, which add up to the
integer $n_{\mathrm{ev}}(\gamma,q)$, for every $\gamma\in \mathscr{P}^{\Lambda}(L)$ and $q\in \crit(f),$ such that $i^{\Lambda}(\gamma) = i(q;f)$. 
These integers are the
coefficients of a chain map 
\[ 
M_k \mathrm{ev} : M_k
(\mathbb{S}_L^{\Lambda}) \rightarrow M_k (f),
\]
which in homology induces the homomorphism $\mathrm{ev}_*$.

\begin{rem}
The fact that $\mathrm{ev}_* \circ \mathrm{c}_* = \mathrm{id}_{H_*(M)}$,
together with the fact that the Morse complex is free, implies
that $M_* \mathrm{ev} \circ M_* \mathrm{c}$ is chain homotopic to the
identity on $M_*(f)$.
\end{rem}

We conclude this section by using the results of Section \ref{inter}
to describe the homomorphism
\[
\mathrm{i}_! : H_k(\Lambda^1(M)) \rightarrow H_{k-n} (\Omega^1(M,q_0)),
\]
working with the same action functional $\mathbb{S}_L$ both on
$\Lambda^1(M)$ and on $\Omega^1(M,q_0)$. Conditions (\ref{noco3a}) and
(\ref{noco3b}) are implied by the following assumption,
\begin{equation}
\label{noperq0}
\gamma \in \mathscr{P}^{\Lambda} (L) \quad \implies \quad \gamma(0)
\neq q_0,
\end{equation}
a condition holding for all but a countable set of $q_0$'s.
Under this assumption, up to the perturbation of the Morse-Smale pseudo-gradient vector fields $X^{\Lambda}_L$ and $X^{\Omega}_L$ for $\mathbb{S}_L^{\Lambda}$ and $\mathbb{S}_L^{\Omega}$, we may assume that the
unstable manifold of each $\gamma_1\in \mathscr{P}^{\Lambda}(L)$ is
transverse to the stable manifold of each $\gamma_2\in
\mathscr{P}^{\Omega}(L)$ in $\Lambda^1(M)$. When
$i^{\Omega}(\gamma_2) = i^{\Lambda}(\gamma_1)-n$, the set
\[
W^u(\gamma_1;X^{\Lambda}_L) \cap W^s(\gamma_2 ; X^{\Omega}_L)
\]
consists of finitely many oriented points, which determine the
integer $n_{\mathrm{i}_!}(\gamma_1,\gamma_2)$. These integers are the
coefficients of a chain map
\[
M{\mathrm{i}_!} : M_*(\mathbb{S}_L^{\Lambda}) \rightarrow
M_{*-n}(\mathbb{S}_L^{\Omega}),
\]
which in homology induces the homomorphism $\mathrm{i}_!$.

\subsection{Morse description of the Pontrjagin product}
\label{mtipp}

In the last section we have described the vertical arrows of
diagram (\ref{diaga}), as well as a preferred left inverse of the
top-right vertical arrow. The top horizontal arrow is
described at the end of Appendix \ref{inter}. It remains to
describe the middle and the bottom horizontal arrows, that is the
loop product and the Pontrjagin product. This section is devoted
to the description of the Pontrjagin product, while in the next one we will deal with the loop product.
The following Propositions \ref{prop MS}, \ref{prop e!} and \ref{prop MG} are consequences of the general statements in Appendix A. 

Given two Lagrangians $L_1,L_2\in C^{\infty}([0,1] \times TM)$ which satisfy (L0)$^{\Omega}$, (L1), (L2), and are
such that $L_1(1,\cdot)=L_2(0,\cdot)$ with all the time derivatives,
we define a Lagrangian $L_1 \# L_2 \in C^{\infty}([0,1] \times
TM)$ in the same class by setting
\begin{equation}
\label{ide} L_1 \# L_2 \, (t,q,v) = \left\{
\begin{array}{ll} 2 L_1(2t,q,v/2) & \mbox{if } 0\leq t \leq 1/2, \\
2 L_2 (2t-1,q,v/2) & \mbox{if } 1/2 \leq t \leq 1. \end{array}
\right.
\end{equation}
The curve $\gamma:[0,1] \rightarrow M$ is a solution of the
Euler-Lagrange equation (\ref{lageq}) with $L=L_1 \# L_2$ if and only
if the rescaled curves $t\mapsto \gamma(t/2)$ and $t\mapsto
\gamma((t+1)/2)$ solve the corresponding equation given by the
Lagrangians $L_1$ and $L_2$, on $[0,1]$. We assume that also $L_1 \# L_2$ satisfies (L0)$^{\Omega}$.

In view of the results of Section \ref{exte},
we wish to consider the functional $\mathbb{S}_{L_1}^{\Omega} \oplus
\mathbb{S}_{L_2}^{\Omega}$ on $\Omega^1(M,q_0) \times \Omega^1(M,q_0)$,
\[
\mathbb{S}_{L_1}^{\Omega} \oplus \mathbb{S}_{L_2}^{\Omega} 
(\gamma_1,\gamma_2) = \mathbb{S}_{L_1}^{\Omega}(\gamma_1) + 
\mathbb{S}_{L_2}^{\Omega}(\gamma_2),
\]
and the functional $\mathbb{S}_{L_1 \# L_2}^{\Omega}$ on
$\Omega^1(M,q_0)$. The concatenation map
\[
\Gamma: \Omega^1(M,q_0) \times \Omega^1(M,q_0) \rightarrow \Omega^1(M,q_0)
\]
is nowhere a submersion, so condition (\ref{noco1}) for the
triplet $(\Gamma,\mathbb{S}_{L_1}^{\Omega} \oplus \mathbb{S}_{L_2}^{\Omega},
\mathbb{S}_{L_1 \# L_2}^{\Omega})$ requires that the image of $\Gamma$
does not meet the critical set of $\mathbb{S}_{L_1 \# L_2}^{\Omega}$, that
is
\begin{equation}
\label{nofig8} \gamma \in \mathscr{P}^{\Omega}(L_1 \# L_2) \quad
\implies \quad \gamma (1/2) \neq q_0.
\end{equation}
Notice that (\ref{nofig8}) allows $L_1$ and $L_2$ to be equal, and
actually it allows them to be also autonomous (however, it implies
that $q_0$ is not a stationary solution, so they cannot be the
Lagrangian associated to a geodesic flow).

Assuming the generic condition (\ref{nofig8}), condition (\ref{noco2}) is automatically
fulfilled. Moreover, if $X^{\Omega}_{L_1}$ and $X^{\Omega}_{L_2}$ are pseudo-gradients for $\mathbb{S}_{L_1}^{\Omega}$ and $\mathbb{S}_{L_2}^{\Omega}$, 
we have that for every $\gamma_1\in \mathscr{P}^{\Omega}(L_1)$ and $\gamma_2\in
\mathscr{P}^{\Omega}(L_2)$,
\[
\Gamma \Bigl(W^u\bigl((\gamma_1,\gamma_2); X_{L_1}^{\Omega} \oplus X_{L_2}^{\Omega}\bigr)\Bigr) 
\cap \crit
(\mathbb{S}_{L_1 \# L_2}^{\Omega}) = \emptyset.
\]
By Remark \ref{nntp}, there is no need to perturb the
vector field $X^{\Omega}_{L_1} \oplus X^{\Omega}_{L_2}$ 
on $\Omega^1(M,q_0) \times \Omega^1(M,q_0)$ to
achieve transversality, and we arrive at the following description of the
Pontrjagin product.

Let $X^{\Omega}_{L_1}$, $X^{\Omega}_{L_2}$, and $X_{L_1 \# L_2}^{\Omega}$ be Morse-Smale pseudo-gradients for $\mathbb{S}_{L_1}^{\Omega}$, $\mathbb{S}_{L_1}^{\Omega}$, and $\mathbb{S}_{L_1 \# L_2}^{\Omega}$, respectively. Fix an arbitrary
orientation of the unstable manifolds of each critical point of
$\mathbb{S}_{L_1}^{\Omega}$, $\mathbb{S}_{L_2}^{\Omega}$, 
$\mathbb{S}_{L_1 \#
L_2}^{\Omega}$. Up to the perturbation of $X_{L_1 \# L_2}^{\Omega}$, we get that the
restriction of $\Gamma$ to the unstable manifold
\[
W^u((\gamma_1,\gamma_2);X_{L_1}^{\Omega} \oplus X_{L_2}^{\Omega}) = 
W^u(\gamma_1;X_{L_1}^{\Omega})
\times W^u(\gamma_2;X_{L_2}^{\Omega})
\]
of every critical point $(\gamma_1,\gamma_2) \in \mathscr{P}^{\Omega}
(L_1) \times \mathscr{P}^{\Omega}(L_2)$ is transverse to the stable manifold
\[
W^s(\gamma;X_{L_1 \# L_2}^{\Omega})
\]
of each critical point of $\mathbb{S}_{L_1 \# L_2}^{\Omega}$.
When $i^{\Omega}(\gamma) =
i^{\Omega}(\gamma_1) + i^{\Omega}(\gamma_2)$, the
corresponding intersections
\[
\set{ (\alpha_1,\alpha_2)\in W^u(\gamma_1;;X_{L_1}^{\Omega})
\times W^u(\gamma_2;X_{L_2}^{\Omega})}{\Gamma(\alpha_1,\alpha_2)
  \in W^s(\gamma;X_{L_1 \# L_2}^{\Omega} )},
\]
is a finite set of oriented points. Let
$n_{\#}(\gamma_1,\gamma_2;\gamma)$ be the algebraic sum of these
orientation signs.

\begin{prop}\label{prop MS}
The homomorphism
\[
M_{\#} : M_j(\mathbb{S}_{L_1}^{\Omega}) \otimes
M_k(\mathbb{S}_{L_2}^{\Omega})
\rightarrow M_{j+k} (\mathbb{S}_{L_1 \# L_2}^{\Omega}) , \;
\gamma_1\otimes \gamma_2
\mapsto \sum_{\substack{\gamma \in \mathscr{P}^{\Omega}(L_1 \# L_2) \\
    i^{\Omega}(\gamma) = j+k}}
n_{\#}(\gamma_1,\gamma_2;\gamma) \, \gamma,
\]
is a chain map, and it induces the Pontrjagin product $\#$ in
homology.
\end{prop}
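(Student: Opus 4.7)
The plan is to obtain the proposition as a direct application of the abstract Morse-theoretic framework from Appendix A, specifically by combining the chain-level description of the exterior homology product from Section~\ref{exte} with the chain map induced by a smooth map between Hilbert manifolds (as used in Section~\ref{funct}), applied to the triple $(\Gamma, \mathbb{S}_{L_1}^{\Omega} \oplus \mathbb{S}_{L_2}^{\Omega}, \mathbb{S}_{L_1 \# L_2}^{\Omega})$.

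First I would verify that the transversality hypotheses of Appendix A are satisfied. Since $\Gamma$ is nowhere a submersion, condition (\ref{noco1}) cannot hold by genericity of the pseudo-gradients; it must instead be enforced by the assumption (\ref{nofig8}), which guarantees that the image $\Gamma(\Omega^1 \times \Omega^1)$ is disjoint from $\crit(\mathbb{S}_{L_1 \# L_2}^{\Omega})$. Condition (\ref{noco2}) is then automatic for the critical points, and by Remark~\ref{nntp} there is no need to perturb the product pseudo-gradient $X_{L_1}^{\Omega} \oplus X_{L_2}^{\Omega}$: a generic perturbation of $X_{L_1\#L_2}^{\Omega}$ alone suffices to make $\Gamma$, restricted to each product unstable manifold $W^u(\gamma_1;X^\Omega_{L_1}) \times W^u(\gamma_2;X^\Omega_{L_2})$, transverse to every stable manifold $W^s(\gamma; X^\Omega_{L_1 \# L_2})$. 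The finite-dimensionality of the unstable manifolds makes this a standard Sard-Smale argument.

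Next I would check that $M_{\#}$ is a chain map by analyzing the compactification of the $1$-dimensional components of the spaces
\[
\set{(\alpha_1,\alpha_2) \in W^u(\gamma_1;X^\Omega_{L_1}) \times W^u(\gamma_2;X^\Omega_{L_2})}{\Gamma(\alpha_1,\alpha_2) \in W^s(\gamma; X^\Omega_{L_1\#L_2})}
\]
when $i^\Omega(\gamma) = i^\Omega(\gamma_1) + i^\Omega(\gamma_2) - 1$. Their boundary consists exactly of the three kinds of broken configurations in which a gradient flow line breaks off either at $\gamma_1$, at $\gamma_2$ (contributing $M_{\#} \circ \partial$), or at $\gamma$ (contributing $\partial \circ M_{\#}$). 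Counting with the orientations coming from the fixed orientations of unstable manifolds and from the co-orientation of the transverse intersections yields $\partial M_{\#} = M_{\#} \partial$.

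To identify the induced homomorphism in homology with the Pontrjagin product, I would factor $M_\#$ as
\[
M_*(\mathbb{S}_{L_1}^{\Omega}) \otimes M_*(\mathbb{S}_{L_2}^{\Omega}) \xrightarrow{\;\Psi\;} M_*(\mathbb{S}_{L_1}^{\Omega} \oplus \mathbb{S}_{L_2}^{\Omega}) \xrightarrow{\;M\Gamma\;} M_*(\mathbb{S}_{L_1\#L_2}^{\Omega}),
\]
where $\Psi$ is the canonical isomorphism arising from the product structure $W^u(\gamma_1,\gamma_2) = W^u(\gamma_1) \times W^u(\gamma_2)$ (inducing the exterior homology product $\times$ in homology by Section~\ref{exte}), and $M\Gamma$ is the chain map induced by the smooth map $\Gamma$ (inducing $\Gamma_*$ in homology by Section~\ref{funct}). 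Since by definition $\# = \Gamma_* \circ \times$, the conclusion follows. The main technical obstacle is the sign bookkeeping: one must check that the orientation on each intersection point counted by $n_\#(\gamma_1,\gamma_2;\gamma)$ coincides with the sign coming from composing the orientation conventions used to define $\Psi$ and $M\Gamma$ separately. This reduces to the splitting $W^u(\gamma_1,\gamma_2;X^\Omega_{L_1}\oplus X^\Omega_{L_2}) = W^u(\gamma_1;X^\Omega_{L_1}) \times W^u(\gamma_2;X^\Omega_{L_2})$ being orientation preserving in the fixed conventions, together with the naturality of the transverse intersection signs under composition of maps, both verified in Appendix~A.
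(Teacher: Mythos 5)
Your proof is correct and follows essentially the same route as the paper: the paper explicitly states that Proposition~\ref{prop MS} is a consequence of the general statements in Appendix A, and the discussion preceding the proposition sets up precisely the factorization through the exterior-product isomorphism of Section~\ref{exte} and the chain map $M\Gamma$ of Section~\ref{funct}, with condition (\ref{nofig8}) securing (\ref{noco1})--(\ref{noco2}) and Remark~\ref{nntp} making it unnecessary to perturb $X^\Omega_{L_1}\oplus X^\Omega_{L_2}$. Your additional remarks on the boundary of one-dimensional moduli spaces and on the orientation bookkeeping simply make explicit the content that the paper delegates to Appendix A.
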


\begin{rem}
It is not necessary to consider the Lagrangian $L_1 \# L_2$ on the
target space of this homomorphism. One could actually work with
any three Lagrangians (with the suitable non-degeneracy condition
replacing (\ref{nofig8})). The choice of dealing with two
Lagrangians $L_1,L_2$ and their concatenation $L_1 \# L_2$ is
important to get energy estimates in Floer homology. We have made
this choice also here mainly to see which kind of non-degeneracy
condition one needs.
\end{rem}

\subsection{Morse description of the loop product}
\label{mtilp}

The loop product is slightly more complicated than the other
homomorphisms considered so far, because it consists of a
composition where two homomorphisms are non-trivial (that is, not
just identifications) when read on the Morse homology groups,
namely the Umkehr map associated to the submanifold
$\Theta^1(M)$ of figure-8 loops, and the homomorphism induced by
the concatenation map $\Gamma: \Theta^1(M) \rightarrow
\Lambda^1(M)$. We shall describe these homomorphisms separately,
and then we will show a compact description of their composition. See also \cite{cs08}, where these Morse theoretical descriptions are extended to more general graphs than the figure-8.

Let us start by describing the Umkehr map
\[
e_! : H_k(\Lambda^1(M) \times \Lambda^1(M)) \rightarrow
H_{k-n}(\Theta^1(M)).
\]
Let $L_1,L_2\in C^{\infty}(\T\times TM)$ be Lagrangians which satisfy (L0)$^{\Lambda}$, (L1), (L2), and are such that $L_1 \oplus L_2$
satisfies (L0)$^{\Theta}$. Assume also
\begin{equation}
\label{nofig8s}
\gamma_1\in \mathscr{P}^{\Lambda} (L_1), \; \gamma_2\in
\mathscr{P}^{\Lambda} (L_2) \quad \implies \quad \gamma_1(0) \neq
\gamma_2(0).
\end{equation}
Notice that this condition prevents $L_1$ from coinciding with $L_2$, but it holds for a generic pair $(L_1,L_2)$.
We shall consider the functional
$\mathbb{S}_{L_1}^{\Lambda} \oplus \mathbb{S}_{L_2}^{\Lambda}$ on 
$\Lambda^1(M) \times \Lambda^1(M)$, and the functional
$\mathbb{S}_{L_1 \oplus L_2}^{\Theta}$ on $\Theta^1(M)$. Condition
(\ref{nofig8s}) implies that the unconstrained functional $\mathbb{S}_{L_1}^{\Lambda} \oplus \mathbb{S}_{L_2}^{\Lambda}$ has no
critical points on $\Theta^1(M)$, so conditions (\ref{noco3a}) and
(\ref{noco3b}) hold.

By the discussion of Appendix \ref{inter}, we can find smooth Morse-Smale pseudo-gradients $X_{L_1\oplus L_2}^{\Lambda}$ and $X_{L_1 \oplus L_2}^{\Theta}$ for $\mathbb{S}_{L_1}^{\Lambda} \oplus \mathbb{S}_{L_2}^{\Lambda}$ and $\mathbb{S}_{L_1\oplus L_2}^{\Theta}$ respectively, such that the unstable manifold
$W^u(\gamma^-; X_{L_1 \oplus L_2}^{\Lambda})$ of every
  $\gamma^- = (\gamma_1^-,\gamma_2^-)\in \mathscr{P}^{\Lambda}(L_1) \times
  \mathscr{P}^{\Lambda} (L_2)$ is transverse to $\Theta^1(M)$ and to
  the stable manifold $W^s(\gamma^+;X_{L_1\oplus L_2}^{\Theta})$ of every
  $\gamma^+ \in
  \mathscr{P}^{\Theta}(L_1\oplus L_2)$. Actually, it is convenient to assume that $X_{L_1\oplus L_2}^{\Lambda}$ is so close to $X_{L_1}^{\Lambda} \oplus X_{L_2}^{\Lambda}$ that the Morse complex of $(\mathbb{S}_{L_1}^{\Lambda} \oplus \mathbb{S}_{L_2}^{\Lambda}, X_{L_1\oplus L_2}^{\Lambda})$ equals the Morse complex of $(\mathbb{S}_{L_1}^{\Lambda} \oplus \mathbb{S}_{L_2}^{\Lambda}, X_{L_1}^{\Lambda} \oplus X_{L_2}^{\Lambda})$, and that $X_{L_1\oplus L_2}^{\Lambda} = X_{L_1}^{\Lambda} \oplus X_{L_2}^{\Lambda}$ up to order one at the critical points.
  
Fix an arbitrary orientation of the unstable manifold of every critical point of $\mathbb{S}_{L_1}^{\Lambda}$, $\mathbb{S}_{L_2}^{\Lambda}$, and $\mathbb{S}_{L_1\oplus
L_2}^{\Theta}$. By our assumptions on $X_{L_1\oplus L_2}^{\Lambda}$, also the unstable manifolds $W^u(\gamma^-; X_{L_1 \oplus L_2}^{\Lambda})$ are oriented.
When $i^{\Theta}(\gamma^+) = i^{\Lambda}(\gamma_1^-) +
i^{\Lambda}(\gamma_2^-) - n$, the intersection
\[
W^u(\gamma^-; X_{L_1\oplus L_2}^{\Lambda}) \cap
W^s(\gamma^+;X_{L_1 \oplus L_2}^{\Theta})
\]
is a finite set of oriented points. If we denote by
$n_{e_!}(\gamma^-,\gamma^+)$ the algebraic sum of these orientation
signs, we have the following:

\begin{prop}\label{prop e!}
The homomorphism
\[
M_k(\mathbb{S}_{L_1}^{\Lambda} \oplus \mathbb{S}_{L_2}^{\Lambda}) \rightarrow M_{k-n}(\mathbb{S}_{L_1\oplus
  L_2}^{\Theta}), \quad
\gamma^- \mapsto \sum_{\substack{\gamma^+\in
  \mathscr{P}^{\Theta}(L_1\oplus L_2)\\ i^{\Theta}(\gamma^+) = k-n}} n_{e_!}
  (\gamma^-,\gamma^+) \gamma^+,
\]
is a chain map, and it induces the Umkehr map
$e_!$ in homology.
\end{prop}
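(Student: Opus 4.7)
My plan is to derive this proposition as a direct application of the general Umkehr map construction in Appendix \ref{inter}, by checking its three hypotheses in the present geometric setup: the embedding is a suitable co-oriented submanifold with tubular neighborhood, the critical sets are non-obstructed, and the pseudo-gradients can be made generically transverse.

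First, I would confirm that $\Theta^1(M) \hookrightarrow \Lambda^1(M) \times \Lambda^1(M)$ is a closed, co-oriented, $n$-codimensional embedding with a smooth tubular neighborhood. Codimension and co-orientation follow from the identification of $N\Theta^1(M)$ with the pull-back of $N\Delta_M$ by $\mathrm{ev}\times\mathrm{ev}$, exactly as in Section \ref{tcslp}; a smooth tubular neighborhood is produced pointwise via the exponential map on $M$. Next, I would verify that hypothesis (\ref{nofig8s}) forces the non-obstruction conditions (\ref{noco3a}) and (\ref{noco3b}). The critical set of $\mathbb{S}_{L_1}^{\Lambda} \oplus \mathbb{S}_{L_2}^{\Lambda}$ is $\mathscr{P}^{\Lambda}(L_1) \times \mathscr{P}^{\Lambda}(L_2)$, which by (\ref{nofig8s}) is disjoint from $\Theta^1(M)$, so no ambient critical point lies on the submanifold. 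Conversely, critical points of $\mathbb{S}_{L_1\oplus L_2}^{\Theta}$ lie in $\Theta^1(M)$ and hence are not ambient critical points.

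The perturbation step then produces Morse--Smale pseudo-gradients $X_{L_1\oplus L_2}^{\Lambda}$ and $X_{L_1\oplus L_2}^{\Theta}$ such that, for every pair $\gamma^-, \gamma^+$, the unstable manifold $W^u(\gamma^-; X_{L_1\oplus L_2}^{\Lambda})$ meets $\Theta^1(M)$ transversely in $\Lambda^1(M)\times\Lambda^1(M)$, and the resulting intersection meets $W^s(\gamma^+; X_{L_1\oplus L_2}^{\Theta})$ transversely inside $\Theta^1(M)$. The extra requirement that $X_{L_1\oplus L_2}^{\Lambda}$ agree with $X_{L_1}^{\Lambda} \oplus X_{L_2}^{\Lambda}$ up to first order at critical points is imposed to make the two product Morse complexes coincide; this identifies the domain of $M_{e_!}$ with $M_*(\mathbb{S}_{L_1}^{\Lambda}) \otimes M_*(\mathbb{S}_{L_2}^{\Lambda})$. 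When $i^{\Theta}(\gamma^+) = i^{\Lambda}(\gamma_1^-) + i^{\Lambda}(\gamma_2^-) - n$, the doubly-transverse intersection is a zero-dimensional oriented manifold; standard Palais--Smale compactness, together with the action bounds $\mathbb{S}_{L_1 \oplus L_2}^{\Theta}(\gamma^+) \leq \mathbb{S}_{L_1}^{\Lambda}(\gamma_1^-) + \mathbb{S}_{L_2}^{\Lambda}(\gamma_2^-)$ along flow lines, keeps this set compact, hence finite, so $n_{e_!}(\gamma^-,\gamma^+) \in \Z$ is well defined.

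Once these ingredients are in place, the abstract theorem of Appendix \ref{inter} applies. The chain map identity is obtained from a boundary analysis of the compactified one-dimensional intersection manifolds: the ends correspond to broken trajectories that split either in $\Lambda^1(M) \times \Lambda^1(M)$ before crossing $\Theta^1(M)$ or in $\Theta^1(M)$ after crossing, accounting precisely for $M_{e_!} \circ \partial$ and $\partial \circ M_{e_!}$ with matching orientation signs. The identification with the topological Umkehr map follows from the abstract comparison in Appendix \ref{inter} between the Morse-theoretic construction and the tubular-neighborhood/Thom-class definition of $e_!$. The main obstacle in the whole argument is the simultaneous transversality step: producing $X_{L_1\oplus L_2}^{\Lambda}$ that is Morse--Smale, transverse to $\Theta^1(M)$, and compatible with the product structure to first order at critical points, all at once; this is settled by the parametric transversality machinery developed in Appendix A for Hilbert-manifold pseudo-gradients.
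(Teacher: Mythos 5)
Your proposal takes essentially the same route as the paper: it invokes the general Umkehr-map construction from Appendix~\ref{inter}, reduces the two non-obstruction conditions (\ref{noco3a}), (\ref{noco3b}) to hypothesis (\ref{nofig8s}), arranges transversality of the unstable manifolds to $\Theta^1(M)$ and to the stable manifolds while keeping $X_{L_1\oplus L_2}^{\Lambda}$ of product type at critical points, and reads off the coefficients $n_{e_!}$ from the finite oriented intersections. The only small divergence is your claim that the chain-map identity comes from a boundary analysis of compactified one-dimensional moduli of broken flow lines; the paper instead inherits this identity (together with the identification with the topological $e_!$) from the cellular-filtration machinery of Appendix A (Section~\ref{funct} combined with the Thom-isomorphism description in Section~\ref{inter}), which is the argument better adapted to the infinite-dimensional setting -- but both routes are correct and lead to the same conclusion.
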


By composing this homomorphism with the Morse theoretical version of
the exterior homology product described in Section \ref{exte}, that is
the isomorphism
\[
M_j (\mathbb{S}_{L_1}^{\Lambda}) \otimes M_h
(\mathbb{S}_{L_2}^{\Lambda}) \rightarrow M_{j+h} (
\mathbb{S}_{L_1}^{\Lambda} \oplus \mathbb{S}_{L_2}^{\Lambda}),
\]
we obtain the homomorphism
\[
M_{!} : M_j (\mathbb{S}_{L_1}^{\Lambda}) \otimes M_h
(\mathbb{S}_{L_2}^{\Lambda}) \rightarrow M_{j+h-n} (\mathbb{S}_{L_1\oplus
  L_2}^{\Theta}).
\]
Here we are using the fact that the Morse complexes of $(\mathbb{S}_{L_1}^{\Lambda} \oplus \mathbb{S}_{L_2}^{\Lambda}, X_{L_1\oplus L_2}^{\Lambda})$ and $(\mathbb{S}_{L_1}^{\Lambda} \oplus \mathbb{S}_{L_2}^{\Lambda}, X_{L_1}^{\Lambda} \oplus X_{L_2}^{\Lambda})$ coincide.

Let us describe the homomorphism
\[
\Gamma_* : H_k (\Theta^1(M)) \rightarrow H_k(\Lambda^1(M)),
\]
induced by the concatenation map $\Gamma$. Let $L_1,L_2\in C^{\infty}([0,1]\times TM)$ be Lagrangians which satisfy (L1), (L2), and are such that $L_1(1,\cdot)=L_2(0,\cdot)$ and $L_2(1,\cdot)=L_1(0,\cdot)$ with all the time
derivatives. We assume that $L_1 \oplus L_2$
satisfies (L0)$^{\Theta}$ and that the time periodic Lagrangian  
$L_1\# L_2$ satisfies (L0)$^{\Lambda}$. We would like to apply the results of Section
\ref{funct} to the functionals $\mathbb{S}_{L_1 \oplus
L_2}^{\Theta}$ on $\Theta^1(M)$ and
$\mathbb{S}_{L_1 \# L_2}^{\Lambda}$ on $\Lambda^1(M)$. The map
$\Gamma:\Theta^1(M) \rightarrow \Lambda^1(M)$ is nowhere a
submersion, so condition (\ref{noco1}) for the triplet
$(\Gamma,\mathbb{S}_{L_1 \oplus
L_2}^{\Theta}, \mathbb{S}_{L_1 \# L_2}^{\Lambda})$
requires that $\Gamma(\Theta^1(M))$ does not contain critical
points of $\mathbb{S}_{L_1 \# L_2}^{\Lambda}$. The latter fact is equivalent to the condition
\begin{equation}
\label{nofig8n} \gamma \in \mathscr{P}^{\Lambda}({L_1 \# L_2})
\quad \implies \quad \gamma (1/2) \neq \gamma(0),
\end{equation}
which holds for a generic pair $(L_1,L_2)$.
Assuming (\ref{nofig8n}), conditions (\ref{noco1}) and
(\ref{noco2}) are automatically fulfilled. Therefore, the
discussion of Section \ref{funct} implies that we can find
Morse-Smale pseudo-gradients $X_{L_1 \oplus L_2}^{\Theta}$ and $X_{L_1 \# L_2}^{\Lambda}$ for $\mathbb{S}_{L_1 \oplus L_2}^{\Theta}$ and $\mathbb{S}_{L_1 \# L_2}^{\Lambda}$, respectively, such that the
restriction of $\Gamma$ to the unstable manifold
\[
W^u(\gamma^-;X_{L_1 \oplus L_2}^{\Theta})
\]
of every critical point $\gamma^- = (\gamma_1^-,\gamma_2^-) \in
\mathscr{P}^{\Theta}(L_1\oplus L_2)$ is transverse to the stable manifold
\[
W^s(\gamma^+;X_{L_1 \# L_2}^{\Lambda})
\]
of every critical point $\gamma^+ \in \mathscr{P}^{\Lambda}({L_1
\# L_2})$. Fix arbitrary orientations for the unstable manifolds
of every critical point of $\mathbb{S}_{L_1 \oplus
L_2}^{\Theta}$ and $\mathbb{S}_{L_1 \# L_2}^{\Lambda}$. When
$i^{\Lambda}(\gamma^+) = i^{\Theta}(\gamma^-)$, the intersection
\[
\set{ (\alpha_1,\alpha_2)\in W^u(\gamma^-;X_{L_1 \oplus L_2}^{\Theta})}{\Gamma(\alpha_1,\alpha_2)
  \in W^s(\gamma^+;X_{L_1 \# L_2}^{\Lambda} )},
\]
is a finite set of oriented points. If we denote by
$n_{\Gamma}(\gamma^-,\gamma^+)$ the algebraic sum of these
orientation signs, we have the following:

\begin{prop}\label{prop MG}
The homomorphism
\[
M_{\Gamma}: M_k(\mathbb{S}_{L_1\oplus
L_2}^{\Theta}) \rightarrow M_k
(\mathbb{S}_{L_1 \# L_2}^{\Lambda}), \quad \gamma^-
\mapsto \sum_{\substack{\gamma^+ \in \mathscr{P}^{\Lambda}(L_1 \# L_2) \\
    i^{\Lambda}(\gamma^+) = k}}
n_{\Gamma}(\gamma^-,\gamma^+) \, \gamma^+,
\]
is a chain map, and it induces the homomorphism $\Gamma_* :
H_k(\Theta^1(M)) \rightarrow H_k(\Lambda^1(M))$ in
homology.
\end{prop}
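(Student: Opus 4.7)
The statement should fall directly under the abstract Morse-theoretic functoriality package developed in Appendix A, Section \ref{funct}, applied to the triplet $(\Gamma,\mathbb{S}_{L_1\oplus L_2}^{\Theta},\mathbb{S}_{L_1\# L_2}^{\Lambda})$. All the auxiliary data — smooth Morse-Smale pseudo-gradients satisfying (X1)-(X4), the isomorphism between the Morse homology and the singular homology of the underlying Hilbert manifold, and a coherent orientation convention on unstable manifolds — are already provided by \cite{as08b} and Appendix \ref{mc} under (L0)$^{\Theta}$ and (L0)$^{\Lambda}$. My plan is therefore to verify the two hypotheses (\ref{noco1}) and (\ref{noco2}) required by the abstract theorem, and then simply unpack what it produces in this geometric situation.

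Since the concatenation map $\Gamma:\Theta^1(M)\to\Lambda^1(M)$ is nowhere a submersion, (\ref{noco1}) cannot be satisfied pointwise at critical points of the target, and must instead be enforced by excluding the critical set of $\mathbb{S}_{L_1\# L_2}^{\Lambda}$ from the image of $\Gamma$. Using the reparametrization in (\ref{ide}), a loop $\gamma\in\mathscr{P}^{\Lambda}(L_1\# L_2)$ lies in $\Gamma(\Theta^1(M))$ precisely when $\gamma(1/2)=\gamma(0)$, so the generic condition (\ref{nofig8n}) rules this out and (\ref{noco1}) becomes vacuous. Condition (\ref{noco2}) — that no negative gradient trajectory of the source functional converges under $\Gamma$ to a critical point of the target — then follows automatically, because the target critical set is disjoint from $\Gamma(\Theta^1(M))$ altogether. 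Hence the hypotheses of the abstract theorem are met.

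Once this is granted, Appendix \ref{funct} produces Morse-Smale pseudo-gradients $X_{L_1\oplus L_2}^{\Theta}$ and $X_{L_1\# L_2}^{\Lambda}$ for which $\Gamma|_{W^u(\gamma^-;X_{L_1\oplus L_2}^{\Theta})}$ is transverse to each $W^s(\gamma^+;X_{L_1\# L_2}^{\Lambda})$. In the case $i^{\Lambda}(\gamma^+)=i^{\Theta}(\gamma^-)$ the intersection is zero-dimensional; finiteness follows from compactness up to broken trajectories, guaranteed by the Palais-Smale condition on $\mathbb{S}_{L_1\oplus L_2}^{\Theta}$ together with the obvious energy identity $\mathbb{S}_{L_1\# L_2}^{\Lambda}\circ\Gamma = \mathbb{S}_{L_1}^{\Lambda}+\mathbb{S}_{L_2}^{\Lambda}$ (a direct consequence of the rescaling in (\ref{ide})), which bounds the action of $\gamma^+$ in terms of that of $\gamma^-$. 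With the prescribed orientations this yields well-defined integers $n_{\Gamma}(\gamma^-,\gamma^+)$. The chain-map identity $\partial M_{\Gamma}=M_{\Gamma}\partial$ is then obtained by the standard inspection of the boundary of the compactified one-dimensional moduli space ($i^{\Lambda}(\gamma^+)-i^{\Theta}(\gamma^-)=1$), whose two types of boundary points — breaking on the $\Theta$-side before applying $\Gamma$, or breaking on the $\Lambda$-side after — correspond exactly to the two sides of the identity; and the identification of $M_{\Gamma}$ in homology with the topologically induced $\Gamma_*$ is the abstract output of Appendix \ref{funct}, proved there by a continuation argument comparing the Morse and singular models.

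The only point requiring genuine care, and the one I expect to be the main obstacle, is the transversality of $\Gamma|_{W^u(\gamma^-)}$ with $W^s(\gamma^+)$: since $\Gamma$ is far from surjective on tangent spaces, flexibility to perturb must come entirely from the two pseudo-gradients. The argument in Appendix \ref{funct} handles this by a Sard-Smale genericity statement applied to a parametrized family of pseudo-gradients — which is available because the Hilbert manifold setting admits smooth partitions of unity, whereas the original Banach manifolds $\Lambda(M)$ and $\Theta(M)$ do not. This is exactly the reason for passing to the $W^{1,2}$-completions $\Lambda^1(M)$ and $\Theta^1(M)$ at the beginning of Section \ref{mtdsec}. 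With this step granted, the proposition follows without further incident.
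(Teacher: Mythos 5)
Your proposal is correct and takes essentially the same route as the paper: it reduces the statement to the abstract functoriality machinery of Appendix A, Section \ref{funct}, by noting that $\Gamma$ is nowhere a submersion, so (\ref{noco1}) requires $\Gamma(\Theta^1(M))\cap\crit(\mathbb{S}_{L_1\#L_2}^{\Lambda})=\emptyset$, which is precisely the generic condition (\ref{nofig8n}), after which (\ref{noco2}) is vacuous and the abstract theorem yields the chain map $M_\Gamma$ and identifies its induced map with $\Gamma_*$. (Your gloss on (\ref{noco2}) — phrased in terms of trajectories rather than critical points mapping to critical points — is slightly off, but since the image of $\Gamma$ misses the entire target critical set, the condition holds vacuously either way, so nothing is affected.)
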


\noindent Therefore, the composition $M_{\Gamma} \circ M_{!}$ induces the loop
product in homology.\\

We conclude this section by exhibiting a compact description of the
loop product
\[
\loopprod : H_j(\Lambda^1(M)) \otimes H_k(\Lambda^1(M)) \rightarrow
H_{j+k-n} (\Lambda^1(M)).
\]
Since we are not going to use this description, we omit the proof.
Let $L_1,L_2\in C^{\infty}(\T\times TM)$ be Lagrangians which satisfy (L0)$^{\Lambda}$, (L1), (L2), such that
$L_1(0,\cdot)=L_2(0,\cdot)$ with all time derivatives, and such
that the concatenated Lagrangian $L_1 \#L_2$ defined by
(\ref{ide}) satisfies (L0)$^{\Lambda}$. We also assume
(\ref{nofig8s}), noticing that this condition prevents $L_1$ from
coinciding with $L_2$. Let $X_{L_1 \oplus L_2}^{\Lambda}$ and $X_{L_1 \# L_2}^{\Lambda}$ be Morse-Smale pseudo-gradients for $\mathbb{S}_{L_1}^{\Lambda} \oplus \mathbb{S}_{L_2}^{\Lambda}$ and $\mathbb{S}_{L_1 \# L_2}^{\Lambda}$, respectively.
By (\ref{nofig8s}), the
functional $\mathbb{S}_{L_1 \oplus L_2}^{\Theta}$ has no
critical points on $\Theta^1(M)$, so up to the perturbation of $X_{L_1 \oplus L_2}^{\Lambda}$, we can assume that for every $\gamma_1\in
\mathscr{P}^{\Lambda}(L_1)$, $\gamma_2\in
\mathscr{P}^{\Lambda}(L_2)$, the unstable manifold
$W^u((\gamma_1,\gamma_2);X_{L_1 \oplus L_2}^{\Lambda} )$
is transverse to $\Theta^1(M)$. Moreover, assumption
(\ref{nofig8s}) implies that the image of $\Theta^1(M)$ by the
concatenation map $\Gamma$ does not contain any critical point of
$\mathbb{S}_{L_1 \# L_2}^{\Lambda}$. Therefore, up to the perturbation of $X_{L_1 \# L_2}^{\Lambda}$, we can
assume that for every $\gamma_1\in \mathscr{P}^{\Lambda}(L_1)$,
$\gamma_2\in \mathscr{P}^{\Lambda}(L_2)$, the restriction of
$\Gamma$ to the submanifold
\[
W^u((\gamma_1,\gamma_2); X_{L_1 \oplus L_2}^{\Lambda}) \cap \Theta^1(M)
\]
is transverse to the stable manifold $W^s(\gamma;X_{L_1 \# L_2}^{\Lambda})$ of each $\gamma \in
\mathscr{P}^{\Lambda}(L)$. In particular, when
$i^{\Lambda}(\gamma) = i^{\Lambda}(\gamma_1) +
i^{\Lambda}(\gamma_2) - n$, the submanifold
\[
\set{ (\alpha_1,\alpha_2) \in  W^u((\gamma_1,\gamma_2); X_{L_1 \oplus L_2}^{\Lambda}) \cap
\Theta^1(M)}{\Gamma(\alpha_1,\alpha_2) \in W^s(\gamma;X_{L_1 \# L_2}^{\Lambda})}
\]
is a finite set of oriented points. We contend that if
$n_{\loopprod}(\gamma_1,\gamma_2;\gamma)$ denotes the algebraic sum of
the corresponding orientation signs, the following holds:

\begin{prop}
\label{nonserve}
The homomorphism
\begin{equation}
\label{compdescr} M_j(\mathbb{S}_{L_1}^{\Lambda}) \otimes
M_k(\mathbb{S}_{L_2}^{\Lambda}) 
\rightarrow M_{j+k-n} (\mathbb{S}_{L_1 \# L_2}^{\Lambda}) , 
\quad \gamma_1 \otimes
\gamma_2 \mapsto \sum_{\substack{\gamma\in \mathscr{P}^{\Lambda}(L_1 \# L_2) \\
    i^{\Lambda} (\gamma) = j+k-n}}
n_{\loopprod}(\gamma_1,\gamma_2;\gamma) \, \gamma,
\end{equation}
is a chain map, and it induces the loop product in homology.
\end{prop}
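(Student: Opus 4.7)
The plan is to identify the homomorphism in (\ref{compdescr}) at the level of homology with the already-established composition $M_\Gamma\circ M_{!}$, which by Propositions \ref{prop e!} and \ref{prop MG} realizes $\Gamma_\ast\circ e_!$, i.e.\ the loop product. Rather than relating the two numbers $n_\loopprod(\gamma_1,\gamma_2;\gamma)$ and $\sum_{\gamma^+} n_{e_!}((\gamma_1,\gamma_2),\gamma^+)\,n_\Gamma(\gamma^+,\gamma)$ via an algebraic identity, I would produce a chain homotopy between the two chain maps by a one-parameter moduli space whose endpoints reproduce the two counts.

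First, I would check that the compact description is well defined. A transversality and dimension count shows that when $i^\Lambda(\gamma)=i^\Lambda(\gamma_1)+i^\Lambda(\gamma_2)-n$, the subset
\[
\mathscr{M}(\gamma_1,\gamma_2;\gamma):=\Bigl\{(\alpha_1,\alpha_2)\in W^u\bigl((\gamma_1,\gamma_2);X_{L_1\oplus L_2}^\Lambda\bigr)\cap\Theta^1(M)\;\Bigl|\;\Gamma(\alpha_1,\alpha_2)\in W^s\bigl(\gamma;X_{L_1\#L_2}^\Lambda\bigr)\Bigr\}
\]
is a zero-dimensional oriented manifold, finite by the Palais--Smale property of $\mathbb{S}_{L_1}^\Lambda\oplus\mathbb{S}_{L_2}^\Lambda$ and $\mathbb{S}_{L_1\#L_2}^\Lambda$. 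Condition (\ref{nofig8s}) rules out degenerations inside $\Theta^1(M)$ caused by limits to $\mathscr{P}^\Lambda(L_1)\times\mathscr{P}^\Lambda(L_2)\cap\Theta^1(M)$. Chain-map-ness follows from the usual compactification of the corresponding one-dimensional moduli spaces: for $i^\Lambda(\gamma)=i^\Lambda(\gamma_1)+i^\Lambda(\gamma_2)-n+1$, the ends of $\mathscr{M}(\gamma_1,\gamma_2;\gamma)$ split into flow-line breakings at the source, giving $\partial(\gamma_1\otimes\gamma_2)$, and at the target, giving $\partial\gamma$.

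The core step is the chain homotopy between (\ref{compdescr}) and $M_\Gamma\circ M_!$. I would introduce an auxiliary Morse-Smale pseudo-gradient $X_{L_1\oplus L_2}^\Theta$ for $\mathbb{S}_{L_1\oplus L_2}^\Theta$ as in Sections \ref{mtilp}, and for a parameter $R\in[0,+\infty]$ consider the moduli space $\mathscr{N}_R(\gamma_1,\gamma_2;\gamma)$ of triples $(\alpha_1,\alpha_2),\,\sigma,\,(\beta_1,\beta_2)$ where $(\alpha_1,\alpha_2)\in W^u((\gamma_1,\gamma_2);X_{L_1\oplus L_2}^\Lambda)\cap\Theta^1(M)$, $\sigma:[0,R]\to\Theta^1(M)$ is a piece of $X_{L_1\oplus L_2}^\Theta$-trajectory with $\sigma(0)=(\alpha_1,\alpha_2)$, and $\Gamma(\sigma(R))\in W^s(\gamma;X_{L_1\#L_2}^\Lambda)$. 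For $R=0$ this recovers $\mathscr{M}(\gamma_1,\gamma_2;\gamma)$, hence the compact description. For $R=+\infty$ the usual breaking analysis produces a middle critical point $\gamma^+\in\mathscr{P}^\Theta(L_1\oplus L_2)$, and the resulting configurations are exactly those counted by $M_\Gamma\circ M_!$. In dimension one, $\bigcup_R\mathscr{N}_R$ provides a cobordism between the two zero-dimensional moduli spaces, and the count of broken ends in interior breakings along the $R$-parameter gives the chain homotopy.

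The main obstacle, and the reason the authors chose to omit the proof, is the analytical control of the $R=+\infty$ limit: one must show that the only breakings that occur are middle breakings at critical points of $\mathbb{S}_{L_1\oplus L_2}^\Theta$ on $\Theta^1(M)$, and that each such broken configuration arises as a genuine end of $\bigcup_R\mathscr{N}_R$ via a gluing theorem. This is standard in spirit but requires a careful verification that the transversality conditions guaranteed by the choices in Section \ref{mtilp} are compatible with the parametric family $X_{L_1\oplus L_2}^\Theta$ on $\Theta^1(M)$ viewed inside $\Lambda^1(M)\times\Lambda^1(M)$. Once this is in place, the orientation bookkeeping reproduces exactly the sign with which each pair $(\alpha_1,\alpha_2)$ contributes to $n_\loopprod(\gamma_1,\gamma_2;\gamma)$, and the identification with $M_\Gamma\circ M_!$ in homology is complete.
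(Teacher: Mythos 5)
Your proposal is correct and takes the natural route that the paper almost certainly has in mind: the authors explicitly write ``Since we are not going to use this description, we omit the proof,'' so there is no in-paper argument to compare against, but the interpolation you describe is the standard Morse-theoretic mechanism for turning a factorization $M_{\Gamma}\circ M_{!}$ into a ``compact'' one-step count.

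Two small points worth keeping in mind when carrying this out. First, the second paragraph on chain-map-ness is logically redundant: once the chain homotopy between the map in (\ref{compdescr}) and the chain map $M_{\Gamma}\circ M_{!}$ is established, the chain-map property of (\ref{compdescr}) follows formally (a map chain homotopic to a chain map is a chain map). It does no harm to give the direct argument, but it is not needed. Second, the parametric transversality for the family $\mathscr{N}_R$ deserves one more sentence than you give it. Since $\mathscr{N}_R$ is determined by $(\alpha_1,\alpha_2)\in W^u\bigl((\gamma_1,\gamma_2);X_{L_1\oplus L_2}^{\Lambda}\bigr)\cap\Theta^1(M)$ together with the condition $\Gamma\bigl(\phi^{\Theta}_R(\alpha_1,\alpha_2)\bigr)\in W^s\bigl(\gamma;X_{L_1\#L_2}^{\Lambda}\bigr)$, one must check that a single perturbation of $X_{L_1\oplus L_2}^{\Lambda}$, $X^{\Theta}_{L_1\oplus L_2}$ and $X^{\Lambda}_{L_1\#L_2}$ makes the total evaluation map over $[0,\infty]\times\bigl(W^u\cap\Theta^1(M)\bigr)$ transverse to $W^s(\gamma)$, while simultaneously keeping the boundary transversalities demanded at $R=0$ (Proposition \ref{nonserve}) and at $R=\infty$ (Propositions \ref{prop e!} and \ref{prop MG}). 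This is a genuine, if routine, parametric Sard--Smale argument, and it is the point where assumption (\ref{nofig8s}) is used a second time, to ensure that the flow $\phi^{\Theta}_R(\alpha_1,\alpha_2)$ does not accumulate on critical points of the unconstrained functional lying on $\Theta^1(M)$. With these two caveats noted, your cobordism $\bigcup_R\mathscr{N}_R$ does exactly what is needed.
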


\section{Floer homologies on cotangent bundles and their ring structures}
\label{fhrssec}

\subsection{Floer homology for nonlocal conormal boundary conditions}
\label{fhpfo}

In this section we recall the construction of Floer homology for Hamiltonian orbits on cotangent bundles with
{\em nonlocal conormal boundary conditions}. This is the Hamiltonian version of the setting described in Section \ref{mclaf}.
See \cite{as06} and \cite{aps08} for detailed proofs.

Let $Q$ be a closed manifold (in our applications, $Q$ is either $M$ or $M 
\times M$), and let $R$ be a closed submanifold of $Q\times Q$. We shall often denote the elements of the cotangent bundle $T^*Q$ as pairs $(q,p)$, where $q\in Q$ and $p\in T_q^*Q$. Let $\omega= dp\wedge dq$ be the standard symplectic form on the manifold $T^*Q$, that is the
differential of the Liouville form $\eta := p\, dq$. Equivalently, the Liouville form $\eta$ can be defined by
\[
\eta(\zeta) = x(D\pi(x)[\zeta]), \quad \mbox{for } \zeta\in
T_xT^*M, \; x\in T^*Q,
\]
where $\pi:T^*Q \rightarrow Q$ is the bundle projection.
Let $Y=p\, \partial_p$ be the standard Liouville vector field on $T^*Q$, which is
defined by the identity
\begin{equation}
\label{Liouv}
\omega(Y,\cdot) = \eta.
\end{equation}
Consider the class of smooth Hamiltonians $H$ on 
$[0,1]\times T^*Q$ such that:
\begin{enumerate}
\item[(H1)] $DH(t,q,p)[Y] - H(t,q,p) \geq h_0 |p|^2 - h_1$,
\item[(H2)] $|\nabla_q H(t,q,p)| \leq h_2 (1+|p|^2)$, $|\nabla_p
  H(t,q,p)| \leq h_2 (1+|p|)$,
\end{enumerate}
for every $(t,q,p)$, for some constants $h_0>0$, $h_1\in \R$, $h_2> 0$. Here $\nabla_q$ and $\nabla_p$ denote the horizontal and vertical components of the gradient, with respect to a Riemannian metric on $Q$ and to the induced metric on $T^*Q$, and $|\cdot|$ denotes the norm associated to such a metric. The fact that $Q$ is compact easily implies that (H1) and (H2) do not depend on the choice of this metric. Condition (H1) essentially says that $H$ grows at
least quadratically in $p$ on each fiber of $T^*M$, and that it is
radially convex for $|p|$ large. Condition (H2) implies that $H$
grows at most quadratically in $p$ on each fiber. Notice also that if
$H$ is the Fenchel transform of a fiber-wise strictly convex Lagrangian
$L$ in $C^{\infty}([0,1]\times TM)$ (see Section \ref{cci}), then the term 
$DH(t,q,p)[Y(q,p)] - H(t,q,p)$ appearing in (H1) coincides with 
$L(t,q,D_p H(t,q,p))$.

Let $X_H$ be the time-dependent Hamiltonian vector field
associated to $H$ by the formula $\omega(X_H,\cdot) = -D_x
H$. Condition (H2) implies the quadratic bound
\begin{equation}
\label{cresc}
|X_H(t,q,p)| \leq h_3 (1+|p|^2),
\end{equation}
for some $h_3> 0$. Let
$(t,x) \mapsto \phi^H(t,x)$ be the non-autonomous flow associated
to the vector field $X_H$.

The {\em conormal bundle} $N^*R$ of $R$ in $Q\times Q$ is the set of covectors $x=(q,p)$ in $T^* (Q\times Q)$ such that $q=(q_1,q_2)$ belongs to $R$ and $p=(p_1,p_2)\in T_{(q_1,q_2)}^* (Q\times Q)$ vanishes identically on $T_{(q_1,q_2)} R$. It is a vector bundle over $R$ and the dimension of its fibers equals the codimension of $R$ in $Q\times Q$. 
The Liouville form of $T^* (Q\times Q)$ vanishes on $N^* R$, so in particular $N^* R$ is a Lagrangian submanifold of $T^* (Q\times Q)$. Actually, conormal bundles can be characterized as those middle dimensional closed submanifolds of a cotangent bundle on which the Liouville form vanishes, see Proposition 2.1 in \cite{aps08}.    

We are interested in the set $\mathscr{P}^R(H)$ of solutions $x:[0,1]\rightarrow T^*Q$ of the Hamiltonian equation
\begin{equation}
\label{hameq}
x'(t) = X_H(t,x(t)),
\end{equation}
which satisfy the boundary conditions
\begin{equation}
\label{conbdry}
(x(0), \mathscr{C} x(1)) \in N^* R,
\end{equation}
where $\mathscr{C}$ is the anti-symplectic involution
\[
\mathscr{C} : T^* Q \rightarrow T^*Q, \quad (q,p) \mapsto (q,-p).
\]
Equivalently, we are looking at the intersections of the Lagrangian submanifold $N^* R$ with the Lagrangian submanifold given by the graph of $\mathscr{C} \circ\phi^H(1,\cdot)$  (we are always considering the standard symplectic form $\omega \oplus \omega$ on $T^* (Q\times Q)$, not the flipped one $\omega \oplus (-\omega)$, so a diffeomorphism $\varphi: T^* Q \rightarrow T^*Q$ is symplectic if and only if the graph of the composition $\mathscr{C} \circ \varphi$ is Lagrangian).

We assume that:
\begin{enumerate}
\item[(H0)] All the elements of $\mathscr{P}^R(H)$ are {\em non-degenerate}, meaning that the submanifolds $N^* R$ and $\graf \mathscr{C} \circ\phi^H(1,\cdot)$ intersect transversally. 
\end{enumerate}
Given $x\in \mathscr{P}^R(H)$, we can conjugate the differential of the flow $D_x \phi^H(t,x(t))$ by a symplectic trivialization $\Psi$ of $x^*(TT^*Q)$ and we obtain a path 
\[
G^{\Psi}: [0,1] \rightarrow \mathrm{Sp} (2m), \quad m:= \dim Q, 
\]
of symplectic automorphisms of $T^* \R^m$, endowed with its standard symplectic structure. 

We assume that the symplectic trivialization $\Psi$ is {\em vertical-preserving}, meaning that it maps the vertical subbundle $T^v T^*M := \ker D\pi$ into the vertical space $N^* (0) = (0) \times (\R^m)^* \subset T^* \R^m$, and has the property that the following linear subspace of $T^* \R^m \times T^* \R^m = T^* \R^{2m}$,
\[
\bigl(\Psi(0) \times C\Psi(1) D\mathscr{C}(\mathscr{C} x(1))\bigr) T_{(x(0), \mathscr{C} 
x(1))} N^* R,
\]
is the conormal space $N^* W^{\Psi}$ of some linear subspace $W^{\Psi} \subset \R^m \times \R^m$, where $C$ is the anti-symplectic involution of $T^* \R^m$ which maps $(q,p)$ into $(q,-p)$. Then the {\em Maslov index of $x$ with respect to the nonlocal boundary condition induced by $R$} is defined as
\begin{equation}
\label{mascon}
\mu^R(x) := \mu (\graf G^{\Psi} C, N^* W^{\Psi})  + \frac{1}{2} (\dim R -\dim Q),
\end{equation}
where $\mu$ denotes the relative Maslov index of two paths of Lagrangian subspaces of $T^* \R^{2m}$, in the sense of \cite{rs93}  (see also Section \ref{tmi} for sign conventions). The fact that $x$ is non-degenerate implies that $\mu^R(x)$ is an integer, and the
assumptions on the trivialization $\Psi$ imply that this integer does not depend on the choice of $\Psi$ (see Proposition 3.3 in \cite{aps08}). The normalizing constant in the definition above is chosen in such a way that, when $H$ is the Fenchel dual of a Lagrangian $L$, $\mu^R(x)$ coincides with the Morse index of the corresponding solution of the Euler-Lagrange equation associated to $L$ (see Section \ref{cci}). 

The elements of $\mathscr{P}^R(H)$ are the critical points of the
Hamiltonian action functional
\[
\mathbb{A}_H(x) = \int_{[0,1]} x^*(\eta - H dt) = \int_0^1 \bigl(
p (t) [q'(t)] - H(t,q(t),p(t))\bigr) \, dt,
\]
on the space of curves $x:[0,1]\rightarrow T^* Q$ which satisfy (\ref{conbdry}). Indeed, the differential of $\mathbb{A}_H$ on the
space of free paths on $T^*Q$ is
\begin{equation}
\label{dafp}
D\mathbb{A}_H(x) [\zeta] = \int_0^1 
\omega(\zeta,x' - X_H(t,x) )\, dt
+\eta(x(1))[\zeta(1)] - \eta(x(0))[\zeta(0)],
\end{equation}
and the boundary term vanishes when $x$ satisfies (\ref{conbdry}) and the variation  $\zeta$ satisfies 
\[
\bigl( \zeta(0),D\mathscr{C}(x(1))[\zeta(1)] \bigr) \in T_{(x(0),\mathscr{C} x(1))} N^* R,
\]
because the Liouville form is zero on conormal bundles. The non-degeneracy assumption, together with conditions (H1) and (H2), imply that the set of $x\in
\mathscr{P}^R(H)$ with $\mathbb{A}_H(x)\leq A$ is finite, for every
$A\in \R$. Indeed, this follows immediately from the following
general:

\begin{lem}
\label{compham} Let $H\in C^{\infty}([0,1] \times T^*Q)$ be a
Hamiltonian satisfying (H1) and (H2). For every $A\in \R$ there exists a
compact subset $K \subset T^*Q$ such that each orbit $x:[0,1]
\rightarrow T^*Q$ of $X_H$ with $\mathbb{A}_H(x)\leq A$ lies in
$K$.
\end{lem}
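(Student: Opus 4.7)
The plan is to establish an $L^2$ bound on the fiber component $p(t)$ using (H1) and the action bound, then upgrade it to an $L^\infty$ bound using (H2) and the Hamiltonian equations.

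First I would exploit the identity $\eta(X_H) = dH(Y)$, which follows from $\omega(Y,\cdot)=\eta$ and $\omega(X_H,\cdot)=-dH$ together with the antisymmetry of $\omega$: indeed $\eta(X_H) = \omega(Y,X_H) = -\omega(X_H,Y) = dH(Y)$. Plugging this into the action integral gives
\[
\mathbb{A}_H(x) = \int_0^1 \bigl(\eta(x'(t)) - H(t,x(t))\bigr)\,dt = \int_0^1 \bigl(DH(t,x(t))[Y(x(t))] - H(t,x(t))\bigr)\,dt,
\]
so (H1) yields
\[
\mathbb{A}_H(x) \geq h_0 \int_0^1 |p(t)|^2\,dt - h_1.
\]
The hypothesis $\mathbb{A}_H(x)\leq A$ therefore gives $\|p\|_{L^2}^2 \leq (A+h_1)/h_0$, and by Chebyshev's inequality there is at least one $t_0\in[0,1]$ with $|p(t_0)|^2 \leq (A+h_1)/h_0$.

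Next I would upgrade this $L^2$ bound to an $L^\infty$ bound. Fix a Riemannian metric on $Q$, let $\nabla_t$ denote the induced covariant derivative along $q(t)$, and let $P_{t_0,t}:T^*_{q(t_0)}Q\to T^*_{q(t)}Q$ be parallel transport, which is a fiberwise isometry. The Hamiltonian equation reads $\nabla_t p(t) = -\nabla_q H(t,q(t),p(t))$, so by (H2),
\[
|\nabla_t p(t)| \leq h_2\bigl(1+|p(t)|^2\bigr),
\]
and this estimate is in $L^1$ because of the $L^2$ bound on $p$: explicitly, $\|\nabla_t p\|_{L^1}\leq h_2 (1+\|p\|_{L^2}^2) \leq h_2\bigl(1 + (A+h_1)/h_0\bigr)$. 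Since $P_{t_0,t}p(t_0)$ has the same norm as $p(t_0)$ and satisfies $p(t) - P_{t_0,t}p(t_0) = \int_{t_0}^t P_{\tau,t}(\nabla_\tau p(\tau))\,d\tau$, the triangle inequality gives
\[
|p(t)| \leq |p(t_0)| + \int_0^1 |\nabla_\tau p(\tau)|\,d\tau \leq \sqrt{(A+h_1)/h_0} + h_2\bigl(1+(A+h_1)/h_0\bigr) =: C(A).
\]
Combined with the compactness of $Q$, this shows that the image of $x$ is contained in the compact subset $K := \{(q,p)\in T^*Q : |p|\leq C(A)\}$, which is the desired conclusion.

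The only genuinely technical point is the handling of the covariant derivative and parallel transport, which is needed because $p(t)$ lives in the varying fiber $T^*_{q(t)}Q$; this is a standard device and the estimates above use nothing beyond the fact that $|\cdot|$ is parallel. Everything else is a straightforward chain of Chebyshev and triangle inequalities once the key identity $\eta(X_H)=dH(Y)$ has been noted.
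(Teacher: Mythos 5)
Your proof is correct and follows essentially the same route as the paper: the key identity $\eta(X_H)=DH[Y]$ combined with (H1) gives the $L^2$ bound on $p$, and then (H2) gives an $L^1$ bound on the covariant derivative, which upgrades to $L^\infty$. The paper phrases the last step as ``$x$ is bounded in $W^{1,1}$, hence in $L^\infty$,'' whereas you unpack that Sobolev embedding explicitly via Chebyshev plus integration of the derivative and are a bit more careful about parallel transport; this is the same argument, just written out.
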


\begin{proof} Let $x=(q,p)$ be an orbit of $X_H$ such that
$\mathbb{A}_H(x)\leq A$. Since $x$ is an orbit of $X_H$,
by (\ref{Liouv}),
\[
\eta (x)[x'] - H(t,x) = \omega (Y(x), X_H(t,x)) - H(t,x) =
DH(t,x)[Y(x)] - H(t,x).
\]
Therefore (H1) implies that $|p|$ is uniformly bounded in
$L^2([0,1])$. By (\ref{cresc}), $|x'|$ is uniformly bounded in
$L^1([0,1])$. Therefore $x$ is uniformly bounded in $W^{1,1}$,
hence in $L^{\infty}$.
\end{proof}

\begin{rem}
Assume that the flow generated by a Hamiltonian $H\in
C^{\infty}([0,1]\times T^*Q)$ is globally defined (for instance, this
holds if $H$ is coercive and $|\partial_t H| \leq c(|H|+1)$). Then the
conclusion of Lemma \ref{compham} holds assuming just that the function
$DH[Y]-H$ is coercive (a much weaker assumption than (H1), still
implying that $H$ is coercive), without
any upper bound such as (H2).
\end{rem}

Let us fix a Riemannian metric $\langle \cdot ,\cdot \rangle$ on $Q$. This metric induces metrics on $TQ$ and on $T^*Q$, both denoted by $\langle \cdot ,\cdot \rangle$. It induces also an identification $T^*Q \cong TQ$,
horizontal-vertical splittings of both $TTQ$ and $TT^*Q$, and a
particular almost complex structure $J$
on $T^*M$, namely the one which in the horizontal-vertical
splitting takes the form
\begin{equation}
\label{acsj}
J = \left( \begin{array}{cc} 0 & -I \\ I & 0 \end{array}
\right).
\end{equation}
This almost complex structure is $\omega$-compatible, meaning that 
\[
\langle \xi , \eta \rangle = \omega(J \xi,\eta), \quad \forall \xi,\eta\in T_x T^*Q, \; \forall x\in T^*Q.
\]
Notice that our sign
convention here differs from the one used in \cite{as06}. The
reason is that here we prefer to see the leading term in the Floer
equation as a Cauchy-Riemann operator, and not as an
anti-Cauchy-Riemann operator.

The $L^2$-negative gradient equation for the Hamiltonian action
functional $\mathbb{A}_H$ is the Floer equation
\begin{equation}
\label{fleq}
\delbar_{J,H}(u) := \partial_s u + J(u) [\partial_t u - X_H(t,u)] = 0,
\end{equation}
for $u=u(s,t)$, $(s,t) \in \R\times [0,1]$.
A generic choice of the Hamiltonian $H$ 
makes the following space of solutions of the Floer equation (\ref{fleq}) with nonlocal conormal boundary conditions defined by $R$,
\begin{equation*}\begin{split}
\mathscr{M}_{\partial}^R(x,y) = &\Big\{u\in C^{\infty}(\R \times
  [0,1] ,T^*Q)\,\big|\, (u(s,0),\mathscr{C}u(s,1)) \in N^* R \; \forall s\in \R,\\ &\delbar_{J,H}(u)=0, \; \mbox{ and } \lim_{s\rightarrow-\infty} u(s,t) = x(t), \; \lim_{s\rightarrow +\infty} 
  u(s,t) = y(t)\Big\}
\end{split}\end{equation*}
a manifold of dimension $\mu^R(x) - \mu^R(y)$, for
every $x,y\in \mathscr{P}^R(H)$.  Here {\em generic} means for a
countable intersection of open and dense subsets of the space of smooth time-dependent Hamiltonians satisfying (H0), (H1), and (H2), with respect to suitable topologies (we refer to \cite{fhs96} for transversality issues). In particular, the perturbation of a given Hamiltonian $H$ satisfying (H0), (H1), (H2) can be chosen in such a way that the discrete set $\mathscr{P}^R(H)$ is unaffected. 

\begin{rem}
\label{3.3}
As it is well-known, transversality can also be achieved for a fixed Hamiltonian by perturbing the almost complex structure $J$ in a time-dependent way. In order to have good compactness properties for the spaces $\mathscr{M}_{\partial}^R$ one needs the perturbed almost complex structure $J_1$ to be $C^0$-close enough to the metric one $J$ defined by (\ref{acsj}) (see \cite[Theorem 1.14]{as06}). Other compactness issues in this paper would impose further restrictions on the distance between $J_1$ and $J$. For this reason here we prefer to work with the fixed almost complex structure $J$, and to achieve transversality by perturbing the Hamiltonian. A different approach would be to choose almost complex structures on $T^*Q$ which are of contact type on $T^* Q \setminus Q$, seen as the symplectization of the unit cotangent sphere bundle (see e.g.\ \cite{vit96}). In this case, compactness of the spaces $\mathscr{M}_{\partial}^R$ follows from the maximum principle, but one needs more restrictive assumptions on the behavior of the Hamiltonian $H$ for $|p|$ large.
\end{rem}  

The manifolds $\mathscr{M}_{\partial}^R(x,y)$ can be
oriented in a coherent way. Assumptions (H1) and (H2) imply that these manifolds
have nice compactifications. In particular, when $\mu^R(x) - \mu^R(y)=1$,
$\mathscr{M}_{\partial}^R(x,y)$ consists of finitely many one-parameter families
of solutions $\sigma \mapsto u(\cdot+\sigma,\cdot)$, each of which
comes with a sign $\pm 1$, depending whether its orientation agrees or
not with the orientation determined by letting $\sigma$ increase. The
algebraic sum of these numbers is an integer
$n_{\partial}^R(x,y)$. If we
let $F_k^R(H)$ denote the free Abelian group generated by the elements
$x\in \mathscr{P}^R(H)$ of index $\mu^R(x)=k$, the above coefficients
define the homomorphism
\[
\partial: F_k^R(H) \rightarrow F_{k-1}^R(H), \quad x \mapsto
\sum_{\substack{y\in \mathscr{P}^R(H) \\ \mu^R(y)=k-1}}
n_{\partial}(x,y)\, y,
\]
which turns out to be a boundary operator. The resulting chain
complex $F^R_*(H)$ is the {\em Floer
complex associated to the Hamiltonian $H$ and to the nonlocal conormal boundary conditions defined by $R$}. If we change the metric on $Q$ - hence the almost complex structure $J$ - and the orientation data, the Floer complex $F^R_*(H)$ 
changes by an isomorphism. If we change the Hamiltonian $H$, the new Floer
complex is homotopically equivalent to the old one. In particular, the homology of
the Floer complex does not depend on the metric, on $H$, and on the orientation
data. This fact allows us to denote this graded Abelian group as
$HF_*^R(T^*Q)$, and to call it the {\em Floer homology of $T^*Q$ with nonlocal conormal boundary conditions defined by $R$}. 

As in Section \ref{mclaf}, here we are interested in the following three boundary conditions.

\paragraph{Periodic boundary conditions.} Here $Q=M$ is closed and oriented,
and $R=\Delta_M$ is the diagonal in $M\times M$. Under the extra assumption that $H$ extends as a smooth 1-periodic function on $\R\times TM$, the set $\mathscr{P}^{\Delta_M}(H)$ is precisely the set of 1-periodic solutions of the Hamiltonian equation (\ref{hameq}). We also use the notation $\mathscr{P}^{\Lambda}(H) := \mathscr{P}^{\Delta_M}(H)$. The non-degeneracy condition (H0) is just:
\begin{description}
\item[(H0)$^{\Lambda}$] For every $x\in \mathscr{P}^{\Lambda}(H)$,
  the number 1 is not an eigenvalue of $D_x\phi^H(1,x(0)): T_{x(0)} T^*
  M \rightarrow T_{x(0)} T^*M$.
\end{description}
The Maslov index $\mu^{\Delta_M}(x)$ coincides with the Conley-Zehnder index of the periodic orbit $x$, that we denote also by $\mu^{\Lambda}(x)$ (notice that the normalizing constant in (\ref{mascon}) vanishes). In the definition of such an index, one can choose the trivialization of $x^*(TT^*M)$ to be any 1-periodic symplectic vertical-preserving trivialization. If $M$ is not orientable and the closed curve $x$ is orientation-reversing, there are no 1-periodic and vertical-preserving trivializations of $x^*(TT^*M)$, so one has either to give up the periodicity, as in definition (\ref{mascon}), or the preservation of the vertical subbundle, as in \cite{web02}. Floer homology with periodic boundary conditions is defined (with integer coefficients) also for non-orientable manifolds, but since in this paper we are interested in the loop product with integer coefficients, we assume orientability. 
The elements $u$ of $\mathscr{M}_{\partial}^{\Lambda}(x,y) := \mathscr{M}_{\partial}^{\Delta_M}(x,y)$ are actually smooth solutions of the Floer equation (\ref{fleq}) on the cylinder, that is
\[
u: \R \times \T \rightarrow T^*M.
\]
The corresponding Floer complex is also denoted by $F_*^{\Lambda}(H)$, and its homology by $HF^{\Lambda}_*(T^*M)$.

\paragraph{Dirichlet boundary conditions.} Here $Q=M$ and $R$ consists of the single point $(q_0,q_0)$, for some fixed $q_0\in M$. The set $\mathscr{P}^{(q_0,q_0)}(H)$ is precisely the set of solutions $x:[0,1] \rightarrow T^*M$ of the Hamiltonian equation (\ref{hameq}) such that $\pi\circ x(0) = \pi\circ x(1)=q_0$, and we denote such a set also by $\mathscr{P}^{\Omega}(H)$. The non-degeneracy condition is now:
\begin{description}
\item[(H0)$^{\Omega}$] For every $x\in \mathscr{P}^{\Omega}(H)$,
  the linear mapping $D_x\phi^H(1,x(0)): T_{x(0)} T^*
  M \rightarrow T_{x(1)} T^*M$ maps the vertical subspace $T^v_{x(0)}
  T^*M$ at $x(0)$ into a subspace having intersection $(0)$ with the
  vertical subspace $T^v_{x(1)} T^*M$ at $x(1)$.
\end{description}
The Maslov index $\mu^{(q_0,q_0)}(x)$, that we denote also by $\mu^{\Omega}(x)$,  is just the relative Maslov index of the path of Lagrangian subspaces $D\phi^H(t,x(0)) T_{x(0)}^v T^*M$ - transported into $T^* \R^n$ by means of a vertical-preserving symplectic trivialization - with respect to the vertical space $N^* (0) = (0) \times (\R^n)^*$.  
The boundary condition for the elements $u$ of $\mathscr{M}_{\partial}^{\Omega}(x,y) := \mathscr{M}_{\partial}^{(q_0,q_0)}(x,y)$ is the local condition
\[
u(s,0) \in T_{q_0}^* M, \quad u(s,1) \in T_{q_0}^* M, \quad \forall s\in \R.
\]
The corresponding Floer complex is also denoted by $F_*^{\Omega}(H)$, and its homology by $HF^{\Omega}_*(T^*M)$.

\paragraph{Figure-8 boundary conditions.} Here $Q=M\times M$ and $R=
\Delta^{(4)}_M$ is the fourth diagonal in $M^4$:
\[
\Delta^{(4)}_M := \set{(q,q,q,q)}{q\in M}.
\]
In this case, it is convenient (although not necessary) to chose $H$ of the form $H=H_1 \oplus H_2$, where
\[
H_1 \oplus H_2 \, (t,x) := H_1(t,x_1) + H_2(t,x_2), \quad \begin{array}{l} \forall t\in [0,1], \\ \forall x=(x_1,x_2)\in T^* M \times T^*M, \end{array}
\]
so that the elements of $\mathscr{P}^{\Delta^{(4)}_M}(H)$ are the pairs of $T^*M$-valued curves $(x_1,x_2)$ such that each $x_j$ solves the Hamiltonian equation induced by $H_j$, and such that the coupling boundary conditions 
\[
\pi\circ x_1(0) = \pi\circ x_2(0) = \pi \circ x_1(1) = \pi\circ x_2(1), \quad x_1(1) - x_1(0) + x_2(1) - x_2(0) = 0,
\]
hold. The set of $\mathscr{P}^{\Delta^{(4)}_M}(H_1 \oplus H_2)$ is also denoted by $\mathscr{P}^{\Theta}(H_1\oplus H_2)$. The corresponding non-degeneracy condition is:
\begin{description}
\item[(H0)$^{\Theta}$] Every solution $x=(x_1,x_2)\in
\mathscr{P}^{\Theta}(H_1 \oplus H_2)$ is non-degenerate, meaning
that the graph of the map $\mathscr{C} \circ \phi^{H_1 \oplus H_2}(1,\cdot)$ is
transverse to the submanifold $N^* \Delta^{(4)}_M$ at the point
$(x(0),\mathscr{C}x(1))$.
\end{description}
If $x=(x_1,x_2)\in \mathscr{P}^{\Theta}(H_1 \oplus H_2)$, the Maslov index $\mu^{\Delta^{(4)}_M}(x)$, that we denote simply by $\mu^{\Theta}(x)$,  is the integer
\begin{equation}
\label{sarlu}
\mu^{\Theta}(x) = \mu\bigl( \graf G^{\Psi} C, \Delta^{(4)}_{\R^n}\bigr) - \frac{n}{2},
\end{equation}
where the symplectic path $G^{\Psi}: [0,1] \rightarrow \mathrm{Sp} (4n)$ is obtained by conjugating the differential of the flow $\phi^{H_1 \oplus H_2}$ along $x$ by a trivialization $\Psi$ of $x^*(TT^*M^2)$ of the form $\Psi = \Psi_1 \oplus \Psi_2$, where each $\Psi_j$ is the canonical vertical-preserving symplectic trivialization of $x_j^*(TT^*M)$ induced by a trivialization of $(\pi\circ x_j)^*(TM)$ over the circle $\T$. The boundary condition for the elements $u$ of $\mathscr{M}_{\partial}^{\Theta}(x,y) := \mathscr{M}_{\partial}^{\Delta^{(4)}_M}(x,y)$ is the nonlocal Lagrangian boundary condition 
\[
\bigl(u(s,0), \mathscr{C} u(s,1)\bigr) \in N^* \Delta_M^{(4)}, \quad \forall s\in \R.
\]
The corresponding Floer complex is also denoted by $F_*^{\Theta}(H)$, and its homology by $HF^{\Theta}_*(T^*M)$.

\subsection{The Floer equation on triangles and pair-of-pants}
\label{Fetpp}

Additional algebraic structures on Floer homology are defined by
extending the Floer equation to more general Riemann surfaces than
the strip $\R \times [0,1]$ and the cylinder $\R \times \T$.

Let $(\Sigma,j)$ be a Riemann surface, possibly with boundary.  For
$u\in C^{\infty}(\Sigma,T^*M)$ consider the {\em nonlinear
Cauchy-Riemann operator}
\[
\overline{D}_J u= \frac{1}{2} ( Du + J(u) \circ Du \circ j),
\]
that is the complex anti-linear part of $Du$ with respect to the
almost-complex structure $J$. The operator $\overline{D}_J$ is a
section of the bundle over $C^{\infty}(\Sigma,T^*M)$ whose fiber
at $u$ is $\Omega^{0,1}(\Sigma,u^*(TT^*M))$, the space of
anti-linear one-forms on $\Sigma$ taking values in the vector
bundle $u^*(TT^*M)$. If we choose a holomorphic coordinate $z=s+it$ on
$\Sigma$, the operator $\overline{D}_J$ takes the form
\begin{equation}
\label{forma} \overline{D}_J u = \frac{1}{2} ( \partial_s u +
J(u) \partial_t u)\, ds - \frac{1}{2} J(u) (\partial_s u +
J(u) \partial_t u)\, dt.
\end{equation}
This expression shows that the leading term $\delbar_J := \partial_s +
J(\cdot)\partial_t$ in the Floer equation (\ref{fleq}) can be
extended to arbitrary Riemann surfaces, at the only cost of
considering an equation which does not take values on a space of
tangent vector fields, but on a space of anti-linear one-forms.

When $\Sigma$ has a global coordinate $z=s+it$, as in the case of the strip $\R
\times [0,1]$ or of the cylinder $\R \times \T$, we can associate
to the Hamiltonian term in the Floer equation the complex anti-linear
one-form
\begin{equation}
\label{hamter} F_{J,H}(u) = - \frac{1}{2}(J(u) X_H(t,u)\, ds +
X_H(t,u)\, dt ) \in \Omega^{0,1}(\Sigma,u^*(TT^*M)).
\end{equation}
Formula (\ref{forma}) shows that the Floer equation (\ref{fleq}) is equivalent to
\begin{equation}
\label{fleqr} \overline{D}_J u + F_{J,H}(u) = 0.
\end{equation}
If we wish to use the formulation (\ref{fleqr}) to extend the Floer
equation to more general Riemann surfaces, we encounter the
difficulty that -- unlike $\overline{D}_J$ -- the Hamiltonian term
$F_{J,H}$ is defined in terms of coordinates.

One way to get around this difficulty is to consider Riemann
surfaces with cylindrical or strip-like ends, each of which is
endowed with some fixed holomorphic coordinate $z=s+it$,
to define the operator
$F_{J,H}$ on such ends, and then to extend it to the whole
$\Sigma$ by considering a Hamiltonian $H$ which also depends on
$s$ and vanishes far from the ends. In this way, only the
Cauchy-Riemann operator acts in the region far from the ends. This
approach is adopted in \cite{sch95,pss96,ms04}.

A drawback of this method is that one loses sharp energy
identities relating some norm of $u$ to the jump of the
Hamiltonian action functional. Moreover, an $s$-dependent
Hamiltonian which vanishes for some values of $s$ cannot satisfy
assumptions (H1) and (H2). These facts lead to problems with
compactness when dealing -- as we are here -- with a non-compact
symplectic manifold.

Therefore, we shall use a different method to extend the
Hamiltonian term $F_{J,H}$. We shall describe this construction in
the case of the triangle and the pair-of-pants surface, although
the same idea could be generalized to any Riemann surface.

Let $\Sigma_{\Upsilon}^{\Omega}$ be the {\em holomorphic triangle},
that is the Riemann surface consisting of a closed triangle with
the three vertices removed (equivalently, a closed disk with three
boundary points removed). Let $\Sigma_{\Upsilon}^{\Lambda}$ be the
{\em pair-of-pants} Riemann surface, that is the sphere with three
points removed.

\begin{figure}[t]
 \begin{center}
  \setlength{\unitlength}{0.01cm}
  \begin{picture}(1200,300)(0,0)
   \put(200,20){\epsfig{file=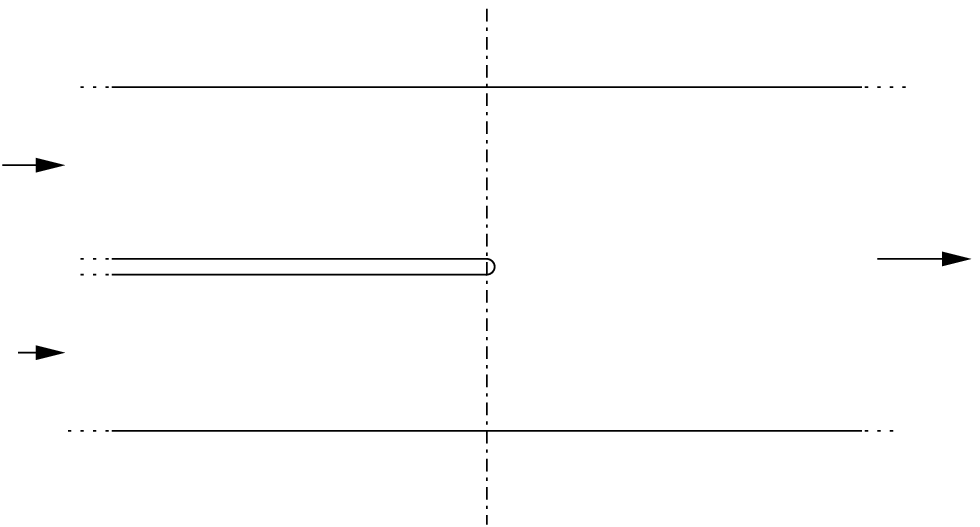,width=7cm}}
   \put(510,0){\makebox(0,0)[lb]{$s=0$}}
   \put(800,0){\makebox(0,0)[lb]{$s\to\infty$}}
  \end{picture}
 \end{center}
\caption{The strip with a slit $\Sigma_{\Upsilon}^{\Omega}$. 
\label{strip-slit}}
\end{figure}

The Riemann surface $\Sigma_{\Upsilon}^{\Omega}$ can be described
as a strip with a slit: One takes the disjoint union
\[
\R \times [-1,0] \sqcup \R \times [0,1]
\]
and identifies $(s,0^-)$ with $(s,0^+)$ for every $s\geq 0$. 
See Figure \ref{strip-slit}. The
resulting object is indeed a Riemann surface with interior
\[
\mathrm{Int}(\Sigma_{\Upsilon}) = (\R \times ]-1,1[ )
\setminus (]-\infty,0] \times \{0\})
\]
endowed with the complex structure of a subset of $\R^2\cong
\C$, $(s,t)\mapsto s+it$, and three boundary components
\[
\R \times \{-1\}, \quad \R \times \{1\}, \quad ]-\infty,0]\times
\{0^-,0^+\}.
\]
The complex structure at each boundary point other than
$0=(0,0)$ is induced by the inclusion in $\C$, whereas a
holomorphic coordinate at $0$ is given by the map
\begin{equation}
\label{holcor1} \set{\zeta\in \C}{\re \zeta\geq 0, \; |\zeta|<1}
\rightarrow \Sigma_{\Upsilon}^{\Omega} , \quad \zeta \mapsto
\zeta^2,
\end{equation}
which maps the boundary line $\{\re \zeta=0,\; |\zeta|<1\}$ into
the portion of the boundary $]-1,0] \times \{0^-,0^+\}$. 

Similarly, the pair-of-pants $\Sigma_{\Upsilon}^{\Lambda}$ can be
described as the following quotient of a strip with a slit: In the
disjoint union $\R \times [-1,0] \sqcup \R \times
[0,1]$ we consider the identifications
\[
\begin{array}{c} (s,-1) \sim (s,0^-) \\ (s,0^+) \sim (s,1)
\end{array} \quad \mbox{for } s\leq 0, \quad
\begin{array}{c} (s,0^-) \sim (s,0^+) \\ (s,-1) \sim (s,1)
\end{array} \quad \mbox{for } s\geq 0.
\]
See Figure \ref{pants}.

\begin{figure}[t]
 \begin{center}
  \setlength{\unitlength}{0.01cm}
  \begin{picture}(1200,300)(0,0)
   \put(0,0){\epsfig{file=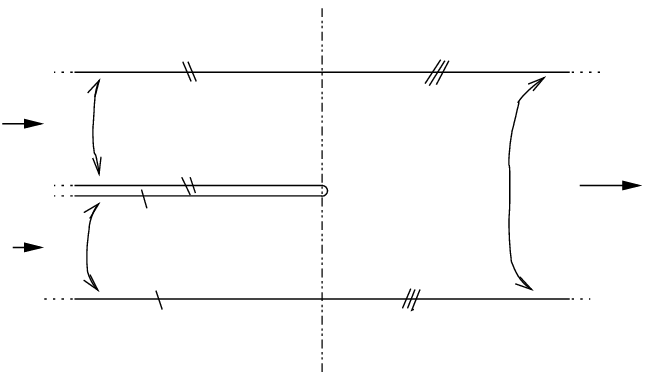,width=5cm}}
   \put(540,150){\makebox(0,0)[lb]{$\simeq$}}
   \put(620,60){\epsfig{file=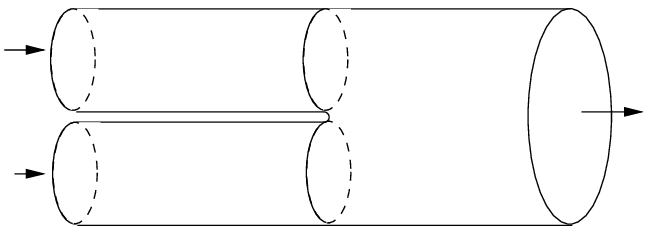,width=5cm}}
  \end{picture}
 \end{center}
\caption{The pair-of-pants $\Sigma_{\Upsilon}^{\Lambda}$. 
         \label{pants}}
\end{figure}

This object is a Riemann surface without boundary, by considering
the standard complex structure at every point other than $(0,0)
\sim (0,-1) \sim (0,1)$, and by choosing the holomorphic
coordinate
\begin{equation}
\label{holcor2} \set{\zeta\in \C}{|\zeta| <1/\sqrt{2}} \rightarrow
\Sigma_{\Upsilon}^{\Lambda}, \quad \zeta \mapsto \left\{
\begin{array}{ll} \zeta^2 & \mbox{if } \re \zeta\geq 0, \\
\zeta^2+i & \mbox{if } \re \zeta\leq 0, \; \im \zeta \geq 0, \\
\zeta^2-i & \mbox{if } \re \zeta\leq 0, \; \im \zeta\leq 0,
\end{array} \right.
\end{equation}
at this point.

The advantage of these representations is that now
$\Sigma_{\Upsilon}^{\Omega}$ and $\Sigma_{\Upsilon}^{\Lambda}$ are
endowed with a global coordinate $z=s+it$, which is holomorphic
everywhere except at the point $(0,0)$ (identified with $(0,-1)$
and $(0,1)$ in the $\Lambda$ case). We refer to such a point
as the {\em singular} point: it is a regular point for the
complex structure of $\Sigma_{\Upsilon}^{\Omega}$ or
$\Sigma_{\Upsilon}^{\Lambda}$, but it is singular for the global
coordinate $z=s+it$. In fact, the canonical map
\[
\Sigma_{\Upsilon}^{\Lambda} \rightarrow \R \times \T, \quad (s,t) \mapsto (s,t),
\]
is a $2:1$ branched covering of the cylinder.

Let $H\in C^{\infty}([-1,1]\times T^*M)$. If $u\in
C^{\infty}(\Sigma_{\Upsilon}^{\Omega},T^*M)$, the complex anti-linear
one-form $F_{J,H}(u)$ is everywhere defined by equation
(\ref{hamter}). We just need to check the regularity of
$F_{J,H}(u)$ at the singular point. Writing $F_{J,H}(u)$ in terms
of the holomorphic coordinate $\zeta=\sigma+i\tau$ by means of
(\ref{holcor1}), we find
\[
F_{J,H}(u) = (\tau I - \sigma J(u))
X_H(2\sigma\tau,u) \, d\sigma + (\sigma I +\tau J(u))
X_H(2\sigma \tau,u) \, d\tau.
\]
Therefore, $F_{J,H}(u)$ is smooth, and actually it vanishes at the
singular point.

Assume now that $H\in C^{\infty}(\R/2\Z \times T^*M)$ is such that
$H(-1,\cdot)=H(0,\cdot)=H(1,\cdot)$ with all the time derivatives. 
If $u\in C^{\infty}(\Sigma^{\Lambda}_{\Upsilon},T^*M)$, (\ref{hamter})
defines a smooth complex anti-linear one-form $F_{J,H}(u)\in
\Omega^{0,1}(\Sigma^{\Lambda}_{\Upsilon},u^*(TT^*M))$.

A map $u$ in $C^{\infty}(\Sigma_{\Upsilon}^{\Omega},T^*M)$ or in
$C^{\infty}(\Sigma^{\Lambda}_{\Upsilon},T^*M)$ solves equation
(\ref{fleqr}) if and only if it solves the equation
\[
\delbar_{J,H}(u) = \partial_s u + J(u) (\partial_t u - X_H(t,u)) = 0
\]
on $\mathrm{Int}(\Sigma_{\Upsilon})$. If $u$ solves the above
equation on $[s_0,s_1] \times [t_0,t_1]$, formula (\ref{dafp})
together with an integration by parts leads to the identity
\begin{eqnarray*}
\int_{s_0}^{s_1} \int_{t_0}^{t_1} |\partial_s
u(s,t)|^2 \, dt\, ds = \mathbb{A}_H^{[t_0,t_1]}
(u(s_0,\cdot)) - \mathbb{A}_H^{[t_0,t_1]} (u(s_1,\cdot)) \\ +
\int_{s_0}^{s_1} \bigl( \eta(u(s,t_1))[\partial_s u(s,t_1)] -
\eta(u(s,t_0))[\partial_s u(s,t_0)] \bigr)\, ds,
\end{eqnarray*}
where $\mathbb{A}_H^I(x)$ denotes the Hamiltonian
action of the path $x$ on the interval $I$. We conclude that a
solution $u$ of (\ref{fleqr}) on $\Sigma_{\Upsilon}^{\Lambda}$ or
on $\Sigma_{\Upsilon}^{\Omega}$ -- in the latter case with values
in $T^*_{q_0} M$ on the boundary -- satisfies the sharp energy
identity
\begin{equation}
\label{sei}
\begin{split}
\int_{\Sigma_{\Upsilon} \cap \{|s|\leq s_0\}} |\partial_s u(s,t)|^2 \, ds\,
dt \\ = \mathbb{A}_H^{[-1,0]}(u(-s_0,\cdot)) +
\mathbb{A}_{H}^{[0,1]}(u(-s_0,\cdot)) -
\mathbb{A}_H^{[-1,1]}(u(s_0,\cdot)).
\end{split}
\end{equation}

\subsection{The triangle and the pair-of-pants products}
\label{tppp}

Given $H_1,H_2\in C^{\infty}([0,1]\times T^*M)$ such that
$H_1(1,\cdot)=H_2(0,\cdot)$ with all time derivatives, we define
$H_1 \# H_2 \in C^{\infty}([0,1] \times T^*M)$ by
\begin{equation}
\label{canch}
H_1 \# H_2 (t,x) = \left\{ \begin{array}{ll} 2H_1 (2t,x) &
\mbox{for } 0\leq t \leq 1/2, \\ 2H_2(2t-1,x) & \mbox{for } 1/2
\leq t \leq 1. \end{array} \right.
\end{equation}
Let us assume that $H_1$, $H_2$, and $H_1 \# H_2$ satisfy
(H0)$^{\Omega}$. The {\em triangle product} on $HF^{\Omega}(T^*M)$
will be induced by a chain map
\[
\Upsilon^{\Omega} : F_h^{\Omega}(H_1) \otimes
F_k^{\Omega}(H_2) \rightarrow F_{h+k}^{\Omega}(H_1 \# H_2).
\]
In the periodic case, we consider Hamiltonians $H_1,H_2 \in
C^{\infty}(\T\times T^*M)$  such that $H_1(0,\cdot) =
H_2(0,\cdot)$ with all time
derivatives. Assuming that $H_1$, $H_2$, and $H_1 \# H_2$ satisfy
(H0)$^{\Lambda}$, the {\em pair-of-pants product} on
$HF^{\Lambda}(T^*M)$ will be induced by a chain map
\[
\Upsilon^{\Lambda} : F_h^{\Lambda}(H_1) \otimes
F_k^{\Lambda}(H_2) \rightarrow F_{h+k-n}^{\Lambda}(H_1 \#
H_2),
\]
where $n$ is the dimension of $M$.

Let $H\in C^{\infty}([-1,1]\times T^*M)$, respectively $H\in
C^{\infty}(\R/2\Z \times T^*M)$, be defined by
\begin{equation}
\label{hham}
H(t,x) = \frac{1}{2} H_1 \# H_2((t+1)/2,x) = \left\{
\begin{array}{ll} H_1(t+1,x) & \mbox{if } -1\leq t \leq 0, \\
H_2(t,x) & \mbox{if } 0\leq t \leq 1. \end{array} \right.
\end{equation}
Notice that $x: [-1,1] \rightarrow T^*M$ is an orbit of $X_H$ if
and only if the curve $t\mapsto x((t+1)/2)$ is an orbit of $X_{H_1
\# H_2}$.

Given $x_1\in \mathscr{P}^{\Omega}(H_1)$, $x_2\in
\mathscr{P}^{\Omega}(H_2)$, and $y\in \mathscr{P}^{\Omega}(H_1 \#
H_2)$, consider the following space of solutions of the Floer equation $\delbar_{J,H}(u)=0$ on the holomorphic triangle:
\begin{eqnarray*}
\mathscr{M}^{\Omega}_{\Upsilon}(x_1,x_2;y) := \Bigl\{ u \in
C^{\infty}(\Sigma_{\Upsilon}^{\Omega},T^*M) \, \Big| \, \delbar_{J,H}(u)=0, \;  u(z) \in T_{q_0}^* M \; \forall z\in
\partial \Sigma_{\Upsilon}^{\Omega}, \\ \lim_{s\rightarrow
-\infty} u(s,t-1) = x_1(t), \; \lim_{s\rightarrow -\infty} u(s,t)
= x_2(t), \; \lim_{s\rightarrow +\infty} u(s,2t-1) = y(t)\Bigr\}.
\end{eqnarray*}
Similarly, for $x_1\in \mathscr{P}^{\Lambda}(H_1)$, $x_2\in
\mathscr{P}^{\Lambda}(H_2)$, and $y\in \mathscr{P}^{\Lambda}(H_1
\# H_2)$, we consider the following space of solutions of the Floer equation on
the pair-of-pants surface:
\begin{eqnarray*}
\mathscr{M}^{\Lambda}_{\Upsilon}(x_1,x_2;y) := \Bigl\{ u \in
C^{\infty}(\Sigma_{\Upsilon}^{\Lambda},T^*M) \, \Big| \, \delbar_{J,H}(u)=0,\; \lim_{s\rightarrow -\infty} u(s,t-1) =
x_1(t), \\ \lim_{s\rightarrow -\infty} u(s,t) = x_2(t), \;
\lim_{s\rightarrow +\infty} u(s,2t-1) = y(t) \Bigr\}.
\end{eqnarray*}
The following result is proved in Section \ref{lin}.

\begin{prop}
\label{popss}
For a generic choice of $H_1$ and $H_2$ as above, the sets
$\mathscr{M}_{\Upsilon}^{\Omega}(x_1,x_2;y)$ and
$\mathscr{M}_{\Upsilon}^{\Lambda}(x_1,x_2;y)$ -- if non-empty --
are manifolds of dimension
\begin{equation*}\begin{split}
&\dim \mathscr{M}_{\Upsilon}^{\Omega}(x_1,x_2;y) =
\mu^{\Omega}(x_1) + \mu^{\Omega}(x_2) - \mu^{\Omega}(y), \\
&\dim \mathscr{M}_{\Upsilon}^{\Lambda}(x_1,x_2;y) =
\mu^{\Lambda}(x_1) + \mu^{\Lambda}(x_2) - \mu^{\Lambda}(y)-n.
\end{split}\end{equation*}
These manifolds carry coherent orientations.
\end{prop}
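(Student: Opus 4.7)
The plan is to realize $\mathscr{M}^{\Omega}_{\Upsilon}(x_1,x_2;y)$ and $\mathscr{M}^{\Lambda}_{\Upsilon}(x_1,x_2;y)$ as solution spaces of a Cauchy--Riemann type equation on the strip $\R\times[0,1]$ with jumping Lagrangian boundary conditions of conormal type, and then to invoke the linear Fredholm theory developed in Section~\ref{lineartheory} together with a standard Sard--Smale transversality argument. Using the strip-with-slit descriptions of $\Sigma^{\Omega}_{\Upsilon}$ and $\Sigma^{\Lambda}_{\Upsilon}$ from Section~\ref{Fetpp}, I will unfold a map $u$ on the triangle (resp.\ pair-of-pants) into a pair
\[
w(s,t)=\bigl(v_1(s,t),v_2(s,t)\bigr):=\bigl(u(s,t-1),u(s,t)\bigr),\qquad (s,t)\in\R\times[0,1],
\]
with values in $T^*(M\times M)$. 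Then $w$ solves the Floer equation for $H_1\oplus H_2$ on $T^*Q$ with $Q=M\times M$, and the slit matching conditions, together with the original boundary conditions, translate into nonlocal conormal boundary conditions with a single jump at $s=0$. The computation in Section~\ref{Fetpp}, where $F_{J,H}$ is expressed in the square-root chart $\zeta\mapsto\zeta^2$ and shown to be smooth and to vanish at the singular point, ensures that smooth finite-energy solutions on $\Sigma_{\Upsilon}$ correspond bijectively to smooth solutions of this strip problem.

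Explicitly, the pair of submanifolds of $(M\times M)\times(M\times M)$ involved in the jump is
\[
R_0=\bigl\{\bigl((q_0,q_0),(q_0,q_0)\bigr)\bigr\}, \qquad R_1=\bigl\{\bigl((q_0,q),(q,q_0)\bigr):q\in M\bigr\}
\]
in the triangle case, and
\[
R_0=\Delta_{M\times M}, \qquad R_1=\bigl\{\bigl((q_1,q_2),(q_2,q_1)\bigr):q_1,q_2\in M\bigr\}
\]
in the pair-of-pants case. Both intersections are clean: $R_0\cap R_1=R_0$ in the triangle case, so $\dim R_0-\dim(R_0\cap R_1)=0$, while $R_0\cap R_1=\Delta^{(4)}_M$ in the pair-of-pants case, so $\dim R_0-\dim(R_0\cap R_1)=2n-n=n$. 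The hypotheses (H0)$^{\Omega}$ or (H0)$^{\Lambda}$ applied to $H_1$, $H_2$, and $H_1\#H_2$ yield the required non-degeneracy for the asymptotic orbits $(x_1,x_2)$ at $s=-\infty$ and $\tilde y(t)=(y(t/2),y((t+1)/2))$ at $s=+\infty$ with respect to the boundary conditions $N^*R_0$ and $N^*R_1$, respectively. A direct computation from (\ref{mascon}) gives the Maslov index decompositions $\mu^{R_0}(x_1,x_2)=\mu^{\Omega}(x_1)+\mu^{\Omega}(x_2)$ (resp.\ $\mu^{\Lambda}(x_1)+\mu^{\Lambda}(x_2)$) and $\mu^{R_1}(\tilde y)=\mu^{\Omega}(y)$ (resp.\ $\mu^{\Lambda}(y)$): the first identity reflects the decoupling of the conormal boundary condition over the two factors, while the second holds because the reparametrization $y\mapsto\tilde y$ is symplectic and vertical-preserving. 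Substituting into the Fredholm index formula of Section~\ref{lineartheory} yields precisely the dimensions claimed in the proposition.

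Transversality is obtained by the standard Sard--Smale scheme. I fix a Banach manifold $\mathcal{H}$ of perturbations of $(H_1,H_2)$ preserving (H0), (H1), (H2) and the finite sets of asymptotic orbits, form the universal moduli space, and prove surjectivity of its linearization using variations of the Hamiltonian supported in an open subset of the interior of $\Sigma_{\Upsilon}$ where $u$ is somewhere injective. The projection to $\mathcal{H}$ is then Fredholm of the claimed index, and its regular values form a generic subset of $\mathcal{H}$. Coherent orientations are constructed in the usual way by trivializing the determinant line bundle of the linearized Cauchy--Riemann operator and matching the gluing conventions already fixed for the Floer complexes $F^{\Omega}_*(H)$, $F^{\Lambda}_*(H)$, and their figure-8 and diagonal analogues. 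The main technical obstacle will be establishing somewhere-injectivity of $u$ in the presence of the jumping boundary at $s=0$ and of the singular conformal point; one must verify that perturbations of the Hamiltonian supported in the interior of the strip probe every direction in the cokernel of the linearized operator, which combines unique continuation for $\bar\partial$-type operators with the smooth extension of $F_{J,H}$ across the singular point proved in Section~\ref{Fetpp}.
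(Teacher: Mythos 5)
Your overall route is the same as the paper's: represent the triangle and pair-of-pants as strips with a slit as in Section~\ref{Fetpp}, fold the solution into a map on the strip $\R\times[0,1]$ solving a Floer equation with nonlocal conormal boundary conditions that jump once at $s=0$, and then invoke the index formula of Corollary~\ref{corfred} from Section~\ref{lin}. The difference is in the folding map. The paper uses $v(z)=(\mathscr{C}u(\overline z),u(z))$, which introduces a time-reversal and $\mathscr{C}$-conjugation in the first factor, at the cost of replacing $H_1\oplus H_2$ by the twisted Hamiltonian $K(t,x_1,x_2)=H_1(1-t,\mathscr{C}x_1)+H_2(t,x_2)$, but with the benefit that the boundary submanifold at $s>0$ becomes the product $\Delta_M\times\Delta_M$. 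You use $w(s,t)=(u(s,t-1),u(s,t))$, which keeps the Hamiltonian as $H_1\oplus H_2$ but makes the $s>0$ boundary submanifold the swap diagonal $\{((q_1,q_2),(q_2,q_1))\}$ rather than a product. Both choices are legitimate, both pairs $(R_0,R_1)$ are partially orthogonal, and the arithmetic of $\dim R_0-\dim R_0\cap R_1$ agrees, so the final dimension formula comes out right either way.

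The gap in your argument is the Maslov index identity $\mu^{R_1}(\tilde y)=\mu^{\Lambda}(y)$ (resp.\ $\mu^{\Omega}(y)$), where $\tilde y(t)=(y(t/2),y((t+1)/2))$ and $R_1$ is the swap diagonal (resp.\ $\{(q_0,q,q,q_0):q\in M\}$). You justify this with ``the reparametrization $y\mapsto\tilde y$ is symplectic and vertical-preserving,'' but this is not enough: the two sides involve different Lagrangian boundary conditions in spaces of different dimension, and the identity requires tracking how the path $\graf\tilde GC$ of Lagrangian graphs built from the split linearized flow $G(t/2)\times G((t+1)/2)G(1/2)^{-1}$ interacts with $N^*R_1$, and homotoping this to the original $(\graf GC,N^*\Delta_{\R^n})$ pair. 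The paper devotes Lemmas~\ref{lemulo} and~\ref{lemule} precisely to the analogous computation for its own folding; the proofs use explicit symplectic homotopies and the vanishing of H\"ormander indices of quadruples of conormals (Proposition~\ref{lemhor}), none of which is automatic. Your choice of folding forces you to redo this work for the swap diagonal rather than the product $\Delta_M\times\Delta_M$, which is arguably less convenient because the swap diagonal is not a product and Lemma~\ref{lemulo}'s direct-sum structure is not immediately available. The analogous decomposition $\mu^{\Delta_{M^2}}(x_1,x_2)=\mu^{\Lambda}(x_1)+\mu^{\Lambda}(x_2)$ at $s=-\infty$ is fine, since there the boundary condition really is the periodic one on $T^*M^2$ and the Conley--Zehnder index is additive under direct sums. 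You should either prove the $\mu^{R_1}(\tilde y)=\mu^{\Lambda}(y)$ identity by a homotopy argument as in Lemma~\ref{lemule}, or switch to the paper's $\mathscr{C}$-conjugated folding so that Lemmas~\ref{lemulo} and~\ref{lemule} apply directly.
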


The energy identity (\ref{sei}) implies that every map $u$ in
$\mathscr{M}_{\Upsilon}^{\Omega}(x_1,x_2;y)$ or in
$\mathscr{M}_{\Upsilon}^{\Lambda}(x_1,x_2;y)$ satisfies
\begin{equation}
\label{seir} \int_{\Sigma_{\Upsilon}} |\partial_s u(s,t)|^2 \, ds \,dt =
\mathbb{A}_{H_1}(x_1) + \mathbb{A}_{H_2}(x_2) - \mathbb{A}_{H_1
\# H_2} (y).
\end{equation}
As a consequence, we obtain the following compactness result, which
is proved in Section \ref{compsec}.

\begin{prop}
\label{comprop1}
Assume that the Hamiltonians $H_1$ and $H_2$ satisfy (H1), (H2).  Then the spaces
$\mathscr{M}_{\Upsilon}^{\Omega}(x_1,x_2;y)$ and
$\mathscr{M}_{\Upsilon}^{\Lambda}(x_1,x_2;y)$ are pre-compact in
$C^{\infty}_{\mathrm{loc}}$.
\end{prop}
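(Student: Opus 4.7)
The plan is the standard three-step Floer compactness argument adapted to the strip-with-slit geometry of $\Sigma_\Upsilon$: energy bound, then $C^0$-bound, then gradient bound, and finally elliptic bootstrap. The sharp energy identity (\ref{seir}) already provides the first step, bounding $\|\partial_s u\|_{L^2(\Sigma_\Upsilon)}^2$ by a constant depending only on the asymptotic data $x_1, x_2, y$.

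Next I would upgrade this to a uniform $C^0$-bound, confining the image of any $u\in \mathscr{M}^{\Omega}_{\Upsilon}(x_1,x_2;y)\cup\mathscr{M}^{\Lambda}_{\Upsilon}(x_1,x_2;y)$ to a fixed compact subset of $T^*M$. This is the only place where the non-compactness of $T^*M$ matters. Away from the singular point of $\Sigma_\Upsilon$ we have the global holomorphic coordinate $z=s+it$, and the Hamiltonian (\ref{hham}) inherits the quadratic growth conditions (H1)--(H2) from $H_1, H_2$; the maximum-principle argument of \cite{as06}, which shows that $|p\circ u|^2$ is a subsolution of a suitable semilinear elliptic inequality, then yields the desired $L^\infty$-bound in terms of the energy and the asymptotic norms. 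At the singular point we pass to the local chart (\ref{holcor1}) or (\ref{holcor2}); as verified in Section \ref{Fetpp}, the Hamiltonian one-form $F_{J,H}(u)$ is smooth in the variable $\zeta=\sigma+i\tau$ and in fact vanishes at $\zeta=0$, so the equation $\overline{D}_J u + F_{J,H}(u)=0$ is a genuine smooth Cauchy--Riemann-type equation across the singular point, and the $C^0$-estimate extends to a neighborhood of it by continuity.

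Once the image of $u$ is confined to a fixed compact $K\subset T^*M$, I would rule out gradient blow-up by the standard bubbling dichotomy. If $\|du_n\|_{L^\infty}\to\infty$ for a sequence $u_n$ in either moduli space, then rescaling around a point of maximal gradient would produce in the limit a non-constant $J$-holomorphic plane extending either to a sphere in $T^*M$ (interior bubble) or to a disk in $T^*M$ with boundary in the Lagrangian fiber $T_{q_0}^*M$ (boundary bubble, only in the $\Omega$-case, since $\Sigma_\Upsilon^\Lambda$ has no boundary). Both are excluded by exactness: the Liouville form $\eta$ is a primitive of $\omega$ on $T^*M$ and vanishes on $T_{q_0}^*M$, so by Stokes any such sphere or disk has zero symplectic area and hence must be constant. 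A bubble forming at the singular point is excluded by the same reasoning in the $\zeta$-coordinate.

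Combining the uniform $C^0$-bound with the absence of bubbling yields uniform $W^{1,\infty}$-bounds; standard elliptic bootstrapping for the smooth operator $\overline{D}_J + F_{J,H}$, valid across the singular point by the remarks above, then gives uniform $C^k_{\mathrm{loc}}$-bounds for every $k$, which is the claimed $C^\infty_{\mathrm{loc}}$ pre-compactness. The main obstacle is the $C^0$-bound, since that is the step where both the non-compactness of the target and the singularity of the global coordinate on $\Sigma_\Upsilon$ interact; here the square-root coordinate of Section \ref{Fetpp} is precisely what ensures that the estimates of \cite{as06} for the strip or the cylinder transfer to $\Sigma_\Upsilon$ without essential modification.
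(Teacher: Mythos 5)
Your overall architecture — energy identity, then $C^0$-bound, then bubbling exclusion via exactness, then elliptic bootstrap — is the correct skeleton, and the last two steps are essentially as in the paper. The gap is in the $C^0$-bound, which is indeed the crux.

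You attribute the $L^\infty$-estimate to a maximum-principle argument in \cite{as06} showing that $|p\circ u|^2$ is a subsolution of a semilinear elliptic inequality. That is not what \cite{as06} does, and in fact the paper explicitly distinguishes the two methods: Remark \ref{3.3} notes that the maximum-principle route belongs to the contact-type almost complex structure setting (as in \cite{vit96}), and requires \emph{more} restrictive Hamiltonian growth than (H1)--(H2); under the metric $J$ of (\ref{acsj}) with merely quadratic growth this route is not available. What \cite[Theorem 1.14]{as06} and Proposition \ref{gencomp} of the present paper actually do is rather different: first, the energy identity and (H1)--(H2) give uniform $L^2$-bounds on $u$ and $\nabla u$ on finite strips (Lemma \ref{illem}); then one embeds $T^*Q\hookrightarrow\C^N$, rewrites the Floer equation as $\delbar u = J_0 X_H(t,u)$ with right-hand side growing quadratically in $u$ (\ref{cresc}), and uses Calderon--Zygmund elliptic estimates to bootstrap the $L^2$-control to $W^{1,p}$ for $p>2$, hence $L^\infty$. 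This is an interpolation/elliptic-regularity argument, not a maximum principle.

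There is a second, related issue: your treatment of the singular point is too casual. "The $C^0$-estimate extends to a neighborhood by continuity" does not give a \emph{uniform} bound across a family of solutions. The paper's strategy is to reformulate both $\mathscr{M}_{\Upsilon}^{\Lambda}$ and $\mathscr{M}_{\Upsilon}^{\Omega}$ (via the map $v(z)=(\mathscr{C}u(\overline z),u(z))$, cf. (\ref{lavu})) as solution spaces for the Floer equation on the ordinary strip $\Sigma$ with \emph{jumping} conormal boundary conditions, precisely so that the singular point of the slit becomes a jump point on $\partial\Sigma$, and the weighted Sobolev spaces $X^p_{\mathscr{S}}$ and the elliptic estimate of Lemma \ref{czj} (valid through the jump) apply directly. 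That reformulation, combined with Lemma \ref{illem} and Proposition \ref{compemb}, is what produces the uniform $L^\infty$-bound of Proposition \ref{gencomp}; nothing in your sketch substitutes for it.

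Once the $C^0$-bound is in hand, your bubbling and bootstrap arguments match the paper (including the exclusion of disk bubbles on the Lagrangian fiber in the $\Omega$-case via exactness of $\eta$ and its vanishing on $T^*_{q_0}M$), so no changes are needed there.
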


When $\mu^{\Omega}(y) = \mu^{\Omega}(x_1) + \mu^{\Omega}(x_2)$,
$\mathscr{M}_{\Upsilon}^{\Omega}(x_1,x_2;y)$ is a finite set of
oriented points, and we denote by
$n_{\Upsilon}^{\Omega}(x_1,x_2;y)$ the algebraic sum of the
corresponding orientation signs. Similarly, when $\mu^{\Lambda}(y)
= \mu^{\Lambda}(x_1) + \mu^{\Lambda}(x_2)-n$,
$\mathscr{M}_{\Upsilon}^{\Lambda}(x_1,x_2;y)$ is a finite set of
oriented points, and we denote by
$n_{\Upsilon}^{\Lambda}(x_1,x_2;y)$ the algebraic sum of the
corresponding orientation signs. These integers are the
coefficients of the homomorphisms
\begin{equation*}\begin{split}
\Upsilon^{\Omega} : &F_h^{\Omega}(H_1) \otimes F_k^{\Omega}(H_2)
\rightarrow F_{h+k}^{\Omega} (H_1 \# H_2), \quad \\
&x_1 \otimes x_2 \mapsto
\sum_{\substack{y\in \mathscr{P}^{\Omega}(H_1 \# H_2)\\
\mu^{\Omega}(y)=h+k}}
n_{\Upsilon}^{\Omega}(x_1,x_2;y)\, y, \\
\Upsilon^{\Lambda} : &F_h^{\Lambda} (H_1) \otimes
F_k^{\Lambda}(H_2) \rightarrow F_{h+k-n}^{\Lambda} (H_1 \# H_2),
\quad\\
 &x_1 \otimes x_2 \mapsto\sum_{\substack{y\in \mathscr{P}^{\Lambda}(H_1 \# H_2)\\
\mu^{\Lambda}(y)=h+k-n}} n_{\Upsilon}^{\Lambda}(x_1,x_2;y)\, y.
\end{split}\end{equation*}
A standard gluing argument shows that the homomorphisms
$\Upsilon^{\Omega}$ and $\Upsilon^{\Lambda}$ are chain maps.
Therefore, they define products
\begin{eqnarray*}
H_*\Upsilon^{\Omega} : HF_h^{\Omega}(T^*M) \otimes HF_k^{\Omega}
(T^*M) \rightarrow HF_{h+k}^{\Omega}(T^*M), \\
H_*\Upsilon^{\Lambda} : HF_h^{\Lambda}(T^*M) \otimes HF_k^{\Lambda}
(T^*M) \rightarrow HF_{h+k-n}^{\Lambda}(T^*M),
\end{eqnarray*}
in homology. Again by gluing arguments, it could be shown that
these products have a unit element, are associative, and the
second one is commutative. These facts will actually follow from
the fact that these products correspond to the Pontrjagin and the
loop products on $H_*(\Omega(M,q_0))$ and $H_*(\Lambda(M))$.

\subsection{Factorization of the pair-of-pants product}
\label{fpop}

Let $H_1,H_2\in C^{\infty}(\T\times T^*M)$ be two Hamiltonians
satisfying (H0)$^{\Lambda}$, (H1), and (H2). We assume that
$H_1(0,\cdot)=H_2(0,\cdot)$ with all time derivatives, so that
the Hamiltonian $H_1 \# H_2$ defined in (\ref{canch}) also belongs to $C^{\infty}(\T\times T^*M)$. We 
assume that $H_1 \# H_2$ satisfies (H0)$^{\Lambda}$, while $H_1
\oplus H_2$ satisfies (H0)$^{\Theta}$. The aim of this section is to construct two chain maps
\begin{eqnarray*}
E: F_h^{\Lambda}(H_1) \otimes F_k^{\Lambda}(H_2)
\rightarrow F_{h+k-n}^{\Theta}(H_1 \oplus H_2), \\
G: F_k^{\Theta}(H_1 \oplus H_2) \rightarrow
F_k^{\Lambda} (H_1 \# H_2),
\end{eqnarray*}
such that the composition $G\circ E$ is chain homotopic to the
pair-of-pants chain map $\Upsilon^{\Lambda}$.

The homomorphisms $E$ is defined by counting solutions of
the Floer equation on the Riemann surface $\Sigma_E$ which is the
disjoint union of two closed disks with an inner and a boundary point
removed. The homomorphism $G$ is defined by counting solutions of
the Floer equation on the Riemann surface $\Sigma_G$ obtained by 
removing one inner point and two boundary points from the closed
disk. 
Again, we find it useful to represent these Riemann surfaces as
suitable quotients of strips with slits.

\begin{figure}[htb]
 \begin{center}
  \setlength{\unitlength}{0.01cm}
  \begin{picture}(1200,400)(0,0)
   \put(0,190){\epsfig{file=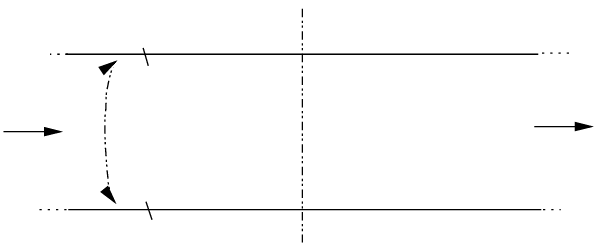,width=5cm}}
   \put(540,280){\makebox(0,0)[lb]{$\simeq$}}
   \put(540,50){\makebox(0,0)[lb]{$\simeq$}}
   \put(620,0){\epsfig{file=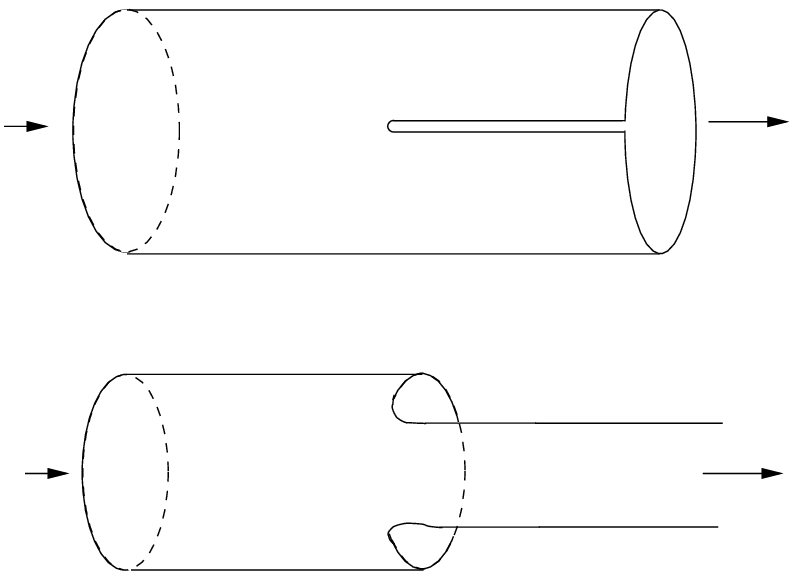,width=5cm}}
  \end{picture}
 \end{center}
\caption{A component of $\Sigma_E$: the cylinder with a slit.
         \label{cyl-slit}} 
\end{figure}

The surface $\Sigma_E$ can be described starting from the disjoint
union of two strips,
\[
\R \times [-1,0]\, \sqcup \, \R \times [0,1],
\]
by making the following identifications:
\[
(s,-1) \sim (s,0^-), \quad (s,0^+) \sim (s,1) \quad \mbox{for } s\leq 0.
\]
The complex structure of $\Sigma_E$ is constructed by
considering the holomorphic coordinate
\begin{equation}
\label{holcor3} 
\set{\zeta\in \C}{\im \zeta \geq 0, \; |\zeta| <
1/\sqrt{2}} \rightarrow \Sigma_E, \quad \zeta \mapsto \left\{ 
\begin{array}{ll} \zeta^2 - i & \mbox{if } \re \zeta \geq 0, \\
  \zeta^2 & \mbox{if } \re \zeta \leq 0, \end{array} \right.
\end{equation}
at $(0,-1)\sim (0,0^-)$, and the holomorphic coordinate
\begin{equation}
\label{holcor4} 
\set{\zeta\in \C}{\im \zeta \geq 0, \; |\zeta| <
1/\sqrt{2}} \rightarrow \Sigma_E, \quad \zeta \mapsto \left\{ 
\begin{array}{ll} \zeta^2 + i & \mbox{if } \re \zeta \geq 0, \\
  \zeta^2 & \mbox{if } \re \zeta \leq 0, \end{array} \right.
\end{equation}
at $(0,0^+) \sim (0,1)$. The resulting object is a Riemann surface
consisting of two disjoint components, each of which is a cylinder
with a slit: each component has one
cylindrical end (on the left-hand side), one strip-like
end and one boundary line (on the right-hand side). 
See Figure \ref{cyl-slit}. The global
holomorphic coordinate $z=s+it$ has two singular points, at
$(0,0^-)\sim (0,-1)$ and at $(0,0^+)\sim (0,1)$.

The Riemann surface $\Sigma_G$ is obtained from the disjoint union
of two strips $\R \times [-1,0] \sqcup \R
\times [0,1]$ by making the identifications:
\[
\left\{ \begin{array}{c} (s,0^-) \sim (s,0^+) \\ (s,-1) \sim (s,1)
\end{array} \right. \quad \mbox{for } s\geq 0.
\]
A holomorphic coordinate at $(0,0)$ is the one given by
(\ref{holcor1}), and a holomorphic coordinate at $(0,-1)\sim
(0,1)$ is:
\begin{equation}
\label{holcor5} \set{\zeta\in \C}{\re \zeta \geq 0, \; |\zeta| <
1} \rightarrow \Sigma_G, \quad \zeta \mapsto \left\{
\begin{array}{ll} \zeta^2 -i & \mbox{if } \im \zeta \geq 0,
\\ \zeta^2 + i & \mbox{if } \im \zeta\leq 0, \end{array}
\right.
\end{equation}
We obtain a Riemann surface with two boundary lines and two
strip-like ends (on the left-hand side), and a cylindrical end (on
the right-hand side). The global holomorphic coordinate $z=s+it$
has two singular points, at $(0,0)$, and at $(0,-1)\sim (0,1)$.

Let $H\in C^{\infty}(\R/2\Z \times T^*M)$ be defined by (\ref{hham}).
Given $x_1\in \mathscr{P}^{\Lambda}(H_1)$, $x_2\in
\mathscr{P}^{\Lambda}(H_2)$, $y=(y_1,y_2) \in
\mathscr{P}^{\Theta}(H_1 \oplus H_2)$, and $z\in
\mathscr{P}^{\Lambda}(H_1 \# H_2)$, we consider the following spaces of maps. The set $\mathscr{M}_E(x_1,x_2;y)$ is the space of solutions $u \in C^{\infty}(\Sigma_E,T^*M)$ of the Floer equation
\[
\delbar_{J,H}(u) = 0,
\]
satisfying the boundary conditions  
\[
\left\{ \begin{array}{l}
\pi \circ u(s,-1) = \pi \circ u(s,0^-) = \pi \circ u(s,0^+) = \pi \circ
u(s,1), \\ u(s,0^-) - u(s,-1) + u(s,1) - u(s,0^+) = 0, \end{array} \right.
\quad \forall s\geq 0, 
\]
and the asymptotic conditions
\begin{eqnarray*}
\lim_{s\rightarrow -\infty} u(s,t-1) =
x_1(t),\; \lim_{s\rightarrow -\infty} u(s,t) = x_2(t),\\
\lim_{s\rightarrow +\infty} u(s,t-1) = y_1(t), \;
\lim_{s\rightarrow +\infty} u(s,t) = y_2(t).
\end{eqnarray*}
The set $\mathscr{M}_G(y,z)$ is the set of solutions $u \in C^{\infty}(\Sigma_G,T^*M)$ of the same equation, the same boundary conditions but for $s\leq 0$, and the asymptotic conditions
\[
\lim_{s\rightarrow -\infty} u(s,t-1) = y_1(t), \;
\lim_{s\rightarrow -\infty} u(s,t) = y_2(t), \; \lim_{s\rightarrow
+\infty} u(s,2t-1) = z(t).
\]
The following result is proved in Section \ref{lin}.

\begin{prop}
\label{eg}
For a generic choice of $H_1$ and $H_2$, the spaces
$\mathscr{M}_E(x_1,x_2;y)$ and $\mathscr{M}_G(y,z)$ -- if
non-empty -- are manifolds of dimension
\[
\dim \mathscr{M}_E(x_1,x_2;y) = \mu^{\Lambda}(x_1) +
\mu^{\Lambda}(x_2) - \mu^{\Theta}(y) - n, \quad \dim
\mathscr{M}_G(y,z) = \mu^{\Theta}(y) - \mu^{\Lambda}(z).
\]
These manifolds carry coherent orientations.
\end{prop}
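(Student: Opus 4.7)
The plan is to derive both dimension formulas from the linear Fredholm theory of Section \ref{lineartheory}, which handles Cauchy-Riemann operators on a strip with Lagrangian boundary conditions of conormal type that jump at finitely many points. The first step is to unfold each of $\Sigma_E$ and $\Sigma_G$, described as strips with a slit, into a single map $U:\R\times[0,1]\to T^*(M\times M)$ by setting $U(s,t):=(u(s,t-1),u(s,t))$. Under this unfolding the identifications at the slits translate into conormal boundary conditions at $t=0$ and $t=1$, associated to two different submanifolds of $(M\times M)\times(M\times M)$ depending on the sign of $s$.

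For $\mathscr{M}_E(x_1,x_2;y)$ the submanifold for $s\leq 0$ is $R_0:=\Delta_{M\times M}$, the diagonal of $(M\times M)\times(M\times M)$, of dimension $2n$, which encodes the periodic boundary conditions that make each component of $\Sigma_E$ a cylinder; the submanifold for $s\geq 0$ is $R_1:=\Delta_M^{(4)}$, of dimension $n$, which encodes the figure-8 condition. These submanifolds intersect cleanly, with $R_0\cap R_1=\Delta_M^{(4)}$. Since $\mu^{R_0}(x_1,x_2)=\mu^{\Lambda}(x_1)+\mu^{\Lambda}(x_2)$ and $\mu^{R_1}(y)=\mu^{\Theta}(y)$, the general index formula recalled in the introduction gives
\[
\mu^{R_0}(x_1,x_2)-\mu^{R_1}(y)-(\dim R_0-\dim R_0\cap R_1) = \mu^{\Lambda}(x_1)+\mu^{\Lambda}(x_2)-\mu^{\Theta}(y)-n.
\]
For $\mathscr{M}_G(y,z)$ the roles are reversed: $R_0=\Delta_M^{(4)}$ for $s\leq 0$, whereas for $s\geq 0$ one takes $R_1$ to be the graph of the coordinate swap $(q_1,q_2)\mapsto(q_2,q_1)$ on $M\times M$, of dimension $2n$; its conormal boundary condition encodes the identifications $U_1(s,0)=U_2(s,1)$ and $U_1(s,1)=U_2(s,0)$, which express precisely the reparametrization of a $1$-periodic orbit of $H_1\# H_2$ as a pair $(U_1,U_2)$ on $[0,1]$, so that $\mu^{R_1}(z)=\mu^{\Lambda}(z)$. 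Once more $R_0\cap R_1=\Delta_M^{(4)}$, the correction term $\dim R_0-\dim R_0\cap R_1$ vanishes, and the formula yields $\mu^{\Theta}(y)-\mu^{\Lambda}(z)$.

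Transversality for generic $(H_1,H_2)$ is obtained by the standard Sard-Smale argument applied to the universal moduli space: letting the Hamiltonians vary, the linearization of the Floer section together with the variations in the Hamiltonians is surjective at every solution, thanks to unique continuation and to the freedom to perturb $H_1$ and $H_2$ away from neighborhoods of the asymptotic orbits, as in \cite{fhs96}; the Fredholm property of the projection onto the parameter space and Sard-Smale then yield the stated manifold structure. Coherent orientations are constructed in the usual way, by orienting the determinant line bundles over the spaces of asymptotic Cauchy-Riemann operators with jumping conormal boundary conditions and verifying their compatibility with the gluing maps arising at the boundary of the compactified moduli spaces.

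The main technical obstacle, which prevents a routine appeal to \cite{fhs96}, is the analytical treatment of the two interior singular points of the global coordinate $z=s+it$ on $\Sigma_E$ and $\Sigma_G$, namely the endpoints of the slits. Near such a singular point, the square-root charts (\ref{holcor3}), (\ref{holcor4}), (\ref{holcor5}) convert the Cauchy-Riemann problem into a half-disc problem with a Lagrangian boundary condition that jumps from $N^*R_0$ to $N^*R_1$ at the origin; the correct Sobolev setting has to match the two coordinates at each such point. The Fredholm property and the index identification for such jumping-conormal boundary problems are exactly what Section \ref{lineartheory} provides, with the index computation reduced via the Liouville-type statement of Section \ref{liouvsec}.
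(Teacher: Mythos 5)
Your unfolding is genuinely different from the paper's. You set $U(s,t)=(u(s,t-1),u(s,t))$, a direct pairing of the two strips, so that $U$ solves the Floer equation for the Hamiltonian $H_1\oplus H_2$ on $T^*(M\times M)$. The paper instead uses the reflection trick $v(z)=(\mathscr{C}u(\bar z),u(z))$, which requires the anti-symplectic involution $\mathscr{C}$ to make the reflected component holomorphic again and leads to the transformed Hamiltonian $K(t,x_1,x_2)=H_1(1-t,\mathscr{C}x_1)+H_2(t,x_2)$. For $\mathscr{M}_E$ your choice is actually cleaner: the asymptotic at $s\to-\infty$ is just $(x_1,x_2)$ with periodic boundary condition $\Delta_{M^2}$, and the Maslov identity $\mu^{\Delta_{M^2}}((x_1,x_2);H_1\oplus H_2)=\mu^\Lambda(x_1)+\mu^\Lambda(x_2)$ follows from the ordinary additivity of the Conley--Zehnder index for block-diagonal symplectic paths, with no need for anything like Lemma~\ref{lemulo}. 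The identification of the $s\geq 0$ boundary condition as $N^*\Delta_M^{(4)}$ also checks out, and the correction term $\dim R_0-\dim R_0\cap R_1=2n-n=n$ gives the stated dimension.

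The gap is in your treatment of $\mathscr{M}_G$. With your unfolding the $s\geq 0$ boundary condition correctly becomes the conormal of the graph $\graf\sigma$ of the coordinate swap on $M\times M$, and the asymptotic at $s\to+\infty$ is $w'(t)=(z(t/2),z((t+1)/2))$, viewed as an orbit of $H_1\oplus H_2$ with nonlocal boundary condition in $N^*\graf\sigma$. You then assert $\mu^{\graf\sigma}(w';H_1\oplus H_2)=\mu^{\Lambda}(z;H_1\#H_2)$ on the grounds that $w'$ ``expresses precisely the reparametrization'' of the periodic orbit $z$. This is a heuristic, not a proof. The two Maslov indices are attached to orbits of \emph{different} Hamiltonians on \emph{different} cotangent bundles with \emph{different} boundary conditions, and the definition in (\ref{mascon}) depends on the symplectic path of linearized flows in a vertical-preserving trivialization; it is not a priori invariant under such a repackaging. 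The paper sidesteps this for its own unfolding by reducing to the boundary condition $\Delta_M\times\Delta_M$ and invoking Lemma~\ref{lemule}, whose proof uses an explicit homotopy of symplectic paths fixing the endpoints. Your unfolding produces a different pair (submanifold $\graf\sigma$, asymptotic orbit $w'$), so Lemma~\ref{lemule} does not directly apply. You need an analogous computation --- writing out the block structure $G_{w'}(t)=G_z(t/2)\times\bigl(G_z((1+t)/2)G_z(1/2)^{-1}\bigr)$, tracking where $N^*\graf\sigma$ goes under the chosen trivialization, and exhibiting a homotopy of Lagrangian paths showing that the relative Maslov index equals $\mu(\graf G_z C, N^*\Delta_{\R^n})$ --- or else switch to the paper's reflection unfolding for the $\mathscr{M}_G$ case and quote Lemma~\ref{lemule}. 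The remaining ingredients you cite (Fredholm theory for jumping conormal conditions from Section~\ref{lineartheory}, Sard--Smale transversality via perturbations of $H_1,H_2$ as in \cite{fhs96}, and coherent orientations as in Section~\ref{cosec}) are all handled correctly and follow the paper.
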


The energy identities are now
\begin{equation}
\label{seir1} \int_{\Sigma_E} |\partial_s
u(s,t)|^2 \, ds \,dt = \mathbb{A}_{H_1}(x_1) +
\mathbb{A}_{H_2}(x_2) - \mathbb{A}_{H_1 \oplus H_2} (y),
\end{equation}
for every $u\in \mathscr{M}_E(x_1,x_2;y)$, and
\begin{equation}
\label{seir2} \int_{\Sigma_G} |\partial_s
u(s,t)|^2 \, ds \,dt = \mathbb{A}_{H_1\oplus H_2}(y)
- \mathbb{A}_{H_1 \# H_2} (z),
\end{equation}
for every $u\in \mathscr{M}_G(y,z)$. As usual, they imply the
following compactness result (proved in Section
\ref{compsec}).

\begin{prop}
\label{comprop3}
Assume that $H_1,H_2$ satisfy (H1), (H2). Then the
spaces $\mathscr{M}_E(x_1,x_2;y)$ and $\mathscr{M}_G(y,z)$ are
pre-compact in $C^{\infty}_{\mathrm{loc}}$.
\end{prop}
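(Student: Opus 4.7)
The plan is to run the same three-step compactness machinery already used to establish Proposition \ref{comprop1} -- uniform $L^\infty$-bound, elliptic bootstrapping, exclusion of bubbling -- adapting it to the new Riemann surfaces $\Sigma_E$ and $\Sigma_G$. The only genuinely new point is the presence of two singular points of the global coordinate $z=s+it$ (instead of one), but both are of exactly the same nature as in the triangle/pair-of-pants case and are handled by passing to the local holomorphic coordinates (\ref{holcor3})--(\ref{holcor5}).

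First, I would extract a uniform $L^\infty$ bound on any sequence $(u_n)$ in $\mathscr{M}_E(x_1,x_2;y)$ or $\mathscr{M}_G(y,z)$. The energy identities (\ref{seir1})-(\ref{seir2}) bound $\int_\Sigma |\partial_s u_n|^2\,ds\,dt$ uniformly by a quantity depending only on the asymptotic data. Combining this with hypotheses (H1), (H2) and the Floer equation, the $L^\infty$-estimate for solutions of $\delbar_{J,H}u=0$ on cotangent bundles proved in \cite{as06} (the non-compact analogue of Lemma \ref{compham}) applies: (H1) controls the fiber component in mean-squared sense through the energy and boundary actions, and (H2) together with a subharmonicity-type argument for $|u_n|^2$ upgrades this to a pointwise bound. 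The nonlocal conormal boundary conditions along the slits pose no new difficulty, since the conormal bundle $N^*\Delta_M^{(4)}$ is invariant under the doubling/reflection used in the interior estimates.

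Second, away from the two singular points of the coordinate $z=s+it$, each $u_n$ satisfies the standard inhomogeneous Cauchy-Riemann equation $\partial_s u + J(u)\partial_t u = J(u)X_H(t,u)$ with either no boundary or smooth totally real conormal boundary conditions, so the $L^\infty$ bound together with the local $L^2$ bound on $\partial_s u_n$ yields $C^k_{\mathrm{loc}}$ bounds by the usual elliptic bootstrapping (see e.g.\ \cite[App.\ B]{ms04}). Near each singular point I would change variables to the $\zeta=\sigma+i\tau$ coordinate given by (\ref{holcor3}), (\ref{holcor4}) for $\Sigma_E$ and by (\ref{holcor1}), (\ref{holcor5}) for $\Sigma_G$. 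As computed in Section \ref{Fetpp}, the Hamiltonian term $F_{J,H}(u_n)$ takes the explicit smooth form
\[
F_{J,H}(u_n) = (\tau I - \sigma J(u_n))\,X_H(2\sigma\tau,u_n)\,d\sigma + (\sigma I + \tau J(u_n))\,X_H(2\sigma\tau,u_n)\,d\tau,
\]
and in particular vanishes at $\zeta=0$. Thus in the $\zeta$-coordinate the Floer equation becomes a genuinely smooth perturbed Cauchy-Riemann equation with Lagrangian boundary conditions (if any) that are smooth across $\zeta=0$, to which elliptic regularity applies without modification. The local change of variables contracts the energy (the Dirichlet integral is conformally invariant), so the available uniform $L^2$ bound on $\partial_s u_n$ transfers to a uniform $L^2$ bound on $\partial_\sigma u_n,\partial_\tau u_n$ on a neighborhood of $\zeta=0$, feeding the bootstrapping.

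Third, I would exclude energy concentration. Any bubble would be a non-constant $J$-holomorphic sphere or a $J$-holomorphic disk with boundary on one of the conormal Lagrangians appearing in the boundary conditions. Both are forbidden by exactness: $\omega=d\eta$ on $T^*M$, and the Liouville form $\eta$ pulls back to zero on any conormal bundle (in particular on $N^*\Delta_M^{(4)}$ and on the vertical $T_{q_0}^*M$), so Stokes' theorem forces the symplectic area of any such bubble to be zero. Hence no energy can escape, and the $C^k_{\mathrm{loc}}$-bounds established above give, via Arzelà-Ascoli, the desired $C^\infty_{\mathrm{loc}}$ pre-compactness.

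The main obstacle I anticipate is the bookkeeping at the singular points, specifically making sure that the jump in boundary conditions across each slit is compatible with the smoothness obtained in the $\zeta$-coordinate. Concretely, along the two half-lines $\{\re\zeta = 0\}$ and $\{\im\zeta = 0\}$ meeting at $\zeta=0$, the map $\zeta\mapsto \zeta^2$ sends $u$ into two different boundary components of $\Sigma_E$ (or $\Sigma_G$), and one must verify that the combined conormal boundary condition becomes a single smooth totally real boundary condition in the $\zeta$-chart. This is exactly the analysis already performed for Proposition \ref{comprop1}, and it adapts verbatim because the nonlocal conormal condition for the figure-8 diagonal is obtained by coupling two copies of the same local condition used in that case.
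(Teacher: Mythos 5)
Your proposal follows essentially the same three-step strategy the paper uses (energy identity $\to$ uniform $L^\infty$ bound $\to$ elliptic bootstrap plus no-bubbling-by-exactness) and correctly identifies all the key ingredients: the energy identities, the role of (H1)--(H2) via \cite{as06}, the smoothness of the Hamiltonian term at the singular points from the explicit formula in the $\zeta$-chart, and the exclusion of holomorphic spheres and disks by exactness of $\omega$ and the vanishing of the Liouville form on conormals.

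One inaccuracy worth flagging: you describe the upgrade from energy bound to pointwise bound as proceeding by ``a subharmonicity-type argument for $|u_n|^2$.'' The paper's Proposition \ref{gencomp} does not use a maximum-principle/subharmonicity argument. Instead it first transforms $\mathscr{M}_E$ and $\mathscr{M}_G$ into Cauchy--Riemann problems on a fixed strip $\R\times[0,1]$ with \emph{jumping conormal boundary conditions} (via the doubling map $v(z)=(\mathscr{C}u((i-z)/2),\,u((i+z)/2))$ and an isometric embedding $T^*Q\hookrightarrow\C^N$), uses Lemma \ref{illem} to get a local $W^{1,2}$ bound from the energy, and then obtains the $L^\infty$ bound from the Calder\'on--Zygmund estimates in the weighted $X^p$ spaces of Section \ref{lineartheory} (Lemma \ref{czj} plus the compact embedding of Proposition \ref{compemb}), with $p>2$ so that $X^{1,p}\hookrightarrow L^\infty$. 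Your reference to \cite{as06} is correct and the overall logic is the same, but the technique for the $L^\infty$ step is elliptic, not a maximum-principle argument, and the relevant elliptic theory is precisely the weighted one built for jumping conormal conditions — this matters because a single $\zeta$-chart covers only one singular point, whereas the paper needs the uniform strip-with-jumps estimates to control the whole solution at once. Apart from this, your argument is aligned with the paper's.
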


When $\mu^{\Theta}(y) = \mu^{\Lambda}(x_1) + \mu^{\Lambda}(x_2) -
n$, $\mathscr{M}_E(x_1,x_2;y)$ is a finite set of oriented
points, and we denote by $n_E(x_1,x_2;y)$ the algebraic sum of the
corresponding orientation signs. Similarly, when $\mu^{\Lambda}(z)
= \mu^{\Theta}(y)$, $\mathscr{M}_G(y,z)$ is a finite set of
oriented points, and we denote by $n_G(y,z)$ the algebraic sum of
the corresponding orientation signs. These integers are the
coefficients of the homomorphisms
\begin{equation*}\begin{split}
E : &F_h^{\Lambda}(H_1) \otimes F_k^{\Lambda}(H_2) \rightarrow
F_{h+k-n}^{\Theta} (H_1 \oplus H_2),\\ &\quad x_1 \otimes x_2 \mapsto
\sum_{\substack{y\in \mathscr{P}^{\Theta}(H_1 \oplus H_2)\\
\mu^{\Theta}(y)=h+k-n}}
n_E(x_1,x_2;y)\, y, \\
G : &F_k^{\Theta} (H_1 \oplus H_2) \rightarrow F_k^{\Lambda} (H_1
\# H_2),\\ 
&\quad y \mapsto\sum_{\substack{z\in \mathscr{P}^{\Lambda}(H_1 \# H_2)\\
\mu^{\Lambda}(z)=k}} n_G(y,z)\, z.
\end{split}\end{equation*}

A standard gluing argument shows that these homomorphisms are
chain maps. The main result of this section states that the
pair-of-pants product on $T^*M$ factors through the Floer homology
of figure-8 loops. More precisely, the following chain level result holds:

\begin{thm}
\label{tge}
The chain maps
\begin{eqnarray*}
\Upsilon^{\Lambda}, 
\; G\circ E : \bigl( F^{\Lambda}(H_1) \otimes F^{\Lambda} 
(H_2)\bigr)_k = \bigoplus_{j+h=k} F_j^{\Lambda} (H_1) \otimes
\ F^{\Lambda}_h (H_2)\\
 \longrightarrow
F_{k-n}^{\Lambda}(H_1 \# H_2)
\end{eqnarray*}
are chain homotopic.
\end{thm}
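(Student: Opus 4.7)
The proof will proceed by a one-parameter cobordism argument, constructing for each $R \geq 0$ a Riemann surface $\Sigma^R$ that interpolates between $\Sigma_\Upsilon^\Lambda$ at $R=0$ and the broken configuration $\Sigma_E \sqcup_y \Sigma_G$ at $R=+\infty$. Starting from $\R \times [-1,0] \sqcup \R \times [0,1]$, impose the $E$-type identifications $(s,-1)\sim(s,0^-)$ and $(s,0^+)\sim(s,1)$ for $s \leq 0$, together with the $G$-type identifications $(s,0^-)\sim(s,0^+)$ and $(s,-1)\sim(s,1)$ for $s \geq R$. On the four boundary curves of the middle band $[0,R]\times([-1,0]\sqcup[0,1])$, impose the figure-8 conormal boundary condition associated to $N^*\Delta_M^{(4)}$, and put square-root charts as in (\ref{holcor1})--(\ref{holcor5}) at the corner points. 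Then $\Sigma^0 = \Sigma_\Upsilon^\Lambda$, while as $R \to +\infty$ the middle band stretches into a figure-8 cylindrical neck separating $\Sigma^R$ into $\Sigma_E$ on the left and $\Sigma_G$ on the right.

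With $H$ defined by (\ref{hham}), the key object is the parameterized moduli space
\begin{equation*}
\mathscr{N}(x_1,x_2;z) := \bigl\{(R,u) \,\big|\, R \geq 0,\; u \in C^\infty(\Sigma^R,T^*M),\; \delbar_{J,H}(u) = 0,\; \text{with the above b.c.\ and asymptotes } x_1,x_2,z\bigr\}.
\end{equation*}
The linear theory of Section \ref{lin}, extended to incorporate the parameter $R$, together with generic transversality perturbations of $H_1, H_2$, makes $\mathscr{N}$ a smooth manifold of dimension $\mu^\Lambda(x_1)+\mu^\Lambda(x_2)-\mu^\Lambda(z)-n+1$. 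The energy identity $\int_{\Sigma^R}|\partial_s u|^2\,ds\,dt = \mathbb{A}_{H_1}(x_1)+\mathbb{A}_{H_2}(x_2)-\mathbb{A}_{H_1\#H_2}(z)$ holds uniformly in $R$, yielding $C^\infty_{\mathrm{loc}}$ precompactness as in Proposition \ref{comprop3}. When $\dim\mathscr{N}=0$, the signed count of its elements (all of which lie in $\{R>0\}$, since $\mathscr{M}_\Upsilon^\Lambda$ has virtual dimension $-1$ in that range) defines the coefficients of a homomorphism
\begin{equation*}
P \colon F_h^\Lambda(H_1) \otimes F_k^\Lambda(H_2) \longrightarrow F_{h+k-n+1}^\Lambda(H_1 \# H_2).
\end{equation*}
When $\dim\mathscr{N}=1$, a compactness analysis identifies the ends of $\mathscr{N}$ as three types: (a) $R\to 0$, an oriented bijection with $\mathscr{M}_\Upsilon^\Lambda(x_1,x_2;z)$, contributing $\Upsilon^\Lambda(x_1\otimes x_2)$; (b) $R\to+\infty$, a one-neck gluing bijection with $\bigsqcup_y \mathscr{M}_E(x_1,x_2;y)\times\mathscr{M}_G(y,z)$ over $y\in\mathscr{P}^\Theta(H_1\oplus H_2)$ of index $\mu^\Theta(y)=\mu^\Lambda(x_1)+\mu^\Lambda(x_2)-n$, contributing $(G\circ E)(x_1\otimes x_2)$; (c) Floer cylinder breaking at one of the three cylindrical asymptotes, contributing $\partial P \pm P\partial$. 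Vanishing of the signed boundary count yields the desired identity $\Upsilon^\Lambda - G\circ E = \partial P \pm P\partial$.

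The main obstacle is type (a): the $R=0$ gluing bijection, in which the conformal structure of $\Sigma^R$ varies with the parameter as two slits open up in the pair-of-pants. Following the approach announced in Section \ref{factsec}, this reduces to the standard implicit function theorem. The crucial observation is that the figure-8 boundary condition on the newly opened slits is formulated via exactly the conormal bundle $N^*\Delta_M^{(4)}$ whose vanishing of the Liouville form accounts for the smoothness across the slit in the pair-of-pants; consequently, in the weighted Sobolev setting of Section \ref{lineartheory} the linearized Cauchy-Riemann operators on $\Sigma^R$ depend continuously on $R$ down to $R=0$ and remain uniformly surjective at a transverse pair-of-pants solution. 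The remaining ingredients (transversality on $\mathscr{N}$, coherent orientations, the one-neck gluing at $R=+\infty$, and Floer cylinder breaking at the asymptotes) are routine adaptations of the machinery developed in Sections \ref{lineartheory} and \ref{cocosec}.
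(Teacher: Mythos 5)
Your construction of $\Sigma^R$ and the parameterized moduli space $\mathscr{N}$ is exactly the paper's one-parameter family $\Sigma_{GE}^{\Upsilon}(\alpha)$ and $\mathscr{M}_{GE}^{\Upsilon}$, and your classification of the boundary of the one-dimensional space into the $R\to 0$, $R\to+\infty$, and cylinder-breaking ends, with the $R\to 0$ end handled by a localized implicit-function-theorem argument, is the same strategy the paper carries out in Propositions \ref{ge} and \ref{facthom} and Section \ref{factsec}. One small inaccuracy in your rationale: the specific reason the implicit function theorem applies down to $R=0$ is not the vanishing of the Liouville form on $N^*\Delta_M^{(4)}$ (that fact enters elsewhere, in the energy identity and in ruling out holomorphic disks), but rather that $\Delta_M^{(4)}=\Delta_{M^2}\cap(\Delta_M\times\Delta_M)$, which permits the orthogonal splitting and translation operator $\mathscr{T}_\alpha$ of Section \ref{factsec} that reparametrizes the $\alpha$-dependent jumping boundary conditions onto a fixed Banach space, producing a family of operators continuous in $\alpha$ up to $\alpha=0$.
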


In order to prove the above theorem, we must construct a homomorphism
\[
P_{GE}^{\Upsilon} : \bigl( F^{\Lambda}(H_1) \otimes F^{\Lambda} 
(H_2)\bigr)_k
\rightarrow F_{k-n+1}^{\Lambda}(H_1 \# H_2),
\]
such that
\begin{equation}\begin{split}
\label{chhom} 
&(\Upsilon^{\Lambda} - G \circ E) ( \alpha \otimes \beta) \\
=\quad &\partial^{\Lambda}_{H_1 \# H_2} \circ P_{GE}^{\Upsilon}
(\alpha \otimes \beta) +  P_{GE}^{\Upsilon}
\bigl( \partial^{\Lambda}_{H_1} \alpha \otimes \beta + (-1)^h
\alpha \otimes \partial^{\Lambda}_{H_2} \beta \bigr),
\end{split}\end{equation}
for every $\alpha\in F_h^{\Lambda}(H_1)$ and $\beta\in
F_j^{\Lambda}(H_2)$. The chain homotopy $P_{GE}^{\Upsilon}$ is 
defined by counting solutions of the Floer equation on a one-parameter 
family of Riemann surfaces with boundary $\Sigma_{GE}^{\Upsilon}(\alpha)$,
$\alpha\in ]0,+\infty[$, obtained by removing an open disks from
the pair-of-pants.

More precisely, given $\alpha\in ]0,+\infty[$, we define
$\Sigma_{GE}^{\Upsilon}(\alpha)$ as the quotient of the disjoint
union $\R \times [-1,0] \sqcup \R \times [0,1]$
under the identifications
\[
\left\{ \begin{array}{l} (s,-1) \sim (s,0^-) \\ (s,0^+) \sim (s,1)
\end{array} \right. \quad \mbox{if } s\leq 0, \quad \left\{
\begin{array}{l} (s,-1) \sim (s,1) \\ (s,0^-) \sim (s,0^+)
\end{array} \right. \quad \mbox{if } s \geq \alpha.
\]
This object is a Riemann surface with boundary, with the
holomorphic coordinates (\ref{holcor3}) and (\ref{holcor4})
 at $(0,-1)\sim (0,0^-)$ and at
$(0,0^+) \sim (0,1)$, and with the holomorphic coordinates
(\ref{holcor1}) and (\ref{holcor5}) (translated by $\alpha$) at
$(\alpha,0)$ and at $(\alpha,-1)\sim (\alpha,1)$. The resulting object 
is a Riemann surface with three cylindrical ends and one boundary circle.
Given $x_1\in \mathscr{P}^{\Lambda}(H_1)$, $x_2\in
\mathscr{P}^{\Lambda}(H_2)$, and $z\in \mathscr{P}^{\Lambda}(H_1
\# H_2)$, we define 
$\mathscr{M}_{GE}^{\Upsilon}(x_1,x_2;z)$ to be the space of pairs $(\alpha,u)$, with $\alpha>0$ and $u \in
C^{\infty}(\Sigma_{GE}^{\Upsilon}(\alpha),T^*M)$ the solution of
\[
\delbar_{J,H}(u) = 0,
\]
with boundary conditions
\[
\left\{ 
\begin{array}{l}  \pi \circ u(s,-1) = \pi \circ u(s,0^-) = \pi\circ u(s,0^+) = \pi\circ 
u(s,1), \\ u(s,0^-) - u(s,-1) + u(s,1) - u(s,0^+) =
0, \end{array} \right. \quad \forall s\in [0,\alpha], 
\]
and asymptotic conditions
\[
\lim_{s\rightarrow -\infty} u(s,t-1) = x_1(t), \quad
\lim_{s\rightarrow -\infty} u(s,t) = x_2(t), \quad \lim_{s\rightarrow
+\infty} u(s,2t-1) = z(t).
\]
The following result is proved in Section \ref{lin}.

\begin{prop}
\label{ge}
For a generic choice of $H_1$ and $H_2$,
$\mathscr{M}_{GE}^{\Upsilon}(x_1,x_2;z)$ -- if non-empty -- is a
manifold of dimension
\[
\dim \mathscr{M}_{GE}^{\Upsilon}(x_1,x_2;z) = \mu^{\Lambda}(x_1)
+ \mu^{\Lambda}(x_2) - \mu^{\Lambda}(z) - n+1.
\]
The projection $(\alpha,u)\mapsto \alpha$ is smooth on
$\mathscr{M}_{GE}^{\Upsilon}(x_1,x_2;z)$. These manifolds carry
coherent orientations.
\end{prop}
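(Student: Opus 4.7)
The plan is to treat the family $\{\Sigma_{GE}^{\Upsilon}(\alpha)\}_{\alpha>0}$ as defining a parametric Floer problem and to apply the linear Fredholm and transversality theory announced in Section \ref{lineartheory} fiberwise in $\alpha$, then add one to the index to account for the parameter. First I would fix $\alpha>0$, choose the standard exponentially weighted Sobolev completions at the three cylindrical ends (using the asymptotic Hamiltonian orbits $x_1$, $x_2$, $z$ to trivialize the target bundle), and set up the $W^{1,p}$ Banach manifold $\mathscr{B}_\alpha(x_1,x_2;z)$ of maps $u:\Sigma_{GE}^{\Upsilon}(\alpha)\to T^*M$ subject to the prescribed boundary conditions. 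The Floer equation $\bar\partial_{J,H}u=0$ is a smooth section of the Banach bundle of anti-linear one-forms, and its vertical linearization at a solution is a Cauchy--Riemann type operator on the strip $\R\times[-1,0]\sqcup\R\times[0,1]$ with Lagrangian boundary conditions that jump at $s=0$ and $s=\alpha$ between the conormal bundles $N^*\Delta_M$ (outside $[0,\alpha]$) and $N^*\Delta^{(4)}_M$ (inside $[0,\alpha]$). These two submanifolds intersect cleanly inside $(M\times M)\times(M\times M)$, so the linear theory of Section \ref{lineartheory} applies and yields a Fredholm operator.

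The index computation reduces, via the formula recalled in the introduction, to
\[
\mu^{\Lambda}(x_1)+\mu^{\Lambda}(x_2)-\mu^{\Lambda}(z)-\sum_{\text{jumps}}\bigl(\dim R_{j-1}-\dim R_{j-1}\cap R_j\bigr),
\]
where at each of the two jumps the ambient $R$ has dimension $2n$ while the clean intersection with $\Delta^{(4)}_M$ has dimension $n$, so each jump contributes $n/2$ on each side of the strip, totalling $n$. This gives Fredholm index $\mu^{\Lambda}(x_1)+\mu^{\Lambda}(x_2)-\mu^{\Lambda}(z)-n$ for fixed $\alpha$. Letting $\alpha$ vary introduces exactly one additional parameter, and the projection $(\alpha,u)\mapsto\alpha$ is tautologically smooth, giving the claimed dimension $\mu^{\Lambda}(x_1)+\mu^{\Lambda}(x_2)-\mu^{\Lambda}(z)-n+1$.

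The main obstacle is transversality: one must show that for a generic choice of $H_1,H_2$ (preserving (H1), (H2), (H0)$^{\Lambda}$, (H0)$^{\Theta}$) the full parametric section $(\alpha,u)\mapsto\bar\partial_{J,H}u$ is transverse to the zero section, uniformly in the two strip-like and three cylindrical ends. I would adopt the Floer--Hofer--Salamon scheme: enlarge the domain to include perturbations $K$ of $H_1,H_2$ supported away from the asymptotic orbits, show that the universal linearized operator $D\bar\partial_{J,H}\oplus D_K$ is surjective by exploiting unique continuation on any relatively open region of $\Sigma_{GE}^{\Upsilon}(\alpha)$ where the asymptotic orbits are not visited, and then invoke Sard--Smale. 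The unique continuation step requires that the solution is somewhere injective; in the present setting this is standard because the asymptotic orbits differ and the slit geometry guarantees a non-trivial open set of non-asymptotic points. A subtlety is that the perturbation must be made compatible with the gluings on $[0,\alpha]$, which is handled by choosing the support of $K$ in the interior of the regions $\{s<0\}$ and $\{s>\alpha\}$.

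Finally, coherent orientations are obtained by the now-standard construction of determinant line bundles for Cauchy--Riemann operators with Lagrangian conormal boundary conditions, extended over the one-parameter family; orientations at the boundary strata as $\alpha\to 0$ and $\alpha\to\infty$ agree with the fiber product orientations that appear in the chain homotopy formula (\ref{chhom}), which is exactly what is needed for the identification to be carried out in Section \ref{factsec}.
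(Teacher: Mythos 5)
Your overall plan — linearize, identify the operator as a Cauchy--Riemann operator with jumping conormal boundary conditions, read off the Fredholm index from the theory of Section~\ref{lineartheory}, add one for the parameter $\alpha$, and invoke generic transversality plus coherent orientations — is the same strategy the paper uses. But your reduction to the linear problem contains two errors that would make the computation come out wrong if carried through carefully.

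First, the boundary conditions on the disjoint union $\R\times[-1,0]\sqcup\R\times[0,1]$ are not local Lagrangian boundary conditions as you describe; the identifications $(s,-1)\sim(s,0^-)$, $(s,0^+)\sim(s,1)$ for $s\leq 0$ and $(s,-1)\sim(s,1)$, $(s,0^-)\sim(s,0^+)$ for $s\geq\alpha$ couple the boundary components of the two strips, so they are genuinely nonlocal. The paper's way around this (Section~\ref{lin}) is the doubling map $v(z)=(\mathscr{C}u(\overline z),u(z))$ into $T^*M^2$, which converts the problem into one on a single strip $\Sigma$ for a map with target $T^*(M^2)$, with nonlocal conormal boundary conditions of the form $(v(s),\mathscr{C}v(s+i))\in N^*R_j$. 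In this picture the three regions carry three \emph{distinct} conormal types:
\[
R_0=\Delta_{M^2}\;(s\leq 0),\qquad R_1=\Delta_M^{(4)}\;(0\leq s\leq\alpha),\qquad R_2=\Delta_M\times\Delta_M\;(s\geq\alpha).
\]
Your description ``$N^*\Delta_M$ outside $[0,\alpha]$ and $N^*\Delta_M^{(4)}$ inside'' both puts the two conormals in incompatible ambient spaces and, more importantly, misses that the two outside regions are governed by different submanifolds ($\Delta_{M^2}$ on the left, $\Delta_M\times\Delta_M$ on the right); this distinction is exactly what makes the $\alpha\to 0$ limit the pair-of-pants problem $\mathscr{M}^\Lambda_\Upsilon$ and not a trivial problem.

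Second, the index arithmetic is internally inconsistent. Applying Corollary~\ref{corfred}, the two jumps contribute
\[
\bigl(\dim R_0-\dim R_0\cap R_1\bigr)+\bigl(\dim R_1-\dim R_1\cap R_2\bigr)=(2n-n)+(n-n)=n,
\]
so the first jump contributes $n$ and the second contributes $0$; they are not symmetric ``$n/2$ each per side.'' Your stated hypothesis ``at each of the two jumps the ambient $R$ has dimension $2n$'' is false for the second jump ($\dim R_1=n$), and would in any case give $2n$, not $n$. You also need Lemmas~\ref{lemulo} and~\ref{lemule} (or equivalents) to convert $\mu^{R_0}(x)$ and $\mu^{R_2}(y)$ into $\mu^\Lambda(x_1)+\mu^\Lambda(x_2)$ and $\mu^\Lambda(z)$; this step is simply asserted in your write-up. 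The transversality and orientation discussion is fine and matches the paper.
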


Energy estimates, together with (H1) and (H2), again imply compactness. When $\mu^{\Lambda}(z) =
\mu^{\Lambda}(x_1) + \mu^{\Lambda}(x_2) - n +1$,
$\mathscr{M}_{GE}^{\Upsilon}(x_1,x_2;z)$ is a finite set of
oriented points. Denoting by $n_{GE}^{\Upsilon}(x_1,x_2;z)$ the
algebraic sum of the corresponding orientation signs, we define
the homomorphism
\begin{equation*}\begin{split}
P_{GE}^{\Upsilon} : &F_h^{\Lambda}(H_1) \otimes F_k^{\Lambda}(H_2)
\rightarrow F_{h+k-n+1}^{\Lambda} (H_1 \# H_2),\\ \quad 
&x_1 \otimes x_2 \mapsto
\sum_{\substack{z\in \mathscr{P}^{\Lambda}(H_1 \# H_2)\\
\mu^{\Lambda}(z)=h+k-n+1}} n_{GE}^{\Upsilon}(x_1,x_2;z)\, z.
\end{split}\end{equation*}
Then Theorem \ref{tge} is a consequence of  the following:

\begin{prop}
\label{facthom}
The homomorphism $P_{GE}^{\Upsilon}$ is a chain homotopy between $
\Upsilon^{\Lambda}$ and $G\circ E$.
\end{prop}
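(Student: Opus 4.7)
The plan is to run a cobordism argument on the one-dimensional components of the moduli space $\mathscr{M}_{GE}^{\Upsilon}(x_1,x_2;z)$, i.e.\ those with $\mu^{\Lambda}(z) = \mu^{\Lambda}(x_1) + \mu^{\Lambda}(x_2) - n$. By Proposition \ref{ge}, this is a smooth $1$-manifold on which the projection $(\alpha,u)\mapsto \alpha$ is smooth. Using the energy identity obtained by combining the arguments that produced (\ref{seir1}) and (\ref{seir2}) -- applied to $\Sigma_{GE}^{\Upsilon}(\alpha)$ split along the intermediate strip of length $\alpha$ -- together with assumptions (H1)-(H2), I would first show that this $1$-manifold admits a compactification as a $1$-manifold with boundary whose boundary points/ends fall into exactly four classes, each corresponding to one term of (\ref{chhom}).

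First I would treat the two boundary strata coming from the parameter $\alpha$. When $\alpha\to+\infty$, the surface $\Sigma_{GE}^{\Upsilon}(\alpha)$ stretches: the middle region with figure-$8$ boundary conditions becomes an arbitrarily long cylindrical neck joining the piece carrying $\Sigma_E$-type identifications (for $s\leq 0$) to the piece carrying $\Sigma_G$-type identifications (for $s\geq \alpha$). A standard neck-stretching / gluing argument (the usual implicit function theorem scheme in Floer theory) identifies the ends with broken configurations $(u_E,u_G)\in \mathscr{M}_E(x_1,x_2;y)\times \mathscr{M}_G(y,z)$ with $\mu^{\Theta}(y)=\mu^{\Lambda}(x_1)+\mu^{\Lambda}(x_2)-n$, contributing $\pm (G\circ E)(x_1\otimes x_2)$. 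When $\alpha\to 0^+$, the two sets of identifications defining $\Sigma_{GE}^{\Upsilon}(\alpha)$ collapse onto one another, producing exactly the pair-of-pants identifications of $\Sigma_{\Upsilon}^{\Lambda}$. Here I would apply the reduction to the standard implicit function theorem promised in Section~\ref{factsec}: after a careful choice of Sobolev setting adapted to the holomorphic coordinates (\ref{holcor3}), (\ref{holcor4}), (\ref{holcor1}), (\ref{holcor5}), each element of $\mathscr{M}_{\Upsilon}^{\Lambda}(x_1,x_2;z)$ (which is $0$-dimensional in the relevant index range) is the $\alpha\to 0$ limit of a unique $1$-parameter family in $\mathscr{M}_{GE}^{\Upsilon}$, and conversely. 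This contributes $\pm n_{\Upsilon}^{\Lambda}(x_1,x_2;z)$.

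Next I would handle the two ends coming from Floer breaking in the $s$ variable, at fixed $\alpha$ inside a compact subset of $]0,+\infty[$. Breaking at $s\to -\infty$ produces a Floer cylinder over $H_1$ or $H_2$ of index difference one, glued to an element of $\mathscr{M}_{GE}^{\Upsilon}(x_1',x_2;z)$ or $\mathscr{M}_{GE}^{\Upsilon}(x_1,x_2';z)$; the corresponding algebraic count is $P_{GE}^{\Upsilon}(\partial^{\Lambda}_{H_1}x_1\otimes x_2 + (-1)^h x_1\otimes \partial^{\Lambda}_{H_2}x_2)$. Breaking at $s\to+\infty$ produces a Floer cylinder over $H_1\# H_2$ from some $z'$ to $z$, glued to an element of $\mathscr{M}_{GE}^{\Upsilon}(x_1,x_2;z')$, and contributes $\partial^{\Lambda}_{H_1\# H_2} P_{GE}^{\Upsilon}(x_1\otimes x_2)$. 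Both are handled by the classical Floer gluing theorem, with signs fixed by the coherent orientations of Proposition~\ref{ge} once the standard conventions are in force.

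Summing the four contributions and using that the signed boundary of a compact oriented $1$-manifold is zero yields precisely (\ref{chhom}), so $P_{GE}^{\Upsilon}$ is the desired chain homotopy. The main technical obstacle is the $\alpha\to 0^+$ degeneration: this is not a standard trajectory breaking but a conformal degeneration of the domain, and one must check that the square-root charts chosen at the four singular points are compatible enough that the linearized problem on the limit pair-of-pants is still Fredholm-surjective and the implicit function theorem produces the required unique $1$-parameter family. The other cases ($\alpha\to+\infty$ neck stretching, and breaking at $s=\pm\infty$) are standard and can be cited.
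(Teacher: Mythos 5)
Your proposal takes essentially the same route as the paper: reduce the chain homotopy identity (\ref{chhom}) to a boundary count on the compactified one-dimensional components of $\mathscr{M}_{GE}^{\Upsilon}$, with the four types of ends you list; treat the $\alpha\to+\infty$ neck-stretching and the $s\to\pm\infty$ Floer breakings by standard gluing; and isolate the $\alpha\to 0^{+}$ conformal degeneration as the sole non-standard step, which is handled by a compactness statement plus an implicit-function-theorem argument after an appropriate localization (the paper's Lemma~\ref{fp-cpt} and Section~\ref{factsec}). You correctly identify both the overall cobordism structure and the genuine technical obstruction, so the plan matches the paper's own proof.
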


The proof of the above result is contained in Section \ref{factsec}. 

\subsection{The homomorphisms $\mathbf{C}$, $\mathbf{Ev}$, and
$\mathbf{I_!}$}
\label{hcevi}

In this section we define the Floer homological counterparts of the the
homomorphisms
\begin{eqnarray*}
& \mathrm{c}_* : H_k(M) \rightarrow H_k(\Lambda(M)), \quad \mathrm{ev}_* :
H_k(\Lambda(M)) \rightarrow H_k(M), & \\ & \mathrm{i}_! : H_k(\Lambda(M)) \rightarrow H_{k-n}(\Omega(M,q_0)). &
\end{eqnarray*}
These are the homomorphism which will appear in the Floer homological counterpart of diagram (\ref{diaga}). The reader who is interested only in Theorems A and B of the Introduction may skip this section.

\paragraph{The homomorphisms $\mathbf{C}$ and $\mathbf{Ev}$.} Let $f$ be a smooth Morse function on $M$, and assume that the vector field $-\grad f$ satisfies the
Morse-Smale condition. Let $H\in C^{\infty}(\T \times T^*M)$ be a
Hamiltonian which satisfies (H0)$^{\Lambda}$, (H1), (H2). We shall
define two chain maps
\[
\mathrm{C}: M_k(f) \rightarrow F_k(H), \quad \mathrm{Ev}: F_k(H)
\rightarrow M_k(f).
\]
Given $x\in \mathrm{crit}(f)$ and $y\in \mathscr{P}^{\Lambda}(H)$,
consider the following spaces of maps
\begin{equation*}\begin{split}
\mathscr{M}_{\mathrm{C}}(x,y) = \Bigl\{ &u \in C^{\infty}([0,+\infty[ \times
\T, T^*M) \, \Big| \, \delbar_{J,H}(u) = 0,\\ &\pi\circ u
(0,t) \equiv q \in W^u(x) \; \forall t\in \T,\;
\lim_{s\rightarrow +\infty} u(s,t) = y(t) \Bigr\},
\end{split}\end{equation*}
and
\begin{eqnarray*}
\mathscr{M}_{\mathrm{Ev}}(y,x) = \Bigl\{ u \in
C^{\infty}(]-\infty,0] \times \T, T^*M) \, \Big| \, \delbar_{J,H}(u) = 0, \; u(0,t) \in \mathbb{O}_M \; \forall t\in \T, \\
u(0,0) \in W^s(x), \; \lim_{s\rightarrow -\infty} u(s,t) = y(t)
\Bigr\},
\end{eqnarray*}
where $\mathbb{O}_M$ denotes the image of the zero section in $T^*M$.
The following result is proved in Section \ref{lin}.

\begin{prop}
\label{cev}
For a generic choice of $H$ and $f$, $\mathscr{M}_C(x,y)$ and
$\mathscr{M}_{\mathrm{Ev}}(y,x)$ are manifolds with
\[
\dim \mathscr{M}_{\mathrm{C}}(x,y) = i(x) - \mu^{\Lambda}(y), \quad
\dim \mathscr{M}_{\mathrm{Ev}}(y,x) = \mu^{\Lambda}(y) - i(x).
\]
These manifolds carry coherent orientations.
\end{prop}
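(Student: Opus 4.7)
The plan is to realize each of these moduli spaces as the zero set of a Fredholm section of a Banach bundle over a parametrized Banach manifold, to read off the virtual dimension from the linear theory of Section \ref{lineartheory}, and to achieve transversality by the standard parametric scheme.

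For $\mathscr{M}_{\mathrm{C}}(x,y)$, consider the Banach manifold $\mathcal{B}_{\mathrm{C}}$ of pairs $(q,u)$, where $q\in M$ and $u:[0,+\infty[\times\T\to T^*M$ is of class $W^{1,p}_{\mathrm{loc}}$ (with $p>2$), satisfies $\pi\circ u(0,\cdot)\equiv q$, and converges exponentially to $y$ as $s\to+\infty$ in a weighted $W^{1,p}$ sense. The Floer operator $\delbar_{J,H}$ defines a smooth section of the Banach bundle whose fibre at $(q,u)$ is the $L^p$-space of anti-linear one-forms on the half-cylinder with values in $u^*(TT^*M)$, and $\mathscr{M}_{\mathrm{C}}(x,y)$ is the zero set of the restriction of this section to the submanifold $\{(q,u)\in\mathcal{B}_{\mathrm{C}}\mid q\in W^u(x)\}$. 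The analogue $\mathcal{B}_{\mathrm{Ev}}$ for $\mathscr{M}_{\mathrm{Ev}}(y,x)$ consists of maps $u:\,]-\infty,0]\times\T\to T^*M$ with Lagrangian boundary condition $u(0,\cdot)\subset\mathbb{O}_M$ and asymptote $y$ at $-\infty$; the space $\mathscr{M}_{\mathrm{Ev}}(y,x)$ is then the zero set of $\delbar_{J,H}$ intersected with the preimage of $W^s(x)$ under the evaluation $\mathrm{ev}_0:u\mapsto u(0,0)$.

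The Fredholm index is read off from the linear theory of Section \ref{lineartheory}, specialized to the half-cylinder. For $\mathscr{M}_{\mathrm{C}}$ with $q$ held fixed, the boundary condition at $s=0$ is the conormal $N^*\{q\}$ (equivalently, the vertical fibre $T_q^*M$) and the asymptotic operator at $+\infty$ is the non-degenerate Hessian of $\mathbb{A}_H$ at $y$; the resulting Fredholm index equals $-\mu^{\Lambda}(y)$, with the normalization fixed by (\ref{mascon}). Letting $q$ vary over the $i(x)$-dimensional submanifold $W^u(x)\subset M$ adds $i(x)$ to the expected dimension, giving $i(x)-\mu^{\Lambda}(y)$. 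For $\mathscr{M}_{\mathrm{Ev}}$ the Lagrangian boundary condition at $s=0$ is the zero section $\mathbb{O}_M$ and the asymptote $y$ lies at $-\infty$, so the unconstrained half-cylinder operator has Fredholm index $\mu^{\Lambda}(y)$; cutting down by the codimension $i(x)$ transverse intersection $\mathrm{ev}_0(u)\in W^s(x)$ yields the expected dimension $\mu^{\Lambda}(y)-i(x)$.

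Transversality is obtained by the parametric argument of \cite{fhs96}: one first perturbs $H$ within the class of Hamiltonians satisfying (H0)$^{\Lambda}$, (H1), (H2), keeping $\mathscr{P}^{\Lambda}(H)$ unchanged, so as to make the universal Floer section on $\mathcal{B}_{\mathrm{C}}$ and on $\mathcal{B}_{\mathrm{Ev}}$ transverse to zero; unique continuation for the Cauchy-Riemann equation allows one to inject $H$-variations supported away from the asymptote $y$ into the cokernel. Once the two universal moduli spaces are smooth, Sard's theorem applied to the smooth evaluation maps $(q,u)\mapsto q$ and $u\mapsto\mathrm{ev}_0(u)$, combined with a generic perturbation of the Morse-Smale pair $(f,g)$, arranges transversality with $W^u(x)$ and $W^s(x)$ respectively. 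Coherent orientations are produced by the standard trivialization of the determinant line bundle over the space of asymptotic Cauchy-Riemann operators with conormal boundary conditions, once orientations of the unstable manifolds of $f$ and of the orbits in $\mathscr{P}^{\Lambda}(H)$ have been chosen. The most delicate point is the set-up of the weighted Sobolev framework on the half-cylinder: the exponential weight must avoid the spectrum of the asymptotic operator at $y$, and the conormal boundary condition $N^*\{q\}$ must be handled so that variations of $q$ appear as a finite-dimensional reduction on top of the linearized problem with fixed fibre. These are exactly the issues handled by the linear theory of Section \ref{lineartheory}.
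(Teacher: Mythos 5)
Your argument is correct and follows essentially the same route as the paper: for $\mathscr{M}_{\mathrm{C}}$ the paper fixes $q$, invokes its Corollary \ref{corfred+} (with $k=0$, $R_0=\Delta_M$, $\gamma\equiv q$) to obtain virtual dimension $-\mu^{\Lambda}(y)$, and then lets $q$ range over the $i(x)$-dimensional unstable manifold $W^u(x)$; for $\mathscr{M}_{\mathrm{Ev}}$ it uses Corollary \ref{corfred-} to get virtual dimension $\mu^{\Lambda}(y)$ and cuts down by the codimension-$i(x)$ constraint $u(0,0)\in W^s(x)$. Your description merely phrases the same two-step computation as a Fredholm-section-plus-finite-dimensional-reduction argument rather than explicitly citing those corollaries.
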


If $u$ belongs to $\mathscr{M}_{\mathrm{C}}(x,y)$ or
$\mathscr{M}_{\mathrm{Ev}}(y,x)$, the fact that $u(0,\cdot)$
takes value either on the fiber of some point $q\in M$ or
on the zero section of $T^*M$ implies that
\[
\mathbb{A}_H(u(0,\cdot)) = - \int_0^1 H(t,u(0,t)) \, dt.
\]
Therefore, we have the energy estimates
\[
\int_{[0,+\infty[ \times \T} |\partial_s u(s,t)
|^2 \, ds\,dt  \leq - \min H - \mathbb{A}_H(y),
\]
for every $u\in \mathscr{M}_{\mathrm{C}}(x,y)$, and
\[
\int_{]-\infty,0] \times \T} |\partial_s u(s,t)
|^2 \, ds\,dt  \leq \mathbb{A}_H(y) + \max_{(t,q)\in
\T \times M} H(t,q,0),
\]
for every $u\in \mathscr{M}_{\mathrm{Ev}}(y,x)$. These energy
estimates allow to prove the following compactness result:

\begin{prop}
The spaces $\mathscr{M}_{\mathrm{C}}(x,y)$ and
$\mathscr{M}_{\mathrm{Ev}}(y,x)$ are pre-compact in\\
$C^{\infty}_{\mathrm{loc}}([0,+\infty[\times \T,T^*M)$ and
$C^{\infty}_{\mathrm{loc}}(]-\infty,0]\times \T,T^*M)$.
\end{prop}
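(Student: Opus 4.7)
The plan is to follow the three-step compactness argument standard in Floer theory for cotangent bundles: uniform action/energy estimates, a uniform $C^0$-estimate, and elliptic bootstrapping, together with the exclusion of bubbling by exactness of the symplectic geometry.

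First, the displayed energy estimates immediately preceding the proposition already provide a uniform bound on $\|\partial_s u\|_{L^2}$ for $u$ in either moduli space. Integrating the Floer equation against $\partial_s u$ in the usual way (exactly as in the derivation of (\ref{sei})) shows that the Hamiltonian action $\mathbb{A}_H(u(s,\cdot))$ is monotone along $s$, so it lies in a fixed bounded interval whose endpoints are $\mathbb{A}_H(y)$ on one side and the boundary value $-\int_0^1 H(t,u(0,t))\,dt$ on the other; the latter is controlled by $-\min H$ for $\mathscr{M}_{\mathrm{C}}$ and by $\max_{(t,q)}H(t,q,0)$ for $\mathscr{M}_{\mathrm{Ev}}$. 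Note that $\min H$ is finite: conditions (H1)-(H2) force $H$ to be coercive in $p$ on each fibre, hence bounded below on $[0,1]\times T^*M$. This yields a uniform upper bound on the action of every slice.

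Second, I would invoke the $L^\infty$-bound for Floer half-cylinders on cotangent bundles established under hypotheses (H1)-(H2) in \cite{as06}, Theorem 1.14 and Section 3. The argument there applies with minor modifications at the boundary $\{s=0\}$: either the boundary value lies on the compact zero section $\mathbb{O}_M$, or it lies on a cotangent fibre $T_q^*M$ whose base point $q$ varies in the precompact subset $\overline{W^u(x)}\subset M$; in both cases the fibre coordinate along the boundary trace is controlled by the action bound from the previous step. The outcome is a uniform compact subset $K\subset T^*M$ containing the image of every $u$ in the family.

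Third, no bubbling can occur. The symplectic form $\omega=d\eta$ is exact, so no $J$-holomorphic sphere bubbles are possible; and both the zero section and each cotangent fibre are exact Lagrangian submanifolds (the restriction of $\eta=p\,dq$ vanishes identically on each), so by Stokes no $J$-holomorphic disc bubbles with the relevant boundary can appear. The standard rescaling argument then yields a uniform bound on $|du|$. Once $C^0$- and $C^1$-bounds are in place, elliptic regularity for the nonlinear Cauchy-Riemann operator $\delbar_{J,H}$ -- interior estimates in the open region together with boundary regularity at $\{s=0\}$ via the totally real Lagrangian boundary condition, in the style of \cite{ms04}, Appendix B -- gives uniform $C^k_{\mathrm{loc}}$ bounds for every $k$. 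Arzel\`a-Ascoli then provides pre-compactness in $C^\infty_{\mathrm{loc}}$. The only genuinely non-trivial step is the $L^\infty$-bound in the second paragraph, the target being non-compact; this is precisely the reason for hypotheses (H1) and (H2) on $H$.
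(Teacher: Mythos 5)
Your argument is correct and follows exactly the compactness scheme the paper itself outlines for all its solution spaces in Section~\ref{compsec}: the energy identity (here supplied by the displayed estimates immediately before the proposition), the $L^\infty$-bound on $T^*M$ obtained from (H1)--(H2) as in Lemma~\ref{illem} and Proposition~\ref{gencomp} (for these two moduli spaces there are no jumps in the boundary condition, so the argument of \cite[Theorem 1.14, Section 3]{as06} that you cite suffices), the exclusion of sphere bubbles by exactness of $\omega$ and of disc bubbles by the vanishing of the Liouville form on the zero section and on cotangent fibres, and elliptic bootstrap plus Arzel\`a--Ascoli. The two small supporting observations you make -- that $H$ is bounded below thanks to the coercivity forced by (H1), and that the base point $q$ for the $\mathscr{M}_{\mathrm C}$ boundary condition stays in the compact set $\overline{W^u(x)}\subset M$ -- are both correct and are precisely what is needed to make the energy bounds and the $L^\infty$-estimate uniform over each moduli space.
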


When $\mu^{\Lambda}(y)=i(x)$, $\mathscr{M}_{\mathrm{C}}(x,y)$ and
$\mathscr{M}_{\mathrm{Ev}}(y,x)$ consist of finitely many
oriented points. The algebraic sums of these orientation signs,
denoted by $n_{\mathrm{C}}(x,y)$ and $n_{\mathrm{Ev}}(y,x)$, define the
homomorphisms
\begin{eqnarray*}
\mathrm{C}:M_k(f) \rightarrow F_k^{\Lambda}(H), \quad x \mapsto
\sum_{\substack{y\in
\mathscr{P}^{\Lambda}(H) \\ \mu^{\Lambda}(y)=k}} n_{\mathrm{C}}(x,y)\, y, \\
\mathrm{Ev}:F_k^{\Lambda}(H) \rightarrow M_k(f), \quad y
\mapsto \sum_{\substack{x\in \crit (f) \\ i(x)=k}}
n_{\mathrm{Ev}}(y,x)\, x.
\end{eqnarray*}
A standard gluing argument shows that $\mathrm{C}$ and $\mathrm{Ev}$ are
chain maps. The induced homomorphisms in homology are denoted by $\mathrm{C}_*$ and $\mathrm{Ev}_*$. 

\paragraph{The homomorphism $\mathbf{I_!}$.} Let $\Sigma_{I_!}$ be a cylinder with a slit. More precisely,
$\Sigma_{I_!}$ is obtained from the strip $\R \times [0,1]$ by the
identifications $(s,0) \sim (s,1)$ for every $s\leq 0$. At the
point $(0,0)\sim(0,1)$ we have the holomorphic coordinate
\[
\set{\zeta\in \C}{\re \zeta\geq 0, \; |\zeta| <1/\sqrt{2}}
\rightarrow \Sigma_{I_!}, \quad \zeta \mapsto \left\{
\begin{array}{ll} \zeta^2 & \mbox{if } \im \zeta \geq 0, \\
\zeta^2 +i & \mbox{if } \im \zeta \leq 0. \end{array} \right.
\]
It is a Riemann surface with one cylindrical end (on the
left-hand side), one strip-like end, and one boundary line (on the
right-hand side). It is the  copy of one component of $\Sigma_E$,
see Figure \ref{cyl-slit}.

Consider now a Hamiltonian $H\in C^{\infty}(\T \times T^*M)$
which satisfies (H0)$^{\Lambda}$, (H0)$^{\Omega}$, (H1), (H2). 
We also assume the condition:
\begin{equation}
\label{puoi} x\in \mathscr{P}^{\Lambda}(H) \quad \implies \quad
x(0) \notin T_{q_0}^* M.
\end{equation}
Given $x\in \mathscr{P}^{\Lambda}(H)$ and $y\in
\mathscr{P}^{\Omega}(H)$, we introduce the space of maps
\begin{equation*}\begin{split}
\mathscr{M}_{I_!}(x,y) = \Bigl\{ u\in
C^{\infty}(\Sigma_{I_!},T^*M) \, \Big| \, \delbar_{J,H} (u) = 0, \;
u(s,0)\in T_{q_0}^*M \mbox{ and} \\ u(s,1) \in T_{q_0}^* M\; \forall s\geq 0, 
\; \lim_{s\rightarrow -\infty} u(s,t) = x(t), \; \lim_{s\rightarrow
+\infty} u(s,t) = y(t)  \Bigr\}.
\end{split}\end{equation*}
The following result is proved in Section \ref{lin}:

\begin{prop}
\label{i!p}
For a generic $H$ satisfying (\ref{puoi}), the space 
$\mathscr{M}_{I_!}(x,y)$ is a manifold, with
\[
\dim \mathscr{M}_{I_!}(x,y)= \mu^{\Lambda}(x) -
\mu^{\Omega}(y)-n.
\]
These manifolds carry coherent orientations.
\end{prop}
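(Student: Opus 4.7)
The plan is to cast Proposition~\ref{i!p} as an instance of the general linear and transversality machinery developed in Section~\ref{lineartheory}, the same way Propositions \ref{popss}, \ref{eg}, \ref{cev}, and \ref{ge} are handled. First I would set up the nonlinear problem in an appropriate Banach manifold of maps $u: \Sigma_{I_!}\to T^*M$ of Sobolev regularity $W^{1,p}$ with $p>2$, asymptotic to $x$ on the cylindrical end and to $y$ on the strip-like end via standard exponential decay weights, and satisfying the two pieces of Lagrangian boundary data: the ``periodic'' condition induced by the conormal $N^*\Delta_M$ of the diagonal on the cylindrical part $\{s\le 0\}$, and the ``Dirichlet'' condition induced by $N^*\{(q_0,q_0)\}=T^*_{q_0}M\times T^*_{q_0}M$ on the slit part $\{s\ge 0\}$. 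The operator $\delbar_{J,H}$ is a smooth section of the bundle of $(0,1)$-forms; at a solution~$u$ its linearization is a Cauchy--Riemann type operator on the strip with Lagrangian boundary values that jump, at the singular point $(0,0)\sim(0,1)$, from $N^*\Delta_M$ to $N^*\{(q_0,q_0)\}$.

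Next I would invoke the linear Fredholm theory of Section~\ref{lineartheory}. Since $\Delta_M$ and $\{(q_0,q_0)\}$ intersect cleanly in $M\times M$ (the second being a point of the first), the operator is Fredholm in the chosen Sobolev setting, and the general index formula announced in the introduction gives
\[
\mathrm{ind}\, D_u \delbar_{J,H} = \mu^{\Delta_M}(x)-\mu^{(q_0,q_0)}(y)-(\dim \Delta_M-\dim(\Delta_M\cap\{(q_0,q_0)\}))
\]
which equals $\mu^{\Lambda}(x)-\mu^{\Omega}(y)-n$, because $\dim\Delta_M=n$ and $\Delta_M\cap\{(q_0,q_0)\}=\{(q_0,q_0)\}$ has dimension $0$. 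This yields the stated dimension as soon as surjectivity is established.

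For transversality I would use the standard Floer--Hofer--Salamon scheme from \cite{fhs96}: introduce a Banach space $\mathcal H$ of admissible compactly supported perturbations of $H$, form the universal moduli space over $\mathcal H$, show it is a smooth Banach manifold, and apply the Sard--Smale theorem to the projection to $\mathcal H$. The only delicate point is that the perturbation of $H$ must already be effective on the ``overlap'' region where the boundary condition has not yet switched --- that is, on a neighborhood of $s=0$ in $\Sigma_{I_!}$ where $u$ is not locally constrained. Hypothesis (\ref{puoi}), by forbidding any $x\in\mathscr P^{\Lambda}(H)$ to pass through the fiber $T^*_{q_0}M$ at time $0$, guarantees that no solution $u$ is eternally locked into the fiber $T^*_{q_0}M$ near the slit; hence unique continuation gives an open set of points $(s,t)\in\Sigma_{I_!}$ at which $u$ is somewhere-injective and sits away from its asymptotic limits, which is exactly what is needed to run the Floer--Hofer--Salamon argument and conclude that surjectivity holds for a comeager set of~$H$. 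After this, the implicit function theorem delivers the smooth manifold structure of the stated dimension.

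The hardest step, and the one I would expect to require the most care, is the transversality argument near the singular point of $\Sigma_{I_!}$: the quadratic coordinate (\ref{holcor1}) makes $u$ smooth in $\zeta$ but only H\"older of exponent $1/2$ in $(s,t)$, so the usual ``somewhere injective'' lemma must be applied in the $\zeta$-coordinate, and one must verify that the class of perturbations supported away from the critical orbits still acts transitively on linearized solutions in this chart. Once surjectivity is secured, coherent orientations are produced by the standard determinant-line construction for Cauchy--Riemann operators with Lagrangian boundary conditions of conormal type (again part of the package of Section~\ref{lineartheory}): the determinant bundle of the linearized operators is trivializable over the space of Fredholm configurations compatible with our jumping data, and a choice of trivialization that is compatible with gluing along the cylindrical and strip-like ends produces the required coherent orientation of $\mathscr M_{I_!}(x,y)$.
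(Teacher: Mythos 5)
Your proposal follows the paper's proof essentially verbatim: reduce to the moduli space $\mathscr{M}(x,y)$ of Corollary~\ref{corfred} (itself derived from the nonlocal Fredholm theorem~\ref{fredholmnl}) with $Q=M$, a single jump at $s_1=0$ from $R_0=\Delta_M$ to $R_1=\{(q_0,q_0)\}$, compute $\mu^{\Lambda}(x)-\mu^{\Omega}(y)-(\dim\Delta_M-\dim(\Delta_M\cap\{(q_0,q_0)\}))=\mu^{\Lambda}(x)-\mu^{\Omega}(y)-n$, then cite \cite{fhs96} for transversality and Section~\ref{cosec} for coherent orientations. One small caution: you invoke ``clean intersection'' of $\Delta_M$ and $\{(q_0,q_0)\}$, but the Fredholm theory of Section~\ref{lineartheory} actually requires the stronger \emph{partial orthogonality} of the local linear models; this holds here (any subspace is partially orthogonal to $(0)$), so no harm is done, but the paper's framework is not built on clean intersection alone (see the remark after Corollary~\ref{corfred} on how to weaken this via $s$-dependent metrics).
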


The following compactness statement follows from the general
discussion of Section \ref{compsec}.

\begin{prop}
\label{comprop4}
The space $\mathscr{M}_{I_!}(x,y)$ is pre-compact in
$C^{\infty}_{\mathrm{loc}}(\Sigma_{I_!},T^*M)$.
\end{prop}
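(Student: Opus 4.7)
The plan is to obtain $C^\infty_{\mathrm{loc}}$-precompactness by the standard three-step Floer scheme (energy bound, $C^0$ bound, elliptic bootstrap with bubbling exclusion), referring for the technical details to the systematic treatment carried out in Section \ref{cocosec}, of which this proposition is a special case.

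First, I would establish the energy identity for every $u\in\mathscr{M}_{I_!}(x,y)$. Writing $u$ in the global (singular) coordinate $z=s+it$, an integration by parts analogous to the derivation of (\ref{sei}), using formula (\ref{dafp}) for the differential of $\mathbb{A}_H$, gives
\[
\int_{\Sigma_{I_!}} |\partial_s u(s,t)|^2\, ds\, dt \;=\; \mathbb{A}_H(x) - \mathbb{A}_H(y) + \text{boundary terms}.
\]
The boundary terms come from the line $\{t=0\}\cup\{t=1\}$ restricted to $s\geq 0$, and they involve the Liouville form $\eta=p\,dq$ evaluated along $u(s,0),u(s,1)\in T_{q_0}^*M$. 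Since $T_{q_0}^*M$ is the conormal of the point $\{q_0\}$, the form $\eta$ vanishes identically on it, so the boundary integrals are zero and we obtain a uniform energy bound depending only on $x$ and $y$.

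Next I would invoke the $C^0$ estimate for solutions of the Floer equation on $T^*M$ with Hamiltonians satisfying (H1)-(H2) and $J$ the metric almost complex structure (\ref{acsj}). This is the Abbondandolo-Schwarz estimate (see \cite[Theorem 1.14]{as06} and Section \ref{cocosec}), whose proof is based on a subharmonicity argument for $|p|^2$ that is compatible with both the cylindrical gluing for $s\leq 0$ and the conormal boundary condition on $T_{q_0}^*M$ for $s\geq 0$ (conormal conditions on cotangent bundles are exactly the Lagrangian conditions preserved by the metric $J$). The singular point $(0,0)\sim(0,1)$ causes no problem: in the holomorphic square-root coordinate the surface is smooth and the Hamiltonian one-form $F_{J,H}(u)$ extends smoothly (in fact vanishes at the singular point, exactly as in the computation following (\ref{holcor1})). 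This yields a uniform $L^\infty$ bound on any sequence in $\mathscr{M}_{I_!}(x,y)$.

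Having uniform $C^0$ bounds and uniform energy, standard elliptic bootstrap for Cauchy-Riemann type equations in interior charts, in boundary charts with conormal Lagrangian boundary conditions, and in the square-root chart at the singular point, produces uniform $C^k_{\mathrm{loc}}$ bounds for every $k$, so Arzelà-Ascoli yields a $C^\infty_{\mathrm{loc}}$-convergent subsequence. The one thing to rule out is concentration of energy at interior or boundary points: this is impossible because both $T^*M$ and the Lagrangian $T_{q_0}^*M$ are exact with respect to $\eta$, so Stokes' theorem forbids non-constant $J$-holomorphic spheres and disks. The main technical obstacle is really the $C^0$ estimate of Step 2, and its robustness with respect to the mixed periodic/Lagrangian structure and the singular point of $\Sigma_{I_!}$; this is precisely what is handled uniformly for all Riemann surfaces of strip-with-slits type in the general compactness discussion of Section \ref{cocosec}, to which I would appeal for the detailed verification.
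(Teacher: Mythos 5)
Your proposal follows exactly the scheme the paper uses: the paper's own statement of Proposition \ref{comprop4} is just a pointer to "the general discussion of Section \ref{compsec}," which proceeds by (i) the energy identity, with the boundary contribution killed because the Liouville form vanishes on conormal bundles (here on the periodic identification for $s\leq 0$ and on $T_{q_0}^*M$ for $s\geq 0$), (ii) the uniform $L^\infty$ bound of Proposition \ref{gencomp} (the Abbondandolo--Schwarz estimate adapted to jumping conormal boundary conditions), and (iii) $C^1$ and higher bounds via the exclusion of bubbling (no non-constant $J$-holomorphic spheres in the exact $T^*M$, no non-constant $J$-holomorphic disks on conormals) and elliptic bootstrap, with the singular point handled through the square-root chart. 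Your argument reproduces all of this, so it is correct and takes essentially the same route as the paper.
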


When $\mu^{\Omega}(y)=\mu^{\Lambda}(x)-n$, the space
$\mathscr{M}_{I_!}(x,y)$ consists of finitely many oriented
points. The algebraic sum of these orientations is denoted by
$n_{I_!}(x,y)$, and defines the homomorphism
\[
I_!: F_k^{\Lambda}(H) \rightarrow F_{k-n}^{\Omega}(H), \quad x
\mapsto \sum_{\substack{y\in \mathscr{P}^{\Omega}(H) \\
\mu^{\Omega}(y) = k-n}} n_{I_!}(x,y)\, y.
\]
A standard gluing argument shows that $I_!$ is a chain map. The induced map in homology is denoted by the same symbol.

\section{Isomorphisms between Morse and Floer complexes}
\label{imfsec}

\subsection{The chain complex isomorphism}
\label{cci}

Let $Q$ be a closed manifold and let $R$ be a closed submanifold of $Q\times Q$, as in Sections \ref{mclaf} and \ref{fhpfo}. The aim of this section is to recall the construction of an isomorphism between $HF_*^R(T^*Q)$, the Floer homology of $T^*Q$ with nonlocal conormal boundary conditions defined by $R$ (see Section \ref{fhpfo}), and the singular homology of the path space
\[
P_R(Q) := \set{\gamma\in C^0([0,1],Q)}{(\gamma(0),\gamma(1))\in R}.
\]
The existence of such an isomorphism was first proved by C.\ Viterbo in \cite{vit96}, in the case of periodic boundary conditions (that is, when $R$ is the diagonal in $Q\times Q$). A different proof is due to D.\ Salamon and J.\ Weber, see \cite{sw06}. Here we adopt a third approach, which we have introduced in \cite{as06} for periodic and Dirichlet boundary conditions, and later extended to arbitrary nonlocal conormal boundary conditions with A.\ Portaluri in \cite{aps08}. See also \cite{web05} for a nice exposition comparing the three approaches.  

The strategy is to choose the Hamiltonian $H\in C^{\infty}([0,1]\times T^*Q)$ to be the Fenchel dual of a Lagrangian $L\in C^{\infty}([0,1]\times TQ)$, and to work at the chain level, by constructing a chain isomorphism
\begin{equation}
\label{theiso}
\Phi^R_L: M_*(\mathbb{S}_L^R) \rightarrow F^R_*(H)
\end{equation}
from the Morse complex of the Lagrangian action functional $\mathbb{S}_L^R$ introduced in Section \ref{mclaf} to the Floer complex of $H$. More precisely, we assume that the Lagrangian $L$ satisfies the assumptions (L1) and (L2) and that all the solutions $\gamma$ in $\mathscr{P}^R(L)$ are non-degenerate. It follows that the Fenchel dual Hamiltonian $H$, which is defined by
\[
H(t,q,p) := \max_{v\in T_q M} \bigl( \langle p,v\rangle - L(t,q,v) \bigr),
\]
is smooth and satisfies (H0), (H1) and (H2). If $v(t,q,p)\in T_q M$ is the (unique) vector where the above
maximum is achieved, the map
\[
[0,1] \times T^*M \rightarrow [0,1] \times TM, \quad (t,q,p)
\mapsto (t,q,v(t,q,p)),
\]
is a diffeomorphism, called the {\em Legendre transform}
associated to the Lagrangian $L$.
There is a one-to-one correspondence $x\mapsto \pi \circ x$ between the
orbits of the Hamiltonian vector field $X_H$ and the solutions of
the Euler-Lagrange equation (\ref{lageq}) associated to $L$, such that $(t,\pi\circ x(t), (\pi\circ x)'(t))$ is the Legendre transform of $(t,x(t))$. Therefore,
$x$ belongs to $\mathscr{P}^R(H)$ if and only if $\pi\circ x$ belongs to $\mathscr{P}^R(L)$, and the fact that $\pi\circ x$ is non-degenerate is equivalent to the fact that $x$ is non-degenerate. Therefore, both the Morse complex $M_*(\mathbb{S}_L^R)$ and the Floer complex $F_*^R(H)$ are well-defined. The existence of the isomorphism (\ref{theiso}) implies that the Floer homology $HF^R_*(T^*Q)$ is isomorphic to the singular homology of $P_Q(R)$, just because the Morse homology $HM_*(\mathbb{S}_L^R)$ of the functional $\mathbb{S}_L^R$ is isomorphic to the singular homology of its domain $W^{1,2}_R([0,1],Q)$, and because the latter space is homotopically equivalent to $P_Q(R)$.

Clearly, the identification between generators $\mathscr{P}^R(H) \rightarrow \mathscr{P}^R(L)$, $x\mapsto \pi\circ x$, need not produce a chain map, because the definitions of the boundary operator in the two complexes have little in common. The construction of the isomorphism $\Phi_L^R$ is based instead on counting solutions of a hybrid problem, that we now describe. 

Given $\gamma\in \mathscr{P}^R(L)$ and $x\in
\mathscr{P}^R(H)$, we denote by $\mathscr{M}_{\Phi}^R(\gamma,x)$ the space of maps $u: [0,+\infty[ \times [0,1] \rightarrow  T^*Q$ which 
solve the Floer equation
\begin{equation}
\label{floen}
\delbar_{J,H}(u) = 0,
\end{equation}
together with the asymptotic condition
\begin{equation}
\label{asym}
\lim_{s\rightarrow +\infty} u(s,t) = x(t),
\end{equation}
and the boundary conditions
\[
(u(s,0), \mathscr{C}u(s,1)) \in N^* R, \quad \forall s\geq 0, \quad
\pi \circ u(0,\cdot) \in W^u(\gamma),
\]
where $W^u(\gamma)\subset W^{1,2}_R([0,1],Q)$ denotes the unstable manifold of $\gamma$ with respect to the pseudo-gradient vector field $X$ for $\mathbb{S}_L^R$ used in the definition of the Morse complex of $\mathbb{S}_L^R$. 
For a generic choice of $L$ and of the pseudo-gradient $X$, these spaces of maps are manifolds of dimension
\[
\dim \mathscr{M}_{\Phi}^R(\gamma,x) = i(\gamma;\mathbb{S}_L^R) -
\mu^R(x),
\]
where $i(\gamma;\mathbb{S}_L^R)$ denotes the Morse index of $\gamma$, seen as a critical point of $\mathbb{S}_L^R$, and $\mu^R(x)$ is the Maslov index of $x$ defined in (\ref{mascon}). 
The fact that $H$ is the Fenchel dual of $L$ immediately implies the following important inequality between the Hamiltonian and the Lagrangian action functionals:
\begin{equation}
\label{fencsti} \mathbb{A}_H(x) \leq \mathbb{S}_L(\pi \circ x),
\quad \forall x\in C^{\infty}([0,1],T^*Q),
\end{equation}
with the equality holding if and only if $x$ is related to $(\pi\circ x,
(\pi\circ x)')$ by the Legendre transform. In particular, the equality
holds if $x$ is an orbit of the Hamiltonian
vector field $X_H$. The inequality (\ref{fencsti}) provides us with the energy
estimates which allow to prove suitable compactness properties for
the spaces $\mathscr{M}_{\Phi}^R(\gamma,x)$. When
$\mu^R(x)=i(\gamma;\mathbb{S}_L^R)$, the space
$\mathscr{M}_{\Phi}^R(\gamma,x)$ consists of finitely
many oriented points, which add up to the integers
$n_{\Phi}^R(\gamma,x)$. These integers are the
coefficients of the homomorphism
\[
\Phi^R_L : M_k(\mathbb{S}_L^R) \rightarrow
F^R_k (H,J), \quad \gamma \mapsto \sum_{\substack{x\in
\mathscr{P}^R(H) \\ \mu^R (x)=k}} n_{\Phi}^R
(\gamma,x)\, x, 
\]
which can be shown to be a chain map. The inequality
(\ref{fencsti})  implies
that $n_{\Phi}^R(\gamma,x)=0$ if $\mathbb{A}_H(x)\geq
\mathbb{S}_L(\gamma)$ and $\gamma \neq \pi \circ x$, while
$n_{\Phi}^R(\gamma,x)=\pm 1$ if $\gamma = \pi \circ x$. These facts
imply that $\Phi^R_L$ is an isomorphism.

As in Sections \ref{mclaf} and \ref{fhpfo}, we are interested in the boundary conditions given by the following choices for $R$: Periodic ($Q=M$ and $R=\Delta_M$), Dirichlet ($Q=M$ and $R=(q_0,q_0)$), Figure-8 ($Q=M\times M$, $R=\Delta_M^{(4)}$). We denote the corresponding isomorphisms also by the symbols
\[
\Phi_L^{\Lambda} := \Phi_L^{\Delta_M}, \quad  \Phi_L^{\Omega} := \Phi_L^{(q_0,q_0)}, \quad \Phi_L^{\Theta} := \Phi_L^{\Delta_M^{(4)}}.
\]

\subsection{A chain level proof of Theorem B}
\label{ori}

Theorem B of the Introduction says that if $M$ is a closed manifold and $q_0\in M$, then there is a graded ring isomorphism
\[
H_*(\Omega(M,q_0)) \cong HF_*^{\Omega}(T^*M)
\]
where the first graded group is endowed with the Pontrjagin product $\#$ (see Section \ref{tpp}), and the second one with the triangle product $\Upsilon^{\Omega}_*$ (see Section \ref{tppp}). By the results of Section \ref{mtipp}, the Pontrjagin product $\#$ can be read on the Morse complex of the Lagrangian action functional with Dirichlet boundary conditions as the chain map
\[
M_{\#} : M_j(\mathbb{S}_{L_1}^{\Omega}) \otimes
M_k(\mathbb{S}_{L_2}^{\Omega})
\rightarrow M_{j+k} (\mathbb{S}_{L_1 \# L_2}^{\Omega}),
\]
defined in Proposition \ref{prop MS}. Therefore, Theorem B is implied 
by the following chain level statement, which is the main result of this section:

\begin{thm}
\label{Bchain}
Let $L_1,L_2\in C^{\infty}([0,1] \times TM)$ be two 
Lagrangians which satisfy  (L0)$^{\Omega}$, (L1), (L2), are such that $L_1(1,\cdot)=L_2(0,\cdot)$ with all the time derivatives, and (\ref{nofig8}) holds. Assume also that the Lagrangian $L_1 \# L_2$ defined
by (\ref{ide}) satisfies (L0)$^{\Omega}$. Let $H_1$ and $H_2$ be the Fenchel
transforms of $L_1$ and $L_2$, so that $H_1 \# H_2$ is the Fenchel
transform of $L_1 \# L_2$, and the three Hamiltonians $H_1$, $H_2$, and
$H_1 \# H_2$ satisfy (H0)$^{\Omega}$, (H1), (H2). Then the diagram
\[
\xymatrix@!C{ \left(M(\mathbb{S}_{L_1}^{\Omega})
  \otimes M (\mathbb{S}_{L_2}^{\Omega})\right)_k \ar[r]^{M_{\#}}
  \ar[d]_{\Phi^{\Omega}_{L_1} \otimes \Phi^{\Omega}_{L_2}} & M_k
  (\mathbb{S}_{L_1 \# L_2}^{\Omega}) \ar[d]^{\Phi^{\Omega}_{L_1 \# L_2}} \\   
\left(F^{\Omega}(H_1) \otimes F^{\Omega} (H_2)\right)_k
\ar[r]^{\Upsilon^{\Omega}}  
& F_k^{\Omega}(H_1 \# H_2) }
\]
is chain-homotopy commutative. 
\end{thm}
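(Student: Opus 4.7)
My plan is to exhibit the chain homotopy by counting solutions of a one-parameter family of mixed Morse--Floer problems that interpolates between the two compositions in the diagram. For $\alpha\in(0,+\infty)$, let $\Sigma_\Upsilon^\Omega(\alpha)\subset\Sigma_\Upsilon^\Omega$ be the compact subsurface obtained by truncating both incoming strip-like ends of the holomorphic triangle at $s=-\alpha$. Fix Morse--Smale pseudo-gradients $X^\Omega_{L_j}$ on $\Omega^1(M,q_0)$ as in Section \ref{mtipp}, and let $H$ on $[-1,1]\times T^*M$ be the Hamiltonian (\ref{hham}). For $\gamma_j\in\mathscr{P}^\Omega(L_j)$ ($j=1,2$) and $y\in\mathscr{P}^\Omega(H_1\#H_2)$ I define $\mathscr{M}_K(\gamma_1,\gamma_2;y)$ to be the set of pairs $(\alpha,u)$ where $\alpha>0$ and $u\in C^\infty(\Sigma_\Upsilon^\Omega(\alpha),T^*M)$ solves $\bar\partial_{J,H}(u)=0$, takes values in $T^*_{q_0}M$ on the three boundary arcs of $\Sigma_\Upsilon^\Omega(\alpha)$, satisfies the Morse conditions $\pi\circ u(-\alpha,\cdot-1)\in W^u(\gamma_1;X^\Omega_{L_1})$ and $\pi\circ u(-\alpha,\cdot)\in W^u(\gamma_2;X^\Omega_{L_2})$ at the two truncated ends, and converges to $y(t)$ (suitably reparameterized) as $s\to+\infty$.

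By the linear theory of Section \ref{lineartheory} combined with standard transversality via generic perturbations of the pseudo-gradients and of $L_1,L_2$ (maintaining the Fenchel correspondence with $H_1,H_2$), $\mathscr{M}_K(\gamma_1,\gamma_2;y)$ is an oriented smooth manifold of dimension $i^\Omega(\gamma_1)+i^\Omega(\gamma_2)-\mu^\Omega(y)+1$, and the forgetful map $(\alpha,u)\mapsto\alpha$ is smooth. Compactness on compact sub-intervals $[\varepsilon,A]\subset(0,+\infty)$ follows by combining the energy identity (\ref{sei}) on the triangle with the Fenchel inequality (\ref{fencsti}) at the truncated Lagrangian-side ends: this bounds $\mathbb{A}_H(u)$ uniformly by $\mathbb{S}_{L_1}(\gamma_1)+\mathbb{S}_{L_2}(\gamma_2)-\mathbb{A}_{H_1\#H_2}(y)$ and yields the $C^0$ estimates in the spirit of Proposition \ref{comprop1}.

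When $\mu^\Omega(y)=i^\Omega(\gamma_1)+i^\Omega(\gamma_2)$ the moduli space is one-dimensional, and the Floer-type breakings along its interior contribute $\partial K+K\partial$. The remaining boundary lies in the strata $\alpha\to+\infty$ and $\alpha\to 0^+$. The former is treated by a standard breaking/gluing argument: solutions split into two copies of the $\Phi^\Omega$-hybrid on the elongated truncated ends plus a triangle solution defining $\Upsilon^\Omega$, producing a bijection with
\[
\bigsqcup_{x_1,x_2}\mathscr{M}_\Phi^{(q_0,q_0)}(\gamma_1,x_1)\times\mathscr{M}_\Phi^{(q_0,q_0)}(\gamma_2,x_2)\times\mathscr{M}_\Upsilon^\Omega(x_1,x_2;y),
\]
whose count is precisely $\Upsilon^\Omega\circ(\Phi^\Omega_{L_1}\otimes\Phi^\Omega_{L_2})(\gamma_1\otimes\gamma_2)$.

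The delicate stratum is $\alpha\to 0^+$, where the two truncated strips collapse onto each other, the two Morse boundary conditions merge into a single condition at $s=0$ on the curve $\pi\circ u(0,\cdot)$ on $[-1,1]$, and simultaneously the singular vertex of the global $(s,t)$-coordinate of $\Sigma_\Upsilon^\Omega$ is being created. Heuristically, after the $t\mapsto(t+1)/2$ reparameterization, the limiting boundary curve is the concatenation $\Gamma(\alpha_1,\alpha_2)$ with $\alpha_j\in W^u(\gamma_j)$, and the residual half-strip problem on $[0,+\infty)\times[-1,1]$ is conformally the $\Phi^\Omega_{L_1\#L_2}$-hybrid emanating from this concatenated curve; an auxiliary Morse breaking inserts some $\gamma_{12}\in\mathscr{P}^\Omega(L_1\#L_2)$ with $\Gamma(\alpha_1,\alpha_2)\in W^s(\gamma_{12})$, yielding exactly the factorization $\Phi^\Omega_{L_1\#L_2}\circ M_\#$. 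The main obstacle, flagged in the introduction, is that standard Floer gluing does not apply here because the conformal structure of the domain varies and the singular vertex appears only in the limit; the remedy, developed in Section \ref{omegasec}, is a localized implicit function theorem which, for every configuration realizing $\Phi^\Omega_{L_1\#L_2}\circ M_\#(\gamma_1\otimes\gamma_2)$, produces a unique one-parameter family of mixed solutions parameterized by small $\alpha>0$. Assembling the two boundary contributions yields $\Upsilon^\Omega\circ(\Phi^\Omega_{L_1}\otimes\Phi^\Omega_{L_2})-\Phi^\Omega_{L_1\#L_2}\circ M_\#=\partial K+K\partial$, with $K$ the homomorphism counting the zero-dimensional slices of $\mathscr{M}_K$.
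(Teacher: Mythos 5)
Your geometric setup (the truncated triangle / strip-with-shrinking-slit, with Morse conditions imposed at the truncation) is essentially the same one-parameter family as the paper's $\mathscr{M}^K_\Upsilon$, and your analysis of the $\alpha\to+\infty$ boundary correctly recovers $\Upsilon^\Omega\circ(\Phi^\Omega_{L_1}\otimes\Phi^\Omega_{L_2})$. However, the $\alpha\to 0^+$ analysis contains a genuine error: what that limit produces is \emph{not} $\Phi^\Omega_{L_1\#L_2}\circ M_\#$.

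When $\alpha\to 0^+$, the slit disappears and you are left with solutions $u$ on the unsplit half-strip $[0,+\infty[\times[-1,1]$, with Dirichlet conditions on $t=\pm 1$, asymptotic to $x$, and with the single initial condition that $\pi\circ u(0,\cdot-1)\in W^u(\gamma_1;X^\Omega_{L_1})$ and $\pi\circ u(0,\cdot)\in W^u(\gamma_2;X^\Omega_{L_2})$. In other words, after the $t\mapsto (t+1)/2$ reparameterization, $\pi\circ u(0,\cdot)$ equals the concatenation $\Gamma(\alpha_1,\alpha_2)$ with $\alpha_j\in W^u(\gamma_j)$ — but \emph{no} condition whatsoever is imposed involving any $\gamma_{12}\in\mathscr{P}^\Omega(L_1\#L_2)$. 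This is precisely the paper's intermediate chain map $K^\Omega$. The ``auxiliary Morse breaking'' you invoke, which would insert $\gamma_{12}$ with $\Gamma(\alpha_1,\alpha_2)\in W^s(\gamma_{12})$, simply does not occur here: nothing in the $\alpha\to 0^+$ limit makes a pseudo-gradient flow segment of $X^\Omega_{L_1\#L_2}$ degenerate, because your family never runs that flow at all. Breaking can only happen along a gradient direction that is being stretched, and the slit-length parameter is unrelated to the Morse flow of $\mathbb{S}^\Omega_{L_1\#L_2}$.

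Consequently your single one-parameter family only yields $\Upsilon^\Omega\circ(\Phi^\Omega_{L_1}\otimes\Phi^\Omega_{L_2})-K^\Omega=\partial K+K\partial$, and a second, independent chain homotopy is still needed between $K^\Omega$ and $\Phi^\Omega_{L_1\#L_2}\circ M_\#$. The paper supplies this by an auxiliary one-parameter family $\mathscr{M}^\#_K$: fix the domain $[0,+\infty[\times[-1,1]$, but require $\pi\circ u(0,2\cdot-1)$ to lie in $\phi^\Omega_\beta\bigl(\Gamma(W^u(\gamma_1)\times W^u(\gamma_2))\bigr)$ for a variable $\beta>0$, where $\phi^\Omega$ is the flow of the pseudo-gradient $X^\Omega_{L_1\#L_2}$. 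As $\beta\to 0$ this gives $K^\Omega$; as $\beta\to+\infty$ a Morse breaking genuinely occurs (the evolved chain runs into the critical set of $\mathbb{S}^\Omega_{L_1\#L_2}$), producing $\gamma_{12}$ with $\Gamma(\alpha_1,\alpha_2)\in W^s(\gamma_{12})$ and a residual hybrid problem with initial curve in $W^u(\gamma_{12})$ — exactly $\Phi^\Omega_{L_1\#L_2}\circ M_\#$. The conformal/implicit-function-theorem issue you flag (Section \ref{omegasec}) is needed only for the $K^\Omega$ end of the \emph{first} homotopy, where the domain's conformal structure degenerates; the second homotopy $P^\#_K$ is a standard parametrized gluing/breaking argument. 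Your outline therefore needs to be supplemented with this second homotopy to close the diagram.
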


Instead of constructing directly a 
homotopy between $\Phi_{L_1 \# L_2}^{\Omega} \circ M_{\#}$ and
$\Upsilon^\Omega \circ \Phi_{L_1}^{\Omega} \otimes
\Phi_{L_2}^{\Omega}$, we shall prove that both these chain maps are
homotopic to a third one, that we name $K^{\Omega}$, see Figure \ref{KOmega}.
\begin{figure}[htb]
 \begin{center}
  \setlength{\unitlength}{0.01cm}
  \begin{picture}(1200,700)(0,0)
   \put(0,350){\epsfig{file=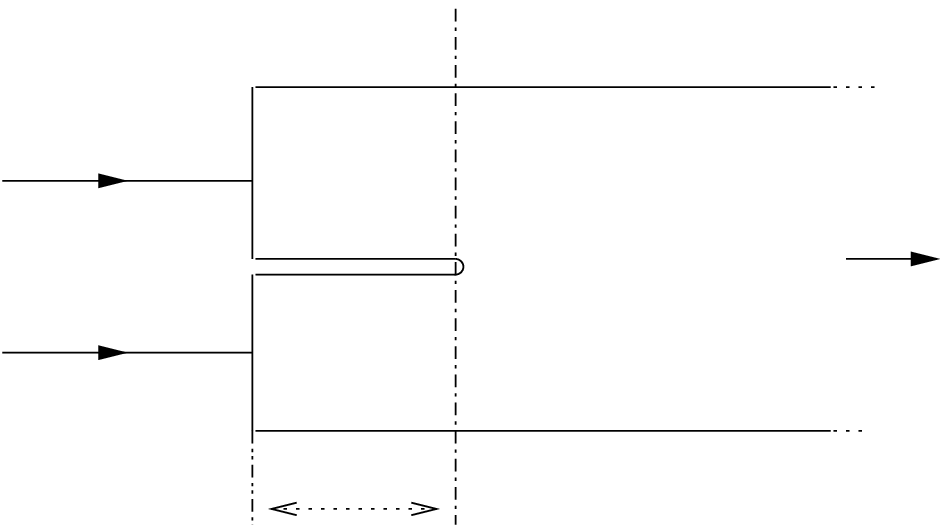,width=5cm}}
   \put(170,370){\makebox{$\alpha$}}
   \put(550,450){\makebox{$\simeq$}}
   \put(620,400){\epsfig{file=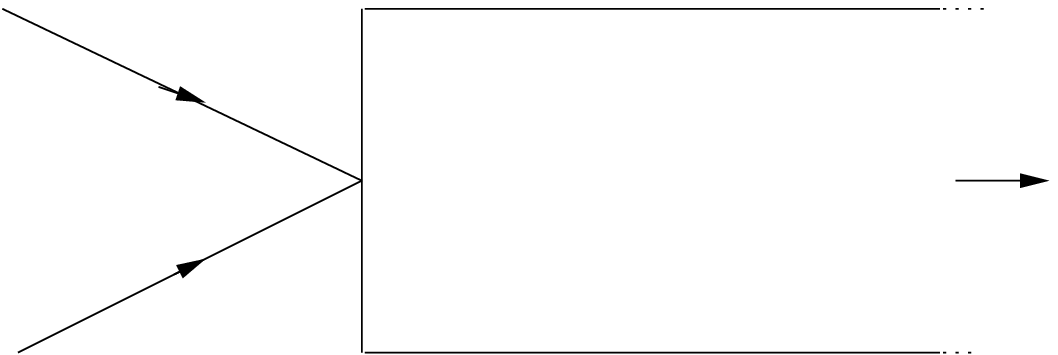,width=55mm}}
   \put(100,300){\makebox{$\Upsilon^\Omega \circ \Phi_{L_1}^{\Omega} \otimes
\Phi_{L_2}^{\Omega}$}}
   \put(900,320){\makebox{$K^{\Omega}$}}
   \put(400,130){\makebox{$\simeq$}}
   \put(470,30){\epsfig{file=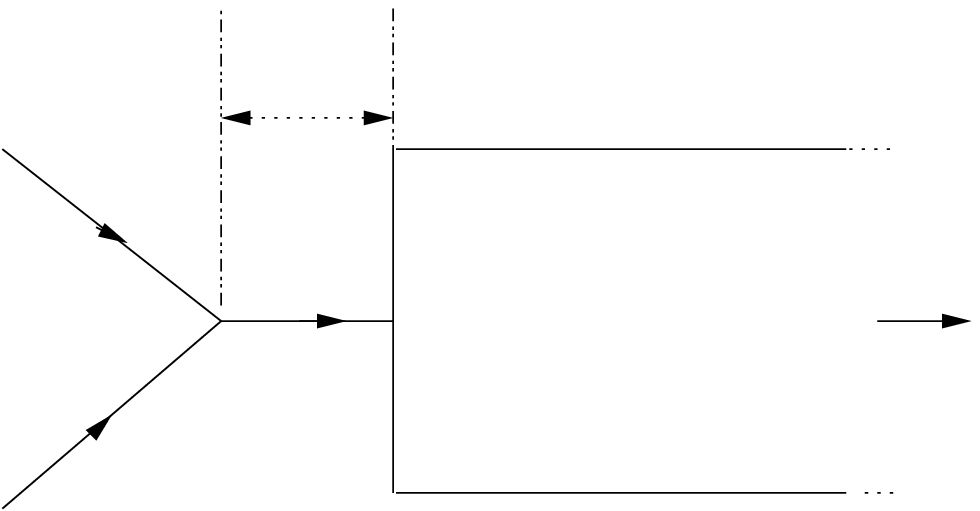,width=50mm}}
  \put(625,260){\makebox{$\alpha$}}
  \put(580,-20){\makebox{$\Phi_{L_1 \# L_2}^{\Omega} \circ M_{\#}$}}
  \end{picture}
 \end{center}
\caption{The homotopy through $K^{\Omega}$.
  \label{KOmega}}
\end{figure}

The definition of $K^{\Omega}$ is based on the following space of
solutions of the Floer equation for the Hamiltonian $H$ 
defined in (\ref{hham}): given $\gamma_1\in \mathscr{P}^{\Omega}(L_1)$, $\gamma_2\in
\mathscr{P}^{\Omega}(L_2)$, and $x\in \mathscr{P}^{\Omega}(H_1 \#
H_2)$, let $\mathscr{M}_K^{\Omega} (\gamma_1,\gamma_2;x)$ be the space of solutions $u:[0,+\infty[ \times [-1,1] \rightarrow T^*M$ of the Floer equation
\[
\delbar_{J,H}(u)=0,
\]
with boundary conditions
\begin{eqnarray*}
\pi\circ u (s,-1) = \pi \circ u(s,1) = q_0, \quad
  \forall s\geq 0, \\
\pi\circ u(0,\cdot-1) \in W^u(\gamma_1; X_{L_1}^{\Omega}), \;  \pi\circ u(0,\cdot) \in
W^u(\gamma_2; X_{L_2}^{\Omega}),
\end{eqnarray*}
and the asymptotic behavior  
\[
\lim_{s\rightarrow +\infty} u(s,2t-1) = x(t).
\]
Here, $X_{L_1}^{\Omega}$ and $X_{L_2}^{\Omega}$ are Morse-Smale pseudo-gradients for the functionals $\mathbb{S}_{L_1}^{\Omega}$ and $\mathbb{S}_{L_2}^{\Omega}$.
Theorem 3.2 in \cite{as06} (or the arguments of Section
\ref{lin}) implies that for a generic choice of the Lagrangians $L_1$, $L_2$, and of the  pseudo-gradients $X_{L_1}^{\Omega}$, $X_{L_2}^{\Omega}$, the space
$\mathscr{M}_K^{\Omega} (\gamma_1,\gamma_2;x)$ -- if non-empty -- is a
smooth manifold of dimension
\[
\dim \mathscr{M}_K^{\Omega} (\gamma_1,\gamma_2;x) =
i^{\Omega}(\gamma_1; L_1) + i^{\Omega}(\gamma_2;
L_2) - \mu^{\Omega}(x;H_1 \# H_2).
\]
These manifolds carry coherent orientations.
The energy identity is now
\[
\int_{[0,+\infty[ \times [-1,1]} 
|\partial_s u(s,t)|^2 \, ds\, dt =
\mathbb{A}_{H_1} (x_1) + \mathbb{A}_{H_2} (x_2) - \mathbb{A}_{H_1\#
  H_2}(x),
\]
where $x_1(t)=u(0,t-1)$ and $x_2(t)=u(0,t)$. Since $\pi\circ x_1$ is
in the unstable manifold of $\gamma_1$ and $\pi\circ x_2$ is
in the unstable manifold of $\gamma_2$, the inequality (\ref{fencsti})
implies that
\[
\mathbb{A}_{H_1} (x_1) \leq \mathbb{S}_{L_1} (\gamma_1), \quad  
\mathbb{A}_{H_2} (x_2) \leq \mathbb{S}_{L_2} (\gamma_2),
\]
so the elements $u$ of $\mathscr{M}_K^{\Omega}(\gamma_1,\gamma_2;x)$ 
satisfy the energy estimate
\begin{equation}
\label{enesti}
\int_{[0,+\infty[ \times [-1,1]} 
|\partial_s u(s,t)|^2 \, ds\, dt \leq \mathbb{S}_{L_1}
(\gamma_1) + \mathbb{S}_{L_2} (\gamma_2) - \mathbb{A}_{H_1\#
  H_2}(x).
\end{equation}
When $i^{\Omega}(\gamma_1;L_1) + i^{\Omega}(\gamma_2;L_2) = \mu^{\Omega}(x;H_1 \oplus H_2) $, the space 
$\mathscr{M}_K^{\Omega}(\gamma_1,\gamma_2;x)$ is a
compact zero-dimensional oriented manifold. If
$n_K^{\Omega}(\gamma_1,\gamma_2;x)$ is the algebraic sum of its  points, we can
define the homomorphism
\begin{equation*}\begin{split}
K^{\Omega} \colon\; &\bigl( M(\mathbb{S}^{\Omega}_{L_1}) \otimes
M(\mathbb{S}^{\Omega}_{L_2}) \bigr)_k \rightarrow
F_k^{\Omega}(H_1 \# H_2),\\
&\gamma_1\otimes \gamma_2 \mapsto \sum_{\substack{x\in \mathscr{P}^{\Omega}(H_1
    \# H_2)\\ \mu^{\Omega}(x) = k}}
n_K^{\Omega}(\gamma_1,\gamma_2;x)\, x.
\end{split}\end{equation*}
A standard gluing argument shows that $K^{\Omega}$ is a chain map. 

It is easy to construct a homotopy $P_K^{\#}$ between $\Phi_{L_1\#
  L_2}^{\Omega}\circ M_{\#}$ and $K^{\Omega}$. In fact, it is enough
to consider the space of pairs $(\alpha,u)$, where $\alpha$ is a
positive number and $u$ is a solution of the Floer equation on
$[0,+\infty[ \times [-1,1]$ which converges to $x$ for $s\rightarrow
+\infty$, and such that the curve $t\mapsto \pi\circ u(0,2t-1)$
belongs to the evolution at time $\alpha$ of 
\[
\Gamma \bigl( W^u(\gamma_1; X_{L_1}^{\Omega}) \times
W^u(\gamma_2; X_{L_2}^{\Omega}) \bigr),
\]
by flow of $X_{L_1 \# L_2}^{\Omega}$, a pseudo-gradient for $\mathbb{S}_{L_1 \# L_2}^{\Omega}$. Here $\Gamma$ is the concatenation map defined in Section \ref{tpp}. More precisely, set
\begin{equation*}\begin{split}
\mathscr{M}_K^{\#} (\gamma_1,\gamma_2;x)  := \Bigl\{ (\alpha,u) \,
\Big| \, \alpha>0, \; u: [0,+\infty[ \times [-1,1] \rightarrow T^*M 
\mbox{ solves } \overline{\partial}_{J,H}(u)=0, \\
\pi\circ u (s,-1) = \pi \circ u(s,1) = q_0 \;
  \forall s\geq 0, \;
\lim_{s\rightarrow +\infty} u(s,2t-1) = x(t),  \\
\pi\circ u(0,2 \cdot-1) \in \phi_{\alpha}^{\Omega}\bigl( \Gamma(W^u(\gamma_1;X_{L_1}^{\Omega}) \times 
W^u(\gamma_2;X_{L_2}^{\Omega})) \bigr) \Bigr\},
\end{split}\end{equation*} 
where $\phi_s^{\Omega}$ denotes the flow of $X_{L_1 \#
  L_2}^{\Omega}$. For a generic choice of $L_1$, $L_2$, $X_{L_1}^{\Omega}$, $X_{L_2}^{\Omega}$, and $X_{L_1 \# L_2}^{\Omega}$, the space
$\mathscr{M}_K^{\#} (\gamma_1,\gamma_2;x)$ -- if non-empty -- is a
smooth manifold of dimension
\[
\dim \mathscr{M}_K^{\#} (\gamma_1,\gamma_2;x) =
i^{\Omega}(\gamma_1; L_1) + i^{\Omega}(\gamma_2;
L_2) - \mu^{\Omega}(x;H_1 \# H_2) + 1,
\]
and these manifolds carry coherent orientations. The energy estimate
is again (\ref{enesti}). By counting the elements of the
zero-dimensional manifolds, we obtain a homomorphism
\[
P^{\#}_K : \bigl( M(\mathbb{S}^{\Omega}_{L_1}) \otimes
M(\mathbb{S}^{\Omega}_{L_2}) \bigr)_k \rightarrow
F_{k+1}^{\Omega}(H_1 \# H_2).
\]
A standard gluing argument shows that $P^{\#}_K$ is a chain
homotopy between $\Phi_{L_1\#
  L_2}^{\Omega}\circ M_{\#}$ and $K^{\Omega}$.
    
The homotopy $P^K_{\Upsilon}$ between $K^{\Omega}$ and $\Upsilon^{\Omega} \circ (
\Phi_{L_1}^{\Omega} \otimes \Phi_{L_2}^{\Omega})$ 
is defined by counting solutions of the
Floer equation on a one-parameter family of Riemann surfaces
$\Sigma^K_{\Upsilon}(\alpha)$, obtained by removing a point from the
closed disk. More precisely, given $\alpha>0$, we define
$\Sigma_{\Upsilon}^K(\alpha)$ as the quotient of the disjoint union
$[0,+\infty[ \times [-1,0] \sqcup [0,+\infty[ \times [0,1]$ under the
identification
\[
(s,0^-) \sim (s,0^+) \quad \mbox{for } s\geq \alpha.
\]
This object is a Riemann surface with boundary: Its complex
structure at each interior point and at each boundary point other than
$(\alpha,0)$ is induced by the inclusion, whereas the holomorphic
coordinate at $(\alpha,0)$ is given by the map
\[
\set{\zeta\in \C}{\re \zeta\geq 0, \; |\zeta|<\epsilon} \rightarrow
\Sigma_{\Upsilon}^K(\alpha), \quad \zeta \mapsto \alpha + \zeta^2,
\]
where the positive number $\epsilon$ is smaller than $1$ and $\sqrt{\alpha}$.
Given $\gamma_1\in \mathscr{P}(L_1)$, $\gamma_2\in \mathscr{P}(L_2)$,
and $x\in \mathscr{P}(H_1 \# H_2)$, we consider the space of pairs
$(\alpha,u)$ where $\alpha$ is a positive number, and $u(s,t)$ is a
solution of the Floer equation on $\Sigma^K_{\Upsilon}(\alpha)$ which converges
to $x$ for $s\rightarrow +\infty$, lies above some element in the
unstable manifold of $\gamma_1$ for $s=0$ and $-1\leq t \leq 0$,
lies above some element in the unstable manifold of $\gamma_2$ for
$s=0$ and $0 \leq t \leq 1$, and lies above $q_0$ at all the other
boundary points. More precisely, $\mathscr{M}_{\Upsilon}^K (\gamma_1,\gamma_2;x) $ is the set of pairs $(\alpha,u)$ where $\alpha$ is a positive number and $u: \Sigma_{\Upsilon}^K(\alpha) \rightarrow T^*M$ is a solution of 
\[
\delbar_{J,H}(u) = 0,
\]
which satisfies the boundary conditions
\begin{eqnarray*}
\pi\circ u (s,-1) = \pi \circ u(s,1) = q_0, \quad \forall s\geq 0, \\ 
\pi \circ u (s,0^-) = \pi \circ u (s,0^+) = q_0, \quad \forall s\in [0,\alpha], \\
\pi\circ u(0,\cdot-1) \in W^u(\gamma_1;X_{L_1}^{\Omega}), \quad  \pi\circ u(0,\cdot) \in
W^u(\gamma_2;X_{L_2}^{\Omega}),
\end{eqnarray*}
and the asymptotic condition
\[
\lim_{s\rightarrow +\infty} u(s,2t-1) = x(t).
\]
The following result is proved in Section \ref{lin}.

\begin{prop}
\label{POmega}
For a generic choice of $L_1$, $L_2$, $X_{L_1}^{\Omega}$, and $X_{L_2}^{\Omega}$,
$\mathscr{M}_{\Upsilon}^K (\gamma_1,\gamma_2;x)$ -- if non-empty -- is a
smooth manifold of dimension
\[
\dim \mathscr{M}_{\Upsilon}^K (\gamma_1,\gamma_2;x) =
i^{\Omega}(\gamma_1; L_1) + i^{\Omega}(\gamma_2;
L_2) - \mu^{\Omega}(x;H_1 \# H_2) + 1.
\]
These manifolds carry coherent orientations.
\end{prop}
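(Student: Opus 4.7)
The plan is to treat $\mathscr{M}_{\Upsilon}^K(\gamma_1,\gamma_2;x)$ as the zero set of a parametrized Fredholm section on the product of the $\alpha$-parameter $(0,+\infty)$ and a Banach manifold of maps on the strip with slit, and to apply the Fredholm and transversality package developed in Section \ref{lineartheory}, along exactly the same lines used to prove the earlier statements of this section (Propositions \ref{popss}, \ref{eg}, \ref{ge}, \ref{cev}).

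First, at a fixed $\alpha>0$, I would check Fredholmness and compute the index. The domain $\Sigma_{\Upsilon}^K(\alpha)$ is conformally the half-strip $[0,+\infty)\times[-1,1]$ with a horizontal slit of length $\alpha$ along $\{t=0\}$, made into a smooth Riemann surface by the square-root chart at the slit endpoint $(\alpha,0)$. The boundary conditions are of conormal type throughout: $\pi\circ u \equiv q_0$ at $t=\pm1$, and on both sides of the slit for $0\le s\le \alpha$; at $s=0$ the Morse-theoretic condition requires that $(\pi\circ u(0,\cdot-1),\pi\circ u(0,\cdot))$ lie in $W^u(\gamma_1;X_{L_1}^{\Omega})\times W^u(\gamma_2;X_{L_2}^{\Omega})$; and at $s\to+\infty$ the map converges to the non-degenerate Hamiltonian orbit $x$ of $H_1\#H_2$ (after the reparametrization $t\mapsto 2t-1$). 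Choosing the standard $W^{1,p}$ completion together with the square-root chart at the slit endpoint makes the linearized Cauchy--Riemann operator Fredholm, and the jumping-boundary-conditions index formula from Section \ref{lin} gives
\[
\mathrm{ind}\,D_u = i^{\Omega}(\gamma_1;L_1) + i^{\Omega}(\gamma_2;L_2) - \mu^{\Omega}(x;H_1\#H_2),
\]
since the two Lagrangians meeting at the slit endpoint $(\alpha,0)$ coincide -- both equal the conormal bundle of the point $q_0$ -- so the clean-intersection correction $\dim R_{j-1}-\dim R_{j-1}\cap R_j$ vanishes at that jump. Letting $\alpha$ vary adds one parameter direction, yielding the predicted total dimension.

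Next, I would establish transversality via a universal moduli space argument. The standard perturbation scheme of \cite{fhs96} applied to the Hamiltonians $H_1,H_2$ makes the Cauchy--Riemann section surjective at every solution $(\alpha,u)$, the required somewhere-injectivity following from the non-triviality of the asymptotic orbit $x$ just as in the proof of Proposition \ref{popss}; crucially, perturbations of $H_1,H_2$ do not alter the discrete sets $\mathscr{P}^{\Omega}(L_1)$, $\mathscr{P}^{\Omega}(L_2)$, $\mathscr{P}^{\Omega}(H_1\#H_2)$. An additional small perturbation of the Morse--Smale pseudo-gradients $X_{L_1}^{\Omega}$ and $X_{L_2}^{\Omega}$ makes the evaluation at $s=0$ transverse to $W^u(\gamma_1;X_{L_1}^{\Omega})\times W^u(\gamma_2;X_{L_2}^{\Omega})$. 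The implicit function theorem then gives the manifold structure and the stated dimension. Coherent orientations are then produced by the usual determinant-line recipe: one orients the determinant line of the linearization by homotoping the boundary Lagrangians to a trivial model, and tensors with the fixed orientations of $W^u(\gamma_1)$, $W^u(\gamma_2)$ and the positive orientation of the $\alpha$-factor; coherence under the degenerations $\alpha\to 0^+$, $\alpha\to+\infty$ and under breakings at $\gamma_1,\gamma_2,x$ is verified by the same gluing analysis already used in Section \ref{fhrssec}.

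The main obstacle is that the conformal structure of the domain genuinely moves with the parameter: the slit length $\alpha$ controls both the geometry of the Riemann surface and the position of the Lagrangian jump, so parametric transversality has to be set up on a bundle of Banach manifolds over $(0,+\infty)$ rather than on a single fixed Banach manifold. This is however only a formal complication, since the square-root chart at the slit endpoint produces a smooth family of complex structures in $\alpha$, and the linear theory of Section \ref{lineartheory} is already stated in the parametric form required to handle such families.
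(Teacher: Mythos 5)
Your overall strategy is the same as the paper's: reduce the problem (for fixed $\alpha$ and fixed points $q_1,q_2$ in the unstable manifolds) to the Fredholm theory of Section~\ref{lineartheory} for Cauchy--Riemann operators on a half-strip with jumping conormal boundary conditions, compute the index, then add the extra dimensions coming from $W^u(\gamma_1)\times W^u(\gamma_2)$ and from the $\alpha$-parameter, and obtain transversality and coherent orientations by the standard perturbation and determinant-bundle arguments. This matches the structure of the paper's dimension computations.

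The one place where your justification is imprecise is the claim that the jump at the slit endpoint contributes no index correction \emph{because ``the two Lagrangians meeting at the slit endpoint coincide.''} That description is faithful to the \emph{un-doubled} picture on $\Sigma^K_\Upsilon(\alpha)$, where the slit carries the constant condition $u\in T^*_{q_0}M$ and the square-root chart smooths the endpoint. But the index formula you are invoking (Theorem~\ref{fredhs+nl} / Corollary~\ref{corfred+}) is formulated for the doubled half-strip, and there the slit endpoint genuinely \emph{is} a jump in the nonlocal conormal boundary conditions: as in the paper's computation, one passes to the map $v:\Sigma^+\to T^*M^2$ via $v(z)=(\mathscr{C}u(\bar z),u(z))$ (and doubling once more), and the boundary submanifolds jump at $s=\alpha$ from $R_0=\{(q_0,q_0,q_0,q_0)\}$ to $R_1=\Delta_M\times\{(q_0,q_0)\}$, which are \emph{different} submanifolds of $M^4$. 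The correction $\dim R_0-\dim R_0\cap R_1$ vanishes not because $R_0=R_1$, but because $R_0$ is a single point contained in $R_1$, so both terms are zero. Your conclusion (zero correction, hence the stated index) is correct, but a careful argument must either carry out the doubling reduction as the paper does, or justify directly that the un-doubled problem has the stated index -- which would require showing that the constant-Lagrangian slit plus square-root chart indeed contributes nothing, a fact the paper extracts from the weighted-Sobolev ($X^{1,p}$) theory rather than assuming it. The remaining steps (computing $\mu^{\Delta_M\times(q_0,q_0)}(y)=\mu^\Omega(x)$ via Lemma~\ref{lemule}, perturbing the Lagrangians and pseudo-gradients for transversality, and the orientation discussion) are in line with the paper.
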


As before, the elements $(\alpha,u)$ of
$\mathscr{M}_{\Upsilon}^K(\gamma_1,\gamma_2;x)$ satisfy the energy
estimate
\[
\int_{\Sigma^K_{\Upsilon} (\alpha)} 
|\partial_s u(s,t)|^2 \, ds\, dt \leq \mathbb{S}_{L_1}
(\gamma_1) + \mathbb{S}_{L_2} (\gamma_2) - \mathbb{A}_{H_1\#
  H_2}(x),
\]
which allows to prove compactness. By counting the zero-dimensional components, we define a homomorphism
\[
P^K_{\Upsilon}: \bigl( M(\mathbb{S}^{\Omega}_{L_1}) \otimes
M(\mathbb{S}^{\Omega}_{L_2}) \bigr)_k \rightarrow
F_{k+1}^{\Omega}(H_1 \# H_2,J).
\]
The conclusion arises from the following:

\begin{prop}
\label{omegahom}
The homomorphism $P^K_{\Upsilon}$ is a chain homotopy between $K^{\Omega}$ and  $\Upsilon^{\Omega} \circ (
\Phi_{L_1}^{\Omega} \otimes \Phi_{L_2}^{\Omega})$.
\end{prop}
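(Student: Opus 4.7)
The plan is the standard boundary-count strategy: I consider the $1$-dimensional components of the parametrized moduli space $\mathscr{M}_{\Upsilon}^K(\gamma_1,\gamma_2;x)$, corresponding to the case $i^{\Omega}(\gamma_1;L_1) + i^{\Omega}(\gamma_2;L_2) - \mu^{\Omega}(x; H_1\#H_2) = 0$, and identify the four ends of such a one-manifold with the four terms appearing in the chain-homotopy identity
\[
K^{\Omega} - \Upsilon^{\Omega}\circ(\Phi^{\Omega}_{L_1}\otimes \Phi^{\Omega}_{L_2}) = \partial^{\Omega}_{H_1 \# H_2}\circ P^K_{\Upsilon} + P^K_{\Upsilon}\circ\bigl(\partial^{\Omega}_{L_1}\otimes \mathrm{id} + (-1)^{|\cdot|}\mathrm{id}\otimes \partial^{\Omega}_{L_2}\bigr).
\]
The energy estimate (\ref{enesti}), which is inherited uniformly in $\alpha$ from the Fenchel inequality (\ref{fencsti}), together with the $C^0$-bounds of Lemma~\ref{compham} and the bubbling-free compactness discussed in Section~\ref{cocosec}, guarantees that on every compact sub-interval $[\alpha_0,\alpha_1]\subset (0,+\infty)$ the restricted moduli space is a compact 1-manifold with the two standard types of interior breaking as boundary: (i) Floer breaking at $s\to+\infty$, contributing $\partial^{\Omega}_{H_1\#H_2}\circ P^K_{\Upsilon}$, and (ii) Morse breaking at $s=0$ inside $W^u(\gamma_1;X_{L_1}^{\Omega})\times W^u(\gamma_2;X_{L_2}^{\Omega})$, contributing $P^K_{\Upsilon}\circ(\partial\otimes\mathrm{id}\pm\mathrm{id}\otimes\partial)$. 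Both breakings are handled by standard gluing, to which I refer.

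The non-standard work lies in the two geometric boundaries $\alpha\to 0$ and $\alpha\to +\infty$. For $\alpha\downarrow 0$, the surface $\Sigma_{\Upsilon}^K(\alpha)$ converges, as a Riemann surface, to the domain $[0,+\infty[\times[-1,1]$ underlying $\mathscr{M}_K^{\Omega}(\gamma_1,\gamma_2;x)$: the slit $[0,\alpha]\times\{0\}$ on which the extra Dirichlet condition $\pi\circ u(s,0^{\pm})=q_0$ was imposed shrinks to the single point $(0,0)$, which lies in the closure of the boundary components carrying the unstable-manifold constraints. Since the auxiliary Dirichlet condition collapses to a codimension-$2$ point constraint that is already implied by the unstable-manifold condition at $s=0$ (after a short homotopy of $X_{L_j}^{\Omega}$ near the critical points), an application of the implicit function theorem at an element $u_0\in\mathscr{M}_K^{\Omega}$, exactly of the kind carried out in Section~\ref{factsec} (see the proof of Proposition~\ref{facthom}), shows that a neighborhood of $\{0\}\times\mathscr{M}_K^{\Omega}$ in the compactification of $\mathscr{M}_{\Upsilon}^K$ is a collar parametrized by $\alpha$, so this end contributes exactly $K^{\Omega}(\gamma_1\otimes\gamma_2)$ with the expected sign.

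For $\alpha\uparrow +\infty$, the geometric picture is that the slit grows without bound, so each solution splits into three pieces: two half-strips $[0,+\infty[\times[-1,0]$ and $[0,+\infty[\times[0,1]$, each with Dirichlet condition at $t=\mp1$ and $t=0$ and with boundary values in $W^u(\gamma_j;X_{L_j}^{\Omega})$ at $s=0$, that is, two elements of the defining moduli spaces of $\Phi^{\Omega}_{L_1}$ and $\Phi^{\Omega}_{L_2}$ converging to some $y_1\in\mathscr{P}^{\Omega}(H_1)$ and $y_2\in\mathscr{P}^{\Omega}(H_2)$; and, after translating by $-\alpha$ in $s$, an element of $\mathscr{M}_{\Upsilon}^{\Omega}(y_1,y_2;x)$ on the holomorphic triangle $\Sigma_{\Upsilon}^{\Omega}$ (which is precisely the $\alpha=0$ shape of the left part of $\Sigma_{\Upsilon}^K$). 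The dimension count $i^{\Omega}(\gamma_1) + i^{\Omega}(\gamma_2) - \mu^{\Omega}(x) = 0$ forces $\mu^{\Omega}(y_j) = i^{\Omega}(\gamma_j)$ and $\mu^{\Omega}(x) = \mu^{\Omega}(y_1) + \mu^{\Omega}(y_2)$, so such a broken configuration is exactly a contribution to $\Upsilon^{\Omega}\circ(\Phi^{\Omega}_{L_1}\otimes\Phi^{\Omega}_{L_2})$. The hard part is the converse: gluing a triple of such solutions into a genuine element of $\mathscr{M}_{\Upsilon}^K(\gamma_1,\gamma_2;x)$ for large $\alpha$. This is where I expect the main obstacle, but the authors' paragraph in the Introduction makes clear that, as in Section~\ref{factsec}, one avoids the usual Newton-scheme machinery: the slit domain $\Sigma_{\Upsilon}^K(\alpha)$ for large $\alpha$ is already a pre-glued approximate domain, and a pair of $\Phi^{\Omega}$-solutions pre-glued with an $\Upsilon^{\Omega}$-solution via translation by $\alpha$ and suitable cut-off gives an approximate solution whose error is $L^p$-small uniformly as $\alpha\to\infty$; a direct implicit function theorem argument in a uniformly invertible linearization (as in \cite{as06} for the $\Phi$-gluing) then yields a unique nearby true solution. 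Combining the four boundary identifications completes the proof.
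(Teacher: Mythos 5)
Your overall framework is correct: treat the $1$-dimensional components of $\mathscr{M}_{\Upsilon}^K(\gamma_1,\gamma_2;x)$ as a cobordism, identify the Floer breaking and the Morse breaking with the two $\partial P + P\partial$ terms, and identify the two geometric ends $\alpha\to 0$ and $\alpha\to+\infty$ with $K^{\Omega}$ and $\Upsilon^{\Omega}\circ(\Phi^{\Omega}_{L_1}\otimes\Phi^{\Omega}_{L_2})$ respectively. The $\alpha\to+\infty$ picture you describe (splitting into two $\Phi^{\Omega}$-solutions and one $\Upsilon^{\Omega}$-solution) matches the paper's statement exactly. However, you have reversed which of the two geometric ends carries the analytical difficulty. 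The paper handles the $\alpha\to+\infty$ gluing by the \emph{standard} Newton iteration (the breaking there occurs at nondegenerate orbits $y_1,y_2$ in a region where the domain is cylindrical, so pre-gluing plus uniform invertibility applies); it is the $\alpha\to 0$ end, where the slit collapses to the corner, that requires the non-standard argument alluded to in the Introduction.

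More importantly, your claim that the $\alpha\to 0$ end is handled ``exactly of the kind carried out in Section~\ref{factsec}'' does not hold up. The key device in Section~\ref{factsec} is the translation operator $\mathscr{T}_\alpha$, which moves the jump point on the boundary by translating in $s$; this works because the domain there is the bi-infinite strip, which translations preserve. Here the domain is the half-strip $\Sigma^+$, the relevant jump sits at $s=\alpha$ on the lower boundary, and as $\alpha\to 0$ it collides with the corner $(0,0)$. Translating toward the corner would push part of the domain past $s=0$ and so is not defined. The paper's actual mechanism is a one-parameter family of conformal homeomorphisms $\varphi_\epsilon$ of $\Sigma^+$, constructed by conjugating the dilation $z\mapsto(1+\epsilon)z$ of $\mathbb{H}$ by the biholomorphism $z\mapsto(1+\cos(i\pi z))/2$; each $\varphi_\epsilon$ fixes the upper boundary and slides the lower and left boundary so that the corner moves to $\alpha(\epsilon)$, thereby transforming the $\alpha$-dependent boundary condition into a fixed one at the cost of $\alpha$-dependent coefficients in the nonlinear Cauchy--Riemann operator. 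The continuity in $\alpha$ required for the parametric inverse function theorem then rests on the non-obvious estimate $\varphi'_\epsilon-1\to 0$ in $L^p(\Sigma^+)$ for $1<p<4$, which has no analogue in the translation argument. Without a replacement for the translation trick of this kind, your proposed $\alpha\to 0$ argument has a genuine gap, since the functional framework in which you would apply the implicit function theorem near $\alpha=0$ is left unspecified and cannot be furnished by the Section~\ref{factsec} machinery.
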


The proof of the above proposition is contained in Section \ref{omegasec}. It is again a compactness-cobordism argument. The analytical tool is the implicit function theorem together with a suitable family of conformal transformations of the half-strip. This concludes the proof of Theorem \ref{Bchain}, hence of Theorem B.

\subsection{A chain level proof of Theorem A}
\label{oria}

Theorem A of the Introduction says that if $M$ is an oriented closed manifold, then there is a graded ring isomorphism
\[
H_*(\Lambda(M)) \cong HF_*^{\Lambda}(T^*M)
\]
where the first graded group is endowed with the loop product $\loopprod$ (see Section \ref{tcslp}), and the second one with the pair-of-pants product $H_* \Upsilon^{\Lambda}$ (see Section \ref{tppp}).

The loop product is the composition of two non-trivial homomorphisms:
The first one is the exterior homology product $\times$ followed by the  Umkehr map $e_!$, that is
\[
e_!\circ \times :H_h(\Lambda(M)) \otimes H_j(\Lambda(M)) \longrightarrow H_{h+j-n} (\Theta(M)),
\]
the second one is the homomorphism induced by concatenation,
\[
\Gamma_*: H_k (\Theta(M)) \longrightarrow H_k (\Lambda(M)).
\]
In Section \ref{mtilp}, we have shown how these two homomorphisms can be
read on the Morse complexes of the Lagrangian action functional with either periodic or figure-8 boundary conditions: $e_!\circ \times$ is induced by the chain map
\[
M_{!} : M_h(\mathbb{S}_{L_1}^{\Lambda}) \otimes M_j
(\mathbb{S}_{L_2}^{\Lambda})
\longrightarrow M_{h+j-n} (\mathbb{S}_{L_1  \oplus L_2}^{\Theta} ), 
\]
defined immediately after Proposition \ref{prop e!}, and $\Gamma_*$ by the chain map
\[
M_{\Gamma} :  M_k (\mathbb{S}_{L_1 \oplus L_2}^{\Theta}) \longrightarrow M_k(\mathbb{S}_{L_1 \# L_2}^{\Lambda}), 
\]
defined in Proposition \ref{prop MG}.
On the other hand, by Theorem \ref{tge} the pair-of-pants product $\Upsilon^{\Lambda}_*$ on the Floer homology of $T^*M$ with periodic boundary conditions is induced by the composition of the chain maps
\[
E : F^{\Lambda}_h(H_1) \otimes F^{\Lambda}_j (H_2) \longrightarrow F^{\Theta}_{h+j-n} (H_1 \oplus H_2)
\]
and
\[
G: F^{\Theta}_k (H_1 \oplus H_2) \longrightarrow F^{\Lambda}_k (H_1 \# H_2). 
\]
Therefore, Theorem A is an immediate consequence of the following chain level result:

\begin{thm}
\label{chainA}
Let $L_1,L_2\in C^{\infty}(\T \times TM)$ be two Lagrangians
which satisfy (L0)$^{\Lambda}$, (L1), (L2), are such that $L_1(0,\cdot)=L_2(0,\cdot)$ with all the time derivatives, and
satisfy (\ref{nofig8s}), or equivalently (\ref{nofig8n}). 
Assume also that the Lagrangian $L_1 \# L_2$ defined
by (\ref{ide}) satisfies (L0)$^{\Lambda}$. Let $H_1$ and $H_2$ be the Fenchel
transforms of $L_1$ and $L_2$, so that $H_1 \# H_2$ is the Fenchel
transform of $L_1 \# L_2$, and the three Hamiltonians $H_1$, $H_2$, and
$H_1 \# H_2$ satisfy (H0)$^{\Lambda}$, (H1), (H2). Then the two squares in the diagram
\begin{equation*}
\xymatrix@!C{ 
\scriptstyle{\bigl(M(\mathbb{S}_{L_1}^{\Lambda})
  \otimes M(\mathbb{S}_{L_2}^{\Lambda})\bigr)_k} \ar^{M_{!}}[r]
  \ar[d]_{\Phi^{\Lambda}_{L_1} \otimes \Phi^{\Lambda}_{L_2}}  &
  \scriptstyle{M_{k-n} 
  (\mathbb{S}_{L_1 \oplus L_2}^{\Theta})} \ar[d]^{\Phi^{\Theta}_{L_1 \oplus L_2}}
  \ar[r]^{M_{\Gamma}} & \scriptstyle{M_{k-n} (\mathbb{S}^{\Lambda}_{L_1 \#
    L_2})} \ar[d]^{\Phi_{L_1\# L_2}^{\Lambda}} \\   
\scriptstyle{\bigl(F^{\Lambda}(H_1) \otimes F^{\Lambda} (H_2)\bigr)_k}
\ar[r]^{E}  &  
\scriptstyle{F_{k-n}^{\Theta} (H_1 \oplus H_2)} \ar[r]^{G} &
\scriptstyle{F_{k-n}^{\Lambda} (H_1 \# H_2)} }
\end{equation*}
commute up to chain homotopies.
\end{thm}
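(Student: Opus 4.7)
The plan is to adapt the factorization strategy used in Section \ref{ori} for Theorem \ref{Bchain}: for each of the two squares, introduce an intermediate ``hybrid'' chain map defined by counting solutions of a Floer-type problem with Floer asymptotic conditions on one side and Morse unstable-manifold boundary data on the other, and then show that each of the two compositions around the square is chain-homotopic to this hybrid.

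\textbf{Right square.} Define a hybrid chain map
\[
K^{\Gamma} \colon M_k(\mathbb{S}^{\Theta}_{L_1 \oplus L_2}) \longrightarrow F^{\Lambda}_k(H_1 \# H_2)
\]
by counting rigid solutions of the Floer equation $\delbar_{J,H}(u)=0$ on the half-surface obtained from $\Sigma_G$ by cutting at $s=0$ and keeping the $s\geq 0$ half, retaining the figure-8 conormal boundary condition along $s\geq 0$ and the cylindrical end at $s\to+\infty$; at $s=0$ we impose $(\pi\circ u(0,\cdot-1),\pi\circ u(0,\cdot))\in W^u(\gamma;X^{\Theta}_{L_1\oplus L_2})$ for some $\gamma\in\mathscr{P}^{\Theta}(L_1\oplus L_2)$, and at $s\to+\infty$ we demand $u(s,2t-1)\to z(t)$ for some $z\in\mathscr{P}^{\Lambda}(H_1\# H_2)$. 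The Fenchel inequality (\ref{fencsti}) yields the energy estimate $\int|\partial_s u|^2\leq \mathbb{S}^{\Theta}_{L_1\oplus L_2}(\gamma)-\mathbb{A}_{H_1\#H_2}(z)$, which together with (H1)-(H2) gives compactness, so $K^{\Gamma}$ is a well defined chain map by standard gluing. The chain homotopy $\Phi^{\Lambda}_{L_1\#L_2}\circ M_{\Gamma}\simeq K^{\Gamma}$ is obtained by counting pairs $(\alpha,u)$ in which $\alpha\geq 0$ measures an intermediate stretch of $X^{\Lambda}_{L_1\#L_2}$-flow on the image $\Gamma(W^u(\gamma))\subset\Lambda^1(M)$ inserted between the concatenation map and the hybrid Floer problem, so that $\alpha=0$ returns $K^{\Gamma}$ and $\alpha\to+\infty$ returns $\Phi^{\Lambda}_{L_1\#L_2}\circ M_{\Gamma}$. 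The homotopy $G\circ\Phi^{\Theta}_{L_1\oplus L_2}\simeq K^{\Gamma}$ is constructed analogously, by varying the length $\alpha\geq 0$ of the portion of $\Sigma_G$ on which the conormal boundary condition is imposed before it is replaced by a pair of Floer half-cylinders feeding the $\Phi^{\Theta}$ hybrid; both homotopies are built by the standard implicit-function-plus-gluing technique.

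\textbf{Left square.} Introduce the hybrid chain map
\[
K^{E} \colon \bigl(M(\mathbb{S}^{\Lambda}_{L_1})\otimes M(\mathbb{S}^{\Lambda}_{L_2})\bigr)_k \longrightarrow F^{\Theta}_{k-n}(H_1\oplus H_2)
\]
by counting rigid solutions on the half-surface obtained from $\Sigma_E$ by cutting at $s=0$: the pair of half-strips $[0,+\infty[\times[-1,0]$ and $[0,+\infty[\times[0,1]$ coupled along $s\geq 0$ by the figure-8 conormal boundary condition, with figure-8 asymptotic orbit $(y_1,y_2)\in\mathscr{P}^{\Theta}(H_1\oplus H_2)$ at $s\to+\infty$ and with Morse boundary data $\pi\circ u(0,\cdot-1)\in W^u(\gamma_1;X^{\Lambda}_{L_1})$, $\pi\circ u(0,\cdot)\in W^u(\gamma_2;X^{\Lambda}_{L_2})$ at $s=0$. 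Again (\ref{fencsti}) gives the energy estimate and $K^E$ is a chain map by standard gluing. The homotopy $E\circ(\Phi^{\Lambda}_{L_1}\otimes\Phi^{\Lambda}_{L_2})\simeq K^E$ is produced by varying the length $\alpha\geq 0$ of the two Floer half-cylinders attached on the left: as $\alpha\to+\infty$ the configuration breaks into the two $\Phi^{\Lambda}$'s composed with $E$, and as $\alpha=0$ one recovers $K^E$; this is a standard implicit-function-plus-gluing argument.

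\textbf{The main obstacle.} The remaining homotopy $K^E\simeq\Phi^{\Theta}_{L_1\oplus L_2}\circ M_!$ is the hard one, and it is the reason why an essentially new cobordism argument is needed. A typical element counted by $K^E$ consists of two Floer half-strips coupled only by the figure-8 condition at $s=+\infty$ and by independent Morse asymptotic data at $s=0$; a typical element counted by $\Phi^{\Theta}_{L_1\oplus L_2}\circ M_!$ is instead a Floer strip with conormal boundary condition whose incoming trace lies in the transverse intersection $W^u((\gamma_1,\gamma_2);X^{\Lambda}_{L_1\oplus L_2})\cap W^s(\gamma^+;X^{\Theta}_{L_1\oplus L_2})$ inside $\Lambda^1(M)\times\Lambda^1(M)$, and the corresponding Floer solutions cannot be expected to be $C^0$-close. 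This is precisely the situation flagged in the Introduction: one cannot build the requisite one-parameter family by a direct Newton iteration on an approximate glued solution. The plan is therefore to invoke the indirect algebraic scheme of Section \ref{chlhs}: first prove by a localization-and-implicit-function argument, as in Section \ref{coupros}, a chain homotopy
\[
K^E\otimes\bigl(\Phi^{\Theta}_{L_1\oplus L_2}\circ M_!\bigr)\;\simeq\;\bigl(\Phi^{\Theta}_{L_1\oplus L_2}\circ M_!\bigr)\otimes K^E,
\]
and then combine it with Lemma \ref{alge} together with an auxiliary $0$-cycle $\epsilon$ and a chain map $\delta$ to the trivial complex, furnished by a Hamiltonian system with a hyperbolic equilibrium point. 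This step will absorb the bulk of the analytic work; all other homotopies in the diagram reduce either to classical gluing or to the localized implicit-function argument.
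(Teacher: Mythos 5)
Your overall blueprint — hybrid chain maps, standard gluing for the $s\to+\infty$ ends, and Lemma \ref{alge} plus the rotation-of-conormal-conditions argument for the one step that resists implicit-function gluing — is the paper's blueprint, and your treatment of the right-hand square matches the paper's: your $K^{\Gamma}$ is the paper's $K^{\Theta}$, and your two homotopies are $P_K^{\Gamma}$ and $P^K_G$. For the left-hand square, however, you have misplaced the hard step. Your $K^E$ coincides (up to the choice of pseudo-gradient on $\Lambda^1(M\times M)$) with the paper's $K^{\Lambda}_0$: the figure-$8$ conormal condition is imposed on all of $s\geq 0$, and the Morse data sits at $s=0$. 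The paper never tries to connect $K^{\Lambda}_0$ directly to $E\circ(\Phi^{\Lambda}_{L_1}\otimes\Phi^{\Lambda}_{L_2})$; it inserts an intermediate $K^{\Lambda}_{\alpha_0}$ (conormal condition only for $s\geq\alpha_0$, cylinder for $s\in[0,\alpha_0]$) and splits the argument into: (a) $K^{\Lambda}_{\alpha_0}\simeq E\circ(\Phi^{\Lambda}_{L_1}\otimes\Phi^{\Lambda}_{L_2})$ via the family $\alpha\in[\alpha_0,+\infty[$, which is standard; (b) $K^{\Lambda}_0\simeq\Phi^{\Theta}_{L_1\oplus L_2}\circ M_!$ via a Morse-side cobordism with fixed Floer domain, also standard; and (c) $K^{\Lambda}_0\simeq K^{\Lambda}_{\alpha_0}$, which is the one step that needs Lemma \ref{alge}, Proposition \ref{coupro} and the hyperbolic-equilibrium construction of $(\epsilon,\delta)$.

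You attach the heavy machinery to step (b), which does not need it, and dismiss the combination of (a) and (c) as a ``standard implicit-function-plus-gluing argument''. It is not: the paper explicitly warns (end of Section \ref{chlhs}, and the remark after the proof of Lemma \ref{localization}) that solutions in $\mathscr{M}^K_{\alpha}$ for small $\alpha>0$ need not be $C^0$-close near the corners $(0,0)$ and $(0,1)$ to a given element of $\mathscr{M}^K_0$, so the $\alpha\to 0$ end of the one-parameter family does not glue to $K^{\Lambda}_0$ by the usual Newton iteration on approximate solutions. This jumping-boundary-condition obstruction is precisely what motivates the indirect algebraic detour, and your proposal leaves it buried inside a step you label routine. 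There is also a technical obstruction to invoking the rotation argument where you place it: the isotopy $V^{\lambda}$ of Section \ref{coupros} acts on the conormal boundary condition of a half-strip Floer problem, so both tensor factors in the swap must be presented as such problems. $K^{\Lambda}_0$ and $K^{\Lambda}_{\alpha_0}$ are, differing only in the interval where the boundary condition jumps; but $\Phi^{\Theta}_{L_1\oplus L_2}\circ M_!$ is a Morse count composed with a Floer count, with no single conormal datum for $V^{\lambda}$ to rotate.
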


The chain homotopy commutativity of the left-hand square is more delicate and  is proved in the next section. The second square is studied in Section \ref{chrhs}.

\subsection{The left-hand square is homotopy commutative}
\label{chlhs}

In this section we show that the chain maps $\Phi_{L_1
  \oplus L_2}^{\Theta} \circ M_{!}$ and $E \circ (\Phi_{L_1}^{\Lambda}
\otimes \Phi_{L_2}^{\Lambda})$ are homotopic. We start by
constructing a one-parameter family of chain maps
\[
K^{\Lambda}_{\alpha} : \bigl(M(\mathbb{S}_{L_1}^{\Lambda}) \otimes M(\mathbb{S}_{L_2}^{\Lambda})\bigr)_* \longrightarrow F^{\Theta}_{*-n} (H_1 \oplus H_2), 
\]
where $\alpha$ is a non-negative number. The definition of
$K^{\Lambda}_{\alpha}$ is based on the solution spaces of the Floer
equation on the Riemann surface $\Sigma^K_{\alpha}$ consisting of a
half-cylinder with a slit. 
More precisely, when $\alpha$ is positive $\Sigma^K_{\alpha}$ is the quotient of $[0,+\infty[ \times [0,1]$ by the identifications
\[
(s,0)\sim (s,1) \quad \forall s\in [0,\alpha].
\]
with the holomorphic coordinate at $(\alpha,0)\sim (\alpha,1)$ 
obtained from (\ref{holcor4}) by a 
translation by $\alpha$. When $\alpha=0$, $\Sigma^K_{\alpha} = \Sigma^K_0$ is just the half-strip $[0,+\infty[ \times [0,1]$.
Fix $\gamma_1\in \mathscr{P}^{\Lambda}(L_1)$, $\gamma_2\in\mathscr{P}^{\Lambda}(L_2)$, and $x\in \mathscr{P}^{\Theta}(H_1 \oplus H_2)$. Let $\mathscr{M}^K_{\alpha}(\gamma_1,\gamma_2;x)$ be the space of solutions $u: \Sigma_{\alpha}^K \rightarrow  T^*M^2$ of the equation
\[
\delbar_{J,H_1 \oplus H_2} (u) = 0,
\]
which satisfy the boundary conditions
\begin{eqnarray}
\label{bkl1}
& & \pi\circ u(0,\cdot) \in W^u((\gamma_1,\gamma_2); X_{L_1\oplus L_2}^{\Lambda}), \\ 
\label{bkl2}
& & (u(s,0), \mathscr{C} u(s,1)) \in N^* \Delta^{(4)}_M, \quad \forall s\geq
\alpha, \\ \label{bkl3} & & \lim_{s\rightarrow +\infty} u(s,t) = x(t),
\end{eqnarray}
where $X_{L_1\oplus L_2}^{\Lambda}$ is a pseudo-gradient for $\mathbb{S}_{L_1 \oplus L_2}^{\Lambda}$ on $\Lambda^1 (M \times M)$.
Let us fix some $\alpha_0\geq 0$. The following result is proved in Section \ref{lin}:

\begin{prop}
\label{Kappa}
For a generic choice of the Lagrangians $L_1$, $L_2$, and of the pseudo-gradient $X_{L_1 \oplus L_2}^{\Lambda}$, the space $\mathscr{M}^{K}_{\alpha_0} (\gamma_1,\gamma_2;x)$ -- if non-empty -- is a smooth manifold of dimension
\[
\dim \mathscr{M}^{K}_{\alpha_0}(\gamma_1,\gamma_2;x) =
i^{\Lambda}(\gamma_1;L_1) + i^{\Lambda}(\gamma_2;L_2) -
\mu^{\Theta}(x;H_1 \oplus H_2) - n.
\]
These manifolds carry coherent orientations.
\end{prop}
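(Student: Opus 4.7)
The plan is to realize $\mathscr{M}^K_{\alpha_0}(\gamma_1,\gamma_2;x)$ as the zero set of a smooth Fredholm section of a Banach bundle, compute the index from the linear theory of Section \ref{lineartheory}, and achieve transversality by perturbing the Hamiltonian.

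Concretely, fix $p>2$ and let $\mathcal{B}$ denote the Banach manifold of $W^{1,p}_{\mathrm{loc}}$-maps $u\colon \Sigma^K_{\alpha_0}\to T^*M^2$ satisfying the asymptotic condition (\ref{bkl3}), the conormal condition (\ref{bkl2}) on $\{s\geq \alpha_0\}\times\{0,1\}$, and whose restriction $u(0,\cdot)\in W^{1,p}(\T,T^*M^2)$ satisfies $\pi\circ u(0,\cdot)\in W^u((\gamma_1,\gamma_2);X_{L_1\oplus L_2}^{\Lambda})$ (the covectors at $s=0$ are left free). The Floer operator $\overline{\partial}_{J,H_1\oplus H_2}$ is a smooth section of the $L^p$-bundle $\mathcal{E}\to\mathcal{B}$ of anti-linear one-forms, whose zero set is precisely $\mathscr{M}^K_{\alpha_0}(\gamma_1,\gamma_2;x)$; elliptic regularity makes the choice of $p$ immaterial. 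The linearization at a solution is a Cauchy-Riemann type operator on the half-strip with Lagrangian boundary conditions that jump at $s=\alpha_0$ from the conormal of $R_0=\Delta_M\times\Delta_M$ (equivalent to the periodic identification $(s,0)\sim(s,1)$) to the conormal of $R_1=\Delta_M^{(4)}$. Since $R_0\cap R_1=\Delta_M^{(4)}$ is a clean intersection of dimension $n$, the hypothesis of the dimension formula from the Introduction is satisfied, and the jump at $s=\alpha_0$ contributes $-(\dim R_0-\dim R_0\cap R_1)=-n$. The asymptotic at $+\infty$ contributes $-\mu^{\Theta}(x)$, and the hybrid Morse coupling at $s=0$, in the spirit of \cite{as06} and Section \ref{cci}, replaces the missing left Maslov term by $i^{\Lambda}(\gamma_1;L_1)+i^{\Lambda}(\gamma_2;L_2)$. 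Summing yields the claimed dimension.

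For transversality I would follow the scheme of \cite{fhs96}: let $H_1$ and $H_2$ vary in a separable Banach space of admissible perturbations, supported in regions chosen so as not to alter the asymptotic data nor the Morse objects $(\gamma_1,\gamma_2)$ and $X_{L_1\oplus L_2}^{\Lambda}$, and show that the universal linearization is surjective by the standard unique continuation argument for Cauchy-Riemann operators. The Sard-Smale theorem then produces a comeager set of $(H_1,H_2)$ for which the section is transverse to zero, making $\mathscr{M}^K_{\alpha_0}(\gamma_1,\gamma_2;x)$ a smooth manifold of the stated dimension. Coherent orientations arise from the determinant line construction of Floer-Hofer, combined multiplicatively with the finite-dimensional orientation of the Morse unstable manifold, exactly as in the other hybrid moduli spaces of the paper.

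The main obstacle I anticipate is the careful bookkeeping of the Fredholm setup at $s=0$: the condition there is not a pure Lagrangian boundary condition but a mixed one in which the base is constrained to a finite-dimensional submanifold of $\Lambda^1(M^2)$ while the covectors are entirely free, and one must reconcile this with the index calculus for conormal boundary conditions of Section \ref{lineartheory} by introducing auxiliary finite-dimensional factors, as in the construction of $\Phi^R_L$ recalled in Section \ref{cci}. Once this Fredholm model is in place, the jump at $s=\alpha_0$ causes no additional trouble because the clean intersection hypothesis $R_0\cap R_1=\Delta_M^{(4)}$ is manifestly satisfied, and the index formula, transversality, and orientations follow along the familiar template.
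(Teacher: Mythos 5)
Your proposal follows essentially the same route as the paper: set up the Fredholm problem on the half-strip, read off the index from the linear theory with one conormal jump plus the free-fiber/Morse coupling at $s=0$, achieve transversality by perturbing the data, and orient via determinant lines. The dimension bookkeeping (jump contributes $-n$, asymptotic contributes $-\mu^\Theta(x)$, Morse coupling contributes $i^\Lambda(\gamma_1)+i^\Lambda(\gamma_2)$) is exactly what the paper extracts from Corollary \ref{corfred+}.

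One small slip worth flagging: the periodic identification $(s,0)\sim(s,1)$ for a map $u\colon\Sigma^K_{\alpha_0}\to T^*M^2$ corresponds to the conormal condition $(u(s,0),\mathscr{C}u(s,1))\in N^*\Delta_{M^2}$, where $\Delta_{M^2}$ is the diagonal of $Q\times Q$ with $Q=M^2$; it is not $\Delta_M\times\Delta_M$, which is the condition that appears on the right end of the pair-of-pants (two separate identifications). Fortunately this does not affect your arithmetic, since $\dim\Delta_{M^2}=\dim(\Delta_M\times\Delta_M)=2n$ and both intersect $\Delta^{(4)}_M$ cleanly in $\Delta^{(4)}_M$ itself. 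Also note that the paper cites the Fredholm result for the right half-strip (Corollary \ref{corfred+}) rather than the full-strip formula quoted in the Introduction, and achieves transversality by perturbing the Lagrangians and the pseudo-gradient (the hybrid transversality of \cite{as06}, Theorem 3.2), which is the appropriate setting since the Morse and Floer data are coupled through the Fenchel transform — but these are presentational differences, not gaps.
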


Compactness is again a consequence of the energy estimate
\begin{equation}
\label{enniste}
\int_{ \Sigma_{\alpha}^K} |\partial_s u(s,t)|^2 \, ds\,dt 
\leq \mathbb{S}_{L_1}(\gamma_1) +
\mathbb{S}_{L_2}(\gamma_2) - \mathbb{A}_{H_1 \oplus H_2} (x),
\end{equation}
implied by (\ref{fencsti}). When $i^{\Lambda}(\gamma_1;L_1)+ 
i^{\Lambda}(\gamma_2;L_2) = k$ and $\mu^{\Theta}(x;H_1 \oplus H_2) =
k -n$, the space $\mathscr{M}^{K}_{\alpha_0}(\gamma_1,\gamma_2;x)$ is a compact zero-dimensional oriented manifold. The usual counting process defines the homomorphism
\[
K^{\Lambda}_{\alpha_0} : \bigl( M(\mathbb{S}^{\Lambda}_{L_1}) \otimes
M(\mathbb{S}^{\Lambda}_{L_2}) \bigr)_k \rightarrow
F_{k-n}^{\Theta}(H_1 \oplus H_2), 
\]
and a standard gluing argument shows that $K^{\Lambda}_{\alpha_0}$ is a chain map.  

Now assume $\alpha_0>0$.
By standard compactness and gluing arguments, the family of solutions $\mathscr{M}^K_{\alpha}$ for $\alpha$ varying in the interval $[\alpha_0,+\infty[$ allows to define a chain homotopy between $K^{\Lambda}_{\alpha_0}$ and the composition $E \circ (\Phi_{L_1}^{\Lambda} \otimes \Phi_{L_2}^{\Lambda})$. 

Similarly, a compactness and cobordism argument on the Morse side
shows that $K^{\Lambda}_0$ is chain homotopic to the composition
$\Phi^{\Theta}_{L_1 \oplus L_2} \circ M_!$. See Figure \ref{EKLM}.

\bigskip
\bigskip

\begin{figure}[h]
 \begin{center}
  \setlength{\unitlength}{0.01cm}
  \begin{picture}(1400,300)(0,0)
   \put(0,50){\epsfig{file=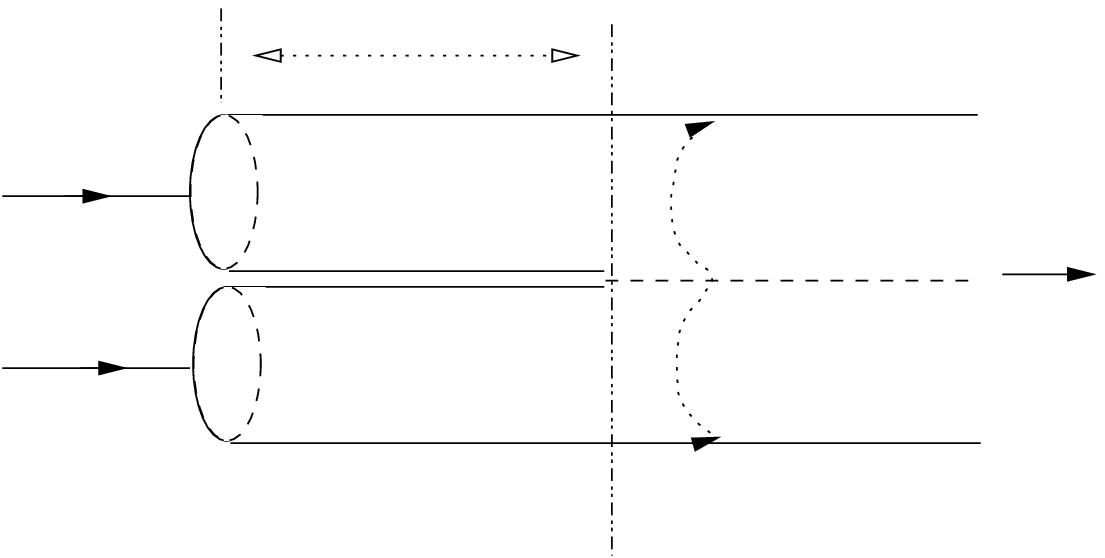,width=65mm}}
   \put(220,370){\makebox{$\alpha$}}
   \put(430,240){\makebox{$\Theta$}}
   \put(100,0){\makebox{$E \circ (\Phi_{L_1}^{\Lambda} \otimes \Phi_{L_2}^{\Lambda})\,\simeq\,K^\Lambda_\alpha$}}
   \put(600,0){\makebox{$\stackrel{!}{\simeq}$}}
   \put(680,120){\epsfig{file=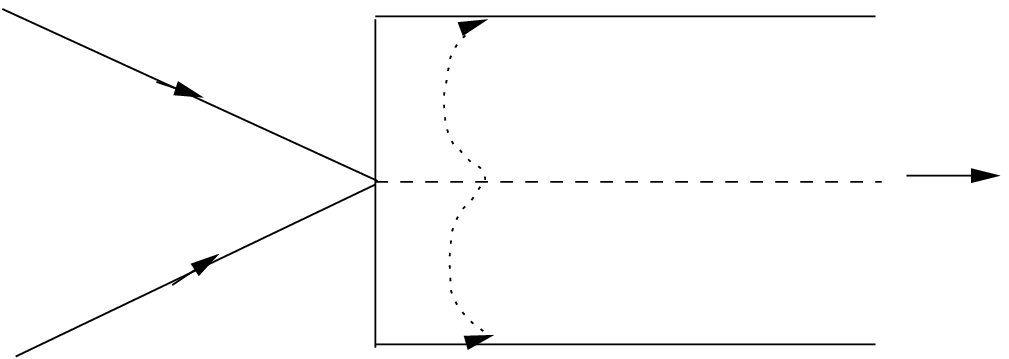,width=60mm}}
   \put(750,0){\makebox{$K^\Lambda_0\,\simeq\,\Phi^{\Theta}_{L_1 \oplus L_2} \circ M_!$}}
   \put(960,250){\makebox{$\Theta$}}
  \end{picture}
 \end{center}
\caption{The homotopy through $K^{\Lambda}_\alpha$ and $K^\Lambda_0$.
  \label{EKLM}}
\end{figure}

It remains to prove that $K^{\Lambda}_{\alpha_0}$ is homotopic to $K^{\Lambda}_0$. Constructing a homotopy between these chain maps by using the spaces of solutions $\mathscr{M}^K_{\alpha}$ for $\alpha\in [0,\alpha_0]$ presents analytical difficulties: If we are given a solution $u$ of the limiting problem $\mathscr{M}^K_0$, the existence of a (unique) one-parameter family of solutions ``converging'' to $u$ is problematic, because we do not expect $u$ to be $C^0$ close to the one-parameter family of solutions, due to the jump in the boundary conditions. 

Therefore, we use a detour, starting from the following algebraic observation. If two chain maps $\varphi,\psi: C \rightarrow C'$ are homotopic, so are their tensor products $\varphi \otimes \psi$ and $\psi \otimes \varphi$. The converse is obviously not true, as the example of $\varphi=0$ and $\psi$ non-contractible shows. However, it becomes true under suitable conditions on $\varphi$ and $\psi$.
Denote by $(\Z,0)$ the graded group which vanishes at every degree, except for degree zero, where it coincides with $\Z$. 
We see $(\Z,0)$ as a chain complex with the trivial boundary operator. Then we have the following:

\begin{lem}
\label{alge}
Let $(C,\partial)$ and $(C',\partial)$ be chain complexes, bounded from below. Let 
$\varphi,\psi: C \rightarrow C'$ be chain maps. Assume that there is an element $\epsilon\in C_0$ with $\partial \epsilon=0$ and a chain map $\delta: C' \rightarrow (\Z,0)$ such that
\[
\delta(\varphi(\epsilon)) = \delta(\psi(\epsilon)) = 1.
\]
If $\varphi\otimes \psi$ is homotopic to $\psi \otimes \varphi$, then 
$\varphi$ is homotopic to $\psi$.
\end{lem}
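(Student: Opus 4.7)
The plan is to build the desired chain homotopy $h$ by sandwiching the assumed homotopy between $\varphi \otimes \psi$ and $\psi \otimes \varphi$ between two auxiliary chain maps, one that inserts $\epsilon$ into the second tensor slot and one that evaluates the second slot via $\delta$. The whole construction is purely formal; the boundedness from below only serves to ensure the tensor product complex is well defined as a direct sum.

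More precisely, I would introduce the degree-$0$ map
\[
\iota_\epsilon \colon C \longrightarrow C\otimes C, \quad c\mapsto c\otimes \epsilon,
\]
which is a chain map because $\partial\epsilon=0$ and $|\epsilon|=0$ (so no sign appears in $\partial(c\otimes\epsilon)=\partial c\otimes\epsilon$). On the other end, using the canonical isomorphism $C'\otimes\Z\cong C'$, I would consider the chain map
\[
\mathrm{id}\otimes\delta \colon C'\otimes C' \longrightarrow C',
\]
which is well defined because $\delta$ is a chain map of degree $0$ to $(\Z,0)$. Let $H\colon C\otimes C\to C'\otimes C'$ be a chain homotopy between $\varphi\otimes\psi$ and $\psi\otimes\varphi$, namely $\partial H+H\partial=\varphi\otimes\psi-\psi\otimes\varphi$, and set
\[
h := (\mathrm{id}\otimes\delta)\circ H\circ \iota_\epsilon \colon C \longrightarrow C',
\]
a map of degree $+1$.

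The key computation is then the following: since $\iota_\epsilon$ and $\mathrm{id}\otimes\delta$ commute with the boundary operators,
\[
\partial h + h\partial = (\mathrm{id}\otimes\delta)\circ (\partial H+H\partial)\circ\iota_\epsilon = (\mathrm{id}\otimes\delta)\circ (\varphi\otimes\psi-\psi\otimes\varphi)\circ\iota_\epsilon.
\]
For every $c\in C$ one has
\[
(\mathrm{id}\otimes\delta)\bigl((\varphi\otimes\psi)(c\otimes\epsilon)\bigr) = \varphi(c)\otimes\delta(\psi(\epsilon)) = \varphi(c),
\]
using the hypothesis $\delta(\psi(\epsilon))=1$, and symmetrically the term coming from $\psi\otimes\varphi$ equals $\psi(c)$, thanks to $\delta(\varphi(\epsilon))=1$. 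Therefore $\partial h + h\partial = \varphi - \psi$, which shows that $h$ is the required chain homotopy.

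There is essentially no obstacle here: the statement is a slick piece of homological algebra. The only point requiring minor care is checking that $\iota_\epsilon$ and $\mathrm{id}\otimes\delta$ are genuine chain maps of degree $0$ (so that no Koszul signs enter the final identity), and that $\delta$ being valued in the trivial complex $(\Z,0)$ makes $(\mathrm{id}\otimes\delta)$ intertwine $\partial_{C'\otimes C'}$ with $\partial_{C'}$ without signs. The real content of the lemma is not in its proof but in its application in Section \ref{chlhs}, where the awkward comparison between $K^{\Lambda}_{\alpha_0}$ and $K^{\Lambda}_{0}$ is replaced by comparing their tensor products, a problem that is accessible by the localization and implicit-function arguments developed in Section \ref{coupros}.
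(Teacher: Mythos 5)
Your proof is correct and follows essentially the same route as the paper's: define $\pi = \mathrm{id}\otimes\delta \colon C'\otimes C' \to C'$, set $h(a) = \pi(H(a\otimes\epsilon))$, and compute $\partial h + h\partial$ using that $H$ is a homotopy between $\varphi\otimes\psi$ and $\psi\otimes\varphi$; the only cosmetic difference is that you package the insertion $a\mapsto a\otimes\epsilon$ as an explicit chain map $\iota_\epsilon$, whereas the paper carries out the same computation directly on elements.
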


\begin{proof}
Let $\pi$ be the chain map
\[
\pi : C' \otimes C' \rightarrow C' \otimes (\Z,0) \cong C', \quad \pi = \mathrm{id} \otimes \delta.
\]
Let $H:C\otimes C \rightarrow C' \otimes C'$ be a chain homotopy between $\varphi\otimes \psi$ and $\psi \otimes \varphi$, that is
\[
\varphi \otimes \psi - \psi \otimes \varphi = \partial H + H \partial.
\]
If we define the homomorphism $h:C \rightarrow C'$ by 
\[
h(a) := \pi \circ H(a\otimes \epsilon), \quad \forall a\in C,
\]
we have
\begin{eqnarray*}
\partial h(a) + h\partial a = \partial \pi (H(a\otimes \epsilon)) + \pi(H\partial (a\otimes \epsilon)) = \pi ( \partial H(a\otimes \epsilon) + H\partial(a\otimes \epsilon))\\ = \pi ( \varphi(a) \otimes \psi(\epsilon) - \psi(a) \otimes \varphi(\epsilon)) = \varphi(a) \otimes \delta(\psi(\epsilon)) - \psi(a) \otimes \delta(\varphi(\epsilon)) = \varphi(a) - \psi(a).
\end{eqnarray*}
Hence $h$ is the required chain homotopy.
\end{proof}

We shall apply the above lemma to the complexes
\[
C_k = \bigl(M(\mathbb{S}^{\Lambda}_{L_1}) \otimes M(\mathbb{S}^{\Lambda}_{L_2})\bigr)_{k+n}, \quad
C_k' = F^{\Theta}_k(H_1\oplus H_2),
\]
and to the chain maps $K^{\Lambda}_{0}$ and $K^{\Lambda}_{\alpha_0}$.
The tensor products $K^{\Lambda}_0 \otimes K^{\Lambda}_{\alpha_0}$ and
$K^{\Lambda}_{\alpha_0} \otimes K^{\Lambda}_0$ are represented by the
coupling -- in two different orders -- of the corresponding elliptic
boundary value problems. 

\begin{prop}
\label{coupro}
The chain maps $K^{\Lambda}_0 \otimes K^{\Lambda}_{\alpha_0}$ and $K^{\Lambda}_{\alpha_0} \otimes K^{\Lambda}_0$ are homotopic.
\end{prop}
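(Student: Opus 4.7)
\noindent The plan is to construct the chain homotopy via a one-parameter family of paired Floer problems on the disjoint union of two half-strips with slits of varying lengths. First, I would fix a smooth path $r \mapsto (\alpha_1(r), \alpha_2(r)) \in [0,\alpha_0]^2$, $r\in \R$, with the symmetry $\alpha_2(r) = \alpha_1(-r)$, $\alpha_1(r) \to 0$ as $r \to -\infty$, and $\alpha_1(r) \to \alpha_0$ as $r \to +\infty$ (for instance, $\alpha_1(r) = \alpha_0 \phi(r)$ for a smooth sigmoid $\phi$). For each such $r$ one obtains a paired surface $\Sigma^K_{\alpha_1(r)} \sqcup \Sigma^K_{\alpha_2(r)}$, which in the limits $r \to \mp\infty$ degenerates onto the pairs defining $K^\Lambda_0 \otimes K^\Lambda_{\alpha_0}$ and $K^\Lambda_{\alpha_0} \otimes K^\Lambda_0$ respectively.

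Next, I would define a parametrized moduli space $\mathscr{M}_{PK}$ of triples $(r, u_1, u_2)$, where each $u_j$ is a solution of the Floer equation on $\Sigma^K_{\alpha_j(r)}$ with the asymptotic and boundary conditions (\ref{bkl1})--(\ref{bkl3}) used in the definition of $K^\Lambda_{\alpha_j(r)}$, for fixed critical points on the Morse side and fixed elements of $\mathscr{P}^\Theta(H_1 \oplus H_2)$ on the Floer side. By the linear theory of Section \ref{lineartheory} applied componentwise, together with a standard $r$-dependent transversality argument, for a generic choice of data $\mathscr{M}_{PK}$ will be a smooth oriented manifold whose dimension is one greater than the sum of the two individual index formulas from Proposition \ref{Kappa}. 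Counting the algebraic number of points in the zero-dimensional components should then produce a homomorphism $H$ for which the usual analysis of boundary components of the one-dimensional strata yields the identity $\partial H + H \partial = K^\Lambda_0 \otimes K^\Lambda_{\alpha_0} - K^\Lambda_{\alpha_0} \otimes K^\Lambda_0$.

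Compactness of $\mathscr{M}_{PK}$ on compact $r$-intervals will follow from the energy bound (\ref{enniste}) applied to each factor, since the right-hand side depends only on the action values of the input critical points. The substantive point, which I expect to be the main obstacle, is identifying the ends $r \to \pm\infty$ correctly with the two tensor-product chain maps: at such an end one of the two slit lengths collapses to zero, which is precisely the regime noted in the Introduction in which one cannot expect $C^0$-closeness and in which the naive homotopy from $K^\Lambda_0$ to $K^\Lambda_{\alpha_0}$ breaks down. The localization that makes the coupled problem tractable is that only one of the two factors undergoes the jumping-boundary degeneration at each end, while the other factor remains a solution of a non-degenerate problem at the fixed slit length $\alpha_0$. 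On that non-degenerate factor the implicit function theorem applies uniformly and produces the continuation, while convergence of the degenerate factor is governed by the single-component compactness already established for the definition of $K^\Lambda_0$ itself. In this way the obstruction to a direct chain homotopy between $K^\Lambda_0$ and $K^\Lambda_{\alpha_0}$ is sidestepped, and the desired bordism--and hence the chain homotopy between the two tensor products--is obtained.
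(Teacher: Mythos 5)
Your proposed cobordism varies the two slit lengths along a path with $\alpha_1(r)\to 0$, $\alpha_2(r)\to\alpha_0$ as $r\to-\infty$ and the reverse as $r\to+\infty$. This re-introduces exactly the difficulty the tensor trick was designed to avoid. To identify the ends of your one-dimensional moduli space $\mathscr{M}_{PK}$ with the fiber products defining $K^\Lambda_0\otimes K^\Lambda_{\alpha_0}$ and $K^\Lambda_{\alpha_0}\otimes K^\Lambda_0$, you need not just compactness (every divergent sequence converges to such a configuration) but also a \emph{gluing} statement: given $u_1\in\mathscr{M}^K_0$, there must exist a unique local family of solutions $u_1^r$ of $\mathscr{M}^K_{\alpha_1(r)}$ with $\alpha_1(r)>0$ converging to $u_1$. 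This is precisely the Newton-iteration step that fails here, because the $\alpha_1=0$ solution is not $C^0$-close to the nearby $\alpha_1>0$ solutions (the boundary condition jumps, and the jump point moves). The fact that the \emph{other} factor $u_2$ is unproblematic does not help, since the two factors are uncoupled in your setup and each must independently be glued; your argument silently replaces the required gluing by compactness, which runs in the wrong direction. If the gluing you need were available, one could construct the homotopy $K^\Lambda_0\simeq K^\Lambda_{\alpha_0}$ directly and the whole detour through Lemma~\ref{alge} would be unnecessary.

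The paper's proof keeps the slit length \emph{fixed} at $\alpha_0$ throughout, and varies instead the boundary submanifold: it constructs an isotopy $V^\lambda$ of $3n$-dimensional submanifolds of $\Delta_{M^2}\times\Delta_{M^2}\subset M^8$ joining $\Delta^{(4)}_M\times\Delta_{M^2}$ to $\Delta_{M^2}\times\Delta^{(4)}_M$, each $V^\lambda$ partially orthogonal to $\Delta^{(4)}_M\times\Delta^{(4)}_M$. Because the slit does not shrink and partial orthogonality is preserved, the linear Fredholm theory of Section~\ref{lineartheory} applies uniformly in $\lambda$ and the parametric implicit function theorem gives the cobordism with no gluing near a degenerate slit. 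Note that such an isotopy exists only because both endpoints have the same dimension $3n$; a direct isotopy from $\Delta_{M^2}$ to $\Delta^{(4)}_M$ is impossible for dimension reasons, which is why the tensor product is essential. Finally, the isotopy $V^\lambda$ is only defined near the finite set $\mathscr{Q}$, so a localization result (Lemma~\ref{localization}) is needed to ensure solutions stay where $V^\lambda$ makes sense; this compactness argument depends on the fixed action bound $A$ and the smallness of $\alpha_0$, neither of which figures in your proposal.
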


Constructing a homotopy between the coupled
problems is easier than dealing with the original ones: we can keep
$\alpha_0$ fixed and rotate the boundary condition on the initial part
of the half-strip. This argument is similar to an alternative way, due
to H.\ Hofer, to prove the gluing statements in standard Floer homology.  Details of the proof of Proposition \ref{coupro} are contained in Section \ref{coupros} below.

Here we just construct the cycle $\epsilon$ and the chain map
$\delta$ required in Lemma \ref{alge}. Let
\[
\tilde{\delta}: M(\mathbb{S}_{L_1 \oplus L_2}^{\Theta}) \rightarrow (\Z,0)
\]
be the standard augmentation on the Morse complex of the Lagrangian
action functional on the space of figure-8 loops, that is the
homomorphism mapping each generator $\gamma\in
\mathscr{P}^{\Theta}(L_1 \oplus L_2)$ of Morse index zero into
1. Since the unstable manifold of a critical point $\gamma$ of Morse
index 1 is one-dimensional, its boundary is of the form
$\gamma_1-\gamma_2$, where $\gamma_1$ and $\gamma_2$ are two relative
minimizers.  Hence, $\tilde{\delta}$ is a chain map. We choose the chain map
\[
\delta: F^{\Theta}(H_1 \oplus H_2) \rightarrow (\Z,0)
\]
to be the composition of $\tilde{\delta}$ with the isomorphism $(\Phi_{L_1 \oplus L_2}^{\Theta})^{-1}$ from
the Floer to the Morse complex. Since the latter isomorphism is the
identity on global minimizers, $\delta(x)=1$ if $x\in
\mathscr{P}^{\Theta}(H_1\oplus H_2)$ corresponds to a global minimizer of $\mathbb{S}_{L_1\oplus L_2}^{\Theta}$.

We now construct the cycle $\epsilon$ in
$\bigl(M(\mathbb{S}_{L_1}^{\Lambda}) \otimes M(\mathbb{S}_{L_2}^{\Lambda})\bigr)_n$. Since changing the Lagrangians
$L_1$ and $L_2$ (and the corresponding Hamiltonians) changes the chain maps appearing in the diagram of Theorem \ref{chainA} by a chain homotopy, we are free to choose the Lagrangians so to make the construction easier. 

We consider a Lagrangian of the form
\[
L_1(t,q,v) := \frac{1}{2} |v|^2 - V_1(t,q),
\]
where the potential $V_1\in C^{\infty}(\T \times M)$ satisfies
\begin{eqnarray}
\label{v1a}
V_1(t,q) < V_1(t,q_0) = 0, \quad \forall t\in \T, \; \forall q\in M \setminus \{q_0\}, \\
\label{v1b}
\hess V_1(t,q_0) < 0, \quad \forall t\in \T.
\end{eqnarray}
The corresponding Euler-Lagrange equation is
\begin{equation}
\label{eln1}
\nabla_t \gamma'(t) = - \grad V_1(t,\gamma(t)),
\end{equation}
where $\nabla_t$ denotes the covariant derivative along the curve $\gamma$.
By (\ref{v1a}) and (\ref{v1b}), the constant curve $q_0$ is a
non-degenerate minimizer for the action functional
$\mathbb{S}_{L_1}^{\Lambda}$ on the free loop space (actually, it is
the unique global minimizer), so
\[
i^{\Lambda}(q_0;L_1) = 0.
\]
Notice also that the equilibrium point $(q_0,0)$ is hyperbolic and unstable for the Hamiltonian dynamics on $T^*M$ induced by the Fenchel dual Hamiltonian $H_1$. We claim that there exists $\omega>0$ such that
\begin{equation}\begin{split}
\label{av1}
&\mbox{every solution }\gamma\mbox{ of (\ref{eln1}) such that
}\gamma(0)=\gamma(1)\mbox{, other than } \gamma(t)\equiv q_0,\\ &\mbox{satisfies } \mathbb{S}_{L_1}(\gamma)\geq \omega.
\end{split}\end{equation}
Assuming the contrary, there exists a sequence $(\gamma_h)$ of
solutions of (\ref{eln1}) with $\gamma_h(0)=\gamma_h(1)$ and
$0<\mathbb{S}_{L_1}(\gamma_h) \rightarrow 0$. The space of solutions of
(\ref{eln1}) with action bounded from above is compact -- for instance in
$C^{\infty}([0,1],M)$ -- so a subsequence of $(\gamma_h)$ converges to
a solution of (\ref{eln1}) with zero action. Since $q_0$ is the only
solution with zero action, we find non-constant solutions $\gamma$ of
(\ref{eln1}) with $\gamma(0)=\gamma(1)$ in any $C^{\infty}$-neighborhood of the constant curve $q_0$. But this is impossible: The fact that the local stable and unstable manifolds of the hyperbolic equilibrium point $(q_0,0)\in T^*M$ are transverse to the vertical foliation $\set{T_q^*M}{q\in M}$ easily implies that if $(x_h)$ is a sequence in the phase space $T^*M$ tending to $(q_0,0)$ such that the Hamiltonian orbit of 
$x_h$ at time $T_h$ is on the leaf $T_{\pi(x_h)}^* M$ containing $x_h$, then the sequence $(T_h)$ must diverge.

A generic choice of the potential $V_1$ satisfying (\ref{v1a}) and (\ref{v1b}) produces a Lagrangian $L_1$ whose associated action functional is Morse on $\Lambda^1(M)$.

Next we consider an autonomous Lagrangian of the form 
\[
\tilde{L}_2(q,v) := \frac{1}{2} |v|^2 - V_2(q),
\]
where
\begin{enumerate}
\item $V_2$ is a smooth Morse function on $M$;
\item $0= V_2(q_0) < V_2(q) <\omega/2$ for every $q\in M \setminus \{q_0\}$;
\item $V_2$ has no local minimizers other than $q_0$;
\item $\|V_2\|_{C^2(M)} < \rho$.
\end{enumerate}
Here $\rho$ is a small positive constant, whose size is to be specified. The critical points of $V_2$ are equilibrium solutions of the Euler-Lagrange equation associated to $\tilde{L}_2$. The second differential of the action at such an equilibrium solution $q$ is 
\[
d^2 \mathbb{S}_{\tilde{L}_2}^{\Lambda} (q)[\xi,\xi] = \int_0^1 \Bigl(
\langle \xi'(t), \xi'(t) \rangle - \langle \hess V_2(q) \,
\xi(t),\xi(t) \rangle \Bigr)\, dt.
\]
If $0< \rho < 2\pi$, (iv) implies that $q$ is a non-degenerate critical point of $\mathbb{S}_{\tilde{L}_2}^{\Lambda}$ with Morse index
\[
i^{\Lambda}(q;\tilde{L}_2)=n - i(q;V_2),
\]  
a maximal negative subspace being the space of constant vector fields at $q$ taking values into the positive eigenspace of $\hess V_2(q)$.

The infimum of the energy
\[
\frac{1}{2} \int_0^1 |\gamma'(t)|^2 \, dt
\]
over all non-constant closed geodesics is positive. It follows that if $\rho$ in (iv) is small enough, then
\begin{equation}
\label{disc}
\inf\set{\mathbb{S}_{\tilde{L}_2}(\gamma)}{\gamma\in \mathscr{P}^{\Lambda}(\tilde{L}_2), \; \gamma \mbox{ non-constant}} >0.
\end{equation}
Since the Lagrangian $\tilde{L}_2$ is autonomous, non-constant periodic orbits cannot be non-degenerate critical points of  $\mathbb{S}_{\tilde{L}_2}^{\Lambda}$.
Let $W\in C^{\infty}(\T \times M)$ be a $C^2$-small time-dependent potential satisfying:
\begin{enumerate}
\setcounter{enumi}{4}
\item $0 \leq W(t,q) < \omega/2$ for every $(t,q)\in \T \times M$;
\item $W(t,q)=0$, $\grad W(t,q)=0$, $\hess W(t,q)=0$ for every $t\in \T$ and every critical point $q$ of $V_2$.
\end{enumerate}
For a generic choice of such a $W$, the action functional associated to the Lagrangian
\[
L_2(t,q,v) := \frac{1}{2} \langle v ,v \rangle - V_2(q) - W(t,q),
\]
is Morse on $\Lambda^1(M)$. By (vi), the critical points of $V_2$ are still equilibrium solutions of the Euler-Lagrange equation
\begin{equation}
\label{eln2}
\nabla_t \gamma'(t) = - \grad \bigl(V_2(t,\gamma(t)) + W(t,\gamma(t)\bigr),
\end{equation}
and 
\begin{equation}
\label{llindices}
i^{\Lambda}(q;L_2) = n - i(q;V_2) \quad \forall q\in \crit V_2.
\end{equation}
Moreover, (\ref{disc}) implies that if the $C^2$ norm of $W$ is small enough, then
\begin{equation}
\label{disc2}
\inf\set{\mathbb{S}_{L_2}(\gamma)}{\gamma\in \mathscr{P}^{\Lambda}(L_2), \; \gamma \mbox{ non-constant}} >0.
\end{equation}
Up to a generic perturbation of the
potential $W$, we may also assume that the equilibrium solution $q_0$ is the only $1$-periodic solution of (\ref{eln2})
with $\gamma(0)=\gamma(1)=q_0$ (generically, the set of periodic
orbits is discrete, and so is the set of their initial points).

Since the inclusion $\mathrm{c}:M \hookrightarrow \Lambda^1(M)$ induces an injective homomorphism between the singular homology groups, the image $\mathrm{c}_*([M])$ of the fundamental class of the oriented closed manifold $M$ does not vanish in $\Lambda^1(M)$. By (ii) and (v), the action $\mathbb{S}_{L_2}$ of every constant curve in $M$ does not exceed $0$. So we can regard $\mathrm{c}_*([M])$ as a non vanishing element of the homology of the sublevel $\{\mathbb{S}_{L_2}^{\Lambda}\leq 0\}$. The singular homology of $\{\mathbb{S}_{L_2}^{\Lambda}\leq 0\}$ is isomorphic to the homology of the subcomplex  of the Morse complex $M_*(\mathbb{S}_{L_2}^{\Lambda})$ generated by the critical points of $\mathbb{S}_{L_2}^{\Lambda}$ whose action does not exceed $0$. By (\ref{disc2}), 
these critical points are the equilibrium solutions $q$, with $q\in \crit\, (V_2)$. By (ii), (iii), and (\ref{llindices}), the only critical point of index $n$ in this sublevel is $q_0$. It follows that the Morse homological counterpart of $\mathrm{c}_*([M])$ is $\pm q_0$. In particular, $q_0 \in M_n(\mathbb{S}_{L_2}^{\Lambda})$ is a cycle. Since $\mathbb{S}_{L_2}^{\Lambda}(q_0)=0$, 
\begin{equation}
\label{v2c}
\mathbb{S}_{L_2}^{\Lambda}(\gamma) \leq 0, 
\quad \forall \gamma\in W^u(q_0;X^{\Lambda}_{L_2}).
\end{equation}

We now regard the pair $(q_0,q_0)$ as an element of $\mathscr{P}^{\Theta}(L_1\oplus L_2)$. We claim that if $\rho$ is small enough,
(iv) implies that $(q_0,q_0)$ is a non-degenerate minimizer for
$\mathbb{S}_{L_1 \oplus L_2}$ on the space of figure-8 loops
$\Theta^1(M)$. The second differential of $\mathbb{S}_{L_1 \oplus L_2}^{\Theta}$ at $(q_0,q_0)$ is the quadratic form
\begin{equation*}\begin{split}
&d^2 \mathbb{S}_{L_1 \oplus L_2}^{\Theta} (q_0,q_0) [(\xi_1,\xi_2)]^2\\ =\quad& \int_0^1 \Bigl(
\langle \xi'_1, \xi'_1 \rangle - \langle \hess V_1(t,q_0) \,
\xi_1,\xi_1 \rangle + \langle \xi'_2, \xi'_2 \rangle - \langle \hess V_2(q_0) \, \xi_2,\xi_2 \rangle \Bigr)\, dt,
\end{split}\end{equation*}
on the space of curves $(\xi_1,\xi_2)$ in the Sobolev space $W^{1,2}([0,1],T_{q_0}M \times T_{q_0} M)$ which satisfy the boundary conditions
\[
\xi_1(0) = \xi_1(1) = \xi_2(0) = \xi_2(1).
\] 
By (\ref{v1b}), we can find $\alpha>0$ such that that
\[
\hess V_1(t,q_0) \leq - \alpha I.
\]
By comparison, it is enough to show that the quadratic form
\[
Q_{\rho}(u_1,u_2) := \int_0^1 ( u_1'(t)^2 + \alpha u_1(t)^2 + u_2'(t)^2
- \rho u_2(t)^2 )\, dt
\]
is coercive on the space 
\[
\set{ (u_1,u_2) \in W^{1,2}([0,1],\R^2) }{u_1(0) = u_1(1) = u_2(0) =
  u_2(1)}.
\]
When $\rho=0$, the quadratic form $Q_0$ is non-negative. An
isotropic element $(u_1,u_2)$ for $Q_0$ would solve the boundary value
problem
\begin{eqnarray}
\label{ubvp1}
- u_1'' (t) + \alpha u_1 (t) & = & 0, \\
\label{ubvp2}
- u_2''(t) & = & 0, \\
\label{ubvp3}
u_1(0) = u_1(1) & = &u_2(0) = u_2(1), \\
\label{ubvp4}
u_1'(1) - u_1'(0) & = &u_2'(0) - u_2'(1).
\end{eqnarray}
By (\ref{ubvp2}) and (\ref{ubvp3}), $u_2$ is constant, so by (\ref{ubvp3}) and
(\ref{ubvp4}) $u_1$ is a periodic solution of (\ref{ubvp1}). Since
$\alpha$ is positive, $u_1$ is zero and by (\ref{ubvp3}) so is
$u_2$. Since the bounded
self-adjoint operator associated to $Q_0$ is Fredholm, we deduce that
$Q_0$ is coercive. By continuity, $Q_{\rho}$ remains coercive for
$\rho$ small. This proves our claim.

Let $H_1$ and $H_2$ be the Hamiltonians which are Fenchel dual to
$L_1$ and $L_2$. In order to simplify the notation, let us denote by
$(q_0,q_0)$ also the constant curve in $T^* M^2$ identically equal to
$((q_0,0),(q_0,0))$. Then $(q_0,q_0)$ is a non-degenerate element of
$\mathscr{P}^{\Theta}(H_1\oplus H_2)$, and it has Maslov index
\[
\mu^{\Theta}(q_0,q_0) = i^{\Theta}(q_0,q_0)=0.
\]

Let $x$ be an element in
$\mathscr{P}^{\Theta}(H_1\oplus H_2)$, and let $\gamma$ be its
projection onto $M\times M$. By the definition of the Euler-Lagrange
problem for figure-8 loops (see in particular the boundary conditions (\ref{thetabdry})), 
$\gamma_1$ is a solution of 
(\ref{eln1}), $\gamma_2$ is a solution of (\ref{eln2}), and
\begin{equation}
\label{bdry8}
\gamma_1(0) = \gamma_1(1) = \gamma_2(0) = \gamma_2(1), \quad
\gamma_2'(1) - \gamma_2'(0) = \gamma_1'(0) - \gamma_1'(1).
\end{equation} 
If $\gamma_1$ is the constant orbit $q_0$, then (\ref{bdry8}) implies
that $\gamma_2$ is a $1$-periodic solution of (\ref{eln2}) such that
$\gamma_2(0)= \gamma_2(1) = q_0$, and we have assumed that the only curve
with these properties is $\gamma_2\equiv q_0$. 
If $\gamma_1$ is not the constant orbit $q_0$, (\ref{av1}) implies that
\[
\mathbb{S}_{L_1}(\gamma_1) \geq \omega.
\]
By (ii) and (v), the infimum of $\mathbb{S}_{L_2}$ is larger than $-\omega$, so we deduce that 
\begin{equation}\begin{split}
\label{elptheta}
&\mathbb{A}_{H_1 \oplus H_2} (x) = \mathbb{A}_{H_1}(x_1) +
\mathbb{A}_{H_2}(x_2) = \mathbb{S}_{L_1}(\gamma_1) + \mathbb{S}_{L_2}(\gamma_2)
>0,\\ &\forall x\in \mathscr{P}^{\Theta}(H_1 \oplus H_2) \setminus \{(q_0,q_0)\},
\end{split}\end{equation}  
so $(q_0,q_0)$ is the global minimizer of $\mathbb{S}_{L_1 \oplus
  L_2}^{\Theta}$. 

Now we choose $\epsilon$ in the $n$-th degree component of the chain complex $M(\mathbb{S}_{L_1}^{\Lambda}) \otimes M(\mathbb{S}_{L_2}^{\Lambda})$ to be the cycle
\[
\epsilon = q_0 \otimes q_0 \in M_0(\mathbb{S}_{L_1}^{\Lambda}) \otimes M_n(\mathbb{S}_{L_2}^{\Lambda}).
\]
We must show that
\begin{equation}
\label{dadim}
\delta(K^{\Lambda}_0(q_0\otimes q_0)) = \delta(K^{\Lambda}_{\alpha_0}(q_0\otimes q_0)) = 1.
\end{equation}
Let $x \in \mathscr{P}^{\Theta}(H_1 \oplus H_2)$, and let $u$ be an element of either 
\[
\mathscr{M}^K_0(q_0,q_0;x) \quad \mbox{or} \quad
\mathscr{M}^K_{\alpha_0}(q_0,q_0;x).
\]
By the boundary condition (\ref{bkl1}), the curve $u(0,\cdot)$ projects onto a closed curve in $M\times M$ whose first component is the constant $q_0$ and whose second component is in the unstable manifold of $q_0$ with respect to the negative pseudo-gradient flow of $\mathbb{S}_{L_2}^{\Lambda}$. By the fundamental inequality (\ref{fencsti}) between the Hamiltonian and the Lagrangian action and by (\ref{v2c}), we have 
\[
\mathbb{A}_{H_1\oplus H_2}(x) \leq \mathbb{A}_{H_1\oplus H_2} (u(0,\cdot)) \leq \mathbb{S}_{L_1 \oplus L_2} (\pi\circ u(0,\cdot)) = \mathbb{S}_{L_1}^{\Lambda}(q_0) + \mathbb{S}_{L_2}^{\Lambda}(\pi\circ u_2(0,\cdot)) \leq 0.
\]
By (\ref{elptheta}), $x$ must be the constant curve $(q_0,q_0)$, and
all the inequalities in the above estimate are equalities. It follows
that $u$ is constant, $u(s,t) \equiv (q_0,q_0)$. 

Therefore, the spaces $\mathscr{M}^K_0(q_0,q_0;x)$ and
$\mathscr{M}^K_{\alpha_0}(q_0,q_0;x)$ are non-empty if and only if
$x=(q_0,q_0)$, and in the latter situation they consist of the unique
constant solution $u\equiv (q_0,q_0)$. Automatic transversality holds
for such solutions (see \cite[Proposition 3.7]{as06}), so such a
picture survives to the generic perturbations of $L_1$, $L_2$, and of the pseudo-gradients which are necessary to achieve a Morse-Smale situation. 
Taking also the orientations into account, it follows that
\[
K^{\Lambda}_0 (q_0 \otimes q_0) = (q_0,q_0), \quad  K^{\Lambda}_{\alpha} 
(q_0 \otimes q_0) = (q_0,q_0).
\]
Since $(q_0,q_0)$ is the global minimizer of $\mathbb{S}_{L_1 \oplus
  L_2}^{\Theta}$, $\delta((q_0,q_0)) = 1$ as previously observed, 
so (\ref{dadim}) holds. 

This concludes the construction of a cycle $\epsilon$ and a chain map $\delta$ which satisfy the assumptions of Lemma \ref{alge}. Together with Proposition \ref{coupro}, this
proves that left-hand square in the diagram of Theorem \ref{chainA} commutes up to a chain homotopy.
  
\subsection{The right-hand square is homotopy commutative}
\label{chrhs}

In this section we prove that the chain maps $\Phi_{L_1
  \# L_2}^{\Lambda} \circ M_{\Gamma}$ and $G \circ \Phi_{L_1 \oplus L_2}^{\Theta}$
are both homotopic to a third chain map, 
named $K^{\Theta}$. This fact implies that the right-hand square in
the diagram of Theorem \ref{chainA} commutes up to chain homotopy.

The chain map $K^{\Theta}$ is defined by using the following 
spaces of solutions of the Floer equation on the half-cylinder 
for the Hamiltonian $H_1 \# H_2$: 
given $\gamma \in \mathscr{P}^{\Theta}(L_1 \oplus
L_2)$ and $x\in \mathscr{P}^{\Lambda}(H_1 \# H_2)$, set
\begin{eqnarray*}
\mathscr{M}_{K}^{\Theta}(\gamma;x) := \Bigl\{u :
[0,+\infty[ \times \T \rightarrow T^*M\, \Big| \, \delbar_{J,H_1 \# H_2} (u) = 0, \\ 
\pi\circ u(0,\cdot) \in \Gamma\bigl(W^u(\gamma; X_{L_1 \oplus L_2}^{\Theta}) \bigr), \;
\lim_{s\rightarrow + \infty} u(s,\cdot) =
x \Bigr\},
\end{eqnarray*}
where $X_{L_1 \oplus L_2}^{\Theta}$ is a pseudo-gradient for $\mathbb{S}_{L_1 \oplus L_2}^{\Theta}$ on $\Theta^1(M)$.
By Theorem 3.2 in \cite{as06} (or by the arguments of Section
\ref{lin}), the space $\mathscr{M}_{K}^{\Theta}(\gamma;x)$ is a
smooth manifold of dimension
\[
\dim \mathscr{M}_{K}^{\Theta}(\gamma;x) =
i^{\Theta}(\gamma) - \mu^{\Lambda}(x),
\]
for a generic choice of $L_1$, $L_2$, and $X_{L_1 \oplus L_2}^{\Theta}$. 
These manifolds carry coherent orientations.

Compactness follows from the energy estimate
\[
\int_{[0,+\infty[ \times \T} |\partial_s u(s,t)|^2 \, ds\,dt 
\leq \mathbb{S}_{L_1 \oplus L_2}(\gamma) - \mathbb{A}_{H_1 \# H_2} (x),
\]
which is implied by (\ref{fencsti}). By counting the elements of the
zero-dimensional spaces, we define a chain map
\[
K^{\Theta} : M_j(\mathbb{S}^{\Theta}_{L_1 \oplus L_2}) 
\rightarrow F_j^{\Lambda}(H_1 \# H_2).
\]
It is easy to construct a chain homotopy $P_K^{\Gamma}$ 
between $\Phi_{L_1 \# L_2}^{\Lambda} \circ M_{\Gamma}$ and
$K^{\Theta}$ by considering the space 
\begin{eqnarray*}
\mathscr{M}_K^{\Gamma} (\gamma;x) :=
\Bigl\{ (\alpha,u) \, \Big| \, \alpha>0, \;
u:[0,+\infty[ \times \T \rightarrow T^*M, \; 
\delbar_{J,H_1 \# H_2} (u) = 0,\\ 
\phi_{-\alpha}^{\Lambda}(\pi\circ u(0,\cdot)) \in 
\Gamma\bigl(W^u(\gamma; X_{L_1 \oplus L_2}^{\Theta}) \bigr), \;
\lim_{s\rightarrow + \infty} u(s,\cdot) =
x \Bigr\}.
\end{eqnarray*}
where $\phi^{\Lambda}_s$ denotes the flow of $X_{L_1
  \# L_2}^{\Lambda}$ on $\Lambda^1(M)$. As before, we find that generically
$\mathscr{M}_K^{\Gamma}(\gamma_1,\gamma_2;x)$ is a manifold of
dimension
\[
\dim \mathscr{M}_K^{\Gamma} (\gamma;x) =
i^{\Lambda}(\gamma) - \mu^{\Theta}(x) + 1.
\]
Compactness holds, so an algebraic count of the zero-dimensional
spaces produces the homomorphism 
\[
P_K^{\Gamma} : M_j(\mathbb{S}_{L_1\oplus L_2}^{\Theta}) 
\rightarrow F_{j+1}^{\Lambda}(H_1\# H_2).
\] 
A standard gluing argument shows that $P_K^{\Gamma}$ is the required homotopy. 

Finally, the construction of the chain homotopy $P^K_G$ between 
$K^{\Theta}$ and $G\circ \Phi_{L_1 \oplus L_2}^{\Theta}$ is based on
the one-parameter family of
Riemann surfaces $\Sigma_G^K(\alpha)$, $\alpha>0$, defined
as the quotient of the disjoint union $[0,+\infty[ \times [-1,0]
\sqcup [0,+\infty[ \times [0,1]$ under the identifications
\[
(s,0^-) \sim (s,0^+) \mbox{ and } (s,-1) \sim (s,1) \quad \mbox{for }
s\geq \alpha.
\]
This object is a Riemann surface with boundary, the holomorphic
structure at $(\alpha,0)$ being given by the map
\[
\set{\zeta \in \C}{\re \zeta\geq 0, \; |\zeta|<\epsilon} \rightarrow
\Sigma_G^K(\alpha), \quad \zeta \mapsto \alpha+ \zeta^2,
\]
and the holomorphic structure at $(\alpha,-1)\sim (\alpha,1)$ being
given by the map
\[
\set{\zeta \in \C}{\re \zeta\geq 0, \; |\zeta|<\epsilon} \rightarrow
\Sigma_G^K(\alpha), \quad \zeta \mapsto \left\{
  \begin{array}{ll} \alpha - i + \zeta^2 & \mbox{if } \im \zeta\geq 0,
    \\ \alpha + i + \zeta^2 & \mbox{if } \im \zeta\leq 0. \end{array}
\right.
\]
Here $\epsilon$ is a positive number smaller than $1$ and $\sqrt{\alpha}$. 

Given $\gamma\in \mathscr{P}^{\Theta}(L_1 \oplus L_2)$ and $x\in
\mathscr{P}^{\Lambda}(H_1 \# H_2)$, we consider the space $\mathscr{M}_G^K (\gamma,x)$ of pairs $(\alpha,u)$ where $\alpha$ is a positive number and $u:
\Sigma_G^K(\alpha) \rightarrow T^*M$ solves the equation
\[
\delbar_{J,H_1 \# H_2} (u) = 0,
\]
satisfies the boundary conditions
\begin{eqnarray*}
\left\{ \begin{array}{l} \pi\circ u (s,-1) = \pi \circ u(s,0^-) = \pi
\circ u(s,0^+) = \pi \circ u(s,1),  \\
u(s,0^-) - u(s,-1) + u(s,1) - u(s,0^+) = 0, \end{array} \right. \quad 
\forall s\in [0,\alpha], \\ 
(\pi\circ u(0,\cdot-1), \pi\circ u(0,\cdot)) \in W^u(\gamma, X_{L_1 \oplus L_2}^{\Theta}), 
\end{eqnarray*}
and the asymptotic condition
\[
\lim_{s\rightarrow +\infty} u(s,2t-1) = x(t).
\] 
The following result is proved in Section \ref{lin}.

\begin{prop}
\label{PRLambda}
For a generic choice of $L_1$, $L_2$, and $X_{L_1 \oplus L_2}^{\Theta}$, $\mathscr{M}_G^K (\gamma,x)$ -- if non-empty -- is a
smooth manifold of dimension
\[
\dim \mathscr{M}_G^K (\gamma,x) =
i^{\Theta}(\gamma;L_1 \oplus L_2) - \mu^{\Lambda}(x; H_1 \# H_2) + 1.
\]
The projection $(\alpha,u) \mapsto \alpha$ is smooth on
$\mathscr{M}_G^K (\gamma,x)$. These manifolds carry
coherent orientations.
\end{prop}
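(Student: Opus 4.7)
The plan is to follow exactly the same template used for the cognate statements in Section \ref{lin} (Propositions \ref{eg}, \ref{ge}, \ref{Kappa}, and \ref{POmega}): present $\mathscr{M}_G^K(\gamma,x)$ as the zero set of a smooth Fredholm section of a Banach bundle over a parametrized Banach manifold whose points are pairs $(\alpha,u)$, and then read off dimension, smoothness of the projection, and orientability from the general linear theory developed in Section \ref{lineartheory}. Concretely, I would fix $\alpha_0>0$ and identify $\Sigma_G^K(\alpha)$ with $\Sigma_G^K(\alpha_0)$ for $\alpha$ near $\alpha_0$ via a smooth family of diffeomorphisms moving only the slit endpoint, so that the Floer equation with the required boundary and asymptotic data pulls back to a family of equations on a fixed domain depending smoothly on $\alpha\in]0,+\infty[$. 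On this fixed domain one sets up the $W^{1,p}_{\loc}$ Banach manifold of maps with the prescribed exponential decay to $x$ at $+\infty$, intersected with the constraint $(\pi\circ u(0,\cdot-1),\pi\circ u(0,\cdot))\in W^u(\gamma;X_{L_1\oplus L_2}^\Theta)$ at $s=0$ and the jumping conormal boundary conditions $N^*\Delta_M^{(4)}$ for $s\in[0,\alpha]$ and $N^*\Delta_{M\times M}$ for $s\geq\alpha$.

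For a fixed $\alpha$, the linearization of the Floer equation on this domain is a Cauchy--Riemann operator with jumping conormal Lagrangian boundary conditions of exactly the type treated in Section \ref{lineartheory}, hence Fredholm in the chosen Sobolev setting. The index formula recalled in the Introduction gives, at a solution, a Fredholm index equal to the algebraic count of Maslov indices at the ends minus the clean-intersection correction $\dim\Delta_M^{(4)}-\dim(\Delta_M^{(4)}\cap\Delta_{M\times M})=n-n=0$, so the jump at $s=\alpha$ contributes nothing. Adding the finite-dimensional contribution $i^\Theta(\gamma;L_1\oplus L_2)$ from the trace constraint at $s=0$, the contribution $-\mu^\Lambda(x;H_1\#H_2)$ from the asymptotic end, and $+1$ from promoting $\alpha$ to a free parameter, gives the claimed dimension. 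The smoothness of $(\alpha,u)\mapsto \alpha$ is then automatic from the parametrized construction.

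Transversality is achieved by a generic perturbation of $L_1$, $L_2$, and of the pseudo-gradient $X_{L_1\oplus L_2}^\Theta$: the standard Floer--Hofer--Salamon argument (see \cite{fhs96} and its adaptation for conormal boundary conditions in \cite{aps08}) shows that for a fixed $\alpha_0$ a generic perturbation makes the section transverse to the zero section at every element of the $\alpha_0$-slice, while Sard--Smale applied to the full parametrized problem yields simultaneous transversality for almost every $\alpha$ in the one-parameter family. The main obstacle is precisely in ensuring that a \emph{single} perturbation of the data makes the whole family $\{\mathscr{M}_G^K(\gamma,x)\}_{\alpha>0}$ cut out regularly and simultaneously: this is handled, as in the proofs of Propositions \ref{ge} and \ref{POmega}, by observing that $\alpha$ only enters through the position of the slit endpoint, so the Hamiltonian perturbation can be chosen with support away from any neighborhood of the periodic orbit $x$ and of the trace $u(0,\cdot)$, and the unique-continuation/somewhere-injectivity arguments used elsewhere in Section \ref{lin} apply verbatim to the portion of $u$ lying in the interior.

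Coherent orientations are then obtained from the standard determinant-line-bundle construction for Cauchy--Riemann operators with Lagrangian boundary conditions, compatible with the conventions already fixed for $\mathscr{M}_E$, $\mathscr{M}_G$, and $\mathscr{M}_{GE}^\Upsilon$ in Section \ref{fpop}. The conormal bundles $N^*\Delta_M^{(4)}$ and $N^*\Delta_{M\times M}$ carry canonical orientations inherited from the orientability of $M$, the gluing rules at the jump $s=\alpha$ agree with those used in the definition of $E$ and $G$, and the one-dimensional factor coming from $\alpha$ is oriented by the standard orientation of $]0,+\infty[$. Together these give the coherent orientations of $\mathscr{M}_G^K(\gamma,x)$, completing the proposition.
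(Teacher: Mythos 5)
Your proposal follows essentially the same route as the paper's argument in Section \ref{lin}: apply Corollary \ref{corfred+} to the doubled half-strip problem with a single jump from $R_0=\Delta_M^{(4)}$ to $R_1=\Delta_M\times\Delta_M$ at $s_1=\alpha$, add $i^{\Theta}(\gamma)$ from the unstable-manifold boundary constraint and $+1$ from the free parameter $\alpha$, and invoke the framework of Sections \ref{tfps}--\ref{cosec} for regularity, smoothness of the projection, and coherent orientations. One minor inaccuracy: the submanifold attached to the output cylinder for $s\geq\alpha$ is $\Delta_M\times\Delta_M\subset M^4$, not $\Delta_{M\times M}=\Delta_{M^2}$ (your notation suggests the latter); the clean-intersection correction is zero in both cases, so your numerics survive, but the geometry of which conormal bundle is actually imposed should read $N^*(\Delta_M\times\Delta_M)$. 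Your discussion of ``simultaneous transversality for almost every $\alpha$'' is also slightly off target: one does not need each $\alpha$-slice to be regular, only the parametrized section over $]0,+\infty[\times\,(\text{maps})$ to be transverse, which is exactly how the paper phrases Proposition \ref{PRLambda} and what Sard--Smale delivers.
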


The elements $(\alpha,u)$ of $\mathscr{M}_G^K(\gamma,x)$ satisfy the energy estimate
\[
\int_{\Sigma^K_G(\alpha)} 
|\partial_s u(s,t)|^2 \, ds\, dt \leq \mathbb{S}_{L_1
  \oplus L_2} (\gamma) - \mathbb{A}_{H_1\# H_2}(x).
\]
This provides us with the compactness which is necessary to define the homomorphism 
\[
P^K_G : M_j(\mathbb{S}_{L_1 \oplus L_2}^{\Theta}) \longrightarrow F_{j+1}^{\Lambda} (H_1 \# H_2),
\]
by the usual counting procedure applied to the spaces $\mathscr{M}_G^K$.
A standard gluing argument shows that $P^K_G$ is a chain homotopy between 
$K^{\Theta}$ and $G\circ \Phi_{L_1 \oplus L_2}^{\Theta}$.

This concludes the proof of Theorem \ref{chainA}, hence of its corollary, Theorem A of the Introduction.

\subsection{Comparison between $\mathbf{C}$, $\mathbf{EV}$, $\mathbf{I_!}$ and $\mathbf{c}$,  $\mathbf{ev}$, $\mathbf{i_!}$} 
\label{bah}
 
The aim of this section is to prove that the homomorphisms
\begin{eqnarray*}
\mathrm{c}_* : H_j(M) & \rightarrow & H_j(\Lambda(M)), \\
\mathrm{ev}_* : H_j(\Lambda(M)) & \rightarrow & H_j(M), \\ \mathrm{i}_!: H_j(\Lambda(M)) & \rightarrow & H_{j-n}(\Omega(M,q_0)), 
\end{eqnarray*}
on the topological side (see Section \ref{rbttp}), correspond - via the isomorphisms of Section \ref{cci} - to the homomorphisms 
\begin{eqnarray*}
\mathrm{C}_* : HM_j(f) & \rightarrow & HF_j^{\Lambda}(T^* M), \\
\mathrm{Ev}_* : HF_j^{\Lambda}(T^* M)) & \rightarrow & HM_j(f), \\
\mathrm{I}_!: HF_j^{\Lambda}(T^*M) & \rightarrow & HF_{j-n}^{\Omega}(T^*M), 
\end{eqnarray*}
on the Floer side (see Section \ref{hcevi}). We start by comparing the first two pairs of homomorphisms.

\paragraph{Comparison between $\mathbf{c}, \mathbf{ev}$ and $\mathbf{C}, \mathbf{Ev}$.} In Section \ref{laf} we have shown that the homomorphisms $\mathrm{c}_*$ and $\mathrm{ev}_*$ are induced by chain maps
\[
M{\mathrm{c}} : M_j(f) \rightarrow M_j(\mathbb{S}_L^{\Lambda}), \quad
M{\mathrm{ev}} : M_j(\mathbb{S}_L^{\Lambda}) \rightarrow M_j(f),
\]
between the Morse complex of the Morse functions $f: M \rightarrow \R$ and $\mathbb{S}_L^{\Lambda}: \Lambda^1(M) \rightarrow \R$. Therefore, the fact that $\mathrm{c}_*$ and $\mathrm{ev}_*$ correspond to $\mathrm{C}_*$ and $\mathrm{Ev}_*$ is implied by the following chain level result:

\begin{thm}
\label{c=C,ev=Ev}
The triangles
\[
\xymatrix{M_j(f) \ar[r]^{M\mathrm{c}} \ar[rd]_{C} & M_j(\mathbb{S}_L^{\Lambda}) \ar[d]^{\Phi_L^{\Lambda}} \ar[r]^{M{\mathrm{ev}}} &
    M_j(f) \\ & F^{\Lambda}_j (H,J) \ar[ur]_{\mathrm{Ev}} & }
\]
are chain homotopy commutative.
\end{thm}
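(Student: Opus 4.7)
The plan is to establish both chain-homotopy relations by the interpolation mechanism used throughout this paper: introduce a one-parameter family of hybrid moduli spaces whose two endpoints recover the two chain maps to be compared, then define the homotopy as the algebraic count of the zero-dimensional components of the family, and read off the required identity from the boundary analysis of the one-dimensional components.

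For the left triangle, I would consider, for each $\alpha\geq 0$, the moduli space $\mathscr{M}^{\mathrm{C}}_\alpha(x,y)$ of Floer half-cylinders $u\colon[0,+\infty)\times\T\to T^*M$ solving $\delbar_{J,H}(u)=0$, with $\lim_{s\to+\infty}u(s,\cdot)=y$, whose boundary loop lies in the time-$\alpha$ evolution of the constant loops $\mathrm{c}(W^u(x;-\grad f))$ under the pseudo-gradient flow of $X_L^\Lambda$ on $\Lambda^1(M)$. At $\alpha=0$ the condition becomes $\pi\circ u(0,\cdot)\in\mathrm{c}(W^u(x;-\grad f))$, so $\mathscr{M}^{\mathrm{C}}_0(x,y)$ recovers the moduli defining $\mathrm{C}(x)$; as $\alpha\to+\infty$ the flow segment must break at a critical point $\gamma\in\mathscr{P}^\Lambda(L)$, producing two-level configurations consisting of an intersection point in $W^u(x;-\grad f)\cap\mathrm{c}^{-1}(W^s(\gamma;X_L^\Lambda))$ followed by a $\Phi_L^\Lambda$-hybrid from $\gamma$ to $y$, i.e.\ exactly what $\Phi_L^\Lambda\circ M\mathrm{c}$ counts. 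Generic transversality with $\alpha$ as an additional parameter is provided by the methods of Section \ref{lin}, compactness follows from the Fenchel inequality (\ref{fencsti}) together with the action control supplied by the pseudo-gradient, and the boundary analysis of the one-dimensional components produces a homomorphism $P^{\mathrm{C}}\colon M_j(f)\to F^\Lambda_{j+1}(H)$ satisfying $\partial\circ P^{\mathrm{C}} + P^{\mathrm{C}}\circ\partial = \mathrm{C} - \Phi_L^\Lambda\circ M\mathrm{c}$.

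For the right triangle, I would consider symmetrically, for each $R\geq 0$, the moduli space $\mathscr{M}^{\mathrm{Ev}}_R(\gamma,x_0)$ of Floer strips $u\colon[-R,0]\times\T\to T^*M$ with $\delbar_{J,H}(u)=0$, $u(0,t)\in\mathbb{O}_M$ and $u(0,0)\in W^s(x_0;-\grad f)$ at the right end, and $\pi\circ u(-R,\cdot)\in W^u(\gamma;X_L^\Lambda)$ at the left end. As $R\to+\infty$ the standard breaking argument produces two-level configurations consisting of a $\Phi_L^\Lambda$-hybrid from $\gamma$ to some Floer orbit $y$ followed by an $\mathrm{Ev}$-hybrid from $y$ to $x_0$, i.e.\ precisely the configurations counted by $\mathrm{Ev}\circ\Phi_L^\Lambda$. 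When $R=0$ the Floer strip degenerates to the single loop $u(0,\cdot)$; the zero-section condition forces this loop to take the form $(\beta(\cdot),0)$ for a loop $\beta$ in $M$, and the remaining boundary conditions reduce to $\beta\in W^u(\gamma;X_L^\Lambda)$ and $\beta(0)\in W^s(x_0;-\grad f)$, which is exactly the moduli defining $M\mathrm{ev}$. Counting the zero-dimensional components of $\bigsqcup_{R\geq 0}\mathscr{M}^{\mathrm{Ev}}_R(\gamma,x_0)$ defines a homomorphism $P^{\mathrm{Ev}}\colon M_j(\mathbb{S}_L^\Lambda)\to M_{j+1}(f)$, and the boundary decomposition of the one-dimensional components yields $\partial\circ P^{\mathrm{Ev}} + P^{\mathrm{Ev}}\circ\partial = M\mathrm{ev} - \mathrm{Ev}\circ\Phi_L^\Lambda$.

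The main technical obstacle is the degenerate end $R\to 0^+$ of the second family, where the linearized Floer operator degenerates and the standard gluing theorem does not apply directly: one must show that each transverse loop $\beta\in W^u(\gamma;X_L^\Lambda)\cap\mathrm{ev}^{-1}(W^s(x_0;-\grad f))$ on the zero section is continued uniquely by a smooth one-parameter family of solutions in the $R$-parameter, and that no spurious contributions appear in the limit. I would handle this by adapting the implicit function argument used for the homotopy $P^K_\Upsilon$ in Section \ref{omegasec}, exploiting the automatic transversality of zero-section Floer solutions recalled at the end of Section \ref{chlhs} (see \cite[Proposition 3.7]{as06}). The remaining three ends --- $\alpha=0$ and $\alpha\to+\infty$ in the first family, and $R\to+\infty$ in the second --- are all treated by the standard compactness, transversality, and gluing arguments used repeatedly in Sections \ref{fhrssec} and \ref{imfsec}.
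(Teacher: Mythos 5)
Your treatment of the left triangle coincides with the paper's: the moduli space $\mathscr{M}_P^{\mathrm{C}}(x,y)$ in the text is defined precisely by the condition $\phi^\Lambda_{-\alpha}(\pi\circ u(0,\cdot))\equiv q\in W^u(x)$, i.e.\ the boundary loop is the time-$\alpha$ evolution under $X_L^\Lambda$ of a constant loop based at a point of $W^u(x;-\grad f)$, with the $\alpha=0$ and $\alpha\to+\infty$ ends recovering $\mathrm{C}$ and $\Phi_L^\Lambda\circ M\mathrm{c}$. That half of your outline needs no further comment.

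For the right triangle the strategy also agrees, but two points deserve attention. First, you run a single family over $R\in[0,+\infty[$ with the stable-manifold condition always at the zero-section end $u(0,0)\in W^s(x_0)$. The paper instead splits the interpolation into three stages $\mathscr{M}_{P_1}^{\mathrm{Ev}}$, $\mathscr{M}_{P_2}^{\mathrm{Ev}}$, $\mathscr{M}_{P_3}^{\mathrm{Ev}}$, the middle one holding the domain $[0,1]\times\T$ fixed while sliding the condition $u(\alpha,0)\in W^s(x)$ from $\alpha=1$ to $\alpha=0$. This moves the stable-manifold constraint to the \emph{free} end before the strip is shrunk, so that in $\mathscr{M}_{P_3}^{\mathrm{Ev}}$ it reads $c(0)\in W^s(x)$, a condition on the prescribed boundary curve $c=\pi\circ u(0,\cdot)$ alone and not on the value of the PDE solution at the opposite end. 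In your single-family version you would have to argue in addition that $\pi(u(R,0))$ converges to $c(0)$ with enough control for the intersection count to survive the degeneration; this is plausible but is not automatic and is not a formal corollary of the shrinking-strip estimate.

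Second, and more substantively, the tool you propose for the degenerate end $R\to 0^+$ does not fit. The implicit-function argument of Section~\ref{omegasec} handles the sliding of a boundary-condition jump (equivalently a moving corner) on a \emph{fixed} domain $\Sigma^+$ via a conformal reparametrization $\varphi_\epsilon$; the degeneration there is of conformal type and the domain never collapses. Here, by contrast, the domain $[0,\alpha]\times\T$ shrinks to a point. The paper's argument rescales by $v(s,t):=u(\alpha s,\alpha t)$, which turns the shrinking domain into the fixed-width but lengthening domain $[0,1]\times\T_{\alpha^{-1}}$, $\T_{\alpha^{-1}}=\R/\alpha^{-1}\Z$, and performs a Newton iteration around the approximate solution $v_0(s,t)=0\in T^*_{c(\alpha t)}M$. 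The essential estimate is a \emph{uniform} bound on $\|D_\alpha^{-1}\|$ for the rescaled linearization $D_\alpha$ as $\alpha\to 0$, proved by a cut-off and contradiction argument comparing $D_\alpha$ to the model $\delbar$-operator on $[0,1]\times\R$ with split boundary conditions $v(0,\cdot)\in i\R^n$, $v(1,\cdot)\in\R^n$. Nothing of this kind appears in Section~\ref{omegasec}. Similarly, the ``automatic transversality of zero-section Floer solutions'' recalled at the end of Section~\ref{chlhs} (from \cite[Proposition 3.7]{as06}) concerns \emph{constant} solutions $u\equiv(q_0,0)$; the limit solution here is the generally non-constant curve $(c,0)$, so that result does not directly apply --- what actually plays the corresponding role is the uniform invertibility of $D_\alpha$, together with the observation that its coherent orientation limits to the canonical $+1$ of the model operator. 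These are the missing ingredients in your outline; the rest of it is sound.
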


A homotopy $P^{\mathrm{C}}$ between $\mathrm{C}$ and $\Phi_L^{\Lambda} \circ M\mathrm{c}$ is
defined by the following spaces: Given $x\in \crit(f)$ and $y\in
\mathscr{P}^{\Lambda}(H)$, set
\begin{eqnarray*}
\mathscr{M}_P^\mathrm{C}(x,y) := \Bigl\{ (\alpha,u) \, \Big| \, \alpha>0, \;
u:[0,+\infty[ \times \T\rightarrow T^*M, \; \delbar_{J,H}(u) = 0, \\ \phi^{\Lambda}_{-\alpha}(\pi\circ u(0,\cdot))
\equiv q \in W^u(x) \Bigr\},
\end{eqnarray*}
where $\phi^{\Lambda}$ is the flow of $X_L^{\Lambda}$, a pseudo-gradient for $\mathbb{S}_L^{\Lambda}$ on $\Lambda^1(M)$, and $W^u(x)\subset M$ is the unstable manifold of $x$ with respect to the negative gradient flow of $f$.

Similarly, the definition of the homotopy $P^{\mathrm{Ev}}$ 
between $\mathrm{Ev} \circ \Phi_L^{\Lambda}$ and $M\mathrm{ev}$ is
obtained from the composition of three 
homotopies  based on the following spaces:
Given $\gamma\in \mathscr{P}^{\Lambda}(L)$ and $x\in \crit(f)$, set
\[
\begin{split}
  \mathscr{M}_{P_1}^{\mathrm{Ev}}(\gamma,x) := \Bigl\{ (\alpha,u) \, \Big|
  \alpha \in [1,+\infty[, \; u: [0,\alpha] \times \T \rightarrow T^*M
  \; \text{solves }\; \delbar_{J,H}(u) = 0,\\
  u(\alpha,t) \in \mathbb{O}_M \; \forall t\in \T, \; u(\alpha,0) \in W^s(x),\; \pi\circ u(0,\cdot) \in
  W^u(\gamma; X_L^{\Lambda}) \Bigr\};
 \end{split} \]

 \[
 \begin{split}
  \mathscr{M}_{P_2}^{\mathrm{Ev}}(\gamma,x) := \Bigl\{ (\alpha,u) \, \Big|
  \alpha \in [0,1], \; u: [0,1] \times \T \rightarrow T^*M \;
  \text{solves }\; \delbar_{J,H}(u) = 0,\\ u(1,t) \in
  \mathbb{O}_M \; \forall t\in \T, \;
  u(\alpha,0) \in W^s(x), \;
  \pi\circ u(0,\cdot) \in
  W^u(\gamma; X_L^{\Lambda})
  \Bigr\};
 \end{split}
 \]
 and
 \[
 \begin{split}
  \mathscr{M}_{P_3}^{\mathrm{Ev}}(\gamma,x) := \Bigl\{ (\alpha,u) \, \Big|
  \alpha \in ]0,1], \; u: [0,\alpha] \times \T \rightarrow T^*M \;
  \text{solves }\; \delbar_{J,H}(u) = 0,\\ u(\alpha,t) \in
  \mathbb{O}_M \; \forall t\in \T, \; u(0,0) \in W^s(x),\;
  \pi\circ u(0,\cdot) \in
  W^u(\gamma; X_L^{\Lambda}) \Bigr\} .
\end{split}
\]
Moreover, recalling that the definition of $M\mathrm{ev}$ is based on the space
\[
   \mathscr{M}_{M\mathrm{ev}}(\gamma,x)\,=\,
   W^u(\gamma;X_L^\Lambda)\cap\mathrm{ev}^{-1}\big(
   W^s(x;-\grad f)\big),
\]
we make the following observation:
\begin{prop} For every $\gamma\in \mathscr{P}^{\Lambda}(L)$ and $x\in \crit\, (f)$ with $i^{\Lambda}(\gamma) = i(x;f)$, there exists $\alpha_o>0$
  such that for each $c$ in the finite set $\mathscr{M}_{M\mathrm{ev}}(\gamma,x)$ and
  $\alpha\in (0,\alpha_o]$ the problem
\begin{equation}\label{eq ev}\begin{split}
    &u\in [0,\alpha]\times \T\to T^\ast M,\quad \delbar_{J,H} u=0,\\
    &u(\alpha,t)\in\mathbb{O}_M\;\forall t\in\T,\quad \pi\circ
    u(0,\cdot)=c,
  \end{split}
\end{equation}
has a unique solution with the same coherent orientation as $c$.
\end{prop}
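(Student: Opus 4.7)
The plan is to reformulate (\ref{eq ev}) as an implicit function theorem problem on a fixed domain, in which $\alpha$ plays the role of a small perturbation parameter. After the change of variables $s=\alpha\sigma$, the problem becomes
\[
F_\alpha(u):=\partial_\sigma u + \alpha\,J(u)\bigl(\partial_t u - X_H(t,u)\bigr)=0,\quad (\sigma,t)\in[0,1]\times\T,
\]
with the same boundary conditions $\pi\circ u(0,\cdot)=c$ and $u(1,\cdot)\in\mathbb{O}_M$ now imposed at $\sigma=0$ and $\sigma=1$. I would work in the standard setting of $W^{1,p}$-maps ($p>2$) on the fixed strip satisfying these two boundary conditions, viewed as an affine Banach manifold $\mathcal{B}_c$, with $F_\alpha:\mathcal{B}_c\to\mathcal{E}$ a smooth section of the associated $L^p$-Banach bundle.

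The heart of the argument is the analysis of the degenerate problem at $\alpha=0$. The equation $F_0(u)=\partial_\sigma u=0$ combined with the two boundary conditions forces $u$ to be $\sigma$-independent and to take its values on the zero section, so $u\equiv u_c$ with $u_c(\sigma,t)=(c(t),0)$; letting $c$ vary, the zero set of $F_0$ is in bijection with $\mathscr{M}_{M\mathrm{ev}}(\gamma,x)$. Using the horizontal--vertical splitting $TT^{\ast}M = T^hT^{\ast}M \oplus T^vT^{\ast}M$ induced by the Levi-Civita connection, the linearisation $DF_0(u_c)$ acts on a variation $\xi=(\xi^h,\xi^v)$ as $\xi\mapsto\partial_\sigma\xi$, on the subspace cut out by $\xi^h(0,\cdot)=0$ (since the projection at $\sigma=0$ is fixed to $c$) and $\xi^v(1,\cdot)=0$ (since the endpoint stays on $\mathbb{O}_M$). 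This operator is a bijection: for any right-hand side $\eta=(\eta^h,\eta^v)$ the ODE $\partial_\sigma\xi=\eta$ is uniquely solved by integrating $\xi^h$ forward from $\sigma=0$ and $\xi^v$ backward from $\sigma=1$. The implicit function theorem applied to the smooth map $(\alpha,u)\mapsto F_\alpha(u)$ then produces, for every $c\in\mathscr{M}_{M\mathrm{ev}}(\gamma,x)$, a unique smooth branch $\alpha\mapsto u_{c,\alpha}$ of solutions on some interval $[0,\alpha_o]$, equal to $u_c$ at $\alpha=0$; elliptic bootstrapping upgrades $u_{c,\alpha}$ to a smooth solution of (\ref{eq ev}).

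To promote this local uniqueness into the global uniqueness statement of the proposition, I would use a compactness argument. For any sequence $u_n$ of solutions of (\ref{eq ev}) with parameters $\alpha_n\downarrow 0$, the Fenchel inequality (\ref{fencsti}) together with the uniform bound on $\mathbb{S}_L(c)$ over the finite set $\mathscr{M}_{M\mathrm{ev}}(\gamma,x)$ yields a uniform energy bound
\[
\int_0^{\alpha_n}\!\!\int_\T |\partial_s u_n|^2\,ds\,dt \le C,
\]
which in rescaled coordinates forces $\|\partial_\sigma u_n\|_{L^2}^2=O(\alpha_n)$. Standard $C^\infty_{\mathrm{loc}}$-compactness for Floer trajectories with Lagrangian boundary conditions (no bubbling is possible on a shrinking domain) then forces a subsequence to converge to some $u_{c^{\ast}}$ with $c^{\ast}\in\mathscr{M}_{M\mathrm{ev}}(\gamma,x)$; shrinking $\alpha_o$ if necessary, every solution of (\ref{eq ev}) must lie in one of the IFT neighbourhoods and therefore coincide with some $u_{c,\alpha}$.

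For coherent orientations, the path $\alpha\mapsto DF_\alpha(u_{c,\alpha})$ is a continuous family of invertible Fredholm operators of index zero, so the determinant line is canonically trivialised along the deformation and the sign at $\alpha>0$ coincides with the sign at $\alpha=0$. The explicit form of $DF_0(u_c)$ worked out above identifies this sign with the transverse-intersection sign of $W^u(\gamma;X_L^\Lambda)$ and $\mathrm{ev}^{-1}(W^s(x;-\grad f))$ at $c$, which is precisely the sign assigned to $c$ in the definition of $M\mathrm{ev}$. The main analytical obstacle is the uniform invertibility of $DF_\alpha$ as $\alpha\to 0$; the rescaling $s=\alpha\sigma$ has been chosen precisely so that the $\alpha=0$ limit is non-degenerate and the invertibility reduces to the elementary ODE computation above.
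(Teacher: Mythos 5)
Your rescaling $s=\alpha\sigma$ is non-conformal, and this breaks the implicit function theorem argument. After the substitution, the operator becomes $F_\alpha(u) = \partial_\sigma u + \alpha J(u)\bigl(\partial_t u - X_H(t,u)\bigr)$, whose principal part $\partial_\sigma + \alpha J\partial_t$ degenerates as $\alpha\to 0$: at $\alpha=0$ it is the pure ODE operator $\partial_\sigma$, which is not elliptic. The ``bijection'' you describe by integrating forward/backward in $\sigma$ produces a $\xi$ that gains one $\sigma$-derivative but no $t$-regularity. Concretely, $\partial_\sigma: W^{1,p}_{BC}([0,1]\times\T) \to L^p([0,1]\times\T)$ is not surjective (given $\eta\in L^p$ rough in $t$, the integrated solution $\xi$ has $\partial_\sigma\xi=\eta\in L^p$ but $\partial_t\xi$ need not lie in $L^p$), and it is not Fredholm because the range is not closed. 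If instead one works in a space encoding only $\sigma$-regularity so that $\partial_\sigma$ becomes invertible, then the term $\alpha J(u)\partial_t u$ is no longer even well-defined as an $L^p$ element, and $(\alpha,u)\mapsto F_\alpha(u)$ fails to be $C^1$. There is no fixed pair of Banach spaces between which $F_0$ is both bijective and between which $F_\alpha$ is $C^1$ in $u$ uniformly as $\alpha\to 0$.

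The paper avoids this by using the \emph{conformal} rescaling $v(s,t)=u(\alpha s,\alpha t)$, sending the domain to $[0,1]\times\T_{\alpha^{-1}}$ with $\T_{\alpha^{-1}}=\R/\alpha^{-1}\Z$. This keeps the leading operator equal to the honest $\bar\partial$ at every $\alpha$, so ellipticity and the $L^p$ Calderon--Zygmund framework survive the degeneration, at the cost of a domain of growing circumference. Uniform invertibility of the linearization as $\alpha\to 0$ is then obtained by comparison with the model $\bar\partial$ on the full strip $[0,1]\times\R$ with totally real boundary conditions $i\R^n$ and $\R^n$, via a cut-off argument. A secondary issue: your compactness paragraph asserts that ``no bubbling is possible on a shrinking domain,'' but this is exactly the subtle point -- when $\|\nabla u_n\|_\infty\cdot\alpha_n$ has a positive limit, the rescaled sequence converges to a non-constant finite-energy holomorphic half-plane with Lagrangian boundary, which must be ruled out by a separate argument; the paper handles this by its case distinction on the behaviour of $R_n\alpha_n$.
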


\begin{proof}
We give a sketch of the proof, details are left to the reader.

First, given a sequence $\alpha_n\to 0$ and associated solutions
$u_n\colon [0,\alpha_n]\times \T\to T^\ast M$ of (\ref{eq ev}), one
can show that $u_n\to (c,0)\in\Lambda (T^\ast M)$ uniformly. Here, it is
important to make a case distinction for the three cases of
possible gradient blow-up: if we set $R_n:=\|\nabla u_n\|_{\infty} = |\nabla u_n(z_n)|$, up to a subsequence we may assume that the sequence
$(\alpha_n R_n)$ either is infinitesimal, or diverges, or converges to some $\kappa>0$.

The most interesting case is $\alpha_nR_n\to \kappa>0$ which is dealt
with by rescaling $v_n=u_n(\alpha_n\cdot,\alpha_n\cdot)$ as in
the proof of Lemma \ref{fp-cpt}.

For the converse, we need a Newton type method which is hard to
implement for the shrinking domains $[0,\alpha]\times \T$ with
$\alpha\to 0$. Instead, we consider the conformally rescaled equivalent
problem. Let $v(s,t)=u(\alpha s,\alpha t)$ and consider the
corresponding problem
for $\alpha\to 0$,
\begin{equation}\label{eq rescale}\begin{split}
    &v\colon [0,1]\times \T_{\alpha^{-1}}\to T^\ast M,\quad
    \delbar_{J,H_\alpha}v=0,\\
    &\pi(v(0,t))=c(\alpha t),\;v(1,t)\in\mathbb{O}_M\;\forall
    t\in\T_{\alpha^{-1}},
  \end{split}
\end{equation}
where $H_\alpha(t,\cdot)=\alpha H(\alpha t,\cdot)$ and
$\T_{\alpha^{-1}}=\R/\alpha^{-1}\Z$. The proof is now based on the
Newton method which requires to show that:
\begin{itemize}
\item[(a)] for $v_o(s,t)=0\in T^\ast_{c(\alpha t)} M$ we have
  $\delbar_{J,H}(v_o) \to 0$ as $\alpha\to 0$, which is obvious, and
\item[(b)] the linearization $D_\alpha$ of $\delbar_{J,H_\alpha}$ at
  $v_o$ is invertible for small $\alpha>0$ with uniform bound on
  $\|D_\alpha^{-1}\|$ as $\alpha\to 0$.
\end{itemize}
We sketch now the proof of this uniform bound.

After suitable trivializations, the linearization $D_\alpha$ of
$\delbar_{J,H_\alpha}$ at $v_o$ with the above Lagrangian boundary
conditions can be viewed as an operator $D_\alpha$ on
\[
   W^{1,p}_{i\R^n,\R^n}(\alpha)\,:=\,
   \big\{\,v\colon ]0,1[ \times \R/ \alpha^{-1} \Z \to\C^n\,|\,
   v(0,\cdot)\in i\R^n,\,v(1,\cdot)\in\R^n\,\big\},
\]
with norm $\|\cdot\|_{1,p;\alpha}$. Assuming that $D^{-1}_\alpha$ is
not uniformly bounded as $\alpha\to 0$ means that we would have
$\alpha_n\to 0$ and $v_n\in W^{1,p}_{i\R^n,\R^n}(\alpha_n)$ with
$\|v_n\|_{1,p;\alpha_n}=1$ such that
  $\|D_{\alpha_n}v_n\|_{0,p;\alpha_n}\to 0$. The limit operator to
  compare to is the standard $\delbar $-operator on maps
\[
   v\colon [0,1]\times\R\to\C^n,\,\text{s.t. }\;
   v(0,t)\in i\R^n,\,v(1,t)\in\R^n\;\forall t\in\R\,.
\]
This comparison operator is clearly an isomorphism, so one
easily shows that $D_{\alpha_n}$ has to be invertible for $\alpha_n$
small.

Let $\beta\in C^\infty(\R,[0,1])$ be a cut-off function such that
\[
   \beta(t)= \begin{cases} 1,\,& t\leq 0,\\ 0,\,& t\geq 1,
   \end{cases}\qquad
    \beta'\leq 0,
\]
and set $\beta_n(t)=\beta(\alpha_n t-1)\cdot \beta(-\alpha_n t)$, hence
\[
    \beta_n{}_{\textstyle| [0,\alpha_n^{-1}]}\equiv 1\quad\text{and}\quad
    \supp\beta_n\subset [-\alpha_n^{-1},2\alpha_n^{-1}]\,.
\]
We have
\[
   \|v_n\|_{1,p;\alpha_n}\,\leq\,
   \|\beta_n v_n\|_{1,p;\R}\,\leq\,
   c_1\|v_n\|_{1,p;\alpha_n}\,.
\]
Since the linearization  $D_\alpha$ is of the form
\[
  D_\alpha v\,=\,\partial_s v + i\partial_t v+ \alpha A(s,t) v
\]
with some matrix $A(s,t)$, we observe that
\[
   \|\delbar (\beta_n v_n)-D_{\alpha_n}(\beta_n v_n)\|_{0,p;\R}\,=\,
   \alpha_n\|A\beta_n v_n\|_{0,p;\R}\to 0\,.
\]
Moreover,
\begin{equation*}\begin{split}
   \|D_{\alpha_n}(\beta_n v_n)\|_{0,p;\R}\, &\leq
   \|i\beta'_n v_n\|_{0,p;\R}\,+\,\|\beta_n
   D_{\alpha_n}v_n\|_{0,p;\R}\\
   &\leq c_2\alpha_n\|v_n\|_{0,p;[-\alpha_n^{-1},0]\cup
     [\alpha_n^{-1},2\alpha_n^{-1}]}
   +\,3\|D_{\alpha_n}v_n\|_{0,p;\alpha_n}\\
   &\leq c_2\alpha_n
   2\|v_n\|_{1,p;\alpha_n}\,+\,3\|D_{\alpha_n}v_n\|_{0,p;\alpha_n}\to\,0\,.
 \end{split}Ä
\end{equation*}
Hence, we find a subsequence such that $\beta_{n_k}v_{n_k}\to
v_o\in\ker\delbar =\{0\}$ which means that $\|v_{n_k}\|_{1,p;\alpha_{n_k}}\to
0$ in contradiction to $\|v_n\|=1$.

Similarly, we see that the coherent orientation for the determinant of
$D_{\alpha_n}$ equals that of $\delbar $ which is canonically $1$. This
completes the proof of the proposition.
\end{proof}

From the cobordisms $\mathscr{M}_{P_i}^{\mathrm{Ev}}(\gamma,x)$,
$i=1,2,3$, we now obtain the chain homotopy between $M\mathrm{ev}$ and $\mathrm{Ev} \circ \Phi-L^{\Lambda}$. This concludes the proof of Theorem \ref{c=C,ev=Ev}.

\paragraph{Comparison between $\mathbf{i_!}$ and $\mathbf{I_!}$.}
In Section \ref{laf}, we have shown that the homomorphism $\mathrm{i}_!$ is induced by the chain map
\[
M\mathrm{i}_! : M_j(\mathbb{S}^{\Lambda}_L) \rightarrow M_{j-n}(\mathbb{S}^{\Omega}_L),
\]
between the Morse complexes of the Lagrangian action functional on the spaces $\Lambda^1(M)$ and $\Omega^1(M,q_0)$. Therefore, the fact that $\mathrm{i}_!$
corresponds to $I_!$ is implied by the following chain level result:

\begin{thm}
\label{i=I}
The diagram
\[
\xymatrix{M_j(\mathbb{S}_L^{\Lambda} ) \ar[r]^{\Phi_L^{\Lambda}}
  \ar[d]_{M\mathrm{i}_!} & F_j^{\Lambda}(H) \ar[d]^{I_!} \\
  M_{j-n}(\mathbb{S}_L^{\Omega}) \ar[r]^{\Phi^{\Omega}_L} &
  F^{\Omega}_{j-n}(H) }
\]
is chain homotopy commutative. 
\end{thm}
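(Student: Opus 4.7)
The plan is to follow the strategy used in Sections \ref{omegasec} and \ref{chrhs}: I will introduce an intermediate chain map $K^{I_!}: M_j(\mathbb{S}_L^{\Lambda}) \to F_{j-n}^{\Omega}(H)$ and show that both compositions $I_! \circ \Phi_L^{\Lambda}$ and $\Phi_L^{\Omega} \circ M\mathrm{i}_!$ are chain homotopic to $K^{I_!}$ via two separate cobordism arguments.

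The chain map $K^{I_!}$ will be defined by counting solutions $u:[0,+\infty[\times[0,1]\to T^*M$ of the Floer equation $\delbar_{J,H}(u)=0$ satisfying the Lagrangian boundary conditions $u(s,0), u(s,1)\in T^*_{q_0}M$ for all $s\geq 0$, the hybrid initial condition $\pi\circ u(0,\cdot)\in W^u(\gamma; X_L^{\Lambda})$ for some $\gamma\in \mathscr{P}^{\Lambda}(L)$, and $\lim_{s\to+\infty}u(s,\cdot)=y$ for some $y\in \mathscr{P}^{\Omega}(H)$. Note that the Lagrangian boundary condition automatically forces $\pi\circ u(0,\cdot)$ to lie in $W^u(\gamma;X_L^{\Lambda})\cap \Omega^1(M,q_0)$, and condition (\ref{noperq0}) is precisely what makes this intersection transversally meaningful. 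By the linear theory of Section \ref{lineartheory} together with a standard genericity argument, these spaces will be smooth oriented manifolds of dimension $i^{\Lambda}(\gamma;L)-\mu^{\Omega}(y;H)-n$, and the Fenchel-type inequality (\ref{fencsti}) provides the required energy bound and hence compactness in $C^{\infty}_{\mathrm{loc}}$.

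The first homotopy, between $\Phi_L^{\Omega}\circ M\mathrm{i}_!$ and $K^{I_!}$, is modeled directly on $P_K^{\Gamma}$ of Section \ref{chrhs}: for $\alpha\geq 0$ I will count pairs $(\alpha,u)$ solving the same Floer and Lagrangian boundary problem as in the definition of $K^{I_!}$, but with the modified initial condition $\phi^{\Omega}_{-\alpha}(\pi\circ u(0,\cdot))\in W^u(\gamma;X_L^{\Lambda})$, where $\phi^{\Omega}$ denotes the flow of the pseudo-gradient $X_L^{\Omega}$ on $\Omega^1(M,q_0)$. At $\alpha=0$ this recovers $K^{I_!}$. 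As $\alpha\to+\infty$ the backward $\phi^{\Omega}$-trajectory splits off a full Morse half-trajectory whose backward limit is a critical point $\gamma'\in \mathscr{P}^{\Omega}(L)$, yielding a transverse intersection point in $W^u(\gamma;X_L^{\Lambda})\cap W^s(\gamma';X_L^{\Omega})$ (counted by $M\mathrm{i}_!(\gamma,\gamma')$) paired with a Floer half-strip satisfying $\pi\circ u(0,\cdot)\in W^u(\gamma';X_L^{\Omega})$ (counted by $\Phi_L^{\Omega}(\gamma';y)$). This is precisely the combinatorial description of $\Phi_L^{\Omega}\circ M\mathrm{i}_!$ from Sections \ref{laf} and \ref{cci}.

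The second homotopy, between $K^{I_!}$ and $I_!\circ\Phi_L^{\Lambda}$, is modeled on $P^K_G$ of Section \ref{chrhs} and uses the one-parameter family of Riemann surfaces $\Sigma^K_{I_!}(\alpha)$, $\alpha\geq 0$, obtained from the cylinder-with-slit $\Sigma_{I_!}$ by truncating its cylindrical end at $s=-\alpha$; at the resulting circle boundary one imposes $\pi\circ u(-\alpha,\cdot)\in W^u(\gamma;X_L^{\Lambda})$, while the Lagrangian condition $u(s,0), u(s,1)\in T^*_{q_0}M$ for $s\geq 0$ and the asymptotic $u(s,\cdot)\to y$ as $s\to+\infty$ are as before. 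The $\alpha\to 0^+$ limit collapses the cylindrical neck and recovers $K^{I_!}$, while as $\alpha\to+\infty$ standard Floer neck-stretching splits off a half-cylinder from $W^u(\gamma;X_L^{\Lambda})$ to some periodic orbit $x\in\mathscr{P}^{\Lambda}(H)$ (contributing $\Phi_L^{\Lambda}$) glued to a solution on the full $\Sigma_{I_!}$ from $x$ to $y$ (contributing $I_!$). The main obstacle I anticipate is the simultaneous management of Fredholm indices, coherent orientations, and uniform Floer-type compactness for the hybrid boundary-value problems appearing in $K^{I_!}$ and in the two homotopies; all of this should reduce to the linear theory of Section \ref{lineartheory} for Cauchy-Riemann operators with jumping conormal Lagrangian boundary conditions, combined with the compactness and removal-of-singularities analysis of Section \ref{cocosec}.
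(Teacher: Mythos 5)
Your strategy coincides with the paper's own: the intermediate chain map you call $K^{I_!}$ is exactly the paper's $K^!$, defined by the same moduli space $\mathscr{M}_K^!(\gamma,x)$, and the paper proves Theorem~\ref{i=I} precisely by showing both $\Phi_L^{\Omega}\circ M\mathrm{i}_!$ and $I_!\circ\Phi_L^{\Lambda}$ are homotopic to $K^!$, with all details "left to the reader". Your first homotopy, which deforms the hybrid initial condition by the backward flow $\phi^{\Omega}_{-\alpha}$ and breaks at $\alpha\to+\infty$ into a Morse trajectory in $W^u(\gamma;X_L^{\Lambda})\cap W^s(\gamma';X_L^{\Omega})$ (giving $M\mathrm{i}_!$) followed by a half-strip solution above $W^u(\gamma';X_L^{\Omega})$ (giving $\Phi_L^{\Omega}$), is the correct analogue of $P_K^{\Gamma}$.

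The second homotopy as you sketch it, however, contains a gap at the $\alpha\to 0^+$ endpoint. For every $\alpha>0$ your surface $\{s\geq-\alpha\}\subset\Sigma_{I_!}$ has a circle boundary at $s=-\alpha$, because the identification $(s,0)\sim(s,1)$ of $\Sigma_{I_!}$ is in force there; consequently every solution satisfies $u(-\alpha,0)=u(-\alpha,1)$. In the formal limit $\alpha=0$ this boundary circle collides with the singular point of $\Sigma_{I_!}$ and the resulting domain $\{s\geq 0\}\subset\Sigma_{I_!}$ still identifies $(0,0)\sim(0,1)$, so $u(0,0)=u(0,1)$ is forced. But $\mathscr{M}_K^!(\gamma,x)$ consists of maps on the plain half-strip $[0,+\infty[\times[0,1]$, where $u(0,0)$ and $u(0,1)$ range independently over $T^*_{q_0}M$. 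This is an un-merging of the hybrid boundary topology (from a circle for $\alpha>0$ to an interval at $\alpha=0$), which is precisely the kind of degeneration the paper singles out as analytically problematic in Section~\ref{chlhs}: there, the analogous homotopy between $K^{\Lambda}_0$ (interval hybrid) and $K^{\Lambda}_{\alpha_0}$ (circle hybrid) is not built from a one-parameter family of surfaces, but via the algebraic Lemma~\ref{alge} together with the localization argument of Proposition~\ref{coupro}. The construction $P^K_G$ that you cite as a model goes in the opposite, merging, direction (two intervals become a circle as $\alpha\to 0$), which is why the compactness there is unproblematic. To complete your proof you would need to either adapt the Lemma~\ref{alge} strategy of Section~\ref{chlhs} to this setting, or prove the $\alpha\to 0^+$ compactness and gluing by an argument along the lines of Lemma~\ref{fp-cpt} and Propositions~\ref{facthom}, \ref{omegahom}, explicitly addressing the fact that $u(0,0)\neq u(0,1)$ is generic for elements of $\mathscr{M}_K^!(\gamma,x)$ and hence the convergence cannot be uniform near the corner.
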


Indeed, one can show that both $I_! \circ
\Phi_L^{\Lambda}$ and $\Phi_L^{\Omega} \circ M\mathrm{i}_!$ are homotopic to
the same chain map $K^!$. The definition of $K^!$ makes use of the
following spaces: Given $\gamma\in \mathscr{P}^{\Lambda}(L)$ and $x\in
\mathscr{P}^{\Omega}(H)$, set 
\begin{eqnarray*}
\mathscr{M}_K^! (\gamma,x) := \Bigl\{ u: [0,+\infty[
\times [0,1] \rightarrow T^*M \, \Big| \, \delbar_{J,H}(u) = 0,\\
\pi\circ u(s,0) = \pi\circ u(s,1) = q_0 \; \forall s\geq 0, \;
 \pi\circ u(0,\cdot) \in W^u(\gamma; X_L^{\Lambda}),\;
 \lim_{s\rightarrow +\infty} u(s,\cdot) = x 
\Bigr\}.
\end{eqnarray*}
Again, details are left to the reader.

We conclude that all the homomorphisms which appear in diagram (\ref{diaga}) have their Floer homological counterpart: The vertical arrows have been treated in this section, whereas the horizontal ones are described in Sections \ref{inter}, \ref{oria}, and \ref{ori}.

\section{Linear theory}
\label{lineartheory}

The remaining part of this paper consists of technical tools. In this section we develop the linear theory which allows us to study the Floer problems on the various Riemann surfaces introduced in the previous sections. All these elliptic problems are treated in the unifying setting of Cauchy-Riemann operators on strips with jumping nonlocal conormal boundary conditions.  

\subsection{The Maslov index}
\label{tmi}

Let $\eta_0$ be the {\em Liouville one-form} on $T^* \R^n = \R^n \times
(\R^n)^*$, that is the tautological one-form $\eta_0 = p\, dq$, that is
\[
\eta_0(q,p)[(u,v)] := p[u], \quad \mbox{for } q,u\in \R^n, \; p,v \in
(\R^n)^*.
\]
Its differential $\omega_0 = d \eta_0 = dp \wedge dq$,
\[
\omega_0 [(q_1,p_1),(q_2,p_1)] = p_1[q_2] - p_2 [q_1], \quad \mbox{for }
q_1,q_2 \in \R^n, \; p_1,p_2 \in (\R^n)^*,
\]
is the {\em standard symplectic form} on $T^* \R^n$. 

The {\em symplectic group}, that is the group of linear automorphisms of $T^* \R^n$ preserving $\omega_0$, is denoted by $\mathrm{Sp}(2n)$.
Let $\mathscr{L}(n)$ be the Grassmannian of Lagrangian subspaces of $T^* \R^n$, that is the set of $n$-dimensional linear subspaces of $T^* \R^n$ on which $\omega_0$ vanishes. The {\em relative Maslov index} assigns to
every pair of Lagrangian paths $\lambda_1, \lambda_2: [a,b]
\rightarrow \mathscr{L}(n)$ a half integer
$\mu(\lambda_1,\lambda_2)$. We refer to \cite{rs93} for the definition and for the properties of the relative Maslov index.

Another useful invariant is the H\"ormander index of four Lagrangian subspaces (see \cite{hor71}, \cite{dui76}, or \cite{rs93}):

\begin{defn}
\label{horind}
Let $\lambda_0,\lambda_1,\nu_0,\nu_1$ be four Lagrangian subspaces of $T^* \R^n$. Their {\em H\"ormander index} is the half
integer
\[
h(\lambda_0,\lambda_1;\nu_0,\nu_1) := \mu(\nu,\lambda_1) - \mu(\nu,\lambda_0),
\]
where $\nu:[0,1] \rightarrow \mathscr{L}(n)$ is a
Lagrangian path such that $\nu(0)=\nu_0$ and $\nu(1)=\nu_1$.
\end{defn}

Indeed, the quantity defined above does not depend on the choice of
the Lagrangian path $\nu$ joining $\nu_0$ and $\nu_1$. 

If $V$ is a linear subspace of $\R^n$, $N^*V \subset T^* \R^n$ 
denotes its {\em conormal space}, that is
\[
N^* V := \set{(q,p)\in \R^n \times (\R^n)^*}{q\in V, \; V \subset \ker
  p} = V \times V^{\perp},
\]
where $V^{\perp}$ denotes the set of covectors in $(\R^n)^*$ which
vanish on $V$. Conormal spaces are Lagrangian subspaces of
$T^* \R^n$. 

Let $C:T^* \R^n \rightarrow T^* \R^n$ be the linear involution
\[
C(q,p) := (q,-p) \quad \forall (q,p) \in T^* \R^n.
\]
The involution $C$ is anti-symplectic, meaning that
\[
\omega_0 (C \xi, C \eta) = - \omega_0 (\xi,\eta) \quad \forall \xi,\eta\in T^* \R^n.
\]
In particular, $C$ maps Lagrangian subspaces into Lagrangian subspaces. Since the Maslov index is natural with respect to symplectic transformations and changes sign if we change the sign of the symplectic structure, we have the identity 
\begin{equation}
\label{conju}
\mu (C\lambda,C\nu) = - \mu (\lambda,\nu),
\end{equation}
for every pair of Lagrangian paths $\lambda,\nu :[a,b] \rightarrow \mathscr{L}(n)$. Since conormal subspaces are $C$-invariant, we deduce that
\begin{equation}
\label{muzero}
\mu (N^*V,N^*W) = 0,
\end{equation}
for every pair of paths $V,W$ into the Grassmannian of $\R^n$.
Let $V_0,V_1,W_0,W_1$ be four linear subspaces of $\R^n$, and let $\nu:[0,1] \rightarrow \mathscr{L}(n)$ be a Lagrangian path such that $\nu(0)=N^* W_0$ and $\nu(1) = N^* W_1$. By (\ref{conju}),
\begin{equation*}\begin{split}
&h(N^*V_0,N^*V_1;N^*W_0,N^*W_1)\\ =\,&\mu(\nu,N^*V_1) - \mu(\nu,N^*V_0) = - \mu(C\nu,N^* V_1) + \mu(C\nu,N^* V_0).
\end{split}\end{equation*}
But also the Lagrangian path $C\nu$ joins $N^*W_0$ and $N^* W_1$, so the latter quantity equals 
\[
-h(N^*V_0,N^*V_1;N^*W_0,N^*W_1).
\]
We deduce the following:

\begin{prop}
\label{lemhor}
Let $V_0,V_1,W_0,W_1$ be four linear subspaces of $\R^n$. Then
\[
h(N^* V_0, N^*V_1;N^* W_0,N^* W_1) = 0.
\]
\end{prop}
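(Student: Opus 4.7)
The plan is to exploit the fact that the anti-symplectic involution $C(q,p)=(q,-p)$ preserves every conormal subspace, combined with the sign change of the Maslov index under $C$ recorded in (\ref{conju}). In essence, the proof is already sketched in the paragraph preceding the statement, so I simply need to assemble it into a clean argument.

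First I would unwind the definition of the H\"ormander index: choose any Lagrangian path $\nu\colon [0,1]\to\mathscr{L}(n)$ with $\nu(0)=N^*W_0$ and $\nu(1)=N^*W_1$, so that
\[
h(N^*V_0,N^*V_1;N^*W_0,N^*W_1)\,=\,\mu(\nu,N^*V_1)-\mu(\nu,N^*V_0).
\]
Next I would apply (\ref{conju}) termwise, using that $C N^*V_i=N^*V_i$ for $i=0,1$, to convert each Maslov term: $\mu(\nu,N^*V_i)=\mu(C\nu,CN^*V_i)\cdot(-1)^{-1}$... wait, (\ref{conju}) reads $\mu(C\lambda,C\nu)=-\mu(\lambda,\nu)$, so $\mu(\nu,N^*V_i)=-\mu(C\nu,CN^*V_i)=-\mu(C\nu,N^*V_i)$. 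Substituting,
\[
h(N^*V_0,N^*V_1;N^*W_0,N^*W_1)\,=\,-\mu(C\nu,N^*V_1)+\mu(C\nu,N^*V_0).
\]

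The final step is the observation that $C\nu$ is itself a Lagrangian path from $N^*W_0$ to $N^*W_1$, because the endpoints are fixed by $C$; hence the right-hand side is precisely $-h(N^*V_0,N^*V_1;N^*W_0,N^*W_1)$, using that the H\"ormander index is independent of the chosen connecting path (Definition \ref{horind}). Thus $h=-h$, giving $h=0$. There is no genuine obstacle here: the whole argument rests on the two elementary facts that conormal spaces are $C$-invariant and that $C$ is anti-symplectic, both of which are explicitly recorded above. The proof therefore fits in a few lines and needs no further estimates.
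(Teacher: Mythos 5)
Your proof is correct and is essentially the same argument that the paper gives just before the statement: choose a connecting path $\nu$, use $C$-invariance of conormal spaces together with (\ref{conju}) to flip the sign, note that $C\nu$ is again a valid connecting path, and conclude $h=-h$. Nothing to add.
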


We identify the product $T^* \R^n \times T^* \R^n$ with $T^* \R^{2n}$, and we endow it with its standard symplectic structure. In other words, we consider the product symplectic form, not the twisted one used in
\cite{rs93}. Note that the conormal space of the diagonal $\Delta_{\R^n}$ in $\R^n \times \R^n$ is the graph of $C$,
\[
N^* \Delta_{\R^n} = \graf C \subset T^* \R^n \times T^*\R^n = T^* \R^{2n}.
\]
The linear endomorphism $\Psi$ of $T^* \R^n$ belongs to the symplectic group $\mathrm{Sp}(2n)$ if and only if the graph of the linear endomorphism $\Psi C$ is a Lagrangian subspace of $T^* \R^{2n}$, if and only if the graph of  $C \Psi$ is a Lagrangian subspace of $T^* \R^{2n}$.  If $\lambda_1,\lambda_2$ are paths of Lagrangian subspaces of $T^* \R^n$ and $\Psi$ is a path in $\mathrm{Sp}(2n)$, Theorem 3.2 of \cite{rs93} leads to the identities
\begin{equation}
\label{masprod}
\mu(\Psi \lambda_1, \lambda_2) = \mu (\graf \Psi C,
C \lambda_1 \times \lambda_2) = - \mu(\graf C\Psi ,
\lambda_1 \times C \lambda_2).
\end{equation}
The {\em Conley-Zehnder index} $\mu_{CZ}(\Psi)$ of a symplectic path
$\Psi: [0,1] \rightarrow \mathrm{Sp}(2n)$ is related to the relative
Maslov index by the formula
\begin{equation}
\label{czi}
\mu_{CZ}(\Psi) = \mu(\graf \Psi C, N^* \Delta_{\R^n}) = \mu (N^* \Delta_{\R^n}, \graf C\Psi).
\end{equation}

We conclude this section by fixing some standard identifications, which allow to see $T^* \R^n$ as a complex vector space.
By using the Euclidean inner product on $\R^n$, we can identify $T^*
\R^n$ with $\R^{2n}$. 
We also identify the latter space to $\C^n$, by means of the isomorphism $(q,p) \mapsto q+ip$. In other words, we consider the complex structure
\[
J_0 := \left( \begin{array}{cc} 0 & -I \\ I & 0 \end{array} \right)
\]
on $\R^{2n}$. With these identifications, the Euclidean inner product 
$u \cdot v$, respectively the symplectic product $\omega_0(u,v)$, of 
two vectors $u,v\in T^* \R^n \cong \R^{2n} \cong \C^n$ is the real
part, respectively the imaginary part, of their Hermitian product 
$\langle \cdot,\cdot \rangle$,
\[
\langle u ,v \rangle := \sum_{j=1}^n u_j \overline{v_j} =
u \cdot v  + i \, \omega_0(u,v).
\]
The involution $C$ is the complex conjugation.
By identifying $V^{\perp}$ with the Euclidean orthogonal complement, we have
\[
N^* V = V \oplus i V^{\perp} = \set{z\in \C^n}{\re z\in V, \; \im z
  \in V^{\perp}}.
\]
If $\lambda:[0,1] \rightarrow \mathscr{L}(1)$ is the path
\[
\lambda(t) = e^{i\alpha t} \R, \quad \alpha \in \R,
\]
the relative Maslov index of $\lambda$ with respect to $\R$ is the
half integer
\begin{equation}
\label{maslov}
\mu(\lambda,\R) = \left\{ \begin{array}{ll} - \frac{1}{2} - \lfloor
  \frac{\alpha}{\pi} \rfloor & \mbox{if } \alpha \in \R \setminus \pi
  \Z, \\ - \frac{\alpha}{\pi} & \mbox{if } \alpha \in \pi
  \Z. \end{array} \right.
\end{equation}
Notice that the sign is different from the one appearing in
\cite{rs93} (localization axiom in Theorem 2.3), due to the fact that
we are using the opposite symplectic form on $\R^{2n}$. Our sign
convention here also differs from the one used in \cite{as06}, because
we are using the opposite complex structure on $\R^{2n}$.

\subsection{Elliptic estimates on the quadrant}
\label{ellsec}

We recall that a real linear subspace $V$ of $\C^n$ is said to be {\em
totally real} if $V\cap i V =(0)$. Denote by $\mathbb{H}$ the upper half-plane $\set{z\in \C}{\im z>0}$, and by $\mathbb{H}^+$ the upper-right
quadrant $\set{z\in \mathbb{H}}{\re z>0}$. We shall make use of the
following Calderon-Zygmund estimates for the Cauchy-Riemann
operator $\delbar = \partial_s + i \partial_t$:

\begin{thm}
\label{czt}
Let $V$ be an $n$-dimensional totally real subspace of
$\C^n$. For every $p\in ]1,+\infty[$, there exists a constant
$c=c(p,n)$ such that
\[
\| D u\|_{L^p} \leq c \| \delbar  u \|_{L^p}
\]
for every $u\in C^{\infty}_c(\C,\C^n)$, and for every $u\in
C^{\infty}_c(\mathrm{Cl}(\mathbb{H}),\C^n)$ such that $u(s)\in V$
for every $s\in \R$.
\end{thm}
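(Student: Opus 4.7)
The plan is to treat the two parts separately. Part one, the interior estimate on all of $\C$, is the classical Calderon--Zygmund $L^p$ bound for the Cauchy--Riemann operator; part two, the boundary estimate on the closed upper half-plane, reduces to part one via a $\C$-linear change of coordinates and Schwarz reflection across the real axis.

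For the interior estimate, recall that $\frac{1}{\pi z}$ is a fundamental solution of $\delbar$ on $\C$, so that any $u\in C^{\infty}_c(\C,\C^n)$ satisfies the representation $u = \frac{1}{\pi z}\ast \delbar u$ (convolution acting componentwise). Differentiating under the convolution yields the Beurling transform identity
\[
\partial u(z) \,=\, -\frac{1}{\pi}\,\mathrm{p.v.}\!\int_{\C}\frac{\delbar u(w)}{(z-w)^2}\,dw,
\]
with convolution kernel $-\frac{1}{\pi z^2}$, which is homogeneous of degree $-2$ and has vanishing mean over circles centered at the origin. This is a standard Calderon--Zygmund convolution operator, hence bounded on $L^p(\C,\C^n)$ for every $p\in\,]1,+\infty[$, giving $\|\partial u\|_{L^p}\leq c(p,n)\|\delbar u\|_{L^p}$. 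Since $|Du|^2$ is pointwise comparable to $|\partial u|^2+|\delbar u|^2$, this yields the first estimate.

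For the boundary estimate, I would first reduce to the model case $V=\R^n$. Since $V$ is totally real of real dimension $n$, any $\R$-basis of $V$ is also a $\C$-basis of $\C^n$, so there is a $\C$-linear isomorphism $T\colon\C^n\to\C^n$ with $T(V)=\R^n$. The substitution $u\mapsto T\circ u$ intertwines $\delbar$ with itself and distorts $|Du|$ by at most a factor $\|T\|\cdot\|T^{-1}\|$, so we may assume $V=\R^n$. For such $u\in C^{\infty}_c(\mathrm{Cl}(\mathbb{H}),\C^n)$ with real boundary values, define the Schwarz extension
\[
\tilde u(s,t) \,:=\, \begin{cases} u(s,t) & \text{if } t\geq 0,\\ \overline{u(s,-t)} & \text{if } t\leq 0.\end{cases}
\]
The condition $u(s,0)\in \R^n$ ensures continuity across the real axis, so $\tilde u$ is a compactly supported Lipschitz map, hence belongs to $W^{1,p}_c(\C,\C^n)$. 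A direct computation with $\delbar=\partial_s+i\partial_t$ shows that $\delbar\tilde u(z)=\overline{(\delbar u)(\bar z)}$ for $t<0$, whence
\[
\|\delbar\tilde u\|_{L^p(\C)}\,=\,2^{1/p}\|\delbar u\|_{L^p(\mathbb{H})},\qquad \|D\tilde u\|_{L^p(\C)}\,=\,2^{1/p}\|Du\|_{L^p(\mathbb{H})}.
\]
Extending the interior estimate from $C^{\infty}_c(\C,\C^n)$ to $W^{1,p}_c(\C,\C^n)$ by a standard mollification argument and applying it to $\tilde u$ gives the half-plane estimate, with a constant depending only on $p$, $n$, and the fixed distortion $\|T\|\cdot\|T^{-1}\|$ associated to $V$.

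The only genuine technical content is the $L^p$-boundedness of the Beurling transform, which is entirely classical; the remaining steps (change of $\C$-basis, reflection, and the extension-by-density of the interior bound) are routine bookkeeping. I do not expect any serious obstacle: the proof is essentially a citation of Calderon--Zygmund theory combined with the Schwarz reflection trick, and the constant $c(p,n)$ in the final estimate can be tracked through both reductions.
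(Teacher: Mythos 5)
Your proposal is correct and follows the standard classical route; the paper itself does not supply a proof of Theorem \ref{czt}, stating it as a known Calderon--Zygmund estimate, so there is no ``paper proof'' to compare against. The representation $u = \frac{1}{\pi z}\ast\delbar u$, the $L^p$-boundedness of the Beurling transform, and the pointwise equivalence $|Du|^2\asymp |\partial u|^2+|\delbar u|^2$ together give the interior estimate; the Schwarz reflection $\tilde u$ is continuous by the boundary condition and piecewise $C^1$, hence Lipschitz with compact support, and the verification $\delbar\tilde u(z)=\overline{(\delbar u)(\bar z)}$ on the lower half-plane is correct, so passing to the full-plane estimate via mollification is sound.

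One small point worth flagging: the $\C$-linear change of frame $T$ carrying $V$ to $\R^n$ is not in general unitary, so your argument yields a constant $c(p,n,V)$ through the factor $\|T\|\,\|T^{-1}\|$. This is unavoidable — the Grassmannian of totally real $n$-planes in $\C^n$ is open, hence non-compact, and the estimate must degenerate as $V$ approaches a subspace with nontrivial $\C$-span deficiency — so the paper's notation $c=c(p,n)$ is a mild abuse (the constant implicitly depends on $V$). In the paper's actual applications $V$ is always a conormal space $N^*X = X\oplus i X^\perp$, and these are all unitarily equivalent to $\R^n$, so there the constant is indeed $c(p,n)$; you already note this dependence explicitly, which is the honest way to state the result.
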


We shall also need the following regularity result for weak solutions
of $\delbar $. Denoting by $\partial := \partial_s - i \partial_t$ the anti-Cauchy-Riemann operator, we have:

\begin{thm} \label{rws}
{\em (Regularity of weak solutions of $\delbar $)}
Let $V$ be an $n$-dimensional totally real subspace of $\C^n$, and let
$1<p<\infty$, $k\in \N$.
\begin{enumerate}
\item Let $u\in L^p_{\mathrm{loc}}
  (\C,\C^n)$, $f\in W^{k,p}_{\mathrm{loc}} (\C,\C^n)$ be such that
\[
\re \int_{\C} \langle u, \partial \varphi \rangle
 \, ds\,dt = - \re \int_{\C} \langle f, \varphi \rangle \, ds\,dt,
\]
for every $\varphi\in C^{\infty}_c(\C,\C^n)$.
Then $u\in W^{k+1,p}_{\mathrm{loc}}(\C,\C^n)$ and
$\delbar  u=f$.
\item Let
  $u\in L^p(\mathbb{H},\C^n)$, $f\in W^{k,p}(\mathbb{H},\C^n)$ be such
  that
\[
\re \int_{\mathbb{H}} \langle u, \partial \varphi \rangle
 \, ds\,dt = - \re \int_{\mathbb{H}} \langle f, \varphi \rangle \, ds\,dt,
\]
for every $\varphi \in C^{\infty}_c(\C,\C^n)$
such that $\varphi(\R) \subset V$.
Then $u\in W^{k+1,p}(\mathbb{H},\C^n)$, $\delbar  u=f$, and the trace
of $u$ on $\R$ takes values into the $\omega_0$-orthogonal complement of
$V$,
\[
V^{\perp_{\omega_0}} := \set{\xi\in \C^n}{\omega_0(\xi,\eta)=0 \;
  \forall \eta \in V}.
\]
\end{enumerate}
\end{thm}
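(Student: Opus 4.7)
The plan is the classical elliptic regularity bootstrap: use the Calderón--Zygmund estimates of Theorem \ref{czt} to gain one derivative of regularity, then iterate by differentiating the equation. Part (i) will be handled by mollification in the interior, while part (ii) is reduced to (i) by a Schwarz-type reflection across the real axis.

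For (i), fix an arbitrary compact $K \subset \mathbb{C}$, a cutoff $\chi \in C_c^\infty(\mathbb{C})$ with $\chi \equiv 1$ on a neighborhood of $K$, and a standard mollifier $\rho_\epsilon$. Set $u_\epsilon := u * \rho_\epsilon$, which is smooth on a neighborhood of $K$ for small $\epsilon$, with $\delbar u_\epsilon = f_\epsilon := f * \rho_\epsilon$ there; applying the interior Calderón--Zygmund estimate of Theorem \ref{czt} to $\chi u_\epsilon \in C_c^\infty(\mathbb{C},\mathbb{C}^n)$ yields
\[
\|\chi u_\epsilon\|_{W^{1,p}} \leq c\bigl(\|\chi f_\epsilon\|_{L^p} + \|(\delbar \chi)u_\epsilon\|_{L^p}\bigr),
\]
which is uniform in $\epsilon$. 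By weak compactness and $u_\epsilon \to u$ in $L^p_{\mathrm{loc}}$, one obtains $u \in W^{1,p}_{\mathrm{loc}}$ and $\delbar u = f$ strongly. Bootstrap: for $|\alpha|\leq k$, the weak equation $\delbar(\partial^\alpha u) = \partial^\alpha f$ holds, and iterating the above argument gives $u \in W^{k+1,p}_{\mathrm{loc}}$.

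For (ii), the first step is to observe that $W := V^{\perp_{\omega_0}}$ is again $n$-dimensional and totally real: a short computation shows $W \cap iW$ coincides with the Hermitian orthogonal complement $V^\perp$, which vanishes because $V$ totally real and $n$-dimensional forces $V + iV = \mathbb{C}^n$. Let $\sigma_W \colon \mathbb{C}^n \to \mathbb{C}^n$ denote the unique $\R$-linear, anti-$\C$-linear involution fixing $W$ pointwise, i.e.\ $\sigma_W(w + iw') = w - iw'$ for $w, w' \in W$. Extend $u$ and $f$ to $\mathbb{C}$ by
\[
\tilde u(s,t) := \sigma_W\, u(s,-t),\qquad \tilde f(s,t) := \sigma_W\, f(s,-t),\qquad t < 0.
\]
A direct calculation using $i\sigma_W = -\sigma_W i$ shows that classically $\delbar \tilde u = \tilde f$ on the open sets $\pm \mathbb{H}$. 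The heart of the argument is to verify that $\tilde u$ is a \emph{distributional} solution of $\delbar \tilde u = \tilde f$ on all of $\mathbb{C}$. To do this, I would take an arbitrary $\psi \in C_c^\infty(\mathbb{C},\mathbb{C}^n)$, split the integral $\int_{\mathbb{C}}\langle \tilde u,\partial\psi\rangle$ as $\int_{\mathbb{H}} + \int_{-\mathbb{H}}$, integrate by parts on each half-plane (producing boundary terms of the form $\pm \int_{\R}\omega_0(u(s,0),\psi(s,0))\,ds$), and change variables $t \mapsto -t$ on $-\mathbb{H}$ to rewrite the second integral in terms of $u$, $f$, and the pull-back $\sigma_W \psi(s,-t)$; using the nondegenerate $\omega_0$-pairing between $V$ and $W$ and the decomposition $\mathbb{C}^n = V \oplus iV$, the test function $\psi$ can be decomposed into a part whose trace lies in $V$ (to which the hypothesis applies directly) and a part whose trace lies in $iV$ (whose contributions from the two half-planes cancel by the $\sigma_W$-symmetry of $\tilde u$). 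Once this is established, part (i) applied to $\tilde u$ gives $\tilde u \in W^{k+1,p}_{\mathrm{loc}}(\mathbb{C})$, hence $u \in W^{k+1,p}_{\mathrm{loc}}(\mathbb{H})$ with $\delbar u = f$; the continuity of the trace of $\tilde u$ across $\{t=0\}$ forces $u(s,0) = \sigma_W u(s,0) \in W = V^{\perp_{\omega_0}}$. To upgrade from $W^{k+1,p}_{\mathrm{loc}}$ to the global $W^{k+1,p}(\mathbb{H})$ estimate, I would cover $\mathbb{C}$ by unit balls with uniformly bounded overlap multiplicity, apply the Calderón--Zygmund estimate of Theorem \ref{czt} to $\chi_j \tilde u$ for each cutoff $\chi_j$ of the associated partition of unity, and sum the local estimates.

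The main technical obstacle is the verification that $\tilde u$ is a global weak solution. The argument is transparent when $V$ is Lagrangian, in which case $W = V$, the decomposition $\mathbb{C}^n = W \oplus iW$ is Euclidean-orthogonal, and $\sigma_W$ is an isometry of the Hermitian form up to conjugation. For a general totally real $V$ the decomposition is not orthogonal, $\sigma_W$ is not an isometry, and the cancellation of the boundary terms produced by integrating by parts on the two half-planes must be engineered through the \emph{symplectic} — not Euclidean — duality between $V$ and $W$. It is precisely this duality that justifies the choice of reflecting across $W = V^{\perp_{\omega_0}}$ rather than across any other totally real complement, and this is where the formal structure of the weak formulation (with test functions' traces in $V$, conclusion's trace in $V^{\perp_{\omega_0}}$) plays an essential role.
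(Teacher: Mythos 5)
Your part (i) is standard and correct. For part (ii), the choice of reflecting across $W := V^{\perp_{\omega_0}}$ via the anti-$\C$-linear involution $\sigma_W$ is exactly right, and your verification that $W$ is again $n$-dimensional and totally real is correct. However, the mechanism you propose for the heart of the argument — showing that $\tilde u$ is a global weak solution — has a genuine gap. You say you would ``integrate by parts on each half-plane (producing boundary terms of the form $\pm\int_{\R}\omega_0(u(s,0),\psi(s,0))\,ds$)''. But $u$ is only in $L^p(\mathbb{H})$, so it has no trace on $\R$ and no integration by parts is available; the whole point of the theorem is to \emph{establish} that the trace exists. Your subsequent cancellation argument, as phrased, leans on these undefined boundary terms.

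The correct mechanism avoids integration by parts altogether. Let $\sigma_V$ be the anti-$\C$-linear involution fixing $V$, and write $\hat\psi(s,t):=\psi(s,-t)$. The key algebraic identity making your reflection work is that the Euclidean transpose of $\sigma_W$ equals $\sigma_V$: indeed $\sigma_W^{T}$ is again an anti-$\C$-linear involution, and its $+1$ eigenspace is the Euclidean orthogonal complement of $iW$, which is $W^{\perp_{\omega_0}} = V$. This is the precise content of the ``symplectic duality'' you invoke. Using it, and the fact that $\partial(\sigma_V g) = \sigma_V\,\delbar g$ for anti-linear $\sigma_V$, a change of variables $t\mapsto -t$ on $-\mathbb{H}$ gives, with no trace of $u$ needed,
\begin{equation*}
\re\int_{-\mathbb{H}}\langle \tilde u,\partial\psi\rangle\,ds\,dt
= \re\int_{\mathbb{H}}\langle \sigma_W u,\delbar\hat\psi\rangle\,ds\,dt
= \re\int_{\mathbb{H}}\langle u,\sigma_V\delbar\hat\psi\rangle\,ds\,dt
= \re\int_{\mathbb{H}}\langle u,\partial(\sigma_V\hat\psi)\rangle\,ds\,dt,
\end{equation*}
so that $\re\int_{\C}\langle\tilde u,\partial\psi\rangle = \re\int_{\mathbb{H}}\langle u,\partial\varphi\rangle$ where $\varphi:=\psi+\sigma_V\hat\psi$. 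Since $\varphi(s,0) = (I+\sigma_V)\psi(s,0)\in V$, the hypothesis applies to $\varphi$; the analogous computation for $\tilde f$ completes the verification that $\delbar\tilde u=\tilde f$ weakly on $\C$. Then part (i) gives $\tilde u\in W^{k+1,p}_{\mathrm{loc}}(\C)$, the traces from $\mathbb{H}$ and $-\mathbb{H}$ agree, forcing $\sigma_W$-invariance of the trace, i.e.\ trace in $W$. Finally, the usual cleaner route — which you may prefer — is to first normalize to $V=W=\R^n$ by the $\C$-linear change of variables $u\mapsto Au$, $\varphi\mapsto (A^*)^{-1}\varphi$ with $V=A^*\R^n$ (hence automatically $V^{\perp_{\omega_0}}=A^{-1}\R^n$); then $\sigma_W$ becomes complex conjugation and the above identity is transparent. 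Note the paper itself states Theorem \ref{rws} without proof, citing it as a background fact.
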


\begin{rem}
If we replace the upper half-plane $\mathbb{H}$ in (ii) by the right
half-plane $\{\re z>0\}$ and the test mappings $\varphi\in
C^{\infty}_c(\C,\C^n)$ satisfy $\varphi( i\R) \subset V$, then the
trace of $u$ on $i\R$ takes value into $V^{\perp}$, the Euclidean
orthogonal complement of $V$ in $\R^{2n}$.
\end{rem}

Two linear subspaces $V,W$ of $\R^n$ are said to be {\em partially
orthogonal} if the linear subspaces $V\cap (V \cap W)^{\perp}$ and
$W \cap (V\cap W)^{\perp}$ are orthogonal, that is if their
projections into the quotient $\R^n/V\cap W$ are orthogonal. 

\begin{lem}
\label{czl} Let $V$ and $W$ be partially orthogonal linear subspaces
of $\R^n$. For
every $p\in ]1,+\infty[$, there exists a constant $c=c(p,n)$
such that
\begin{equation}
\label{caldzyg} \| D u\|_{L^p} \leq c \| \delbar 
u \|_{L^p}
\end{equation}
for every $u\in C^{\infty}_c(\mathrm{Cl}(\mathbb{H}^+),\C^n)$ such that
\begin{equation}
\label{bdryc} u(s)\in N^*V\: \forall s\in [0,+\infty[, \quad
u(it) \in N^*W \: \forall t\in [0,+\infty[.
\end{equation}
\end{lem}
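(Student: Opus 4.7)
The plan is to reduce the quadrant estimate to the half-plane case of Theorem \ref{czt} by an antiholomorphic reflection across the imaginary axis, using an $\mathbb{R}$-linear involution of $\mathbb{C}^n$ whose fixed set is $N^\ast W$ and which, thanks to the partial orthogonality hypothesis, also preserves $N^\ast V$.

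Concretely, let $P_W$ and $P_{W^\perp}=I-P_W$ be the orthogonal projections of $\mathbb{R}^n$ onto $W$ and its Euclidean complement, and define the conjugate-linear involution
\[
R_W: \mathbb{C}^n \longrightarrow \mathbb{C}^n, \qquad R_W(x+iy) := (P_W-P_{W^\perp})(x-iy), \qquad x,y\in\mathbb{R}^n .
\]
A direct calculation shows $R_W(iz)=-iR_W(z)$, $|R_W z|=|z|$ and $\mathrm{Fix}(R_W)=W+iW^\perp=N^\ast W$. The partial orthogonality of $V$ and $W$ provides the orthogonal decomposition $\mathbb{R}^n=(V\cap W)\oplus V_1\oplus W_1\oplus E$ with $V_1:=V\cap(V\cap W)^\perp$, $W_1:=W\cap(V\cap W)^\perp$, $V=(V\cap W)\oplus V_1$, $W=(V\cap W)\oplus W_1$, $V^\perp=W_1\oplus E$, $W^\perp=V_1\oplus E$; the operator $P_W-P_{W^\perp}$ acts as $+I$ on $V\cap W$ and $W_1$ and as $-I$ on $V_1$ and $E$, so it leaves both $V$ and $V^\perp$ invariant, whence $R_W(N^\ast V)=N^\ast V$.

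Given $u\in C^\infty_c(\mathrm{Cl}(\mathbb{H}^+),\mathbb{C}^n)$ satisfying (\ref{bdryc}), define the reflected extension
\[
\tilde u(s,t) := \begin{cases} u(s,t) & \text{if } s\geq 0,\ t\geq 0,\\ R_W\bigl(u(-s,t)\bigr) & \text{if } s\leq 0,\ t\geq 0.\end{cases}
\]
Since $u(0,t)\in N^\ast W=\mathrm{Fix}(R_W)$, the two definitions agree on $i\mathbb{R}_+$, so $\tilde u$ is continuous, piecewise smooth and compactly supported on $\mathrm{Cl}(\mathbb{H})$; in particular $\tilde u\in W^{1,p}(\mathbb{H},\mathbb{C}^n)$. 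By the preceding paragraph, $\tilde u(s)\in N^\ast V$ for every $s\in\mathbb{R}$, and $N^\ast V$ is a totally real $n$-dimensional subspace of $\mathbb{C}^n$. Using the $\mathbb{R}$-linearity and conjugate $\mathbb{C}$-linearity of $R_W$, a short computation gives, for $s<0$,
\[
\bar\partial \tilde u(s,t) \,=\, -R_W\bigl(\partial_s u + i\partial_t u\bigr)(-s,t) \,=\, -R_W(\bar\partial u)(-s,t),
\]
so $\bar\partial\tilde u$ has no singular contribution along $i\mathbb{R}$ and satisfies $\|\bar\partial\tilde u\|_{L^p(\mathbb{H})}=2^{1/p}\|\bar\partial u\|_{L^p(\mathbb{H}^+)}$ because $R_W$ is an isometry of $\mathbb{C}^n$.

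The estimate of Theorem \ref{czt} on the half-plane $\mathbb{H}$ with totally real subspace $N^\ast V$, applied to $\tilde u$ (after an elementary density/mollification argument to handle the mild loss of smoothness across $i\mathbb{R}$, or by invoking the extension of Theorem \ref{czt} to compactly supported $W^{1,p}$ functions with trace in $N^\ast V$), yields
\[
\|D\tilde u\|_{L^p(\mathbb{H})} \,\leq\, c\,\|\bar\partial \tilde u\|_{L^p(\mathbb{H})} \,\leq\, c\,2^{1/p}\,\|\bar\partial u\|_{L^p(\mathbb{H}^+)},
\]
and since $\|Du\|_{L^p(\mathbb{H}^+)}\leq \|D\tilde u\|_{L^p(\mathbb{H})}$, this gives (\ref{caldzyg}). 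The only non-routine step is checking that partial orthogonality is exactly the condition ensuring $R_W(N^\ast V)=N^\ast V$; the remaining verifications (matching of traces on $i\mathbb{R}_+$, absence of a jump term in the distributional $\bar\partial$, and the $L^p$ comparison) are straightforward.
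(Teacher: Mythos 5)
Your argument is correct, and it reaches the same destination as the paper's by a parallel but genuinely distinct route. The paper first normalizes via a unitary $U\in\mathrm{U}(n)$ built from the orthogonal splitting $\R^n = X_1\oplus X_2\oplus X_3\oplus X_4$ (with $V=X_1\oplus X_2$, $W=X_1\oplus X_3$), arranging $UN^*V=\R^n$ and $UN^*W = N^*(X_1\oplus X_4)$, and then performs a standard Schwarz reflection $v(z)=\overline{u(\overline z)}$ across the real-axis boundary (the one carrying the $N^*V$ condition), invoking self-conjugacy of $UN^*W$ for the remaining boundary. You instead skip the normalization entirely, build an ad hoc conjugate-linear isometric involution $R_W$ whose fixed locus is $N^*W$, and reflect across the imaginary-axis boundary (the one carrying the $N^*W$ condition), using partial orthogonality to show $R_W$ preserves $N^*V$. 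This is the same idea (a single antiholomorphic reflection reducing the quadrant problem to Theorem~\ref{czt} on a half-plane with a totally real boundary) applied to the other boundary wall and without a preliminary unitary change of frame. What your version buys is a somewhat more transparent account of exactly where partial orthogonality enters: it is precisely the condition ensuring that the reflection operator $P_W-P_{W^\perp}$ leaves both $V$ and $V^\perp$ invariant, hence $R_W(N^*V)=N^*V$.

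Your computations check out: $R_W$ is conjugate-linear, isometric, involutive, with $\mathrm{Fix}(R_W)=W\oplus iW^\perp=N^*W$; the orthogonal decomposition $\R^n=(V\cap W)\oplus V_1\oplus W_1\oplus E$ gives $R_W(N^*V)=N^*V$; and the conjugate-linearity yields $\delbar\tilde u(s,t)=-R_W(\delbar u)(-s,t)$ for $s<0$, so no distributional contribution appears along $i\R$ and the $L^p$ norms scale by $2^{1/p}$. The one thing worth stating plainly: the reflected $\tilde u$ matches $u$ continuously across $i\R_+$ but is in general only Lipschitz, not $C^1$, there (the first derivatives $\partial_s u(0,t)$ need not lie in the $-1$-eigenspace of $R_W$), so one does indeed need the density/trace extension of Theorem~\ref{czt} you mention. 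The paper's reflection has exactly the same mild loss of smoothness and handles it implicitly; you are right to flag it.
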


\begin{proof} Since $V$ and $W$ are partially orthogonal, $\R^n$ has an orthogonal splitting $\R^n = X_1 \oplus X_2 \oplus X_3 \oplus X_4$ such that
\[
V= X_1 \oplus X_2, \quad W = X_1 \oplus X_3.
\]
Therefore,
\[
N^*V = X_1 \oplus X_2 \oplus iX_3 \oplus iX_4, \quad N^*W = X_1
\oplus X_3 \oplus iX_2 \oplus iX_4.
\]
Let $U\in \mathrm{U}(n)$ be the identity on $(X_1 \oplus X_2)
\otimes \C$, and the multiplication by $i$ on $(X_3 \oplus X_4)
\otimes \C$. Then
\[
UN^* V = \R^n, \quad UN^*W = X_1 \oplus X_4 \oplus iX_2 \oplus i
X_3 = N^*(X_1 \oplus X_4).
\]
Up to multiplying $u$ by $U$, we can replace the boundary
conditions (\ref{bdryc}) by
\begin{equation} \label{bdry1}
u(s)\in \R^n \: \forall s\in [0,+\infty[, \quad u(it) \in Y\:
\forall t\in [0,+\infty[,
\end{equation}
where $Y$ is a totally real $n$-dimensional subspace of $\C^n$
such that $\overline{Y}=Y$. Define a $\C^n$-valued map $v$ on the
right half-plane $\{\re z\geq 0\}$ by Schwarz reflection,
\[
v(z) := \left\{ \begin{array}{ll} u(z) & \mbox{if } \im z\geq 0,
\\ \overline{u(\overline{z})} & \mbox{if } \im z\leq 0. \end{array} \right.
\]
By (\ref{bdry1}) and by the fact that $Y$ is self-conjugate,
$v$ belongs to $W^{1,p}(\{\re z>0 \},\C^n)$, and satisfies
\[
v(it) \in Y \quad \forall t\in \R.
\]
Moreover,
\[
\| \nabla v\|_{L^p(\{\re z>0\})}^p = 2 \| \nabla
v\|_{L^p(\mathbb{H}^+)}^p,
\]
and since $\delbar  v(z) =
\overline{\delbar  u(\overline{z})}$ for $\im z\leq 0$,
\[
\| \delbar  v\|_{L^p(\{\re z>0\})}^p = 2 \|
\delbar  v\|_{L^p(\mathbb{H}^+)}^p.
\]
Then (\ref{caldzyg}) follows from the Calderon-Zygmund estimate on
the half plane with totally real boundary conditions (Theorem
\ref{czt}). 
\end{proof}

Similarly, Theorem \ref{rws} has the following consequence about
regularity of weak solutions of $\delbar $ on the upper right quadrant $\mathbb{H}^+$:

\begin{lem}
\label{woq}
Let $V$ and $W$ be partially orthogonal linear subspaces of $\R^n$.
Let $u\in L^p(\mathbb{H}^+,\C^n)$, $f\in L^p(\mathbb{H}^+,\C^n)$, $1<p<\infty$, be such that
\[
\re \int_{\mathbb{H}^+} \langle u, \partial \varphi \rangle
 \, ds\,dt = - \re \int_{\mathbb{H}^+} \langle f, \varphi \rangle \, ds\,dt,
\]
for every $\varphi \in C^{\infty}_c(\C,\C^n)$
such that $\varphi(\R) \subset N^*V$, $\varphi(i\R) \subset N^* W$.
Then $u\in W^{1,p}(\mathbb{H}^+,\C^n)$, $\delbar  u=f$, the trace of $u$ on
$\R$ takes values into $N^*V$, and the trace of $u$ on $i\R$ takes
values into $(N^* W)^{\perp} = N^* (W^{\perp}) = i N^* W$.
\end{lem}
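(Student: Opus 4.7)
The plan is to mirror the argument of Lemma \ref{czl}, reducing the problem to the setting of Theorem \ref{rws}(ii) (more precisely, to the right half-plane variant of it mentioned in the remark after that theorem) by combining an algebraic unitary rotation with a Schwarz reflection across the real axis. First I would exploit the partial orthogonality of $V$ and $W$ to split $\R^n = X_1 \oplus X_2 \oplus X_3 \oplus X_4$ with $V = X_1 \oplus X_2$ and $W = X_1 \oplus X_3$, and introduce the unitary $U \in \mathrm{U}(n)$ which is the identity on $(X_1 \oplus X_2) \otimes \C$ and multiplication by $i$ on $(X_3 \oplus X_4) \otimes \C$. As in the proof of Lemma \ref{czl}, $U$ maps $N^*V$ onto $\R^n$ and $N^*W$ onto the self-conjugate totally real subspace $Y := N^*(X_1 \oplus X_4)$. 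Setting $v := Uu$, $g := Uf$ and testing against $\psi := U\varphi$ reduces the problem to an $L^p$-weak $\bar\partial$-equation $\bar\partial v = g$ on $\mathbb{H}^+$ against test functions $\psi \in C^{\infty}_c(\C,\C^n)$ with $\psi(\R) \subset \R^n$ and $\psi(i\R) \subset Y$.

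Next I would extend $v$ and $g$ to the right half-plane $\{\re z > 0\}$ by Schwarz reflection across $\R$, setting $\tilde v(z) := \overline{v(\bar z)}$ and $\tilde g(z) := \overline{g(\bar z)}$ for $\im z < 0$. A direct computation gives the intertwining identity $\bar\partial\bigl[\overline{v(\bar z)}\bigr] = \overline{(\bar\partial v)(\bar z)}$, so the candidate equation $\bar\partial \tilde v = \tilde g$ is correct pointwise. To verify it in the weak sense, one splits any test function $\psi \in C^{\infty}_c(\C,\C^n)$ with $\psi(i\R) \subset Y$ into its restrictions to the two quadrants, applies the weak equation in $\mathbb{H}^+$ to $\psi|_{\overline{\mathbb{H}^+}}$ and to the conjugate-reflected test function $\overline{\psi(\bar{\,\cdot\,})}|_{\overline{\mathbb{H}^+}}$ (which, crucially, also satisfies the $\R^n$ and $Y$ boundary conditions because $\bar Y = Y$ and $\psi(\R) \subset \R^n$ follows automatically from self-conjugacy of $Y$ near $\R \cap i\R = \{0\}$ after a small partition-of-unity argument). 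The two contributions, after a change of variables $z \mapsto \bar z$ in the second one, recombine to the weak equation on $\{\re z > 0\}$.

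Finally I would invoke the right half-plane version of Theorem \ref{rws}(ii) on $\tilde v$ with totally real boundary condition $Y$ on $i\R$: this yields $\tilde v \in W^{1,p}(\{\re z > 0\},\C^n)$, $\bar\partial \tilde v = \tilde g$ in the strong sense, and a trace on $i\R$ valued in $Y^\perp$. Restriction to $\mathbb{H}^+$ gives $v \in W^{1,p}(\mathbb{H}^+,\C^n)$ with $\bar\partial v = g$, trace on $\R_+$ valued in $\R^n$ (forced by the symmetry under which $\tilde v$ was constructed), and trace on $i\R_+$ valued in $Y^\perp$. Applying $U^{-1}$ and using that $U$ is unitary so commutes with orthogonal complements, together with the identities $U^{-1}(\R^n) = N^*V$ and $U^{-1}(Y^\perp) = (N^*W)^\perp = iN^*W = N^*(W^\perp)$, delivers the desired regularity and trace conclusions for $u$. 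The only subtle step is the verification in the middle paragraph that Schwarz reflection preserves the weak formulation across the real axis; the self-conjugacy $\bar Y = Y$ guarantees that the boundary integrals over $\R$ produced by integration by parts in each quadrant cancel exactly, so this is where the partial orthogonality hypothesis really pays off.
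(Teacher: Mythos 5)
Your route --- use the unitary $U$ from Lemma \ref{czl} to normalize $N^*V$ to $\R^n$ and $N^*W$ to the self-conjugate totally real space $Y=N^*(X_1\oplus X_4)$, Schwarz-reflect across $\R$, and invoke the right half-plane variant of Theorem \ref{rws}(ii) --- matches the paper's proof exactly, and your final transport back by $U^{-1}$, using that a unitary commutes with Euclidean orthogonal complements, is correct. But the step you single out as the crucial subtlety is wrong as you state it.

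You assert that, for a test function $\psi\in C^{\infty}_c(\C,\C^n)$ of the extended right-half-plane problem (which is constrained only by $\psi(i\R)\subset Y$), the additional condition $\psi(\R)\subset\R^n$ ``follows automatically \dots\ after a small partition-of-unity argument,'' and on that basis you apply the weak equation on $\mathbb{H}^+$ to $\psi|_{\overline{\mathbb{H}^+}}$ and to $\overline{\psi(\bar{\cdot})}|_{\overline{\mathbb{H}^+}}$ separately. This is not true: away from the origin such a $\psi$ can take arbitrary values on $\R$, and no partition of unity changes that, so neither summand alone is an admissible test function for the original weak equation on $\mathbb{H}^+$, which requires both $\varphi(\R)\subset\R^n$ and $\varphi(i\R)\subset Y$. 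The correct bookkeeping is to split the integral over $\{\re z>0\}$ into the two quadrants and change variables $z\mapsto\bar z$ in the lower one; using $\re\langle\bar a,\bar b\rangle = \re\langle a,b\rangle$ and $\partial\bigl[\overline{\psi(\bar\cdot)}\bigr] = \overline{(\partial\psi)(\bar\cdot)}$, both sides of the candidate weak identity for $\tilde v$ collapse to integrals of $v$, resp.\ $g$, over $\mathbb{H}^+$ against the single test function $\chi:=\psi+\overline{\psi(\bar\cdot)}$. It is $\chi$ --- not its two summands --- that satisfies both constraints: $\chi(s)=2\re\psi(s)\in\R^n$ for $s\in\R$, and $\chi(i\R)\subset Y$ because $\bar Y=Y$. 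You then apply the weak equation on $\mathbb{H}^+$ once, to $\chi$, and the remainder of your argument goes through.
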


\begin{proof}
By means of a linear unitary transformation, as in the proof of Lemma \ref{czl}, we may assume that $V=N^* V = \R^n$. A Schwarz reflection then allows to extend $u$ to a map $v$ on the right half-plane $\{\re z>0\}$ which is in $L^p$ and is a weak solution of $\delbar  v = g \in L^p$,  with boundary condition in $i N^* W$ on $i\R$. The thesis  follows from Theorem \ref{rws}.
\end{proof}

We are now interested in studying the operator $\delbar $
on the half-plane $\mathbb{H}$, with boundary conditions
\[
u(s)\in N^* V, \quad u(-s) \in N^*W \quad \forall s>0,
\]
where $V$ and $W$ are partially orthogonal linear subspaces of $\R^n$. 
Taking Lemmas \ref{czl} and \ref{woq} into account, the natural idea is to obtain the required estimates by applying a conformal change of variable mapping the half-plane $\mathbb{H}$ onto the the upper right quadrant $\mathbb{H}^+$. More precisely,
let $\mathscr{R}$ and $\mathscr{T}$ be the transformations
\begin{eqnarray}
\label{transr}
\mathscr{R}: \mathrm{Map}(\mathbb{H},\C^n) \rightarrow \mathrm{Map}(\mathbb{H}^+,\C^n), \quad (\mathscr{R}u)(\zeta) = u(\zeta^2), \\
\label{transt}
\mathscr{T}: \mathrm{Map}(\mathbb{H},\C^n) \rightarrow \mathrm{Map}(\mathbb{H}^+,\C^n), \quad (\mathscr{T}u)(\zeta) = 2 \overline{\zeta} u(\zeta^2),
\end{eqnarray}
where $\mathrm{Map}$ denotes some space of maps. Then the diagram
\begin{equation}
\label{diagrt}
\begin{CD}
\mathrm{Map}(\mathbb{H},\C^n) @>{\delbar }>>  \mathrm{Map}(\mathbb{H},\C^n) \\ @VV{\mathscr{R}}V @V{\mathscr{T}}VV \\ 
\mathrm{Map}(\mathbb{H}^+,\C^n) @>{\delbar }>>  \mathrm{Map}(\mathbb{H}^+,\C^n)
\end{CD}
\end{equation}
commutes. By the elliptic estimates of Lemma \ref{czl}, suitable domain and codomain for the operator on the lower horizontal arrow are the standard $W^{1,p}$ and $L^p$ spaces, for $1<p<\infty$. Moreover, if  $u\in \mathrm{Map}(\mathbb{H},\C^n)$ then
\begin{eqnarray}
\label{Lpr}
\|\mathscr{R} u\|_{L^p(\mathbb{H}^+)}^p = \frac{1}{4} \int_{\mathbb{H}} \frac{1}{|z|} |u(z)|^p \, ds\,dt, \\ \label{Lpdr}
\|D (\mathscr{R} u)\|_{L^p(\mathbb{H}^+)}^p
=   2^{p-2} \int_{\mathbb{H}} |D u(z)|^p |z|^{p/2-1} \, ds\,dt, \\
\label{Lpt}
\|\mathscr{T} u\|_{L^p(\mathbb{H}^+)}^p = 2^{p-2} \int_{\mathbb{H}} |u(z)|^p |z|^{p/2-1}\, ds\,dt.
\end{eqnarray}
Note also that by the generalized Poincar\'e inequality, the $W^{1,p}$ norm on $\mathbb{H}^+ \cap \mathbb{D}_r$, where $\mathbb{D}_r$ denotes the open disk of radius $r$, is equivalent to the norm
\begin{equation}
\label{eqsobnorm}
\|v\|_{\tilde{W}^{1,p}(\mathbb{H}^+ \cap \mathbb{D}_r)}^p :=
\|D v\|_{L^p(\mathbb{H}^+ \cap \mathbb{D}_r)}^p + \int_{\mathbb{H}^+ \cap \mathbb{D}_r} |v(\zeta)|^p |\zeta|^p\, d\sigma d\tau,
\end{equation}
and the $\tilde{W}^{1,p}$ norm of $\mathscr{R} u$ is
\begin{equation}
\label{treqsobnorm}\begin{split}
\|\mathscr{R}u\|_{\tilde{W}^{1,p}(\mathbb{H}^+ \cap \mathbb{D}_r)}^p \,=\, &\frac{1}{4} \int_{\mathbb{H}\cap \mathbb{D}_{r^2}} |u(z)|^p |z|^{p/2-1}\, ds\,dt\\ &+\,2^{p-2} \int_{\mathbb{H}\cap \mathbb{D}_{r^2}} |D u(z)|^p |z|^{p/2-1} \, ds\,dt.\end{split}
\end{equation}
So when dealing with bounded domains, both the transformations $\mathscr{R}$ and $\mathscr{T}$ involve the appearance of the weight $|z|^{p/2-1}$ in the $L^p$ norms. Note also that when $p=2$, this weight is just $1$, reflecting the fact that the $L^2$ norm of the differential is a conformal invariant. 

By the commutativity of diagram (\ref{diagrt}) and by the identities (\ref{Lpdr}), (\ref{Lpt}), Lemma \ref{czl} applied to $\mathscr{R}u$ implies the following:

\begin{lem}
\label{czl2} Let $V$ and $W$ be partially orthogonal
linear subspaces of $\R^n$.  For
every $p\in ]1,+\infty[$, there exists a constant $c=c(p,n)$
such that
\[
\int_{\mathbb{H}} |\nabla u(z)|^p |z|^{p/2-1} \, ds
\, dt \leq c^p \int_{\mathbb{H}}
|\delbar  u(z)|^p |z|^{p/2-1} \, ds \, dt
\]
for every compactly supported map
$u: \mathrm{Cl}(\mathbb{H}) \rightarrow \C^n$ such that
$\zeta \mapsto u(\zeta^2)$ is smooth on $\mathrm{Cl}(\mathbb{H}^+)$, and
\[
\label{bdry} u(s)\in N^*V \; \forall s\in ]-\infty,0], \quad u(s) \in
N^*W \: \forall s\in [0,+\infty[.
\]
\end{lem}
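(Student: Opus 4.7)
\textbf{Proof plan for Lemma \ref{czl2}.} The plan is to reduce the weighted estimate on the half-plane $\mathbb{H}$ to the unweighted Calderon-Zygmund estimate on the quadrant $\mathbb{H}^+$ (Lemma \ref{czl}) via the conformal substitution $z = \zeta^2$, exploiting the intertwining of $\delbar$ by the transformations $\mathscr{R}$ and $\mathscr{T}$ recorded in (\ref{diagrt}). Concretely, given $u$ as in the hypothesis, I would set $v := \mathscr{R}u$, i.e.\ $v(\zeta) = u(\zeta^2)$. By assumption $v$ is smooth on $\mathrm{Cl}(\mathbb{H}^+)$, and it is compactly supported since $u$ is.

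Next I would verify that $v$ satisfies the boundary conditions of Lemma \ref{czl} with the roles of $V$ and $W$ swapped. Indeed, for $\sigma \geq 0$ one has $\zeta = \sigma$, hence $\zeta^2 = \sigma^2 \geq 0$, so $v(\sigma) = u(\sigma^2) \in N^*W$; for $\tau \geq 0$ one has $\zeta = i\tau$, hence $\zeta^2 = -\tau^2 \leq 0$, so $v(i\tau) = u(-\tau^2) \in N^*V$. Since partial orthogonality is a symmetric relation between $V$ and $W$, the pair $(W,V)$ is still partially orthogonal, and Lemma \ref{czl} applies to $v$, yielding
\[
\|Dv\|_{L^p(\mathbb{H}^+)} \,\leq\, c(p,n)\,\|\delbar v\|_{L^p(\mathbb{H}^+)}.
\]
Using the commutativity of diagram (\ref{diagrt}), $\delbar v = \delbar (\mathscr{R}u) = \mathscr{T}(\delbar u)$.

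Finally I would translate both sides back into weighted integrals on $\mathbb{H}$ using the identities (\ref{Lpdr}) and (\ref{Lpt}), which give
\[
\|D(\mathscr{R}u)\|_{L^p(\mathbb{H}^+)}^p = 2^{p-2} \int_{\mathbb{H}} |Du(z)|^p |z|^{p/2-1}\, ds\,dt,
\]
and
\[
\|\mathscr{T}(\delbar u)\|_{L^p(\mathbb{H}^+)}^p = 2^{p-2} \int_{\mathbb{H}} |\delbar u(z)|^p |z|^{p/2-1}\, ds\,dt.
\]
The common factor $2^{p-2}$ cancels and the desired inequality follows with the same constant $c(p,n)$. There is no real obstacle here beyond bookkeeping: the substance of the estimate lies entirely in Lemma \ref{czl}, and the role of the present lemma is to reorganize that estimate through the ramified cover $\zeta \mapsto \zeta^2$. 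The smoothness hypothesis ``$\zeta \mapsto u(\zeta^2)$ is smooth on $\mathrm{Cl}(\mathbb{H}^+)$'' is precisely what is needed in order for $v = \mathscr{R}u$ to be an admissible test map for Lemma \ref{czl}; without it, $u$ would only have a square-root type regularity at the origin and one could not directly invoke Theorem \ref{czt}.
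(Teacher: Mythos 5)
Your proposal is correct and follows exactly the route the paper takes (which the paper compresses into a single sentence preceding the lemma's statement): apply Lemma~\ref{czl} to $\mathscr{R}u$, use the commutativity of diagram~(\ref{diagrt}) to identify $\delbar(\mathscr{R}u)$ with $\mathscr{T}(\delbar u)$, and translate the resulting unweighted estimate on $\mathbb{H}^+$ back to the weighted one on $\mathbb{H}$ via~(\ref{Lpdr}) and~(\ref{Lpt}). Your verification of the swapped boundary conditions and the remark on the symmetry of partial orthogonality are exactly the small checks the paper leaves implicit.
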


\subsection{Strips with jumping conormal boundary conditions}
\label{sjlbc}

Let us consider the following data: two integers $k,k^{\prime}\geq
0$, $k+1$ linear subspaces $V_0,\dots, V_k$ of $\R^n$ such that
$V_{j-1}$ and $V_j$ are partially orthogonal, for every $j=1,\dots,k$,
$k^{\prime}+1$ linear subspaces $V_0^{\prime},\dots, V_k^{\prime}$ of
$\R^n$ such that
$V_{j-1}^{\prime}$ and $V_j^{\prime}$ are partially orthogonal,
for every $j=1,\dots,k^{\prime}$, and real numbers
\begin{equation*}\begin{split}
&-\infty = s_0 < s_1 < \dots < s_k < s_{k+1} = +\infty,\\
&-\infty = s_0^{\prime} < s_1^{\prime} < \dots < s_{k^{\prime}}^{\prime} <
s_{k^{\prime}+1}^{\prime} = +\infty.
\end{split}\end{equation*}
Denote by $\mathscr{V}$ the $(k+1)$-tuple $(V_0,\dots,V_k)$, by
$\mathscr{V}^{\prime}$ the $(k^{\prime}+1)$-tuple
$(V_0^{\prime},\dots,V_k^{\prime})$, and set
\[
\mathscr{S}:= \{s_1,\dots,s_k,s_1'+i, \dots,
s_{k^{\prime}}^{\prime} + i\}.
\]
Let $\Sigma$ be the closed strip
\[
\Sigma := \set{z\in \C}{0 \leq \im z \leq 1}.
\]
The space $C^{\infty}_{\mathscr{S}}
(\Sigma,\C^n)$ is the space of maps $u: \Sigma
\rightarrow \C^n$ which are smooth on $\Sigma \setminus
\mathscr{S}$, and such that the maps 
$\zeta \mapsto u (s_j + \zeta^2)$ and $\zeta \mapsto u(s_j' + i -
\zeta^2)$ are smooth in a neighborhood of $0$ in the  closed
upper-right quadrant
\[
\mathrm{Cl}(\mathbb{H}^+) = \set{\zeta\in \C}{\re \zeta\geq 0, \;
\im \zeta\geq 0}.
\]
The symbol $C^{\infty}_{\mathscr{S},c}$
indicates bounded support.

Given $p\in [1,+\infty[$, we define the $X^p$ norm of a map $u\in L^1_{\mathrm{loc}} (\Sigma,\C^n)$ by
\[
\|u\|_{X^p(\Sigma)}^p :=
     \|u\|^p_{L^p(\Sigma \setminus B_r(\mathscr{S}))} + \sum_{w\in \mathscr{S}} \int_{\Sigma \cap B_r(w)} |u(z)|^p
    |z-w|^{p/2-1}\, ds\,dt,
\]
where $r<1$ is less than half of the minimal distance between pairs of distinct points in $\mathscr{S}$. This is just a weighted $L^p$ norm, where the weight $|z-w|^{p/2-1}$ comes from the identities (\ref{Lpdr}), (\ref{Lpt}), and (\ref{treqsobnorm}) of the last section.
Note that when $p>2$ the $X^p$ norm is weaker than the
$L^p$ norm, when $p<2$ the $X^p$ norm is stronger than the $L^p$ norm, and when $p=2$ the two norms are equivalent.

The space $X^p_{\mathscr{S}}(\Sigma,\C^n)$ is the space of locally integrable $\C^n$-valued maps on $\Sigma$ whose
$X^p$ norm is finite. The $X^p$ norm makes it a Banach space. We
view it as a {\em real} Banach space.

The space $X^{1,p}_{\mathscr{S}}(\Sigma,\C^n)$
is defined as the completion of the space $C^{\infty}_{\mathscr{S},c}(\Sigma,\C^n)$ with respect to the norm
\[
\|u\|_{X^{1,p}(\Sigma)}^p := \|u\|_{X^p(\Sigma)}^p + \|Du\|_{X^p(\Sigma)}^p.
\]
It is a Banach space with the above norm. Equivalently, it is the
space of maps in $X^p(\Sigma,\C^n)$ whose distributional derivative is
also in $X^p$. 
The space $X^{1,p}_{\mathscr{S},\mathscr{V},\mathscr{V}^{\prime}} (\Sigma,\C^n)$ is defined as the closure in $X^{1,p}_{\mathscr{S}}
(\Sigma,\C^n)$ of the space of all $u\in C^{\infty}_{\mathscr{S},c}
(\Sigma,\C^n)$ such that
\begin{equation}
\label{conoboco}
\begin{array}{lll}
u(s)\in N^*V_j & \forall s\in [s_j,s_{j+1}], & j=0,\dots,k, \\
u(s+i)\in N^*V_j^{\prime} & \forall s\in
  [s_j^{\prime},s_{j+1}^{\prime}], & j=0,\dots,k^{\prime}.
\end{array} \end{equation}
Equivalently, it can be defined in terms of the trace of $u$ on the
boundary of $\Sigma$.

Let $A:\R \times [0,1]\rightarrow  \mathrm{L}(\R^{2n},\R^{2n})$ be continuous and bounded. For every $p \in [1,+\infty[$, the linear operator
\[
\delbar_A : X^{1,p}_{\mathscr{S}}(\Sigma,\C^n) \rightarrow X^p_{\mathscr{S}}(\Sigma,\C^n), \quad \delbar_A u := \delbar  u + A u,
\]
is bounded. Indeed, $\delbar $ is a bounded operator because of the inequality $|\delbar  u|\leq |Du|$, while the multiplication operator by $A$ is bounded because
\[
\|Au\|_{X^p(\Sigma)} \leq \|A\|_{\infty} \|u\|_{X^p(\Sigma)}.
\]
We wish to prove that if $p>1$ and $A(z)$ satisfies suitable asymptotics for $\re z \rightarrow \pm \infty$ the operator $\delbar_A$ restricted to the space $X^{1,p}_{\mathscr{S}, \mathscr{V}, \mathscr{V}'} (\Sigma,\C^n)$ of maps satisfying the boundary conditions (\ref{conoboco}) is Fredholm.

Assume that $A\in C^0(\overline{\R} \times [0,1], \mathrm{L}(\R^{2n},\R^{2n}))$ is
such that $A(\pm \infty,t) \in \mathrm{Sym}(2n,\R)$ for every $t\in
[0,1]$. Define $\Phi^+,\Phi^-:[0,1] \rightarrow \mathrm{Sp}(2n)$ to be
the solutions of the linear Hamiltonian systems
\begin{equation}
\label{wronsk}
\frac{d}{dt} \Phi^{\pm}(t) = i A(\pm \infty,t) \Phi^{\pm}(t), \quad
\Phi^{\pm}(0) = I.
\end{equation}
Then we have the following:

\begin{thm}
\label{fredholm}
Assume that $\Phi^-(1) N^*V_0 \cap N^*V_0' = (0)$ and $\Phi^+(1) N^*
V_k \cap N^* V_{k'}' = (0)$. Then the bounded $\R$-linear operator
\[
\delbar_A :
X^{1,p}_{\mathscr{S},\mathscr{V},\mathscr{V}^{\prime}}
(\Sigma,\C^n) \rightarrow X^p_{\mathscr{S}}
(\Sigma,\C^n), \quad
\delbar_A u = \delbar  u + A u,
\]
is Fredholm of index
\begin{equation}
\label{indform}
\begin{split}
\ind \delbar_A \,=\; &\mu(\Phi^- N^*V_0,N^* V_0^{\prime}) - \mu(\Phi^+ N^*
V_k,N^* V_{k^{\prime}}^{\prime}) \\ 
&- \frac{1}{2}\sum_{j=1}^k (\dim V_{j-1} + \dim V_j - 2 \dim V_{j-1} \cap V_j)\\
&- \frac{1}{2}\sum_{j=1}^{k^{\prime}} (\dim V_{j-1}^{\prime} + \dim V_j^{\prime} -
2 \dim V_{j-1}^{\prime} \cap V_j^{\prime}).
\end{split}
\end{equation}
\end{thm}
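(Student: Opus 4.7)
The plan is to first establish that $\delbar_A$ is semi-Fredholm by proving a Garding-type estimate of the form
\[
\|u\|_{X^{1,p}(\Sigma)} \,\leq\, C\bigl(\|\delbar_A u\|_{X^p(\Sigma)} + \|u\|_{L^p(\Sigma \cap \{|s|\leq S\})}\bigr),
\]
and then to compute the index by a deformation/excision argument, reducing to model operators whose kernel and cokernel can be computed explicitly and whose index is matched to the Maslov-index formula.

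For the estimate, I would localize with a partition of unity adapted to $\Sigma$. Away from the jumping points in $\mathscr{S}$ and from $|s|=\infty$, the $X^p$ norm is equivalent to the $L^p$ norm, so standard interior Calderon-Zygmund (Theorem \ref{czt}) together with boundary Calderon-Zygmund for totally real boundary conditions (applied after the usual unitary rotation turning $N^\ast V_j$ into $\R^n$) give full control of $\|Du\|_{L^p}$ in terms of $\|\delbar u\|_{L^p}$ and a lower-order term. Near each jumping point $w\in\mathscr{S}$, I would apply the square-root transformation $\mathscr{R}$ of (\ref{transr}), which turns the strip with slit into the quadrant $\mathbb{H}^+$ carrying the two boundary subspaces $N^\ast V_{j-1}$ and $N^\ast V_j$; by the partial-orthogonality hypothesis and the weighted identities (\ref{Lpdr}), (\ref{Lpt}), (\ref{treqsobnorm}), the $X^{1,p}$-norm near $w$ matches exactly the standard $W^{1,p}$-norm of $\mathscr{R}u$ on $\mathbb{H}^+$, so Lemma \ref{czl2} provides the required local estimate. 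The zero-order perturbation by $A$ is harmless by boundedness of $A$ and a cut-off argument.

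At the two ends $s\to\pm\infty$, the non-degeneracy conditions $\Phi^\pm(1)N^\ast V_{0,k}\cap N^\ast V_{0,k'}'=(0)$ mean that the asymptotic operators $-J\partial_t - A(\pm\infty,t)$ on the interval $[0,1]$ with boundary data in $N^\ast V_0\times N^\ast V_0'$ respectively $N^\ast V_k\times N^\ast V_{k'}'$ are invertible. By the standard argument (multiplying by a cut-off $\chi(s)$ supported in $|s|\geq S$ and using invertibility of the constant-coefficient problem), this yields an estimate on the ends not involving the $L^p$ norm of $u$ on a compact set. Combining these pieces gives the semi-Fredholm estimate, so the kernel is finite-dimensional and the range is closed; applying the same reasoning to the formal adjoint $-\delbar  + A^T$ with dual boundary conditions handles the cokernel, and $\delbar_A$ is Fredholm.

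For the index, the plan is to use the homotopy invariance of the Fredholm index under continuous paths of operators through Fredholm operators with the same asymptotic non-degeneracy, noticing that both sides of (\ref{indform}) change by the same amount under homotopies of $A$ and of the trivializations of the $V_j$, $V_j'$. This lets me reduce to a normalized situation: deform $A$ to $0$ on each segment $[s_j,s_{j+1}]\times[0,1]$ while absorbing all the Maslov-index contribution into a chosen $A(\pm\infty,t)$, and deform the $V_j$'s to a convenient normal form respecting the partial orthogonality. Then I would concatenate two kinds of model computations: (a) on a half-strip with constant boundary conditions $N^\ast V_0$ and $N^\ast V_0'$ on the two sides and with $A$ prescribed only at $-\infty$, the index equals $\mu(\Phi^-N^\ast V_0, N^\ast V_0')$ by the relative-Maslov-index interpretation of the spectral flow (reducible via Rabier-style or Robbin--Salamon arguments); (b) on a strip with a single jumping point, where the index reduces via the $\mathscr{R}$--$\mathscr{T}$ diagram (\ref{diagrt}) to a Cauchy-Riemann problem on $\mathbb{H}^+$ with boundary conditions $N^\ast V_{j-1}$ on one edge and $N^\ast V_j$ on the other, whose kernel and cokernel are controlled by the Liouville-type statement advertised in the introduction (Section~\ref{liouvsec}). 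Applying Proposition \ref{lemhor}, which tells us that the H\"ormander index of four conormals vanishes, ensures that the jumping contributions add up to the purely combinatorial term $-\tfrac12(\dim V_{j-1}+\dim V_j - 2\dim V_{j-1}\cap V_j)$ with no residual Maslov contribution.

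The main obstacle I foresee is the analysis at the jumping points in the $X^p$ setting: one must verify carefully that the $\mathscr{R}$-transformation gives a topological isomorphism between the local $X^{1,p}$ and $X^p$ pieces on $\Sigma$ and the standard $W^{1,p}$ and $L^p$ pieces on $\mathbb{H}^+$ (including the compatibility of the conormal boundary conditions with the square-root branch), and that this isomorphism intertwines $\delbar_A$ with a $\delbar$-type operator on $\mathbb{H}^+$ whose zero-order term remains in the correct class despite the factor $2\overline{\zeta}$. Once this local equivalence is in place, both the Fredholm estimate and the jumping-point index contribution follow from the quadrant results; the identification of the contributions at $\pm\infty$ with the Maslov indices then follows by standard methods, and the combinatorial jumping terms by the H\"ormander-index calculation of Proposition \ref{lemhor}.
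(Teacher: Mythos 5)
Your proposal matches the paper's strategy at every essential step: local Calder\'on--Zygmund estimates on the strip away from the jumps plus quadrant estimates near each jump via the square-root transformation $\mathscr{R}$, asymptotic invertibility of the constant-coefficient operators at the ends (using the non-degeneracy hypothesis) to close the semi-Fredholm estimate, an additivity/concatenation argument reducing the index to model pieces, a Liouville-type identification of the single-jump model, and Proposition~\ref{lemhor} to show the jump contribution is purely combinatorial.

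One imprecision worth correcting: the single-jump model index is \emph{not} obtained by conformally reducing the strip to a Cauchy--Riemann problem on $\mathbb{H}^+$ via $\mathscr{R}$--$\mathscr{T}$. The paper's Liouville result (Propositions~\ref{caspart1}, \ref{caspart2}) is a global statement about the full strip $\Sigma$ with one jump at $0$ and exponential decay at $s\to\pm\infty$; the quadrant transformation appears only in the local regularity Lemmas~\ref{rker} and~\ref{cok}, which characterize the behavior of putative kernel and cokernel elements near the jump. The actual invertibility argument uses the strip geometry essentially---exponential decay from Proposition~\ref{dec} at both ends, Schwarz reflections, and an open-mapping argument for the auxiliary anti-holomorphic function---and would not go through if you tried to set it up directly on $\mathbb{H}^+$ without those decay conditions. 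This does not affect the soundness of your overall plan, but the model computation at a jump is a strip problem, not a quadrant problem. A minor further remark: the paper derives finite-dimensionality of the cokernel from the finite index supplied by the additivity formula rather than by dualizing; your alternative via the formal adjoint is fine and is acknowledged as possible in the paper.
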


The proof of the Fredholm property for Cauchy-Riemann type operators is based on local estimates. By a partition of unity argument, 
the proof that $\delbar_A$ is Fredholm reduces to the Calderon-Zygmund estimates of Lemmas \ref{czl}, \ref{czl2}, and to the invertibility of $\delbar_A$ when $A$ does not depend on $\re z$ and there are no jumps in the boundary conditions. Details are contained in the next section. The index computation instead is based on homotopy arguments together with a Liouville type result stating that in a particular case with one jump the operator $\delbar_A$ is an isomorphism.

\subsection{The Fredholm property}
\label{tfps}

The elliptic estimates of Section \ref{ellsec} have the following consequence:

\begin{lem}
\label{czj}
For every $p\in ]1,+\infty[$, there exist constants
$c_0=c_0(p,n,\mathscr{S})$ and $c_1=c_1(p,n,k+k^{\prime})$ such that
\[
\| D u \|_{X^p} \leq  c_0
\|u\|_{X^p} + c_1 \|\delbar  u\|_{X^p},
\]
for every $u\in C^{\infty}_{\mathscr{S},c} (\Sigma,\C^n)$ such that
\[
u(s) \in N^*V_j \quad \forall s\in
[s_{j-1},s_j ], \quad u(s+i) \in N^*V_j^{\prime} \quad \forall s\in
[s_{j-1}^{\prime},s_j^{\prime} ]
\]
for every $j$.
\end{lem}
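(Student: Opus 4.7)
The plan is to reduce the global weighted estimate to the three model estimates already established: the interior Calderon--Zygmund estimate (Theorem~\ref{czt}, first part), the half-plane estimate with a single totally real boundary condition (Theorem~\ref{czt}, second part applied after the unitary rotation used in Lemma~\ref{czl}), and the quadrant estimate with a jump in the conormal boundary condition (Lemma~\ref{czl2}). The bridge between these local results and the global $X^p$-norm is a finite partition of unity on $\Sigma$ subordinate to a cover tailored to the singular set $\mathscr{S}$.

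First I would fix a radius $r_0\in (0,r)$ smaller than the parameter $r$ used to define the $X^p$ norm, and build a finite family of open sets $\{U_i\}$ covering $\Sigma$ of three kinds: (i) sets $U_i$ whose closure meets neither $\partial \Sigma$ nor $\mathscr{S}$; (ii) sets $U_i$ meeting one boundary component of $\Sigma$ but disjoint from $\mathscr{S}$; (iii) for each $w\in \mathscr{S}$, one distinguished neighborhood $U_w$ of $w$ contained in the disk $B_{r_0}(w)\cap\Sigma$ and disjoint from all other points of $\mathscr{S}$. Take a smooth partition of unity $\{\chi_i\}$ subordinate to this cover, with $|D\chi_i|$ uniformly bounded, and write
\[
Du=\sum_i \chi_i Du=\sum_i\bigl(D(\chi_i u)-(D\chi_i) u\bigr),
\]
so that by the triangle inequality in $X^p$,
\[
\|Du\|_{X^p}\le \sum_i\|D(\chi_i u)\|_{X^p}+C_0\|u\|_{X^p},
\]
with $C_0$ depending only on $\max_i \|D\chi_i\|_\infty$ and the cardinality of the cover. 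Similarly $\delbar(\chi_i u)=\chi_i\delbar u+(\delbar\chi_i)u$, hence $\|\delbar(\chi_i u)\|_{X^p}\le\|\delbar u\|_{X^p}+C_1\|u\|_{X^p}$.

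Next I would estimate each $\|D(\chi_i u)\|_{X^p}$ by invoking the appropriate model estimate. In case (i), $\chi_i u$ extends by zero to a compactly supported smooth map on $\C$, its $X^p$-norm coincides with its $L^p$-norm on the support, and Theorem~\ref{czt} yields $\|D(\chi_i u)\|_{X^p}\le c\|\delbar(\chi_i u)\|_{X^p}$. In case (ii), after straightening the boundary and applying the unitary rotation from the proof of Lemma~\ref{czl} to turn $N^*V_j$ (or $N^*V'_j$) into a totally real subspace, the half-plane part of Theorem~\ref{czt} applies and gives the same inequality, again with $X^p=L^p$ on the support. In case (iii), the cutoff $\chi_i u$ is supported in $B_{r_0}(w)\cap \Sigma$, where the $X^p$-norm is exactly the weighted norm with weight $|z-w|^{p/2-1}$; after translating $w$ to the origin and, if $w=s'_j+i$, reflecting across the line $\im z=1/2$, the map $\chi_i u$ satisfies precisely the hypotheses of Lemma~\ref{czl2}, which delivers
\[
\int |D(\chi_i u)|^p|z-w|^{p/2-1}\,ds\,dt\le c\int |\delbar(\chi_i u)|^p|z-w|^{p/2-1}\,ds\,dt.
\]
Summing over $i$ and combining with the two previous displays yields the claim, with $c_1=c_1(p,n,k+k')$ depending only on the number of jump charts (hence on $k+k'$ through the constants in Lemma~\ref{czl2}) and $c_0=c_0(p,n,\mathscr{S})$ absorbing the partition-of-unity commutators, which depend on the geometry of $\mathscr{S}$ through $r_0$ and the spacing of the $s_j,s_j'$.

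The main subtlety I expect is case (iii): one must check that the $X^p$-norm localized to $B_{r_0}(w)\cap\Sigma$ transforms, under the inverse of the square-root chart, precisely into the standard $L^p$ and $W^{1,p}$ norms on the quadrant $\mathbb{H}^+$. This is exactly the content of the computation (\ref{Lpdr})--(\ref{treqsobnorm}), which explains why the weight $|z-w|^{p/2-1}$ is built into the definition of $X^p$ in the first place; this is the whole point of introducing $X^p$ rather than $L^p$, and once the book-keeping of the conformal factor is done carefully, the global estimate is a routine partition-of-unity assembly of the three model cases.
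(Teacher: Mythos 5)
Your proposal follows essentially the same strategy as the paper's proof: a partition of unity on $\Sigma$ whose members are supported either near a singular point $w\in\mathscr{S}$ (where Lemma~\ref{czl2} provides the weighted estimate) or away from $\mathscr{S}$ (where the $X^p$-norm reduces to $L^p$ and the half-plane Calderon--Zygmund estimate of Theorem~\ref{czt} applies), followed by commutator bookkeeping to absorb the cutoff error into the $\|u\|_{X^p}$ term. The only structural difference is cosmetic: you split the non-singular region into interior patches and boundary patches, whereas the paper uses just two cutoffs $\psi_1,\psi_2$ supported in $\{\im z<2/3\}\setminus B_{r/2}(\mathscr{S})$ and $\{\im z>1/3\}\setminus B_{r/2}(\mathscr{S})$ respectively, each handled at once by the half-plane estimate. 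One small inaccuracy in your case (ii): the unitary rotation from Lemma~\ref{czl} is not needed there, since a conormal $N^*V=V\oplus iV^\perp$ is already totally real and Theorem~\ref{czt} applies directly; the rotation is only required in the quadrant/jump case, where one uses it to make the Schwarz reflection work. Finally, note that both your argument and the paper's implicitly require the boundary-incident patches away from $\mathscr{S}$ to be further subordinate to the intervals $[s_{j-1},s_j]$ (so that each patch sees a single conormal condition) before Theorem~\ref{czt} can be invoked; making this explicit would tighten either version.
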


\begin{proof}
Let $\{\psi_1,\psi_2\} \cup \{\varphi_j\}_{j=1}^{k+k'}$ be a smooth
partition of unity on $\C$ satisfying: 
\begin{equation}
\label{partun}
\begin{array}{lcll}
\supp \psi_1 & \subset & \set{z\in \C}{\im z<2/3} \setminus B_{r/2} (\mathscr{S}), & \\
\supp \psi_2 & \subset & \set{z\in \C}{\im z>1/3} \setminus B_{r/2} (\mathscr{S}), & \\
\supp \varphi_j & \subset & B_r(s_j) & \forall j=1,\dots,k, \\
\supp \varphi_{k+j} & \subset & B_r(s_j'+i) & \forall j=1,\dots,k'.
\end{array} 
\end{equation}
By Lemma \ref{czl2},
\begin{equation*}\begin{split}
\|D(\varphi_j u)\|_{X^p(\Sigma)}\;&\leq\, c(p,n) \|\delbar 
(\varphi_j u)\|_{X^p(\Sigma)}\\ &\leq\, c(p,n)
( \|\delbar  \varphi_j \|_{\infty} \|u\|_{X^p(\Sigma)} +
\|\delbar  u\|_{X^p(\Sigma)} ), \quad 1\leq j \leq k+k'.
\end{split}\end{equation*}
Since the $X^p$ norm is equivalent to the $L^p$ norm on the subspace of maps whose support does not meet $B_{r/2}(\mathscr{S})$, the standard Calderon-Zygmund estimates on the half-plane (see Theorem \ref{czt}) imply 
\begin{equation*}\begin{split}
\|D(\psi_j u)\|_{X^p(\Sigma)}\; &\leq\, c(p,n)  \|\delbar 
(\psi_j u)\|_{X^p(\Sigma)}\\ &\leq\, c(p,n)
( \|\delbar  \psi_j \|_{\infty} \|u\|_{X^p(\Sigma)} +
\|\delbar  u\|_{X^p(\Sigma)} ), \quad \forall j=1,2.
\end{split}\end{equation*}
We conclude that
\begin{equation*}\begin{split}
\|Du\|_{X^p} \;&\leq\, \|D(\psi_1 u)\|_{X^p(\Sigma)} + \|D(\psi_2 u)\|_{X^p(\Sigma)} + \sum_{j=1}^{k+k^{\prime}} \|D(\varphi_j u)\|_{X^p(\Sigma)}\\ &\leq\,
c_0 \|u\|_{X^p(\Sigma)} + c_1 \|\delbar  u\|_{X^p(\Sigma)},
\end{split}\end{equation*}
with
\[
c_0 := c(p,n) \Bigl( \|\delbar 
\psi_1\|_{\infty} + \|\delbar 
\psi_2\|_{\infty} + \sum_{j=0}^{k+k^{\prime}+1} \|\delbar 
\varphi_j\|_{\infty}\Bigr), \quad c_1 := (k+k^{\prime}+2) c(p,n),
\]
as claimed.
\end{proof}

The next result we need is the following theorem, proved in
\cite[Theorem 7.1]{rs95}. Consider two continuously differentiable
Lagrangian paths $\lambda,\nu:\overline{\R} \rightarrow \mathscr{L}(n)$, assumed to be constant on $[-\infty,-s_0]$ and on
  $[s_0,+\infty]$, for some $s_0>0$.
Denote by $W^{1,p}_{\lambda,\nu}(\Sigma,\C^n)$ the
space of maps $u\in W^{1,p}(\Sigma,\C^n)$ such that
$u(s,0) \in \lambda(s)$ and $u(s,1)\in \nu(s)$, for every $s\in \R$ (in the
sense of traces).
Let $A\in C^0(\overline{\R} \times [0,1], \mathrm{L}(\R^{2n},\R^{2n}))$ be such that
$A(\pm \infty,t)\in \mathsf{Sym} (2n,\R)$ for any $t\in [0,1]$, and
define $\Phi^-,\Phi^+:[0,1] \rightarrow \mathsf{Sp}(2n)$ by (\ref{wronsk}).

\begin{thm} \label{strip}
{\em (Cauchy-Riemann operators on the strip)} Let $p\in ]1,+\infty[$,
and assume that
\[
\Phi^-(1) \lambda(-\infty) \cap \nu(-\infty) = (0), \quad \Phi^+(1) \lambda(+\infty) \cap \nu(+\infty) = (0).
\]
\begin{enumerate}
\item  The bounded $\R$-linear operator
\[
\delbar_A : W^{1,p}_{\lambda,\nu} (\Sigma,\C^n) \rightarrow
L^p (\Sigma,\C^n) , \quad
\delbar_A u = \delbar  u + A u,
\]
is Fredholm of index
\[
\ind \delbar_A = \mu(\Phi^- \lambda(-\infty),\nu(-\infty)) - \mu(\Phi^+\lambda(+\infty), \nu(+\infty)) + \mu(\lambda,\nu).
\]
\item If furthermore $A(s,t)=A(t)$, $\lambda(s)=\lambda$, and $\nu(s)=\nu$ do not depend on $s$, the operator $\delbar_A$ is an isomorphism.
\end{enumerate}
\end{thm}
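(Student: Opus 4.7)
The plan is to prove part (ii) first, since it provides both the model case needed for the Fredholm property in (i) and the anchor for the index computation via homotopy. For (ii), with $A(s,t)=A(t)$, $\lambda(s)=\lambda$, $\nu(s)=\nu$ all $s$-independent, I write $\delbar_A = \partial_s + L$, where
\[
L = i\partial_t + A(t) : \mathrm{dom}(L) \subset L^2([0,1],\C^n) \rightarrow L^2([0,1],\C^n),
\]
with $\mathrm{dom}(L) = \{v \in W^{1,2}([0,1],\C^n) \mid v(0) \in \lambda,\ v(1) \in \nu\}$. Integration by parts together with the Lagrangian conditions $\omega_0|_{\lambda}=\omega_0|_{\nu}=0$ shows that $L$ is self-adjoint. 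An element $v\in\ker L$ has the form $v(t)=\Phi(t)v(0)$ with $v(0)\in\lambda$ and $\Phi(1)v(0)\in\nu$; the hypothesis $\Phi(1)\lambda\cap\nu=(0)$ forces $v=0$, so $0\in\rho(L)$. The spectral theorem then gives an explicit bounded inverse of $\partial_s+L$ on $L^2(\R\times[0,1],\C^n)$ via the kernel built from spectral projections; $L^p$-boundedness for the remaining $p\in ]1,+\infty[$ follows either from classical singular integral theory in the $s$-variable or from an elliptic $W^{1,p}$-bootstrap starting from $p=2$.

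For the Fredholm property in (i), the goal is the standard semi-Fredholm estimate
\[
\|u\|_{W^{1,p}} \leq C\big(\|\delbar_A u\|_{L^p} + \|u\|_{L^p(K)}\big)
\]
for some compact $K\subset\Sigma$. Using a partition of unity $\{\psi_-,\psi_0,\psi_+\}$ in the $s$-variable with $\psi_{\pm}$ supported in $\{\pm s\geq s_0\}$ for large $s_0$, I would control $\psi_{\pm}u$ by comparison with the $s$-independent operators whose asymptotic data are $A(\pm\infty,\cdot)$, $\lambda(\pm\infty)$, $\nu(\pm\infty)$; these comparison operators are isomorphisms by (ii) thanks to the non-degeneracy hypotheses, so for $s_0$ large enough a perturbation argument gives bounds of the form $\|\psi_{\pm}u\|_{W^{1,p}}\leq C'\|\delbar_A(\psi_{\pm}u)\|_{L^p}$. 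In the middle, Theorem \ref{czt} gives local Calderon-Zygmund estimates on the strip with totally real boundary conditions. Together with Rellich compactness of $W^{1,p}\hookrightarrow L^p$ on compact subsets, and the same estimate for the formal adjoint, this yields that $\delbar_A$ is Fredholm.

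The index formula is the most delicate part. By homotopy invariance of the Fredholm index under continuous deformations of $(A,\lambda,\nu)$ preserving the asymptotic non-degeneracy, I can first reduce to the case where $A(s,t)$, $\lambda(s)$, $\nu(s)$ are $s$-independent for $|s|\geq s_0$; in this reduced setting the two endpoint terms $\mu(\Phi^{\pm}\lambda(\pm\infty),\nu(\pm\infty))$ depend only on the asymptotic model, and the remaining contribution $\mu(\lambda,\nu)$ depends only on the path in the interior. I would then identify $\ind\delbar_A$ with the spectral flow of the path $s\mapsto L_s := i\partial_t + A(s,\cdot)$ of self-adjoint operators with $s$-dependent boundary conditions $(\lambda(s),\nu(s))$; at each crossing $s_*$ where $\ker L_{s_*}\neq (0)$, the spectral-flow contribution is computed by a crossing form that, under the identification of $\ker L_s$ with $\Phi_s(t)\,\lambda(s)\cap\nu(s)$ at $t=1$, coincides with the Robbin-Salamon crossing form defining $\mu(\lambda,\nu)$, and similarly for the boundary contributions at $s=\pm\infty$.

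The main obstacle is this last identification: matching the spectral flow crossing form with the Maslov index crossing form, and keeping consistent sign conventions with (\ref{maslov}) and the product formula (\ref{masprod}) throughout the homotopies. This identification is essentially the content of \cite[Theorem 7.1]{rs95}, which we are invoking; my proof would therefore consist in reducing to the precise form in which that theorem is stated, and then recasting the result in the notation (\ref{mascon}) used here.
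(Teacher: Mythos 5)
The paper does not prove Theorem \ref{strip}; it is quoted as a black box, with the explicit attribution ``proved in \cite[Theorem 7.1]{rs95}.'' Your proposal is therefore not a competing proof but rather a sketch of the argument one would find in that reference. Your outline is consistent with how Robbin--Salamon actually argue: part (ii) via the decomposition $\delbar_A = \partial_s + L$ with $L$ a self-adjoint, invertible operator on $L^2([0,1],\C^n)$ (invertibility being exactly the transversality hypothesis), and part (i) via semi-Fredholm estimates and a spectral-flow identification for the index. Two places where your sketch leaves real work undone, both of which [RS95] addresses in detail: (a) the step from the $L^2$ isomorphism in (ii) to the $L^p$ isomorphism is not just a bootstrap starting at $p=2$ -- one needs the explicit kernel representation and a Mikhlin/Calderon--Zygmund multiplier argument in the $s$-variable to get the uniform $L^p$ bound; (b) the passage from ``Fredholm index equals spectral flow'' to the Maslov-index formula requires both the correspondence of crossing forms and a normalization argument pinning down the half-integer contributions at the endpoints, which is why the endpoint terms $\mu(\Phi^{\pm}\lambda(\pm\infty),\nu(\pm\infty))$ appear. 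You correctly flag this as the delicate point and correctly defer to [RS95] for it. Since you end by saying that your proof ``consists in reducing to the precise form in which that theorem is stated,'' you have in effect done the same thing the paper does -- invoke [RS95] -- but with a helpful expansion of what goes into that theorem's proof.
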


Note that under the assumptions of (ii) above,
the equation $\delbar  u + Au$ can be rewritten as
$\partial_s u = -L_A u$, where
$L_A$ is the unbounded $\R$-linear operator on
$L^2([0,1],\C^n)$ defined by
\begin{equation*}\begin{split}
\dom &L_A\, =\, W^{1,2}_{\lambda,\nu}([0,1],\C^n) = \set{u\in
    W^{1,2}([0,1],\C^n)}{u(0)\in \lambda, \; u(1)\in \nu},\\ &L_A \,=\, i
    \frac{d}{dt} + A.
\end{split}\end{equation*}
The conditions on $A$ imply that $L_A$ is self-adjoint and invertible.
These facts lead to the following:

\begin{prop}
\label{dec}
Assume that $A$, $\lambda$, and $\nu$ satisfy the conditions of Theorem
 \ref{strip} (ii), and set $\delta:=\min \sigma(L_A)\cap
[0,+\infty[>0$. Then for every $k\in \N$ there exists $c_k$ such that
\[
\|u(s,\cdot)\|_{C^k([0,1])} \leq c_k \|u(0,\cdot)\|_{L^2([0,1])}
e^{-\delta s}, \quad \forall s\geq 0,
\]
for every $u\in W^{1,p}(]0,+\infty[ \times ]0,1[,\C^n)$, $p>1$, such
  that $u(s,0)\in \lambda$, $u(s,1)\in \nu$ for every $s\geq 0$, and
  $\delbar  u + A u=0$.
\end{prop}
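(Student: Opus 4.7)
The plan is to reduce the PDE to the Hilbert-space ODE $\partial_s u = -L_A u$ in $\mathcal{H}:=L^2([0,1],\C^n)$ and then exploit the spectral decomposition of the self-adjoint, invertible operator $L_A$.

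\textbf{Step 1 (reformulation as an ODE).} Since $[0,1]$ is bounded and $p>1$, for almost every $s\geq 0$ the slice $u(s,\cdot)$ belongs to $W^{1,p}([0,1],\C^n)\subset \mathcal{H}$, and the boundary conditions $u(s,0)\in\lambda$, $u(s,1)\in\nu$ imply $u(s,\cdot)\in\dom L_A$ for a.e.\ $s$. The equation $\partial_s u + i\partial_t u + A(t)u = 0$ rewrites, in $\mathcal{H}$, as
\[
\partial_s u(s) \,=\, -L_A\, u(s),\qquad s\geq 0 .
\]
A standard Fubini / mollification argument shows $s\mapsto u(s,\cdot)$ is (after modification on a null set) continuous into $\mathcal{H}$ and satisfies this ODE in the distributional sense.

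\textbf{Step 2 (killing the unstable modes).} Because $L_A$ is self-adjoint and $0\notin\sigma(L_A)$, the spectrum is discrete and contained in $(-\infty,-\delta\,]\cup[\,\delta,+\infty)$, with an orthonormal eigenbasis $\{\phi_k\}$ of $\mathcal{H}$ and eigenvalues $\{\mu_k\}$. Expanding $u(s)=\sum_k a_k(s)\phi_k$, each mode satisfies the scalar ODE $a_k'(s)=-\mu_k a_k(s)$, hence $a_k(s)=e^{-\mu_k s}a_k(0)$. If some $\mu_k<0$ had $a_k(0)\neq 0$, then $|a_k(s)|$ would grow like $e^{\delta s}$; since the $\phi_k$ are $L^2$-orthonormal on $[0,1]$ this would contradict $u\in L^p(\,]0,+\infty[\times]0,1[\,)$ (one slices in $s$ and uses that $W^{1,p}$-in-$s$ integrability on the half-strip forces $\|u(s,\cdot)\|_{L^2}$ to be in $L^p_{\mathrm{loc}}$ without exponential growth). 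Hence $a_k(0)=0$ for all $\mu_k<0$, i.e.\ $P^- u(0,\cdot)=0$, and
\[
u(s,\cdot)\,=\,e^{-sL_A}P^+u(0,\cdot),\qquad s\geq 0 .
\]

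\textbf{Step 3 ($L^2$-decay).} From the functional calculus applied to the positive part of $L_A$,
\[
\|u(s,\cdot)\|_{L^2([0,1])}\,\leq\,e^{-\delta s}\,\|P^+u(0,\cdot)\|_{L^2([0,1])}\,\leq\,e^{-\delta s}\,\|u(0,\cdot)\|_{L^2([0,1])} .
\]

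\textbf{Step 4 ($C^k$-bootstrap).} Fix $s\geq 1$ and set $\Sigma_s:=[s-\tfrac12,s+\tfrac12]\times[0,1]$, $\widetilde\Sigma_s:=[s-1,s+1]\times[0,1]$. Since $\lambda,\nu$ are Lagrangian (hence totally real), the operator $\delbar + A$ with boundary conditions $u(\cdot,0)\in\lambda$, $u(\cdot,1)\in\nu$ satisfies Lopatinski--Shapiro conditions, and standard interior + boundary elliptic regularity (applied iteratively, using that $A$ is smooth in $t$ and $s$-independent) yields, for every $k\in\N$,
\[
\|u\|_{W^{k+1,2}(\Sigma_s)} \,\leq\, C_k\,\|u\|_{L^2(\widetilde\Sigma_s)} .
\]
Combining with the Sobolev embedding $W^{k+1,2}\hookrightarrow C^k$ in two dimensions (choosing $k+1$ large enough, and repeating the bootstrap), and with Step~3 applied at each $s'\in[s-1,s+1]$, we obtain
\[
\|u(s,\cdot)\|_{C^k([0,1])}\,\leq\, C_k'\,\|u\|_{L^2(\widetilde\Sigma_s)} \,\leq\, c_k\,\|u(0,\cdot)\|_{L^2([0,1])}\,e^{-\delta s} .
\]
For $s\in[0,1]$ the inequality holds trivially by enlarging $c_k$.

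The main obstacle is Step~2: justifying that the $W^{1,p}$ hypothesis alone (not, say, an a priori $L^2$ bound) suffices to eliminate the negative spectral part. The key point is that after projecting onto a single eigenmode, the projected component is a scalar solution of a one-dimensional linear ODE whose only non-$L^p$-on-$[0,+\infty[$ behavior is exponential growth; the eigenfunction $\phi_k$ being fixed in $t$ then turns $s$-growth of $|a_k(s)|$ into growth of $\|u(s,\cdot)\|_{L^p([0,1])}$, ruling it out.
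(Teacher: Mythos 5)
The paper does not supply a proof of this proposition; it states it after the remark that the equation becomes the abstract ODE $\partial_s u = -L_A u$ with $L_A$ self-adjoint, invertible, with compact resolvent. Your argument is exactly the standard one that remark is pointing to (slice to get the ODE in $L^2([0,1],\C^n)$, diagonalize $L_A$, eliminate the negatively-indexed modes by testing against a fixed smooth eigenfunction and invoking $u\in L^p$ of the half-strip, conclude $u(s,\cdot)=e^{-sL_A}P^+u(0,\cdot)$, then bootstrap to $C^k$ by translation-invariant elliptic estimates), so in spirit you have reproduced the intended proof. Steps 1--3 are correct, and the remark at the end of Step 2 — pair with the fixed eigenfunction $\phi_k$, so that exponential growth of $|a_k(s)|$ forces $\|u(s,\cdot)\|_{L^p([0,1])}$ to grow exponentially, contradicting $u\in L^p$ — is the right way to make this work for all $p>1$, not just $p\geq 2$.

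The one place where the argument is not airtight is the last sentence of Step 4, ``For $s\in[0,1]$ the inequality holds trivially by enlarging $c_k$.'' This is false as stated: for $s$ near $0$ the semigroup $e^{-sL_A}$ is only smoothing with a loss $\sim s^{-m}$, and $u(0,\cdot)=P^+u(0,\cdot)$ is a priori only an $L^2$ (or $W^{1-1/p,p}$-trace) function, so $\|u(s,\cdot)\|_{C^k}$ can blow up as $s\to 0^+$ even though $\|u(0,\cdot)\|_{L^2}$ is fixed. Your elliptic bootstrap in Step 4 requires a neighborhood $[s-1,s+1]\times[0,1]$ inside the half-strip, so it genuinely only yields the claim for $s\geq s_0>0$ with $c_k$ depending on $s_0$. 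This matches the imprecision in the paper's own statement (``for all $s\geq 0$''), and in fact the paper only ever invokes the proposition for $|\re z|$ bounded away from zero (e.g.\ ``$|v(z)|\leq c\,e^{-\alpha|\re z|}$ for $|\re z|\geq 1$'' in the Liouville section), so nothing downstream is affected; but you should replace ``$\forall s\geq 0$'' by ``$\forall s\geq s_0$'' (or add a hypothesis guaranteeing regularity of $u(0,\cdot)$) rather than calling the small-$s$ case trivial.
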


Next we need the following easy consequence of the Sobolev embedding theorem:

\begin{prop}
\label{compemb}
Let $s>0$ and let $\chi_s$ be the characteristic function of the set $\set{z\in \Sigma}{|\re z|\leq s}$. Then the linear operator
\[
X^{1,p}_{\mathscr{S}} (\Sigma,\C^n) \rightarrow X^q_{\mathscr{S}} (\Sigma,\C^n), \quad u \mapsto \chi_s u,
\]
is compact for every $q<\infty$ if $p\geq 2$, and for every $q<2p/(2-p)$ if $1\leq p< 2$.
\end{prop}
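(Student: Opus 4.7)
The plan is to localize via a smooth partition of unity. Let $(u_k)$ be a bounded sequence in $X^{1,p}_{\mathscr{S}}(\Sigma,\C^n)$; it suffices to extract a subsequence that converges in $X^q$ on every bounded subset of $\Sigma$, since $\chi_s$ has support in the bounded set $\{|\re z|\leq s\}$. Cover this support by finitely many small disks $B_r(w)$ centered at the points $w\in\mathscr{S}_s:=\mathscr{S}\cap\{|\re z|\leq s+1\}$, together with a complementary open set bounded away from $\mathscr{S}$. On the complementary piece the $X^p$ and $X^{1,p}$ norms are equivalent to the standard $L^p$ and $W^{1,p}$ norms, and the classical Rellich-Kondrachov theorem in dimension two provides a subsequence converging in $L^q$ for every $q<\infty$ when $p\geq 2$ and every $q<2p/(2-p)$ when $1\leq p<2$, which is exactly the range in the statement.

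The local analysis near each singular point $w\in\mathscr{S}_s$ is the main content of the argument. Treating the case of a bottom-boundary point $w=s_j\in\R$ (the top-boundary case $w=s_j'+i$ is identical after replacing $\zeta\mapsto w+\zeta^2$ by $\zeta\mapsto w-\zeta^2$), I use the conformal unfolding $\zeta\mapsto w+\zeta^2$, which maps $\mathbb{H}^+\cap\mathbb{D}_{\sqrt r}$ onto $\Sigma\cap B_r(w)$. Set
\[
\mathscr{R}_w u(\zeta):=u(w+\zeta^2),\qquad \mathscr{T}_w u(\zeta):=2\bar\zeta\,u(w+\zeta^2)=2\bar\zeta\,\mathscr{R}_w u(\zeta).
\]
The identities (\ref{Lpdr}), (\ref{Lpt}) and (\ref{treqsobnorm}), combined with the generalized Poincar\'e inequality cited before (\ref{eqsobnorm}), show that on the bounded domain $\mathbb{H}^+\cap\mathbb{D}_{\sqrt r}$ the map $\mathscr{R}_w$ carries the local $X^{1,p}$ seminorm of $u$ on $\Sigma\cap B_r(w)$ to a norm equivalent to the standard $W^{1,p}$ norm, while $\mathscr{T}_w$ carries the local $X^q$ seminorm of $u$ to the standard $L^q$ norm. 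Consequently $(\mathscr{R}_w u_k)$ is bounded in $W^{1,p}(\mathbb{H}^+\cap\mathbb{D}_{\sqrt r})$, and Rellich-Kondrachov yields a subsequence converging in $L^q(\mathbb{H}^+\cap\mathbb{D}_{\sqrt r})$ in the stated range of $q$; multiplying by the bounded factor $2\bar\zeta$ gives convergence of $(\mathscr{T}_w u_k)$ in the same space, which is precisely local $X^q$-convergence of $(u_k)$ near $w$. A diagonal extraction over the finitely many points of $\mathscr{S}_s$ then produces the desired subsequence.

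The only subtlety is that two distinct transformations are needed: $\mathscr{R}_w$ is the right map to convert the weight $|z-w|^{p/2-1}$ in the $X^{1,p}$ norm into an unweighted Sobolev norm, via the Jacobian of $\zeta\mapsto w+\zeta^2$, whereas $\mathscr{T}_w$ is the right map to convert the corresponding $X^q$ weight into a plain $L^q$ norm. Their discrepancy is the multiplier $2\bar\zeta$, which is harmless on a fixed disk. The upper bound on $q$ is dictated entirely by the Sobolev embedding $W^{1,p}\hookrightarrow L^q$ on a two-dimensional bounded domain, so no ingredient beyond classical Rellich-Kondrachov is required. I expect the main (and only real) obstacle to be the careful bookkeeping that links $\mathscr{R}_w$ and $\mathscr{T}_w$ to the correct weighted norms, rather than any deep analytic difficulty.
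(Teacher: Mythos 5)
Your proof is correct and follows essentially the same route as the paper: partition of unity, classical Rellich--Kondrachov away from $\mathscr{S}$, and the conformal unfolding $\zeta\mapsto w+\zeta^2$ near each singular point, with $\mathscr{R}_w$ and (\ref{treqsobnorm}) providing the $W^{1,p}$ bound. The only minor deviation is in the final transfer back to $X^q$: you invoke the exact isometry provided by $\mathscr{T}_w$ together with the bounded multiplier $2\bar\zeta$, whereas the paper instead uses the cruder one-sided inequality (\ref{confro}) (valid because $|z-w|^{q/2-1}\le |z-w|^{-1}$ on $B_r(w)$ when $r<1$); both are correct and of comparable length.
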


\begin{proof}
Let $(u_h)$ be a bounded sequence in $X^{1,p}_{\mathscr{S}}(\Sigma,\C^n)$. Let $\{\psi_1,\psi_2\}\cup \{\varphi_j\}_{j=1}^{k+k'}$ be a smooth partition of unity of $\C$ satisfying (\ref{partun}). Then the sequences $(\psi_1 u_h)$, $(\psi_2 u_h)$ and $(\varphi_j u_h)$, for $1\leq j \leq k+k'$ are bounded in $X^{1,p}(\Sigma,\C^n)$. We must show that each of these sequences is compact in $X^q_{\mathscr{S}}(\Sigma,\C^n)$.

Since the $X^q$ and $X^{1,p}$ norms on the space of maps supported in $\Sigma\setminus B_{r/2}(\mathscr{S})$ are equivalent to the $L^q$ and $W^{1,p}$ norms, the Sobolev embedding theorem implies that the sequences $(\chi_s \psi_1 u_h)$ and $(\chi_s \psi_2 u_h)$ are compact in $X^q_{\mathscr{S}}(\Sigma,\C^n)$. 

Let $1\leq j \leq k$.  If $u$ is supported in $B_r(s_j)$, set $v(z) := u(s_j+z)$, so that by (\ref{Lpr})
\begin{equation}
\label{confro}\begin{split}
\|u\|_{X^q(\Sigma)}^q \,=\,& \int_{B_r(s_j) \cap \Sigma} |u(z)|^q |z|^{q/2-1}\, ds \,dt \,\leq\, \int_{B_r(s_j) \cap \Sigma} \frac{1}{|z|} |u(z)|^q \, ds \,dt\\ =\; &4 \|\mathscr{R} u\|_{L^q(\mathbb{H}^+ \cap \mathbb{D}_{\sqrt{r}})}^q.\end{split}
\end{equation}
Set $v_h (z) := \varphi_j(s_j + z) u_h(s_j + z)$.
By (\ref{treqsobnorm}), the sequence $(\mathscr{R} v_h)$ is bounded in $W^{1,p}(\mathbb{H}^+ \cap \mathbb{D}_{\sqrt{r}})$, hence it is compact in $L^q(\mathbb{H}^+ \cap \mathbb{D}_{\sqrt{r}})$ for every $q<\infty$ if $p\geq 2$, and for every $q<2p/(2-p)$ if $1\leq p<2$. Then (\ref{confro}) implies that $(\varphi_j u_h)$ is compact in $X^q_{\mathscr{S}}(\Sigma,\C^n)$.
A fortiori, so is $(\chi_s \varphi_j u_h)$.
 The same argument applies to $j\geq k+1$, concluding the proof.
\end{proof} 

\medskip 

Putting together Lemma \ref{czj}, statement (ii) in Theorem \ref{strip}, and the Proposition above we obtain the following:

\begin{prop}
\label{sfe}
Let $1<p<\infty$. Assume that the paths of symmetric matrices
$A(\pm \infty, \cdot)$ satisfy the assumptions of Theorem \ref{fredholm}.
Then 
\[
\delbar_A : X^{1,p}_{\mathscr{S},\mathscr{V},\mathscr{V}^{\prime}}(\Sigma,\C^n) \rightarrow X^p_{\mathscr{S}}(\Sigma,\C^n)
\] 
is semi-Fredholm with $\ind \delbar_A := \dim \ker \delbar_A - \dim \coker \delbar_A < +\infty$.
\end{prop}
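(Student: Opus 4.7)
The plan is to establish a standard semi-Fredholm estimate of the form
\begin{equation}
\label{sfest}
\|u\|_{X^{1,p}(\Sigma)} \leq C\bigl(\|\delbar_A u\|_{X^p(\Sigma)} + \|\chi_S u\|_{X^p(\Sigma)}\bigr),
\end{equation}
for some $S>0$ large enough, where $\chi_S$ is the characteristic function of $\{|\re z|\leq S\}$. Since the restriction map $u \mapsto \chi_S u$ from $X^{1,p}_{\mathscr{S},\mathscr{V},\mathscr{V}'}(\Sigma,\C^n)$ to $X^p_{\mathscr{S}}(\Sigma,\C^n)$ is compact by Proposition \ref{compemb}, the estimate \eqref{sfest} implies that $\delbar_A$ has closed range and finite-dimensional kernel, which is what we need.

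The key idea to derive \eqref{sfest} is to use a partition of unity that separates the bounded region containing the jumps $\mathscr{S}$ and the asymptotic ends where Theorem \ref{strip}(ii) can be applied. First I would fix $S_0 > 0$ large enough so that $\{|\re z| \geq S_0/2\}$ lies beyond all the jumping points $\{s_j\}$ and $\{s_j'\}$, so that on each half-strip $\Sigma^\pm := \Sigma \cap \{\pm \re z \geq S_0/2\}$ the boundary conditions are given by the constant subspaces $N^*V_0, N^*V_0'$ (on $\Sigma^-$) and $N^*V_k, N^*V_{k'}'$ (on $\Sigma^+$). Choose smooth cutoffs $\psi_-,\psi_0,\psi_+$ on $\R$ with $\psi_-+\psi_0+\psi_+ \equiv 1$, $\supp \psi_\pm \subset \{\pm s \geq S_0\}$ and $\supp \psi_0 \subset \{|s|\leq 2S_0\}$, and pull them back to $\Sigma$.

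Next I would estimate each piece. For $\psi_0 u$, the support lies in a bounded region of $\Sigma$ and the standard local elliptic estimate of Lemma \ref{czj} together with the boundedness of $A$ gives
\[
\|\psi_0 u\|_{X^{1,p}(\Sigma)} \leq C\bigl(\|\delbar_A u\|_{X^p(\Sigma)} + \|\chi_{2S_0} u\|_{X^p(\Sigma)}\bigr).
\]
For $\psi_+ u$, the support lies in $\Sigma^+$ where the boundary conditions are the constant Lagrangian subspaces $N^*V_k$ and $N^*V_{k'}'$, and by hypothesis $\Phi^+(1)N^*V_k \cap N^*V_{k'}' = (0)$. By Theorem \ref{strip}(ii) applied to the $s$-independent operator $\delbar_{A(+\infty,\cdot)}$ on the full strip, we have a bounded inverse, hence
\[
\|\psi_+ u\|_{W^{1,p}(\Sigma)} \leq C\|\delbar_{A(+\infty,\cdot)}(\psi_+ u)\|_{L^p(\Sigma)}.
\]
Writing
\[
\delbar_{A(+\infty,\cdot)}(\psi_+ u) = \psi_+ \delbar_A u + (\delbar \psi_+) u + \psi_+ \bigl(A(+\infty,t)-A(s,t)\bigr) u,
\]
and observing that $\|A(+\infty,\cdot) - A(s,\cdot)\|_\infty \to 0$ as $s \to +\infty$ (since $A$ extends continuously to $\overline{\R}\times[0,1]$), we may enlarge $S_0$ if needed so that the coefficient of $u$ in the last term has sup-norm smaller than $1/(2C)$ on $\supp \psi_+$, allowing us to absorb that contribution into the left-hand side. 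The term $(\delbar \psi_+) u$ is supported on the bounded strip $\{S_0\leq \re z \leq 2 S_0\}$ and is controlled by $\|\chi_{2S_0} u\|_{X^p}$. The same argument applies to $\psi_- u$ using $\Phi^-$. Summing the three estimates yields \eqref{sfest} with $S = 2S_0$.

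The main obstacle is the consistent handling of the weighted $X^p$ norm throughout, especially ensuring that on the half-strips $\Sigma^\pm$ the $X^p$ and $L^p$ norms are actually equivalent (which is true, since the half-strips are bounded away from $\mathscr{S}$), so that Theorem \ref{strip}(ii) can be invoked directly; and checking that the absorption of the small perturbation $A(+\infty,\cdot)-A(s,\cdot)$ is indeed possible, which hinges on the continuity of $A$ at $s=\pm\infty$ assumed in Theorem \ref{fredholm}. Once \eqref{sfest} is proved, the semi-Fredholm conclusion is standard: closedness of the range and finite-dimensionality of the kernel follow from the classical abstract result that a bounded operator $T$ satisfying $\|u\| \leq C(\|Tu\| + \|Ku\|)$ with $K$ compact is upper semi-Fredholm.
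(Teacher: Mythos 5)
Your proof is correct and essentially the same as the paper's: both establish the semi-Fredholm estimate $\|u\|_{X^{1,p}} \leq C(\|\delbar_A u\|_{X^p} + \|\chi_S u\|_{X^p})$ via a cutoff argument combining Lemma \ref{czj} near the jump set with the invertibility from Theorem \ref{strip}(ii) on the ends, and then invoke the compactness of Proposition \ref{compemb}. The only cosmetic differences are that the paper uses a two-piece cutoff $\varphi, 1-\varphi$ (rather than your three-piece $\psi_\pm, \psi_0$) and phrases the absorption of the tail error $A(s,\cdot)-A(\pm\infty,\cdot)$ abstractly as ``invertibility is an open condition in the operator norm'' rather than carrying out the explicit absorption as you do.
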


\begin{proof}
We claim that there exist $c\geq 0$ and $s \geq 0$ such that, for any $u\in
X^{1,p}_{\mathscr{S},\mathscr{V},
\mathscr{V}^{\prime}}(\Sigma,\C^n)$, there holds
\begin{equation}
\label{sfineq}
\|u\|_{X^{1,p}(\Sigma)} \leq c \left(\|(\delbar  +
  A)u\|_{X^p(\Sigma)} +
\|\chi_s u\|_{X^p(\Sigma)} \right),
\end{equation}
where $\chi_s$ is the characteristic function of the set $\set{z\in \Sigma}{|\re z|\leq s}$. 

By Theorem \ref{strip} (ii), the asymptotic operators
\begin{eqnarray*}
\delbar  + A(-\infty,\cdot) : W^{1,p}_{N^*V_0,N^*V_0^{\prime}}
(\Sigma,\C^n) \rightarrow L^p(\Sigma,\C^n), \\
\delbar  + A(+\infty,\cdot) :
W^{1,p}_{N^*V_k,N^*V^{\prime}_{k^{\prime}}} (\Sigma,\C^n)
\rightarrow L^p(\Sigma,\C^n),
\end{eqnarray*}
are invertible. Since invertibility is an open condition in the
operator norm, there exist $s > \max |\re \mathscr{S}| + 2$ and $c_1>0$ such
that for any $u\in X^{1,p}_{\mathscr{S},
\mathscr{V},\mathscr{V}^{\prime}}(\Sigma,\C^n)$
with support disjoint from $\{|\re z|\leq s - 1\}$ there holds
\begin{equation}
\label{sta}
\|u\|_{X^{1,p}(\Sigma)} = \|u\|_{W^{1,p}(\Sigma)} \leq c_1 \|
(\delbar  + A)u\|_{L^p(\Sigma)} = c_1  \|
(\delbar  + A)u\|_{X^p(\Sigma)}.
\end{equation}
By Proposition \ref{czj}, there exists $c_2>0$ such that for every
$u\in X^{1,p}_{\mathscr{S},
\mathscr{V},\mathscr{V}^{\prime}}(\Sigma,\C^n)$ with support
in $\{|\re z|\leq s \}$ there holds
\begin{equation}
\label{stb}
\begin{split}
\|u\|_{X^{1,p}(\Sigma)} \leq c_2 (\|u\|_{X^p(\Sigma)}
+ \|\delbar  u\|_{X^p(\Sigma)} ) \\ \leq (c_2 +
\|A\|_{\infty}) \|u\|_{X^p(\Sigma)} + c_2
\|(\delbar +A) u\|_{X^p(\Sigma)}.
\end{split}
\end{equation}
The inequality (\ref{sfineq}) easily follows from (\ref{sta}) and (\ref{stb})
by writing any $u\in X^{1,p}_{\mathscr{S},\mathscr{V}, \mathscr{V}^{\prime}}(\Sigma,\C^n)$ as $u=(1-\varphi) u +
\varphi u$, for $\varphi$ a smooth real function on $\Sigma$
having support in $\{|\re z| < s\}$ and such that $\varphi=1$ on $\{
|\re z|\leq s-1\}$.

Finally, by Proposition \ref{compemb} the linear operator
\[
X^{1,p}_{\mathscr{S},\mathscr{V},\mathscr{V}^{\prime}}(\Sigma,\C^n) \rightarrow X^p_{\mathscr{S}}(\Sigma,\C^n), \quad u \mapsto \chi_s u,
\]
is compact. Therefore the estimate (\ref{sfineq}) implies that $\delbar_A$ has finite dimensional kernel and closed range, that is it is semi-Fredholm with index less than $+\infty$.
\end{proof}

It would not be difficult to use the regularity of weak solutions of the Cauchy-Riemann operator to prove that the cokernel of $\delbar_A$ is finite-dimensional, so that $\delbar_A$ is Fredholm. However, this will follow directly from the index computation presented in the next section.

\subsection{A Liouville type result}
\label{liouvsec}

Let us consider the following particular case in dimension $n=1$:
\[
k=1, \quad k^{\prime}=0, \quad \mathscr{S}=\{0\}, \quad 
V_0 = (0), \quad V_1 = \R , \quad V_0^{\prime} = \R, \quad A(z)=\alpha,
\]
with $\alpha$ a real number. In other words, we are looking at the operator $\delbar  + \alpha$ on a space of $\C$-valued maps $u$ on $\Sigma$ such that $u(s)$ is purely imaginary for $s\leq 0$, $u(s)$ is real for $s\geq 0$, and $u(s+i)$ is real for every $s\in \R$.
Notice that $\Phi^-(t) = \Phi^+(t) = e^{i\alpha t}$, so
\[
e^{i\alpha} i\R \cap \R = (0) \quad \forall \alpha \in \R \setminus ( \pi/2 + \pi \Z ), \quad e^{i \alpha} \R \cap \R = (0) \quad \forall \alpha \in \R \setminus \pi \Z,
\]
so the assumptions of Theorem \ref{fredholm} are satisfied whenever $\alpha$ is not an integer multiple of $\pi/2$. In order to simplify the notation, we set
\[
X^p(\Sigma) := X^p_{\{0\}}(\Sigma,\C), \quad X^{1,p}(\Sigma) := X^{1,p}_{\{0\},((0),\R),(\R)}(\Sigma,\C).
\]
We start by studying the regularity of the elements of the kernel of $\delbar_{\alpha}$:

\begin{lem}
\label{rker}
Let $p>1$ and $\alpha\in \R \setminus (\pi/2) \Z$. If $u$ belongs to the kernel of
\[
\delbar_{\alpha}: X^{1,p}(\Sigma)
\rightarrow X^p(\Sigma),
\]
then $u$ is smooth on $\Sigma\setminus \{0\}$, it satisfies the boundary conditions pointwise, and the function $(\mathscr{R} u)(\zeta) = 
u(\zeta^2)$ is smooth on $\mathrm{Cl}(\mathbb{H}^+) \cap \mathbb{D}_1$. In particular, $u$ is continuous at $0$, and $D u(z)=O(|z|^{-1/2})$
for $z\rightarrow 0$.
\end{lem}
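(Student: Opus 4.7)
The plan is to translate the problem to the upper-right quadrant via the square root change of variables $\zeta\mapsto\zeta^2$, where the singular point is blown up into a corner with standard partially orthogonal conormal boundary conditions, and then apply the regularity results of Section \ref{ellsec}.

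First, away from $0$ the claim is standard. On any subset of $\Sigma$ whose closure does not meet $0$, the boundary conditions are constant (either $u(s)\in i\R$, or $u(s)\in\R$, or $u(s+i)\in\R$), and after a unitary rotation each becomes a totally real boundary condition for $\delbar$. Elliptic bootstrap applied to the equation $\delbar u=-\alpha u$, using Theorem \ref{rws} and its reflection across the corresponding boundary line, yields $u\in C^\infty$ up to the boundary on $\Sigma\setminus\{0\}$, and shows that the boundary conditions are satisfied pointwise.

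The core of the argument is the behavior at $0$. Let $v:=\mathscr{R}u$, that is $v(\zeta)=u(\zeta^2)$, defined on $\mathbb{H}^+\cap\mathbb{D}_1$. A direct computation gives
\[
\delbar_\zeta v(\zeta)=2\bar\zeta\,(\delbar u)(\zeta^2)=-2\alpha\bar\zeta\,v(\zeta),
\]
so $v$ weakly solves the Cauchy-Riemann type equation $\delbar v+B(\zeta)v=0$ with the smooth coefficient $B(\zeta)=2\alpha\bar\zeta$. The boundary conditions for $u$ transform, under $\zeta\mapsto\zeta^2$, into
\[
v(\sigma)\in\R=N^\ast\R,\quad v(i\tau)\in i\R=N^\ast(0),\qquad\sigma,\tau\geq 0,
\]
which are partially orthogonal conormal conditions on the two sides of the quadrant at $0$. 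Moreover, by the very definition of the $X^p$ and $X^{1,p}$ norms and by the identities (\ref{Lpr})–(\ref{treqsobnorm}) combined with the norm equivalence (\ref{eqsobnorm}), the hypothesis $u\in X^{1,p}(\Sigma)$ translates into $v\in W^{1,p}(\mathbb{H}^+\cap\mathbb{D}_r)$ for some $r>0$.

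With $v\in W^{1,p}$ on the quadrant and the right boundary conditions, Lemma \ref{woq} together with a standard bootstrap on the equation $\delbar v=-2\alpha\bar\zeta v$ (differentiating the equation and noting that the coefficient $2\alpha\bar\zeta$ and its derivatives are smooth, while each differentiation preserves the type of the conormal boundary conditions up to a unitary rotation, as in the proof of Lemma \ref{czl}) yields $v\in W^{k,p}(\mathbb{H}^+\cap\mathbb{D}_{r'},\C)$ for every $k$. By Sobolev embedding $v$ is smooth on $\mathrm{Cl}(\mathbb{H}^+)\cap\mathbb{D}_{r'}$, which is precisely the asserted smoothness of $\mathscr{R}u$. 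Combining this with the smoothness of $v$ on $\mathbb{D}_1\setminus\mathbb{D}_{r'/2}$ (which follows from the elliptic regularity away from $0$ established in the first step, transported to the $\zeta$-picture) gives smoothness on all of $\mathrm{Cl}(\mathbb{H}^+)\cap\mathbb{D}_1$.

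Finally, continuity of $u$ at $0$ is immediate: $u(z)=v(\sqrt z)\to v(0)$ as $z\to 0$, and in fact $v(0)=0$ since $v(0)$ lies in $\R\cap i\R$ by continuity of the boundary values. The derivative estimate follows from
\[
Du(z)=Dv(\sqrt z)\cdot\tfrac{1}{2\sqrt z},
\]
together with boundedness of $Dv$ near $0$, giving $|Du(z)|=O(|z|^{-1/2})$.

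The main technical point is the bootstrap step at the corner: one must verify that differentiating the equation preserves the partial orthogonality of the boundary conditions so that Lemma \ref{woq} can be iterated. This is handled by reducing via a unitary transformation (as in the proof of Lemma \ref{czl}) to the case of standard real/imaginary boundary conditions and then performing Schwarz reflection, exactly as in Theorem \ref{rws}.
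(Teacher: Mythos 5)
Your strategy is valid and does the job, but it takes a genuinely different route from the paper, and the crucial bootstrap step near the corner is described too loosely to stand on its own.

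The paper avoids the bootstrap entirely by an integrating-factor trick: it sets $f(\zeta) := e^{\alpha\bar\zeta^2/2}\,u(\zeta^2)$ and observes that $f$ is \emph{holomorphic} on $\mathbb{H}^+\cap\mathbb{D}_1$ (the exponential exactly cancels the zeroth-order term), is real on $\R^+$, purely imaginary on $i\R^+$ (because $e^{\alpha\bar\zeta^2/2}$ is real on both rays), and lies in $W^{1,p}\subset L^2$. A double Schwarz reflection then gives a holomorphic, square-integrable function on $\mathbb{D}_1\setminus\{0\}$, so the singularity is removable; smoothness of $\mathscr{R}u = e^{-\alpha\bar\zeta^2/2}f$ follows at once. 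No elliptic bootstrap at the corner is needed. Your approach instead works with $v=\mathscr{R}u$ satisfying $\delbar v + 2\alpha\bar\zeta v = 0$ and tries to bootstrap directly. This can be made to work, but the sentence ``each differentiation preserves the type of the conormal boundary conditions up to a unitary rotation'' is not correct as stated: if you differentiate $v$ in the $\sigma$-direction, $\partial_\sigma v$ inherits a boundary condition on $\R^+$ but has \emph{no} boundary condition on $i\R^+$, and vice versa for $\partial_\tau v$. Differentiating a corner boundary value problem does not give you another corner boundary value problem of the same type.

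The correct way to carry out your plan — and the one you gesture at in the last paragraph but do not actually implement — is to perform the two Schwarz reflections \emph{before} bootstrapping, exactly as the paper does for $f$. Reflecting $v$ across $\R^+$ and then across $i\R$, using the boundary conditions $v(\sigma)\in\R$, $v(i\tau)\in i\R$, produces a function $\tilde v\in W^{1,p}(\mathbb{D}_{r^2})$ that one must check still solves $\delbar\tilde v + 2\alpha\bar\zeta\tilde v = 0$ on the whole punctured disk; this is a small computation (one finds that under each reflection the source term $-2\alpha\bar\zeta v$ transforms covariantly, precisely because $\bar\zeta$ goes to $\bar\zeta$ under $\zeta\mapsto\bar\zeta$ and to $-\bar\zeta$ under $\zeta\mapsto-\bar\zeta$, matching the sign change in $\tilde v$). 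Once you are on the punctured disk, there is no corner and you may bootstrap with interior regularity (Theorem \ref{rws}(i)) since $\delbar\tilde v = -2\alpha\bar\zeta\tilde v$ has the same Sobolev regularity as $\tilde v$. Note also that Lemma \ref{woq} as stated only gives $W^{1,p}$ from $L^p$; it is not formulated for higher Sobolev spaces, so you cannot cite it directly for the iteration — you must reduce to interior estimates as just described. With that repair, your argument is correct and yields the conclusions. The paper's integrating factor is shorter because it removes the zeroth-order term in one stroke instead of iterating; your approach is more robust (it would work for coefficients that cannot be explicitly integrated), but here it costs you the extra bookkeeping at the corner.
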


\begin{proof}
The regularity theory for weak solutions of $\delbar $ on $\C$ and on the half-plane $\mathbb{H}$ (Theorem \ref{rws}) implies -- by a standard bootstrap argument -- that $u\in C^{\infty}(\Sigma \setminus \{0\})$. We just need to check the regularity of $u$ at $0$.

Consider the function $f(\zeta):= e^{\alpha \overline{\zeta}^2/2} u(\zeta^2)$ on $\mathbb{H}^+ \cap \mathbb{D}_1$. Since
\[
\delbar  f(\zeta) = 2 \overline{\zeta} e^{\alpha \overline{\zeta}^2/2} \bigl(\delbar  u(\zeta^2) + \alpha u(\zeta^2)\bigr) = 0,
\]
$f$ is holomorphic on $\mathbb{H}^+ \cap \mathbb{D}_1$.  
Moreover, by (\ref{treqsobnorm}) the function $f$ belongs to $W^{1,p}(\mathbb{H}^+ \cap \mathbb{D}_1)$, and in particular it is square integrable. The function $f$ is real on $\R^+$ and purely imaginary on $i\R^+$, so a double Schwarz reflection produces a holomorphic extension of $f$ to $\mathbb{D}_1\setminus \{0\}$. Such an extension of $f$ is still square integrable, so the singularity $0$ is removable and the function is holomorphic on the whole $\mathbb{D}_1$. It follows that
\[
(\mathscr{R} u)(\zeta) = u(\zeta^2) =  e^{-\alpha \overline{\zeta}^2/2} f(\zeta)
\]
is smooth on  $\mathrm{Cl}(\mathbb{H}^+) \cap \mathbb{D}_1$, as claimed.
\end{proof}

The real Banach space $X^p(\Sigma)$ is the space of $L^p$ functions with respect to the measure defined by the density
\[
\rho_p(z) := \left\{ \begin{array}{ll} 1 & \mbox{if } z\in \Sigma \setminus \mathbb{D}_r, \\ |z|^{p/2-1} & \mbox{if } z\in \Sigma \cap \mathbb{D}_r. \end{array} \right.
\]
So the dual of $X^p(\Sigma)$ can be identified with the real Banach space
\begin{equation}
\label{primodual}
\set{v\in L^1_{\mathrm{loc}}(\Sigma,\C)}{\int_{\Sigma} |v|^q \rho_p(z) \, ds\,dt <+\infty}, \quad \mbox{where } \frac{1}{p} + \frac{1}{q}=1,
\end{equation}
by using the duality paring
\[
\bigl(X^p(\Sigma)\bigr)^* \times X^p(\Sigma) \rightarrow \R, \quad
(v,u) \mapsto  \re \int_{\Sigma} \langle v,u\rangle \rho_p(z)\, ds\,dt.
\]
We prefer to use the standard duality pairing
\begin{equation}
\label{stdual}
\bigl(X^p(\Sigma)\bigr)^* \times X^p(\Sigma) \rightarrow \R, \quad
(w,u) \mapsto  \re \int_{\Sigma} \langle w,u\rangle \, ds\,dt.
\end{equation}
With the latter choice, the dual of $X^p(\Sigma)$ is identified with the space of functions $w=\rho_p(z) v$, where $v$ varies in the space (\ref{primodual}). From $1/p+1/q=1$ we get the identity
\begin{eqnarray*}
\|w\|_{X^q}^q = \int_{\Sigma\setminus \mathbb{D}_r} |w|^q \, ds \,dt + \int_{\Sigma \cap \mathbb{D}_r} |w|^q |z|^{q/2-1}\, ds\,dt \\= \int_{\Sigma\setminus \mathbb{D}_r} |v|^q \, ds \,dt + \int_{\Sigma \cap \mathbb{D}_r} |v|^q |z|^{(p/2-1)q} |z|^{q/2-1}\, ds\,dt \\= \int_{\Sigma\setminus \mathbb{D}_r} |v|^q \, ds \,dt + \int_{\Sigma \cap \mathbb{D}_r} |v|^q |z|^{p/2-1} \, ds\,dt
= \int_{\Sigma} |v|^q \rho_p(z) \, ds\,dt,
\end{eqnarray*}
which shows that the standard duality paring (\ref{stdual}) produces the identification
\[
\bigl( X^p(\Sigma) \bigr)^* \cong  X^q(\Sigma), \quad \mbox{for } \frac{1}{p} + \frac{1}{q} = 1.
\]
Therefore, we view the cokernel of $\delbar_{\alpha}: X^{1,p}(\Sigma) \rightarrow X^p(\Sigma)$ as a subspace of $X^q(\Sigma)$. Its elements are a priori less regular at $0$ than the elements of the kernel:

\begin{lem}
\label{cok}
Let $p>1$ and $\alpha \in \R \setminus (\pi/2) \Z$.  If $v \in X^q(\Sigma)$, $1/p+1/q=1$, belongs to the cokernel of
\[
\delbar_{\alpha}: X^{1,p}(\Sigma)
\rightarrow X^p(\Sigma),
\]
then $v$ is smooth on $\Sigma\setminus \{0\}$, it solves the equation $\partial v - \alpha v = 0$ with boundary conditions
\begin{equation}\label{bccok}
\begin{array}{ll}
v(s)\in \R, \quad v(-s) \in i\R & \forall s>0, \\
v(s+i) \in \R & \forall s\in \R,
\end{array}
\end{equation}
and the function $(\mathscr{T} v)(\zeta) = 2 \overline{\zeta} 
v(\zeta^2)$ is smooth on $\mathrm{Cl}(\mathbb{H}^+) \cap \mathbb{D}_1$. In particular, $v(z)=O(|z|^{-1/2})$ and $D v(z)=O(|z|^{-3/2})$
for $z\rightarrow 0$.
\end{lem}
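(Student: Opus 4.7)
The element $v \in X^q(\Sigma)$ is in the cokernel iff $\re\int_\Sigma \langle v, \delbar_\alpha u\rangle\, ds\,dt = 0$ for every admissible test map $u$. A formal integration by parts (using that $\re(iw\bar w') = \omega_0(w',w)$ for the boundary contribution coming from $\int iv\,\partial_t\bar u$) gives
\[
\re\int_\Sigma \langle v, \delbar_\alpha u\rangle\, ds\,dt \,=\, -\re\int_\Sigma \langle \partial v - \alpha v, u\rangle\, ds\,dt \,+\,\int_\R\bigl[\omega_0(u(s,0),v(s,0)) - \omega_0(u(s,1),v(s,1))\bigr] ds.
\]
Testing against $u \in C^\infty_c(\mathrm{int}\,\Sigma,\C)$ yields $\partial v - \alpha v = 0$ in the distributional sense on $\mathrm{int}\,\Sigma$; the interior version of Theorem \ref{rws}(i), applied to the complex-conjugated operator $\partial$ (equivalently, to $\delbar$ after complex conjugation), then upgrades $v$ to a smooth strong solution of $\partial v = \alpha v$ on $\mathrm{int}\,\Sigma$. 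Testing next against $u$ supported in a small neighborhood of a segment of $\partial\Sigma$ that avoids $0$, and using the boundary version of Theorem \ref{rws}(ii) together with the fact that the admissible boundary values of $u$ span the entire conormal subspace $N^*V_j$ (or $N^*V_j'$), gives smoothness of $v$ up to the respective boundary segment and forces $v(s,0) \in (N^*V_j)^{\perp_{\omega_0}}$, $v(s,1) \in (N^*V_0')^{\perp_{\omega_0}}$. Since every conormal subspace is Lagrangian and hence equals its own symplectic orthogonal, this reduces exactly to the jump conditions (\ref{bccok}) and establishes smoothness of $v$ on $\Sigma \setminus\{0\}$.

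It remains to analyze $v$ near $0$. Define
\[
\tilde F(\zeta) := e^{-\alpha\zeta^2/2}\,\mathscr{T}v(\zeta) \,=\, 2\bar\zeta\,e^{-\alpha\zeta^2/2}\,v(\zeta^2), \qquad \zeta \in \bigl(\mathbb{H}^+ \cap \mathbb{D}_1\bigr)\setminus\{0\}.
\]
Using $\partial(2\bar\zeta) = 0$, $\partial\bigl(v(\zeta^2)\bigr) = 2\zeta\,(\partial v)(\zeta^2) = 2\alpha\zeta\,v(\zeta^2)$ and $\partial\bigl(e^{-\alpha\zeta^2/2}\bigr) = -2\alpha\zeta\,e^{-\alpha\zeta^2/2}$, a direct computation gives $\partial\tilde F \equiv 0$, so $\tilde F$ is anti-holomorphic on the punctured quarter-disk. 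The identity (\ref{Lpt}) together with $v \in X^q(\Sigma)$ yields $\mathscr{T}v \in L^q(\mathbb{H}^+)$, hence $\tilde F \in L^q(\mathbb{H}^+ \cap \mathbb{D}_1)$. The boundary conditions of step one translate, via $v(\sigma^2) \in \R$ and $v(-\tau^2) \in i\R$, into $\tilde F(\sigma), \tilde F(i\tau) \in \R$ for $\sigma,\tau > 0$. Setting $\Phi := \overline{\tilde F}$ we obtain a holomorphic $L^q$ function on the punctured quarter-disk which is real on both coordinate half-axes. Two successive Schwarz reflections (first across $\R^+$, then across $i\R$) extend $\Phi$ to a holomorphic $L^q$ function on $\mathbb{D}_1 \setminus \{0\}$ satisfying $\Phi(-\zeta) = \Phi(\zeta)$. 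Its Laurent expansion therefore involves only even powers $\Phi(\zeta) = \sum_k a_{2k}\zeta^{2k}$, and $L^q$-integrability near the origin rules out every negative index: $\int_0^1 r^{2kq+1}\,dr$ diverges whenever $k \leq -1$ (since $q > 1$ implies $-1 \leq -1/q$). Thus $\Phi$ extends analytically across $0$, and consequently $\tilde F = \overline{\Phi}$ and $\mathscr{T}v = e^{\alpha\zeta^2/2}\tilde F$ are smooth on $\mathrm{Cl}(\mathbb{H}^+) \cap \mathbb{D}_1$. The asymptotic bounds $v(z) = O(|z|^{-1/2})$ and $Dv(z) = O(|z|^{-3/2})$ are then immediate from $\mathscr{T}v(\zeta) = 2\bar\zeta\,v(\zeta^2)$ and $|\bar\zeta| = |z|^{1/2}$.

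The main subtle point is the regularity at $0$: the cokernel element $v$ lives in the \emph{weighted} space $X^q$, so $v\circ \zeta^2$ need not be in $L^q(\mathbb{H}^+)$ and the naive analogue of the argument in Lemma \ref{rker} fails. The factor $2\bar\zeta$ built into $\mathscr{T}$ is dictated precisely by (\ref{Lpt}), which converts the weight $|z|^{q/2-1}$ into unweighted $L^q$-integrability on $\mathbb{H}^+$; the exponential twist $e^{-\alpha\zeta^2/2}$ is then exactly what is needed to kill the zeroth-order term $\alpha v$ and make $\tilde F$ anti-holomorphic, enabling the classical removable-singularity argument after Schwarz reflection.
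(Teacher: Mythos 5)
Your proposal is correct and reaches the same conclusion, but the handling of the singularity at $0$ differs from the paper's. You establish the strong equation $\partial v = \alpha v$ only on $\Sigma\setminus\{0\}$, observe that $\tilde F = e^{-\alpha\zeta^2/2}\mathscr{T}v$ is anti-holomorphic on the \emph{punctured} quarter-disk, and then remove the singularity by combining the $L^q$-bound coming from (\ref{Lpt}) with the parity of the doubly reflected function $\Phi$. The paper instead takes the extra step of showing that $w=\mathscr{T}v$ satisfies the weak equation $\partial w = 2\alpha\zeta w$ on the quarter-disk \emph{including} the origin (via the substitution $u(\zeta^2)=\varphi(\zeta)$ in the annihilation identity), applies the weak-regularity Lemma \ref{woq} to obtain $w\in W^{1,q}$, embeds into $L^2$, and then invokes the standard $L^2$ removable-singularity criterion after reflection. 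The paper's route is cleaner because removability of an $L^2$ holomorphic function is immediate via Parseval, whereas your $L^q$ removability with $1<q<2$ is more delicate: $\zeta^{-1}\in L^q$ when $q<2$, so the parity of $\Phi$ is indispensable, and ruling out an essential singularity requires the Laurent-coefficient estimate $|a_{-m}|\le C r^m\bigl(\int_0^{2\pi}|\Phi(re^{i\theta})|^q\,d\theta\bigr)^{1/q}$ combined with $\liminf_{r\to 0} r^2\int_0^{2\pi}|\Phi(re^{i\theta})|^q\,d\theta=0$, which gives $a_{-m}=0$ for $m>2/q$ (hence, with parity, for all $m\geq 1$). This is what your remark ``$\int_0^1 r^{2kq+1}\,dr$ diverges'' is really encoding, but it is worth spelling out since the naive reading (testing each power separately) does not by itself preclude an essential singularity. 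With that one clarification your argument is a valid and slightly more self-contained alternative, as it avoids Lemma \ref{woq} entirely.
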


\begin{proof}
Since $v\in X^q(\Sigma)$
annihilates the image of $\delbar_{\alpha}$, there holds
\begin{equation}
\label{ann}
\re \int_{\Sigma} \langle v(z) , \delbar  u(z) +
\alpha u(z) \rangle \, ds\,dt =0,
\end{equation}
for every $u\in
X^{1,p}(\Sigma)$. By letting $u$ vary among all smooth functions in $X^{1,p}(\Sigma)$ which are compactly supported in $\Sigma\setminus \{0\}$, the regularity theory for weak solutions of $\partial$ (the analogue of Theorem \ref{rws}) and a bootstrap argument show that $v$ is smooth on $\Sigma\setminus \{0\}$ and it solves the equation $\partial v - \alpha v=0$ with boundary conditions (\ref{bccok}). It remains to study the regularity of $v$ at $0$.

Set $w(\zeta):= (\mathscr{T} v)(\zeta) = 2 \overline{\zeta} v(\zeta^2)$. By (\ref{Lpt}), the function $w$ is in $L^q(\mathbb{H}^+\cap \mathbb{D}_1)$. Let $\varphi\in C^{\infty}_c (\mathrm{Cl}(\mathbb{H}^+) \cap \mathbb{D}_1)$ be real on $\R^+$ and purely imaginary on $i\R^+$. Then the function $u$ defined by $u(\zeta^2)=\varphi(\zeta)$ belongs to $X^{1,p}(\Sigma)$, and by (\ref{ann}) we have 
\begin{eqnarray*}
0 = \re \int_{\Sigma} \langle v , \delbar  u + \alpha u \rangle \, ds\,dt = 4 \re \int_{\mathbb{H}^+ \cap \mathbb{D}_1} |\zeta^2| \langle \frac{1}{2\overline{\zeta}} w(\zeta), \frac{1}{2\overline{\zeta}} \delbar  
\varphi(\zeta) + \alpha \varphi(\zeta) \rangle \, d\sigma d\tau \\ = \re \int_{\mathbb{H}^+ \cap \mathbb{D}_1}  \langle w(\zeta), \delbar  \varphi(\zeta) + 2\alpha \overline{\zeta} \varphi(\zeta) \rangle \, d\sigma d\tau.
\end{eqnarray*}
The above identity can be rewritten as
\[
\re \int_{\mathbb{H}^+ \cap \mathbb{D}_1} \langle w(\zeta), \delbar  \varphi(\zeta) \rangle \, d\sigma d\tau = - \re \int_{\mathbb{H}^+ \cap \mathbb{D}_1} \langle 2\alpha \zeta w(\zeta), \varphi(\zeta)\rangle \, d\sigma d\tau,
\]
so $w$ is a weak solution of $\partial w = 2\alpha \zeta w$ on $\mathrm{Cl}(\mathbb{H}^+) \cap \mathbb{D}_1$ with real boundary conditions.
Since $w$ is in $L^q( \mathbb{H}^+ \cap \mathbb{D}_1)$, Lemma \ref{woq} implies that $w$ is in $W^{1,q}( \mathbb{H}^+ \cap \mathbb{D}_1)$. In particular, $w$ is square integrable on $\mathbb{H}^+ \cap \mathbb{D}_1$, and so is the function
\[
f(\zeta) := e^{-\alpha \zeta^2/2} w(\zeta).
\]
The function $f$ is anti-holomorphic, it takes real values on $\R^+$ and on $i\R^+$, so by a double Schwarz reflection it can be extended to an anti-holomorphic function on $\mathbb{D}_1\setminus \{0\}$. Since $f$ is square integrable, the singularity $0$ is removable and $f$ is anti-holomorphic on $\mathbb{D}_1$. Therefore
\[
(\mathscr{T} v)(\zeta) = w(\zeta) = e^{\alpha \zeta^2/2} f(\zeta)
\]
is smooth on $\mathrm{Cl}(\mathbb{H}^+)\cap \mathbb{D}_1$, as claimed.
\end{proof}

We can finally prove the following Liouville type result:

\begin{prop}
\label{caspart1}
If $0<\alpha<\pi/2$, the operator
\[
\delbar_{\alpha} : X^{1,p}(\Sigma) \rightarrow X^p(\Sigma)
\]
is an isomorphism, for every $1<p<\infty$. 
\end{prop}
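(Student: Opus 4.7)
The plan is to show $\ker\delbar_\alpha=0$ and $\coker\delbar_\alpha=0$ separately; combined with the semi-Fredholm property of Proposition \ref{sfe} this upgrades $\delbar_\alpha$ to an isomorphism. I describe the kernel case in detail; the cokernel case runs in parallel, replacing Lemma \ref{rker} by Lemma \ref{cok} and adjusting the auxiliary function.

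Let $u\in\ker\delbar_\alpha$. The computation in the proof of Lemma \ref{rker} shows that $f(\zeta)=e^{\alpha\bar\zeta^2/2}u(\zeta^2)$ is holomorphic on $\mathbb D_1$; the two Schwarz reflections used there (across $\R^+$ giving $f(\bar\zeta)=\overline{f(\zeta)}$ and across $i\R^+$ giving $f(-\bar\zeta)=-\overline{f(\zeta)}$) together force $f(-\zeta)=-f(\zeta)$, so $f$ is odd, $u(0)=f(0)=0$, and $u(z)\sim a_1\sqrt z$ near $0$. At the same time, Proposition \ref{dec} applied at the two ends of the strip yields the exponential decay
\[
|u(s,t)|\le Ce^{-\alpha s}\ \text{as }s\to+\infty,\qquad |u(s,t)|\le Ce^{(\pi/2-\alpha)s}\ \text{as }s\to-\infty,
\]
the two rates being the smallest positive eigenvalues of the asymptotic self-adjoint operator $i\partial_t+\alpha$ with boundary conditions $(\R,\R)$ and $(i\R,\R)$; both are strictly positive exactly because $0<\alpha<\pi/2$. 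Setting $v(z):=e^{\alpha\bar z/2}u(z)$ then gives a function holomorphic on $\Sigma\setminus\{0\}$, continuous at $0$ with $v(0)=0$, decaying exponentially at both ends, whose boundary values are $v(s,0)\in\R$ for $s>0$, $v(s,0)\in i\R$ for $s<0$, and $v(s,1)\in e^{-i\alpha/2}\R$.

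The decisive step is to move to the upper half-plane by the conformal map $\omega=e^{\pi z}$, letting $V(\omega):=v(\pi^{-1}\log\omega)$ on $\mathbb H\setminus\{1\}$. Then $V$ takes values in $\R$, $i\R$, $e^{-i\alpha/2}\R$ on the three boundary arcs $(1,\infty),(0,1),(-\infty,0)$, behaves like $\sqrt{\omega-1}$ near $1$, and satisfies $|V|\le C|\omega|^{1/2-\alpha/(2\pi)}$ at $0$ and $|V|\le C|\omega|^{-\alpha/(2\pi)}$ at $\infty$. I introduce the auxiliary holomorphic function
\[
\psi(\omega):=(\omega-1)^{1/2}\,\omega^{-1/2-\alpha/(2\pi)}
\]
on $\mathbb H$ with principal branches; the exponent $\tfrac12+\tfrac{\alpha}{2\pi}\in(\tfrac12,\tfrac34)$ is calibrated so that the boundary rotations of $\psi$ across $\omega=1$ and $\omega=0$ reproduce exactly the three Lagrangian lines $\R,\,i\R,\,e^{-i\alpha/2}\R$ on the three arcs. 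Comparing singular orders, the quotient $F:=V/\psi$ has a removable singularity at $\omega=1$, tends to $0$ like $|\omega|$ at $0$, and remains bounded at $\infty$, while taking real values on $\R\setminus\{0,1\}$. By Schwarz reflection across $\R$, $F$ extends to a bounded entire function with $F(0)=0$; Liouville's theorem forces $F\equiv 0$, hence $V\equiv 0$, and $u\equiv 0$.

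For the cokernel, an element $v\in\coker\delbar_\alpha\subset X^q(\Sigma)$ satisfies $\partial v-\alpha v=0$ with the boundary data of Lemma \ref{cok} and the weaker regularity $v(z)=O(|z|^{-1/2})$ at $0$ obtained there via $\mathscr T v$. Conjugating, $\tilde v:=\overline{v}$ solves $\delbar\tilde v-\alpha\tilde v=0$, and the twist $w:=e^{-\alpha\bar z/2}\tilde v$ restores holomorphy, with boundary values $\R,\,i\R,\,e^{i\alpha/2}\R$ and a $|z|^{-1/2}$ singularity at $0$. The same conformal transfer now pairs $W:=w\circ\Phi^{-1}$ with the auxiliary function
\[
\psi(\omega):=(\omega-1)^{-1/2}\,\omega^{-1/2+\alpha/(2\pi)},
\]
whose $|\omega-1|^{-1/2}$ singularity absorbs the blow-up of $W$ at $\omega=1$ and whose rotations again match the three Lagrangian lines (the exponent $\tfrac12-\tfrac{\alpha}{2\pi}\in(\tfrac14,\tfrac12)$ once more requiring $0<\alpha<\pi/2$). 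The same Liouville step then gives $v\equiv 0$.

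The main obstacle is the simultaneous Riemann-Hilbert matching in the Liouville step: the auxiliary $\psi$ must meet two independent requirements at once---its boundary rotations across $\omega=0$ and $\omega=1$ must carry $\R$ to the three prescribed Lagrangian lines, \emph{and} its singular orders at $\omega=0,1,\infty$ must precisely absorb the singularities of $V$ (or $W$) so that $V/\psi$ is globally bounded. Both requirements fix the same exponent up to integer shifts, and the hypothesis $0<\alpha<\pi/2$ is exactly what keeps that exponent in the open interval where principal branches deliver the correct Lagrangian rotations and where the decay rates furnished by Proposition \ref{dec} translate into polynomial decay of $V$ faster than the absorbing order of $\psi$.
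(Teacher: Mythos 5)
Your argument is correct and runs along a genuinely different route from the paper's. For the kernel, the paper observes that $w:=u^{2}$ satisfies $\delbar w+2\alpha w=0$ with non-jumping $(\R,\R)$ boundary data (the jump $i\R\to\R$ is squared away), lands in $W^{1,q}_{\R,\R}(\Sigma,\C)$ for $q<4$, and then invokes Theorem~\ref{strip}(ii) with $0<2\alpha<\pi$ to conclude $w=0$. For the cokernel, the paper splits into two cases: when $w(0)=0$ in the notation of Lemma~\ref{cok} the same squaring trick applies, and when $w(0)\neq 0$ a separate argument is needed, built around the holomorphic function $f(z)=e^{-\alpha\bar z/2}\bar v(z)$, its asymptotics $f(z)=\bar w(0)/\sqrt z+o(|z|^{-1/2})$, and the open mapping theorem. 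Your approach --- conformal transfer to $\mathbb H$ by $\omega=e^{\pi z}$, explicit Riemann--Hilbert auxiliary function $\psi$ whose branch rotations across $\omega=0,1$ match the three Lagrangian lines and whose singular orders cancel those of $V$ (resp.\ $W$), then Schwarz reflection and Liouville --- handles kernel and cokernel uniformly, with no case distinction. The price is that you must pin down the sharp decay exponents at $\pm\infty$ from Proposition~\ref{dec} (which you do correctly: $e^{-\alpha s}$ and $e^{(\pi/2-\alpha)s}$ for the kernel, $e^{-(\pi-\alpha)s}$ and $e^{(\pi/2+\alpha)s}$ for the cokernel) and verify the oddness of $f(\zeta)=e^{\alpha\bar\zeta^2/2}u(\zeta^2)$ (so $u(0)=0$), which you also do correctly via the two reflection rules. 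One small imprecision worth flagging: the constraint $0<\alpha<\pi/2$ is not really what calibrates the boundary rotations of $\psi$ (those work for $\alpha\in(0,\pi)$); it is what guarantees the asymptotic operators have trivial kernel (Fredholm condition) and hence the exponential decay rates you feed into the singular-order bookkeeping. Apart from this phrasing, the proof is sound, and the auxiliary-function reduction to Liouville is arguably more systematic than the paper's squaring-plus-open-mapping hybrid.
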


\begin{proof}
By Proposition \ref{sfe} the operator
$\delbar_{\alpha}$ is semi-Fredholm, so it is enough to prove
that its kernel and co-kernel are both $(0)$.

Let $u\in X^{1,p} (\Sigma)$ be
an element of the kernel of $\delbar_{\alpha}$. By Proposition
\ref{dec}, $u(z)$ has exponential decay for $|\re z|\rightarrow
+\infty$ together with all its derivatives. By Lemma \ref{rker}, $u$
is smooth on $\Sigma \setminus \{0\}$, it is continuous at $0$, and
$Du(z)=O(|z|^{-1/2})$ for $z\rightarrow 0$. Then the function $w:=
u^2$ belongs to $W^{1,q}(\Sigma,\C)$ for every $q<4$. Moreover, $w$ is
real on the boundary of $\Sigma$, and it satisfies the equation
\[
\delbar  w + 2 \alpha w = 0.
\]
Since $0<2\alpha<\pi$, $e^{2\alpha i} \R \cap \R = (0)$, so the
assumptions of Theorem \ref{strip} (ii) are satisfied, and the operator
\[
\delbar_{2\alpha} : W^{1,q}_{\R,\R} (\Sigma,\C)
\rightarrow L^q(\Sigma,\C)
\]
is an isomorphism. Therefore $w=0$, hence $u=0$, proving that the
operator $\delbar_{\alpha}$ has vanishing kernel.

Let $v\in X^q(\Sigma)$, $1/p+1/q=1$, be an
element of the cokernel of $\delbar_{\alpha}$.
By Lemma \ref{cok}, $v$ is smooth on $\Sigma \setminus \{0\}$,
$v(s)\in i\R$ for $s<0$, $v(s)\in \R$ for $s>0$, $v$
solves $\partial v - \alpha v=0$, and the function
\begin{equation}
\label{svil}
w(\zeta):= 2\overline{\zeta} v(\zeta^2)
\end{equation}
is smooth in $\mathrm{Cl}(\mathbb{H}^+)\cap \mathbb{D}_1$ and real on the boundary of $\mathbb{H}^+$. In particular,
$v(z) = O(|z|^{-1/2})$ and $Dv(z) = O(|z|^{-3/2})$ for $z\rightarrow
0$. Furthermore, by Proposition \ref{dec}, $v$ and $Dv$
decay exponentially for $|\re z|\rightarrow +\infty$. More precisely,
since the spectrum of the operator $L_{\alpha}$ on $L^2([0,1],\C)$,
\[
\dom L_{\alpha} = W^{1,2}_{\R,\R}([0,1],\C) = \set{u\in
    W^{1,2}([0,1],\C)}{u(0), u(1)\in \R}, \quad L_{\alpha} = i
    \frac{d}{dt} + \alpha,
\]
is $\alpha + \pi \Z$, we have $\min \sigma(L_{\alpha}) \cap
[0,+\infty)=\alpha$, hence
\begin{equation}
\label{ed}
|v(z)|\leq c e^{-\alpha |\re z|}, \quad \mbox{for } |\re z|\geq 1.
\end{equation}

If $w(0)=0$, the function $v$ vanishes at $0$, and
$Dv(z)=O(|z|^{-1})$ for $z\rightarrow
0$, so $v^2$ belongs to $W^{1,q}_{\R,\R}(\Sigma,\C)$ for any $q<2$,
it solves $\partial v^2 - 2\alpha v^2=0$, and
as before we deduce that $v=0$.
Therefore, we can assume that the real number $w(0)$ is not zero.

Consider the function
\[
f:\Sigma \setminus \{0\} \rightarrow \C, \quad
f(z) := e^{-\alpha \overline{z}/2} \overline{v}(z).
\]
Since $\partial v=\alpha v$,
\[
\delbar  f (z) = -\alpha e^{-\alpha \overline{z}/2}
\overline{v}(z) + e^{-\alpha\overline{z}/2} \overline{\partial v} (z)
= e^{-\alpha \overline{z}/2} ( - \alpha \overline{v}(z) +
\overline{\alpha v}(z)) =0,
\]
so $f$ is holomorphic on the interior of $\Sigma$.
Moreover, $f$ is smooth on $\Sigma \setminus \{0\}$, and
\begin{eqnarray}
\label{1a}
f(s) = e^{-\alpha s/2} \overline{v} (s)\in i\R \quad \mbox{for } s<0,
\quad\quad f(s)
= e^{-\alpha s/2} \overline{v}(s)\in \R \quad \mbox{for } s>0, \\
\label{1b} f(s+i) = e^{-\alpha s/2} \overline{v}(s+i) e^{\alpha i/2} \in
e^{\alpha i/2} \R \quad \mbox{for every } s\in \R.
\end{eqnarray}
Denote by $\sqrt{z}$ the determination of the
square root on $\C \setminus \R^-$ such that $\sqrt{z}$ is real and
positive for $z$ real and positive, so that $\overline{\sqrt{z}} =
\sqrt{\overline{z}}$. By (\ref{svil}),
\begin{equation}
\label{2a}
f(z) = -e^{-\alpha \overline{z}/2} \frac{1}{\sqrt{z}}
\overline{w} (\sqrt{z}) =\frac{\overline{w}(0)}{\sqrt{z}}
+ o(|z|^{-1/2}) \quad \mbox{for }
 z\rightarrow 0.
\end{equation}
Finally, by (\ref{ed}),
\begin{equation}
\label{3a}
\lim_{|\re z|\rightarrow +\infty} f(z) = 0.
\end{equation}
We claim that a holomorphic function with the properties listed above
is necessarily zero. By (\ref{2a}), setting $z=\rho
e^{\theta i}$ with $\rho>0$ and $0\leq \theta\leq \pi$,
\[
f(z) = \frac{\overline{w}(0)}{\sqrt{\rho}} e^{-\theta i/2} + o(|\rho|^{-1/2})
\quad \mbox{for } \rho\rightarrow 0.
\]
Since $\overline{w}(0)$ is real and not zero, the above expansion at
0 shows that there exists $\rho>0$ such that
\begin{equation}
\label{4a}
f(z) \in \bigcup_{\theta\in ]-\pi/2-\alpha/4,\alpha/4[}
e^{\theta i} \R, \quad \forall z\in (B_{\rho}(0) \cap \Sigma)
\setminus \{0\}.
\end{equation}
If $f=0$ on $\R+i$, then $f$ is identically zero (by reflection
and by analytic continuation), so we may assume that $f(\R+i) \neq
\{0\}$. By (\ref{1b}) the set $f(\R+i)$ is contained in
$\R e^{\alpha i/2}$. Since $f$ is holomorphic on $\mathrm{Int}(\Sigma)$,
it is open on such a domain, so we can find
$\gamma\in ]\alpha/4,\alpha/2[ \cup ]\alpha/2,3\alpha/4[$
such that $f(\mathrm{Int}(\Sigma)) \cap \R
e^{\gamma i} \neq \{0\}$. By (\ref{3a}) and
(\ref{4a}) there exists $z\in \Sigma \setminus
B_{\rho}(0)$ such that
\begin{equation}
\label{5a}
f(z) \in \R e^{\gamma i}, \quad |f(z)| = \sup |
f(\Sigma\setminus \{0\}) \cap \R e^{\gamma i} | >0.
\end{equation}
By (\ref{1a}) and (\ref{1b}), $z$ belongs to $\mathrm{Int}(\Sigma)$,
but since $f$ is open on $\mathrm{Int}(\Sigma)$ this fact
contradicts (\ref{5a}). Hence $f=0$. Therefore $v$ vanishes on
$\Sigma$, concluding the proof of the invertibility of the operator
$\delbar_{\alpha}$.
\end{proof}

If we change the sign of $\alpha$ and we invert the boundary conditions on $\R$ we still get an isomorphism. Indeed, if we set $v(s,t) := \overline{u}(-s,t)$ we have
\[
\delbar_{-\alpha} v (s,t) =
\delbar  v (s,t) - \alpha v (s,t) = - \overline{
  \delbar  u (-s,t) + \alpha u(-s,t) } = -
\overline{\partial_{\alpha} u (-s,t)},
\]
so the operators
\begin{eqnarray*}
\delbar_{\alpha} : X^{1,p}_{\{0\},((0),\R),(\R)}
(\Sigma,\C) & \rightarrow & X^p_{\{0\}} (\Sigma,\C), \\ 
\delbar_{-\alpha} : X^{1,p}_{\{0\},(\R,(0)),(\R)}
(\Sigma,\C) & \rightarrow & X^p_{\{0\}} (\Sigma,\C)
\end{eqnarray*}
are conjugated. Therefore Proposition \ref{caspart1} implies:

\begin{prop}
\label{caspart2}
If $0<\alpha<\pi/2$, the operator
\[
\delbar_{-\alpha} : X^{1,p}_{\{0\},(\R,(0)),(\R)}
(\Sigma,\C) \rightarrow X^p_{\{0\}} (\Sigma,\C)
\]
is an isomorphism.
\end{prop}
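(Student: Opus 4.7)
The strategy, already indicated in the paragraph preceding the statement, is to exhibit an explicit conjugation between $\delbar_{-\alpha}$ and $\delbar_{\alpha}$ (up to sign), so that Proposition \ref{caspart1} transfers directly to the new boundary conditions. Define the antilinear involution
\[
\Phi : u \longmapsto v, \qquad v(s,t) := \overline{u(-s,t)}.
\]
The plan is to verify that $\Phi$ is an isometric $\R$-linear isomorphism between the appropriate function spaces, and that it intertwines the two Cauchy–Riemann operators.

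First, I would check the intertwining relation. A direct computation using $\partial_s\overline{u(-s,t)} = -\overline{\partial_s u(-s,t)}$ and $J\overline{w} = -\overline{Jw}$ gives
\[
\delbar_{-\alpha} v(s,t) \,=\, \partial_s v + i\partial_t v - \alpha v \,=\, -\overline{\partial_s u(-s,t) + i\partial_t u(-s,t) + \alpha u(-s,t)} \,=\, -\,\overline{\delbar_{\alpha} u(-s,t)},
\]
exactly as claimed in the paragraph preceding the statement. In particular, $\Phi\circ \delbar_{\alpha}\circ \Phi^{-1} = -\delbar_{-\alpha}$ (with $\Phi^{-1}$ of the same form as $\Phi$).

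Next I would check that $\Phi$ is an isomorphism at the level of the Banach spaces involved. The map $z\mapsto -\overline{z}$ preserves the strip $\Sigma$, fixes the singular set $\{0\}$, and swaps the two half-lines of the lower edge $\R\subset \partial\Sigma$ while fixing the upper edge $\R+i$. Combined with complex conjugation of values, it interchanges the boundary subspaces $(0)$ and $\R$ on the lower half-lines (since both are invariant under conjugation in $\C$), while leaving the condition $v(s+i)\in\R$ on the upper edge unchanged. Consequently $\Phi$ maps
\[
X^{1,p}_{\{0\},(\R,(0)),(\R)}(\Sigma,\C) \;\stackrel{\cong}{\longrightarrow}\; X^{1,p}_{\{0\},((0),\R),(\R)}(\Sigma,\C),
\]
and similarly $\Phi$ is an isomorphism of $X^p_{\{0\}}(\Sigma,\C)$ onto itself, because the weight $\rho_p(z)=|z|^{p/2-1}$ near $0$ and the identically $1$ weight away from $0$ are both invariant under $z\mapsto -\overline{z}$. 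Both restrictions are clearly isometries for the corresponding norms; this step is routine and requires no new estimate.

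Combining the two points, the diagram
\[
\begin{CD}
X^{1,p}_{\{0\},(\R,(0)),(\R)}(\Sigma,\C) @>{\delbar_{-\alpha}}>> X^p_{\{0\}}(\Sigma,\C) \\
@V{\Phi}VV @VV{-\Phi}V \\
X^{1,p}_{\{0\},((0),\R),(\R)}(\Sigma,\C) @>{\delbar_{\alpha}}>> X^p_{\{0\}}(\Sigma,\C)
\end{CD}
\]
commutes, with both vertical arrows isomorphisms. By Proposition \ref{caspart1}, the lower horizontal arrow is an isomorphism for every $1<p<\infty$ when $0<\alpha<\pi/2$, hence so is the upper horizontal arrow, which is the content of the statement. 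There is no genuine obstacle in this argument: the only thing to do carefully is the bookkeeping of the boundary subspaces under $z\mapsto -\overline{z}$ and complex conjugation, and the verification that the weighted norms defining $X^p$ and $X^{1,p}$ are preserved.
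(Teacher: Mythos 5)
Your proof is correct and follows exactly the paper's route: the paragraph preceding the proposition already proves the claim by the same conjugation $v(s,t)=\overline{u}(-s,t)$, intertwining $\delbar_{-\alpha}$ with $-\delbar_{\alpha}$ and reducing to Proposition~\ref{caspart1}. You have simply spelled out the routine bookkeeping the paper leaves implicit (the invariance of the weighted $X^p$-norm under $z\mapsto -\overline{z}$ and the swap of the two boundary subspaces on the lower edge), which is worth doing but does not change the argument.
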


\subsection{Computation of the index}

The computation of the Fredholm index of $\delbar_A$ is
based on the Liouville type results proved in the previous section, together with the following additivity formula:

\begin{prop}
\label{add}
Assume that $A,A_1,A_2 \in C^0(\overline{\R} \times
[0,1],\mathrm{L}(\R^{2n},\R^{2n}))$ satisfy
\begin{equation*}\begin{split}
&A_1(+\infty,t) = A_2 (-\infty,t), \quad A(-\infty,t)=A_1(-\infty,t),\\
&A(+\infty,t) = A_2(+\infty,t),
\quad \forall t\in [0,1].
\end{split}\end{equation*}
Let $\mathscr{V}_1=(V_0,\dots,V_k)$, $\mathscr{V}_2 = (V_k,\dots,
V_{k+h})$,
$\mathscr{V}_1^{\prime}=(V_0^{\prime},\dots,V^{\prime}_{k^{\prime}})$,
$\mathscr{V}_2^{\prime} = (V_{k^{\prime}},\dots,
V_{k^{\prime}+h^{\prime}}^{\prime})$
 be finite ordered sets of linear subspaces of $\R^n$ such that
$V_j$ and $V_{j+1}$, $V_j^{\prime}$ and $V_{j+1}^{\prime}$ are
 partially orthogonal, for every $j$. Set
$\mathscr{V} = (V_0, \dots, V_k, V_{k+1}, \dots, V_{k+h})$ and
 $\mathscr{V}^{\prime}  = (V_0^{\prime}, \dots,
 V_{k^{\prime}}^{\prime},  V^{\prime}_{k^{\prime}+1}, \dots,
 V^{\prime}_{k^{\prime} +h^{\prime}})$.
Assume that $(A_1,\mathscr{V}_1,\mathscr{V}_1^{\prime})$ and
$(A_2,\mathscr{V}_2,\mathscr{V}_2^{\prime})$ satisfy the assumptions of Theorem
\ref{fredholm}. Let $\mathscr{S}_1$ be a set consisting of $k$ points in $\R$ and $k'$ points in $i+\R$, let $\mathscr{S}_2$ be a set consisting of $h$ points in $\R$ and $h'$ points in $i+\R$, and let $\mathscr{S}$ be a set consisting of $k+h$ points in $\R$ and $k'+h'$ points in $i+\R$.
For $p\in ]1,+\infty[$ consider the semi-Fredholm operators
\begin{eqnarray*}
& \delbar_{A_1} :
X^{1,p}_{\mathscr{S}_1, \mathscr{V}_1,
\mathscr{V}_1^{\prime}}(\Sigma,\C^n) \rightarrow
X^p_{\mathscr{S}_1}(\Sigma,\C^n), \quad
\delbar_{A_2} :
X^{1,p}_{\mathscr{S}_2,\mathscr{V}_2,
\mathscr{V}_2^{\prime}}(\Sigma,\C^n) \rightarrow
X^p_{\mathscr{S}_2}(\Sigma,\C^n) & \\ &
\delbar_A :
X^{1,p}_{\mathscr{S},\mathscr{V},\mathscr{V}^{\prime}}
(\Sigma,\C^n) \rightarrow X^p_{\mathscr{S}}(\Sigma,\C^n). &
\end{eqnarray*}
Then
\[
\ind \delbar_A = \ind \delbar_{A_1} + \ind
\delbar_{A_1}.
\]
\end{prop}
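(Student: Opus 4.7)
The plan is to establish the additivity by a standard stretching/gluing argument combined with the homotopy invariance of the semi-Fredholm index. Note first that by Proposition \ref{sfe}, all three operators $\delbar_{A_1}$, $\delbar_{A_2}$, $\delbar_A$ are semi-Fredholm, and the semi-Fredholm index is locally constant under continuous deformations within the space of semi-Fredholm operators. This fact will be used throughout.

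As a preliminary reduction, I would use a homotopy to put the data $(A,\mathscr{S},\mathscr{V},\mathscr{V}')$ in the following normal form, without changing the Fredholm index:
\begin{enumerate}
\item $A(s,t)=A_1(s,t)$ for $s\le -1$, $A(s,t)=A_2(s,t)$ for $s\ge 1$, and $A(s,t)=A_0(t):=A_1(+\infty,t)=A_2(-\infty,t)$ for $s$ in a neighborhood of $[-1,1]$;
\item all points of $\mathscr{S}$ lying on $\R$ coming from $\mathscr{S}_1$ and $\R$ coming from $\mathscr{S}_2$ lie in $(-\infty,-1)$ and $(1,+\infty)$ respectively (and analogously for the points on $i+\R$), so that the jumps in the boundary conditions occur in the two pieces, not on the neck $[-1,1]\times[0,1]$.
\end{enumerate}
The concatenated boundary data $(\mathscr{V},\mathscr{V}')$ match $(\mathscr{V}_j,\mathscr{V}_j')$ on the respective halves, and on the neck the boundary conditions are given by the constant spaces $N^*V_k$ and $N^*V'_{k'}$.

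Next I would insert a long neck: for $R>1$, define $\delbar_{A_R}$ by replacing the neck $[-1,1]\times[0,1]$ with a strip of length $2R$ on which $A\equiv A_0(t)$ and the boundary conditions remain $N^*V_k$, $N^*V'_{k'}$ with no jumps. The family $\{\delbar_{A_R}\}_{R\ge 1}$ is a continuous family of semi-Fredholm operators (after the obvious identification of domains), so $\ind\delbar_{A_R}=\ind\delbar_A$ for all $R$. The heart of the argument is then to show that for $R$ sufficiently large there is a gluing isomorphism
\[
\ker \delbar_{A_R}\;\cong\;\ker\delbar_{A_1}\oplus\ker\delbar_{A_2},\qquad \coker\delbar_{A_R}\;\cong\;\coker\delbar_{A_1}\oplus\coker\delbar_{A_2},
\]
which immediately yields $\ind\delbar_{A_R}=\ind\delbar_{A_1}+\ind\delbar_{A_2}$.

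The construction of this isomorphism proceeds by the standard ``cut-off and correct'' scheme. Using a partition of unity $\{\beta_R^-,\beta_R^+\}$ on $\R$ with $\beta_R^-\equiv 1$ on $(-\infty,-R/2]$ and $\beta_R^+\equiv 1$ on $[R/2,\infty)$, supported in the neck away from the jumps, one defines a pre-gluing map sending a pair $(u_1,u_2)\in\ker\delbar_{A_1}\oplus\ker\delbar_{A_2}$ to the cut-and-pasted section $\beta_R^-u_1+\beta_R^+u_2$ on the neck-stretched strip. By Proposition \ref{dec}, the kernel elements $u_j$ decay exponentially as $|\re z|\to\infty$, so the pre-glued section fails to lie in the kernel of $\delbar_{A_R}$ only by an exponentially small error (of order $e^{-\delta R}$). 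The right inverse needed to correct this error exists because Theorem \ref{strip}(ii) guarantees that the constant-coefficient operator $\delbar+A_0$ on $W^{1,p}_{N^*V_k,N^*V'_{k'}}(\Sigma,\C^n)$ is an isomorphism; patched with local right inverses for $\delbar_{A_1}$ and $\delbar_{A_2}$ on the complement of the neck, one obtains an approximate right inverse for $\delbar_{A_R}$ whose norm is uniformly bounded in $R$. A Newton/contraction-mapping step then turns the pre-glued section into a genuine kernel element, and the map on kernels so produced is shown to be injective and surjective for $R$ large via the same exponential estimates (applied in reverse to decompose a kernel element of $\delbar_{A_R}$). The same scheme applied to the adjoint problem on weighted dual spaces gives the cokernel identification.

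The main technical obstacle will be running this parametrix/Newton argument in the weighted spaces $X^p$, $X^{1,p}$ rather than the usual $L^p$, $W^{1,p}$. This is not serious: since the jumps of $\mathscr{S}$ are located away from the neck by our normal form, on the neck itself the $X^p$ and $L^p$ norms agree (up to an equivalent norm), so the uniform estimates needed for the right inverse of the constant-coefficient operator on the neck are exactly those from Theorem \ref{strip}(ii). The weighted spaces only play a role near the jump points, and there the local estimates of Lemma \ref{czj} give the same bounds as in the unweighted case. Combining homotopy invariance of the index, the stretching reduction, and the gluing isomorphism then yields
\[
\ind \delbar_A=\ind\delbar_{A_R}=\ind\delbar_{A_1}+\ind\delbar_{A_2},
\]
as required.
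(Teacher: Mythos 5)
The paper itself omits the proof of this proposition, pointing the reader to the gluing argument in Schwarz's thesis (Theorem 3.2.12 of \cite{sch95}); your proposal fleshes out precisely that standard linear neck-stretching and gluing scheme, so it is the same approach in substance. The only point I would urge you to make explicit is the cokernel identification: since Proposition \ref{sfe} only gives semi-Fredholmness (finite kernel, closed range) and the paper deliberately refrains from asserting finiteness of the cokernel at this stage, ``the same scheme applied to the adjoint problem'' should be spelled out as: the formal adjoint $-\partial_s+i\partial_t+A^\ast$ in the dual weighted spaces satisfies the identical a priori estimate as in Proposition \ref{sfe} (whence its kernel is finite-dimensional and one may glue it in the same way), which simultaneously shows the operators are genuinely Fredholm and gives the cokernel isomorphism. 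With that addition the argument is complete and in harmony with the paper's cited route.
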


The proof is analogous to the proof of Theorem 3.2.12 in \cite{sch95},
and we omit it.
When there are no jumps, that is $\mathscr{S}=
\emptyset$ and $\mathscr{V}=(V)$, $\mathscr{V}^{\prime}=(V^{\prime})$,
Theorem \ref{strip} shows that the index of the operator
\[
\delbar_A : X^{1,p}_{\emptyset,(V),(V^{\prime})} (\Sigma,\C^n) =
W^{1,p}_{N^*V,N^*V^{\prime}} (\Sigma,\C^n)
\rightarrow L^p(\Sigma,\C^n) = X^p_{\emptyset} (\Sigma,\C^n)
\]
is
\[
\ind \delbar_A = \mu(\Phi^- N^*V,N^*V^{\prime}) - \mu(\Phi^+
N^*V,N^*V^{\prime}).
\]
In the general case, Proposition \ref{add} shows that
\begin{equation}
\label{corre}
\begin{split}
\ind (\delbar_A :
X^{1,p}_{\mathscr{S},\mathscr{V},\mathscr{V}^{\prime}}
(\Sigma,\C^n) \rightarrow X^p_{\mathscr{S}}
(\Sigma,\C^n) ) \\ =  \mu(\Phi^- N^*V_0,N^*V_0^{\prime}) - \mu(\Phi^+
N^*V_k,N^*V_{k^{\prime}}^{\prime}) +
c(V_0,\dots,V_k;V_0^{\prime},\dots, V_{k^{\prime}}^{\prime}),
\end{split} \end{equation}
where the correction term $c$ satisfies the additivity formula
\begin{equation}
\label{addf}\begin{split}
&c(V_0,\dots,
V_{k+h};V_0^{\prime},\dots,V_{k^{\prime}+h^{\prime}}^{\prime} )\\
=\; &c(V_0, \dots, V_k; V_0^{\prime},\dots,V_{k^{\prime}}^{\prime}) \,+\,
c(V_k, \dots, V_{k+h};V_{k^{\prime}}^{\prime},\dots,
V_{k^{\prime}+h^{\prime}}^{\prime}).\end{split}
\end{equation}
Since the Maslov index is in general a half-integer, and since
we have not proved that the cokernel of  $\delbar_A$ is finite dimensional, the correction term $c$ takes values in $(1/2)\Z \cup \{-\infty \}$. Actually, the analysis of this section shows that $c$ is always finite, proving that  $\delbar_A$ is Fredholm.

Clearly, we have the following direct sum formula
\begin{equation}
\label{dsf}
\begin{split}
&c(V_0 \oplus W_0, \dots, V_k \oplus W_k ; V_0^{\prime} \oplus
W_0^{\prime}, \dots, V_{k^{\prime}}^{\prime} \oplus W_{k^{\prime}}^{\prime}
) \\ =\, &c(V_0, \dots,V_k;V_0^{\prime},\dots, V_{k^{\prime}}^{\prime}) \,+\,
c(W_0,\dots,W_k ; W_0^{\prime}, \dots,W^{\prime}_{k^{\prime}}).
\end{split} \end{equation}
Note also that the index formula of Theorem \ref{strip} produces a correction term of the form
\begin{equation}
\label{nojumps}
c(\lambda;\nu) = \mu(\lambda,\nu),
\end{equation}
where $\lambda$ and $\nu$ are asymptotically constant paths of
Lagrangian subspaces on $\C^n$. The Liouville type results of the previous section imply that
\begin{equation}
\label{casopart}
c((0),\R^n;\R^n) = -\frac{n}{2} = c(\R^n,(0);\R^n).
\end{equation}
Indeed, by Proposition \ref{caspart1}  the operator 
\[
\delbar_{\alpha I} : X^{1,p}_{\{0\},  ((0),\R^n), (\R^n)} (\Sigma,\C^n) \rightarrow  X^p_{\{0\}}  (\Sigma,\C^n)
\]
is an isomorphism if $0<\alpha<\pi/2$. By (\ref{maslov}), the Maslov index of the path $e^{i\alpha t}\R^n$, $t\in [0,1]$, with respect to $\R^n$ is $-n/2$. On the other hand, the Maslov index of the path $e^{i\alpha t} i \R^n$, $t\in [0,1]$, with respect to $\R^n$ is $0$ because the intersection is $(0)$ for every $t\in [0,1]$. Inserting the information about the Fredholm and the Maslov index in (\ref{corre}), we find
\[
0 = \ind \delbar_{\alpha I} = \frac{n}{2} + c((0),\R^n;(0)),
\]
which implies the first identity in (\ref{casopart}). The second one is proved in the same way by using Proposition \ref{caspart2}.

\begin{lem}
\label{casopart3}
Let $(V_0,V_1,\dots, V_k)$ be a $(k+1)$-tuple of linear
subspaces of $\R^n$, with $V_{j-1}$ and $V_j$ partially orthogonal for
every $j=1,\dots,k$, and let $W$ be a linear subspace of $\R^n$. Then
\begin{equation*}\begin{split}
c(V_0, \dots, V_k ; W)\, &=\, c(W; V_0,\dots, V_k)\\ 
&=\,- \frac{1}{2}
\sum_{j=1}^k \left( \dim V_{j-1} + \dim V_j - 2 \dim V_{j-1} \cap V_j
\right).
\end{split}\end{equation*}
\end{lem}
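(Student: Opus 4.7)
The plan is to combine the additivity formula~(\ref{addf}), the direct sum formula~(\ref{dsf}), and the base cases (\ref{nojumps}) and (\ref{casopart}) to reduce the computation to elementary pieces. First, by iterating (\ref{addf}) along the successive jump points, the statement for general $k$ reduces to the single-jump case
\begin{equation*}
c(V_0, V_1; W) = -\tfrac{1}{2}\bigl(\dim V_0 + \dim V_1 - 2\dim(V_0 \cap V_1)\bigr).
\end{equation*}
Partial orthogonality of $V_0$ and $V_1$ yields the orthogonal decomposition $\R^n = X_1 \oplus X_2 \oplus X_3 \oplus X_4$ with $X_1 = V_0 \cap V_1$, $V_0 = X_1 \oplus X_2$, $V_1 = X_1 \oplus X_3$, so the right-hand side above equals $-\frac{1}{2}(\dim X_2 + \dim X_3)$.

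I would first treat the case $W = \R^n$, since $\R^n$ decomposes compatibly with this splitting. The direct sum formula (\ref{dsf}) then yields
\begin{equation*}
\begin{split}
c(V_0, V_1; \R^n) \,=\; &c(X_1, X_1; X_1) + c(X_2, (0); X_2)\\
&+ c((0), X_3; X_3) + c((0), (0); X_4).
\end{split}
\end{equation*}
The middle two summands are precisely the Liouville-type identities (\ref{casopart}), contributing $-\dim X_2/2$ and $-\dim X_3/2$. The first and last summands involve a fake slit at which the Lagrangian boundary condition does not actually jump; local elliptic regularity at such a boundary point identifies the Fredholm index of $\delbar_A$ on the weighted space $X^{1,p}$ with its index on the unweighted $W^{1,p}$ space without slit, so by (\ref{nojumps}) these correction terms vanish. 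Summing gives $c(V_0, V_1; \R^n) = -\frac{1}{2}(\dim X_2 + \dim X_3)$, as required.

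To handle an arbitrary $W$, I would introduce an auxiliary jump on the top boundary from $W$ to $\R^n$ and apply (\ref{addf}) by splitting the strip at a vertical line either to the left or to the right of the bottom jump point. Equating the two resulting decompositions of $c(V_0, V_1; W, \R^n)$ gives
\begin{equation*}
c(V_0, V_1; W) - c(V_0, V_1; \R^n) = c(V_0; W, \R^n) - c(V_1; W, \R^n).
\end{equation*}
The two terms on the right are single-jump corrections with the jump on the top boundary; the holomorphic involution $z\mapsto i-z$ of the strip swaps the two boundary components and converts them into single-jump corrections with the jump on the bottom. The argument of the preceding paragraph applies verbatim to this setting, showing that both terms equal $-\frac{1}{2}(n-\dim W)$ independently of $V$, so they cancel. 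Hence $c(V_0, V_1; W) = c(V_0, V_1; \R^n)$ for every $W$, and the formula of the lemma follows. The symmetry $c(V_0,\dots,V_k; W) = c(W; V_0,\dots,V_k)$ also follows from the same reflection.

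The main obstacle is to justify the fake-slit claim used in the computation of $c(V_0, V_1; \R^n)$. For $p=2$ the weight $|z-s_1|^{p/2-1}$ equals $1$, so $X^{1,p}$ coincides with $W^{1,p}$; but for $p\ne 2$ the two spaces differ near the slit, and identifying their Fredholm indices requires a careful local elliptic regularity argument at a boundary point where the conormal condition is continuous. Once this is granted, the remainder of the proof is purely algebraic and follows from additivity, the direct sum formula, and the base identities already established.
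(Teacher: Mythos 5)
Your first paragraph follows the paper's proof closely (reduce to single jumps by additivity, decompose via partial orthogonality, apply the direct sum formula and the Liouville identities~(\ref{casopart})), and you are right to flag the fact that the paper silently sets $c(X_1,X_1;X_1)=c((0),(0);X_4)=0$; that ``fake slit'' observation deserves a sentence, and your elliptic-regularity sketch is a reasonable way to supply it.

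The genuine gap is in the passage to general $W$. Your two-splittings identity $c(V_0,V_1;W)-c(V_0,V_1;\R^n)=c(V_0;W,\R^n)-c(V_1;W,\R^n)$ is correct and is a perfectly good variant of the paper's use of a Lagrangian path from $\R^n$ to $N^*W$ on the top boundary. But the claim that ``the argument of the preceding paragraph applies verbatim'' to each term $c(V_i;W,\R^n)$ after reflection is false: the reflection converts $c(V_i;W,\R^n)$ into a bottom-jump correction of the form $c(W,\R^n;V_i)$, whose \emph{top} boundary is $N^*V_i$, not $N^*\R^n$. The preceding paragraph's computation relies on the orthogonal decomposition of $\R^n$ associated to the pair $(W,\R^n)$ \emph{and} on the fact that the opposite boundary $N^*\R^n=\R^n$ decomposes compatibly, so the direct sum formula~(\ref{dsf}) breaks the problem into elementary pieces. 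For an arbitrary $V_i$ the conormal $N^*V_i$ does not split along that decomposition, and~(\ref{dsf}) cannot be applied. What actually makes the right-hand side vanish is that $c(V_0;W,\R^n)-c(V_1;W,\R^n)$ equals (after the same additivity trick applied to a Lagrangian path on the bottom boundary from $N^*V_0$ to $N^*V_1$, and the no-jump case~(\ref{nojumps})) the H\"ormander index $h(N^*\R^n,N^*W;N^*V_0,N^*V_1)$, and it is Proposition~\ref{lemhor} -- the vanishing of the H\"ormander index for four conormals -- that kills it. So the step from $\R^n$ to general $W$ is not ``purely algebraic'': it rests on precisely the geometric input about conormal subspaces that the paper invokes and that your proposal never mentions. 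The obstacle you identify at the end (the fake slit) is real but minor; the missing H\"ormander-index input is the essential one.
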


\begin{proof}
Let us start by considering the case $W=\R^n$.
By the additivity formula (\ref{addf}),
\[
c(V_0, \dots, V_k ;\R^n) = \sum_{j=1}^k c(V_{j-1},V_j; \R^n).
\]
Since $V_{j-1}$ and $V_j$ are partially orthogonal, $\R^n$ has an
orthogonal splitting $\R^n = X_1^j \oplus X_2^j \oplus X_3^j \oplus
X_4^j$ where $V_{j-1}=X_1^j \oplus X_2^j$ and $V_j=X_1^j \oplus
X_3^j$. By the direct sum formula (\ref{dsf}) and by formula (\ref{casopart}),
\begin{eqnarray*}
c(V_{j-1},V_j; \R^n) = c(X_1^j,X_1^j;X_1^j) +  c(X_2^j,(0);X_2^j)
+ c((0),X_3^j;X_3^j) + c((0),(0);X_4^j) \\ = 0 - \frac{1}{2} \dim
X_2^j - \frac{1}{2} \dim X_3^j + 0 = - \frac{1}{2} \dim X_2^j \oplus X_3^j
\end{eqnarray*}
Since
\[
\dim X_2^j \oplus X_3^j = \dim V_{j-1} + \dim V_j - 2 \dim V_{j-1} \cap
V_j,
\]
the formula for $c(V_0,\dots,V_k;\R^n)$ follows.

Now let $\lambda:\R \rightarrow \mathscr{L}(n)$ be a continuous path of
Lagrangian subspaces such that $\lambda(s)=\R^n$ for $s\leq -1$ and
$\lambda(s)=N^*W$ for $s\geq 1$. By an easy generalization of the additivity
formula (\ref{addf}) to the case of non-constant Lagrangian boundary
conditions,
\begin{equation}
\label{above}
c(N^* V_0;\lambda) + c(V_0,\dots,V_k;W) = c(V_0, \dots, V_k;
\R^n) + c(N^* V_k;\lambda).
\end{equation}
By (\ref{nojumps}), $c(N^* V_0;\lambda)=-\mu(\lambda,N^* V_0)$ and
$c(N^* V_k;\lambda)=-\mu(\lambda,N^* V_k)$, so (\ref{above}) leads to
\begin{eqnarray*}
c(V_0, \dots, V^k;W) = c(V_0, \dots, V^k;\R^n) -
(\mu(\lambda,N^* V_k) - \mu(\lambda,N^* V_0)) \\=  c(V_0, \dots, V^k;\R^n) - h(N^* V_0,N^* V_k; \R^n, N^*W),
\end{eqnarray*}
where $h$ is the H\"ormander index. By Lemma \ref{lemhor}, the above
H\"ormander index vanishes, so we get the desired formula for
$c(V_0,\dots,V_k;W)$. The formula for $c(W;V_0,\dots,V_k)$ follows by
considering the change of variable $v(s,t) = \overline{u}(s,1-t)$.
\end{proof}

The additivity formula (\ref{addf}) leads to
\[
c(V_0,\dots,V_k;V_0^{\prime},\dots, V_{k^{\prime}}^{\prime}) =
c(V_0,\dots,V_k; V_0^{\prime}) + c(V_k; V_0^{\prime},\dots,
V_{k^{\prime}}^{\prime}),
\]
and the index formula in the general case follows from (\ref{corre}) and
the above lemma.
This concludes the proof of Theorem \ref{fredholm}.

\subsection{Half-strips with jumping conormal boundary conditions}

This section is devoted to the analogue of Theorem \ref{fredholm} on
the half-strips
\begin{equation*}\begin{split}
&\Sigma^+ := \set{z\in \C}{0\leq \im z \leq 1, \; \re
z\geq 0} \\
\mbox{and} \quad &\Sigma^- := \set{z\in \C}{0\leq \im z
  \leq 1, \; \re z\leq 0} .
\end{split}\end{equation*}
In the first case, we fix the following data. Let $k,k'\geq 0$ be integers, let
\[
0 = s_0 < s_1 < \dots < s_k < s_{k+1} = +\infty, \quad
0 = s_0' < s_1' < \dots < s_{k'}' < s_{k'+1}' = +\infty,
\]
be real numbers, and let $W$, $V_0,\dots,V_k$, $V_0', \dots ,V_{k'}'$
be linear subspaces of $\R^n$ such that $V_{j-1}$ and $V_j$,
$V_{j-1}'$ and $V_j'$, $W$ and $V_0$, $W$ and $V_0'$, are partially orthogonal.
We denote by $\mathscr{V}$ the $(k+1)$-tuple $(V_0,\dots,V_k)$, by
$\mathscr{V}'$ the $(k'+1)$-tuple $(V_0',\dots,V_{k'}')$, and
by $\mathscr{S}$ the set
$\{s_1,\dots,s_k,s_1'+i, \dots, s_{k'}'+i\}$.
The $X^p$ and $X^{1,p}$ norms on $\Sigma^+$ are defined as in Section \ref{sjlbc}, and so are the spaces $X^p_{\mathscr{S}}(\Sigma^+,\C^n)$ and $X^{1,p}_{\mathscr{S}}(\Sigma^+,\C^n)$.
Let $X^{1,p}_{\mathscr{S},W,\mathscr{V},\mathscr{V}'}
(\Sigma^+,\C^n)$ be the completion
of the space of maps $u\in C^{\infty}_{\mathscr{S},c}(\Sigma^+,\C^n)$ satisfying the boundary conditions
\begin{equation*}\begin{split}
&u(it) \in N^* W \;\; \forall t\in [0,1],\quad
u(s) \in N^* V_j \;\; \forall s\in [s_j,s_{j+1}],\\
&u(s+i) \in N^* V_j' \;\; \forall s\in [s_j',s_{j+1}'],
\end{split}\end{equation*}
with respect to the norm $\|u\|_{X^{1,p}(\Sigma^+)}$.

Let $A\in C^0([0,+\infty]\times [0,1],\mathrm{L}(\R^{2n},\R^{2n}))$ be such that $A(+\infty,t)$ is symmetric for every $t\in [0,1]$, and denote by
$\Phi^+:[0,1]\rightarrow \mathrm{Sp}(2n)$ the solutions of the linear
Hamiltonian system
\[
\frac{d}{dt} \Phi^+(t) = i A(+\infty,t) \Phi^+(t), \quad \Phi^+(0)=I.
\]
Then we have:

\begin{thm}
\label{fredhs+}
Assume that $\Phi^+(1) N^*V_k \cap N^* V_{k'}' = (0)$. Then the
$\R$-linear bounded operator
\[
\delbar_A :
X^{1,p}_{\mathscr{S},W,\mathscr{V},\mathscr{V}'}
(\Sigma^+,\C^n) \rightarrow X^p_{\mathscr{S}} (\Sigma^+,\C^n), \quad
\delbar_A u = \delbar  u + Au,
\]
is Fredholm of index
\begin{equation}
\label{inds+}
\begin{split}
\ind \delbar_A \,=\, &\frac{n}{2} -\mu(\Phi^+ N^*V_k, N^*
V_{k'}')\\ &- \frac{1}{2} ( \dim V_0 + \dim W - 2 \dim V_0 \cap W) \\
&-\frac{1}{2} ( \dim V_0' + \dim W - 2 \dim V_0' \cap W)\\
&-\frac{1}{2} \sum_{j=1}^k (\dim V_{j-1} + \dim V_j - 2 \dim V_{j-1}
\cap V_j)  \\ 
&-\frac{1}{2} \sum_{j=1}^{k'} (\dim V_{j-1}' + \dim V_j' - 2
\dim V_{j-1}' \cap V_j').
\end{split} \end{equation}
\end{thm}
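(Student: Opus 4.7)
The proof parallels that of Theorem \ref{fredholm}, combining local elliptic estimates with an additivity/reduction argument. The plan is first to establish the Fredholm property via local Calderon-Zygmund estimates combined with asymptotic invertibility at $+\infty$, and then to compute the index by additivity, reducing to a base case on the pure half-strip.

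For the Fredholm property, one establishes local $X^{1,p}$-estimates at each point of $\mathscr{S}$ exactly as in Lemma \ref{czj} (using Lemma \ref{czl2}), and at the two corners $(0,0)$ and $(0,1)$ where the conormal condition on the left boundary meets that on the bottom or top. At the corner $(0,0)$ the setting is exactly that of Lemma \ref{czl}, since the left boundary $i\R^+$ carries the condition $N^\ast W$ while the bottom $\R^+$ carries $N^\ast V_0$ with $W$ and $V_0$ partially orthogonal; the corner $(0,1)$ is handled by reflecting across $\{\im z = 1/2\}$. Combining these local estimates with the asymptotic invertibility of $\delbar + A(+\infty,\cdot)$ provided by Theorem \ref{strip}(ii), via a partition of unity argument, yields the half-strip analog of the semi-Fredholm estimate in Proposition \ref{sfe}; the half-strip version of the compact embedding of Proposition \ref{compemb} then gives the semi-Fredholm property with finite-dimensional kernel.

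For the index I would proceed by a three-step reduction. Step one: establish an additivity formula analogous to Proposition \ref{add}, splitting $\ind \delbar_A$ as the sum of the index of a pure half-strip operator (no jumps, constant coefficient $A(+\infty,\cdot)$, boundary conditions $N^\ast W$, $N^\ast V_0$, $N^\ast V_0'$) and the index of a strip operator carrying all the jumps with matching constant asymptotic $A(+\infty,\cdot)$ at both ends. The strip contribution is computed by Theorem \ref{fredholm} and produces the two sums over $V_j$ and $V_j'$ together with $-\mu(\Phi^+ N^\ast V_k, N^\ast V_{k'}') + \mu(\Phi^+ N^\ast V_0, N^\ast V_0')$. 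Step two: for the base half-strip, by inserting auxiliary jumps close to the corners $(0,0)$ and $(0,1)$ through chains of partially orthogonal intermediate subspaces and applying additivity together with a corner version of Lemma \ref{casopart3}, one peels off the two corrections $-\frac{1}{2}(\dim V_0 + \dim W - 2\dim V_0 \cap W)$ and $-\frac{1}{2}(\dim V_0' + \dim W - 2\dim V_0' \cap W)$, reducing to the case $W = V_0 = V_0' = \R^n$. Step three: for this remaining base case the index is computed either by a Schwarz-type reflection across $i\R$ that doubles the half-strip into a strip with totally real boundary conditions (to which Theorem \ref{strip}(i) then applies), or by a direct Liouville-type kernel/cokernel analysis in the spirit of Propositions \ref{caspart1} and \ref{caspart2}, yielding $\frac{n}{2} - \mu(\Phi^+ \R^n, \R^n)$ and thereby completing the formula.

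The main obstacle is expected to be step three: the Schwarz reflection across $i\R$ in general produces a strip coefficient $\tilde A$ which has a discontinuity in $s$ at $\re z = 0$ (unless $A(0,t)$ happens to anticommute with complex conjugation), so either one must extend the Fredholm machinery of Section \ref{sjlbc} to accommodate such an $s$-jump of the coefficient with no accompanying change in the Lagrangian boundary condition, or one must circumvent the doubling entirely by a direct computation on $\Sigma^+$. In either case, the Maslov-index sign conventions of Section \ref{tmi} must be scrupulously respected in order for the $\frac{n}{2}$ correction to emerge with the correct sign and for the reduction of $\mu(\Phi^+ N^\ast V_0, N^\ast V_0')$ to $\mu(\Phi^+ \R^n, \R^n)$ in step two to be internally consistent.
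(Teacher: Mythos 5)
Your overall plan — semi‑Fredholm property by patching local Calderon–Zygmund estimates, then reduce the index computation via additivity to the no‑jump case on $\Sigma^+$ — matches the paper through step one. The divergence, and the gap, is in how you propose to handle the no‑jump base case, and you have misdiagnosed where the real trouble lies.

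Your step two does not actually produce a reduction. Inserting a jump $W\rightsquigarrow V_0$ at some $s_1>0$ on the bottom boundary (and similarly on top) yields a \emph{different} operator, with a different index. Additivity expresses the index of the operator \emph{with} the inserted jumps as a sum involving the unknown half‑strip correction plus strip jump corrections; it does not express the original correction $c(W;V_0;V_0')$ in terms of $c(W;W;W)$ — the corner data $(W;V_0;V_0')$ is untouched by any jump inserted at $s_1>0$. What you would need is a homotopy‑in‑the‑Grassmannian argument relating the two corner configurations, or a H\"ormander‑index identity as in Lemma \ref{casopart3}; but that lemma's vanishing result (Proposition \ref{lemhor}) concerns four Lagrangians on the strip, whereas the corner involves three subspaces meeting at right angles, and the identity $c(W;V_0;V_0')=c((0);W_0;W_0')$ you ultimately need is obtained in the paper by a unitary transformation $U\in\mathrm U(n)$ sending $N^*W$ to $i\R^n$, not by jump insertion. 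So "peeling off the corrections" is not justified as stated.

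As for step three: the paper does not perform a Schwarz reflection across $i\R$ at all, thereby avoiding the coefficient‑discontinuity obstacle you flag. Instead, it chooses the constant coefficient $A=\alpha I$, $\alpha\in\,]0,\pi/2[$ (so that the index depends only on the Conley–Zehnder data at $+\infty$), and computes kernel and cokernel directly by separation of variables for the three extreme configurations $V_0,V_0'\in\{(0),\R^n\}$ with $W=(0)$. The general $(W;V_0;V_0')$ is then reduced to these cases via the unitary change of variables $v=Uu$ (which yields $c(W;V_0;V_0')=c((0);W_0;W_0')$ with $W_0=(V_0\cap W)^\perp\cap(V_0+W)$, etc.), a homotopy in the Grassmannian to make $W_0,W_0'$ partially orthogonal, and an orthogonal splitting $\R^n=X_1\oplus X_2\oplus X_3\oplus X_4$ that decomposes $\delbar_{\alpha I}$ as a direct sum of the already‑computed base operators. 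So the technical insight you are missing is not a fix for the reflection, but the observation that the constant‑coefficient operator can be solved explicitly, rendering both the reflection and the jump‑peeling unnecessary.
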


\begin{proof}
The proof of the fact that $\delbar_A$ is
semi-Fredholm is analogous to the case of the full strip, treated in
Section \ref{tfps}. It remains to compute the index. By an
additivity formula analogous to (\ref{addf}), it is enough to prove
(\ref{inds+}) in the case with no jumps, that is $k=k'=0$,
$\mathscr{V}=(V_0)$, $\mathscr{V}' = (V_0')$. In this case, we
have a formula of the type
\[
\ind \delbar_A = - \mu(\Phi^+ N^* V_0,N^* V_0') +
c(W;V_0;V_0'),
\]
and we have to determine the correction term $c$.

Assume $W=(0)$, so that $N^* W=i\R^n$. Let us compute the correction
term $c$ when $V_0$ and $V_0'$ are either $(0)$ or $\R^n$.
We can choose the map $A$ to be the
constant map $A(s,t)=\alpha I$, for $\alpha\in ]0,\pi/2[$,
so that $\Phi^+(t)=e^{i\alpha t}$. The Kernel and co-kernel of
$\delbar_{\alpha I}$ are easy to determine explicitly, by separating
the variables in the corresponding boundary value PDE's:

\begin{enumerate}

\item If $V_0=V_0'=\R^n$, the kernel and co-kernel of
$\delbar_{\alpha I}$ are both $(0)$. Since $\mu(e^{i\alpha
  t} \R^n,\R^n) = -n/2$, we have $c((0);\R^n;\R^n)= -n/2$.

\item If $V_0=V_0'=(0)$, the kernel of $\delbar_{\alpha I}$
  is $i\R^n e^{-\alpha s}$, while its co-kernel
  is $(0)$. Since $\mu(e^{i\alpha
  t} i\R^n,i\R^n) = -n/2$, we have $c((0);(0);(0))= n/2$.

\item If either $V_0=\R^n$ and $V_0'=(0)$, or $V_0=(0)$ and
  $V_0'=\R^n$, the kernel and co-kernel of
$\delbar_{\alpha I}$ are both $(0)$. Since $\mu(e^{i\alpha
  t} \R^n,(0)) = \mu(e^{i\alpha t} (0),\R^n) = 0$, we have
$c((0);\R^n;(0)) = c((0);(0);\R^n) = 0$.

\end{enumerate}

Now let $W$, $V_0$, and $V_0'$ be arbitrary (with $W$ partially
orthogonal to both $V_0$ and $V_0'$).
Let $U\in \mathrm{U}(n)$ be such that $U N^*W=i\R^n$.
Then $UN^* V_0=N^*W_0$ and  $UN^* V_0'=N^*W_0'$, where
\[
W_0 = (V_0\cap W)^{\perp} \cap (V_0+W), \quad
W_0' = (V_0'\cap W)^{\perp} \cap (V_0'+W).
\]
By using the change of variable $v=Uu$, we find
\begin{equation}
\label{redu}
c(W;V_0;V_0') = c((0);W_0;W_0'),
\end{equation}
and we are reduced to compute the latter quantity.
By an easy homotopy argument, using the fact that the Fredholm index
is locally constant in the operator norm topology, we can assume that
$W_0$ and $W_0'$ are partially orthogonal. Then $\R^n$ has an
orthogonal splitting $\R^n = X_1 \oplus X_2 \oplus X_3 \oplus X_4$,
where
\[
W_0 = X_1 \oplus X_2, \quad W_0' = X_1 \oplus X_3,
\]
from which
\[
N^* W_0 = X_1 \oplus X_2 \oplus iX_3 \oplus iX_4, \quad
N^* W_0' = X_1 \oplus iX_2 \oplus X_3 \oplus iX_4.
\]
Then the operator $\delbar_{\alpha I}$ decomposes as the direct sum
of four operators, whose index is computed in cases (i), (ii), and
(iii) above. Indeed,
\begin{equation*}\begin{split}
c((0);W_0;W_0') \,=\,& \frac{1}{2} \dim X_4 - \frac{1}{2}
\dim X_1 \\ =\,& \frac{1}{2} \codim (W_0+W_0') - \frac{1}{2} \dim W_0 \cap
W_0'\\ =\,& \frac{1}{2} (n- \dim W_0 - \dim W_0').
\end{split}\end{equation*}
Since
\begin{eqnarray*}
\dim W_0 = \dim (V_0 + W) - \dim V_0 \cap W = \dim V_0 + \dim W - 2
\dim W_0 \cap W, \\
\dim W_0' = \dim (V_0' + W) - \dim V_0' \cap W = \dim V_0' + \dim W - 2
\dim W_0' \cap W,
\end{eqnarray*}
we find
\begin{equation*}\begin{split}
c((0);W_0;W_0') \,=\, &\frac{n}{2} - \frac{1}{2} (\dim V_0 + \dim W - 2
\dim W_0 \cap W)\\ &- \frac{1}{2} (\dim V_0' + \dim W - 2
\dim W_0' \cap W).
\end{split}\end{equation*}
Together with (\ref{redu}), this proves formula (\ref{inds+}).
\end{proof}

We conclude this section by considering the case of the left half-strip
$\Sigma^-$. Let $k,k'\geq 0$, $\mathscr{V}=(V_0,\dots,V_k)$,
and $\mathscr{V}'=(V_0',\dots,V_{k'}')$ be as above. Let
\[
-\infty = s_{k+1} < s_k < \dots < s_1 < s_0 = 0, \quad
-\infty = s_{k'+1}' < s_{k'}' < \dots < s_1' < s_0' = 0,
\]
be real numbers, and set $\mathscr{S}=\{s_1,\dots,s_k,s_1'+i,\dots,s_{k'}'+i\}$.

Let $X^{1,p}_{\mathscr{S},W,\mathscr{V},\mathscr{V}'}
(\Sigma^-,\C^n)$ be the completion
of the space of maps $u\in C^{\infty}_{\mathscr{S},c}(\Sigma^-,\C^n)$ satisfying the boundary conditions
\begin{equation*}\begin{split}
&u(it) \in N^* W \;\; \forall t\in [0,1],\quad
u(s) \in N^* V_j \;\; \forall s\in [s_{j+1},s_j],\\
&u(s+i) \in N^* V_j' \;\; \forall s\in [s_{j+1}',s_j'],
\end{split}\end{equation*}
with respect to the norm $\|u\|_{X^{1,p}(\Sigma^-)}$. 

Let $A\in C^0([-\infty,0]\times [0,1],\mathrm{L}(\R^{2n},\R^{2n}))$ be such
that $A(-\infty,t)$ is symmetric for every $t\in [0,1]$, and denote by
$\Phi^-:[0,1]\rightarrow \mathrm{Sp}(2n)$ the solutions of the linear
Hamiltonian system
\[
\frac{d}{dt} \Phi^-(t) = i A(-\infty,t) \Phi^-(t), \quad \Phi^-(0)=I.
\]
Then we have:

\begin{thm}
\label{fredhs-}
Assume that $\Phi^-(1) N^*V_k \cap N^* V_{k'}' = (0)$. Then the
$\R$-linear operator
\begin{equation}
\label{oper}
\delbar_A :
X^{1,p}_{\mathscr{S},W,\mathscr{V},\mathscr{V}'}
(\Sigma^-,\C^n) \rightarrow X^p_{\mathscr{S}} (\Sigma^-,\C^n), \quad
\delbar_A u = \delbar  u + Au,
\end{equation}
is bounded and Fredholm of index
\begin{equation}
\label{inds-}
\begin{split}
\ind \delbar_A \,=\, &\frac{n}{2} + \mu(\Phi^- N^*V_k, N^*
V_{k'}') - \frac{1}{2} ( \dim V_0 + \dim W - 2 \dim V_0 \cap W) \\
&-\frac{1}{2} ( \dim V_0' + \dim W - 2 \dim V_0' \cap W)\\
&-\frac{1}{2} \sum_{j=1}^k (\dim V_{j-1} + \dim V_j - 2 \dim V_{j-1}
\cap V_j)  \\ 
&-\frac{1}{2} \sum_{j=1}^{k'} (\dim V_{j-1}' + \dim V_j' - 2
\dim V_{j-1}' \cap V_j').
\end{split} \end{equation}
\end{thm}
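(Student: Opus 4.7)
The idea is to reduce Theorem \ref{fredhs-} to the already established Theorem \ref{fredhs+} via the anti-holomorphic reflection $z\mapsto -\overline{z}$, which maps $\Sigma^-$ bijectively onto $\Sigma^+$, fixes the vertical segment $i[0,1]$, and reverses the orientation of the two horizontal boundary components so that the Lagrangian datum $V_0$ (resp.\ $V_0'$) immediately adjacent to the vertical boundary stays adjacent to it while $V_k$ (resp.\ $V_{k'}'$) remains at infinity.

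Concretely, given $u:\Sigma^-\to\C^n$ set $v(s,t):=\overline{u(-s,t)}$, i.e.\ $v=C\circ u\circ(-\mathrm{id}_s)$ where $C$ denotes complex conjugation. A direct calculation gives $\delbar v(s,t) = -\overline{\delbar u(-s,t)}$, so that $\delbar_A u =0$ on $\Sigma^-$ if and only if $\delbar_B v=0$ on $\Sigma^+$, with $B(s,t):=-C\,A(-s,t)\,C$. Since every conormal subspace $N^*V$ is $C$-invariant, the map $u\mapsto v$ is a bounded isomorphism
\[
X^{1,p}_{\mathscr{S},W,\mathscr{V},\mathscr{V}'}(\Sigma^-,\C^n)\,\xrightarrow{\cong}\,X^{1,p}_{-\mathscr{S},W,\mathscr{V},\mathscr{V}'}(\Sigma^+,\C^n),
\]
with the same boundary data $W,V_0,\dots,V_k,V_0',\dots,V_{k'}'$; one verifies as a routine matter that the weight $|z-w|^{p/2-1}$ defining the $X^p$-norm is symmetric under $z\mapsto -\overline{z}$, and that the distinguished holomorphic coordinates $\zeta\mapsto s_j+\zeta^2$ at the jump points get intertwined with those at the reflected jump points.

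Next I would check that the asymptotic data transforms correctly. The matrix $B(+\infty,t)=-C\,A(-\infty,t)\,C$ is again symmetric, and using $J_0 C=-CJ_0$ one sees that the fundamental solution of $\Psi'=iB(+\infty,\cdot)\Psi$, $\Psi(0)=I$, is $\Psi(t)=C\,\Phi^-(t)\,C$. Since $C$ preserves conormal subspaces, $\Psi(1)N^*V_k=C\,\Phi^-(1)N^*V_k$, so the transversality hypothesis $\Phi^-(1)N^*V_k\cap N^*V_{k'}'=(0)$ is equivalent to $\Psi(1)N^*V_k\cap N^*V_{k'}'=(0)$, which is the hypothesis of Theorem \ref{fredhs+} for the transformed problem. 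This already gives that $\delbar_A$ is Fredholm.

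Finally, applying the index formula (\ref{inds+}) to $\delbar_B$ on $\Sigma^+$ (with $\Phi^+$ replaced by $\Psi$) and using the $C$-invariance of conormal subspaces together with the sign identity (\ref{conju}), one gets
\[
\mu\bigl(\Psi(1)N^*V_k,\,N^*V_{k'}'\bigr)=\mu\bigl(C\Phi^-(1)N^*V_k,\,C\,N^*V_{k'}'\bigr)=-\mu\bigl(\Phi^-(1)N^*V_k,\,N^*V_{k'}'\bigr),
\]
so the Maslov term in (\ref{inds+}) flips sign, whereas the dimension-correction terms depending only on $W,V_j,V_j'$ are unchanged by the reflection. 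The result is exactly formula (\ref{inds-}). The only points that require genuine verification (beyond the algebra of $C$, $J_0$, and conormal spaces) are the isomorphism of the weighted Sobolev spaces and the compatibility of the coordinates at the jump points; neither is a real obstacle, because $z\mapsto -\overline{z}$ is an anti-conformal self-diffeomorphism of the closed strip that permutes the jump loci and preserves the weight. The main conceptual point, rather than any technical difficulty, is the correct bookkeeping of the Maslov-index sign under complex conjugation of Lagrangian paths.
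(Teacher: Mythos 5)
Your proposal is essentially identical to the paper's own argument: the paper also reduces to Theorem \ref{fredhs+} via the substitution $u(s,t)=\overline{v}(-s,t)$ (equivalently $v(s,t)=\overline{u(-s,t)}$), obtaining the conjugated operator $\delbar_B$ with $B(s,t)=-CA(-s,t)C$ on $X^{1,p}_{-\mathscr{S},W,\mathscr{V},\mathscr{V}'}(\Sigma^+,\C^n)$, identifying $\Phi^+(t)=C\Phi^-(t)C$, and using the $C$-invariance of conormals together with the sign rule $\mu(C\lambda,C\nu)=-\mu(\lambda,\nu)$ to flip the Maslov term. All the details you single out (symmetry of $B(+\infty,\cdot)$, anti-commutation $CJ_0=-J_0C$, reflection-invariance of the $X^p$-weight) are exactly the ones the paper relies on.
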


Indeed, notice that if $u(s,t)=\overline{v}(-s,t)$, then
\[
- (\delbar  u(s,t) + A(s,t) u(s,t)) = C
  (\delbar  v(-s,t) - CA(s,t)C v(-s,t)),
\]
where $C$ is denotes complex conjugation. Then the operator (\ref{oper})
is obtained from the operator
\[
\delbar_B :
X^{1,p}_{-\mathscr{S},W,\mathscr{V},\mathscr{V}'}
(\Sigma^+,\C^n) \rightarrow
X^p_{-\mathscr{S}} (\Sigma^+,\C^n), \quad
\delbar_B v = \delbar  v + Bv,
\]
where $B(s,t) = - C A(-s,t) C$,
by left and right multiplication by isomorphisms. In particular, the
indices are the same. Then Theorem \ref{fredhs-} follows from Theorem
\ref{fredhs+}, taking into account the fact that the solution $\Phi^+$
of
\[
\frac{d}{dt} \Phi^+(t) = i B(+\infty,t) \Phi^+(t), \quad \Phi^+(0)=I,
\]
is $\Phi^+(t) = C \Phi^-(t) C$, so that
\[
\mu(\Phi^+ N^* V_k,N^* V_{k'}') = \mu(C\Phi^- C N^* V_k,N^* V_{k'}') =
- \mu(\Phi^- N^* V_k,N^* V_{k'}').
\]

\subsection{Nonlocal boundary conditions}
\label{nonlocsec}

It is useful to dispose of versions of Theorems \ref{fredholm},
\ref{fredhs+}, and \ref{fredhs-}, involving nonlocal boundary conditions.
In the case of the full strip $\Sigma$, let us fix the following data. Let $k\geq 0$ be an integer, let
\[
-\infty = s_0 < s_1 < \dots < s_k < s_{k+1} = +\infty
\]
be real numbers, and set $\mathscr{S}:=\{s_1,\dots,s_k, s_1+i,\dots, s_k + i\}$. Let $W_0,W_1,\dots,W_k$ be linear subspaces of $\R^n \times \R^n$ such that $W_{j-1}$ and $W_j$ are partially orthogonal, for $j=1,\dots,k$, and set $\mathscr{W}=(W_0,\dots,W_k)$.

The space $X^{1,p}_{\mathscr{S},\mathscr{W}}(\Sigma,\C^n)$ is defined
as the completion of the space of all $u\in C^{\infty}_{\mathscr{S},c}(\Sigma,\C^n)$ such that
\[
(u(s), \overline{u}(s+i)) \in N^*
    W_j, \quad \forall s\in [s_j,s_{j+1}], \quad j=0,\dots,k,
\]
with respect to the norm $\|u\|_{X^{1,p}(\Sigma)}$.

Let $A\in C^0(\overline{\R} \times [0,1],\mathrm{L}(\R^{2n},\R^{2n}))$ be such that $A(\pm \infty,t) \in \mathrm{Sym}(2n,\R)$ for every $t\in [0,1]$,
and define the symplectic paths $\Phi^+,\Phi^-: [0,1] \rightarrow
\mathrm{Sp}(2n)$ as the solutions of the linear Hamiltonian systems
\[
\frac{d}{dt} \Phi^{\pm} (t) = i A(\pm \infty,t) \Phi^{\pm}(t), \quad
\Phi^{\pm}(0) =I.
\]
Denote by $C$ the complex conjugation, and recall from Section \ref{tmi}
that $\Phi\in \mathrm{L}(\R^{2n},\R^{2n})$ is symplectic if and only if $\graf \Phi C$ is a Lagrangian subspace of $(\R^{2n}\times \R^{2n}, \omega_0 \times \omega_0)$. Then we have the following:

\begin{thm}
\label{fredholmnl}
Assume that $\graf C \Phi^-(1) \cap N^*W_0 = (0)$ and $\graf C \Phi^+(1)
\cap N^* W_k=(0)$. Then for every $p\in ]1,+\infty[$
the $\R$-linear operator
\[
\delbar_A : X^{1,p}_{\mathscr{S},\mathscr{W}}
(\Sigma,\C^n) \rightarrow X^p_{\mathscr{S}} (\Sigma,\C^n), \quad u
\mapsto \overline{\partial} u + A u,
\]
is bounded and Fredholm of index
\begin{equation*}\begin{split}
\ind \delbar_A \,=\, &\mu(\graf \Phi^-C, N^* W_0) - \mu(\graf \Phi^+C,N^*
W_k) \\ 
&-\frac{1}{2} \sum_{j=1}^k (\dim W_{j-1} + \dim W_j - 2 \dim W_{j-1}
\cap W_j).
\end{split}\end{equation*}
\end{thm}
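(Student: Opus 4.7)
My plan is to prove Theorem \ref{fredholmnl} by imitating, in the nonlocal setting, the proof of Theorem \ref{fredholm}. The underlying structure is identical: local elliptic estimates at the jump points, asymptotic invertibility from the transversality hypothesis, a patching argument for semi-Fredholmness, and an index computation by additivity and reduction to a model case. Many steps carry over essentially verbatim.

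For the local elliptic estimates near a jump point $s_j\in\mathscr{S}$ the doubling trick applies. Given $u\in X^{1,p}_{\mathscr{S},\mathscr{W}}(\Sigma,\C^n)$, the assignment
\[
\iota(u)(s,t) \,:=\, \bigl(u(s,t),\,\overline{u(s,1-t)}\bigr) \,\in\,\C^{2n}
\]
turns the nonlocal conormal condition $(u(s),\overline{u}(s+i))\in N^*W_j$ into the local conormal conditions $\iota(u)(s,0)\in N^*W_j$ and $\iota(u)(s+i)\in N^*(\sigma W_j)$, where $\sigma:\R^n\times\R^n\to\R^n\times\R^n$ is the swap map; partial orthogonality is preserved by $\sigma$. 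The Cauchy-Riemann equation is converted into $\delbar v+\tilde A v=0$ with $\tilde A(s,t):=\mathrm{diag}(A(s,t),\overline{A(s,1-t)})$. Although the doubling \emph{globally} may fail to preserve the Fredholm property---in simple scalar examples with $\Phi^\pm(1)=-I$ the doubled operator is not Fredholm even when the original is---it does preserve the required local ellipticity around each jump, which suffices, via Lemmas \ref{czj} and \ref{czl2} applied to $\iota(u)$, to obtain $X^{1,p}$-estimates on $u$ near $\mathscr{S}$.

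At $s=\pm\infty$ the asymptotic operator $L_{A(\pm\infty,\cdot)}=i\partial_t+A(\pm\infty,t)$ acts on paths $x:[0,1]\to\C^n$ with $(x(0),\overline{x(1)})\in N^*W_{0,k}$. Its kernel consists of $x(t)=\Phi^\pm(t)x(0)$ with $(x(0),C\Phi^\pm(1)x(0))\in N^*W_{0,k}$, which is trivial precisely under the hypothesis $\graf C\Phi^\pm(1)\cap N^*W_{0,k}=(0)$; a symmetric argument handles the cokernel, so $L_{A(\pm\infty,\cdot)}$ is invertible. Combining the local estimates near $\mathscr{S}$ with standard interior and boundary Cauchy-Riemann theory off of $\mathscr{S}$ and the asymptotic invertibility at $\pm\infty$ yields, exactly as in Proposition \ref{sfe}, a bound $\|u\|_{X^{1,p}}\le c(\|\delbar_A u\|_{X^p}+\|\chi_s u\|_{X^p})$ which establishes semi-Fredholmness of $\delbar_A$; finiteness of the cokernel follows from elliptic regularity of weak solutions of the formal adjoint.

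For the index I proceed by additivity in the cut-and-paste style of Proposition \ref{add} (again easily adapted to the nonlocal setting), reducing by a concatenation argument to the sum of a no-jump problem plus, for each $j=1,\dots,k$, a correction term $-\tfrac{1}{2}(\dim W_{j-1}+\dim W_j-2\dim W_{j-1}\cap W_j)$ computed from the doubled local model exactly as in Lemma \ref{casopart3}. The no-jump case $k=0$ is the model case to verify: the expected index is $\mu(\graf\Phi^-C,N^*W_0)-\mu(\graf\Phi^+C,N^*W_0)$, the nonlocal analogue of Theorem \ref{strip}. The hard part of my plan will be this model computation, because the doubled operator may fail transversality and so Theorem \ref{strip} cannot simply be pushed through $\iota$. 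Instead I would use a spectral-flow argument: the desired right-hand side is the Maslov index along any path of Lagrangians $\graf C\Phi_s(1)$ in $\C^{2n}$ interpolating the two asymptotic ends, while the left-hand side is the spectral flow of the corresponding $s$-family of self-adjoint asymptotic operators $L_{A(s,\cdot)}$; crossings of $\graf C\Phi_s(1)$ through $N^*W_0$ correspond bijectively and sign-preservingly to eigenvalues of $L_{A(s,\cdot)}$ crossing zero, which gives the equality.
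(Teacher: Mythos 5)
You have the right scaffolding (local estimates near $\mathscr{S}$, asymptotic invertibility from the transversality hypothesis, patching for semi-Fredholmness, additivity plus model cases for the index), and your identification of the invertibility criterion at $\pm\infty$ is correct, but the correction-term computation has a gap. Your doubling $\iota(u)(s,t)=(u(s,t),\overline{u(s,1-t)})$ duplicates each jump: in the $\C^{2n}$-picture there is a jump from $N^*W_{j-1}$ to $N^*W_j$ at $s_j$ on the bottom \emph{and} a jump from $N^*(\sigma W_{j-1})$ to $N^*(\sigma W_j)$ at $s_j+i$ on the top. By the additivity formula (\ref{addf}) and Lemma~\ref{casopart3} this doubled local model contributes $-(\dim W_{j-1}+\dim W_j-2\dim W_{j-1}\cap W_j)$, i.e.\ \emph{twice} the correction you assert; and since, as your own counterexample with $\Phi^\pm(1)=-I$ shows, $\iota$ is not an isomorphism onto the full doubled space, its index does not transfer to $\delbar_A$ anyway. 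So ``computed from the doubled local model exactly as in Lemma~\ref{casopart3}'' does not yield the half-correction. A direct proof in the nonlocal setting would need a genuine nonlocal analogue of the Liouville Propositions~\ref{caspart1}--\ref{caspart2}: the simplest one-jump model, from $W_0=\R^n\times\R^n$ to $W_1=\Delta_{\R^n}$, is a strip-to-cylinder problem not covered by Lemma~\ref{casopart3}, and your sketch does not supply the needed computation.

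The paper avoids all of this with a different unfolding, $\tilde u(z):=\bigl(u(z/2),\,\overline{u}(\overline z/2+i)\bigr)$, which, thanks to the half-scaling in $s$, \emph{is} a linear isomorphism $F: X^{1,p}_{\mathscr{S},\mathscr{W}}(\Sigma,\C^n)\to X^{1,p}_{\mathscr{S}',\mathscr{W},\mathscr{W}'}(\Sigma,\C^{2n})$, where the jumps all move to the bottom and the top boundary condition becomes the \emph{constant} diagonal $N^*\Delta_{\R^n}=\graf C$. Conjugating $\delbar_A$ by this isomorphism produces a local operator to which Theorem~\ref{fredholm} applies verbatim; the Fredholm property and the half-correction per jump come out directly, and the remaining work is the elementary identification of $\mu(\tilde\Phi^\pm N^*W,N^*\Delta_{\R^n})$ with $\mu(\graf\Phi^\pm C,N^*W)$ via (\ref{masprod}) and a homotopy of symplectic paths. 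This reduction also makes your proposed spectral-flow argument for the no-jump case unnecessary.
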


\begin{proof}
Given $u:\Sigma \rightarrow \C^n$ define $\tilde{u}: \Sigma
\rightarrow \C^{2n}$ by
\[
\tilde{u} (z) := (u(z/2), \overline{u}(\overline{z}/2+i)).
\]
The map $u \mapsto \tilde{u}$ determines a linear isomorphism
\[
F: X^{1,p}_{\mathscr{S},\mathscr{W}} (\Sigma,\C^n)
\stackrel{\cong}{\longrightarrow}
X^{1,p}_{\mathscr{S}',\mathscr{W},\mathscr{W}'} (\Sigma,\C^{2n}),
\]
where $\mathscr{S}'=\{2s_1,\dots,2s_k, 2s_1+i,\dots,2 s_k+i\}$,
$\mathscr{W}'$ is the $(k+1)$-tuple $(\Delta_{\R^n},\dots,
\Delta_{\R^n})$, and we have used the identity
\[
N^* \Delta_{\R^n} = \graf C = \set{(w,\overline{w})}{w\in \C^n}.
\]
The map $v \mapsto \tilde{v}/2$ determines an isomorphism
\[
G: X^p_{\mathscr{S}} (\Sigma,\C^n)
\stackrel{\cong}{\longrightarrow} X^p_{\mathscr{S}'}
(\Sigma,\C^{2n}).
\]
The composition $G \circ \delbar_A \circ F^{-1}$ is the
operator
\[
\delbar_{\tilde{A}} :  X^{1,p}_{\mathscr{S}',
\mathscr{W},\mathscr{W}'} (\Sigma,\C^{2n}) \longrightarrow
  X^p_{\mathscr{S}'}(\Sigma,\C^{2n}), \quad u \mapsto
  \delbar  u + \tilde{A} u,
\]
where
\[
\tilde{A}(z) := \frac{1}{2} \left( A(z/2) \oplus C A(\overline{z}/2+i)
C \right).
\]
Since
\[
\tilde{A}(\pm \infty,t) = \frac{1}{2} \left( A(\pm \infty,t/2) \oplus
CA(\pm \infty,1-t/2)C \right),
\]
we easily see that the solutions $\tilde{\Phi}^{\pm}$ of
\[
\frac{d}{dt} \tilde{\Phi}^{\pm} (t) = i \tilde{A}(\pm \infty,t)
\tilde{\Phi}^{\pm} (t), \quad \tilde{\Phi}^{\pm} (0) = I,
\]
are given by
\[
\tilde{\Phi}^{\pm} (t) = \Phi^{\pm}(t/2) \oplus C \Phi^{\pm}(1-t/2)
\Phi(1)^{-1} C.
\]
The above formula implies
\begin{equation}
\label{grafi}
\tilde{\Phi}^{\pm}(t)^{-1} N^* \Delta_{\R^n} = \graf C \Phi^{\pm}(1)
\Phi^{\pm}(1-t/2)^{-1} \Phi^{\pm}(t/2).
\end{equation}
For $t=1$ we get
\begin{eqnarray*}
\tilde{\Phi}^-(1) N^* W_0 \cap N^* \Delta_{\R^n} = \tilde{\Phi}^-(1)
[ N^* W_0 \cap \graf C \Phi^-(1)] = (0), \\
\tilde{\Phi}^+(1) N^* W_k \cap N^* \Delta_{\R^n} = \tilde{\Phi}^+(1)
[ N^* W_k \cap \graf C \Phi^+(1)] = (0),
\end{eqnarray*}
so the transversality hypotheses of Theorem \ref{fredholm} are
fulfilled. By this theorem, the operator $\delbar_A = G^{-1}
\circ \delbar_{\tilde{A}} \circ F$ is Fredholm of index
\begin{equation}
\label{lafo}
\begin{split}
\ind \delbar_A = \ind \delbar_{\tilde{A}} =
\mu(\tilde{\Phi}^- N^* W_0,N^* \Delta_{\R^n}) - \mu(\tilde{\Phi}^+ N^*
W_k,N^* \Delta_{\R^n})
\\ - \frac{1}{2}
\sum_{j=1}^k ( \dim W_{j-1} + \dim W_j - 2 \dim W_{j-1} \cap W_j).
\end{split} \end{equation}
The symplectic paths $t\mapsto \Phi^{\pm}(1) \Phi^{\pm} (1-t/2)^{-1}
\Phi^{\pm} (t/2)$ and $t\mapsto \Phi^{\pm}(t)$ are homotopic by means
of the symplectic homotopy
\[
(\lambda,t) \mapsto \Phi^{\pm}(1) \, \Phi^{\pm} \left(\frac{1+\lambda}{2} -
  \frac{1-\lambda}{2} t \right)^{-1} \Phi^{\pm} \left(
  \frac{1+\lambda}{2} t \right),
\]
which fixes the end-points $I$ and $\Phi^{\pm}(1)$. By the symplectic
invariance and the homotopy invariance of the Maslov index we deduce
from (\ref{grafi}) that
\begin{equation}
\label{prim}
\begin{split}
\mu(\tilde{\Phi}^- N^* W_0,N^* \Delta_{\R^n}) =\, &\mu (N^* W_0,
\tilde{\Phi}^-(\cdot)^{-1}N^* \Delta_{\R^n}) \\ =\,& \mu (N^* W_0,
\graf C \Phi^-(1) \Phi^-(1-\cdot/2)^{-1} \Phi^-(\cdot/2))\\
=\,& \mu (N^*W_0, \graf C \Phi^-) = \mu (\graf \Phi^- C, N^* W_0),
\end{split}
\end{equation}
where the lest equality is obtained by applying the anti-symplectic involution $C$ to both arguments.
Similarly,
\begin{equation}
\label{second}
\mu(\tilde{\Phi}^+ N^* W_k,N^* \Delta_{\R^n}) = \mu (\graf 
\Phi^+C, N^*W_k).
\end{equation}
The conclusion follows from (\ref{lafo}), (\ref{prim}), and
(\ref{second}).
\end{proof}

In the case of the right half-strip $\Sigma^+$, we fix an integer
$k\geq 0$, real numbers
\[
0=s_0 < s_1 < \dots < s_k < s_{k+1}=+\infty,
\]
a linear subspace $V_0\subset \R^n$ and a $(k+1)$-tuple
$\mathscr{W}=(W_0,\dots,W_k)$ of linear subspaces of $\R^n \times
\R^n$, such that $W_0$ and $V_0 \times V_0$ are partially orthogonal,
and so are $W_{j-1}$ and $W_j$, for every $j=1,\dots,k$. Set
$\mathscr{S}=\{s_1,\dots,s_k,s_1+i,\dots,s_k+i\}$, and let
$X^{1,p}_{\mathscr{S},V_0,\mathscr{W}}(\Sigma^+,\C^n)$ be the completion
of the space of maps $u\in C^{\infty}_{\mathscr{S},c}(\Sigma^+,\C^n)$ such that
\[
u(it) \in V_0 \;\; \forall t\in [0,1], \quad
(u(s), \overline{u}(s+i)) \in N^*
    W_j, \quad \forall s\in [s_j,s_{j+1}], \quad j=0,\dots,k,
\]
with respect to the norm $\|u\|_{X^{1,p}(\Sigma^+)}$.

Let $A\in C^0([0,+\infty] \times [0,1],\mathrm{L}(\R^{2n},\R^{2n}))$ be such that $A(+ \infty,t) \in \mathrm{Sym}(2n,\R)$ for every $t\in [0,1]$,
and let $\Phi^+: [0,1] \rightarrow
\mathrm{Sp}(2n)$ be the solution of the linear Hamiltonian systems
\[
\frac{d}{dt} \Phi^+ (t) = i A(+ \infty,t) \Phi^+(t), \quad
\Phi^+(0) =I.
\]
Then we have:

\begin{thm}
\label{fredhs+nl}
Assume that $\graf C \Phi^+(1)
\cap N^* W_k=(0)$. Then for every $p\in ]1,+\infty[$
the $\R$-linear operator
\[
\delbar_A : X^{1,p}_{\mathscr{S},V_0,\mathscr{W}}
(\Sigma^+,\C^n) \rightarrow X^p_{\mathscr{S}} (\Sigma^+,\C^n), \quad u
\mapsto \overline{\partial} u + A u,
\]
is bounded and Fredholm of index
\begin{equation*}\begin{split}
\ind \delbar_A \,=\, &\frac{n}{2} - \mu(\graf 
\Phi^+C, N^* W_k) - \frac{1}{2} (\dim W_0 + 2 \dim V_0 - 2 \dim W_0 \cap
(V_0 \times V_0)) \\ &- \frac{1}{2} \sum_{j=1}^k (\dim W_{j-1} +
\dim W_j - 2 \dim W_{j-1} \cap W_j).
\end{split}\end{equation*}
\end{thm}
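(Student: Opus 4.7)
The plan is to reduce Theorem \ref{fredhs+nl} to its local counterpart Theorem \ref{fredhs+} by the same doubling trick used in the proof of Theorem \ref{fredholmnl}, now adapted to the half-strip $\Sigma^+$. Given $u:\Sigma^+\to\C^n$, I would define $\tilde u:\Sigma^+\to \C^{2n}$ by
$$
\tilde u(z) := \bigl(u(z/2),\ \overline{u}(\overline{z}/2+i)\bigr),
$$
and check that this formula determines a Banach space isomorphism
$$
F: X^{1,p}_{\mathscr{S}, V_0,\mathscr{W}}(\Sigma^+,\C^n)\ \stackrel{\cong}{\longrightarrow}\
X^{1,p}_{\mathscr{S}',\, V_0\times V_0,\,\widetilde{\mathscr{W}},\,(\Delta_{\R^n})}(\Sigma^+,\C^{2n}),
$$
where $\mathscr{S}':=\{2s_1,\dots,2s_k\}$ sits on the bottom edge, $\widetilde{\mathscr{W}}:=(W_0,\dots,W_k)$ now indexes \emph{local} conormal conditions on the bottom, and the top carries the single unjumped conormal $N^*\Delta_{\R^n}$. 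The key boundary-condition check is pointwise: for $z$ on the top edge $\im z=1$ one has $\tilde u(s+i)=(u(s/2+i/2),\overline{u}(s/2+i/2))\in\graf C=N^*\Delta_{\R^n}$; for $z$ on the bottom the nonlocal condition $(u(s),\overline{u}(s+i))\in N^*W_j$ is exactly $\tilde u(2s)\in N^*W_j$; and for $z=it$ on the left edge $\tilde u(it)=(u(it/2),\overline{u}(i(1-t/2)))\in N^*V_0\times N^*V_0=N^*(V_0\times V_0)$. The analogous isomorphism $G$ on the target space is obtained by rescaling.

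Conjugating by $F$ and $G$ then yields an operator $\delbar_{\tilde A}$ on a space of the type treated by Theorem \ref{fredhs+}, with zero-order term
$$
\tilde A(z)\,=\,\tfrac12\bigl(A(z/2)\,\oplus\,C\,A(\overline{z}/2+i)\,C\bigr),
$$
whose asymptotic symplectic path at $+\infty$ is $\tilde\Phi^+(t)=\Phi^+(t/2)\oplus C\,\Phi^+(1-t/2)\,\Phi^+(1)^{-1}C$. The transversality hypothesis of Theorem \ref{fredhs+} is then
$$
\tilde\Phi^+(1)\,N^*W_k\cap N^*\Delta_{\R^n}\,=\,\tilde\Phi^+(1)\bigl[N^*W_k\cap \graf C\Phi^+(1)\bigr]\,=\,(0),
$$
which holds by the assumption $\graf C\Phi^+(1)\cap N^*W_k=(0)$. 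Before invoking Theorem \ref{fredhs+} one must also verify that $V_0\times V_0$ is partially orthogonal to each of $W_0$ (hypothesis) and $\Delta_{\R^n}$; the latter is a short direct computation, since $\Delta_{\R^n}\cap(V_0\times V_0)=\Delta_{V_0}$ and the complementary parts inside $\Delta_{\R^n}$ and inside $V_0\times V_0$ are $\Delta_{V_0^\perp}$ and $\{(v,-v):v\in V_0\}$, which are manifestly orthogonal.

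With these verifications Theorem \ref{fredhs+} produces the Fredholm property and, after substituting $2n$ for $n$ and collecting terms, the index formula
\begin{equation*}\begin{split}
\ind\delbar_{\tilde A}\,=\;&n-\mu(\tilde\Phi^+N^*W_k,N^*\Delta_{\R^n})\\
&-\tfrac12\bigl(\dim W_0+2\dim V_0-2\dim W_0\cap(V_0\times V_0)\bigr)\\
&-\tfrac12\bigl(\dim\Delta_{\R^n}+2\dim V_0-2\dim\Delta_{V_0}\bigr)\\
&-\tfrac12\sum_{j=1}^k\bigl(\dim W_{j-1}+\dim W_j-2\dim W_{j-1}\cap W_j\bigr).
\end{split}\end{equation*}
The top-left corner contribution simplifies to $-n/2$, which combines with the leading $n$ to give the $n/2$ of the claimed formula. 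The Maslov term is then converted via exactly the symplectic-homotopy argument of equations (\ref{prim})--(\ref{second}) in the proof of Theorem \ref{fredholmnl}: the path $t\mapsto \Phi^+(1)\Phi^+(1-t/2)^{-1}\Phi^+(t/2)$ is homotopic with fixed endpoints $I,\Phi^+(1)$ to $\Phi^+(t)$, and applying the anti-symplectic involution $C$ to both arguments of the Maslov pair yields
$$
\mu(\tilde\Phi^+N^*W_k,N^*\Delta_{\R^n})\,=\,\mu(N^*W_k,\graf C\Phi^+)\,=\,\mu(\graf\Phi^+C,N^*W_k),
$$
producing the desired formula (\ref{inds+}) for $\ind\delbar_A=\ind\delbar_{\tilde A}$. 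The main technical point demanding care is the bookkeeping of the weighted spaces: one must check that the conformal scaling $z\mapsto z/2$ and the Schwarz-type reflection $z\mapsto \overline z/2+i$ map the $X^{1,p}$ norm on $\Sigma^+$ with weights at $\mathscr{S}$ to the $X^{1,p}$ norm with weights at $\mathscr{S}'$ up to equivalent constants, so that $F$ and $G$ are genuine Banach isomorphisms; once this bookkeeping is done, the statement reduces cleanly to Theorem \ref{fredhs+}.
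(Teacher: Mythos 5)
Your proof follows the paper's own argument almost step for step: the same doubling map $\tilde u(z) = (u(z/2),\overline u(\overline z/2+i))$ reducing to the local half-strip case of Theorem \ref{fredhs+}, the same computation of $\tilde A$ and $\tilde\Phi^+$, the same transversality verification, and the same symplectic-homotopy conversion of the Maslov term as in equations (\ref{prim})--(\ref{second}). The only additions you make are welcome extra care: the explicit check that $V_0\times V_0$ is partially orthogonal to $\Delta_{\R^n}$ (via $\Delta_{V_0^\perp}\perp\{(v,-v):v\in V_0\}$) and the remark that the conformal rescaling and reflection preserve the weighted $X^{1,p}$ norms up to equivalence — both of which the paper leaves implicit by deferring to the proof of Theorem \ref{fredholmnl}.
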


\begin{proof}
By the same argument used in the proof of Theorem \ref{fredholmnl},
the operator $\delbar_A$ is Fredholm and has the same
index as the operator
\[
\delbar_{\tilde{A}} : X^{1,p}_{\mathscr{S}',
  V_0 \times V_0, \mathscr{W},
  \mathscr{W}'}(\Sigma^+,\C^{2n}) \rightarrow X^p_{\mathscr{S}'} (\Sigma^+,\C^{2n}), \quad u \mapsto
  \delbar  u + \tilde{A} u,
\]
where $\mathscr{S}':= \{2s_1,\dots,2s_k,2s_1+i,\dots,2s_k+i\}$, $\mathscr{W}'$ is the $(k+1)$-tuple
$(\Delta_{\R^n},\dots,\Delta_{\R^n})$, and
\[
\tilde{A}(z) := \frac{1}{2} (A(z/2) \oplus C A (\overline{z}/2+i)C).
\]
By Theorem \ref{fredhs+} and by (\ref{second}), the index of this
operator is
\begin{equation*}\begin{split}
\ind \delbar_{\tilde{A}} \,=\, &n -  \mu(\graf 
\Phi^+ C, N^* W_k)\\
 & - \frac{1}{2} (\dim \Delta_{\R^n} + \dim
V_0 \times V_0 - 2 \dim \Delta_{\R^n} \cap (V_0 \times V_0))\\
 &- \frac{1}{2} (\dim W_0 + \dim V_0\times V_0 - 2 \dim W_0 \cap
(V_0 \times V_0)) \\ 
 &- \frac{1}{2} \sum_{j=1}^k (\dim W_{j-1} +
\dim W_j - 2 \dim W_{j-1} \cap W_j) \\ 
=\,& n - \mu(\graf 
\Phi^+ C, N^* W_k) - \frac{n}{2}\\
& - \frac{1}{2} (\dim W_0 + 2 \dim V_0 - 2
\dim W_0 \cap (V_0 \times V_0)) \\  
&- \frac{1}{2} \sum_{j=1}^k
(\dim W_{j-1} + \dim W_j - 2 \dim W_{j-1} \cap W_j).
\end{split}\end{equation*}
The desired formula follows.
\end{proof}

In the case of the left half-strip $\Sigma^-$, let $k$, $V_0$,
$\mathscr{W}$ be as above, and
let $\mathscr{S}=\{s_1,\dots,s_k,s_1+i,\dots,s_k+i\}$ with
\[
0=s_0 > s_1 > \dots > s_k > s_{k+1}=-\infty.
\]
Let $X^{1,p}_{\mathscr{S},V_0,\mathscr{W}}(\Sigma^-,\C^n)$ the completion
of the space of all maps $u\in C^{\infty}_{\mathscr{S},c}(\Sigma^-,\C^n)$ such that
\[
u(it) \in V_0 \;\; \forall t\in [0,1], \quad
(u(s), \overline{u}(s+i)) \in N^*
    W_j, \quad \forall s\in [s_{j+1},s_j], \quad j=0,\dots,k,
\]
with respect to the norm $\|u\|_{X^{1,p}(\Sigma^-)}$.

Let $A\in C^0([-\infty,0] \times [0,1],\mathrm{L}(\R^{2n},\R^{2n}))$ be such
that $A(- \infty,t)$ is symmetric for every $t\in [0,1]$,
and let $\Phi^-: [0,1] \rightarrow
\mathrm{Sp}(2n)$ be the solution of the linear Hamiltonian systems
\[
\frac{d}{dt} \Phi^- (t) = i A(- \infty,t) \Phi^+(t), \quad
\Phi^-(0) =I.
\]
Then we have:

\begin{thm}
\label{fredhs-nl}
Assume that $\graf C \Phi^-(1)
\cap N^* W_k=(0)$. Then for every $p\in ]1,+\infty[$
the $\R$-linear operator
\[
\delbar_A : X^{1,p}_{\mathscr{S},V_0,\mathscr{W}}
(\Sigma^-,\C^n) \rightarrow X^p_{\mathscr{S}} (\Sigma^-,\C^n), \quad u
\mapsto \overline{\partial} u + A u,
\]
is bounded and Fredholm of index
\begin{equation*}\begin{split}
\ind \delbar_A \,=\, & \frac{n}{2} + \mu(\graf 
\Phi^-C, N^* W_k)\\ &- \frac{1}{2} (\dim W_0 + 2 \dim V_0 - 2 \dim W_0 \cap
(V_0 \times V_0)) \\ &- \frac{1}{2} \sum_{j=1}^k (\dim W_{j-1} +
\dim W_j - 2 \dim W_{j-1} \cap W_j).
\end{split}\end{equation*}
\end{thm}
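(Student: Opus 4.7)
The plan is to mimic the proof of Theorem \ref{fredhs+nl}: reduce the nonlocal problem on $\Sigma^-$ to a local one by the doubling trick already employed in the proofs of Theorems \ref{fredholmnl} and \ref{fredhs+nl}, and then invoke Theorem \ref{fredhs-}.

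The first step is to associate to any $u\colon \Sigma^-\to \C^n$ the doubled map $\tilde u\colon \Sigma^-\to \C^{2n}$ defined by $\tilde u(z):=(u(z/2),\overline{u}(\overline{z}/2+i))$, and to verify that $u\mapsto \tilde u$ yields a Banach space isomorphism
\[
X^{1,p}_{\mathscr{S},V_0,\mathscr{W}}(\Sigma^-,\C^n)\;\stackrel{\cong}{\longrightarrow}\;
X^{1,p}_{\mathscr{S}',V_0\times V_0,\mathscr{W},\mathscr{W}'}(\Sigma^-,\C^{2n}),
\]
where $\mathscr{S}':=\{2s_1,\dots,2s_k,\,2s_1+i,\dots,2s_k+i\}$ and $\mathscr{W}'$ is the constant $(k+1)$-tuple $(\Delta_{\R^n},\dots,\Delta_{\R^n})$. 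The boundary-condition checks are routine: on $i[0,1]$ one has $\tilde u(it)=(u(it/2),\overline{u}(i(1-t/2)))\in V_0\times V_0$ since $V_0\subset \R^n$ is self-conjugate; on the lower boundary $\tilde u(s)=(u(s/2),\overline u(s/2+i))$ belongs to $N^*W_j$ precisely by the original nonlocal condition; and on the upper boundary $\tilde u(s+i)=(u((s+i)/2),\overline u((s+i)/2))$ automatically lies in $\graf C=N^*\Delta_{\R^n}$. Compatibility with the weighted $X^p$ norms is inherited from the same Jacobian computation used in the proof of Theorem \ref{fredholmnl}.

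Second, a direct computation shows that under these identifications $\delbar_A$ becomes $\delbar_{\tilde A}$, with
\[
\tilde A(z):=\tfrac{1}{2}\bigl(A(z/2)\oplus CA(\overline z/2+i)C\bigr),
\]
so $\tilde A(-\infty,t)=\tfrac{1}{2}(A(-\infty,t/2)\oplus CA(-\infty,1-t/2)C)$. The associated fundamental solution decomposes as $\tilde\Phi^-(t)=\Phi^-(t/2)\oplus C\Phi^-(1-t/2)\Phi^-(1)^{-1}C$, from which
\[
\tilde\Phi^-(1)^{-1}N^*\Delta_{\R^n}=\graf C\Phi^-(1).
\]
Thus the local transversality hypothesis $\tilde\Phi^-(1)N^*W_k\cap N^*\Delta_{\R^n}=(0)$ of Theorem \ref{fredhs-} is equivalent to the assumed $\graf C\Phi^-(1)\cap N^*W_k=(0)$.

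Finally, I would apply Theorem \ref{fredhs-} in dimension $2n$ with $W=V_0\times V_0$, $\mathscr V=\mathscr W$, $\mathscr V'=\mathscr W'$. The Fredholm property is immediate, and the index formula simplifies as follows: (a) the constant tuple $\mathscr W'$ contributes nothing to the corresponding jump sum; (b) using $\dim\Delta_{\R^n}=n$ and $\dim\Delta_{\R^n}\cap(V_0\times V_0)=\dim V_0$, the three boundary-vertex corrections involving the upper-strip base-point collapse to
\[
\tfrac{2n}{2}-\tfrac{1}{2}\bigl(n+2\dim V_0-2\dim V_0\bigr)=\tfrac{n}{2};
\]
(c) the same symplectic homotopy argument used in (\ref{prim})--(\ref{second}) yields $\mu(\tilde\Phi^-N^*W_k,N^*\Delta_{\R^n})=\mu(\graf \Phi^-C,N^*W_k)$. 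Assembling these gives exactly the index announced in the statement. I do not expect any substantive obstacle; the only care is in the sign bookkeeping for the Maslov identity in (c) and in checking that the doubling is a bijection on the weighted Sobolev spaces at the jump points, both of which are already present in the earlier proofs.
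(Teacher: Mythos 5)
Your proposal is correct and carries out exactly the doubling strategy that the paper uses for the analogous Theorem \ref{fredhs+nl}, whose proof the paper clearly intends the reader to adapt (the paper states Theorem \ref{fredhs-nl} without a written proof). The doubling map, the passage to $\tilde A$, the identification $\tilde\Phi^-(1)^{-1}N^*\Delta_{\R^n}=\graf C\Phi^-(1)$, the application of Theorem \ref{fredhs-} with $W=V_0\times V_0$, and the arithmetic collapsing the constant $(n,\dim V_0)$ terms to $n/2$ all match what the authors' argument delivers.
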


\subsection{Coherent orientations}
\label{cosec}

As noticed in \cite[Section 1.4]{as06}, the problem of giving coherent orientations for the spaces of maps arising in Floer homology on cotangent bundles is somehow simpler than in the case of a general symplectic manifolds, treated in \cite{fh93} for periodic orbits and in \cite{fooo09} for more general Lagrangian boundary conditions. This fact remains true if we deal with Cauchy-Riemann type operators on strips and half-strips with jumping conormal boundary conditions. We briefly discuss this issue in the general case of nonlocal boundary conditions on the strip, the case of the half-strip being similar (see \cite[Section 3.2]{as06}).

We recall that the space $\mathrm{Fred}(E,F)$ of Fredholm linear operators from the real Banach space $E$ to the real Banach space $F$ is the base space of a smooth real non-trivial line-bundle $\det(\mathrm{Fred}(E,F))$, with fibers
\[
\det (A) := \Lambda^{\max}(\ker A) \otimes ( \Lambda^{\max} (\coker A) )^*, \quad \forall A\in \mathrm{Fred}(E,F),
\]
where $\Lambda^{\max}(V)$ denotes the component of top degree in the exterior algebra of the finite-dimensional vector space $V$ (see \cite{qui85}). 

Let us recall the setting from Section \ref{nonlocsec}.
We fix the data $k\geq 0$, $\mathscr{S}=\{ s_1,\dots,s_k,s_1+i,\dots, s_k+i\}$, with $s_1 < \dots < s_k$,
and $\mathscr{W}=(W_0,\dots,W_k)$, where $W_0,\dots,W_k$ are linear subspaces of $\R^n \times \R^n$, such that $W_{j-1}$ is partially orthogonal to $W_j$, for $j=1,\dots,k$. Let $A^{\pm}:[0,1] \rightarrow \mathrm{Sym}(\R^n)$ be continuous paths of symmetric matrices such that the linear problems
\[
\left\{ \begin{array}{l} w'(t) = i A^-(t) w(t), \\ (w(0), C w(1)) \in N^* W_0, \end{array} \right. \quad \left\{ \begin{array}{l} w'(t) = i A^+(t) w(t), \\ (w(0), C w(1)) \in N^* W_k, \end{array} \right.
\]
have only the trivial solution $w=0$. Such paths are referred to as {\em non-degenerate} paths (with respect to $W_0$ and $W_k$, respectively). Fix some $p>1$, and let $\mathscr{D}_{\mathscr{S},\mathscr{W}} (A^-,A^+)$ be the space of operators of the form
\[
\overline{\partial}_A : X^{1,p}_{\mathscr{S},\mathscr{W}} (\Sigma,\C^n) \rightarrow X^p_{\mathscr{S}}(\Sigma,\C^n) , \quad u \mapsto \overline{\partial} u + A u,
\]
where $A\in C^0 (\overline{\R}\times [0,1],\mathrm{L} (\R^{2n},\R^{2n})) $ is such that $A(\pm \infty,t) = A^{\pm}(t)$ for every $t\in [0,1]$.  By Theorem \ref{fredholmnl}, 
$$\mathscr{D}_{\mathscr{S},\mathscr{W}} (A^-,A^+)\,\subset\,\mathrm{Fred}( X^{1,p}_{\mathscr{S},\mathscr{W}}(\Sigma,\C^n), X^p_{\mathscr{S}} (\Sigma,\C^n) )\,.
$$ 
It is actually a convex subset, so the restriction of the determinant bundle to  $\mathscr{D}_{\mathscr{S},\mathscr{W}} (A^-,A^+)$ -- that we denote by $\det(\mathscr{D}_{\mathscr{S},\mathscr{W}} (A^-,A^+))$ -- is trivial. 

Let $\mathfrak{S}$ be the family of all subsets of $\Sigma$ consisting of exactly $k$ pairs of opposite boundary points. It is a $k$-dimensional manifold, diffeomorphic to an open subsets of $\R^k$. An orientation of $\det(\mathscr{D}_{\mathscr{S},\mathscr{W}} (A^-,A^+))$ for a given $\mathscr{S}$ in $\mathfrak{S}$ uniquely determines an orientation for all choices of $\mathscr{S}'\in \mathfrak{S}$. Indeed, the disjoint unions
\[
\bigsqcup_{\mathscr{S}\in \mathfrak{S}}  X^{1,p}_{\mathscr{S},\mathscr{W}} (\Sigma,\C^n), \quad \bigsqcup_{\mathscr{S}\in \mathfrak{S}}  X^p_{\mathscr{S}} (\Sigma,\C^n),
\]
define locally trivial Banach bundles over $\mathfrak{S}$, and the operators $\overline{\partial}_A$ define a Fredholm bundle-morphism between them. Since $\mathfrak{S}$ is connected and simply connected, an orientation of the determinant space of this operator between the fibers of a given point $\mathscr{S}$ induces an orientation of the determinant spaces of the operators over each $\mathscr{S}'\in \mathfrak{S}$. 

The space of all Fredholm bundle-morphisms between the above Banach bundles induced by operators of the form $\overline{\partial}_A$ with fixed asymptotic paths $A^-$ and $A^+$
is denoted by $\mathscr{D}_{\mathscr{W}}(A^-,A^+)$.
An orientation of the determinant bundle over this space of Fredholm bundle-morphisms is denoted by $o_{\mathscr{W}}(A^-,A^+)$.

Let $\mathscr{W} = (W_0,\dots,W_k)$, $\mathscr{W}' = (W_k, \dots, W_{k+k'})$ be vectors consisting of consecutively partially orthogonal linear subspaces of $\R^n \times \R^n$, and set 
\[
\mathscr{W} \# \mathscr{W}' := (W_0,\dots,W_{k+k'}).
\]  
Let $A_0,A_1,A_2$ be non-degenerate paths with respect to $W_0$, $W_{k}$, and $W_{k+k'}$, respectively.
Then orientations $o_{\mathscr{W}}(A_0,A_1)$ and $o_{\mathscr{W}'}(A_1,A_2)$ of $\det(\mathscr{D}_{\mathscr{W}} (A_0,A_1))$ and $\det(\mathscr{D}_{\mathscr{W}'} (A_1,A_2))$, respectively, determine in a canonical way a {\em glued orientation}
\[
o_{\mathscr{W}}(A_0,A_1)\, \# \,o_{\mathscr{W}'}(A_1,A_2)
\]
of $\det(\mathscr{D}_{\mathscr{W}\# \mathscr{W}'} (A_0,A_2))$. The construction is analogous to the one described in \cite[Section 3]{fh93}.
This way of gluing orientations is associative. A {\em coherent orientation} is a set of orientations $o_{\mathscr{W}}(A^-,A^+)$ for each choice of compatible data such that
\[
o_{\mathscr{W}\# \mathscr{W}'}(A_0,A_1) = 
o_{\mathscr{W}}(A_0,A_1)\, \# \,o_{\mathscr{W}'}(A_1,A_2),
\]
whenever the latter glued orientation is well-defined. The proof of the existence of a coherent orientation is analogous to the proof of Theorem 12 in \cite{fh93}.

The choice of such a coherent orientation in this linear setting determines orientations for all the nonlinear objects we are interested in, and such orientations are compatible with gluing. As mentioned above, the fact that we are dealing with the cotangent bundle of an oriented manifold makes the step from the linear setting to the nonlinear one easier. The reason is that we can fix once for all special symplectic trivializations of the bundle $x^*(TT^*M)$, for every solution $x$ of our Hamiltonian problem. In fact, one starts by fixing an orthogonal and orientation preserving trivialization of $(\pi\circ x)^*(TM)$, and then considers the induced unitary trivialization of $x^*(TT^*M)$. Let $u$ be an element in some space $\mathscr{M}(x,y)$, consisting
of the solutions of a Floer equation on the strip $\Sigma$ which are asymptotic to two Hamiltonian orbits $x$ and $y$ and satisfy suitable jumping conormal boundary conditions. Then
we can find a unitary trivialization of $u^*(TT^*M)$ which converges to the given unitary trivializations of $x^*(TT^*M)$ and $y^*(TT^*M)$. We may use such a trivialization to linearize the problem, producing  a Fredholm operator in $\mathscr{D}_{\mathscr{S},\mathscr{W}}(A^-,A^+)$. Here $A^-,A^+$ are determined by the fixed unitary trivializations of $x^*(TT^*M)$ and $y^*(TT^*M)$. The orientation of the determinant bundle over $\mathscr{D}_{\mathscr{S},\mathscr{W}}(A^-,A^+)$ then induces an orientation of the tangent space of $\mathscr{M}(x,y)$ at $u$, that is an orientation of $\mathscr{M}(x,y)$.
See \cite[Section 1.4]{as06} for more details.

When the manifold $M$ is not orientable, one cannot fix once for all trivializations along the Hamiltonian orbits, and the construction of coherent orientations requires understanding the effect of changing the trivialization, as in \cite[Lemma 15]{fh93}. The Floer complex and the pair-of-pants product are still well-defined over integer coefficients, whereas the Chas-Sullivan loop product requires $\Z_2$ coefficients.  

\subsection{Nonlinear consequences}
\label{lin}

Let us derive the nonlinear consequences of Theorems \ref{fredholmnl}, \ref{fredhs+nl}, and \ref{fredhs-nl}. 
Let $Q$ be a finite dimensional Riemannian manifold, and let $R_0,\dots,R_k$ be submanifolds of $Q\times Q$, such that $R_{j-1}$ is partially orthogonal to $R_j$, for every $j=1,\dots,k$ (with respect to the product metric on $Q\times Q$). Partial orthogonality implies that the dimension of $R_{j-1}\cap R_j$ is locally constant, and we assume such a dimension is actually constant.
Let $k\geq 0$ be an integer, and let us fix numbers
\[
-\infty = s_0 < s_1 < \dots < s_k < s_{k+1} = +\infty.
\]
We recall from Section \ref{fhpfo} that if $H\in C^{\infty}([0,1],T^*Q)$ is a Hamiltonian, the symbol $\mathscr{P}^R(H)$ denotes the set of all the Hamiltonian orbits $x:[0,1]\rightarrow T^*Q$  which satisfy the boundary condition $(x(0),\mathscr{C}x(1))\in N^* R$, where $\mathscr{C}$ is the anti-symplectic involution on $T^*Q$ which maps $(q,p)$ into $(q,-p)$. When $x\in \mathscr{P}^R(H)$ is non-degenerate, its Maslov index $\mu^R(x)$ is defined in (\ref{mascon}).

We recall that a space $\mathscr{M}$ is said to have {\em virtual dimension $d$}, where $d\in \Z$, or briefly
\[
\mathrm{virdim}\, \mathscr{M} = d,
\]
if $\mathscr{M}$ can be seen as the set of zeroes of a smooth section of some Banach bundle, whose fiberwise derivative is Fredholm of index $d$. When such a section is transverse to the zero-section, the implicit function theorem implies that either $\mathscr{M}$ is empty, or $\mathscr{M}$ is a smooth manifold of dimension $d\geq 0$.  

We start by considering the case of the full strip $\Sigma = \set{z\in \C}{0\leq \im z\leq 1}$:

\begin{cor}
\label{corfred}
Let $x\in \mathscr{P}^{R_0}(H)$ and $y\in \mathscr{P}^{R_k}(H)$ be non-degenerate. Then the set $\mathscr{M}(x,y)$ of maps $u:\Sigma\rightarrow T^*Q$ which solve the equation $\delbar_{J,H} (u) = 0$ with boundary conditions
\[
(u(s),\mathscr{C} u(s+i)) \in N^* R_j, \quad \forall s\in [s_j,s_{j+1}],
\]
and asymptotic conditions
\[
\lim_{s\rightarrow -\infty} u(s+it) = x(t), \quad \lim_{s\rightarrow +\infty} u(s+it) = y(t),
\]
has virtual dimension
\[
\mathrm{virdim}\, \mathscr{M}(x,y) = \mu^{R_0}(x) - \mu^{R_k}(y) - \sum_{j=1}^k ( \dim R_{j-1} - \dim R_{j-1} \cap R_j ).
\]
\end{cor}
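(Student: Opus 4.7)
The plan is to realize $\mathscr{M}(x,y)$ as the zero set of a smooth Fredholm section of a Banach bundle and to compute the virtual dimension as the Fredholm index of that section's linearization, invoking Theorem \ref{fredholmnl} together with the normalization (\ref{mascon}) of the Maslov index $\mu^R$. Concretely, I would fix $p>2$, set $\mathscr{S}=\{s_1,\dots,s_k,s_1+i,\dots,s_k+i\}$, and consider the Banach manifold $\mathscr{B}(x,y)$ of maps $u\colon\Sigma\to T^*Q$ which converge exponentially to $x$ and $y$ at the two ends, satisfy the jumping boundary conditions $(u(s),\mathscr{C}u(s+i))\in N^*R_j$ on each subinterval $[s_j,s_{j+1}]$, and whose local regularity near each singular point of $\mathscr{S}$ is that of the $X^{1,p}_{\mathscr{S},\mathscr{W}}$-space, with $\mathscr{W}=(W_0,\dots,W_k)$ obtained by trivializing the tangent spaces $T_{(x(0),x(1))}R_j,\dots,T_{(y(0),y(1))}R_j$. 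Over $\mathscr{B}(x,y)$ the bundle $\mathscr{E}$ with fiber the space of $X^p_{\mathscr{S}}$-sections of $\Lambda^{0,1}\otimes u^*(TT^*Q)$ carries the smooth section $u\mapsto\delbar_{J,H}(u)$, whose zero locus is exactly $\mathscr{M}(x,y)$.

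Next, at any $u\in\mathscr{M}(x,y)$ I would choose a symplectic vertical-preserving trivialization $\Psi$ of $u^*(TT^*Q)$ which, at $s\to\pm\infty$, agrees with the trivializations used to define $\mu^{R_0}(x)$ and $\mu^{R_k}(y)$ in (\ref{mascon}). Read in $\Psi$, the linearization $D\delbar_{J,H}(u)$ becomes an operator of precisely the form treated in Theorem \ref{fredholmnl}: its asymptotic symmetric paths $A^\pm$ integrate to symplectic paths $\Phi^-,\Phi^+$ conjugate to $D\phi^H(\cdot,x(0)),D\phi^H(\cdot,y(0))$, and its jumping boundary conditions are the nonlocal conormal ones attached to the $W_j$. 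The non-degeneracy of $x$ and $y$ translates into the transversality hypotheses $\graf C\Phi^\pm(1)\cap N^*W_{0,k}=(0)$, while the clean (partially orthogonal) intersection hypothesis on $R_{j-1}$ and $R_j$ ensures partial orthogonality of $W_{j-1}$ and $W_j$. Theorem \ref{fredholmnl} then yields the Fredholm index
\begin{equation*}
\mu(\graf\Phi^-C,N^*W_0)-\mu(\graf\Phi^+C,N^*W_k)-\tfrac{1}{2}\sum_{j=1}^k\bigl(\dim R_{j-1}+\dim R_j-2\dim R_{j-1}\cap R_j\bigr),
\end{equation*}
using that $\dim W_j=\dim R_j$ and $\dim W_{j-1}\cap W_j=\dim R_{j-1}\cap R_j$ under partial orthogonality.

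Finally, I would convert this into the claimed formula by bookkeeping. The normalization (\ref{mascon}) gives
\begin{equation*}
\mu(\graf\Phi^-C,N^*W_0)-\mu(\graf\Phi^+C,N^*W_k)=\mu^{R_0}(x)-\mu^{R_k}(y)+\tfrac{1}{2}\bigl(\dim R_k-\dim R_0\bigr),
\end{equation*}
and a straightforward telescoping shows that adding $\tfrac{1}{2}(\dim R_k-\dim R_0)$ to the jump sum above collapses it to $-\sum_{j=1}^k(\dim R_{j-1}-\dim R_{j-1}\cap R_j)$, which is exactly the correction appearing in the statement. The main obstacle I expect is not this final index bookkeeping, which is forced once Theorem \ref{fredholmnl} is in place, but the analytic setup of $\mathscr{B}(x,y)$ at the jump points: one must verify that the weighted $X^{1,p}$-regularity encodes the right trace behaviour for nonlinear maps into $T^*Q$ with jumping conormal conditions, that the exponential-map construction based at a reference solution produces legitimate charts, and that the trivialization $\Psi$ can be arranged to convert the nonlinear conormal conditions into the linear ones of Section \ref{nonlocsec}. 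Once this is done, the corollary follows from Theorem \ref{fredholmnl} by the computation sketched above.
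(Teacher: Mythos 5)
Your proposal follows essentially the same route as the paper's proof: realize $\mathscr{M}(x,y)$ as the zero set of a Fredholm section of a Banach bundle modeled on $X^{1,p}_{\mathscr{S},\mathscr{W}}(\Sigma,\C^n)$ with $X^p_{\mathscr{S}}$ fibers, linearize in a vertical-preserving trivialization to get an operator of the type covered by Theorem~\ref{fredholmnl}, and then translate the relative Maslov indices $\mu(\graf\Phi^\pm C,N^*W)$ into $\mu^{R_0}(x),\mu^{R_k}(y)$ via (\ref{mascon}), with the same telescoping bookkeeping. Your index computation and normalization are correct, and the analytic setup you flag as the main concern is exactly what the paper dismisses as ``standard arguments in Floer theory.''
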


\begin{proof}
Standard arguments in Floer theory allow us to see $\mathscr{M}(x,y)$ as the set of zeroes of a smooth section of a suitable Banach bundle, the base of which is a Banach manifold modeled on $X^{1,p}_{\mathscr{S},\mathscr{W}}$, and the fibers are Banach spaces of sections of class $X^p_{\mathscr{S}}$, where $p>2$.
The fiberwise derivative of such a section at $u\in \mathscr{M}(x,y)$ is conjugated to a linear operator $\delbar_A$ of the form considered in Theorem \ref{fredholmnl}, where the linear subspaces $W_j$ of $\R^n \times \R^n$ are local models for the submanifolds $R_j$, and where 
\begin{eqnarray*}
\mu (\graf \Phi^- C,N^* W_0)  & = & \mu^{R_0} (x) - \frac{1}{2} ( \dim R_0 - \dim Q), \\
\mu (\graf \Phi^+ C,N^* W_k) & = &  \mu^{R_k} (y) - \frac{1}{2} ( \dim R_k - \dim Q),
\end{eqnarray*}
by the definition (\ref{mascon}) of the Maslov index. By Theorem \ref{fredholmnl}, such an operator is Fredholm of index
\begin{eqnarray*}
\ind \delbar_A = \mu^{R_0} (x) - \frac{1}{2} ( \dim R_0 - \dim Q) - \mu^{R_k} (y) + \frac{1}{2} ( \dim R_k - \dim Q) \\ - \frac{1}{2} \sum_{j=1}^k (\dim R_{j-1} + \dim R_j - 2 \dim R_{j-1}\cap R_j ).
\end{eqnarray*}
After simplification, this formula reduces to
\[
\ind \delbar_A = \mu^{R_0}(x) - \mu^{R_k}(y) - \sum_{j=1}^k ( \dim R_{j-1} - \dim R_{j-1} \cap R_j ),
\]
as claimed.
\end{proof}

\begin{rem}
By elliptic regularity, the maps $u\in \mathscr{M}(x,y)$ are smooth up to the boundary on $\Sigma\setminus \{s_1,s_1+i,\dots,s_k,s_k+i\}$. By Schwarz reflection, the maps
\[
\zeta \mapsto u(s_j+\zeta^2) \quad \mbox{and} \quad \zeta \mapsto u(s_j + i - \zeta^2)
\]
are smooth up to the boundary in a neighborhood of zero in the upper right quadrant $\mathrm{Cl} (\mathbb{H}^+) = \set{\zeta\in \C}{\re \zeta\geq 0, \; \im z\geq 0}$. Analogous regularity results hold for the maps which appear in the following two corollaries.
\end{rem}  

\begin{rem}
In this paper, all the pairs of submanifolds we need to consider are partially orthogonal. However, it might be useful to have a generalization of Corollary \ref{corfred} to the situation where the submanifolds $R_{j-1}$ and $R_j$ are only assumed to have a clean intersection. An easy way to deal with such a situation is the following. First, we transform the nonlocal boundary problem for $u:\Sigma \rightarrow T^*Q$ into a local one, by considering the maps $v: \Sigma \rightarrow T^* (Q\times Q)$ defined by
\[
v(z) := \bigl( \mathscr{C} u((i-z)/2), u((i+z)/2)\bigr).
\]
Then $v$ solves a Cauchy-Riemann type equation, together with local boundary conditions
\[
v(s) \in N^* \Delta_Q, \;\; \forall s\in \R, \quad v(s+i) \in N^* R_j, \;\; \forall s\in [s_j,s_{j+1}].
\]
Let $g_s$ be a smooth 1-parameter family of metrics on $Q\times Q$ such that $R_{j-1}$ is partially orthogonal to $R_j$ with respect to the metric  $g_{s_j}$ (when  the sets $R_{j-1}\cap R_j$ are pairwise disjoint, the family $g_s$ can be chosen to be independent of $s$). Then the analogue of Corollarly \ref{corfred} holds, where the the perturbed Cauchy-Riemann operator $\delbar_{J,H}$ is the one associated to the $s$-dependent family of Levi-Civita almost complex structures $J_s$ on $T^* (Q\times Q)$ induced by $g_s$. The same considerations apply to the next two corollaries.
\end{rem}  

In the case of the right half-strip $\Sigma^+ = \set{z\in \C}{\re z\geq 0, \; 0\leq \im z\leq 1}$, we fix the numbers
\[
0 = s_0 < s_1 <\dots< s_k <s_{k+1}=+\infty,
\]
and we have the following consequence of Theorem \ref{fredhs+nl}:

\begin{cor}
\label{corfred+}
Let $x\in \mathscr{P}^{R_k}(H)$ be non-degenerate and let $\gamma\in W^{1,2}([0,1],Q)$ be a curve such that $(\gamma(0),\gamma(1))\in R_0$.
Then the set $\mathscr{M}(\gamma,x)$ of maps $u:\Sigma^+ \rightarrow T^*Q$ which solve the equation $\delbar_{J,H} (u) = 0$ with boundary conditions
\[
(u(s),\mathscr{C} u(s+i)) \in N^* R_j, \;\; \forall s\in [s_j,s_{j+1}], \quad u(it) \in T_{\gamma(t)}^* Q, \;\; \forall t\in [0,1],
\]
and the asymptotic condition
\[
\lim_{s\rightarrow +\infty} u(s+it) = x(t),
\]
has virtual dimension 
\[
\mathrm{virdim}\, \mathscr{M}(\gamma,x) = - \mu^{R_k}(x) - \sum_{j=1}^k ( \dim R_{j-1} - \dim R_{j-1} \cap R_j ).
\]
\end{cor}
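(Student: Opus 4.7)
The plan is to mirror the proof of Corollary \ref{corfred}, using Theorem \ref{fredhs+nl} in place of Theorem \ref{fredholmnl}. First, I would realize $\mathscr{M}(\gamma,x)$ as the zero set of a smooth Fredholm section of a Banach bundle whose base is a Banach manifold modeled on a space of the form $X^{1,p}_{\mathscr{S},V_0,\mathscr{W}}(\Sigma^+,\C^n)$ for some $p>2$, with $2n=\dim T^*Q$. The jumping conormal conditions at the boundary points $s_1,\dots,s_k$ are encoded by linear subspaces $W_0,\dots,W_k$ of $\R^n\times\R^n$ that are local models of $N^\ast R_0,\dots,N^\ast R_k$; the partial orthogonality of consecutive $R_j$ translates into the partial orthogonality hypothesis of Theorem \ref{fredhs+nl}. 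The Lagrangian condition $u(it)\in T^\ast_{\gamma(t)}Q$ is imposed by fixing a smooth orientation-preserving orthogonal trivialization of $\gamma^*(TQ)$, inducing a unitary trivialization along $\gamma$ which sends each fiber $T^\ast_{\gamma(t)}Q$ to the vertical subspace $N^\ast(0)\subset T^\ast\R^n$. Hence $V_0=(0)$.

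Next, one extends this trivialization along $\gamma$ to a unitary trivialization $\Psi$ of $u^\ast(TT^\ast Q)$ which is vertical-preserving and converges as $s\to+\infty$ to a canonical vertical-preserving trivialization along $x$; this is the standard construction recalled in Section \ref{fhpfo}. Under $\Psi$, the linearization of the section at $u$ becomes conjugated to an operator $\delbar_A$ of the type treated in Theorem \ref{fredhs+nl}, with asymptotic path $\Phi^+$ given by conjugating $D_x\phi^H(t,x(0))$ by $\Psi$. By definition (\ref{mascon}) of $\mu^R$,
\[
\mu(\graf\Phi^+ C, N^\ast W_k) = \mu^{R_k}(x) - \tfrac{1}{2}\bigl(\dim R_k - \dim Q\bigr).
\]
Since $V_0=(0)$, we have $\dim V_0=0$ and $\dim W_0\cap(V_0\times V_0)=0$, so Theorem \ref{fredhs+nl} gives
\[
\ind\delbar_A = \frac{n}{2} - \mu^{R_k}(x) + \tfrac{1}{2}(\dim R_k - n) - \tfrac{1}{2}\dim R_0 - \tfrac{1}{2}\sum_{j=1}^{k}\bigl(\dim R_{j-1} + \dim R_j - 2\dim R_{j-1}\cap R_j\bigr).
\]

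Finally, an elementary algebraic simplification telescopes the last sum: writing $a_j:=\dim R_j$ and $b_j:=\dim R_{j-1}\cap R_j$, one checks
\[
\tfrac{1}{2}a_k - \tfrac{1}{2}a_0 - \tfrac{1}{2}\sum_{j=1}^k (a_{j-1}+a_j-2b_j) = -\sum_{j=1}^k(a_{j-1}-b_j),
\]
which yields exactly the claimed virtual dimension. The main technical point in this plan is really just the setup of the Banach manifold structure together with the correct choice of trivialization along $\gamma$; the Fredholm index computation itself is a direct application of the linear theory of Section \ref{nonlocsec}. Since the hypotheses on $R_{j-1}\cap R_j$ (constant dimension of clean intersection, partial orthogonality) match exactly those required by Theorem \ref{fredhs+nl}, no further analytic obstacle arises, and the regularity of solutions up to the singular boundary points follows from elliptic regularity together with the Schwarz reflection argument already used in the bulk of the paper.
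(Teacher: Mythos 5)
Your proposal is correct and follows essentially the same route as the paper: realize $\mathscr{M}(\gamma,x)$ as the zero set of a Fredholm section, identify the linearization with an operator $\overline\partial_A$ of the form treated in Theorem~\ref{fredhs+nl} with $V_0=(0)$ (from trivializing along $\gamma$ so that the fibers go to the vertical subspace) and $W_j$ a local model for $R_j$, then plug into the index formula and use~\eqref{mascon}. The algebraic telescoping you carry out at the end is exactly the computation done in the paper's proof.
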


\begin{proof}
Arguing as in the proof of Corollary \ref{corfred}, we find that $\mathscr{M}(\gamma,x)$ has virtual dimension equal to the Fredholm index of an operator $\delbar_A$ of the form considered in Theorem \ref{fredhs+nl}, with $V_0=(0)$ and $W_j$ a local model for $R_j$ (see \cite{as06}, Section 3.1, for more details on how to deal with this kind of boundary data).

By Theorem \ref{fredhs+nl} and  (\ref{mascon}), we have
\begin{eqnarray*}
\ind \delbar_A = \frac{1}{2} \dim Q - \mu ( \graf \Phi^+ C,N^* W_k) - \frac{1}{2} \dim R_0 \\ - \frac{1}{2} \sum_{j=1}^k (\dim R_{j-1} + \dim R_j - 2 \dim R_{j-1} \cap R_j) \\
= - \mu^{R_k}(x) - \sum_{j=1}^k ( \dim R_{j-1} - \dim R_{j-1} \cap R_j ),
\end{eqnarray*}
concluding the proof.
\end{proof}

In the case of the left half-strip $\Sigma^- = \set{z\in \C}{\re z\leq 0, \; 0\leq \im z \leq 1}$, we fix the numbers
\[
0=s_0 > s_1 > \dots > s_k > s_{k+1}=-\infty.
\]
One could easily derive the analogue of Corollary \ref{corfred+} from Theorem \ref{fredhs-nl}. Instead, we prefer to derive the following variant, where $\mathbb{O}_Q$ denotes image of the zero-section in $T^*Q$:

\begin{cor}
\label{corfred-}
Let $x\in \mathscr{P}^{R_k}(H)$ be non-degenerate. Then the set $\mathscr{M}(x)$ of maps $u:\Sigma^- \rightarrow T^*Q$ which solve the equation $\delbar_{J,H} (u) = 0$ with boundary conditions
\[
(u(s),\mathscr{C} u(s+i)) \in N^* R_j, \;\; \forall s\in [s_{j+1},s_j], \quad u(it) \in \mathbb{O}_Q \;\; \forall t\in [0,1],
\]
and the asymptotic condition
\[
\lim_{s\rightarrow -\infty} u(s+it) = x(t),
\]
has virtual dimension 
\[
\mathrm{virdim}\, \mathscr{M}(x) = \mu^{R_k}(x) - \sum_{j=1}^k ( \dim R_j - \dim R_{j-1} \cap R_j ).
\]
\end{cor}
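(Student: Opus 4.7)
\medskip

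\noindent\emph{Proof proposal.} The plan is to follow the template of Corollaries \ref{corfred} and \ref{corfred+}, reducing the statement to a direct application of Theorem \ref{fredhs-nl} plus an algebraic simplification of the resulting index formula. First I would realize $\mathscr{M}(x)$ as the zero set of a smooth Fredholm section in the usual way: fix $p>2$, take as base a Banach manifold of $X^{1,p}_{\mathscr{S}}$-maps $u:\Sigma^-\rightarrow T^*Q$ satisfying the prescribed jumping conormal boundary conditions together with $u(it)\in\mathbb{O}_Q$ and the asymptotic condition at $-\infty$, take as fiber the Banach space $X^p_{\mathscr{S}}(u^*TT^*Q)$, and view $\delbar_{J,H}$ as a section. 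Near a given element $u$ I choose a vertical-preserving unitary trivialization of $u^*(TT^*Q)$ which at $s\to-\infty$ agrees with the canonical trivialization of $x^*(TT^*Q)$; this is legitimate because the boundary datum $\mathbb{O}_Q$ is preserved by any vertical-preserving unitary framing.

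In this trivialization the fiberwise derivative $D\delbar_{J,H}(u)$ is conjugate to a bounded operator
\[
\delbar_A:X^{1,p}_{\mathscr{S},V_0,\mathscr{W}}(\Sigma^-,\C^n)\rightarrow X^p_{\mathscr{S}}(\Sigma^-,\C^n)
\]
of exactly the form considered in Theorem \ref{fredhs-nl}, where $V_0=\R^n$ is the local model of the zero section (so that $u(it)\in\mathbb{O}_Q$ reads $u(it)\in V_0$), and $W_j\subset\R^n\times\R^n$ is a local model for $R_j$; in particular $\dim W_j=\dim R_j$, $W_0\cap(V_0\times V_0)=W_0$, and $\dim W_{j-1}\cap W_j=\dim R_{j-1}\cap R_j$. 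The transversality assumption $\graf C\Phi^-(1)\cap N^*W_k=(0)$ required by Theorem \ref{fredhs-nl} is precisely the non-degeneracy of $x\in\mathscr{P}^{R_k}(H)$.

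Theorem \ref{fredhs-nl} then yields
\[
\ind\delbar_A\,=\,\frac{n}{2}+\mu(\graf\Phi^- C,N^*W_k)-\frac{1}{2}\bigl(\dim R_0+2n-2\dim R_0\bigr)-\frac{1}{2}\sum_{j=1}^k\bigl(\dim R_{j-1}+\dim R_j-2\dim R_{j-1}\cap R_j\bigr).
\]
The normalization (\ref{mascon}) of the Maslov index with nonlocal conormal boundary conditions gives $\mu(\graf\Phi^- C,N^*W_k)=\mu^{R_k}(x)-\tfrac{1}{2}(\dim R_k-n)$. Substituting and expanding the telescoped sum $\sum_{j=1}^k(\dim R_{j-1}+\dim R_j)=\dim R_0+2\sum_{j=1}^{k-1}\dim R_j+\dim R_k$, one sees that the terms involving $\dim R_0$ and the constant $n$ cancel, leaving
\[
\ind\delbar_A\,=\,\mu^{R_k}(x)-\sum_{j=1}^k\bigl(\dim R_j-\dim R_{j-1}\cap R_j\bigr),
\]
which is the claimed virtual dimension.

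The only non-routine step is the first one: setting up the correct functional-analytic framework on the left half-strip with the weighted Sobolev spaces $X^{1,p}_{\mathscr{S}}$ near the jump points $s_1,\dots,s_k$ and identifying the linearization with the operator covered by Theorem \ref{fredhs-nl}. This is however exactly parallel to the setup already used in Corollaries \ref{corfred} and \ref{corfred+}, and the translation between the global geometric picture (the zero section $\mathbb{O}_Q$ and the conormal bundles $N^*R_j$) and the linear data $(V_0,\mathscr{W})$ is immediate once the trivialization along $u$ has been fixed. Once this identification is made, the index computation is the purely algebraic manipulation carried out above.
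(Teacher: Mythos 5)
Your proposal is correct and follows essentially the same route as the paper's own argument: realize $\mathscr{M}(x)$ as the zero set of a Fredholm section whose linearization is an operator covered by Theorem \ref{fredhs-nl} with $V_0$ the model for the zero section and $W_j$ a local model for $R_j$, then apply that theorem together with the Maslov normalization (\ref{mascon}) and simplify. The algebraic simplification you carry out (cancellation of the terms in $\dim R_0$ and in $\dim Q$, and the telescoping of the sum) is the same computation the paper compresses into a two-line display, and it is correct; the only slip is a harmless notational one, using $n$ both for $\dim Q$ in the local linear model and, implicitly, for $\dim M$ in the surrounding discussion, where the paper is careful to write $m=\dim Q$.
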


\begin{proof}
Arguing as in the proof of Corollary \ref{corfred}, we find that $\mathscr{M}(x)$ has virtual dimension equal to the Fredholm index of an operator $\delbar_A$ of the form considered in Theorem \ref{fredhs-nl}, with $V_0=\R^m$, $m=\dim Q$, and $W_j$ a local model for $R_j$. By Theorem \ref{fredhs-nl} and  (\ref{mascon}), we have
\begin{eqnarray*}
\ind \delbar_A = \frac{1}{2} \dim Q + \mu ( \graf \Phi^- C,N^* W_k) - \frac{1}{2} (2 \dim Q - \dim R_0) \\ - \frac{1}{2} \sum_{j=1}^k (\dim R_{j-1} + \dim R_j - 2 \dim R_{j-1} \cap R_j) \\
= \mu^{R_k}(x) - \sum_{j=1}^k ( \dim R_j - \dim R_{j-1} \cap R_j ),
\end{eqnarray*}
concluding the proof.
\end{proof}

\paragraph{Dimension computations.} We conclude this section by using the above corollaries to prove that all the spaces of solutions of the Cauchy-Riemann type problems considered in this paper are - generically - smooth manifolds, and to compute their dimension. Together with the results of Section \ref{cosec}, we deduce that these manifolds carry coherent orientations which are compatible with gluing. The transversality issues which lead to genericity in the space of the Hamiltonians are standard, see \cite{fhs96}. Here we compute the virtual dimensions, by making use of the following two lemmas.

\begin{lem}
\label{lemulo}
Let $R_1$ and $R_2$ be submanifolds of $Q\times Q$, let $H_1$, $H_2\in C^{\infty}([0,1]\times T^*Q)$ be two Hamiltonians, and set
\begin{eqnarray}
\label{lerre}
R:= \set{(q_1,q_2,q_3,q_4)\in Q^4}{(q_3,q_1)\in R_1, \; (q_2,q_4)\in R_2} \cong R_1 \times R_2, \\
\label{lakappa}
K\in C^{\infty}\bigl([0,1]\times T^* Q^2 \bigr), \quad K(t,x_1,x_2) := H_1(1-t,\mathscr{C}x_1) + H_2(t,x_2).
\end{eqnarray}
Then the curves $x_1$, $x_2\in C^{\infty}([0,1],T^*Q)$ belong to $\mathscr{P}^{R_1}(H_1)$ and $\mathscr{P}^{R_2}(H_2)$, respectively, if and only if the curve
\[
x: [0,1] \rightarrow T^* (Q \times Q) = T^* Q\times T^* Q, \quad x(t):= \bigl( \mathscr{C} x_1(1-t), x_2(t) \bigr),
\]
belongs to $\mathscr{P}^R(K)$. Furthermore,
\[
\mu^R(x) = \mu^{R_1}(x_1) + \mu^{R_2}(x_2).
\]
\end{lem}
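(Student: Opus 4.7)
The plan is to verify the three assertions in turn: (a) the equivalence of the Hamiltonian systems, (b) the equivalence of the boundary conditions, and (c) the additivity of the Maslov index. For (a), since the Hamiltonian splits as $K = K_1 \oplus K_2$ with $K_1(t,\xi) := H_1(1-t, \mathscr{C}\xi)$ and $K_2 = H_2$, so does the Hamiltonian vector field, $X_K = X_{K_1} \oplus X_{K_2}$. Using $\omega(X_K,\cdot) = -dK$ together with the anti-symplecticity of $\mathscr{C}$, a direct calculation yields $X_{K_1}(t,\xi) = -D\mathscr{C}\, X_{H_1}(1-t, \mathscr{C}\xi)$, and differentiating $y(t) := \mathscr{C}x_1(1-t)$ then shows that $y$ solves $y' = X_{K_1}(t,y)$ if and only if $x_1$ is an orbit of $H_1$.

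For (b), write $x_j(t) = (q_j(t), p_j(t))$ and compute the element $(x(0), \mathscr{C}x(1)) \in T^*Q^4$ explicitly: its base is the quadruple $(q_1(1), q_2(0), q_1(0), q_2(1))$ and its covector is $(-p_1(1), p_2(0), p_1(0), -p_2(1))$. The defining relation (\ref{lerre}) shows that the base lies in $R$ exactly when each $(q_j(0), q_j(1))$ lies in $R_j$, and the product decomposition $TR \cong TR_1 \oplus TR_2$ (after the same reordering of coordinates) lets the annihilation condition decouple into the two individual conormal conditions $(x_j(0), \mathscr{C}x_j(1)) \in N^*R_j$.

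For (c), choose vertical-preserving symplectic trivializations $\Psi_j$ of $x_j^*(TT^*Q)$ realizing the normalization in (\ref{mascon}) for each pair $(R_j, x_j)$, and set $\Psi := \Psi_y \oplus \Psi_2$ with $\Psi_y(t) := D\mathscr{C}\, \Psi_1(1-t)\, C$; this $\Psi_y$ is symplectic and vertical-preserving since it is a composition of three sign-flipping factors. Because $K$ splits, the conjugated flow decomposes as $G^\Psi = G^{\Psi_y}_{K_1} \oplus G^{\Psi_2}_{H_2}$, and by (b) the transported boundary Lagrangian splits as $N^*W^\Psi = N^*W_1^{\Psi_y} \oplus N^*W_2^{\Psi_2}$. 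The additivity of the relative Maslov index under direct sums of pairs of Lagrangian paths together with the trivial identity $\tfrac{1}{2}(\dim R - 2\dim Q) = \sum_j \tfrac{1}{2}(\dim R_j - \dim Q)$ reduce the whole claim to the identity $\mu(\graf G^{\Psi_y}_{K_1} C, N^*W_1^{\Psi_y}) = \mu(\graf G^{\Psi_1}_{H_1} C, N^*W_1^{\Psi_1})$.

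The main obstacle is this final equality. Starting from the explicit flow formula $\phi^{K_1}(t,\xi) = \mathscr{C}\phi^{H_1}(1-t, (\phi^{H_1}_1)^{-1}\mathscr{C}\xi)$, differentiating in $\xi$ at $\xi = y(0)$ and conjugating by $\Psi_y$ yields the compact expression
\[
G^{\Psi_y}_{K_1}(t) \,=\, C\, G^{\Psi_1}_{H_1}(1-t)\, G^{\Psi_1}_{H_1}(1)^{-1}\, C.
\]
I would then combine: (i) identity (\ref{conju}), $\mu(C\lambda, C\nu) = -\mu(\lambda, \nu)$, to strip the outer $C$-conjugations at the cost of a sign; (ii) homotopy invariance of the Maslov index under a symplectic deformation that removes the factor $G^{\Psi_1}_{H_1}(1)^{-1}$, which is admissible because the non-degeneracy hypothesis preserves the transversality conditions at both endpoints throughout the deformation; and (iii) the time-reversal rule for the relative Maslov index, which introduces a second sign change canceling the first. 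It remains to verify that $W_1^{\Psi_y}$ is the image of $W_1^{\Psi_1}$ under the swap of the two copies of $\R^n$, and that this swap is implemented symplectically by the extra $C$-factor in the definition of $\Psi_y$; carefully tracking the half-integer normalizations through each of these steps closes the argument.
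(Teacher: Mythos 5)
Your parts (a) and (b) are fine and essentially coincide with the paper's treatment, and your overall strategy for (c) (choose a trivialization for the first factor, split the graph and the boundary Lagrangian, reduce to a one-factor Maslov-index identity) is the same as the paper's. However, the mechanism you propose to establish the one-factor identity has a genuine gap.

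The central issue is step (ii). After step (i) you are looking at $\mu(\graf(G_{H_1}(1-\cdot)G_{H_1}(1)^{-1}C),\,N^*W_1^{\Psi_y})$, and you propose a deformation ``removing'' $G_{H_1}(1)^{-1}$. Any naive deformation of this type --- say $(\lambda,t)\mapsto G_{H_1}(1-t)G_{H_1}(\lambda)^{-1}$ --- does \emph{not} fix the endpoints of the path: at $t=0$ it moves from $I$ to $G_{H_1}(1)$, and at $t=1$ from $G_{H_1}(1)^{-1}$ to $I$. Homotopy invariance of the relative Maslov index requires either fixed endpoints or constant intersection dimension at the endpoints throughout the deformation. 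You assert the latter, but it is false in general: at the $t=0$ endpoint one is comparing $\graf C=N^*\Delta_{\R^m}$ with $N^*W_1^{\Psi_y}$, and these need not be transverse --- for instance, in the periodic case $W_1=\Delta_{\R^m}$ and the intersection is all of $N^*\Delta_{\R^m}$. Your steps (i) and (iii) also leave the swap between $W_1^{\Psi_y}$ and $W_1^{\Psi_1}$ unresolved: the $C$-conjugation in (i) is a symplectic-sign-reversing operation on $T^*\R^{2m}$, but the needed reordering of the two $\R^m$ factors is a \emph{symplectic} automorphism, and the two are not interchangeable. As your sketch stands, after (i), (ii), (iii) you arrive at $\mu(\graf(G_{H_1}(\cdot)C),N^*S(W_1))$ rather than $\mu(\graf(G_{H_1}(\cdot)C),N^*W_1)$, and nothing in the plan bridges that last discrepancy.

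The paper's route avoids all of this by applying the swap $T\oplus T^*$ (a genuine symplectic automorphism of $T^*\R^{4m}$, so no sign) simultaneously to both Lagrangian paths; this turns the first factor into $\graf(G_{H_1}(1)G_{H_1}(1-t)^{-1}C)$ against $N^*W_1$, with \emph{both} endpoints $I$ and $G_{H_1}(1)$ matching those of $\graf(G_{H_1}(t)C)$. The remaining comparison is then a single homotopy $(\lambda,t)\mapsto G_{H_1}(t+\lambda(1-t))G_{H_1}(\lambda(1-t))^{-1}$ which actually fixes both endpoints, so the Maslov index is invariant with no auxiliary transversality hypotheses. If you replace your (i)--(iii) with this swap-and-reparametrize argument (which is compatible with your set-up, since $\sigma^{-1}\graf(G^{\Psi_y}_{K_1}C)=\graf(G_{H_1}(1)G_{H_1}(1-\cdot)^{-1}C)$ follows from your flow formula), the proof closes; as written, it does not.
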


\begin{proof}
It is easy to check that $x_1$ and $x_2$ are orbits of the Hamiltonian vector fields associated to $H_1$ and $H_2$ if and only if $y$ is an orbit of the vector field associated to $K$. Moreover,
\[
(y(0),\mathscr{C}y(1)) = \bigl( \mathscr{C}x_1(1),x_2(0),x_1(0),\mathscr{C}x_2(1) \bigr)
\]
belongs to $N^* R$ if and only if $(x_1(0),\mathscr{C}x_1(1))$ belongs to $N^* R_1$ and $(x_2(0),\mathscr{C}x_2(1))$ belongs to $N^* R_2$. We just have to check the identity involving the Maslov indices.

Let $j$ be either 1 or 2. Let $G_{H_j}:[0,1] \rightarrow \mathrm{Sp}(2m)$, $m=\dim Q$, be the symplectic path obtained by conjugating the differential of the Hamiltonian flow of $H_j$ along the orbit $x_j$ by a vertical-preserving trivialization $\Psi_j$ of $x^*_j(TT^*Q)$ such that
\[
\bigl( \Psi_j(0)\times C \Psi_j(1) D\mathscr{C}(\mathscr{C}x_j(1)) \bigr) T_{(x_j(0),\mathscr{C}x_j(1))} N^* R_j = N^* W_j,
\]
where $W_j$ is a linear subspace of $\R^m \times \R^m$ . Then, by the definition (\ref{mascon}) of the Maslov index,
\[
\mu^{R_j}(x_j) = \mu( \graf G_{H_j} C, N^* W_j) + \frac{1}{2} (\dim R_j - \dim Q_j).
\]
If $G_K: [0,1] \rightarrow \mathrm{Sp}(4m)$ is the symplectic path obtained by conjugating the differential of the Hamiltonian flow of $K$ along the orbit $y$ by the trivialization induced in the obvious way by $\Psi_1$ and $\Psi_2$, we have
\[
G_K(t) = C G_{H_1} ( 1-t ) G_{H_1} (1)^{-1}  C \times G_{H_2} (t),
\]
and
\[
\mu^R(y) = \mu ( \graf G_K C, N^* W) + \frac{1}{2} (\dim R - \dim Q^2),
\]
where 
\[
W := \set{(\xi_1,\xi_2,\xi_3,\xi_4)}{(\xi_3,\xi_1)\in W_1, \; (\xi_2,\xi_4)\in W_2}.
\]
Since $\dim R = \dim Q_1+\dim Q_2$, we must show that
\begin{equation}
\label{hkhk}
\mu ( \graf G_K C, N^* W) = \mu( \graf G_{H_1} C, N^* W_1) + \mu( \graf G_{H_2} C, N^* W_2).
\end{equation}
The linear mapping
\[
T: \R^{4m} \rightarrow \R^{4m}, \quad (\xi_1,\xi_2,\xi_3,\xi_4) \mapsto (\xi_3,\xi_1,\xi_2,\xi_4),
\]
maps $W$ onto $W_1 \times W_2$, hence the symplectic automorphism $T \oplus T^*$ maps $N^* W$ onto $N^* W_1 \times N^* W_2$. Moreover,
\[
(T\oplus T^*) \graf G_K (t) C = \Bigl( \graf G_{H_1}(1) G_{H_1}(1-t)^{-1} C\Bigr)\times \Bigl( \graf G_{H_2} (t) C \Bigr).
\]
By the symplectic invariance and the additivity of the Maslov index,
\begin{equation}
\label{hkhk1}
\begin{split}
\mu ( \graf G_K C, N^* W) = \mu ( (T\oplus T^*) \graf G_K (t) C, (T\oplus T^*)  N^* W ) \\ = \mu( \graf G_{H_1}(1) G_{H_1}(1-\cdot)^{-1}C , N^* W_1) + \mu( \graf G_{H_2} C, N^* W_2).
\end{split}
\end{equation}
The symplectic path $t\mapsto G_{H_1}(1) G_{H_1}(1-t)^{-1}$ is homotopic within the symplectic group to the path $G_{H_1}$, by the homotopy
\[
(\lambda,t) \mapsto G_{H_1} ( t+\lambda(1-t)) G_{H_1}(\lambda(1-t))^{-1},
\]
which fixes the end points $I$ and $G_{H_1}(1)$. By the homotopy invariance of the Maslov index,
\[
\mu( \graf G_{H_1}(1) G_{H_1}(1-\cdot)^{-1} C, N^* W_1) = \mu (\graf G_{H_1} C, N^* W_1),
\]
so (\ref{hkhk}) follows from (\ref{hkhk1}).
\end{proof}

\begin{lem}
\label{lemule}
Let $R$ be a submanifold of $Q\times Q$, let $H\in C^{\infty}([0,1]\times T^*Q)$, and define $K\in C^{\infty}([0,1]\times T^* Q^2)$ by
\[
K(t,x_1,x_2) := \frac{1}{2} H \left( \frac{1-t}{2}, \mathscr{C} x_1 \right) +\frac{1}{2} H \left( \frac{1+t}{2},  x_2 \right).
\]
Then the curve $x:[0,1]\rightarrow T^*Q$ belongs to $\mathscr{P}^R(H)$ if and only if the curve $y:[0,1]\rightarrow T^* Q^2$, defined by
\[
y(t) := \left( \mathscr{C} x \Bigl( \frac{1-t}{2} \Bigr), x \Bigl( \frac{1+t}{2} \Bigr) \right),
\]
belongs to $\mathscr{P}^{\Delta_Q \times R}(K)$. Furthermore,
\[
\mu^R(x) = \mu^{\Delta_Q \times R}(y).
\]
\end{lem}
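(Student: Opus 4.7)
The plan is to verify the dynamical correspondence directly and then prove the Maslov identity through a careful choice of trivializations followed by a homotopy of symplectic paths. Writing $y=(u,v)$ with $u(t)=\mathscr{C}x((1-t)/2)$ and $v(t)=x((1+t)/2)$, the $v$-component of $y'(t)=X_K(t,y(t))$ is immediate from $K_2(t,\cdot)=\tfrac12 H((1+t)/2,\cdot)$ and the chain rule. For the $u$-component, the identity $X_{H\circ\mathscr{C}}(z)=-D\mathscr{C}(\mathscr{C}z)X_H(\mathscr{C}z)$, which follows from $\mathscr{C}$ being an anti-symplectic involution, shows that $u'(t)$ coincides with the Hamiltonian vector field of $K_1(t,\cdot)=\tfrac12 H((1-t)/2,\mathscr{C}\cdot)$ at $u(t)$. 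For the boundary data, $y(0)=(\mathscr{C}x(1/2),x(1/2))$ lies automatically in $N^*\Delta_Q$, while $\mathscr{C}y(1)=(x(0),\mathscr{C}x(1))$ lies in $N^*R$ iff $x\in\mathscr{P}^R(H)$; non-degeneracy transfers analogously via linearization.

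For the Maslov identity, I would fix a vertical-preserving symplectic trivialization $\Psi$ of $x^*(TT^*Q)$ adapted to $R$ as in (\ref{mascon}), so that the image of $T_{(x(0),\mathscr{C}x(1))}N^*R$ is $N^*W$ for some $W\subset\R^m\times\R^m$. Setting $\Psi_v(t):=\Psi((1+t)/2)$ and $\Psi_u(t):=C\,\Psi((1-t)/2)\,D\mathscr{C}(u(t))$ defines vertical-preserving symplectic trivializations of $u^*$ and $v^*$ (using that $D\mathscr{C}$ reads as the complex conjugation $C$ in any vertical-preserving trivialization, so that anti $\times$ symp $\times$ anti is symplectic). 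A short computation using $\mathscr{C}^2=\mathrm{id}$ then shows that $T_{(y(0),\mathscr{C}y(1))}N^*(\Delta_Q\times R)$ maps to $N^*\Delta_{\R^m}\times N^*W$ under the combined endpoint trivialization, while the symplectic path induced by the flow of $K$ along $y$ takes the block-diagonal form
\[
G_K(t)\,=\,\bigl(C\,G((1-t)/2)\,G(1/2)^{-1}\,C\bigr)\,\oplus\,\bigl(G((1+t)/2)\,G(1/2)^{-1}\bigr),
\]
where $G$ denotes the symplectic path induced by $H$ along $x$ via $\Psi$.

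From here, formula (\ref{masprod}) gives $\mu(\graf G_K\, C,\,N^*\Delta_{\R^m}\times N^*W)=\mu(G_K\cdot N^*\Delta_{\R^m},\,N^*W)$, since $N^*\Delta_{\R^m}=\graf C$ is $C$-invariant. A direct calculation yields $G_K(t)\cdot N^*\Delta_{\R^m}=\graf \tilde{G}(t)\,C$ with $\tilde{G}(t):=G((1+t)/2)\,G((1-t)/2)^{-1}$, which is a symplectic path from $I$ to $G(1)$. Since $\tilde{G}$ and $G$ are both such paths, they are homotopic in $\mathrm{Sp}(2m)$ rel endpoints via, e.g., $\tilde{G}_s(t):=G(t+s(1-t)/2)\,G(s(1-t)/2)^{-1}$, $s\in[0,1]$, and homotopy invariance of the relative Maslov index forces $\mu(\graf \tilde{G}\,C,N^*W)=\mu(\graf G\,C,N^*W)$. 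Together with the equality of normalizing constants $\tfrac12(\dim(\Delta_Q\times R)-\dim Q^2)=\tfrac12(\dim R-\dim Q)$ in (\ref{mascon}), this yields $\mu^{\Delta_Q\times R}(y)=\mu^R(x)$.

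The main technical difficulty is not conceptual but rather the careful bookkeeping of the anti-symplectic involution $\mathscr{C}$ through the construction of the trivializations $\Psi_u,\Psi_v$: one must verify that the conormal $N^*(\Delta_Q\times R)$ trivializes as a pure product $N^*\Delta_{\R^m}\times N^*W$ and that the conjugated flow of $K$ takes the block form above, so that the Maslov index splits cleanly and the reduction via (\ref{masprod}) leaves only the elementary homotopy argument in $\mathrm{Sp}(2m)$.
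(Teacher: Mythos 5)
Your argument reproduces the paper's proof step by step: same trivialization setup adapted to $R$, same block-diagonal expression for $G_K$, the same reduction via (\ref{masprod}) to $\mu(G_K N^*\Delta_{\R^m}, N^*W)$, the same reparametrized path $\tilde{G}(t)=G((1+t)/2)G((1-t)/2)^{-1}$ (which the paper calls $\Gamma$), and literally the same symplectic homotopy rel endpoints. The only additions are the explicit vector-field identity $X_{H\circ\mathscr{C}}=-D\mathscr{C}\circ X_H\circ\mathscr{C}$ and the spelled-out trivializations $\Psi_u,\Psi_v$, which the paper leaves implicit.
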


\begin{proof}
A simple computation shows that $x$ is an orbit of the Hamiltonian vector field induced by $H$ if and only if $y$ is an orbit of the one induced by $K$.
Moreover,
\[
(y(0), \mathscr{C} y(1)) = \bigl( \mathscr{C} x (1/2), x(1/2), \mathscr{C} x(0), x(1) \bigr).
\]
so, by using the fact that conormals are $\mathscr{C}$-invariant, we deduce that  $x\in \mathscr{P}^R(H)$ if and only if $y\in \mathscr{P}^{\Delta_Q \times R}(K)$. Let $G_H:[0,1] \rightarrow \mathrm{Sp}(2m)$, $m=\dim Q$, be the symplectic path obtained by conjugating the differential of the Hamiltonian flow of $H$ along the orbit $x$ by a vertical-preserving trivialization $\Psi$ of $x^*(TT^*Q)$ such that
\[
\bigl( \Psi(0)\times C \Psi(1) D\mathscr{C}(\mathscr{C}x(1)) \bigr) T_{(x(0),\mathscr{C}x(1))} N^* R = N^* W,
\]
where $W$ is a linear subspace of $\R^m \times \R^m$. Then, by the definition (\ref{mascon}) of the Maslov index,
\[
\mu^R(x) = \mu( \graf G_H C, N^* W) + \frac{1}{2} (\dim R - \dim Q).
\]
If $G_K: [0,1] \rightarrow \mathrm{Sp}(4m)$ is the symplectic path obtained by conjugating the differential of the Hamiltonian flow of $K$ along the orbit $y$ by the trivialization induced in the obvious way by $\Psi$, we have
\[
G_K(t) = \left( C G_H \Bigl( \frac{1-t}{2} \Bigr) G_H \Bigl( \frac{1}{2} \Bigr)^{-1} C \right) \times \left( G_H \Bigl( \frac{1+t}{2} \Bigr) G_H \Bigl( \frac{1}{2} \Bigr)^{-1} \right).
\]
Moreover, using also (\ref{masprod}),
\begin{equation*}
\begin{split}
\mu^{\Delta_Q \times R}(y) = \mu( \graf G_K C, N^* (\Delta_{\R^m} \times W)) + \frac{1}{2} (\dim R \times \Delta_Q - \dim Q\times Q) \\ = \mu( \graf G_K C, N^* \Delta_{\R^m} \times N^* W )  + \frac{1}{2} (\dim R - \dim Q) \\
= \mu(G_K N^*_{\Delta_{\R^m}} , N^* W) + \frac{1}{2} (\dim R - \dim Q).
\end{split}
\end{equation*}
Since $N^* \Delta_{\R^m}=\graf C$, there holds
\[
G_K(t) N^*_{\Delta_{\R^m}} = \graf \Gamma(t) C,
\]
where $\Gamma:[0,1] \rightarrow \mathrm{Sp}(2m)$ is the path
\[
\Gamma(t) := G_H \left( \frac{1+t}{2} \right)G_H \left( \frac{1-t}{2} \right)^{-1}.
\]
The symplectic path $\Gamma$ is homotopic to the symplectic path $G_H$ by the symplectic homotopy 
\[
(\lambda,t) \mapsto G_H \left( t + \frac{\lambda}{2} (1-t) \right)G_H \left( \frac{\lambda}{2} (1-t) \right)^{-1},
\]
which fixes the end points $\Gamma(0)=G_H(0)=I$ and $\Gamma(1)=G_H(1)$. By the homotopy invariance of the Maslov index,
\[
\mu(G_K N^*_{\Delta_{\R^m}} , N^* W) = \mu ( \graf \Gamma C, N^* W) = \mu ( \graf G_H C, N^* W),
\]
and the conclusion follows from the above formulas for $\mu^R(x)$ and $\mu^{\Delta_Q \times R}(y)$. 
\end{proof}

We are finally ready to compute the virtual dimensions of the spaces of maps introduced in this paper.

\paragraph{The space $\mathscr{M}_{\Upsilon}^{\Lambda}$.} Let us
study the space of solutions
$\mathscr{M}_{\Upsilon}^{\Lambda}(x_1,x_2;y)$, where $x_1\in
\mathscr{P}^{\Lambda}(H_1)$, $x_2\in \mathscr{P}^{\Lambda}(H_2)$, and
$y\in \mathscr{P}^{\Lambda}(H_1 \# H_2)$ (see Section \ref{tppp}). It
is a space of solutions of the Floer equation on the pair-of-pants
Riemann surface $\Sigma_{\Upsilon}^{\Lambda}$.
The pair-of-pants Riemann surface $\Sigma_{\Upsilon}^{\Lambda}$ is described in Section \ref{Fetpp} as the quotient of the disjoint union of two strips $\R \cup [-1,0]$ and $\R \times [0,1]$ with respect to the identifications
\begin{eqnarray}
\label{sinistranew}
(s,-1) \sim (s,0-), \quad (s,0+) \sim (s,1) \quad \forall s\leq 0, \\
\label{destranew}
(s,-1) \sim (s,1), \quad (s,0-) \sim (s,0+) \quad \forall s\geq 0.
\end{eqnarray}
The space $\mathscr{M}^{\Lambda}_{\Upsilon}(x_1,x_2;y)$ consists of maps 
\[
u: \Sigma_{\Upsilon}^{\Lambda} \rightarrow T^* M,
\]
solving the Floer equation $\delbar_{J,H} (u) = 0$,
with asymptotics
\[
\lim_{s\rightarrow -\infty} u(s,t-1) = x_1(t), \quad
\lim_{s\rightarrow -\infty} u(s,t) = x_2(t), \quad
\lim_{s\rightarrow +\infty} u(s,2t-1) = y(t).
\]
We can associate to a map $u: \Sigma_{\Upsilon}^{\Lambda} \rightarrow T^*M$ the map $v: \Sigma  \rightarrow T^*M^2$ by setting
\begin{equation}
\label{lavu}
v(z) := \bigl( \mathscr{C} u(\overline{z}),u(z) \bigr).
\end{equation}
The identifications (\ref{sinistranew}) on the left-hand side of the domain of $u$ are translated into the fact that $v(s+it)$ is $1$ periodic in $t$ for $s\leq 0$, or equivalently into the nonlocal boundary condition
\begin{equation}
\label{bbb1}
(v(s),\mathscr{C} v(s+i)) \in N^* \Delta_{M^2}, \quad \forall s\leq 0,
\end{equation}
where $\Delta_{M^2}$ denotes the diagonal in $M^4 = M^2 \times M^2$. The identifications (\ref{destranew}) on the right-hand side of the domain of $u$ are translated into the local boundary conditions 
\begin{equation}
\label{bbb2}
v(s) \in N^* \Delta_M, \quad v(s+i) \in N^* \Delta_M, \quad \forall s\geq 0.
\end{equation}
The map $u$ solves the Floer equation $\delbar_{J,H}(u)=0$ if and only if $v$ solves the Floer equation $\delbar_{J,K}(v)=0$, where $K$ is the Hamiltonian defined in (\ref{lakappa}).
The asymptotic conditions for $u$ are equivalent to
\begin{equation}
\label{bbb3}
\begin{split}
x(t):= \lim_{s\rightarrow -\infty} v(s + it) = (\mathscr{C} x_1(1-t),x_2(t)), \\
z(t):= \lim_{s\rightarrow +\infty} v(s + it) = \bigl(\mathscr{C} y((1-t)/2) , y((1+t)/2) \bigr).
\end{split}
\end{equation}
We conclude that $\mathscr{M}_{\Upsilon}^{\Lambda}(x_1,x_2;y)$ can be identified with the space of maps $\mathscr{M}(x,z)$ of Corollary \ref{corfred}, where the underlying manifold is $Q=M\times M$, the Hamiltonian is $K$, the boundary conditions have a single jump at $s_1=0$ and are given by the following partially orthogonal submanifolds of $Q^2=M^4$:
\[
R_0 = \Delta_{M^2}, \quad R_1 = \Delta_M \times \Delta_M.
\]
By Corollary \ref{corfred}, 
 \begin{eqnarray*}
 \mathrm{virdim}\, \mathscr{M}_{\Upsilon}^{\Lambda}(x_1,x_2;y) = \mu^{\Delta_{M^2}} (x) - \mu^{\Delta_M \times \Delta_M} (z) \\- \bigl( \dim \Delta_{M^2} - \dim \Delta_{M^2} \cap (\Delta_M \times \Delta_M) \bigr) = \mu^{\Delta_{M^2}} (x) - \mu^{\Delta_M \times \Delta_M} (z) - n.
 \end{eqnarray*}
 By Lemma \ref{lemulo}, 
 \begin{equation}
 \label{gambe}
 \mu^{\Delta_{M^2}} (x) = \mu^{\Delta_M}(x_1) + \mu^{\Delta_M}(x_2) = \mu^{\Lambda}(x_1) + \mu^{\Lambda}(x_2).
 \end{equation}
 By Lemma \ref{lemule}, applied to the manifold $Q=M$, to the Hamiltonian $H_1\# H_2$, and to the submanifold $R=\Delta_M$,
 \[
 \mu^{\Delta_M \times \Delta_M} (z) = \mu^{\Delta_M}(y) = \mu^{\Lambda}(y).
 \]
We conclude that $\mathscr{M}_{\Upsilon}^{\Lambda}(x_1,x_2;y)$ has virtual dimension
 \[
 \mathrm{virdim}\, \mathscr{M}_{\Upsilon}^{\Lambda}(x_1,x_2;y) = \mu^{\Lambda}(x_1) + \mu^{\Lambda}(x_2) - \mu^{\Lambda}(y)- n,
 \]
proving the part of Proposition \ref{popss} which concerns the space $\mathscr{M}_{\Upsilon}^{\Lambda}$.

\paragraph{The space $\mathscr{M}_{\Upsilon}^{\Omega}$.}
Let us consider the space
$\mathscr{M}_{\Upsilon}^{\Omega}(x_1,x_2;y)$, where $x_1\in
\mathscr{P}^{\Omega}(H_1)$, $x_2\in \mathscr{P}^{\Omega}(H_2)$, and
$y\in \mathscr{P}^{\Omega}(H_1 \# H_2)$ (see Section \ref{tppp}). This
is the space of solutions $u$ of the Floer equation on the Riemann surface
with boundary $\Sigma^{\Omega}_{\Upsilon}$, described as a strip with
a slit in Section \ref{Fetpp}, which take values in $T_{q_0}^* M$ on the boundary, and converge to the orbits $x_1$, $x_2$, $y$ on the three ends. By defining the map $v: \Sigma \rightarrow T^* M^2$ as in (\ref{lavu}) and the curves $x$ and $z$ as in (\ref{bbb3}), we see that the space $\mathscr{M}_{\Upsilon}^{\Omega}(x_1,x_2;y)$ is in one-to-one correspondence with the space $\mathscr{M}(x,z)$ of Corollary \ref{corfred}, where $Q=M^2$, the Hamiltonian is the function $K$ defined in (\ref{lakappa}), the boundary conditions jump at $s_1=0$ and are given by the following partially orthogonal submanifolds of $Q^2=M^4$:
\[
R_0 = \{(q_0,q_0,q_0,q_0)\}, \quad R_1 = \Delta_M \times \{(q_0,q_0)\}.
\]
By Corollary \ref{corfred}, 
 \[
 \mathrm{virdim}\, \mathscr{M}_{\Upsilon}^{\Omega}(x_1,x_2;y) = \mu^{(q_0,q_0,q_0,q_0)} (x) - \mu^{\Delta_M \times (q_0,q_0)} (z). 
 \]
 By Lemma \ref{lemulo}, 
 \[
 \mu^{(q_0,q_0,q_0,q_0)} (x) = \mu^{(q_0,q_0)}(x_1) + \mu^{(q_0,q_0)}(x_2) = \mu^{\Omega}(x_1) + \mu^{\Omega}(x_2).
 \]
 By Lemma \ref{lemule},
 \[
 \mu^{\Delta_M \times (q_0,q_0)} (z) = \mu^{(q_0,q_0)}(y) = \mu^{\Omega}(y).
 \]
Therefore, $\mathscr{M}_{\Upsilon}^{\Omega}(x_1,x_2;y)$ has virtual dimension
 \[
 \mathrm{virdim}\, \mathscr{M}_{\Upsilon}^{\Omega}(x_1,x_2;y) = \mu^{\Omega}(x_1) + \mu^{\Omega}(x_2) - \mu^{\Omega}(y).
 \]
 This concludes the proof of Proposition \ref{popss}.

\paragraph{The space $\mathscr{M}_E$.} Let $(x_1,x_2)\in
\mathscr{P}^{\Lambda}(H_1) \times \mathscr{P}^{\Lambda}(H_2)$ and
$y\in \mathscr{P}^{\Theta}(H_1 \oplus H_2)$ (see Section
\ref{fpop}). We set $Q=M\times M$, we define the Hamiltonian $K\in C^{\infty}([0,1]\times T^*Q)$ by (\ref{lakappa}), and the $T^*Q$-valued curves $x$ and $z$ by 
\[
x(t) := \bigl( \mathscr{C} x_1(1-t), x_2(t) \bigr), \quad
z(t) := \bigl( \mathscr{C} y_1(1-t), y_2(t) \bigr),
\]
where $y_1$ and $y_2$ are the components of $y$.
Then $x\in \mathscr{P}^{\Delta_{M^2}}(K)$,  $z\in \mathscr{P}^{\Delta_M^{(4)}}(K)$, 
and one easily checks that
\[
\mu^{\Delta_M^{(4)}}(z) = \mu^{\Delta_M^{(4)}} (y) = \mu^{\Theta}(y),
\]
where the Maslov index of $z$ refers to the Hamiltonian $K$, and the Maslov index of $y$ to the Hamiltonian $H_1 \oplus H_2$. The  space of solutions $\mathscr{M}_E(x_1,x_2;y)$ is in one-to-one correspondence with the space $\mathscr{M}(x,z)$ of Corollary \ref{corfred}, where the boundary conditions switch at $s_1=0$ and are given by the submanifolds
\[
R_0 = \Delta_{M^2} , \quad R_1 = \Delta_M^{(4)}.
\]
Hence, using also (\ref{gambe}),
\begin{eqnarray*}
\mathrm{virdim}\, \mathscr{M}_E(x_1,x_2;y) = \mu^{\Delta_{M^2}}(x) - \mu^{\Delta_M^{(4)}} (z) \\ - \bigl( \dim \Delta_{M^2} - \dim \Delta_{M^2} \cap  \Delta_M^{(4)} \bigr) = \mu^{\Lambda}(x_1) + \mu^{\Lambda}(x_2) - \mu^{\Theta} (y) - n.
\end{eqnarray*}
This proves the part of Proposition \ref{eg} about $\mathscr{M}_E$.

\paragraph{The space $\mathscr{M}_G$.} Let
$y\in \mathscr{P}^{\Theta}(H_1 \oplus H_2)$ and $z\in
\mathscr{P}^{\Lambda}(H_1 \# H_2)$
(see Section \ref{fpop}). We set $Q=M\times M$, and
\begin{eqnarray*}
x: [0,1] \rightarrow T^* Q, \quad x(t) := \bigl( \mathscr{C} y_1(1-t), y_2(t) \bigr), \\
w: [0,1] \rightarrow T^* Q, \quad w(t):= \bigl(\mathscr{C} z((1-t)/2), z((1+t)/2) \bigr).
\end{eqnarray*}
As in the case of $\mathscr{M}_E$, the curve $x$ belongs to $\mathscr{P}^{\Delta_M^{(4)}}(K)$, where the Hamiltonian $K$ is defined by (\ref{lakappa}), and
\[
\mu^{\Delta_M^{(4)}}(x) = \mu^{\Delta_M^{(4)}} (y) = \mu^{\Theta}(y).
\]
On the other hand, by Lemma \ref{lemule}, 
$w$ belongs to $\mathscr{P}^{\Delta_M \times \Delta_M}(K)$ and
\[
\mu^{\Delta_M \times \Delta_M}(w) = \mu^{\Delta_M}(z) = \mu^{\Lambda}(z).
\]
Then Corollary \ref{corfred} implies that
\begin{eqnarray*}
\mathrm{virdim}\, \mathscr{M}_G (y,z) = \mu^{\Delta_M^{(4)}}(x) - \mu^{\Delta_M \times \Delta_M}(w)  \\ - \bigl( \dim \Delta_M^{(4)} - \dim \Delta_M^{(4)} \cap (\Delta_M \times \Delta_M) \bigr) = \mu^{\Theta}(y) - \mu^{\Lambda}(z).
\end{eqnarray*}
This concludes the proof of Proposition \ref{eg}.

\paragraph{The space $\mathscr{M}_{GE}^{\Upsilon}$.} Let $x_1\in
\mathscr{P}^{\Lambda}(H_1)$, $x_2\in \mathscr{P}^{\Lambda}(H_2)$, and $z\in
\mathscr{P}^{\Lambda}(H_1 \# H_2)$ (see the proof of Theorem
\ref{tge}). The space $\mathscr{M}_{GE}^{\Upsilon}(x_1,x_2;z)$ is the
set of pairs $(\alpha,u)$ where $\alpha$ is a real positive parameter
and $u$ is a solution of the Floer equation on the Riemann surface
$\Sigma^{\Upsilon}_{GE}(\alpha)$ with asymptotics $x_1$, $x_2$, $z$, and
suitable nonlocal boundary conditions. Let $Q=M^2$, $K$ be as in (\ref{lakappa}), $x$ be as in (\ref{bbb3}), and  
\[
y(t):= \bigl( \mathscr{C} z((1-t)/2), z((1+t)/2) \bigr),
\]
so that $x\in \mathscr{P}^{\Delta_{M^2}}(K)$ and $y\in \mathscr{P}^{\Delta_M \times \Delta_M}(K)$. For fixed $\alpha>0$, the set $\mathscr{M}_{GE}^{\Upsilon}(x_1,x_2;z)$ is in one-to-one correspondence with the space $\mathscr{M}(x,y)$ of Corollary \ref{corfred}, where the boundary conditions jump at $s_1=0$ and $s_2 = \alpha$, and are given by
\[
R_0 = \Delta_{M^2}, \quad R_1 = \Delta_M^{(4)}, \quad R_2 = \Delta_M \times \Delta_M.
\]
Considering also the parameter $\alpha$, we deduce that $\mathscr{M}_{GE}^{\Upsilon}(x_1,x_2;z)$ has virtual dimension 
\begin{eqnarray*}
\mathrm{virdim}\, \mathscr{M}_{GE}^{\Upsilon}(x_1,x_2;z) = 1 + \mathrm{virdim} \mathscr{M}(x,y) = 1 + \mu^{\Delta_{M^2}}(x) - \mu^{\Delta_M \times \Delta_M}(y) \\
- \bigl( \dim \Delta_{M^2} - \dim \Delta_{M^2} \cap \Delta_M^{(4)} + \dim \Delta_M^{(4)} - \dim \Delta_M^{(4)} \cap (\Delta_M \times \Delta_M)\bigr) \\
= 1 + \mu^{\Lambda}(x_1) + \mu^{\Lambda}(x_2) - \mu^{\Lambda}(z) - n.
\end{eqnarray*}
In the last identity we have used also Lemmas \ref{lemulo} and \ref{lemule}.
This proves Proposition \ref{ge}.

\paragraph{The spaces $\mathscr{M}_{\mathrm{C}}$ and
$\mathscr{M}_{\mathrm{Ev}}$.} Let $f$ be a Morse function on $M$,
let $x$ be a critical point of $f$, and let $y\in
\mathscr{P}^{\Lambda}(H)$. Given $q\in M$, let
\begin{eqnarray*}
\widetilde{\mathscr{M}}_{\mathrm{C}}(q,y) := \Bigl\{ u\in C^{\infty} ([0,+\infty[
\times \T, T^*M) \, \Big| \, \delbar_{J,H}(u)=0, \; \pi\circ
u(0,t) \equiv q \; \forall t\in \T, \\ \lim_{s\rightarrow +\infty} u(s,t) =
y(t)  \Big\}.
\end{eqnarray*}
For a fixed $q\in M$, the space $\widetilde{\mathscr{M}}_{\mathrm{C}}(q,y)$ coincides with $\mathscr{M}(q,y)$ from Corollary \ref{corfred+}, where $k=0$ and $R_0 = \Delta_M$. By Corollary \ref{corfred+}, $\widetilde{\mathscr{M}}_{\mathrm{C}}(q,y)$ has virtual dimension $- \mu^{\Delta_M}(y) = - \mu^{\Lambda} (y)$.
Therefore, the space
\[
\mathscr{M}_{\mathrm{C}}(x,y) = \bigcup_{q\in W^u(x)}
\widetilde{\mathscr{M}}_C(q,y),
\]
has virtual dimension
\[
\mathrm{virdim}\,  \mathscr{M}_{\mathrm{C}}(x,y) = \dim W^u(x) - \mu^{\Lambda}(y) = i(x) -
\mu^{\Lambda}(y), 
\]
proving the first part of Proposition \ref{cev}.

The space of maps
\begin{eqnarray*}
\widetilde{\mathscr{M}}_{\mathrm{Ev}} (y) := \Bigl\{ u\in
  C^{\infty}(]-\infty,0] \times \T,T^*M) \, \Big| \,\delbar_{J,H}(u)=0, \; u(0,t) \in \mathbb{O}_M \; \forall t\in \T, \\
\lim_{s\rightarrow -\infty} u(s,t) = y(t) \Bigr\}
\end{eqnarray*}
can be identified with $\mathscr{M}(y)$ from Corollary \ref{corfred-}, where the boundary condition has no jumps and is given by $R_0=\Delta_M$. Hence
\[
\mathrm{virdim}\,  \widetilde{\mathscr{M}}_{\mathrm{Ev}} (y) = \mu^{\Delta_M}(y) = \mu^{\Lambda}(y).
\]
Therefore, given $x$ a critical point of the Morse function $f$ on $M$, the space
\[ 
\mathscr{M}_{\mathrm{Ev}} (y,x) = \set{ u \in \widetilde{\mathscr{M}}_{\mathrm{Ev}} (y)}{u(0,0) \in W^s(x)}
\]
has virtual dimension
\[
\mathrm{virdim}\, \mathscr{M}_{\mathrm{Ev}} (y,x) = \mu^{\Lambda}(y) - \codim\, W^s(x) = \mu^{\Lambda}(y) - i(x).
\]
This concludes the proof of Proposition \ref{cev}.

\paragraph{The space $\mathscr{M}_{I_!}$.} Let $x\in
\mathscr{P}^{\Lambda}(H)$ and $y\in \mathscr{P}^{\Omega}(H)$ (see
Section \ref{hcevi}). The set $\mathscr{M}_{I_!}(x,y)$ is immediately seen to be in one-to-one correspondence with the space $\mathscr{M}(x,y)$ of Corollary \ref{corfred}, where the boundary conditions jump at $s_1=0$ from $R_0 = \Delta_M$ to $R_1=\{(q_0,q_0)\}$. Therefore,
\begin{eqnarray*}
\mathrm{virdim}\, \mathscr{M}_{I_!}(x,y) = \mu^{\Delta_M}(x) - \mu^{(q_0,q_0)}(y) - \bigl(\dim \Delta_M - \dim \Delta_M \cap \{(q_0,q_0)\} \bigr) \\ 
= \mu^{\Lambda}(x) - \mu^{\Omega}(y) - n.
\end{eqnarray*}
This proves Proposition \ref{i!p}.

\paragraph{The space $\mathscr{M}_{\Upsilon}^K$.} Let $\gamma_1\in
\mathscr{P}^{\Omega}(L_1)$, $\gamma_2 \in \mathscr{P}^{\Omega}(L_2)$, and $x\in
\mathscr{P}^{\Omega}(H_1 \# H_2)$ (see Section \ref{ori}). The space
$\mathscr{M}^K_{\Upsilon}(\gamma_1, \gamma_2;x)$ consists of pairs
$(\alpha,u)$ where $\alpha$ is a positive number and $u(s,t)$ is a solution
of the Floer equation on the Riemann surface
$\Sigma_{\Upsilon}^K(\alpha)$, 
which is asymptotic to $x$ for $s\rightarrow +\infty$, 
lies above some element in the unstable manifold of
$\gamma_1$ (resp.\ $\gamma_2$) for $s=0$ and $-1 \leq t \leq 0^-$
(resp.\ $0^+ \leq t \leq 1$), and lies above $q_0$ at the other
boundary points. Let us fix the two curves $q_1$ and $q_2$ in the unstable manifolds of $\gamma_1$ and $\gamma_2$ and the positive number $\alpha$. Set $Q=M^2$, let $K$ be the Hamiltonian on $[0,1]\times T^*Q$ defined by (\ref{lakappa}), and let $y:[0,1]\rightarrow T^* Q$, $\gamma:[0,1]\times Q$ be the curves
\begin{equation}
\label{ygamma}
y(t) := \left( \mathscr{C} x \Bigl( \frac{1-t}{2} \Bigr), x \Bigl( \frac{1+t}{2} \Bigr) \right), \quad \gamma(t) := \bigl(q_1(1-t),q_2(t)\bigr).
\end{equation}
Lemma \ref{lemule} implies that $y\in \mathscr{P}^{\Delta_M \times (q_0,q_0)}(K)$ and that
\[
\mu^{\Delta_M \times (q_0,q_0)} (y) = \mu^{(q_0,q_0)}(x) = \mu^{\Omega}(x).
\]
Then the set of elements $(\alpha,u)$ in $\mathscr{M}^K_{\Upsilon}(\gamma_1, \gamma_2;x)$ which lie above $q_1$ and $q_2$ for $s=0$ is in one-to-one correspondence with the space $\mathscr{M}(\gamma,y)$ of Corollary \ref{corfred+}, where the boundary conditions have a jump at $s_1=\alpha$ and are defined by
\[
R_0 = \{(q_0,q_0,q_0,q_0)\}, \quad R_1 = \Delta_M \times \{(q_0,q_0)\}.
\]
Such a space has virtual dimension
\begin{eqnarray*}
\mathrm{virdim}\, \mathscr{M}(\gamma,y) = -\mu^{\Delta_M \times (q_0,q_0)} ( y)= - \mu^{\Omega}(x).
\end{eqnarray*}
Letting the elements $q_1$ and $q_2$ of the unstable manifolds of
$\gamma_1$ and $\gamma_2$ vary, we increase the virtual dimension by
$i^{\Omega}(\gamma_1;L_1)+
i^{\Omega}(\gamma_2;L_2)$. Letting also $\alpha$ vary we
further increase the virtual dimension by $1$, and we find the formula 
\[
\mathrm{virdim}\,  \mathscr{M}_{\Upsilon}^K (\gamma_1,\gamma_2;x) =
i^{\Omega}(\gamma_1;L_1)+
i^{\Omega}(\gamma_2;L_2) - \mu^{\Omega}(x;H_1 \# H_2) + 1.
\]
This proves Proposition \ref{POmega}.

\paragraph{The space $\mathscr{M}^K_{\alpha_0}$.} Let $\gamma_1\in
\mathscr{P}^{\Lambda}(L_1)$, $\gamma_2\in
\mathscr{P}^{\Lambda}(L_2)$, and $x\in
\mathscr{P}^{\Theta}(H_1 \oplus H_2)$ (see Section \ref{chlhs}). The space $\mathscr{M}^K_{\alpha_0}(\gamma_1,\gamma_2;x)$ consists of solutions $u=(u_1,u_2)$ 
of the Floer equation on the Riemann surface $\Sigma^K_{\alpha_0}$, 
which is asymptotic to $x$ for $s\rightarrow +\infty$, $u_1$ and $u_2$
lie above some elements $q_1$ and $q_2$ in the unstable manifolds of
$\gamma_1$ and $\gamma_2$ for $s=0$, and $u$ satisfies the figure-8 
boundary condition for $s\geq \alpha_0$. Set $Q=M^2$, let $K$ be the Hamiltonian defined by (\ref{lakappa}), and let $y$ and $\gamma$ be as in (\ref{ygamma}). Then $y$ belongs to $\mathscr{P}^{\Delta_M^{(4)}}(K)$, and 
\[
\mu^{\Delta_M^{(4)}}(y) = \mu^{\Delta_M^{(4)}}(x) = \mu^{\Theta}(x).
\]
The space of $u_0\in \mathscr{M}^K_{\alpha_0}(\gamma_1,\gamma_2;x)$ which lie above $q_1$ and $q_2$ for $s=0$ is in one-to-one correspondence with the space $\mathscr{M}(\gamma,y)$ of Corollary \ref{corfred+}, where the boundary conditions jump at $s_1=\alpha_0$ and are given by
\[
R_0 = \Delta_{M^2}, \quad R_1 = \Delta_M^{(4)}.
\]
Such a space has virtual dimension
\[
\mathrm{virdim}\, \mathscr{M}(\gamma,y) = - \mu^{\Delta_M^{(4)}} (y) - ( \dim \Delta_{M^2} - \dim \Delta_{M^2} \cap \Delta_M^{(4)}) = - \mu^{\Theta}(x) - n.
\]
Letting the elements $q_1$ and $q_2$ of the unstable manifolds of
$\gamma_1$ and $\gamma_2$ vary, we increase the virtual dimension by
$i^{\Lambda}(\gamma_1;L_1)+i^{\Lambda}(\gamma_2;L_2)$, and
we find the formula  
\[
\mathrm{virdim}\,  \mathscr{M}^K_{\alpha_0} (\gamma_1,\gamma_2;x) =
i^{\Lambda}(\gamma_1;L_1)+i^{\Lambda}(\gamma_2;L_2) - \mu^{\Theta}(x)
- n.
\]
This proves Proposition \ref{Kappa}.

\paragraph{The space $\mathscr{M}_G^K$.} Let $\gamma\in
\mathscr{P}^{\Theta}(L_1\oplus L_2)$ and $x\in
\mathscr{P}^{\Lambda}(H_1 \# H_2)$ (see Section \ref{chrhs}). The space
$\mathscr{M}_G^K(\gamma,x)$ consists of pairs
$(\alpha,u)$ where $\alpha$ is a positive number and $u(s,t)$ is a solution
of the Floer equation on the Riemann surface
$\Sigma_G^K(\alpha)$, 
which is asymptotic to $x$ for $s\rightarrow +\infty$, 
lies above some element $q=(q_1,q_2)$ in the unstable manifold of
$\gamma$ for $s=0$, and satisfies the figure-8 boundary condition for
$s\in [0,\alpha]$. Set $Q=M^2$, let $K$ be as in (\ref{lakappa}), let $y$ be as in (\ref{ygamma}), and let $\tilde{\gamma}:[0,1]\rightarrow Q$ be the curve
\[
\tilde{\gamma}(t) := \bigl( q_1(1-t), q_2(t) \bigr).
\]
Then the space of elements $(\alpha,u)$ in $\mathscr{M}_G^K(\gamma,x)$ with $u$ above $q$ at $s=0$ is in one-to-one correspondence with the space $\mathscr{M}(\tilde{\gamma},y)$ of Corollary \ref{corfred+}, where the boundary conditions jump at $s_1=\alpha$ from $R_0=\Delta_M^{(4)}$ to $R_1= \Delta_M \times \Delta_M$. Such a space has virtual dimension
\begin{eqnarray*}
\mathrm{virdim}\, \mathscr{M}(\tilde{\gamma},y) = - \mu^{\Delta_M \times \Delta_M}(y) - \bigl( \dim \Delta_M^{(4)} - \dim \Delta_M^{(4)} \cap (\Delta_M \times \Delta_M) \bigr) \\ = - \mu^{\Lambda}(x).
\end{eqnarray*}
where we have used Lemma \ref{lemule}. Letting the elements $q$ of the unstable manifold of $\gamma$ vary, we increase the virtual dimension by
$i^{\Theta}(\gamma;L_1 \oplus L_2)$. Letting also $\alpha$ vary we
further increase it by $1$, and we find the formula 
\[
\mathrm{virdim}\,  \mathscr{M}_G^K (\gamma,x) =
i^{\Theta}(\gamma; L_1 \oplus L_2)- \mu^{\Lambda}(x;H_1 \# H_2) + 1.
\]
This proves Proposition \ref{PRLambda}.

\section{Compactness and cobordism}
\label{cocosec}

The first aim of this section is to explain how compactness and removal of singularities for Cauchy-Riemann problems can be obtained in the framework of cotangent bundles and conormal boundary conditions. The second aim is to prove the three already stated cobordism results (Propositions \ref{facthom}, \ref{omegahom}, and \ref{coupro}) which do not follow form standard arguments in Floer theory.

\subsection{Compactness in the case of jumping conormal boundary conditions}
\label{compsec}

Compactness in the $C^{\infty}_{\mathrm{loc}}$ topology of all the spaces of solutions of the Floer equation considered in this paper can be proved within the following general setting. Let $Q$ be a closed Riemannian manifold, and let $R_0,R_1,\dots,R_k$ be submanifolds of $Q \times Q$.  We assume that there is an isometric embedding $Q\hookrightarrow \R^N$ and linear subspaces $V_0,V_1,\dots,V_k$
of $\R^N \times \R^N$, such that $V_{j-1}$ is partially orthogonal to $V_j$, for every $j=1,\dots,k$, and
\[
R_j = V_j \cap (Q \times Q).
\]
The embedding $Q\hookrightarrow \R^N$ induces an embedding $T^* Q
\hookrightarrow T^* \R^N \cong \R^{2N} \cong \C^N$. Since the embedding $Q\hookrightarrow \R^N$ is isometric, the standard complex structure $J_0$ of $\R^{2N}$ restricts to the metric almost complex structure $J$ on $T^*Q$.
Let $H\in C^{\infty}([0,1]\times T^*Q)$ be a Hamiltonian
satisfying (H1) and (H2). Fix real numbers
\[
-\infty = s_0 < s_1 < \dots < s_k < s_{k+1} = +\infty,
\]
and let $u: \Sigma = \set{z\in \C}{0\leq \im z \leq 1} \rightarrow T^*Q$ be a solution of the Floer equation $\delbar_{J,H}(u)=0$ which
satisfies the nonlocal boundary conditions
\begin{equation}
\label{bdry3} (u(s),\mathscr{C} u(s+i)) \in N^* R_j \quad \forall
s\in [s_{j-1},s_j],
\end{equation}
for every $j=0,\dots,k$.

The map $u$ satisfies the energy identity
\begin{equation}
\label{enid}
\begin{split}
\int_a^b \int_0^1 |\partial_s u(s,t)|^2 \, dt\, ds \,=\,
&\mathbb{A}_H(u(a,\cdot)) - \mathbb{A}_H(u(b,\cdot)) \\ +
\int_{[a,b]} (u(\cdot,1)^* \eta - u(\cdot,0)^* \eta) 
= &\mathbb{A}_H(u(a,\cdot)) - \mathbb{A}_H(u(b,\cdot)),
\end{split}
\end{equation}
for every $a<b$, where the integral over $[a,b]$ vanishes because of the boundary conditions (\ref{bdry3}), thanks to the fact that
$\eta \oplus \eta$, that is the Liouville form on $T^* Q^2$, vanishes on $N^* R_j$. The following
result is proven in \cite[Lemma 1.12]{as06}  (in that lemma different
boundary conditions are considered, but the proof makes use only of
the energy identity (\ref{enid}) coming from those boundary conditions).

\begin{lem}
\label{illem}
For every $a>0$ there exists $c>0$ such that for every solution $u:\Sigma
\rightarrow T^*Q$ of $\delbar_{J,H} (u)=0$, with boundary conditions (\ref{bdry3}), and energy bound
\[
\int_{\Sigma} |\partial_s u (s,t)|^2 \, ds\, dt
\leq a,
\]
we have the following estimates:
\[
\|u\|_{L^2(I\times ]0,1[)} \leq c |I|^{1/2}, \quad \|\nabla
u\|_{L^2(I\times ]0,1[)} \leq c (1+|I|^{1/2}),
\]
for every interval $I$.
\end{lem}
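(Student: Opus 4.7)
The plan is to observe that the energy identity (\ref{enid}) for our problem has exactly the same form as the one used in the proof of Lemma 1.12 of \cite{as06}, since in both cases the boundary term vanishes: there, because of the specific boundary conditions imposed; here, because the Liouville form $\eta\oplus\eta$ of $T^*Q^2$ vanishes on every conormal bundle $N^*R_j$. Consequently the proof of \cite[Lemma 1.12]{as06} transfers verbatim; I now sketch its structure.

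Writing $u=(q,p)$ in the horizontal-vertical splitting induced by the metric on $Q$, the bound on $\partial_s u$ is immediate from the hypothesis: $\|\partial_s u\|_{L^2(I\times]0,1[)}^2 \leq \|\partial_s u\|_{L^2(\Sigma)}^2 \leq a$ for every interval $I$. Since $Q$ is compact, $q$ is automatically bounded, so it suffices to produce a uniform bound on the slice-wise $L^2$-norm of $p$, i.e.\ on $\phi(s):=\tfrac{1}{2}\|p(s,\cdot)\|_{L^2([0,1])}^2$. Differentiating $\phi$ in $s$, using the vertical component of the Floer equation $\partial_s p = -\partial_t q + \nabla_p H(t,u)$, and noting that $\mathbb{A}_H(u(s,\cdot))=\int_0^1 (p\cdot\partial_t q - H(t,u))\,dt$, one obtains
\[
\phi'(s) \;=\; \int_0^1 \bigl(p\cdot\nabla_p H(t,u) - H(t,u)\bigr)\,dt \;-\; \mathbb{A}_H(u(s,\cdot)).
\]
Condition (H1), in the form $p\cdot\nabla_p H - H \geq h_0|p|^2 - h_1$, yields the differential inequality $\phi'(s) \geq 2h_0\,\phi(s) - \mathbb{A}_H(u(s,\cdot)) - h_1$, while (H2) produces a companion upper bound $\phi'(s) \leq 2c_0\,\phi(s) - \mathbb{A}_H(u(s,\cdot)) + c_0$. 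The energy identity tells us that $s\mapsto \mathbb{A}_H(u(s,\cdot))$ is nonincreasing with total oscillation at most $a$; feeding this into the two inequalities and running a Gronwall-type argument gives a uniform bound $\phi(s)\leq C(a)$, whence $\|u\|_{L^2(I\times]0,1[)} \leq c|I|^{1/2}$.

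For $\partial_t u$, the Floer equation rewrites as $\partial_t u = X_H(u) + J(u)\partial_s u$; combining the pointwise bound $|X_H(u)|\leq h_3(1+|p|^2)$ coming from (H2) with the already-established slice-wise control on $p$, upgraded from $L^2$ to $L^4$ per unit interval via a short elliptic bootstrap using the PDE itself, one controls $\|X_H(u)\|_{L^2(I\times]0,1[)}$ and hence $\|\nabla u\|_{L^2(I\times]0,1[)}$ by $c(1+|I|^{1/2})$. The main obstacle is the uniform bound on $\phi$: without any a priori absolute control of $\mathbb{A}_H(u(s,\cdot))$, the lower differential inequality from (H1) cannot be integrated in isolation. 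The subtle point is the simultaneous use of the companion upper bound from (H2), together with the monotonicity and bounded oscillation of the action, which together rule out blow-up of $\phi$ at either end of $\R$.
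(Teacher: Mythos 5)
Your first paragraph is exactly the paper's entire proof: the text simply cites \cite[Lemma 1.12]{as06} and observes that, although the boundary conditions there are different, the argument uses only the energy identity (\ref{enid}), whose boundary term vanishes here because the Liouville form vanishes on conormal bundles. So your proposal takes the same approach and your core observation is correct.

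The remaining two paragraphs, which sketch the internal structure of the AS06 argument, go beyond what the paper records; they are broadly on the right track (the identity
$\phi'(s) = \int_0^1 \bigl(DH[Y]-H\bigr)\,dt - \mathbb{A}_H(u(s,\cdot))$
and the lower differential inequality from (H1) are correct), but the decisive step is left vague. Saying that monotonicity and bounded oscillation of $\mathbb{A}_H(u(s,\cdot))$ together with the upper inequality ``rule out blow-up of $\phi$'' is not the same as producing a bound $\phi(s)\leq C(a)$: bounded oscillation of the action does not bound the action itself, and the Gronwall-type closure you invoke requires an absolute control on $\sup_s|\mathbb{A}_H(u(s,\cdot))|$, which in \cite{as06} comes from the additional hypotheses on the solution (and is available in every application of the present lemma). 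Also a minor sign slip: from $\partial_s u + J(u)(\partial_t u - X_H) = 0$ one gets $\partial_t u = X_H(t,u) - J(u)\partial_s u$, not $+J(u)\partial_s u$, though this does not affect the estimate.
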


The proof of the following result follows the argument of  
\cite[Theorem 1.14]{as06}, using the above lemma together with the elliptic estimates of Proposition \ref{czj}. 

\begin{prop}
\label{gencomp}
For every $a>0$ there exists $c>0$ such that for every solution $u:\Sigma
\rightarrow T^*Q$ of $\delbar_{J,H} (u) =0$, with boundary conditions (\ref{bdry3}), and energy bound
\[
\int_{\Sigma} |\partial_s u (s,t)|^2 \, ds\, dt
\leq a,
\]
we have the following uniform estimate:
\[
\|u\|_{L^{\infty}(\R\times ]0,1[)} \leq c.
\]
\end{prop}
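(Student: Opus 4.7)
The plan is to argue by contradiction along the lines of \cite[Theorem 1.14]{as06}, adapted to accommodate the jumps in boundary conditions and the presence of singular coordinates at the jump points. Suppose, for the sake of contradiction, that there exists $a>0$ and a sequence of solutions $u_n:\Sigma \to T^*Q$ of $\delbar_{J,H}(u_n)=0$ with boundary conditions \eqref{bdry3}, energy bounded by $a$, but $M_n:=\|u_n\|_{L^\infty} \to +\infty$. Lemma \ref{illem}, applied to each $u_n$, yields uniform $L^2$-bounds for $u_n$ and $\nabla u_n$ on every bounded subdomain of $\Sigma$, so the blow-up of $u_n$ cannot be detected at the $L^2$ level.

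The next step is a bubbling analysis. Using the isometric embedding $T^*Q \hookrightarrow \C^N$, regard each $u_n$ as a $\C^N$-valued map satisfying a perturbed Cauchy--Riemann equation whose zero-order term $-J(u_n)X_H(t,u_n)$ is controlled in terms of $|u_n|^2$ via (H2). By Hofer's almost-maximum lemma, one extracts points $z_n \in \Sigma$ and radii $\varepsilon_n\to 0$ with $\varepsilon_n R_n \to +\infty$, where $R_n:=|\nabla u_n(z_n)|\to +\infty$, such that $|\nabla u_n| \le 2 R_n$ on the ball of radius $\varepsilon_n$ around $z_n$. Rescaling via $v_n(\zeta):=u_n(z_n + \zeta/R_n)$, the elliptic estimates of Proposition \ref{czj} (which rely precisely on the partial orthogonality of consecutive conormal subspaces) together with standard interior and boundary bootstrap yield a subsequential $C^\infty_{\mathrm{loc}}$-limit $v_\infty$ on one of three model domains determined by the asymptotic behavior of $R_n \cdot \mathrm{dist}(z_n,\partial\Sigma)$ and $R_n \cdot \mathrm{dist}(z_n,\{s_1,\dots,s_k\})$: the plane $\C$, the half-plane $\mathbb{H}$, or the upper-right quadrant $\mathbb{H}^+$. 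In each case $v_\infty$ is a nonconstant $J$-holomorphic map (the Hamiltonian perturbation disappears in the rescaling) with $|\nabla v_\infty(0)|=1$ and finite energy.

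Each type of bubble is then excluded using exactness. The symplectic form on $T^*Q$ is the differential of the Liouville form $\eta$, and every conormal bundle $N^*R_j$ is an exact Lagrangian, since $\eta$ vanishes identically on it (this is used already in the derivation of the energy identity \eqref{enid}). By Stokes' theorem, any finite-energy $J$-holomorphic plane has zero symplectic area, and any finite-energy $J$-holomorphic half-plane or quadrant whose boundary pieces lie in conormal Lagrangians has zero symplectic area as well. Hence $v_\infty$ must be constant, contradicting $|\nabla v_\infty(0)|=1$, and the $L^\infty$ bound follows.

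The main obstacle is case (c), the blow-up at a jump point $s_j$. There the strip $\Sigma$ has ordinary smooth boundary but the boundary condition switches from $N^*R_{j-1}$ to $N^*R_j$; after reformulating the nonlocal conormal condition as a local one by passing to $T^*Q^2$ via the doubling $v(z)=(\mathscr{C}u(\bar z),u(z))$, one faces Lagrangian boundary conditions jumping along one edge. Rescaling at such a jump produces a limit on a half-plane with boundary lying in $N^*V_{j-1}$ on one ray and in $N^*V_j$ on the other. The task is to show that this limit, which a priori is merely $X^{1,p}$-regular at the corner in the sense of Section \ref{sjlbc}, has an isolated removable singularity at $0$, yielding either a $J$-holomorphic disk or a $J$-holomorphic map on the quadrant $\mathbb{H}^+$ with well-defined conormal boundary conditions on each edge. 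Partial orthogonality of $V_{j-1}$ and $V_j$ is exactly what is needed both to control the weighted regularity near the corner (as in the linear analysis of Section \ref{lineartheory}) and to apply Stokes to rule out a nonconstant limit, since $\eta$ vanishes on each of $N^*V_{j-1}$ and $N^*V_j$.
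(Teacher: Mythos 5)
Your plan departs entirely from the paper's method, and it has a genuine gap that makes it unworkable for this particular statement. The paper's proof of Proposition~\ref{gencomp} is a direct bootstrap estimate, not a contradiction argument: after doubling to local conormal boundary conditions via $v(z)=(\mathscr{C}u((i-z)/2),u((i+z)/2))$ and cutting off on unit substrips, the elliptic estimate of Lemma~\ref{czj} together with the quadratic growth bound (\ref{cresc}) gives $\|\nabla w\|_{X^p}\le a\|w\|_{X^p}+b\|\chi(\cdot-h)(1+|v|^2)\|_{X^p}$; both terms on the right are controlled by the uniform $W^{1,2}$ bound of Lemma~\ref{illem} and the embedding of Proposition~\ref{compemb}, so $w$ is uniformly bounded in $X^{1,p}$ for some $p>2$, hence in $L^\infty$. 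There is no concentration analysis in this proof, and \cite[Theorem 1.14]{as06} is of the same direct type.

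The gap in your approach is that bubbling cannot detect escape to infinity in the non-compact target $T^*Q$. You rescale at points $z_n$ of near-maximal gradient $R_n=|\nabla u_n(z_n)|$, but you never control $|v_n(0)|=|u_n(z_n)|$: the uniform local $L^2$ bound from Lemma~\ref{illem} only forces $|u_n(z_n)|\lesssim\sqrt{R_n}$, which may diverge. If the rescaled maps leave every compact subset of $T^*Q$, there is no $C^\infty_{\mathrm{loc}}$ limit $v_\infty$ to contradict exactness, and the whole scheme collapses. This is exactly why Floer theory on cotangent bundles requires a separate $C^0$ estimate --- obtained here by the direct elliptic bootstrap, by Viterbo via the maximum principle for contact-type $J$, or by Salamon--Weber via the heat-flow comparison --- \emph{before} any bubbling can start. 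The paper does run the concentration argument you describe, but only afterwards, in Section~\ref{compsec}, to pass from the $C^0$ to the $C^1$ bound, once the image of the solutions is already confined to a compact set. Your discussion of the corner case and of removal of singularities at jump points is technically sensible, but it addresses a regularity issue near the jump points, not the possibility that the rescaling escapes to the end of $T^*Q$, which is what breaks the argument.
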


\begin{proof} 
By using the above embedding, the equation $\delbar_{J,H} (u) =0$ can be rewritten as
\begin{equation}
\label{pg1}
\delbar  u = J_0 X_H(t,u).
\end{equation}
We can pass to local boundary conditions by considering the map $v: \Sigma \rightarrow T^*Q^2 \subset \C^{2N}$ defined by
\[
v(z) :=  \bigl( \mathscr{C} u((i-z)/2), u((i+z)/2)\bigr) = \bigl( \overline{u} ((i-z)/2), u((i+z)/2)\bigr).
\]
The map $v$ satisfies the boundary conditions
\begin{equation}
\label{pg2}
\begin{split}
v(s) \in N^* \Delta_Q \subset N^* \Delta_{\R^N}, \quad & \forall
s\in \R, \\
v(s+i) \in N^* R_j \subset N^* V_j, \quad & \mbox{ if } s\in [2s_{j-1},2s_j].
\end{split}
\end{equation}
Moreover,
\[
\delbar  v (z) = \frac{1}{2} \left( \overline{\delbar  u}
((i-z)/2) , \delbar u((i+z)/2) \right),
\]
so by (\ref{pg1}) and by the fact that $X_H(t,q,p)$ has quadratic
growth in $|p|$ by (\ref{cresc}), there is a constant $c$ such that
\begin{equation}
\label{pg3}
|\delbar  v (z)| \leq c (1+|v(z)|^2).
\end{equation}
Let $\chi$ be a smooth function such that $\chi(s)=1$ for $s\in
[0,1]$, $\chi(s)=0$ outside $[-1,2]$, and $0\leq \chi \leq 1$. Given
$h\in \Z$ set
\[
w(s+it) := \chi(s-h) v(s+it).
\]
Fix some $p>2$, and consider the norm $\|\cdot\|_{X^p}$ introduced in
Section \ref{sjlbc}, with $\mathscr{S}=\{2s_1,\dots, 2s_k\}$. The map $w$ has compact support and 
satisfies the boundary conditions (\ref{pg2}), so by Proposition
\ref{czj} we have the elliptic estimate
\[
\|\nabla w\|_{X^p} \leq c_0 \|w\|_{X^p} + c_1 \|\delbar  w
\|_{X^p}.
\]
Since
\[
\delbar  w = \chi'(s-h) v + \chi(s-h)\delbar  v
= \frac{\chi'}{\chi} (s-h) w  + \chi(s-h)\delbar  v,
\]
we obtain, together with (\ref{pg3}),
\begin{eqnarray*}
\|\nabla w\|_{X^p} \leq \left( c_0 + c_1 \|\chi'/\chi\|_{\infty}
\right) \|w\|_{X^p} + c_1 \| \chi(\cdot-h) \delbar  v
\|_{X^p} \\ 
\leq  \left( c_0 + c_1 \|\chi'/\chi\|_{\infty}
\right) \|w\|_{X^p} +
c_1 c \|\chi(\cdot-h) (1+|v|^2)\|_{X^p}.
\end{eqnarray*}
Therefore, we have an estimate of the form
\begin{equation}
\label{pg4}
\|\nabla w\|_{X^p} \leq a \|w\|_{X^p} + b \|\chi(\cdot-h)
(1+|v|^2)\|_{X^p}.
\end{equation}
Since $w$ has support in the set $[h-1,h+2]\times [0,1]$, we can
estimate its $X^p$ norm in terms of its $X^{1,2}$ norm, by Proposition
\ref{compemb}. The $X^{1,2}$ norm is equivalent to the $W^{1,2}$
norm, and the latter norm is bounded by Lemma \ref{illem}. We conclude
that $\|w\|_{X^p}$ is uniformly bounded. Similarly, the $X^p$ norm of $ 
\chi(\cdot-h)(1+|v|^2)$ is controlled by its $W^{1,2}$ norm, which is
also bounded because of Lemma \ref{illem}. Therefore, (\ref{pg4})
implies that $w$ is uniformly bounded in $X^{1,p}$. Since $p>2$, we deduce that $w$ is uniformly bounded in
$L^{\infty}$. The integer $h$ was arbitrary, hence we conclude that
$v$ is uniformly bounded in $L^{\infty}$, and so is $u$. 
\end{proof}

We conclude this section by discussing how the above result leads to $C^{\infty}_{\mathrm{loc}}$ compactness for the spaces of maps considered in this paper. We consider the model case of $\mathscr{M}_{\Upsilon}^{\Lambda}(x_1,x_2;y)$, the other cases being analogous. 

By the equivalent description of $\mathscr{M}_{\Upsilon}^{\Lambda}(x_1,x_2;y)$ of Section (\ref{lin}) (see in particular (\ref{lavu})), the space of maps we are considering fits into the above setting. Indeed, an isometric embedding of $M$ into $\R^N$ induces isometric embeddings of $Q=M^2$ into $\R^{2N}$ and of $Q^2$ into $\R^{4N}$, such that $\Delta_{M^2}$ and $\Delta_M \times \Delta_M$ are mapped into $Q^2 \cap \Delta_{\R^{2N}}$ and $Q^2 \cap (\Delta_{\R^N} \times \Delta_{\R^N})$, where the linear subspaces $\Delta_{\R^{2N}}$ and  $\Delta_{\R^N} \times \Delta_{\R^N}$ are partially orthogonal. Therefore, the energy estimate (\ref{seir}) and Proposition \ref{gencomp} imply that the elements of $\mathscr{M}_{\Upsilon}^{\Lambda}(x_1,x_2;y)$ have a uniform $L^{\infty}$ bound. 

For the remaining part of the argument leading to the $C^{\infty}_{\mathrm{loc}}$ compactness of $\mathscr{M}^{\Lambda}_{\Upsilon}(x_1,x_2;y)$ it is more convenient to use the original definition of this solutions space and the smooth structure of $\Sigma_{\Upsilon}^{\Lambda}$. Then the argument is absolutely standard: If by
contradiction there is no uniform $C^1$ bound, a concentration argument (see e.g. \cite[Theorem 6.8]{hz94}) produces a non-constant $J$-holomorphic sphere. However, there are no non-constant $J$-holomorphic spheres on cotangent bundles, because the symplectic form $\omega$ is exact. This contradiction proves the $C^1$ bound. Then the $C^k$ bounds for arbitrary $k$ follow from elliptic bootstrap, as in \cite[Section 6.4]{hz94}.   

Other solutions spaces, such as the space $\mathscr{M}_{\Upsilon}^{\Omega}$ for the triangle products, involve Riemann surfaces with boundary, and the solutions take value on some conormal subbundle of $T^*M$. In this case the concentration argument for proving the $C^1$ bound could produce a non-constant $J$-holomorphic disk with boundary on the given conormal subbundle. However, the Liouville one-form vanishes on conormal subbundles, so such $J$-holomorphic disks do not exist. Again we find a contradiction, leading to $C^1$ bounds and -- by elliptic bootstrap -- to $C^k$ bounds for every $k$.  

\subsection{Removal of singularities}

Removal of singularities results state that isolated singularities of
a $J$-holomorphic map with bounded energy can be removed (see for
instance \cite[Section 4.5]{ms04}). In Proposition \ref{remsing}
below, we prove a result of this sort for corner singularities. 
The fact that we are dealing with cotangent
bundles, which can be isometrically embedded into $\C^N$, allows to
reduce such a statement to the following easy linear result, where 
$\mathbb{D}_r$ is the open disk of radius $r$ in $\C$, and 
$\mathbb{H}^+$ is the upper right quadrant $\{\re z>0, \; \im z>0\}$.

\begin{lem}
\label{mnbline}
Let $V_0$ and $V_1$ be partially orthogonal linear subspaces of
$\R^n$.
Let $u:\mathrm{Cl}(\mathbb{D}_1\cap \mathbb{H}^+)
\setminus \{0\} \rightarrow \C^n$ be a smooth map such that
\[
u\in  L^p( \mathbb{D}_1 \cap \mathbb{H}^+ ,\C^n), \quad
\label{mnbstma2}
\delbar  u  \in L^p( \mathbb{D}_1 \cap \mathbb{H}^+ ,\C^n),
\]
for some $p>2$, and 
\[
u(s) \in N^* V_0 \quad \forall \; s>0, \quad u(it) \in N^* V_1 \quad
\forall \; t>0.
\]
Then $u$ extends to a continuous map on $\mathrm{Cl}(\mathbb{D}_1\cap
\mathbb{H}^+)$.
\end{lem}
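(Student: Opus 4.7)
The plan is to reduce the problem to the classical interior removal of singularities for $\delbar$, first by trivializing the boundary conditions via a unitary transformation, then by extending $u$ to the whole punctured disk through two successive Schwarz reflections, and finally by invoking $L^p$ elliptic regularity.

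First, I would exploit the partial orthogonality exactly as in the proof of Lemma \ref{czl}: $\R^n$ splits orthogonally as $X_1 \oplus X_2 \oplus X_3 \oplus X_4$ with $V_0 = X_1 \oplus X_2$ and $V_1 = X_1 \oplus X_3$. The unitary transformation $U \in \mathrm{U}(n)$ that is the identity on $(X_1 \oplus X_2) \otimes \C$ and multiplication by $i$ on $(X_3 \oplus X_4) \otimes \C$ sends $N^* V_0$ to $\R^n$ and $N^* V_1$ to $N^* W$, where $W := X_1 \oplus X_4$. The key point is that $N^*W = W \oplus i W^\perp$ is both totally real and invariant under complex conjugation. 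Replacing $u$ by $Uu$, we may assume that the boundary condition on $\R^+$ is simply $\R^n$ and the one on $i\R^+$ is the self-conjugate totally real subspace $N^*W$.

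Since $u$ is smooth up to the boundary on $\mathrm{Cl}(\mathbb{D}_1\cap \mathbb{H}^+)\setminus\{0\}$ and real-valued on $\R^+\cap \mathbb{D}_1$, the Schwarz extension $\tilde u(z) := \overline{u(\bar z)}$ for $\im z<0$ produces a map on $\mathbb{D}_1 \cap \{\re z>0\}\setminus\{0\}$; a direct computation gives $\delbar \tilde u(z) = \overline{\delbar u(\bar z)}$ on the lower half, so $\tilde u$ and $\delbar \tilde u$ lie in $L^p$ on the extended domain. Self-conjugacy of $N^*W$ ensures $\tilde u$ takes values in $N^*W$ on all of $i\R \cap \mathbb{D}_1 \setminus \{0\}$. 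A further unitary $U'$ with $U' N^* W = \R^n$ turns this condition into the real one, and a second Schwarz reflection across $i\R$ produces a map $\hat u$ on $\mathbb{D}_1\setminus\{0\}$ with $\hat u\in L^p(\mathbb{D}_1)$ and $\delbar \hat u \in L^p(\mathbb{D}_1)$.

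At this stage the isolated singularity at the origin is removed by the standard argument: for $p>2$ the conjugate exponent $p'<2$ admits cut-off functions $\varphi_\varepsilon \in C^\infty_c(\mathbb{D}_1)$ equal to $1$ outside $\mathbb{D}_{2\varepsilon}$, vanishing on $\mathbb{D}_\varepsilon$, with $\|\partial \varphi_\varepsilon\|_{L^{p'}} \to 0$ as $\varepsilon\to 0$ (concretely $\varphi_\varepsilon(z)=\phi(|z|/\varepsilon)$ gives $\|\partial\varphi_\varepsilon\|_{L^{p'}} \leq C \varepsilon^{(2-p')/p'}$). Using these to truncate arbitrary $\psi\in C^\infty_c(\mathbb{D}_1,\C^n)$ and passing to the limit in the integration-by-parts identity on $\mathbb{D}_1 \setminus \{0\}$, one shows that $\hat u$ is a weak $L^p$ solution of $\delbar \hat u = f$ on all of $\mathbb{D}_1$ with $f \in L^p$. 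Theorem \ref{rws}(i) then gives $\hat u \in W^{1,p}_{\mathrm{loc}}(\mathbb{D}_1)$, and the Sobolev embedding $W^{1,p}\hookrightarrow C^0$ (valid since $p>2$) yields continuity at the origin; transporting back through $U'$ and $U$ furnishes the required continuous extension of $u$.

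The main technical subtlety, rather than a deep obstacle, is verifying that each Schwarz reflection produces a genuine $L^p$-weak solution of a $\delbar$ equation across the respective boundary ray, with no concentrated distributional contribution supported on that ray. This is exactly where the pointwise boundary values of $u$ in $\R^n$, respectively in $N^*W$, combined with the smoothness of $u$ up to the boundary away from the origin, come in: a standard integration by parts against test functions supported in a neighbourhood of the reflection ray but away from the origin shows that the boundary term vanishes because the involved inner products pair totally real values of $u$ with purely imaginary contributions from the trace of the test function in the appropriate directions, leaving no residual distribution.
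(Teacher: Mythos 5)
Your proof is correct and follows the same overall route as the paper's: reduce to self-conjugate totally real boundary conditions via the unitary of Lemma~\ref{czl}, extend to the punctured disk by two Schwarz reflections, remove the isolated singularity at the origin by showing the weak $\delbar$ equation holds across $0$, and conclude via the $L^p$ regularity of Theorem~\ref{rws} together with the Sobolev embedding $W^{1,p}\hookrightarrow C^0$ for $p>2$. The only divergence lies in the removable-singularity step: where you truncate against cut-offs $\varphi_\varepsilon$ with $\|\partial\varphi_\varepsilon\|_{L^{p'}}\to 0$ (exploiting $p'<2$), the paper instead selects annular radii $\epsilon_h\to 0$ along which $\epsilon_h^2\int_0^{2\pi}|u(\epsilon_h e^{i\theta})|^2\,d\theta\to 0$ and estimates the contour integral over $\partial\mathbb{D}_{\epsilon_h}$ by Cauchy--Schwarz, using only $u\in L^2$; this is a cosmetic variant and both arguments are routine.
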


\begin{proof}
Since $V_0$ and $V_1$ are partially orthogonal, by applying twice the 
Schwarz reflection argument of the proof of Lemma \ref{czl} we can
extend $u$ to a continuous map
\[
u: \mathbb{D}_1 \setminus \{0\} \rightarrow \C^n,
\]
which is smooth on $\mathbb{D}_1 \setminus ( \R \cup i\R)$, has finite
$L^p$ norm on $\mathbb{D}_1$, and satisfies
\[
\delbar  u \in L^p(\mathbb{D}_1).
\] 
Since $p>2$, the $L^2$ norm of $u$ on $\mathbb{D}_1$ is also finite,
and by the conformal change of variables $z=s+it=e^{\zeta} = e^{\rho+i
  \theta}$, this norm can be written as
\[
\int_{\mathbb{D}_1} |u(z)|^2\, ds \,dt = \int_{-\infty}^0 \int_0^{2\pi}
|u(e^{\rho + i \theta})|^2 e^{2\rho} \, d\theta d\rho.
\]
The fact that this quantity is finite implies that there is a 
sequence $\rho_h \rightarrow -\infty$ such
that, setting $\epsilon_h := e^{\rho_h}$, we have
\begin{equation}
\label{tendea0}
\lim_{h\rightarrow \infty} \epsilon_h^2 \int_0^{2\pi} |u(\epsilon_h
e^{i\theta})|^2 d\theta = \lim_{h\rightarrow \infty}
e^{2\rho_h} \int_0^{2\pi} |u(e^{\rho_h + i \theta})|^2 \, d\theta
= 0.
\end{equation}
If $\varphi\in C^{\infty}_c (\mathbb{D}_1,\C^N)$, an integration by parts 
using the Gauss formula leads to
\begin{eqnarray*}
\int_{\mathbb{D}_1} \langle u , \partial \varphi \rangle \, ds\,dt =   
\int_{\mathbb{D}_{\epsilon_h}} \langle u , \partial \varphi \rangle \, ds\,dt
+ \int_{\mathbb{D}_1 \setminus \mathbb{D}_{\epsilon_h}} \langle u ,
\partial \varphi \rangle \, ds\,dt \\ 
= \int_{\mathbb{D}_{\epsilon_h}} \langle u , \partial \varphi
\rangle \, ds\,dt - \int_{\mathbb{D}_1 \setminus
  \mathbb{D}_{\epsilon_h}} \langle \delbar  u , \varphi
\rangle \, ds\,dt + i \int_{\partial \mathbb{D}_{\epsilon_h}} \langle u,
\varphi \rangle \, dz.
\end{eqnarray*}
Since $u$ and $\delbar  u$ are integrable over
$\mathbb{D}_1$, the the first integral in the latter
expression tends to zero, while the second one tends to
\[
- \int_{\mathbb{D}_1} \langle \delbar  u , \varphi
\rangle \, ds\,dt.
\]
As for the last integral, we have
\[
\int_{\partial \mathbb{D}_{\epsilon_h}} \langle u,
\varphi \rangle \, dz = i \epsilon_h \int_0^{2\pi} \langle
u(\epsilon_h e^{i\theta}), \varphi(\epsilon_h e^{i\theta}) \rangle
e^{i\theta} \, d\theta,
\]
so by the Cauchy-Schwarz inequality,
\begin{eqnarray*}
\left|\int_{\partial \mathbb{D}_{\epsilon_h}} \langle u,
\varphi \rangle \, dz \right| \leq \epsilon_h \left( \int_0^{2\pi}
|u(\epsilon_h e^{i\theta})|^2 \, d\theta \right)^{1/2} \left( \int_0^{2\pi}
|\varphi(\epsilon_h e^{i\theta})|^2 \, d\theta \right)^{1/2} \\ \leq
\sqrt{2\pi} \, \epsilon_h \left( \int_0^{2\pi}
|u(\epsilon_h e^{i\theta})|^2 \, d\theta \right)^{1/2}
\|\varphi\|_{\infty}.
\end{eqnarray*}
Then (\ref{tendea0}) implies that the latter quantity tends to zero
for $h\rightarrow \infty$. Therefore,
\[
\int_{\mathbb{D}_1} \langle u , \partial \varphi \rangle \, ds\,dt =   
- \int_{\mathbb{D}_1} \langle \delbar  u , \varphi
\rangle \, ds\,dt,
\]
for every test function $\varphi\in
C^{\infty}_c(\mathbb{D}_1,\C^n)$. Since $\delbar  u \in L^p$,
by the regularity theory of the weak
solutions of $\delbar $ (see Theorem \ref{rws} (i)), $u$ belongs
to $W^{1,p}(\mathbb{D}_1,\C^n)$. Since $p>2$, we conclude that $u$ is
continuous at $0$.
\end{proof}

Let $R_0$ and $R_1$ be closed submanifolds of $Q$, and assume that there
is an isometric embedding $Q\hookrightarrow \R^N$ such that
\[
R_0 = Q \cap V_0, \quad R_1 \cap V_1,
\]
where $V_0$ and $V_1$ are partially orthogonal linear subspaces of $\R^N$.

\begin{prop}
\label{remsing}
Let $X: \mathbb{D}_1\cap \mathbb{H}^+ 
\times T^*Q \rightarrow TT^*Q$ be a smooth vector
field such that $X(z,q,p)$ grows at most polynomially in $p$,
uniformly in $(z,q)$. Let $u:\mathrm{Cl}(\mathbb{D}_1\cap \mathbb{H}^+)
\setminus \{0\} \rightarrow T^*Q$ be a smooth solution of the equation
\begin{equation}
\label{mnbeq}
\delbar_J (u) (z) = X(z,u(z)) \quad \forall z\in
\mathrm{Cl}(\mathbb{D}_1\cap \mathbb{H}^+)\setminus \{0\},
\end{equation}
such that
\[
u(s) \in N^* R_0 \quad \forall \; s>0, \quad u(it) \in N^* R_1 \quad
\forall \; t>0.
\]
If $u$ has finite energy,
\begin{equation}
\label{mnben}
\int_{\mathbb{D}_1 \cap \mathbb{H}^+} |\nabla u|^2 \, ds\,dt < +\infty,
\end{equation}
then $u$ extends to a continuous map on $\mathrm{Cl}(\mathbb{D}_1\cap
\mathbb{H}^+)$.
\end{prop}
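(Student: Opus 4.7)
The plan is to reduce Proposition \ref{remsing} to its linear counterpart, Lemma \ref{mnbline}. Using the isometric embedding $Q\hookrightarrow \R^N$, we view $u$ as a map into $T^*\R^N\cong\C^N$. As in Section \ref{compsec}, the metric almost complex structure $J$ on $T^*Q$ agrees with the restriction of $J_0$ to $T^*Q$, and the second fundamental form of $T^*Q\subset\C^N$ grows at most linearly in the fiber. Hence in the ambient coordinates the equation $\delbar_J(u)=X(z,u)$ takes the form
\[
\delbar u \,=\, Y(z,u),
\]
with $Y$ smooth on $(\mathbb{D}_1\cap\mathbb{H}^+)\times \C^N$ and of polynomial growth in $|u|$ uniformly in $z$. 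Moreover $N^*R_j\subset N^*V_j$ since $R_j = Q\cap V_j$, so the boundary conditions on $u$ are contained in $u(s)\in N^*V_0$, $u(it)\in N^*V_1$, with $V_0$ and $V_1$ partially orthogonal. By Lemma \ref{mnbline}, it is enough to show that $u\in L^p$ and $\delbar u\in L^p$ on $\mathbb{D}_1\cap \mathbb{H}^+$ for some $p>2$; the bound on $\delbar u$ then follows from the bound on $u$ via the polynomial growth of $Y$.

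The key step is therefore to upgrade the finite-energy hypothesis (\ref{mnben}) to a pointwise bound on $u$ near the corner singularity at $0$. Exactly as in the proof of Lemma \ref{czl}, the partial orthogonality of $V_0$ and $V_1$ produces a unitary $U\in \mathrm{U}(N)$ mapping $N^*V_0$ to $\R^N$ and $N^*V_1$ to a subspace $N^*W$ that is self-conjugate in the coordinates decomposing $\R^N$ into simultaneous eigenspaces of the two projections. A double Schwarz reflection across the horizontal and vertical half-lines then extends $Uu$ to a map $\tilde u:\mathbb{D}_1\setminus\{0\}\to\C^N$, continuous off $\R\cup i\R$ and smooth on each of the four open quadrants. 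Because the reflections are isometries, $\int_{\mathbb{D}_1}|\nabla \tilde u|^2<\infty$, and $\tilde u$ satisfies a perturbed Cauchy-Riemann equation $\delbar \tilde u = \tilde Y(z,\tilde u)$ on each quadrant, where $\tilde Y$ still obeys a polynomial growth bound $|\tilde Y(z,v)|\leq c(1+|v|^k)$ uniformly in $z$.

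Since $\int_{\mathbb{D}_r\setminus\{0\}}|\nabla \tilde u|^2\to 0$ as $r\to 0$, the standard $\varepsilon$-regularity for perturbed $J_0$-holomorphic maps (the mean value inequality of \cite[Sections~4.3--4.5]{ms04}) applies on every sufficiently small disk centered at an interior point and yields the decay
\[
|\nabla \tilde u(z)|\,=\,o(|z|^{-1}) \qquad \mbox{as } z\to 0.
\]
Integrating along radial paths from $|z|=1/2$ to $z$ then gives $|\tilde u(z)|=O(\log|z|^{-1})$ as $z\to 0$, so $\tilde u\in L^p(\mathbb{D}_{1/2})$ for every $p<\infty$. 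Restricting to the quadrant, $u\in L^p(\mathbb{D}_{1/2}\cap\mathbb{H}^+)$ for every $p<\infty$, and hence also $\delbar u=Y(\cdot,u)\in L^p$. Lemma \ref{mnbline} now furnishes the required continuous extension of $u$ across $0$.

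The principal technical obstacle is the application of the mean value inequality across the reflection lines, where the equation $\delbar \tilde u=\tilde Y(z,\tilde u)$ is only piecewise smooth in $z$. The point is that the mean value inequality for solutions of a perturbed Cauchy-Riemann equation on a small disk requires only an $L^\infty$ bound on the inhomogeneity of the form $|\delbar \tilde u|\leq c(1+|\tilde u|^k)$, not smoothness of the right-hand side; this bound is manifestly preserved by reflection, so the gradient estimate goes through uniformly on all small disks meeting the singular set $\{0\}\cup\R\cup i\R$, which is exactly what is needed for the logarithmic upper bound on $|\tilde u|$.
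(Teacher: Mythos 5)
Your outer architecture agrees with the paper's: embed $T^*Q$ isometrically into $\C^N$ so that $\delbar_J$ becomes the standard $\delbar$ and $N^*R_j\subset N^*V_j$, establish that $u$ and $\delbar u$ lie in $L^p$ for some $p>2$, and then invoke Lemma \ref{mnbline}. The difference --- and the gap --- is in the middle step. The paper obtains $u\in L^p(\mathbb{D}_1\cap\mathbb{H}^+)$ for \emph{all} $p<\infty$ by a soft argument that makes no use of the PDE at all: after the double Schwarz reflection and a cut-off, $v=\chi\tilde u$ is compactly supported with $\nabla v\in L^2$, and the removability of the point singularity for $W^{1,2}$ in dimension two (applied, say, to truncations $v_k$) together with the Poincar\'e inequality gives $v\in W^{1,2}(\mathbb{D}_1)$, whence $v\in L^p$ for every finite $p$ by the two-dimensional Sobolev embedding. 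Only after this does the polynomial growth of $X$ enter, to conclude $\delbar u\in L^p$.

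Your version replaces this soft step with an $\varepsilon$-regularity / mean-value-inequality argument, and that is where the proof breaks down. The mean value inequality for a perturbed Cauchy--Riemann equation $\delbar\tilde u=\tilde Y(z,\tilde u)$ with $|\tilde Y(z,w)|\leq c(1+|w|^k)$ is \emph{not} applicable with only a small-energy hypothesis: the constants in that inequality (and in the differential inequality $\Delta|\nabla\tilde u|^2\geq -A(\cdots)$ underlying it) depend on bounds for the inhomogeneity and its derivatives evaluated along the map, and since $\tilde Y$ is polynomial in $\tilde u$ and $\tilde u$ is a priori unbounded near the corner, those constants are exactly what you are trying to control. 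Your last paragraph acknowledges a ``technical obstacle'' but misidentifies it: the issue is not the piecewise smoothness in $z$ across the reflection lines, it is that $|\delbar\tilde u|\leq c(1+|\tilde u|^k)$ is \emph{not} an $L^\infty$ bound on the inhomogeneity --- it is a bound in terms of $|\tilde u|$, the very quantity you need to estimate. The argument is therefore circular. (Even setting that aside, pushing $\varepsilon$-regularity uniformly across the reflection cross, where the coefficients jump, would require an additional argument that you do not give.) The fix is to follow the paper and obtain the $L^p$ bound on $u$ \emph{before} touching the equation, via Poincar\'e and Sobolev on the reflected, truncated map; then the polynomial growth of $X$ transfers $L^p$ to $\delbar u$, and Lemma \ref{mnbline} finishes the proof exactly as you intend.
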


\begin{proof}
By means of the above isometric embedding, we may regard $u$ as a
$\C^N$-valued map, satisfying the equation (\ref{mnbeq}) with
$\delbar_J = \delbar $, the energy estimate
(\ref{mnben}), and the boundary condition
\[
u(s) \in N^* V_0 \quad \forall \; s>0, \quad u(it) \in N^* V_1 \quad
\forall \; t>0.
\]
By the energy estimate (\ref{mnben}), $u$ belongs to $L^p(\mathbb{D}_1
\cap \mathbb{H}^+,\C^N)$ for every $p<+\infty$: for instance, 
this follows from the Poincar\'e inequality and the Sobolev embedding 
theorem on $\mathbb{D}_1$, after applying
a Schwarz reflection twice and after multiplying by a cut-off function
vanishing on $\partial \mathbb{D}_1$ and equal to $1$ on a
neighborhood of $0$. The polynomial growth of $X$ then implies that 
\begin{equation}
\label{mnblp}
X(\cdot,u(\cdot)) \in L^p(\mathbb{D}_1
\cap \mathbb{H}^+,\C^N) \quad \forall p<+\infty.
\end{equation}
Therefore, Lemma \ref{mnbline} implies that $u$ extends to a
continuous map on $\mathrm{Cl}(\mathbb{D}_1\cap \mathbb{H}^+)$.
\end{proof}

The corresponding statement for jumping conormal boundary conditions
is the following:

 \begin{prop}
\label{remsing2}
Let $X: \mathbb{D}_1\cap \mathbb{H} 
\times T^*Q \rightarrow TT^*Q$ be a smooth vector
field such that $X(z,q,p)$ grows at most polynomially in $p$,
uniformly in $(z,q)$. Let $u:\mathrm{Cl}(\mathbb{D}_1\cap \mathbb{H})
\setminus \{0\} \rightarrow T^*Q$ be a smooth solution of the equation
\[
\delbar_J (u) (z) = X(z,u(z)) \quad \forall z\in
\mathrm{Cl}(\mathbb{D}_1\cap \mathbb{H})\setminus \{0\},
\]
such that
\[
u(s) \in N^* R_0 \quad \forall \; s>0, \quad u(s) \in N^* R_1 \quad
\forall \; s<0.
\]
If $u$ has finite energy,
\[
\int_{\mathbb{D}_1 \cap \mathbb{H}} |\nabla u|^2 \, ds\,dt < +\infty,
\]
then $u$ extends to a continuous map on the closed half-disk 
$\mathrm{Cl}(\mathbb{D}_1\cap \mathbb{H})$.
\end{prop}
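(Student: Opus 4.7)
My plan is to reduce Proposition \ref{remsing2} directly to Proposition \ref{remsing} by exploiting the conformal square-root map $\phi(\zeta) := \zeta^2$, which sends the closed upper-right quadrant $\mathrm{Cl}(\mathbb{H}^+)$ homeomorphically onto the closed upper half-plane $\mathrm{Cl}(\mathbb{H})$, fixes $0$, and maps the two rays $\R^+$ and $i\R^+$ bijectively onto $\R^+$ and $\R^-$ respectively. This is exactly the device used throughout Section \ref{lineartheory} (see (\ref{transr})) to convert problems with a jumping boundary condition at an interior point of the boundary into problems with two different conormal conditions meeting at a corner.

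Concretely, I would set $v(\zeta) := u(\zeta^2)$ on $\mathrm{Cl}(\mathbb{H}^+ \cap \mathbb{D}_1) \setminus \{0\}$ and verify that $v$ fulfils all the hypotheses of Proposition \ref{remsing} with the same submanifolds $R_0, R_1$. Smoothness away from $0$ is immediate from the smoothness of $u$ and the fact that $\phi$ is a biholomorphism on the interior. The boundary conditions transform correctly: for $\sigma>0$ one has $v(\sigma) = u(\sigma^2) \in N^*R_0$, and for $\tau>0$ one has $v(i\tau) = u(-\tau^2) \in N^*R_1$. Finite energy is preserved by conformal invariance of the Dirichlet integral,
$$
\int_{\mathbb{H}^+ \cap \mathbb{D}_1} |\nabla v|^2\, d\sigma\, d\tau \,=\, \int_{\mathbb{H} \cap \mathbb{D}_1} |\nabla u|^2\, ds\, dt \,<\, +\infty.
$$

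For the PDE itself, after passing to an isometric embedding $T^*Q \hookrightarrow \C^N$ (as in the proof of Proposition \ref{remsing}), so that $\delbar_J$ becomes the standard $\delbar$, the chain rule gives
$$
\delbar v(\zeta) \,=\, 2\overline{\zeta}\,(\delbar u)(\zeta^2) \,=\, \tilde{X}(\zeta, v(\zeta)), \qquad \tilde{X}(\zeta, q, p) := 2\overline{\zeta}\, X(\zeta^2, q, p).
$$
The new inhomogeneity $\tilde{X}$ is smooth on $\mathrm{Cl}(\mathbb{H}^+ \cap \mathbb{D}_1) \times T^*Q$, and since the conformal factor $2\overline{\zeta}$ is bounded on $\mathbb{D}_1$, it retains polynomial growth in $p$ uniformly in $(\zeta, q)$. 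Hence Proposition \ref{remsing} applies and provides a continuous extension of $v$ to the whole $\mathrm{Cl}(\mathbb{H}^+ \cap \mathbb{D}_1)$. Since $\phi$ is a homeomorphism of closures with $\phi(\zeta) \to 0$ iff $\zeta \to 0$, the identity $u = v \circ \phi^{-1}$ then yields the required continuous extension of $u$ to $0$.

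I do not expect a genuine obstacle in this argument; it is essentially a change of variable that converts the jumping boundary datum at $0\in \partial \mathbb{H}$ into a corner datum at $0\in \partial \mathbb{H}^+$. The only point deserving care is matching the two boundary rays on each side: the verification that $\phi$ sends $i\R^+$ into $\R^-$ (and not $\R^+$) is what ensures that the condition $N^* R_1$ on $\R^-$ for $u$ becomes the condition $N^* R_1$ on $i\R^+$ for $v$, which is the side where Proposition \ref{remsing} expects it. Everything else — boundedness of the conformal factor, conformal invariance of the $L^2$ norm of the gradient, and propagation of the polynomial bound on $X$ — is routine.
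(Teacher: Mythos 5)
Your proposal is correct and is essentially identical to the paper's proof, which consists of the single observation that, by conformal invariance of the energy, it suffices to apply Proposition~\ref{remsing} to $v(z)=u(z^2)$ on $\mathbb{D}_1\cap\mathbb{H}^+$. Your write-up simply fills in the routine verifications (transformation of the boundary rays, the factor $2\overline\zeta$ in the chain rule, and the preservation of the polynomial growth bound) that the paper leaves implicit.
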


\begin{proof}
The energy is invariant with respect to conformal changes of
variable. Therefore, it is enough to apply Proposition \ref{remsing}
to the map $v(z)=u(z^2)$, with $z\in \mathbb{D}_1\cap \mathbb{H}^+$.
\end{proof}

\subsection{Proof of Proposition \ref{facthom}}
\label{factsec}

Let $x_1\in \mathscr{P}^{\Lambda}(H_1)$, $x_2\in \mathscr{P}^{\Lambda}(H_2)$, and $z\in \mathscr{P}^{\Lambda}(H_1 \# H_2)$ be such that
\begin{equation}
\label{dimfor}
\mu^{\Lambda}(x_1) + \mu^{\Lambda}(x_2) - \mu^{\Lambda}(z) = n,
\end{equation}
so that the manifold $\mathscr{M}_{GE}^{\Upsilon}(x_1,x_2;z)$ is one-dimensional. 
By standard arguments, Proposition \ref{facthom} is implied by the following two statements:
\begin{enumerate}
\item for every $y\in \mathscr{P}^{\Theta}(H_1 \oplus H_2)$ such that
\[
\mu^{\Theta}(y) = \mu^{\Lambda}(z) = \mu^{\Lambda}(x_1) + \mu^{\Lambda}(x_2) - n,
\]
and every pair $(u_1,u_2)$ with $u_1\in \mathscr{M}_E(x_1,x_2;y)$ and $u_2\in \mathscr{M}_G(y,z)$, there is a unique connected component of $\mathscr{M}_{GE}^{\Upsilon}(x_1,x_2;z)$ containing a curve  $\alpha \mapsto (\alpha,u_{\alpha})$ which -- modulo translations in the $s$ variable -- converges to $(+\infty,u_1)$ and to $(+\infty,u_2)$;
\item for every $u\in \mathscr{M}_{\Upsilon}^{\Lambda}(x_1,x_2;z)$, there is a unique connected component of $\mathscr{M}_{GE}^{\Upsilon}(x_1,x_2;z)$ containing a curve $\alpha \mapsto (\alpha,u_{\alpha})$ which converges to $(0,u)$. 
\end{enumerate}
The first statement follows from standard gluing arguments. Here we prove the second statement, by reducing it to an implicit function type
argument. At first the difficulty consists in the parameter dependence
of the underlying domain for the elliptic PDE. By using the special form
of the occurring conormal type boundary conditions and a suitable
localization argument, we equivalently translate this parameter
dependence into a continuous family of elliptic operators with fixed
boundary conditions.

If $(\alpha,v)\in \mathscr{M}_{GE}^{\Upsilon}(x_1,x_2;z)$, we define the map
\[
u: \Sigma = \set{z\in \C}{0\leq \im z \leq 1} \rightarrow T^* M^2, \quad 
u(z) := \bigl(\mathscr{C} v(\overline{z}),v(z)\bigr),
\]
and the Hamiltonian $K$ on $\T \times T^* M^2$ by
\[
K(t,x_1,x_2) := H_1(-t,\mathscr{C} x_1) + H_2(t,x_2).
\]
By this identification, we can view the space $\mathscr{M}_{GE}^{\Upsilon}(x_1,x_2;z)$ as the space of pairs $(\alpha,u)$, where $\alpha>0$ and 
$u:\Sigma\rightarrow T^* M^2$ solves the Floer equation 
\begin{equation}
\label{nnnfl}
\delbar_{J,K} (u)=0,
\end{equation} 
with nonlocal boundary conditions 
\begin{equation}
\label{facto}
(u(s),\mathscr{C} u(s+i)) \in \left\{ \begin{array}{ll} N^* \Delta_{M^2} & \mbox{if } s\leq 0, \\ N^* \Delta^{(4)}_M & \mbox{if } 0 \leq s \leq \alpha, \\ N^* (\Delta_M \times \Delta_M) & \mbox{if } s\geq \alpha, \end{array} \right.
\end{equation}
and asymptotics
\begin{equation}
\label{aym}
\lim_{s\rightarrow -\infty} u(s+it) = \bigl( \mathscr{C} x_1(-t),x_2(t) \bigr), \quad 
\lim_{s\rightarrow +\infty} u(s+it) = \bigl( \mathscr{C} z((1-t)/2),z((1+t)/2) \bigr).
\end{equation}
Similarly, we can view the space $\mathscr{M}^{\Lambda}_{\Upsilon}(x_1,x_2;z)$ as the space of maps $u: \Sigma \rightarrow T^* M^2$ solving the equation (\ref{nnnfl}) with asymptotics (\ref{aym}) and nonlocal boundary conditions
\begin{equation}
\label{pro}
(u(s),\mathscr{C}u(s+i)) \in \left\{ \begin{array}{ll} N^* \Delta_{M^2} & \mbox{if } s\leq 0, \\ N^* (\Delta_M \times \Delta_M) & \mbox{if } s\geq 0. \end{array} \right.
\end{equation}

\paragraph{Compactness.} We start with the following compactness results, which also clarifies the sense of the convergence in (ii):

\begin{lem}
\label{fp-cpt}
Let $(\alpha_h,u_h)$ be a sequence in $\mathscr{M}^{\Upsilon}_{GE} (x_1,x_2;z)$ with $\alpha_h \rightarrow 0$. Then there exists $u_0 \in \mathscr{M}^{\Lambda}_{\Upsilon}(x_1,x_2;z)$ such that, up to a subsequence, $u_h$ converges to $u_0$ in $C^{\infty}_{\mathrm{loc}}(\Sigma \setminus \{0,i\})$, in $C^{\infty}(\Sigma\cap \{|\re z|>1\})$, and uniformly on $\Sigma$.
\end{lem}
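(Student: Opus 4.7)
The plan is to establish uniform a priori estimates for $(u_h)$, extract a $C^\infty_{\mathrm{loc}}$ limit on $\Sigma \setminus \{0, i\}$, identify this limit as the desired element of $\mathscr{M}^{\Lambda}_{\Upsilon}(x_1,x_2;z)$ via removal of singularities, and finally upgrade to uniform convergence by a small-energy modulus-of-continuity argument near $\{0, i\}$.

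To begin, an energy identity of the form (\ref{seir1}) adapted to $\Sigma^{\Upsilon}_{GE}(\alpha_h)$ yields $\int_\Sigma |\partial_s u_h|^2 \, ds\, dt = \mathbb{A}_{H_1}(x_1) + \mathbb{A}_{H_2}(x_2) - \mathbb{A}_{H_1 \# H_2}(z)$ independently of $h$, because the Liouville form on $T^* M^4$ vanishes on each of the three conormal subspaces appearing in (\ref{facto}). After an isometric embedding $M \hookrightarrow \R^N$, these three conormal subspaces arise from pairwise partially orthogonal linear subspaces of $\R^{4N}$, so Proposition \ref{gencomp} delivers a uniform $L^\infty$ bound $\|u_h\|_{\infty} \le C$. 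For any compact set $K \Subset \Sigma \setminus \{0, i\}$ and for all $h$ such that $[0, \alpha_h] \cup ([0, \alpha_h]+i)$ is disjoint from $K$, the restriction $u_h|_K$ satisfies exactly the fixed boundary conditions (\ref{pro}); standard interior and totally-real boundary elliptic estimates, combined with the $L^\infty$ bound and the absence of holomorphic spheres and of conormal half-disk bubbles in $T^* M^2$ (owing to the exactness of $\omega$ and the vanishing of $\eta$ on conormals), give uniform $C^k(K)$ bounds for every $k$. Arzel\`a--Ascoli and a diagonal extraction produce a subsequence converging in $C^\infty_{\mathrm{loc}}(\Sigma \setminus \{0, i\})$ to a map $u_0$ solving (\ref{nnnfl}), (\ref{pro}), (\ref{aym}), with finite energy by Fatou's lemma. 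The non-degeneracy of $x_1, x_2, z$ yields uniform exponential decay at $\pm\infty$ in the spirit of Proposition \ref{dec}, promoting this to $C^\infty$ convergence on $\Sigma \cap \{|\re z| > 1\}$. Proposition \ref{remsing2} applied at $0$ and at $i$ shows that $u_0$ extends continuously across these points, so $u_0 \in \mathscr{M}^{\Lambda}_{\Upsilon}(x_1,x_2;z)$.

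The main obstacle will be upgrading to uniform convergence on all of $\Sigma$, which by the exponential decay above reduces to uniform control on small neighborhoods of $\{0, i\}$. The strategy is an $\varepsilon$-regularity argument. Given $\varepsilon > 0$, continuity of $u_0$ and absolute continuity of its energy provide $\rho > 0$ such that $\int_{\mathbb{D}_\rho(p) \cap \Sigma} |\partial_s u_0|^2 < \varepsilon$ and the oscillation of $u_0$ on $\mathbb{D}_\rho(p) \cap \Sigma$ is less than $\varepsilon$ for $p \in \{0, i\}$; the $C^\infty_{\mathrm{loc}}$ convergence on the surrounding annulus $(\mathbb{D}_\rho \setminus \mathbb{D}_{\rho/2})(p)$ then propagates the energy bound to $u_h$ for $h$ large. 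The crux is a mean value inequality for solutions of the Floer equation subject to the jumping totally real boundary conditions (\ref{facto}), converting small energy into small oscillation uniformly in $h$ despite the fact that the jump points of $u_h$ themselves lie inside $\mathbb{D}_{\rho/2}(p) \cap \Sigma$. Such an inequality is available because all three conormal subspaces in (\ref{facto}) come from pairwise partially orthogonal linear subspaces -- indeed $\Delta_M^{(4)} = \Delta_{M^2} \cap (\Delta_M \times \Delta_M)$ realizes the intermediate condition as the intersection of the two outer ones -- so after the doubling trick $u \mapsto (\mathscr{C}u(\bar z), u(z))$ of Section \ref{lin} and a Schwarz reflection as in the proof of Lemma \ref{czl}, the nonlinear mean value inequality reduces to the standard one on a half-disk with totally real boundary, the no-bubbling input once more guaranteeing that small energy implies small oscillation. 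Combining this with the $C^0$ convergence on the annulus yields $\|u_h - u_0\|_{L^\infty(\mathbb{D}_{\rho/2}(p) \cap \Sigma)} = O(\varepsilon)$ for all $h$ large, which together with the earlier convergence statements completes the proof.
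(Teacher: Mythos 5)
Your proof begins along the same lines as the paper's: the uniform energy identity, the $L^\infty$ bound via Proposition \ref{gencomp}, $C^\infty_{\mathrm{loc}}$ convergence on $\Sigma\setminus\{0,i\}$, and removal of singularities at $0$ and $i$ by Proposition \ref{remsing2} to identify $u_0$. However, there is a genuine gap when you claim that $u_0$ satisfies the asymptotic conditions (\ref{aym}) and that the convergence is in $C^\infty$ on $\{|\re z|>1\}$: these do not follow from non-degeneracy alone. One must rule out breaking of the sequence $(u_h)$ at the strip ends (energy escaping along cylinders). The paper does this by invoking the index identity (\ref{dimfor}) and transversality, which forces the broken configuration to have negative-dimensional pieces. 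Your proof never invokes this argument, so the identification of the limit in $\mathscr{M}^\Lambda_\Upsilon(x_1,x_2;z)$ and the exponential decay used to upgrade to $C^\infty$ convergence are both unsupported; worse, the $\varepsilon$-regularity step further on tacitly assumes the total energy of $u_0$ equals that of the $u_h$'s, which is exactly what breaking would destroy.

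The second gap is in the mean value inequality you use for uniform convergence near $0$ and $i$. You assert it ``reduces to the standard one on a half-disk with totally real boundary'' after the doubling trick and a Schwarz reflection as in Lemma \ref{czl}, but this reduction is not correct. After doubling, the boundary conditions along $\R$ still jump at $0$ and at $2\alpha_h$, and since $\alpha_h\to 0$ both jump points sit inside any fixed half-disk around $0$ for $h$ large, so there are \emph{three} conormal pieces meeting in the region of concern. A single square-root chart turns one jump into a corner (the Lemma \ref{czl} setting), but the other jump survives as a moving discontinuity on the boundary, and no single Schwarz reflection eliminates it. The uniform mean value inequality you need is true, but proving it is equivalent in substance to the rescaling argument the paper actually carries out: one sets $R_h=\|\nabla u_h\|_\infty$ at the blow-up sequence $z_h\to 0$ (or $i$) and distinguishes the regimes $R_h\alpha_h\to\infty$, $\to 0$, and $\to c>0$. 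The first two are handled by your ``no sphere/disk bubbling'' observation, but the crucial third regime produces, after rescaling by $\alpha_h$, a putative non-constant $J$-holomorphic \emph{triangle} with boundary on three conormals (the three subspaces of (\ref{facto})), ruled out only because the Liouville form vanishes on conormals. Your phrase ``the no-bubbling input once more guaranteeing that small energy implies small oscillation'' hides precisely this case, and the claimed reduction cannot see it. Without addressing the triangle regime the mean value inequality is not established, and the uniform convergence on $\Sigma$ does not follow.
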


\begin{proof}
Since the sequence of maps $(u_h)$ has uniformly bounded energy, Proposition \ref{gencomp} implies a uniform $L^{\infty}$ bound. Then, the usual non-bubbling-off analysis for interior points and boundary points away from the jumps in the boundary condition
implies that, modulo subsequence, we have
\[
u_h\to u_0 \quad \mbox{in } C^{\infty}_{\mathrm{loc}} (\Sigma\setminus \{0,i\}, T^* M^2),
\]
where $u_0$ is a smooth solution of equation (\ref{nnnfl}) on $\Sigma\setminus \{0,i\}$ with bounded energy and satisfying the boundary conditions (\ref{pro}), except possibly at $0$ and $i$. By Proposition \ref{remsing2}, the singularities $0$ and $i$ are removable, and $u_0$ satisfies the boundary condition also at $0$ and $i$. By the index formula (\ref{dimfor}) and transversality, the sequence $u_h$ cannot split, so $u_0$ satisfies also the asymptotic conditions (\ref{aym}), and $u_h \to u_0$ in $C^{\infty}(\Sigma\cap \{|\re z|>1\})$. Therefore, $u_0$ belongs to $\mathscr{M}^{\Lambda}_{\Upsilon}(x_1,x_2;z)$, and there remains to prove that $u_h \to u$ uniformly on $\Sigma$. 

We assume by contraposition that $(u_h)$ does not converge
uniformly on $\Sigma$. By Ascoli-Arzel\`a theorem, there must be some blow-up of the gradient. That is, modulo subsequence, we can find $z_h\in \Sigma$ converging either to $0$ or to $i$ such that
\[
R_h :=|\nabla u_h(z_h)| = \|\nabla u_h\|_{\infty} \to\infty.
\]
For sake of simplicity, we only consider the case where $z_h=(s_h,0) \rightarrow 0$, $0<s_h<\alpha_h\to 0$. The general
case follows along analogous arguments using additional standard
bubbling-off arguments. For more details, see e.g. \cite[Section 6.4]{hz94}.

We now have to make a case distinction concerning the behavior of the quantity $0<R_h \alpha_h<\infty$:
\begin{enumerate}

\item[(a)] The case of a diverging subsequence $R_{h_j}
\alpha_{h_j}\to\infty$ can be handled by conformal rescaling
$v_j(s,t):=u_{h_j}(s_{h_j} + s/R_{h_j},t/R_{h_j})$ which provides us with a finite energy disk with boundary on a single Lagrangian submanifold of conormal type. This has to be constant due to the vanishing of the Liouville 1-form on conormals, contradicting the
convergence of $|\nabla v_j(0)|=1$.

\item[(b)] The case of convergence of a subsequence $R_{h_j}
\alpha_{h_j}\to 0$ can be dealt with by rescaling
$v_j(s,t):=u_{h_j}(s_{h_j} + \alpha_{h_j},\alpha_{h_j} t)$. Now $v_k$ has to converge uniformly on compact subsets towards a constant map, since $\|\nabla v_j\|_\infty=|\nabla v_j(0)|=R_{h_j} \alpha_{h_j}\to
0$. This in particular implies that $u_{h_j}(\cdot,0)_{|[0,\alpha_{h_j}]}$ converges uniformly to a point contradicting the contraposition assumption.

\item[(c)] It remains to study the case $R_h \alpha_h\to
c>0$. Again we rescale $v_h(s,t)=u_h(\alpha_h s,\alpha_h t)$,
which now has to converge to a non-constant $J$-holomorphic map $v$ on the upper half plane. After applying a suitable conformal coordinate
change and transforming the nonlocal boundary conditions into local
ones, we can view $v$ as a map on the half strip $v\colon \Sigma^+ \to T^* M^4$, satisfying the boundary conditions
\begin{eqnarray*}
     v(it) \in & N^*\Delta^{(4)}_M & \mbox{for } t\in [0,1],\\
     v(s) \in & N^* (\Delta_M \times \Delta_M) & \mbox{for } s\geq 0,\\
     v(s+i) \in & N^* \Delta_{M^2} & \mbox{for } s\geq 0. 
\end{eqnarray*}
Applying again the removal of singularities for $s\to\infty$, we obtain
$v$ as a $J$-holomorphic triangle with boundary on three conormals.
Hence, $v$ would have to be constant, contradicting again the
rescaling procedure.

\end{enumerate}

This shows the uniform convergence of a subsequence of $(u_h)$.
\end{proof}

\paragraph{Localization.} It is convenient to transform the nonlocal boundary conditions (\ref{facto}) and (\ref{pro}) into local boundary conditions, by the usual method of doubling the space: Given $u: \Sigma \rightarrow T^* M^2$ we define $\tilde{u} : \Sigma \rightarrow T^* M^4$ as
\[
\tilde{u} (z) := \bigl( u(z/2), \mathscr{C} u(i + \overline{z}/2) \bigr).
\]
Then $u$ solves $\delbar_{J,K} (u)=0$ if and only if $\tilde{u}$ solves the equation 
\begin{equation}
\label{ooofl}
\delbar_{J,\tilde{K}} (\tilde{u}),
\end{equation}
with upper boundary condition
\begin{equation}
\label{common}
\tilde{u}(s+i) \in N^* \Delta_{M^2} \quad \forall s\in \R,
\end{equation}
where the Hamiltonian $\tilde{K}:[0,1] \times T^* M^4 \to \R$ is defined by 
\[
\tilde{K}(t,x_1,x_2,x_3,x_4) := \frac{1}{2} K\left(\frac{t}{2},x_1,x_2\right) + \frac{1}{2} K\left(1-\frac{t}{2},\mathscr{C}x_3,\mathscr{C}x_4\right).
\] 
Moreover, $u$ satisfies (\ref{facto}) if and only if $\tilde{u}$ satisfies
\begin{equation}
\label{facto1}
\tilde{u}(s) \in \left\{ \begin{array}{ll} N^* \Delta_{M^2} & \mbox{if } s\leq 0, \\ N^* \Delta_{M}^{\Theta} & \mbox{if } 0\leq s \leq 2\alpha, \\ N^* (\Delta_M \times \Delta_M) & \mbox{if } s\geq 2\alpha, \end{array} \right.
\end{equation}
whereas $u$ satisfies (\ref{pro}) if and only if $\tilde{u}$ satisfies 
\begin{equation}
\label{pro1}
\tilde{u}(s) \in \left\{ \begin{array}{ll} N^* \Delta_{M^2} & \mbox{if } s\leq 0, \\  N^* (\Delta_M \times \Delta_M), & \mbox{if } s\geq 0.\end{array} \right.
\end{equation}
Finally, the asymptotic condition (\ref{aym}) is translated into
\begin{equation}
\label{aym1}
\begin{split}
\lim_{s\rightarrow -\infty} \tilde{u}(s+ti) &= \bigl(\mathscr{C} x_1(-t/2),x_2(t/2),x_1(t/2-1),\mathscr{C} x_2(1-t/2) \bigr), \\
\lim_{s\rightarrow +\infty} \tilde{u}(s+ti) &= \bigl(\mathscr{C} z(1/2-t/4),z(1/2+t/4),z(t/4),\mathscr{C} z(1-t/4)\bigr). \end{split}
\end{equation}

Let $u_0 \in \mathscr{M}^{\Lambda}_{\Upsilon}(x_1,x_2;z)$. We must prove that there exists a unique connected component of $\mathscr{M}_{GE}^{\Upsilon}(x_1,x_2;z)$ containing a curve $\alpha \mapsto (\alpha,u_{\alpha})$ which converges to $(0,u_0)$, in the sense of Lemma \ref{fp-cpt}. 

Let $\tilde{u}_0$ be the map from $\Sigma$ to $T^* M^4$ associated to $u_0$: $\tilde{u}_0$ solves (\ref{ooofl}) with boundary conditions (\ref{common}), (\ref{pro1}), and asymptotic conditions (\ref{aym1}). Since we are looking for solutions which converge to $\tilde{u}_0$ uniformly on $\Sigma$, we may localize the problem and assume that $M=\R^n$. More precisely, if the projection of $\tilde{u}_0(z)$ onto $M^4$ is $(q_1,q_2,q_3,q_4)(z)$, we construct open embeddings
\[
\Sigma \times \R^n \rightarrow \Sigma \times M, \quad (z,q) \mapsto (z,\varphi_j(z,q)), \quad j=1,\dots,4,
\]
such that $\varphi_j(z,0) = q_j(z)$ and $D_2 \varphi_j(z,0)$ is an isometry, for every $z\in \Sigma$ (for instance, by composing an isometric trivialization of $q_j^*(TM)$ by the exponential mapping). The induced open embeddings
\begin{equation*}\begin{split}
&\Sigma \times T^* \R^n \rightarrow \Sigma \times T^* M,\\ &(z,q,p) \mapsto (z,\psi_j(z,q,p)) :=
\bigl( z,\varphi_j(z,q), (D_2\varphi_j(z,q)^*)^{-1} p \bigr), \quad j=1,\dots,4,
\end{split}\end{equation*}
are the components of the open embedding
\[
\Sigma \times T^* \R^{4n} \rightarrow \Sigma \times T^* M^4, \quad (z,\xi) \mapsto (z,\psi(z,\xi)) := \bigl( z,\psi_1(z,\xi_1), \dots, \psi_4(z,\xi_4) \bigr).
\]
Such an embedding allows us to associate to any $\tilde{u}: \Sigma \rightarrow T^*M^4$ which is $C^0$-close to $\tilde{u}_0$ a map $w:\Sigma \rightarrow T^* \R^{4n} = \C^{4n}$, by setting
\[
\tilde{u}(z) = \psi(z,w(z)).
\]
Then $\tilde{u}$ solves (\ref{ooofl}) if and only if $w$ solves an equation of the form
\begin{equation}
\label{locfl}
\mathscr{D}(w) := \partial_s w (z) + J(z,w(z)) \partial_t w(z) + G(z,w(z)) = 0,
\end{equation}
where $J$ is an almost complex structure on $\C^{4n}$ parametrized on $\Sigma$ and such that $J(z,0)=J_0$ for any $z\in \Sigma$, whereas $G: \Sigma \times \C^{4n} \to \C^{4n}$ is such that $G(z,0)=0$ for any $z\in \Sigma$. Moreover, $\tilde{u}$ solves the asymptotic conditions (\ref{aym1}) if and only if $w(s,t)$ tends to $0$ for $s\to \pm \infty$.
The maps $\psi_j(z,\cdot)$ preserve the Liouville form, so they map conormals into conormals. It easily follows that the boundary condition (\ref{common}) on $\tilde{u}$ is translated into
\begin{equation}
\label{common2}
w(s+i) \in N^* \Delta_{\R^{2n}} \quad \forall s\in \R.
\end{equation}
Moreover, $\tilde{u}$ satisfies the boundary condition (\ref{facto1}) if and only if $w$ satisfies 
\begin{equation}
\label{facto2}
w(s) \in \left\{ \begin{array}{ll} N^* \Delta_{\R^{2n}} & \mbox{if } s\leq 0, \\ N^* \Delta^{(4)}_{\R^n} & \mbox{if } 0 \leq s \leq 2\alpha, \\ N^* (\Delta_{\R^n} \times \Delta_{\R^n}) & \mbox{if } s\geq 2\alpha. \end{array} \right.
\end{equation}
Similarly, $\tilde{u}$ satisfies the boundary condition (\ref{pro1}) if and only if $w$ satisfies 
\begin{equation}
\label{pro2}
w(s) \in \left\{ \begin{array}{ll} N^* \Delta_{\R^{2n}} & \mbox{if } s\leq 0, \\ N^* (\Delta_{\R^n} \times \Delta_{\R^n}) & \mbox{if } s\geq 0. \end{array} \right.
\end{equation}
The element $u_0\in \mathscr{M}^{\Lambda}_{\Upsilon}(x_1,x_2;z)$ corresponds to the solution $w_0=0$ of (\ref{locfl})-(\ref{pro2}). By using the functional setting introduced in Section \ref{lineartheory}, we can view the nonlinear operator $\mathscr{D}$ defined in (\ref{locfl}) as a continuously differentiable operator 
\[
\mathscr{D} : X^{1,p}_{\mathscr{S},\mathscr{V}, \mathscr{V}'} (\Sigma,\C^{4n}) \rightarrow X^p_{\mathscr{S}} (\Sigma,\C^{4n}) 
\]
where $\mathscr{S}:=\{0\}$, $\mathscr{V}:=(\Delta_{\R^{2n}}, \Delta_{\R^n} \times \Delta_{\R^n})$, $\mathscr{V}':=(\Delta_{\R^{2n}})$, and $p$ is some number larger than $2$. Since $J(z,0)=J_0$, the differential of $\mathscr{D}$ at $w_0=0$ is a linear operator of the kind studied in Section \ref{lineartheory}, and by the transversality assumption it is an isomorphism.

Consider the orthogonal decomposition
\[
\R^{4n} = W_1 \oplus W_2 \oplus W_3 \oplus W_4,
\]
where
\[
W_1 := \Delta^{(4)}_{\R^n} = \Delta_{\R^{2n}} \cap (\Delta_{\R^n} \times \Delta_{\R^n}), \quad \Delta_{\R^{2n}} = W_1 \oplus W_2, \quad \Delta_{\R^n} \times \Delta_{\R^n} = W_1 \oplus W_3,
\]
and denote by $P_j$ the the orthogonal projection of $\C^{4n}$ onto $N^* W_j=W_j \oplus i W_j^{\perp}$.   If $\mathscr{T}_{\alpha}$ is the translation operator mapping some $w:\Sigma \to \C^{4n}$ into
\[
(\mathscr{T}_{\alpha} w)(z) := \bigl(P_1 w(z), P_2 w(z), P_3 w(z- 2\alpha), P_4 w(z) \bigr),
\]
we easily see that $w$ satisfies the boundary conditions (\ref{pro2}) if and only if $\mathscr{T}_{\alpha} w$ satisfies the boundary conditions (\ref{facto2}). Therefore, if we define the operator
\[
\mathscr{D}_{\alpha} : X^{1,p}_{\mathscr{S},\mathscr{V}, \mathscr{V}'} (\Sigma,\C^{4n}) \rightarrow X^p_{\mathscr{S}} (\Sigma,\C^{4n}), \quad \mathscr{D}_{\alpha} = \mathscr{D} \circ \mathscr{T}_{\alpha},
\]
we have that $w\in X^{1,p}_{\mathscr{S},\mathscr{V}, \mathscr{V}'} (\Sigma,\C^{4n})$ solves $\mathscr{D}_{\alpha}(w)=0$ if and only if $\mathscr{T}_{\alpha} w$ is a solution of (\ref{locfl}) satisfying the boundary conditions (\ref{common2}) and (\ref{facto2}). The operator 
\[
[0,+\infty[ \times X^{1,p}_{\mathscr{S},\mathscr{V}, \mathscr{V}'} (\Sigma,\C^{4n}) \rightarrow X^p_{\mathscr{S}} (\Sigma,\C^{4n}), \quad (\alpha,w) \mapsto \mathscr{D}_{\alpha} (w),
\]
is continuous on the product, it is continuously differentiable with respect to the second variable, and this partial differential is continuous on the product. Moreover, $D\mathscr{D}_0(0) = D \mathscr{D}(0)$ is an isomorphism, so the parametric inverse mapping theorem implies that there are a number $\alpha_0>0$ and a neighborhood $\mathscr{U}$ of $0$ in $X^{1,p}_{\mathscr{S},\mathscr{V}, \mathscr{V}'} (\Sigma,\C^{4n})$, such that the set of zeroes in $[0,\alpha_0[ \times \mathscr{U}$ of the above operator consists of a continuous curve $[0,\alpha_0[ \ni \alpha \to (\alpha,w_{\alpha})$ starting at $w_0=0$. Then $\alpha \to(\alpha, \mathscr{T}_{\alpha} w_{\alpha})$ provides us with the unique curve in $\mathscr{M}^{\Upsilon}_{GE}(x_1,x_2;z)$ converging to $(0,u_0)$. This concludes the proof of Proposition \ref{facthom}.

\subsection{Proof of Proposition \ref{omegahom}}
\label{omegasec}

Fix some $\gamma_1\in \mathscr{P}^{\Omega}(L_1)$, $\gamma_2\in\mathscr{P}^{\Omega}(L_2)$, and $x\in \mathscr{P}^{\Omega}(H_1 \# H_2)$ such that
\[
i^{\Omega}(\gamma_1;L_1) + i^{\Omega}(\gamma_2;L_2) - \mu^{\Omega}(x;H_1 \# H_2) = 0.
\]
By a standard argument in Floer homology, the claim that $P^K_{\Upsilon}$ is a chain homotopy between $K^{\Omega}$ and $\Upsilon^{\Omega} \circ ( \Phi_{L_1}^{\Omega} \otimes \Phi_{L_2}^{\Omega})$ is implied by the following statements:
\begin{enumerate}

\item For every $(u_1,u_2)\in \mathscr{M}^{\Omega}_{\Phi}(\gamma_1,y_1) \times \mathscr{M}^{\Omega}_{\Phi}(\gamma_2,y_2)$ and every $u \in \mathscr{M}^{\Omega}_{\Upsilon}(y_1,y_2;x)$, where $(y_1,y_2)\in \mathscr{P}^{\Omega}(H_1) \times \mathscr{P}^{\Omega}(H_2)$ is such that
\[
\mu^{\Omega}(y_1;H_1) +  \mu^{\Omega}(y_2;H_2) = \mu^{\Omega}(x;H_1 \# H_2),
\]
there exists a unique connected component of $\mathscr{M}^K_{\Upsilon}(\gamma_1,\gamma_2;x)$ containing a curve $(\alpha,u_{\alpha})$ such that $\alpha\rightarrow +\infty$,
$u_{\alpha}(\cdot,\cdot-1)$ and $u_{\alpha}$ converge to $u_1$ and $u_2$ in $C^{\infty}_{\mathrm{loc}}([0,+\infty[ \times [0,1],T^*M)$, while $u_{\alpha}(\cdot+\sigma(\alpha),2\cdot-1)$ converges to $u$ in $C^{\infty}_{\mathrm{loc}}(\R \times [0,1],T^*M)$, for a suitable function $\sigma$ diverging at $+\infty$.

\item For every $u\in \mathscr{M}_K^{\Omega}(\gamma_1,\gamma_2;x)$ there exists a unique connected component of $\mathscr{M}^K_{\Upsilon}(\gamma_1,\gamma_2;x)$ containing a curve $\alpha \mapsto (\alpha,u_{\alpha})$ which converges  to $(0,u)$.

\end{enumerate}

Statement (i) can be proved by the standard gluing arguments in Floer theory. Here we prove statement (ii). 

Given $u:[0,+\infty[ \times [-1,1] \rightarrow T^*M$, 
we define the map
\[
\tilde{u} : \Sigma^+ = \set{z\in \C}{\re z\geq 0, \; 0\leq \im z\leq 1} \rightarrow T^*M^2, \quad
\tilde{u}(z) := \bigl(\mathscr{C}u(\overline{z}),u(z)\bigr).
\]
If we define $\tilde{x}:[0,1]\rightarrow T^*M^2$ and $\tilde{H}\in C^{\infty}([0,1]\times T^*M^2)$ by
\[
\tilde{x}(t) := \bigl(\mathscr{C} x((1-t)/2),x((1+t)/2)\bigr), \quad \tilde{H}(t,x_1,x_2) := H_1(1-t,\mathscr{C} x_1) + H_2(t,x_2),
\]
we see that associating $\tilde{u}$ to $u$ produces a one-to-one correspondence between $\mathscr{M}^{\Omega}_K(\gamma_1,\gamma_2 ;x)$ and the space $\widetilde{\mathscr{M}}^{\Omega}_K(\gamma_1,\gamma_2 ;x)$ consisting of the maps 
$\tilde{u}: \Sigma^+ \rightarrow T^*M^2$ solving $\delbar_{J,\tilde{H}} (\tilde{u}) = 0$ with boundary conditions
\begin{eqnarray}
\label{bbcc1}
\tilde{u}(s) \in N^* \Delta_M \quad \forall s\geq 0, \\
\label{bbcc2}
\tilde{u}(s+i) \in T^*_{q_0} M \times T^*_{q_0} M \quad \forall s\geq 0, \\
\label{bbcc3}
\pi\circ \tilde{u}_1(0,1-\cdot) \in W^u(\gamma_1), \quad 
\pi\circ \tilde{u}_2(0,\cdot) \in W^u(\gamma_2), \\
\label{bbcc4}
\lim_{s\rightarrow +\infty} \tilde{u}(s+it) = \tilde{x}(t). 
\end{eqnarray}
Similarly, we have a one-to-one correspondence between $\mathscr{M}^K_{\Upsilon}(\gamma_1,\gamma_2;x)$ and the space $\widetilde{\mathscr{M}}^K_{\Upsilon}(\gamma_1,\gamma_2;x)$ consisting of pairs $(\alpha,\tilde{u})$ where $\alpha$ is a positive number and $\tilde{u}: \Sigma^+ \rightarrow T^*M^2$  is a solution of the problem above, with (\ref{bbcc1}) replaced by
\begin{equation}
\label{bbcc5}
\tilde{u}(s) \in T^*_{q_0} M \times T^*_{q_0} M \quad \forall s\in [0,\alpha], \quad \tilde{u}(s) \in N^* \Delta_M \quad \forall s\geq \alpha.
\end{equation}

Fix some $\tilde{u}^0\in \widetilde{\mathscr{M}}_K^{\Omega}(\gamma_1,\gamma_2;x)$. Since we are looking for solutions near $\tilde{u}^0$, we can localize the problem as follows.
Let $k = i^{\Omega}(\gamma_1;L_1) + i^{\Omega}(\gamma_2;L_2)$. Let $q: \R^k \times \Sigma^+ \rightarrow M^2$ be a map such that
\[
q(0,z) = \pi\circ \tilde{u}^0(z) \;\; \forall z\in \Sigma^+, \quad
q(\lambda,s+it) \rightarrow \pi \circ \tilde{x} (t) \;\; \mbox{for } s\rightarrow +\infty, \; \forall \lambda\in \R^k,
\]
and such that the map
\[
\R^k \ni \lambda \mapsto (q_1(\lambda,0,1-\cdot),q_2(\lambda,0,\cdot)\in \Omega^1(M^2) 
\]
is a  diffeomorphism onto a neighborhood of $\pi\circ(\tilde{x}_1(-\cdot),\tilde{x}_2)$ in $W^u(\gamma_1)\times W^u(\gamma_2)$. By means of a suitable trivialization of $q^*(TM^2)$ and using the usual $W^{1,p}$ Sobolev setting with $p>2$, we can transform the problem of finding $\tilde{u}$  solving $\delbar_{J,\tilde{H}} (\tilde{u}) = 0$ together with (\ref{bbcc2}), (\ref{bbcc3}) and (\ref{bbcc4}) and being close to $\tilde{u}^0$, into the problem of finding pairs $(\lambda,u) \in \R^k \times  W^{1,p}(\Sigma^+,T^* \R^{2n})$, solving an equation of the form
\begin{equation}
\label{nf1}
\delbar  u(z) + f(\lambda,z,u(z)) = 0 \quad \forall z \in \Sigma^+,
\end{equation}
with boundary conditions
\begin{equation}
\label{nf2}
u(it) \in N^* (0) \;\; \forall t\in [0,1], \quad u(s+i) \in N^*(0) \;\; \forall s\geq 0.
\end{equation}
Then the boundary condition (\ref{bbcc1}) is translated into
\begin{equation}
\label{nf3}
u(s) \in N^* \Delta_{\R^n}\quad \forall s\geq 0,
\end{equation}
and the solution $\tilde{u}^0$ corresponds to the solution $\lambda=0$ and $u\equiv 0$ of (\ref{nf1}), (\ref{nf2}), and (\ref{nf3}). On the other hand, the problem $\widetilde{\mathscr{M}}^K_{\Upsilon} (\gamma_1,\gamma_2;x)$ of finding $(\alpha,\tilde{u}^{\alpha})$ solving  $\delbar_{J,\tilde{H}} (\tilde{u}^{\alpha}) = 0$ together with (\ref{bbcc2}), (\ref{bbcc3}), (\ref{bbcc4}) and (\ref{bbcc5}) corresponds to the problem of finding $(\lambda,u) \in \R^k \times W^{1,p}(\Sigma^+,T^*\R^{2n})$ solving (\ref{nf1}) with boundary conditions (\ref{nf2}) and
\begin{equation}
\label{nf4}
u(s) \in N^*(0) \;\; \forall s\in [0,\alpha], \quad u(s) \in N^* \Delta_{\R^n} \;\; \forall s\geq \alpha.
\end{equation} 
In order to find a common functional setting, it is convenient to turn the boundary condition (\ref{nf4}) into (\ref{nf3}) by means of a suitable conformal change of variables on the half-strip $\Sigma^+$.

The holomorphic function $z\mapsto \cos z$ maps the half strip $\{ 0 < \re z < \pi, \; \im z>0\}$ biholomorphically onto the upper half-plane $\mathbb{H} = \{\im z>0\}$. It is also a homeomorphism between the closure of these domains. We denote by $\arccos$ the determination of the arc-cosine which is the inverse of this function. Then the function $z \mapsto (1+\cos (i\pi z))/2$ is a biholomorphism from the interior of $\Sigma^+$ to $\mathbb{H}$, mapping $0$ into $1$ and $i$ into $0$. Let $\epsilon>0$. If we conjugate the linear automorphism $z \mapsto (1+\epsilon)z$ of $\mathbb{H}$ by the latter biholomorphism, we obtain the following map:  
\[
\varphi_{\epsilon}(z) = \frac{i}{\pi} \arccos \bigl( (1+\epsilon) \cos (i\pi z) + \epsilon \bigr).
\]
The map $\varphi_{\epsilon}$ is a homeomorphism of $\Sigma^+$ onto itself, it is biholomophic in the interior, it preserves the upper part of the boundary $i+ \R^+$, while it slides the left part $i[0,1]$ and the lower part $\R^+$ by moving the corner point $0$ into the real positive number
\[
\alpha(\epsilon) := - \frac{i}{\pi} \arccos (1+2\epsilon).
\]
The function $\epsilon \mapsto \alpha(\epsilon)$ is invertible, and we denote by $\alpha \mapsto \epsilon(\alpha)$ its inverse.
Moreover, $\varphi_{\epsilon}$ converges to the identity uniformly on compact subsets of $\Sigma^+$ for $\epsilon\rightarrow 0$. An explicit computation shows that
\begin{equation}
\label{converge}
\varphi_{\epsilon}' - 1 \rightarrow 0 \quad \mbox{in } L^p(\Sigma^+), \quad \mbox{if } 1<p<4.
\end{equation} 
If $u:\Sigma^+ \rightarrow T^* \R^{2n}$ and $\alpha>0$, we define
\[
v(z) := u(\varphi_{\epsilon(\alpha)} (z)).
\]
Since $\varphi_{\epsilon}$ is holomorphic, $\delbar  (u\circ \varphi_{\epsilon}) = \varphi'_{\epsilon} \cdot \delbar  u \circ \varphi_{\epsilon}$. Therefore,
$u$ solves the equation (\ref{nf1}) if and only if $v$ solves the equation
\begin{equation}
\label{nnff1}
\delbar  v (z) + \varphi'_{\epsilon(\alpha)}(z) f(\lambda,\varphi_{\epsilon(\alpha)}(z),v(z)) = 0.
\end{equation}
Given $2<p<4$, we set
\begin{eqnarray*}
W^{1,p}_*(\Sigma^+,T^* \R^{2n}) = \bigl\{v\in W^{1,p}(\Sigma^+,T^* \R^{2n}) \, | \, v(s,0)\in N^* \Delta_{\R^n} \; \forall s\geq 0, \\ v(s,1) \in N^*(0) \; \forall s\geq 0, \; v(0,t) \in N^* (0)\; \forall t\in [0,1]\bigr\},
\end{eqnarray*}
and we consider the operator
\begin{equation*}\begin{split}
&F : [0,+\infty[ \times \R^k \times W^{1,p}_*(\Sigma^+,T^* \R^{2n}) \rightarrow L^p (
\Sigma^+,T^* \R^{2n}),\\
&F(\alpha,\lambda,v) = \delbar  v +  \varphi'_{\epsilon(\alpha)} f(\lambda,\varphi_{\epsilon(\alpha)}(\cdot),v),
\end{split}\end{equation*}
where $\varphi_0=\mathrm{id}$. The problem of finding $(\alpha,\tilde{u})$ in $\widetilde{\mathscr{M}}^K_{\Upsilon}(\gamma_1,\gamma_2;x)$ with $\tilde{u}$ close to $\tilde{u}^0$  is equivalent to finding zeroes of the operator $F$ of  the form $(\alpha,\lambda,v)$ with $\alpha>0$. By (\ref{converge}), the operator $F$ is continuous, and its differential $D_{(\lambda,v)}F$ with respect to the variables $(\lambda,v)$ is continuous. The transversality assumption that $\tilde{u}^0$ is a non-degenerate solution of problem $\widetilde{\mathscr{M}}^{\Omega}_K(\gamma_1,\gamma_2;x)$ is translated into the fact that $D_{(\lambda,v)} F(0,0,0)$ is an isomorphism. Then the parametric inverse mapping theorem implies that there is a unique curve $\alpha \mapsto (\lambda(\alpha),v(\alpha))$, $0<\alpha<\alpha_0$, converging to $(0,0)$ for $\alpha\rightarrow 0$, and such that $(\lambda(\alpha),v(\alpha))$ is the unique zero of $F(\alpha,\cdot,\cdot)$ in a neighborhood of $(0,0)$. This concludes
  the proof of statement (ii). 

\subsection{Proof of Proposition \ref{coupro}}
\label{coupros} 

\paragraph{The setting.} We recall the setting of Section \ref{chlhs}. Let $\gamma_1\in \mathscr{P}^{\Lambda}(L_1)$, $\gamma_2 \in \mathscr{P}^{\Lambda}(L_2)$, and $x\in \mathscr{P}^{\Theta}(H_1 \oplus H_2)$. If $\alpha\geq 0$, $\mathscr{M}^K_{\alpha}(\gamma_1,\gamma_2;x)$ is the space of solutions $u : \Sigma^+ = \set{z\in \C}{\re z\geq 0, \; 0 \leq \im z \leq 1} \rightarrow T^*M^2$ of the equation
\begin{equation}
\label{cp0}
\delbar_{J,H_1 \oplus H_2} (u) = 0,
\end{equation}
satisfying the boundary conditions
\begin{eqnarray}
\label{cp1} \pi \circ u(\cdot i) \in W^u((\gamma_1,\gamma_2); X_{L_1 \oplus L_2}^{\Lambda}), \\
\label{cp2} (u(s),\mathscr{C}u(s+i)) \in N^* \Delta_{M^2} \mbox{ if } 0 \leq s < \alpha, \\
\label{cp3} (u(s),\mathscr{C}u(s+i)) \in N^* \Delta^{(4)}_M \mbox{ if } s\geq \alpha, \\
\label{cp4} \lim_{s\rightarrow +\infty} u(s+ ti) = x(t).
\end{eqnarray}
The energy of a solution $u\in \mathscr{M}_{\alpha}^K(\gamma_1,\gamma_2;x)$ is uniformly bounded:
\begin{equation}
\label{ube}
E(u) := \int_{\Sigma^+} |\partial_s u|^2\, ds \, dt \leq \mathbb{S}_{L_1}(\gamma_1) + \mathbb{S}_{L_2}(\gamma_2) - \mathbb{A}_{H_1 \oplus H_2} (x).
\end{equation}
Let $\alpha_0>0$. For a generic choice of $L_1$, $L_2$, and $X_{L_1 \oplus L_2}^{\Lambda}$, both $\mathscr{M}^K_0(\gamma_1,\gamma_2;x)$ and  $\mathscr{M}^K_{\alpha_0}(\gamma_1,\gamma_2;x)$ are smooth oriented manifolds of dimension
\[
i^{\Lambda}(\gamma_1,L_1) + i^{\Lambda}(\gamma_2,L_2) - \mu^{\Theta}(x) - n,
\]
for every $\gamma_1\in \mathscr{P}^{\Lambda}(L_1)$, $\gamma_2\in \mathscr{P}^{\Lambda}(L_2)$, $x\in \mathscr{P}^{\Theta}(H_1\oplus H_2)$ (see Proposition \ref{Kappa}). The usual counting process defines the chain maps
\[
K_0^{\Lambda}, \; K_{\alpha_0}^{\Lambda} : \bigl(M(\mathbb{S}_{L_1}^{\Lambda}) \otimes M(\mathbb{S}_{L_2}^{\Lambda})\bigr)_* \longrightarrow F_{*-n}^{\Theta} (H_1\oplus H_2),
\]
and we wish to prove that $K_0^{\Lambda} \otimes K_{\alpha_0}^{\Lambda}$ is chain homotopic to $K_{\alpha_0}^{\Lambda} \otimes K_0^{\Lambda}$. Since $K_{\alpha_0}^{\Lambda}$ is chain homotopic to $K_{\alpha_1}^{\Lambda}$ for $\alpha_0,\alpha_1 \in ]0,+\infty[$, we may as well assume that $\alpha_0$ is small. Moreover, since the chain maps $K_0^{\Lambda}$ and $K_{\alpha_0}^{\Lambda}$ preserve the filtrations of the Morse and Floer complexes given by the action sublevels
\[
\mathbb{S}_{L_1}^{\Lambda}(\gamma_1) + \mathbb{S}_{L_2}^{\Lambda} (\gamma_2) \leq A, \quad \mathbb{A}_{H_1 \oplus H_2}(x)
 \leq A,
 \] 
we can work with the subcomplexes corresponding to a fixed (but arbitrary) action bound $A$. We also fix our generic data  in such a way that transversality holds for the problem $\mathscr{M}^K_0$. 

The union of all the spaces of solutions $\mathscr{M}_0^K(\gamma_1,\gamma_2;x)$ where $\gamma_1\in \mathscr{P}^{\Lambda}(L_1)$, $\gamma_2 \in \mathscr{P}^{\Lambda}(L_2)$, and $x\in \mathscr{P}^{\Theta}(H_1 \oplus H_2)$ satisfy the index identity
\[
i^{\Lambda}(\gamma_1) + i^{\Lambda}(\gamma_2) - \mu^{\Theta}(x) = n,
\]
and the action estimates
\[
\mathbb{S}_{L_1}(\gamma_1) + \mathbb{S}_{L_2}(\gamma_2) \leq A, \quad \mathbb{A}_{H_1 \oplus H_2} (x) \leq A,
\]
is a compact zero-dimensional manifold, hence a finite set. We denote by $\mathscr{Q}$ the finite subset of $M$ consisting of the points $q\in M$ such that $\pi\circ u(0) = \pi\circ u(i) = (q,q)$ for some $u$ in the above finite set. We fix a positive number $\delta$ such that for every $q\in \mathscr{Q}$ the ball $B_{\delta}(q)$ is diffeomorphic to $\R^n$, and
\begin{equation}
\label{ul2}
B_{\delta}(q) \cap B_{\delta}(q') = \emptyset \quad \forall q,q'\in \mathscr{Q}, \; q\neq q'.
\end{equation}

Since real Grassmannians of a given dimension are connected, we can find a smooth path of $3n$-dimensional linear subspaces of $\Delta_{\R^{2n}} \times \Delta_{\R^{2n}} \subset \R^{8n}$ connecting $\Delta^{(4)}_{\R^n} \times \Delta_{\R^{2n}}$ to $\Delta_{\R^{2n}} \times \Delta^{(4)}_{\R^n}$, and containing the intersection of these two spaces, that is $\Delta^{(4)}_{\R^n} \times \Delta^{(4)}_{\R^n}$. Therefore, for every pair of points $q,q'\in \mathscr{Q}$ we can find a smooth isotopy of embeddings
\[
\varphi: [0,1] \times \R^{3n} \rightarrow B_{\delta}(q)^4 \times B_{\delta}(q')^4 \subset M^8,
\]
such that, setting $V^{\lambda}_{q,q'} := \varphi (\{\lambda\} \times \R^{3n})$, we have:
\begin{enumerate}
\item $V_{q,q'}^{\lambda}$ is relatively closed in $B_{\delta}(q)^4 \times B_{\delta}(q')^4$;
\item $(\Delta^{(4)}_M \times \Delta^{(4)}_M) \cap (B_{\delta}(q)^4 \times B_{\delta}(q')^4) \subset V_{q,q'}^{\lambda}$ for every $\lambda\in [0,1]$;   
\item $V_{q,q'}^{\lambda} \subset \Delta_{M^2} \times \Delta_{M^2}$ for every $\lambda\in [0,1]$;
\item $V_{q,q'}^0 = (\Delta^{(4)}_M \times \Delta_{M^2}) \cap (B_{\delta}(q)^4 \times B_{\delta}(q')^4)$;
\item $V_{q,q'}^1 = (\Delta_{M^2} \times \Delta^{(4)}_M ) \cap (B_{\delta}(q)^4 \times B_{\delta}(q')^4)$.
\end{enumerate}
We define $V^{\lambda} \subset M^8$ to be the union of $V_{q,q'}^{\lambda}$ over all pairs $(q,q')\in \mathscr{Q}\times \mathscr{Q}$. By (\ref{ul2}), $V^{\lambda}$ is an isotopy of (non-compact) $3n$-dimensional submanifolds of $M^8$.

Let $\gamma_1,\gamma_3\in \mathscr{P}^{\Lambda}(L_1)$, $\gamma_2,\gamma_4 \in \mathscr{P}^{\Lambda}(L_2)$, and $x_1,x_2\in \mathscr{P}^{\Theta}(H_1 \oplus H_2)$ satisfy the index identity 
\begin{equation}
\label{ul3} 
i^{\Lambda}(\gamma_1) + i^{\Lambda}(\gamma_2) + i^{\Lambda}(\gamma_3) + i^{\Lambda}(\gamma_4) - \mu^{\Theta}(x_1) - \mu^{\Theta}(x_2) = 2n,
\end{equation}
and the action bounds 
\begin{equation}
\label{ul4}
\mathbb{S}_{L_1}(\gamma_1) + \mathbb{S}_{L_2}(\gamma_2) + \mathbb{S}_{L_1}(\gamma_3) + \mathbb{S}_{L_2}(\gamma_4) \leq A,  \quad \mathbb{A}_{H_1\oplus H_2}(x_1) + \mathbb{A}_{H_1 \oplus H_2}(x_2) \leq A.
\end{equation}
Given $\alpha>0$, we define 
\[
\mathscr{M}^P_{\alpha}(\gamma_1,\gamma_2,\gamma_3,\gamma_4;x_1,x_2)
\]
to be the set of pairs $(\lambda,u)$ where $\lambda\in [0,1]$ and
$u : [0,+\infty[ \times [0,1] \rightarrow T^*M^4$ is a solution of the equation
\begin{equation}
\label{ul5}
\delbar_{J,H_1 \oplus H_2 \oplus H_1 \oplus H_2} (u) = 0,
\end{equation}
satisfying the boundary conditions
\begin{eqnarray}
\label{ul6} \pi \circ u(0,\cdot) \in W^u(\gamma_1) \times  W^u(\gamma_2) \times W^u(\gamma_3) \times  W^u(\gamma_4), \\
\label{ul7} (u(s),\mathscr{C} u(s+i)) \in N^* V^{\lambda}  \mbox{ if } 0 \leq s \leq \alpha, \\
\label{ul8} (u(s),\mathscr{C} u(s+i)) \in N^* (\Delta^{(4)}_M \times \Delta^{(4)}_M) \mbox{ if } s\geq \alpha, \\
\label{ul9} \lim_{s\rightarrow +\infty} u(s+ti) = (x_1(t),x_2(t)).
\end{eqnarray}
By property (ii) above, the submanifold $V^{\lambda}$ is partially orthogonal to $\Delta^{(4)}_M \times \Delta^{(4)}_M$ for every $\lambda\in [0,1]$, so the results of Section \ref{lineartheory} provide us with elliptic estimates for solutions of the above problem. 

Notice that if $(0,u)$ belongs to $\mathscr{M}^P_{\alpha}(\gamma_1,\gamma_2,\gamma_3,\gamma_4;x_1,x_2)$, then writing $u=(u_1,u_2)$ where $u_1$ and $u_2$ take values into $T^*M^2$, we have
\[
u_1 \in \mathscr{M}^K_0(\gamma_1,\gamma_2;x_1), \quad
u_2 \in \mathscr{M}^K_{\alpha}(\gamma_3,\gamma_4;x_2).
\]
If transversality holds, we deduce the index estimates
\[
i^{\Lambda}(\gamma_1) + i^{\Lambda}(\gamma_2) - \mu^{\Theta}(x_1) \geq n, \quad i^{\Lambda}(\gamma_3) + i^{\Lambda}(\gamma_4) - \mu^{\Theta}(x_2) \geq n.
\]
But then (\ref{ul3}) implies that the above inequalities are indeed identities. Similarly, if $(1,u)$ belongs to $\mathscr{M}^P_{\alpha}(\gamma_1,\gamma_2,\gamma_3,\gamma_4;x_1,x_2)$, we deduce that
\[
u_1 \in \mathscr{M}^K_{\alpha}(\gamma_1,\gamma_2;x_1), \quad
u_2 \in \mathscr{M}^K_0(\gamma_3,\gamma_4;x_2).
\]
and
\[
i^{\Lambda}(\gamma_1) + i^{\Lambda}(\gamma_2) - \mu^{\Theta}(x_1) = n, \quad i^{\Lambda}(\gamma_3) + i^{\Lambda}(\gamma_4) - \mu^{\Theta}(x_2) = n.
\]
Conversely, we would like to show that pairs of solutions in $\mathscr{M}^K_0 \times \mathscr{M}^K_{\alpha}$ (or $\mathscr{M}^K_{\alpha} \times \mathscr{M}^K_0$) correspond to elements of $\mathscr{M}^P_{\alpha}$ of the form $(0,u)$ (or $(1,u)$), at least if $\alpha$ is small. This fact follows from the following localization result:

\begin{lem}
\label{localization}
There exists a positive number $\alpha(A)$ such that for every $\alpha\in ]0,\alpha(A)]$, for every $\gamma_1,\gamma_3\in \mathscr{P}^{\Lambda}(L_1)$, $\gamma_2,\gamma_4\in \mathscr{P}^{\Lambda}(L_2)$, $x_1,x_2\in \mathscr{P}^{\Theta}(H_1 \oplus H_2)$ satisfying (\ref{ul3}) and (\ref{ul4}) and for every $(\lambda,u) \in \mathscr{M}^P_{\alpha}(\gamma_1,\gamma_2,\gamma_3,\gamma_4;x_1,x_2)$ there holds
\[
\pi\circ u([0,\alpha]) = \pi \circ u([0,\alpha] + i) \subset B_{\delta/2}(q)^2 \times B_{\delta/2}(q')^2,
\]
for suitable $q,q'\in \mathscr{Q}$.
\end{lem}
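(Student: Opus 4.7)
The proof proceeds by contradiction. Suppose the conclusion fails, so there exist sequences $\alpha_h\downarrow 0$, admissible asymptotic data $(\gamma_1^h,\gamma_2^h,\gamma_3^h,\gamma_4^h,x_1^h,x_2^h)$ satisfying (\ref{ul3})--(\ref{ul4}), and elements $(\lambda_h,u_h)\in\mathscr{M}^P_{\alpha_h}(\gamma_1^h,\gamma_2^h,\gamma_3^h,\gamma_4^h;x_1^h,x_2^h)$ violating the localization conclusion for every $(q,q')\in\mathscr{Q}^2$. Since the action bound $A$ singles out only finitely many admissible tuples, after extraction we may assume that the asymptotic data are independent of $h$ and that $\lambda_h\to\lambda_\infty\in[0,1]$.

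The central step is a compactness argument adapted to the degeneration $\alpha_h\to 0$. The action bound (\ref{ul4}) together with the standard energy identity yields a uniform bound on $E(u_h)$, and Proposition \ref{gencomp} applied to the product conormal data (as in Section \ref{compsec}) gives a uniform $L^\infty$ bound. Exactness of $\omega$ and the vanishing of the Liouville form on each conormal appearing exclude nonconstant $J$-holomorphic spheres and disks, hence a standard bubbling-off analysis produces a uniform $C^1$ bound; elliptic bootstrap then yields $C^\infty_{\mathrm{loc}}$ control away from the corners $0$ and $i$. For any $\epsilon>0$, once $\alpha_h<\epsilon$ the nonlocal boundary condition on $\{s\geq\epsilon\}$ coincides with $N^*(\Delta_M^{(4)}\times\Delta_M^{(4)})$, so boundary elliptic regularity gives uniform bounds on such subdomains. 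Near the corners, the quadratic conformal charts of Section \ref{sjlbc} together with the corner removal result of Proposition \ref{remsing2} provide uniform $C^0$ control of $u_h$ up to $s=0$ and $s=i$. Extracting a diagonal subsequence produces a limit $u^\infty\colon\Sigma^+\to T^*M^4$ which is continuous on $\Sigma^+$, smooth away from $\{0,i\}$, and such that $u_h\to u^\infty$ uniformly on $\Sigma^+$.

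The limit $u^\infty$ solves (\ref{ul5}), (\ref{ul6}), (\ref{ul9}), and, by continuity from $\{s>0\}$, also $(u^\infty(s),\mathscr{C}u^\infty(s+i))\in N^*(\Delta_M^{(4)}\times\Delta_M^{(4)})$ for every $s\geq 0$; this is compatible with the limiting form of (\ref{ul7}) at $s=0$ because $\Delta_M^{(4)}\times\Delta_M^{(4)}\subset V^{\lambda_\infty}$. Writing $u^\infty=(u_1^\infty,u_2^\infty)$ with each component valued in $T^*M^2$, the limit problem decouples and each $u_j^\infty$ belongs to $\mathscr{M}^K_0(\gamma_{2j-1},\gamma_{2j};x_j)$. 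The dimension formula from Proposition \ref{Kappa}, combined with the index identity (\ref{ul3}) and the transversality hypothesis for $\mathscr{M}^K_0$, forces both subproblems to have virtual dimension zero, so each $u_j^\infty$ lies in the finite collection used to define $\mathscr{Q}$. Consequently $\pi\circ u^\infty(0)=\pi\circ u^\infty(i)=(q,q,q',q')$ for some $q,q'\in\mathscr{Q}$, and uniform convergence $u_h\to u^\infty$ together with $\alpha_h\to 0$ forces $\pi\circ u_h([0,\alpha_h])$ and $\pi\circ u_h([0,\alpha_h]+i)$ into $B_{\delta/2}(q)^2\times B_{\delta/2}(q')^2$ for large $h$, contradicting our choice. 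The principal difficulty is the second step: establishing uniform $C^0$ convergence up to the degenerating boundary segments, where both $\lambda_h$ and the jump point $\alpha_h$ are moving; the linear theory of Section \ref{lineartheory} and Proposition \ref{remsing2} are precisely what is needed to handle this.
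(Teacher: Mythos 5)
Your proof has a genuine gap at the crucial compactness step. You assert that "Extracting a diagonal subsequence produces a limit $u^\infty\colon\Sigma^+\to T^*M^4$ ... such that $u_h\to u^\infty$ \emph{uniformly on $\Sigma^+$}." This is precisely what fails. The paper, in a remark immediately after its proof, explicitly states that one \emph{cannot} conclude that $(u_h)$ converges uniformly in a neighborhood of the corner points $(0,0)$ and $(0,1)$: if that were the case, the limit $u$ would be an element of $\mathscr{M}_0^K(\gamma_1,\gamma_2;x_1)\times\mathscr{M}_0^K(\gamma_3,\gamma_4;x_2)$ satisfying the extra condition $u(0)=u(i)$, which is a problem of Fredholm index $-2n$ and generically has no solutions. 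Your appeal to Proposition \ref{remsing2} for "uniform $C^0$ control up to $s=0$ and $s=i$" does not work: that result removes a singularity for a \emph{single} finite-energy map with a fixed jump, whereas here the jump point $\alpha_h$ is approaching the corner, so you have a double degeneration — a sequence of maps with conormal jump at distance $\alpha_h\to 0$ from a corner singularity. The quadratic chart at $0$ and the chart centered at $\alpha_h$ collide, and no uniform corner estimate is available from the cited linear theory.

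The paper handles exactly this degeneration by a rescaling argument that your proof omits. After establishing (as you do) $W^{1,2}$-convergence of the boundary traces $c_h(t)=\pi\circ u_h(0,t)$ to a curve $c$ with $c(0)=c(1)=(q,q,q',q')$, $(q,q')\in\mathscr{Q}^2$, and $C^\infty_{\mathrm{loc}}$-convergence of $u_h$ away from the corners, the paper considers the blown-up maps $v_h^0(z):=u_h(\alpha_h z)$ and $v_h^1(z):=u_h(i+\alpha_h\bar z)$ (combined into $w_h:=(v_h^0,\mathscr{C}v_h^1)$ to make the boundary conditions local). After this conformal rescaling, the Hamiltonian term acquires a factor $\alpha_h$, so the limit $w$ is a finite-energy $J$-holomorphic half-plane map with conormal boundary conditions. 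The vanishing of the Liouville form on conormals forces $w$ to be constant, and matching with the traces from Claim 1 shows its projection is $(c(0),c(1))=(q,q,q',q',q,q,q',q')$. It is this rescaled-limit argument, not any uniform convergence of $u_h$ near the corners, that controls $\pi\circ u_h$ on the shrinking boundary segments $[0,\alpha_h]$ and $[0,\alpha_h]+i$ and yields the contradiction. Without this step your proof does not go through.
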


\begin{proof}
By contradiction, we assume that there are an infinitesimal sequence of positive numbers $(\alpha_h)$ and elements $(\lambda_h,u_h) \in \mathscr{M}_{\alpha_h}^P (\gamma_1,\gamma_2,\gamma_3,\gamma_4;x_1,x_2)$ where $\gamma_1,\gamma_2,\gamma_3,\gamma_4,x_1,x_2$ satisfy (\ref{ul3}), (\ref{ul4}), and
\begin{equation}
\label{ul10}
\pi\circ u_h([0,\alpha_h]) = \pi\circ u_h([0,\alpha_h] + i) \not\subset \bigcup_{(q,q') \in \mathscr{Q}\times \mathscr{Q}} B_{\delta/2} (q)^2 \times B_{\delta/2} (q')^2.
\end{equation}

\paragraph{Claim 1.} {\em Up to a subsequence, $c_h(t):= \pi\circ u_h(0,t)$ converge to some $c$ in $W^{1,2}([0,1],M^4)$ such that $c(0)=c(1)=(q,q,q',q')$ with $(q,q')\in \mathscr{Q}\times \mathscr{Q}$, and $u_h$ converges to some $u\in \mathscr{M}^K_0(\gamma_1,\gamma_2;x_1) \times \mathscr{M}^K_0(\gamma_3,\gamma_4;x_2)$ such that $
\pi \circ u(ti) = c(t)$ in $C^{\infty}_{\mathrm{loc}} (]0,+\infty[ \times [0,1], T^* M^4)$ and uniformly on compact subsets of $[0,+\infty[ \times [0,1] \setminus \{ (0,0), (0,1) \}$.}

\medskip

By (\ref{ul6}), $c_h$ is an element of $W^u(\gamma_1) \times W^u(\gamma_2) \times W^u(\gamma_3) \times W^u(\gamma_4)$. The latter space is pre-compact in $W^{1,2}([0,1],M^4)$. By the argument of breaking gradient flow lines, up to a subsequence we may assume that $(c_h)$ converges in $W^{1,2}$ to a curve $c$ belonging to $W^u(\tilde\gamma_1) \times W^u(\tilde\gamma_2) \times W^u(\tilde\gamma_3) \times W^u(\tilde\gamma_4)$, for some $\tilde\gamma_1,\tilde\gamma_3 \in \mathscr{P}^{\Lambda}(L_1)$, $\tilde\gamma_2,\tilde\gamma_4\in \mathscr{P}^{\Lambda}(L_2)$ such that
\begin{equation}
\label{ul11}
\mbox{either} \quad \sum_{j=1}^4 i^{\Lambda}(\tilde\gamma_j) < \sum_{j=1}^4 i^{\Lambda}(\gamma_j) \quad \mbox{or} \quad (\tilde\gamma_1,\tilde\gamma_2,\tilde\gamma_3,\tilde\gamma_4) = (\gamma_1,\gamma_2,\gamma_3,\gamma_4).
\end{equation}
Similarly, the upper bound (\ref{ube}) on the energy $E(u_h)$ implies that $(u_h)$ converges to some $u$ in $C^{\infty}_{\mathrm{loc}} (]0,+\infty[ \times [0,1])$, using the $L^{\infty}$ estimates of Section \ref{cocosec}, the standard argument excluding bubbling off of spheres and disks, and elliptic bootstrap. The same arguments, together with the $W^{1,2}$ convergence of $(c_h)$ to $c$, imply that $(u_h)$ converges to $u$ uniformly on compact subsets of $[0,+\infty[ \times [0,1] \setminus \{ (0,0), (0,1) \}$ (actually, the $W^{1,2}$ convergence of the boundary data implies $W^{3/2,2}$ convergence near the portion of the boundary $\{0\}\times ]0,1[$). In particular,
\begin{equation}
\label{ul12}
(u(s),\mathscr{C}u(s+i)) \in N^* (\Delta^{(4)}_M \times \Delta^{(4)}_M ) \quad \forall s>0,
\end{equation}
and
\begin{equation}
\label{ul13}
\pi\circ u (0,t) = \pi \circ u(1,t) = c(t) \quad \forall t\in ]0,1[.
\end{equation}
The limit $u$ satisfies equation (\ref{ul5}), and
\[
\lim_{s\rightarrow +\infty} u(s+ti) = (\tilde{x}_1(t),\tilde{x}_2(t)),
\]
with $(\tilde{x}_1,\tilde{x}_2) \in \mathscr{P}^{\Theta}(H_1 \oplus H_2) \times  \mathscr{P}^{\Theta}(H_1 \oplus H_2)$ such that
\begin{equation}
\label{ul14}
\mbox{either} \quad \mu^{\Theta}(\tilde{x}_1) + \mu^{\Theta}(\tilde{x}_2) > \mu^{\Theta}(x_1) + \mu^{\Theta}(x_2) \quad \mbox{or} \quad (\tilde{x}_1,\tilde{x}_2) = (x_1,x_2),
\end{equation}
by the argument of breaking Floer trajectories.
Due to finite energy,
\begin{equation*}\begin{split}
E(u) \,\leq\, &\liminf_{h\rightarrow +\infty} E(u_h)\\ \leq\, & \mathbb{S}_{L_1}(\gamma_1) + \mathbb{S}_{L_2}(\gamma_2) + \mathbb{S}_{L_1}(\gamma_3) + \mathbb{S}_{L_2}(\gamma_4) - \mathbb{A}_{H_1 \oplus H_2} (x_1) - \mathbb{A}_{H_1 \oplus H_2} (x_2),
\end{split}\end{equation*}
we find by removal singularities (Proposition \ref{remsing}) 
a continuous extension of $u$ to the
corner points $0$ and $i$. By (\ref{ul12}) and (\ref{ul13}) we have
\[
(u(0),\mathscr{C}u(i)) \in N^* (\Delta^{(4)}_M \times \Delta^{(4)}_M) , \quad \pi\circ u(0) = \pi\circ u(i) = c(0) = c(1).
\]
It follows that, setting $c=(c_1,c_2,c_3,c_4)$, there holds $c_1(0)=c_1(1)=c_2(0)=c_2(1)$ and $c_3(0)=c_3(1)=c_4(0)=c_4(1)$. So $(c_1,c_2)$ and $(c_3,c_4)$ describe two figure-8 loops, and $u$ belongs to $\mathscr{M}^K_0(\tilde\gamma_1,\tilde\gamma_2;\tilde{x}_1) \times \mathscr{M}^K_0(\tilde\gamma_3,\tilde\gamma_4;\tilde{x}_2)$. In particular, the latter space in not empty, so
\[
i^{\Lambda}(\tilde{\gamma}_1) + i^{\Lambda}(\tilde\gamma_2) - \mu^{\Theta}(\tilde{x}_1) \geq n, \quad i^{\Lambda}(\tilde{\gamma}_3) + i^{\Lambda}(\tilde\gamma_4) - \mu^{\Theta}(\tilde{x}_2) \geq n.
\]
Together with (\ref{ul3}), this implies
\[
\sum_{j=1}^4 i^{\Lambda}(\tilde\gamma_j) - \sum_{j=1}^2 \mu^{\Theta} (\tilde{x}_j) \geq 2n = \sum_{j=1}^4 i^{\Lambda}(\gamma_j) - \sum_{j=1}^2 \mu^{\Theta} (x_j).
\]
Comparing the above inequality with (\ref{ul11}) and (\ref{ul14}), we deduce that \[
(\tilde\gamma_1,\tilde\gamma_2,\tilde\gamma_3,\tilde\gamma_4) = (\gamma_1,\gamma_2,\gamma_3,\gamma_4), \quad (\tilde{x}_1,\tilde{x}_2)= (x_1,x_2).
\]
Therefore, $u$ belongs to $\mathscr{M}^K_0(\gamma_1,\gamma_2;x_1) \times \mathscr{M}^K_0(\gamma_3,\gamma_4;x_2)$ and 
\[
c(0)=c(1) = (q,q,q',q'),
\] 
for some pair $(q,q')\in \mathscr{Q}\times \mathscr{Q}$, concluding the proof of Claim 1.

\paragraph{Claim 2.} {\em Up to a subsequence, the sequences of maps
\[
v_h^0(z) := u_h(\alpha_h z), \quad 
v_h^1(z) := u_h(i + \alpha_h \overline{z}),
\]
converge to constant maps $v^0,v^1$ uniformly on compact subsets of $[0,+\infty[\times [0,+\infty[$. Moreover, $\pi(v^0) = \pi(v^1) = (q,q,q',q')$ for some pair $(q,q')\in \mathscr{Q}\times \mathscr{Q}$.}

\medskip 

It is convenient to replace the nonlocal boundary conditions (\ref{ul7}), (\ref{ul8}) by local ones. We define the sequence of maps
\[
w_h : [0,+\infty[ \times [0,1/\alpha_h] \rightarrow T^* M^8, \quad w_h(z) := (v_h^0(z), \mathscr{C} v_h^1(z)).
\]
Then $w_h$ satisfies the local boundary conditions
\begin{eqnarray}
\label{ul15}
w_h(s) = (u_h(\alpha_h s), \mathscr{C} u_h(\alpha_h s+i)) \in \left\{ \begin{array}{ll} N^* V^{\lambda_h} & \mbox{if } 0\leq s \leq 1, \\ N^* (\Delta^{(4)}_M \times \Delta^{(4)}_M) & \mbox{if } s\geq 1, \end{array} \right. \\
\label{ul16}
\pi\circ w_h(ti) = \bigl(\pi\circ u_h(0,\alpha_h t),\pi \circ u_h(0,(1-\alpha_h) t)\bigr) = \bigl(c_h(\alpha_h t),c_h(1-\alpha_h t)\bigr).
\end{eqnarray}
Up to a subsequence, we may assume that $(\lambda_h)$ converges to some $\lambda\in [0,1]$. The maps $w_h$ solve a Floer equation of the form
\[
\delbar_J (w_h) = \alpha_h J X_K, 
\]
for a suitable time-dependent Hamiltonian $K$ on $T^*M^8$. Since we have applied a conformal rescaling, the energy of $w_h$ is
uniformly bounded, so $(w_h)$ converges up to a subsequence to some
$J$-holomorphic map $w: [0,+\infty[ \times [0,+\infty[ \rightarrow T^* M^8$ uniformly on compact subsets of $[0,+\infty[ \times
[0,+\infty[$ (more precisely, we have\\
$C^{\infty}_{\mathrm{loc}}(]0,+\infty[\times [0,+\infty[)$ convergence once the domain $[0,+\infty[\times [0,+\infty[$ is transformed by a conformal mapping turning the portion near the boundary point $(1,0)$ into a neighborhood of $(0,0)$ in the upper-right quarter $\mathbb{H}^+ =
]0,+\infty[ \times ]0,+\infty[$, and we have $W^{3/2,2}$ convergence near the piece of the boundary $\{0\}\times ]0,+\infty[$). The $J$-holomorphic map $w$ has finite
energy, so by removal singularities it has a continuous extension at
$\infty$ (again, by Proposition \ref{remsing} together with a suitable
conformal change of variables). 
By (\ref{ul15}) and (\ref{ul16}) it satisfies the
boundary conditions 
\begin{eqnarray}
\label{ul17}
w(s) \in \left\{ \begin{array}{ll} N^* V^{\lambda} & \mbox{if } 0 \leq s \leq 1, \\ N^* (\Delta^{(4)}_M \times \Delta^{(4)}_M) & \mbox{if } s\geq 1, \end{array} \right. \\
\label{ul18}
\pi\circ w(0,t) = (c(0),c(1)) = (c(0),c(1)) \mbox{ for } t\geq 0.
\end{eqnarray}
Since the boundary conditions are of conormal type and the Liouville one-form $\eta$ vanishes on conormals, we have
\[
\int_{\mathbb{H}^+} |\nabla w|^2 \, ds\, dt = \int_{\mathbb{H}^+} w^*(\omega) = \int_{\mathbb{H}^+} w^*(d\eta) = \int_{\mathbb{H}^+} d w^*(\eta) = \int_{\partial \mathbb{H}^+} w^*(\eta) = 0,
\]
so $w$ is constant. By (\ref{ul17}) and (\ref{ul18}), $w$ belongs to $N^* V^{\lambda} \cap N^* (\Delta^{(4)}_M \times \Delta^{(4)}_M) \cap T^*_{(c(0),c(0))} M^8$. In particular, $\pi(w)=(c(0),c(0))$, and since $c(0)=(q,q,q',q')$ with $(q,q')\in \mathscr{Q}$, so Claim 2 follows.

\medskip

Claim 2 contradicts (\ref{ul10}), proving the lemma.
\end{proof}

\begin{rem}
If $(u_h)$ is the sequence of solutions considered in the proof of the above lemma, we cannot conclude that $(u_h)$ converges uniformly in a neighborhood of the corner points $(0,0)$ and $(0,1)$. Indeed, generically this will not happen: otherwise the limit $u$ would be an element of $\mathscr{M}_0^K(\gamma_1,\gamma_2;x_1) \times \mathscr{M}_0^K(\gamma_3,\gamma_4;x_2)$ satisfying the extra condition $u(0)=u(i)$, and this is a problem of Fredholm index $-2n$, for which generically there are no solutions.
\end{rem}

We can now built the required chain homotopy between $K_0^{\Lambda} \otimes K_{\alpha_0}^{\Lambda}$ and $K_{\alpha_0}^{\Lambda} \otimes K_0^{\Lambda}$. We fix some $\alpha_0\in ]0,\alpha(A)]$, and we choose the generic data $L_1$, $L_2$, $X_{L_1}^{\Lambda}$, $X_{L_2}^{\Lambda}$ in such a way that transversality holds for the problems $\mathscr{M}^K_0$, $\mathscr{M}^K_{\alpha_0}$, and $\mathscr{M}^P_{\alpha_0}$, and such that there are no elements of the form $(0,u)$ or $(1,u)$ in the zero-dimensional components of $\mathscr{M}^P_{\alpha_0}$.

Let $\gamma_1,\gamma_3\in \mathscr{P}^{\Lambda}(L_1)$, $\gamma_2,\gamma_4 \in \mathscr{P}^{\Lambda}(L_2)$, $x_1,x_2\in \mathscr{P}^{\Theta}(H_1\oplus H_2)$ be Hamiltonian orbits satisfying the index identity (\ref{ul3}) and the action bounds (\ref{ul4}). Consider the one-dimensional manifold 
$\mathscr{M}^P_{\alpha_0}(\gamma_1,\gamma_2,\gamma_3,\gamma_4;x_1,x_2)$, and let 
\[
\overline{\mathscr{M}}^P_{\alpha_0}(\gamma_1,\gamma_2,\gamma_3,\gamma_4;x_1,x_2)
\]
be the one-dimensional manifold with boundary obtained by attaching in the usual way elements $(\lambda,u)$ of the zero-dimensional spaces of solutions $\mathscr{M}^P_{\alpha_0}$ (necessarily, with $0<\lambda<1$).  
The submanifold $V^{\lambda}$ is not compact, but its intersection with the closure of $B_{\delta/2}(q)^4 \times B_{\delta/2}(q')^4$, $q,q'\in \mathscr{Q}$, is compact, so the localization Lemma \ref{localization} implies that $\overline{\mathscr{M}}^P_{\alpha_0}(\gamma_1,\gamma_2,\gamma_3,\gamma_4;x_1,x_2)$ is compact, and its intersections with $\{\lambda = 0 \}$ and $\{\lambda=1\}$ are
\begin{eqnarray}
\label{chom1}
\mathscr{M}^P_{\alpha_0} (\gamma_1,\gamma_2,\gamma_3,\gamma_4;x_1,x_2) \cap \{\lambda = 0\} = \mathscr{M}^K_0(\gamma_1,\gamma_2;x_1) \times \mathscr{M}^K_{\alpha_0} (\gamma_3,\gamma_4;x_2), \\
\label{chom2}
\mathscr{M}^P_{\alpha_0} (\gamma_1,\gamma_2,\gamma_3,\gamma_4;x_1,x_2) \cap \{\lambda = 1\} = \mathscr{M}^K_{\alpha_0}(\gamma_1,\gamma_2;x_1) \times \mathscr{M}^K_0 (\gamma_3,\gamma_4;x_2).
\end{eqnarray}
We denote by $M^A$ the subcomplex of
\[
M(\mathbb{S}_{L_1}^{\Lambda}) \otimes M(\mathbb{S}_{L_2}^{\Lambda})  \otimes M(\mathbb{S}_{L_1}^{\Lambda})  \otimes M(\mathbb{S}_{L_2}^{\Lambda})
\]
spanned by generators $\gamma_1\otimes \gamma_2\otimes \gamma_3 \otimes \gamma_4$ with
\[
\mathbb{S}_{L_1}(\gamma_1) + \mathbb{S}_{L_2}(\gamma_2) +  
\mathbb{S}_{L_1}(\gamma_3) + \mathbb{S}_{L_2}(\gamma_4) \leq A.
\]
Similarly, we denote by $F^A$ the subcomplex of
\[
F^{\Theta}(H_1\oplus H_2) \otimes  F^{\Theta}(H_1\oplus H_2) 
\]
spanned by generators $x_1 \otimes x_2$ with
\[
\mathbb{A}_{H_1 \oplus H_2} (x_1) + \mathbb{A}_{H_1 \oplus H_2} (x_2) \leq A.
\]
We define a homomorphism
\[
P : M^A_* \rightarrow F^A_{*-2n+1}
\]
by counting the elements of $\mathscr{M}^P_{\alpha_0}(\gamma_1,\gamma_2,\gamma_3,\gamma_4;x_1,x_2)$ in the zero-dimensional case:
\[
i^{\Lambda}(\gamma_1) + i^{\Lambda}(\gamma_2) + i^{\Lambda}(\gamma_3) + i^{\Lambda}(\gamma_4) - \mu^{\Theta}(x_1) - \mu^{\Theta}(x_2) = 2n - 1.
\]
Using the identities (\ref{chom1}) and (\ref{chom2}) we see that $P$ is a chain homotopy between the restrictions of $K^{\Lambda}_0 \otimes K^{\Lambda}_{\alpha_0}$ and $K^{\Lambda}_{\alpha_0} \otimes K^{\Lambda}_{\alpha}$ to the above subcomplexes. This concludes the proof of Proposition \ref{coupro}.

\begin{rem}
As mentioned in the introduction, the homotopy and localization argument used in the proof of Proposition \ref{coupro} is inspired by an idea of H.\ Hofer's. One could try to apply this idea directly to the proof that the pair-of-pants homomorphism $\Upsilon^{\Lambda}$ corresponds to the Morse theoretical interpretation of the loop product (see Proposition \ref{nonserve}), without passing from the factorizations through the figure-8 Floer and Morse complexes. 
This would involve showing that a Floer problem for $u: \Sigma^+ \rightarrow T^*M^2$ satisfying the boundary conditions
\[
(u(s),\mathscr{C}u(s+i)) \in N^* (\Delta_M \times \Delta_M) \mbox{ for } s\geq 0,
\]
is homotopic to a Floer problem for $u$ satisfying  
\begin{equation*}\begin{split}
&(u(s),\mathscr{C}u(s+i)) \in N^* \Delta_{M\times M} \mbox{ for } 0\leq s \leq \alpha,\\ 
&(u(s),\mathscr{C}u(s+i)) \in N^* (\Delta_M \times \Delta_M) \mbox{ for } s\geq \alpha.
\end{split}\end{equation*}
Here the trick of doubling the dimensions by using the algebraic Lemma \ref{alge} would not be needed, because the manifolds $\Delta_{M\times M}$ and $\Delta_M \times \Delta_M$ have the same dimension. However, a local isotopy between such manifolds cannot be partially orthogonal to $\Delta_M \times \Delta_M$, so the elliptic estimates of Section \ref{lineartheory} would  not be available and one would face compactness problems.
\end{rem}

\renewcommand{\thesection}{\Alph{section}}
\setcounter{section}{0} 

\section{Appendix - Morse constructions}
\label{appendixA}

The aim of the first section of this appendix is to recall the construction of the Morse complex for functions defined on an infinite-dimensional Hilbert manifold. See
\cite{ama06m} for detailed proofs.  

Many operations in singular homology can be read
on the Morse complex (see for instance \cite{sch93,fuk93,bc94,vit95f,fuk97}).
Here we are interested only in functoriality, in the exterior homology
product, and in the Umkehr map. The
corresponding constructions -- still in our infinite dimensional setting
-- are outlined in the subsequent sections.

\subsection{The Morse complex}
\label{mc}

Let $\mathcal{M}$ be a
(possibly infinite-dimensional) Hilbert manifold, that is, a Hausdorff paracompact topological space which is locally homeomorphic to a real Hilbert space and admits an atlas whose transition maps are smooth. If $X$ is a smooth 
vector field on $\mathcal{M}$ and $x\in \mathcal{M}$ is a {\em singular point} for $X$, i.e.\ $X(x)=0$, the {\em Jacobian} of $X$ at $x$ is a well-defined bounded operator on $T_x \mathcal{M}$, that we denote by $\nabla X(x)$. The singular point $x$ is said to be {\em hyperbolic} if the spectrum of $\nabla X(x)$ is disjoint from $i\R$, and the {\em Morse index} $i(x)$ of $x$ is the (possibly infinite) dimension of the $\nabla X(x)$-invariant closed linear subspace of $T_x \mathcal{M}$ associated to the part of the spectrum with positive real part. Hyperbolic singular points are isolated within the set of singular points. The vector field $X$ is said to be {\em Morse} if all its singular points are hyperbolic. 

A real function $f\in C^1(\mathcal{M})$ is said to be a {\em Lyapunov function} for $X$ if\\ $Df(p)[X(p)]<0$ for every $p\in \mathcal{M}$ which is not a singular point of $X$. If $X$ is Morse, then the set of singular points of $X$ coincides with the set of critical points of $f$, that we denote by $\crit (f)$ (in general, $\crit(f)$ is contained in the set of singular points of $X$). If $(t,p) \mapsto \phi(t,p)$ denotes the flow of the vector field $X$, the {\em stable} and {\em unstable manifolds} of the hyperbolic singular point $x$ are the subsets
\begin{eqnarray*}
W^s(x;X) & := & \set{p\in \mathcal{M}}{\phi(t,p)\rightarrow x \mbox{ for } t\rightarrow +\infty}, \\   
W^u(x;X) & := & \set{p\in \mathcal{M}}{\phi(t,p)\rightarrow x \mbox{ for } t\rightarrow -\infty}.
\end{eqnarray*}
We just write $W^s(x)$ and $W^u(x)$ when the vector field $X$ is clear from the context.
They are always injectively immersed smooth submanifolds of $\mathcal{M}$, and if $X$ admits a Lyapunov function $f$ and $i(x)<+\infty$ they are actually embedded submanifolds (if $i(x)=+\infty$, the same is true under a non-degeneracy condition on $f$, see Theorem 1.20 in \cite{ama06m}). Their tangent spaces at $x$ are the $\nabla X(x)$-invariant subspaces determined by the negative part (for the stable manifold) and the positive part (for the unstable manifold) of the spectrum of $\nabla X(x)$. In particular, $\dim W^u(x) = i(x)$. The Morse vector field $X$ is said to satisfy the {\em Morse-Smale condition} if for every pair of singular points $x,y$, the unstable manifold of $x$ is transverse to the stable manifold of $y$.

A {\em Palais-Smale sequence} for the pair $(X,f)$ is a sequence $(p_h)\subset \mathcal{M}$ such that $f(p_h)$ is bounded and $Df(p_h)[X(p_h)]$ is infinitesimal. The pair $(X,f)$ is said to satisfy the {\em Palais-Smale condition} if every Palais-Smale sequence has a converging subsequence. 

Let $\mathcal{F}(\mathcal{M})$ be the set of all $C^1$ real functions $f$ on $\mathcal{M}$ which are bounded from below 
and for which there exists a smooth vector field $X$ on $\mathcal{M}$ such that:
\begin{enumerate}
\item[(X1)] $f$ is a Lyapunov function for $X$;
\item[(X2)] $X$ is Morse and all its singular points (i.e.\ the critical points of $f$) have finite Morse index;
\item[(X3)] the pair $(X,f)$ satisfies the Palais-Smale condition;
\item[(X4)] $X$ is forward complete (i.e.\ the flow $\phi(t,p)$ is defined for every $t\geq 0$).
\end{enumerate}  
Such a vector field is called a (negative) {\em pseudo-gradient} for $f$. The Morse index $i(x)$ of a critical point $x$ of $f$ does not depend on the choice of the pseudo-gradient $X$, and if $f$ has a non-degenerate Gateaux second differential $d^2 f(x)$ at the critical point $x$, $i(x)$ coincides with the standard Morse index $i(x;f)$, that is the dimension of a maximal linear subspace of $T_x \mathcal{M}$ on which $d^2 f(x)$ is negative definite. The set of critical points of $f$ of Morse index $k$ is denoted by $\mathrm{crit}_k(f)$.

If $f$ is a smooth Morse function, bounded from below, and it satisfies the Palais-Smale condition with respect to some complete Riemannian metric $g$ on $\mathcal{M}$ (in the usual sense, see e.g.\ \cite{cha93}), then $-\mathrm{grad}_g f$ satisfies (X1)-(X4), so $f\in \mathcal{F}(\mathcal{M})$. However, it is useful to have a theory which allows also for non-regular functions $f$, as the example of the Lagrangian action functional introduced in Section \ref{laf} shows.

If $f\in \mathcal{F}(\mathcal{M})$, it is possible to perturb the smooth vector field $X$ generically, in such a way that (X1)-(X4) still hold, and:
\begin{enumerate}
\item[(X5)] $X$ satisfies the Morse-Smale condition.
\end{enumerate} 

In this paper, genericity for a pseudo-gradient $X$ is meant in a suitable complete metric space of vector fields which coincide with the original one up to order one at critical points, endowed with an adapted Whitney metric such that all the vector fields whose distance from the original one is less than 1 still satisfy (X1)-(X4). We shall not specify such metric space any further (see \cite{ama06m} for more details).

Let us fix a real number $a$. Since $f$ is bounded from below, (X2) and (X3) imply that $f$ has finitely many critical points in the sublevel $\{f<a\}$. Using also (X4) and (X5), one can find open neighborhoods $\mathcal{U}(x)$ of each critical point $x$ in $\{f<a\}$ such that, if we set
\[
\mathcal{M}_k^a := \bigcup_{\substack{x\in \crit(f)\\ f(x)<a \\ i(x)\leq k}} \phi([0,+\infty[ \times \mathcal{U}(x)), \quad \forall k\in \N \cup \{ \infty \},
\]
the (eventually constant) sequence $\{\mathcal{M}_k^a\}_{k\in \N}$ is a cellular filtration of $\mathcal{M}_{\infty}^a$. More precisely, if $M_k^a(f)$ denotes the free Abelian group generated by the critical points in $\{f<a\}$ of Morse index $k$, one has 
\[
H_j(\mathcal{M}_k^a,\mathcal{M}_{k-1}^a) \cong \left\{ \begin{array}{ll} M_k^a(f), & \mbox{if } j=k, \\ 0 , & \mbox{if } j\neq k, \end{array} \right.
\]
and this isomorphism is uniquely determined by the choice of an orientation of each unstable manifold $W^u(x)$ of $x\in \crit_k(f)\cap \{f<a\}$. 
Moreover, $\mathcal{M}_{\infty}^a$ is a deformation retract of the sublevel $\{f<a\}$. Then the cellular complex of the cellular filtration $\{\mathcal{M}_k^a\}_{k\in \N}$ induces the structure of a chain complex on the 
graded group $M_*^a(f)$, whose homology is isomorphic to the singular homology of $\mathcal{M}_{\infty}^a$, hence to the singular homology of $\{f<a\}$. 
This chain complex is called the {\em Morse complex of $(X,f)$ on the sublevel $\{f<a\}$}. If we choose different neighborhoods $\mathcal{U}(x)$, we obtain the same chain complex. Actually, the boundary homomorphism
\[
\partial_k : M_k^a(f) \rightarrow M_{k-1}^a(f)
\]
can be expressed in terms of the standard generators of $M_k^a(f)$, that is the critical points $x$ such that $f(x)<a$ and $i(x)=k$, as
\begin{equation}
\label{bordo}
\partial_k x = \sum_{\substack{y\in \crit(f) \\ i(y) = k-1}} n_{\partial}(x,y) \, y, 
\end{equation}
where $n_{\partial}$ is defined as follows. The chosen orientation of each unstable manifold $W^u(x)$ induces a co-orientation of each stable manifold. By (X5), each intersection $W^u(x)\cap W^s(y)$ is a transverse intersection of an oriented submanifold of dimension $i(x)$ and a co-oriented submanifold of codimension $i(y)$. So $W^u(x)\cap W^s(y)$ is an oriented submanifold of dimension $i(x)-i(y)$. When $i(x)-i(y)=1$, $W^u(x) \cap W^s(y)$ consists of finitely many flow orbits, and $n_{\partial}(x,y)$ is the number of those orbits on which the direction of $X$ agrees with the orientation, minus the number of the other ones.
 
The {\em Morse complex of $(X,f)$ on $\mathcal{M}$} is a chain complex on the graded free Abelian group $M_*(f)$ generated by all the critical points of $f$, and it is defined either by taking a direct limit of $\{M_*^a(f),\partial_*\}$ for $a\uparrow \infty$, or - equivalently - by formula (\ref{bordo}), which does not depend on $a$. In order to use the vector field $X$ to construct a cellular filtration of the whole $\mathcal{M}$, one would need that every critical point $x$ does not belong to the closure of the union of all the unstable manifolds of critical points $y$ of Morse index  $i(y)\leq i(x)$. The latter fact is implied by the Morse-Smale condition when there is a finite number of critical points of any given index, but it mail fail in the general case. This is why we work with sublevels and we define the Morse complex on $\mathcal{M}$ as a direct limit.
Since the homology functor commutes with direct limits, the first definition implies that the homology of $\{M_*(f),\partial_*\}$ is isomorphic to the singular homology of $\mathcal{M}$.

If we change the orientations of the unstable manifolds, we get isomorphic Morse complexes. The same is true if we choose a different pseudo-gradient vector field $X$ satisfying (X1)-(X5) with respect to the same $f$. Therefore, the {\em Morse complex of $f$} is well-defined up to isomorphism, for every $f\in \mathcal{F}(\mathcal{M})$. 

It is also useful to consider the following relative version of the Morse complex. Let $\mathcal{A}$ be an open subset of the Hilbert manifold $\mathcal{M}$. Let $\mathcal{F}(\mathcal{M},\mathcal{A})$ be the set of all $C^1$ real functions $f$ on $\mathcal{M}$  which are bounded from below on $\mathcal{M}\setminus \mathcal{A}$ and for which there is a smooth vector field $X$ on $\mathcal{M}$ such that:
\begin{enumerate}
\item[(X1')] $f$ is a Lyapunov function for $X$ on $\mathcal{M} \setminus \overline{\mathcal{A}}$;
\item[(X2')] $X$ is Morse on $\mathcal{M}\setminus \overline{\mathcal{A}}$ and all its singular points in this open set have finite Morse index;
\item[(X3')] the pair $(X,f)$ satisfies the Palais-Smale condition on $\mathcal{M} \setminus \overline{\mathcal{A}}$;
\item[(X4')] $\mathcal{A}$ is positively invariant with respect to the flow of $X$ (i.e.\ $p\in \mathcal{A}$ implies $\phi(t,p)\in \mathcal{A}$ for every $t\geq 0$ for which the flow is defined), and $X$ is forward complete relative to $\mathcal{A}$ (i.e.\ if $\phi(t,p)$ is not defined for all $t\geq 0$, then $\phi(t,p)\in \mathcal{A}$ for $t$ large enough).
\end{enumerate}  
In particular, (X3') implies that $X$ has no singular points on the boundary of $\mathcal{A}$, hence $f$ has no critical points on the boundary of $\mathcal{A}$. In fact, a sequence $(p_h)\subset \mathcal{M} \setminus \overline{\mathcal{A}}$ which converges to a singular point of $X$ on the boundary of $\mathcal{A}$ would be a Palais-Smale sequence which does not converge in $\mathcal{M} \setminus \overline{\mathcal{A}}$. By a generic perturbation, we may assume that $X$ satisfies also:
\begin{enumerate}
\item[(X5')] $X$ satisfies the Morse-Smale condition on $\mathcal{M} \setminus \overline{\mathcal{A}}$.
\end{enumerate} 
Then the {\em relative Morse complex of $(X,f)$ on $(\mathcal{M},\mathcal{A})$} is constructed as before, but taking into account only the critical points of $f$ in $\mathcal{M} \setminus \overline{\mathcal{A}}$.  The homology of this chain complex is isomorphic to the relative singular homology $H_*(\mathcal{M},\mathcal{A})$.
As before, the isomorphism class of the Morse complex does not depend on the pseudo-gradient $X$, but only on the function $f$.

\subsection{Functoriality}
\label{funct}

Let $\varphi: \mathcal{M}_1 \rightarrow \mathcal{M}_2$ be a smooth map between Hilbert manifolds. Let $f_1\in \mathcal{F}(\mathcal{M}_1)$ and $f_2\in \mathcal{F}(\mathcal{M}_2)$, and let $X_1$ and $X_2$ be corresponding Morse-Smale pseudo-gradients, i.e.\ smooth vector fields satisfying (X1)-(X5). We denote by $\phi^1$ and $\phi^2$ the corresponding flows. We assume that
\begin{eqnarray}
\label{noco1}
\mbox{each } y\in \crit(f_2) \mbox{ is a regular value of } \varphi; \\
\label{noco2}
x\in \crit(f_1), \; \varphi(x) \in \crit(f_2) \quad \implies \quad
i(x;f_1) \geq i(\varphi(x);f_2).
\end{eqnarray}
The set of critical points of $f_2$ is discrete, and in many cases
(for instance, if $\varphi$ is a Fredholm map) the set
of regular values of $\varphi$ is generic (i.e., it is a countable intersection of open and dense sets), by Sard-Smale theorem
\cite{sma65}. In such a situation,
condition (\ref{noco1}) can be achieved by arbitrary small (in
several senses) perturbations of $\varphi$ or of $f_2$. Also condition
(\ref{noco2}) can be achieved by an arbitrary small perturbation of
$\varphi$ or of $f_2$, simply by requiring
that the image of the set of critical points of $f_1$ by $\varphi$
does not meet the set of critical point of $f_2$.

By (\ref{noco1}) and (\ref{noco2}), up to the perturbation of the vector field $X_1$ and $X_2$, we may assume that
\begin{equation}
\label{tra}
\forall x\in \crit (f_1), \; \forall y\in \crit(f_2), \quad
\varphi|_{W^u(x;X_1)} \mbox{ is
transverse to } W^s(y;X_2).
\end{equation}
Indeed, by (\ref{noco1}) and (\ref{noco2}) one can perturb $X_1$ in such a
way that if $p\in W^u(x;X_1)$ and $\varphi(p)$ is a critical
point of $f_2$ then $\rank D\varphi(p) |_{T_p W^u(x)} \geq
i(\varphi(p);f_2)$. The possibility of perturbing $X_2$ so that
(\ref{tra}) holds is now a consequence of the following fact: if $W$ is
a finite dimensional manifold and $\psi:
W \rightarrow \mathcal{M}_2$ is a smooth map such that for every $p\in
W$ with $\psi(p)\in \crit (f_2)$ there holds $\rank D\psi(p) \geq
i(\varphi(p);f_2)$, then the set of pseudo-gradient vector fields $X_2$ for $f_2$ on $\mathcal{M}_2$ such that the map $\psi$ is transverse to the stable
manifold of every critical point of $f_2$ is generic.  

The transversality condition (\ref{tra}) ensures that if $x\in \crit(f_1)$ and
$y\in \crit(f_2)$, then
\[
W(x,y) := W^u(x;X_1) \cap \varphi^{-1} \bigl(W^s(y;X_2) \bigr)
\]
is a submanifold of dimension $i(x;f_1) - i(y;f_2)$. If $W^u(x;X_1)$ is
oriented and the normal bundle of $W^s(y;X_2)$ in $\mathcal{M}_2$ is
oriented, the manifold $W(x,y)$ carries a canonical orientation. In
particular, if $i(x;f_1) = i(y;f_2)$, $W(x,y)$ is a discrete set, each
of whose point carries an orientation sign $\pm 1$.
The transversality condition (\ref{tra}) and the
fact that $W^u(x;X_1)$ has compact closure in $\mathcal{M}_1$ imply that
the discrete set $W(x,y)$ is also compact, so it is a finite set and
we denote by $n_{\varphi}(x,y)\in \Z$ the algebraic sum of the
corresponding orientation signs. We can then define the homomorphism
\[
M_k \varphi : M_k (f_1) \rightarrow M_k (f_2), \quad (M_k \varphi)  x =
\sum_{y\in \mathrm{crit}_k(f_2)} n_{\varphi}(x,y) \, y,
\]
for every $x\in \mathrm{crit}_k (f_1)$.

We claim that $M_* \varphi$ is a chain map from the Morse complex
of $(f_1,X_1)$ to the Morse complex of $(f_2,X_2)$, and that the
corresponding homomorphism in homology coincides -- via the
isomorphism described in Section \ref{mc} -- with the homomorphism
$\varphi_* : H_*(\mathcal{M}_1) \rightarrow H_*(\mathcal{M}_2)$.

Indeed, let us fix a real number $a_1$, and let $a_2$ be larger than the maximum of $f_2$ on the image by $\varphi$ of the union of all unstable manifolds of critical points of $f_1$ in $\{f<a_1\}$ (the latter is a compact set). Then we can find open neighborhoods $\mathcal{U}_i(x)$, $i=1,2$, of each critical point $x\in \crit(f_i)$, such that the
sequence of open sets
\[
\mathcal{M}_{i,k}^{a_i} = \bigcup_{\substack{x\in \crit(f_i) \\ f_i(x) < a_i\\ i(x;f_i) \leq
    k}} \phi^i([0,+\infty[ \times \mathcal{U}_i(x)), \quad k\in \N \cup \{\infty\}, \; i=1,2,
\]
is a cellular filtration of $\mathcal{M}^{a_i}_{i,\infty}$, which is a deformation retract of the sublevel $\set{p\in \mathcal{M}_i}{f_i(p)<a_i}$, and 
\[
\varphi(\mathcal{M}^{a_1}_{1,\infty}) \subset \set{p\in \mathcal{M}_2}{f_2(p) < a_2}.
\]
By the transversality assumption (\ref{tra}), if $p$ belongs to $
W^u(x;X_1)$ with $f_1(x)<a_1$, then $\varphi(p)$ belongs to the stable manifold of
some critical point $y\in \crit(f_2)$ with $i(y;f_2) \leq i(x;f_1)$ and $f_2(y)<a_2$. A
standard compactness-transversality argument shows that, up to the 
replacement of the neighborhoods $\mathcal{U}_1(x)$, $x\in \crit(f_1)$, by
smaller ones, we may assume that
\begin{equation*}\begin{split}
p\in \mathcal{M}_{1,k}^{a_1} \quad \implies \quad \varphi(p) \in W^s(y; X_2) \mbox{ for some } y\in \crit(f_2) \mbox{ with } &i(y;f_2) \leq k\\ \mbox{ and } &f_2(y)<a_2.
\end{split}\end{equation*}
Since the set $\mathcal{M}_{2,k}^{a_2}$ is a $\phi^2$-positively invariant open
neighborhood of the set of the critical points of $f_2$ in $\{f_2<a_2\}$ whose Morse
index does not exceed $k$, it is easy to find a continuous function $t_0:
\mathcal{M}_{1,\infty}^{a_1} \rightarrow [0,+\infty[$ such that
\[
p\in \mathcal{M}_{1,k}^{a_1} \quad \implies \quad \phi^2(t_0(p),\varphi(p))
\in \mathcal{M}_{2,k}^{a_2}, \quad \forall k\in \N.
\]
Therefore, $\psi(p) := \phi^2 (t_0(p),\varphi(p))$ is a cellular map
from $\{ \mathcal{M}_{1,k}^{a_1} \}_{k\in \N}$ to
  $\{ \mathcal{M}_{2,k}^{a_2} \}_{k\in \N}$, and it is
    easy to check that the induced cellular homomorphism
\[
\psi_* : H_*( \mathcal{M}_{1,*}^{a_1},  \mathcal{M}_{1,*-1}^{a_1} )
\rightarrow H_*( \mathcal{M}_{2,*}^{a_2},  \mathcal{M}_{2,*-1}^{a_2} )
\]
coincides with the restriction $M_* \varphi  : M_*^{a_1}(f_1) \rightarrow M_*^{a_2}(f_2)$, once we identify the group
$H_k(\mathcal{M}_{i,k}^{a_i},\mathcal{M}_{i,k-1}^{a_i})$ with $M_k^{a_i}(f_i)$, 
by taking the orientations of the unstable manifolds into account. Then,
everything follows from the naturality of cellular homology, from
the fact that the inclusions $j_i: \mathcal{M}_{i,\infty}^{a_i} \hookrightarrow
\set{p\in \mathcal{M}_i}{f(p_i)<a_i}$ are homotopy equivalences, from the fact that
$j_2 \circ \psi$ is homotopic to $\varphi \circ j_1$, and by taking a direct limit for $a_1\uparrow +\infty$.

The above construction has an obvious extension to the case of a
smooth map $\varphi$ between two pairs
$(\mathcal{M}_1,\mathcal{A}_1)$ and $(\mathcal{M}_2,\mathcal{A}_2)$, where
$\mathcal{A}_i$ is an open subset of $\mathcal{M}_i$, for $i=1,2$.

\begin{rem}
We recall that if two chain maps between {\em free} chain
complexes induce the same homomorphism in homology, they are chain
homotopic. So from the functoriality of singular homology, we
deduce that $M_* \varphi \circ M_* \psi$ and $M_* \varphi\circ
\psi$ are chain homotopic. Actually, a chain homotopy between
these two chain maps could be constructed in a direct way.
\end{rem}

\begin{rem}
Consider the following particular but important case:
$\mathcal{M}_1=\mathcal{M}_2$ and $\varphi=\mathrm{id}$. Then
(\ref{noco1}) holds automatically, and (\ref{noco2}) means asking
that every common critical point $x$ of $f_1$ and $f_2$ must satisfy
$i(x;f_1) \geq i(x;f_2)$. In this case, the above construction produces
a chain map from $M_*(f_1)$ to $M_*(f_2)$ which induces the
identity map in homology (after the identification with singular
homology).
\end{rem}

\begin{rem}
\label{nntp}
For future reference, let us stress the fact that if it is already
known that $p\in W^u(x;X_1)$ and $\varphi(p)\in \crit(f_2)$
imply $\rank D\varphi(p) |_{T_p W^u(x;X_1)} \geq
i(\varphi(p);f_2)$, then condition (\ref{noco1}) is redundant, condition
(\ref{noco2}) holds automatically, and there is no need of perturbing
the vector field $X_1$ on $\mathcal{M}_1$.
\end{rem}

\subsection{The exterior homology product}
\label{exte}

Let $\mathcal{M}_1, \mathcal{M}_2$ be Hilbert manifolds, let $f_1\in
\mathcal{F}(\mathcal{M}_1)$, $f_2\in
\mathcal{F}(\mathcal{M}_2)$, and let $X_1$ and $X_2$ be corresponding Morse-Smale pseudo-gradients.  If we denote by $f_1 \oplus
f_2$ the function on $\mathcal{M}_1 \times \mathcal{M}_2$,
\[
f_1 \oplus f_2 \, (p_1,p_2) := f_1(p_1) + f_2(p_2),
\]
we see that $f_1 \oplus f_2$ belongs to $\mathcal{F}(\mathcal{M}_1
\times \mathcal{M}_2)$, and 
\[
X_1 \oplus X_2 (p_1,p_2) := (X_1(p_1),X_2(p_2)),
\]
satisfies (X1)-(X5) with respect to $f_1\oplus f_2$. Moreover,
\[
\mathrm{crit}_{\ell} (f_1 \oplus f_2) = \bigcup_{j+k=\ell}
\mathrm{crit}_j(f_1) \times \mathrm{crit}_k(f_2),
\]
hence
\[
M_{\ell} (f_1 \oplus f_2) = \bigoplus_{j+k=\ell} M_j (f_1) \otimes M_k (f_2).
\]
If we fix orientations for the unstable manifold of each critical point
of $f_1,f_2$, and we endow the unstable manifold of each $(x_1,x_2) \in
\crit(f_1 \oplus f_2)$,
\[
W^u((x_1, x_2)) = W^u(x_1) \times W^u(x_2),
\]
with the product orientation, we see that the boundary homomorphism in the
Morse complex of $(f_1 \oplus f_2,X_1 \oplus X_2)$ is given by
\[
\partial (x_1,x_2) = (\partial x_1,x_2) + (-1)^{i(x_1)}
(x_1,\partial x_2), \quad
\forall x_i \in \crit (f_i), \; i=1,2.
\]
We conclude that the Morse complex of $(f_1 \oplus f_2,X_1 \oplus X_2)$ is the tensor product of the Morse complexes of $(f_1,X_1)$ and
$(f_2,X_2)$. So, using the natural homomorphism from the tensor
product of the homology of two chain complexes to the homology of the
tensor product of the two complexes, we obtain the homomorphism
\begin{equation}
\label{p1}
H_j M(f_1) \otimes H_k M(f_2) \rightarrow H_{j+k} M (f_1
\oplus f_2).
\end{equation}
We claim that this homomorphism corresponds to the exterior product
homomorphism
\begin{equation}
\label{p2}
H_j (\mathcal{M}_1) \otimes H_k(\mathcal{M}_2)
\stackrel{\times}{\longrightarrow} H_{j+k} (\mathcal{M}_1 \times
\mathcal{M}_2),
\end{equation}
via the isomorphism between Morse homology and singular homology
described in Section \ref{mc}.

Indeed, the cellular filtration in $\{f_1<a_1\} \times \{ f_2 < a_2\}$ can be chosen to be generated by small product neighborhoods of the critical points,
\begin{equation*}\begin{split}
\mathcal{W}_{\ell}^{a_1,a_2} = &\bigcup_{\substack{(x_1,x_2) \in \crit(f_1 \oplus
    f_2)\\ f_1(x_1) < a_1, \; f_2(x_2)<a_2\\ i(x_1) + i(x_2) = \ell}} \hspace{-1em}\phi^1([0,+\infty[ \times
\mathcal{U}_1(x_1)) \times \phi^2([0,+\infty[ \times
\mathcal{U}_2(x_2))\\ = &\bigcup_{j+k=\ell} \mathcal{M}_{1,j}^{a_1} \times
\mathcal{M}_{2,k}^{a_2}.
\end{split}\end{equation*}
By excision and by the K\"unneth theorem, together with the fact that we
are dealing with free Abelian groups, one easily obtains that
\[
H_{\ell} (\mathcal{W}_{\ell}^{a_1,a_2},
\mathcal{W}_{\ell-1}^{a_1,a_2}) \cong \bigoplus_{j+k=\ell} H_j(\mathcal{M}_{1,j}^{a_1},
\mathcal{M}_{1,j-1}^{a_1}) \otimes H_k(\mathcal{M}_{2,k}^{a_2},
\mathcal{M}_{2,k-1}^{a_2}),
\]
and that the boundary homomorphism of the cellular filtration
$\mathcal{W}_*^{a_1,a_2}$ is the tensor product of the boundary homomorphisms of
the cellular filtrations $\mathcal{M}_{1,*}^{a_1}$ and $\mathcal{M}_{2,*}^{a_2}$. Passing
to homology, we find that (\ref{p1}) corresponds to the exterior
homology product
\[
 H_j (\mathcal{M}_{1,\infty}^{a_1}) \otimes H_k(\mathcal{M}_{2,\infty}^{a_2})
\stackrel{\times}{\longrightarrow} H_{j+k} (\mathcal{M}_{1,\infty}^{a_1} \times
\mathcal{M}_{2,\infty}^{a_2}) = H_{j+k} (\mathcal{W}_{\infty}^{a_1,a_2}),
\]
by the usual identification of the cellular complex to the Morse
complex induced by a choice of orientations for the unstable
manifolds. But using the fact that the inclusion $\mathcal{M}_{1,\infty}^{a_1}
\hookrightarrow \{f_1<a_1\}$ and
$\mathcal{M}_{2,\infty}^{a_2} \hookrightarrow \{f_2<a_2\}$ are homotopy
equivalences, and by considering a direct limit for $a_1,a_2 \uparrow +\infty$, we conclude that (\ref{p1}) corresponds to (\ref{p2}).

\subsection{Intersection products}
\label{inter}

Let $\mathcal{M}_0$ be a Hilbert manifold, and let $\pi: \mathcal{E}
\rightarrow \mathcal{M}_0$ be a smooth rank-$n$ oriented real vector
bundle over $\mathcal{M}_0$. It is easy to describe the Thom
isomorphism
\[
\tau : H_k (\mathcal{E},\mathcal{E} \setminus \mathcal{M}_0)
\stackrel{\cong}{\longrightarrow} H_{k-n}(\mathcal{M}_0), \quad \alpha
\mapsto \tau_{\mathcal{E}} \cap \alpha,
\]
in a Morse theoretical way ($\tau_{\mathcal{E}} \in
H^n(\mathcal{E}, \mathcal{E} \setminus \mathcal{M}_0)$ denotes the
Thom class of the vector bundle $\mathcal{E}$).

Indeed,  let $f_0 \in \mathcal{F}(\mathcal{M}_0)$ and let  $X_0$ be a Morse-Smale pseudo-gradient for $f_0$. The choice of a Riemannian structure on the
vector bundle $\mathcal{E}$ determines the smooth function
\[
f_1(\xi) := f_0(\pi(\xi)) - |\xi|^2, \quad \forall \xi \in
\mathcal{E}.
\]
The choice of a connection on $\mathcal{E}$ induces the horizontal-vertical splitting and the isomorphism:
\[
T_{\xi} \mathcal{E} = T_{\xi}^h \mathcal{E} \oplus T_{\xi}^v \mathcal{E} \cong T_{\pi(\xi)} \mathcal{M}_0 \oplus \pi^{-1}(\pi(\xi)).
\]
By the above identifications, we can define the tangent vector field $X_1$ on the total space $\mathcal{E}$ by 
\[
X_1(\xi) := (X_0(\pi(\xi)),  \xi).
\]
It is readily seen that $(f_1,X_1)$ satisfies conditions (X1')-(X5') of Section \ref{mc} on the pair $(\mathcal{E}, \mathcal{E}
\setminus \overline{\mathcal{U}_1})$, $\mathcal{U}_1$ being the set of
vectors $\xi$ in the total space $\mathcal{E}$ with $|\xi|<1$. Therefore, the homology of the relative Morse
complex of $(f_1,X_1)$ on $(\mathcal{E},\mathcal{E}\setminus
\overline{\mathcal{U}_1})$ is isomorphic to the singular homology of the pair
$(\mathcal{E},\mathcal{E}\setminus \overline{\mathcal{U}_1})$, that is
to the singular homology of $(\mathcal{E},\mathcal{E}\setminus
\mathcal{M}_0)$. Actually, the critical points of $f_1$ are contained in the zero-section of $\mathcal{E}$, and if we identify such a zero section with $\mathcal{M}_0$, we have
\[
\crit_k(f_1) = \crit_{k-n}(f_0), \quad
T_x W^u(x;X_1) = T_x W^u(x; X_0) \oplus \pi^{-1}(x), \quad \forall x\in \crit(f_0),
\]
so the orientation of the vector bundle $\mathcal{E}$ allows to
associate an orientation of $W^u(x;X_1)$ to each orientation of
$ W^u(x;X_0)$. Then the relative Morse complex of $(f_1,X_1)$ on
$(\mathcal{E}, \mathcal{E} \setminus \overline{\mathcal{U}_1})$
is obtained from the Morse complex of
$(f_0,X_0)$ on $\mathcal{M}_0$ by a $-n$-shift in the grading:
\[
M_k (f_1,X_1) = M_{k-n} (f_0,X_0),
\]
and one can show that the isomorphism $\tau$ -- read on the Morse
complexes by the isomorphisms described in Section \ref{mc} -- is induced by the identity mapping
\[
M_k(f_1,X_1) \stackrel{\mathrm{id}}{\longrightarrow} M_{k-n}(f_0,X_0),
\]
see \cite{cs08} for more details.

Consider now the general case of a closed embedding $e: \mathcal{M}_0
\hookrightarrow \mathcal{M}$, assumed to be of codimension $n$ and
co-oriented. The above description of the Thom isomorphism
associated to the normal bundle $N\mathcal{M}_0$
of $\mathcal{M}_0$ and the discussion about functoriality
of Section \ref{funct} provide us with a Morse theoretical description of the Umkehr map
\[
e_! : H_k(\mathcal{M}) \rightarrow H_{k-n}(\mathcal{M}_0).
\]
It is actually useful to identify an open neighborhood of
$\mathcal{M}_0$ with $N\mathcal{M}_0$ by the tubular neighborhood
theorem, to consider again the open unit ball
$\mathcal{U}_1$ around the zero section of $N\mathcal{M}_0$, and to
see the Umkehr map as the composition
\[
H_j(\mathcal{M}) \stackrel{i_*}{\longrightarrow} H_j (\mathcal{M},
\mathcal{M} \setminus \overline{\mathcal{U}_1}) \cong H_j(N
\mathcal{M}_0, N \mathcal{M}_0 \setminus \mathcal{M}_0)
\stackrel{\tau}{\longrightarrow} H_{j-n}(\mathcal{M}_0),
\]
the map $i: \mathcal{M} \hookrightarrow (\mathcal{M}, \mathcal{M}
\setminus \overline{\mathcal{U}_1})$ being the inclusion.
Let $f_0,X_0,f_1,X_1$ be as above. We use the symbols $f_1$ and $X_1$
also to denote arbitrary extensions of $f_1$ and
$X_1$ to the whole $\mathcal{M}$.
Let $f\in \mathcal{F}(\mathcal{M})$, and let $X$ be a Morse-Smale pseudo-gradient for $f$. Since we
would like to achieve transversality by perturbing $X$ and $X_0$, but
keeping $X_1$ of product-type near $\mathcal{M}_0$, we need the
condition
\begin{equation}
\label{noco3a}
x\in \crit(f) \cap \mathcal{M}_0 \quad \implies \quad i(x;f) \geq n,
\end{equation}
which implies that up to perturbing $X$ we may assume that the
unstable manifold of each critical point of $f$ is transversal to
$\mathcal{M}_0$. Assumption (\ref{noco1}) is automatically satisfied
by the triplet $(i,f,f_1)$, while (\ref{noco2}) is equivalent to asking that
\begin{equation}
\label{noco3b}
x\in \crit(f) \cap \crit(f_0) \quad \implies \quad i(x;f) \geq
i(x;f_0) + n.
\end{equation}
Conditions (\ref{noco3a}) and (\ref{noco3b}) are implied by the
generic assumption $\crit(f) \cap \mathcal{M}_0 = \emptyset$.
By the arguments of Section \ref{funct} applied to the
map $i$ (in particular, condition
(\ref{tra})), we see that up to perturbing $X$ and $X_0$ (keeping
$X_1$ of product-type near $\mathcal{M}_0$), we may assume that for
every $x\in \crit (f)$, $y\in \crit(f_0)$, the intersection
\[
W^u(x;X)\cap W^s(y;X_1) =
W^u(x;X)\cap W^s(y;X_0)
\]
is transverse in $\mathcal{M}$, hence it is a submanifold of
dimension $i(x;f) - i(y;f_0) -n$. If we fix an orientation of
the unstable manifold of each critical point of $f$ and $f_0$, these
intersections are canonically oriented. Compactness and transversality
imply that when $i(y;f_0) = i(x;f) - n$, this intersection is a finite
set of points, each of which comes with an orientation sign $\pm
1$. Denoting by $n_{e_!}(x,y)$ the
algebraic sum of these signs, we conclude that the homomorphism
\[
M_k(f) \rightarrow M_{k-n}(f_0), \quad x\mapsto
\hspace{-1em}\sum_{\substack{y\in \crit (f_0)\\ i(y;f_0) = k - n}}\hspace{-1em} n_{e_!}(x,y)\,
y, \quad \forall x\in \mathrm{crit}_k(f),
\]
is a chain map of degree $-n$, and that it induces the Umkehr map $e_!$ in homology.

Let us conclude this section by describing the Morse theoretical
interpretation of the intersection product in homology. See \cite{bc94} for the Morse theoretical description of more general cohomology operations.
Let
$M$ be a finite-dimensional oriented manifold, and consider the
diagonal embedding $e:\Delta_M \hookrightarrow M \times M$, which is
$n$-codimensional and co-oriented. The intersection product is defined
by the composition
\[
H_j(M) \otimes H_k(M) \stackrel{\times}{\longrightarrow} H_{j+k}(M
\times M) \stackrel{e_!}{\longrightarrow} H_{j+k-n}(\Delta_M) \cong
H_{j+k-n}(M),
\]
and it is denoted by
\[
\bullet : H_j(M) \otimes H_k(M) \longrightarrow H_{j+k-n}(M).
\]
The above description of $e_!$ and the description of the exterior
homology product $\times$ given in Section \ref{exte} immediately
yield the following description of $\bullet$. Let $f_i \in \mathcal{F}(M)$,
$i=1,2,3$, and let $X_i$ be corresponding Morse-Smale pseudo-gradients (here we could assume that the $f_i$'s are smooth Morse functions, and that $X_i=-\mathrm{grad}_g f_i$ with respect to three suitable complete metrics $g_i$ on $M$). The non-degeneracy 
conditions (\ref{noco3a}) and
(\ref{noco3b}), which are used to represent $e_!$, are now
\begin{eqnarray}
\label{noco4a}
x\in \crit (f_1) \cap \crit (f_2) \quad \implies \quad i(x;f_1) +
i(x;f_2) \geq n, \\ \label{noco4b}
x\in \crit(f_1) \cap \crit(f_2) \cap \crit(f_3) \quad \implies \quad
i(x;f_1) + i(x;f_2) \geq i(x;f_3) + n.
\end{eqnarray}
These conditions are implied for instance by the generic assumption that
$f_1$ and $f_2$ do not have common critical points.
We can now perturb the vector fields $X_1$, $X_2$, and $X_3$ in such a way that for every triplet $x_i\in
\crit(f_i)$, $i=1,2,3$, the intersection
\[
W^u((x_1,x_2) ; X_1 \oplus X_2)
\cap a(W^s(x_3; X_3)),
\]
$a:M \rightarrow M \times M$ being the map $a(p)=(p,p)$,
is transverse in $M \times M$, hence it is an oriented submanifold of
$\Delta_M$ of dimension $i(x_1;f_1) + i(x_2;f_2) - i(x_3;f_3) - n$.
By compactness and
transversality,  when $i(x_3;f_3) = i(x_1;f_1) + i(x_2;f_2) - n$,
this intersection, which can also be written as
\[
\set{(p,p) \in W^u(x_1;X_1) \times W^u(x_2;X_2)}{p\in
  W^s(x_3; X_3)},
\]
is a finite set of points, each of which comes with
an orientation sign $\pm 1$. Denoting by $n_{\bullet}(x_1,x_2;x_3)$ the
algebraic sum of these signs, we conclude that the homomorphism
\[
M_j(f_1,g_1) \otimes M_k(f_2,g_2) \rightarrow M_{j+k-n}(f_3,g_3),
\quad x_1 \otimes x_2 \mapsto \hspace{-1em}\sum_{\substack{x_3 \in \crit (f_3)\\
    i(x_3;f_3) = j+k - n}} \hspace{-1em}n_{\bullet}(x_1,x_2;x_3)\,
x_3,
\]
where $x_1\in \mathrm{crit}_j(f_1)$, $x_2 \in \mathrm{crit}_k(f_2)$,
is a chain map of degree $-n$ from $M(f_1) \otimes M(f_2)$
to $M(f_3)$, and that it induces the
intersection product $\bullet$ in homology.

\providecommand{\bysame}{\leavevmode\hbox to3em{\hrulefill}\thinspace}
\providecommand{\MR}{\relax\ifhmode\unskip\space\fi MR }
% \MRhref is called by the amsart/book/proc definition of \MR.
\providecommand{\MRhref}[2]{%
  \href{http://www.ams.org/mathscinet-getitem?mr=#1}{#2}
}
\providecommand{\href}[2]{#2}

\end{document}